\def\DateTime{16/March/2013, 18:30 (Kyoto)}
\def\Version{Version 2.0}
\def\yes{\if00}
\def\no{\if01}
\def\ifmonog{\no}
\def\ifpaper{\yes}
\def\iftenpt{\no}
\def\ifelevenpt{\no}
\def\iftwelvept{\no}
\def\ifmathscr{\yes}
\def\ifutopia{\no}
\def\iftenpt{\no}
\def\ifelevenpt{\no}
\def\iftwelvept{\yes}
\def\ifmathscr{\no}
\def\ifutopia{\yes}
\def\ifquery{\yes}
\theoremstyle{plain}
\newtheorem{Theorem}{Theorem}[section]
\newtheorem{Proposition}[Theorem]{Proposition}
\newtheorem{Lemma}[Theorem]{Lemma}
\newtheorem{PropDef}[Theorem]{Proposition-Definition}
\newtheorem{Corollary}[Theorem]{Corollary}
\newtheorem{Sublemma}[Theorem]{Sublemma}
\newtheorem{Claim}{Claim}[Theorem]
\theoremstyle{definition}
\newtheorem{Definition}[Theorem]{Definition}
\newtheorem{Remark}[Theorem]{Remark}
\newtheorem{Example}[Theorem]{Example}
\newtheorem{Conjecture}[Theorem]{Conjecture}
\newtheorem{Assertion}[Theorem]{Assertion}
\newtheorem{Condition}[Theorem]{Condition}
\newtheorem{Problem}[Theorem]{Problem}
\newtheorem{Question}[Theorem]{Question}
\newtheorem{FQuestion}[Theorem]{Fundamental question}
\newtheorem{Observation}[Theorem]{Observation}
\newtheorem{Assumption}[Theorem]{Assumption}
\newtheorem{Conj}[Claim]{Conjecture}
\newtheorem{Convention}{}[section]
\theoremstyle{plain}
\newtheorem{Theorem}{Theorem}[subsection]
\newtheorem{Proposition}[Theorem]{Proposition}
\newtheorem{Lemma}[Theorem]{Lemma}
\newtheorem{PropDef}[Theorem]{Proposition-Definition}
\newtheorem{Corollary}[Theorem]{Corollary}
\newtheorem{Claim}{Claim}[Theorem]
\theoremstyle{definition}
\newtheorem{Definition}[Theorem]{Definition}
\newtheorem{Remark}[Theorem]{Remark}
\renewcommand{\thesection}{\arabic{chapter}.\arabic{section}}
\renewcommand{\thesubsection}{\arabic{chapter}.\arabic{section}.\arabic{subsection}}
\renewcommand{\theTheorem}{\arabic{chapter}.\arabic{section}.\arabic{Theorem}}
\renewcommand{\theClaim}{\arabic{chapter}.\arabic{section}.\arabic{Theorem}.\arabic{Claim}}
\renewcommand{\theequation}{\arabic{chapter}.\arabic{section}.\arabic{Theorem}.\arabic{Claim}}
\renewcommand{\thesubsection}{\arabic{section}.\arabic{subsection}}
\renewcommand{\theTheorem}{\arabic{section}.\arabic{subsection}.\arabic{Theorem}}
\renewcommand{\theClaim}{\arabic{section}.\arabic{subsection}.\arabic{Theorem}.\arabic{Claim}}
\renewcommand{\theequation}{\arabic{section}.\arabic{subsection}.\arabic{Theorem}.\arabic{Claim}}
\def\rom{\textup}
\newcommand{\ZZ}{{\mathbb{Z}}}
\newcommand{\QQ}{{\mathbb{Q}}}
\newcommand{\RR}{{\mathbb{R}}}
\newcommand{\KK}{{\mathbb{K}}}
\newcommand{\CC}{{\mathbb{C}}}
\newcommand{\PP}{{\mathbb{P}}}
\newcommand{\MM}{{\mathbb{M}}}
\newcommand{\AAA}{{\mathscr{A}}}
\newcommand{\BBB}{{\mathscr{B}}}
\newcommand{\DDD}{{\mathscr{D}}}
\newcommand{\EEE}{{\mathscr{E}}}
\newcommand{\FFF}{{\mathscr{F}}}
\newcommand{\HHH}{{\mathscr{H}}}
\newcommand{\III}{{\mathscr{I}}}
\newcommand{\JJJ}{{\mathscr{J}}}
\newcommand{\LLL}{{\mathscr{L}}}
\newcommand{\MMM}{{\mathscr{M}}}
\newcommand{\NNN}{{\mathscr{N}}}
\newcommand{\OOO}{{\mathscr{O}}}
\newcommand{\PPP}{{\mathscr{P}}}
\newcommand{\QQQ}{{\mathscr{Q}}}
\newcommand{\VVV}{{\mathscr{V}}}
\newcommand{\XXX}{{\mathscr{X}}}
\newcommand{\YYY}{{\mathscr{Y}}}
\newcommand{\ZZZ}{{\mathscr{Z}}}
\newcommand{\AAA}{{\mathcal{A}}}
\newcommand{\BBB}{{\mathcal{B}}}
\newcommand{\DDD}{{\mathcal{D}}}
\newcommand{\EEE}{{\mathcal{E}}}
\newcommand{\FFF}{{\mathcal{F}}}
\newcommand{\HHH}{{\mathcal{H}}}
\newcommand{\III}{{\mathcal{I}}}
\newcommand{\JJJ}{{\mathcal{J}}}
\newcommand{\LLL}{{\mathcal{L}}}
\newcommand{\MMM}{{\mathcal{M}}}
\newcommand{\NNN}{{\mathcal{N}}}
\newcommand{\OOO}{{\mathcal{O}}}
\newcommand{\PPP}{{\mathcal{P}}}
\newcommand{\QQQ}{{\mathcal{Q}}}
\newcommand{\VVV}{{\mathcal{V}}}
\newcommand{\XXX}{{\mathcal{X}}}
\newcommand{\YYY}{{\mathcal{Y}}}
\newcommand{\ZZZ}{{\mathcal{Z}}}
\newcommand{\Proj}{\operatorname{Proj}}
\newcommand{\Pic}{\operatorname{Pic}}
\newcommand{\Rat}{\operatorname{Rat}}
\newcommand{\Hom}{\operatorname{Hom}}
\newcommand{\Coker}{\operatorname{Coker}}
\newcommand{\Spec}{\operatorname{Spec}}
\newcommand{\Supp}{\operatorname{Supp}}
\newcommand{\codim}{\operatorname{codim}}
\newcommand{\mult}{\operatorname{mult}}
\newcommand{\lentgh}{\operatorname{length}}
\newcommand{\trdeg}{\operatorname{trdeg}}
\newcommand{\rank}{\operatorname{rk}}
\newcommand{\ord}{\operatorname{ord}}
\newcommand{\adeg}{\widehat{\operatorname{deg}}}
\newcommand{\zeros}{\operatorname{div}}
\newcommand{\Bs}{\operatorname{Bs}}
\newcommand{\Div}{\operatorname{Div}}
\newcommand{\aDiv}{\widehat{\operatorname{Div}}}
\newcommand{\WDiv}{\operatorname{WDiv}}
\newcommand{\vol}{\operatorname{vol}}
\newcommand{\avol}{\widehat{\operatorname{vol}}}
\newcommand{\acvol}{\widehat{\operatorname{vol}}_{\chi}}
\newcommand{\aH}{\hat{H}^0}
\newcommand{\ah}{\hat{h}^0}
\newcommand{\achi}{\hat{\chi}}
\newcommand{\Tpsh}{\operatorname{PSH}}
\newcommand{\DVal}{\operatorname{DVal}}
\newcommand{\an}{\operatorname{an}}
\newcommand{\ad}{\operatorname{a}}
\newcommand{\integrable}{\operatorname{int}}
\newcommand{\rest}[2]{\left.{#1}\right\vert_{{#2}}}
\newcommand{\srest}[2]{{#1}\vert_{{#2}}}
\def\query#1{\setlength\marginparwidth{65pt} 
\marginpar{\raggedright\fontsize{7.81}{9} 
\selectfont\upshape\hrule\smallskip 
#1\par\smallskip\hrule}} 
\def\query#1{}
\newcommand{\frontmatterforspececialeqn}{%
\ifmonog\renewcommand{\theequation}{\arabic{chapter}.\arabic{section}.\arabic{Theorem}}\fi%
\ifpaper\renewcommand{\theequation}{\arabic{section}.\arabic{subsection}.\arabic{Theorem}}\fi%
\addtocounter{Theorem}{1}}
\newcommand{\backmatterforspececialeqn}{%
\ifmonog\renewcommand{\theequation}{\arabic{chapter}.\arabic{section}.\arabic{Theorem}.\arabic{Claim}}\fi%
\ifpaper\renewcommand{\theequation}{\arabic{section}.\arabic{subsection}.\arabic{Theorem}.\arabic{Claim}}\fi}
\newcommand{\frontmatterforspececialclaim}{%
\ifmonog\renewcommand{\theClaim}{\arabic{chapter}.\arabic{section}.\arabic{Theorem}}\fi%
\ifpaper\renewcommand{\theClaim}{\arabic{section}.\arabic{subsection}.\arabic{Theorem}}\fi%
\addtocounter{Theorem}{1}}
\newcommand{\backmatterforspececialclaim}{%
\ifmonog\renewcommand{\theClaim}{\arabic{chapter}.\arabic{section}.\arabic{Theorem}.\arabic{Claim}}\fi%
\ifpaper\renewcommand{\theClaim}{\arabic{section}.\arabic{subsection}.\arabic{Theorem}.\arabic{Claim}}\fi}
\def\AdelDivSubject{AdelDiv-Subject}
\def\AdelDivSymbol{AdelDiv-Symbol}
\def\AdelDivSubject{AdelDiv}
\def\AdelDivSymbol{AdelDiv}
\begin{document}

\ifmonog
\frontmatter
\fi

\title[Adelic divisors on arithmetic varieties]%
{Adelic divisors on arithmetic varieties%
\ifmonog\footnote{Date: \DateTime, (\Version)}\fi}
\author{Atsushi Moriwaki}
\address{Department of Mathematics, Faculty of Science,
Kyoto University, Kyoto, 606-8502, Japan}
\email{moriwaki@math.kyoto-u.ac.jp}
\ifpaper
\date{\DateTime, (\Version)}
\fi
\subjclass[2010]{Primary 14G40; Secondary 11G50, 37P30}
\begin{abstract}
In this article, we generalize several fundamental results for arithmetic divisors,
such as the continuity of the volume function, the generalized Hodge index theorem,
Fujita's approximation theorem for arithmetic divisors and
Zariski decompositions for arithmetic divisors on arithmetic surfaces,
to the case of the adelic arithmetic divisors.
\end{abstract}


\maketitle

\ifpaper
\setcounter{tocdepth}{1}
\fi
\tableofcontents

\ifmonog
\mainmatter
\fi

\ifmonog\chapter*{Introduction}\fi
\ifpaper\section*{Introduction}\fi

The theory of birational Arakelov geometry has advanced 
tremendously over the last decade, 
such as
the continuity of the volume function, the generalized Hodge index theorem,
Fujita's approximation theorem for arithmetic divisors,
Zariski decompositions for arithmetic divisors on arithmetic surfaces and so on.
Besides them, non-Archimedean Arakelov geometry is also well-developed using Berkovich analytic spaces.
In this article, we would like to generalize the fundamental results for arithmetic divisors to
the case of adelic arithmetic divisors. 

\ifmonog\section{Birational Arakelov geometry}\fi
\ifpaper\subsection{Birational Arakelov geometry}\fi

Let $\XXX$ be a $(d+1)$-dimensional, generically smooth, projective, normal arithmetic variety,
that is, $\XXX$ is a projective and flat normal integral scheme over $\ZZ$ such that
$\XXX$ is smooth over $\QQ$ and the Krull dimension of $\XXX$ is $d+1$.
A pair $\overline{\DDD} = (\DDD, g_{\infty})$ is called an {\em arithmetic $\RR$-Cartier divisor of
$C^0$-type on $\XXX$}
\index{\AdelDivSubject}{arithmetic R-Cartier divisor of C^0-type@arithmetic $\RR$-Cartier divisor of $C^0$-type}%
if the following conditions are satisfied:
\begin{enumerate}
\renewcommand{\labelenumi}{(\roman{enumi})}
\item
The first $\DDD$ is an $\RR$-Cartier divisor on $\XXX$, that is,
$\DDD = a_1 \DDD_1 + \cdots + a_r \DDD_r$ for some
Cartier divisors $\DDD_1, \ldots, \DDD_r$ on $\XXX$ and $a_1, \ldots, a_r \in \RR$.

\item
The second $g_{\infty}$ is a real valued continuous function on $(\XXX \setminus \Supp(\DDD))(\CC)$ such that,
for each $x \in \XXX(\CC)$,
$g_{\infty} + \sum_{i=1}^r a_i \log \vert f_i \vert^2$ extends to a continuous function
around $x$, where $f_1, \ldots, f_r$ are local equations of $\DDD_1, \ldots, \DDD_r$ at $x$, respectively.
In addition, $g_{\infty}$ is invariant under the complex conjugation map.
\end{enumerate}
Let $\Rat(\XXX)$ be the rational function field of $\XXX$.
We define $H^0(\XXX, \DDD)$ to be
\[
H^0(\XXX, \DDD) := \left\{ \phi \in \Rat(\XXX)^{\times} \mid
\DDD + (\phi) \geq 0 \right\} \cup \{ 0 \}.
\]%
\index{\AdelDivSymbol}{0H:H^0(XXX,DDD)@$H^0(\XXX, \DDD)$}%
Note that $H^0(\XXX, \DDD)$ is a finitely generated $\ZZ$-module.
For $\phi \in H^0(\XXX, \DDD)$, we can see that $\vert \phi \vert \exp(-g_{\infty}/2)$ extends to
a continuous function $\vartheta$ on $\XXX(\CC)$, so that
\[
\sup \left\{ \vartheta(x) \mid x \in \XXX(\CC) \right\}
\]
is denoted by
$\Vert \phi \Vert_{g_{\infty}}$.
\index{\AdelDivSymbol}{0n:Vert phi Vert_{g_{infty}}@$\Vert \phi \Vert_{g_{\infty}}$}%
The volume $\avol(\overline{\DDD})$ of $\overline{\DDD}$, by definition,
is given by
\[
\avol(\overline{\DDD}) :=
\limsup_{n \to \infty} 
\frac{\log \# \left\{ \phi \in H^0(\XXX, n\DDD) \mid \Vert \phi \Vert_{n g_{\infty}} \leq 1 \right\}}%
{n^{d+1}/(d+1)!}.
\]%
\index{\AdelDivSymbol}{0v:avol(overline{DDD})@$\avol(\overline{\DDD})$}%
It is known that the volume function $\avol$ 
has the following fundamental properties (for details, see \cite{MoArZariski}):
\begin{enumerate}
\renewcommand{\labelenumi}{(\arabic{enumi})}
\item (Finiteness)
$\avol(\overline{\DDD}) < \infty$ (\cite{MoCont}, \cite{MoContExt}).

\item
(Limit theorem) 
${\displaystyle \avol(\overline{\DDD}) =
\lim_{n \to \infty} 
\frac{\log \# \left\{ \phi \in H^0(\XXX, n \DDD) \mid \Vert \phi \Vert_{n g_{\infty}} \leq 1 \right\}}%
{n^{d+1}/(d+1)!}
}$ (\cite{HChen}, \cite{MoContExt}).

\item (Positive homogeneity)
$\avol(a \overline{\DDD}) = a^{d+1} \avol(\overline{\DDD})$ for $a \in \RR_{\geq 0}$
(\cite{MoCont}, \cite{MoContExt}).

\item
(Continuity) The volume function
$\avol$ is continuous in the following sense:
Let $\overline{\DDD}_1,\ldots,\overline{\DDD}_r$, $\overline{\AAA}_1,\ldots, \overline{\AAA}_s$ be arithmetic $\RR$-Cartier divisors of $C^0$-type on $\XXX$.
For a compact subset $B$ in $\RR^r$ and a positive number $\epsilon$, 
there are positive numbers $\delta$ and $\delta'$ such that
\[
\left\vert \avol\left( \sum_{i=1}^r a_i \overline{\DDD}_i + \sum_{j=1}^s \delta_j \overline{\AAA}_j + (0, \phi) \right) - \avol\left( \sum_{i=1}^r a_i \overline{\DDD}_i  \right) \right| \leq \epsilon
\]
for all $a_1, \ldots, a_r, \delta_1, \ldots, \delta_s \in \RR$ and $\phi \in C^0(X)$ with
$(a_1, \ldots, a_r) \in B$, $\vert \delta_1 \vert + \cdots + \vert \delta_s \vert \leq \delta$ and $\Vert \phi \Vert_{\sup} \leq \delta'$ (\cite{MoCont}, \cite{MoContExt}).
\end{enumerate}
Here we would like to introduce several kinds of the positivity of an arithmetic $\RR$-Cartier divisor
$\overline{\DDD}$ of $C^0$-type on $\XXX$.
\begin{enumerate}
\item[$\bullet$] Big:
$\avol(\overline{\DDD}) > 0$.

\item[$\bullet$] Relatively nef: the first Chern current $c_1(\overline{\DDD})$ is positive and
$\DDD$ is relatively nef with respect to $\XXX \to \Spec(\ZZ)$, that is,
$\deg(\rest{\DDD}{C}) \geq 0$ 
for all vertical $1$-dimensional closed integral subschemes $C$ of $\XXX$.

\item[$\bullet$] Nef: $\overline{\DDD}$ is relatively nef and $\adeg(\srest{\overline{\DDD}}{C}) \geq 0$
for all horizontal $1$-dimensional closed integral subschemes $C$ of $\XXX$.
\end{enumerate}
In addition, $\overline{\DDD}$ is said to be {\em integrable} 
\index{\AdelDivSubject}{integrable arithmetic R-Cartier divisor@integrable arithmetic $\RR$-Cartier divisor}%
if $\overline{\DDD} = \overline{\DDD}' - \overline{\DDD}''$ for some
relatively nef arithmetic $\RR$-Cartier divisors $\overline{\DDD}'$ and
$\overline{\DDD}''$ of $C^0$-type.
For integrable arithmetic $\RR$-Cartier divisors
$\overline{\DDD}_1, \ldots, \overline{\DDD}_{d+1}$ of $C^0$-type,
the arithmetic intersection number
$\adeg \left( \overline{\DDD}_1 \cdots \overline{\DDD}_{d+1} \right)$ is well-defined
(cf. \cite[Subsection~6.4]{MoArZariski}, \cite[Subsection~2.1]{MoD}).
\index{\AdelDivSymbol}{0d:adeg(overline{DDD}_1 cdots overline{DDD}_{d+1})@$\adeg(\overline{\DDD}_1 \cdots \overline{\DDD}_{d+1})$}%
The following fundamental results were obtained by several authors such as
Faltings, Gillet-Soul\'{e}, S. Zhang, Moriwaki, H. Chen, X. Yuan and so on:

\begin{enumerate}
\renewcommand{\labelenumi}{(\arabic{enumi})}
\setcounter{enumi}{4}
\item
\rom{(Generalized Hodge index theorem)}
If $\overline{\DDD}$ is relatively nef, then
\[
\adeg(\overline{\DDD}^{d+1}) \leq
\avol(\overline{\DDD}).
\]
Moreover, if $\overline{\DDD}$ is nef, then
$\adeg(\overline{\DDD}^{d+1}) =
\avol(\overline{\DDD})$ (\cite{FaCAS}, \cite{GSRR}, \cite{ZhPos}, \cite{MoCont}, \cite{MoArZariski}).

\item
\rom{(Fujita's approximation theorem for arithmetic divisors)}
If $\overline{\DDD}$ is big, then,
for any positive number $\epsilon$, there are a birational morphism $\mu : \YYY \to \XXX$
of generically smooth, normal and projective arithmetic varieties and
a nef arithmetic $\RR$-Cartier divisor $\overline{\QQQ}$ of $C^0$-type on $\YYY$ such that
\[
\overline{\QQQ} \leq \mu^*(\overline{\DDD})
\quad\text{and}\quad 
\avol(\overline{\DDD}) - \epsilon \leq
\avol(\overline{\QQQ}) \leq \avol(\overline{\DDD})
\]
(\cite{HChen}, \cite{YuanVol}, \cite{MoArLin}).

\item
\rom{(Zariski decompositions for arithmetic divisors on arithmetic surfaces)}
We assume that $d=1$ and $\XXX$ is regular.
Let $\Upsilon(\overline{\DDD})$ be the set of all
nef arithmetic $\RR$-Cartier divisors $\overline{\LLL}$ of
$C^0$-type on $\XXX$ with $\overline{\LLL} \leq \overline{\DDD}$.
\index{\AdelDivSymbol}{0U:Upsilon(overline{DDD})@$\Upsilon(\overline{\DDD})$}%
If $\Upsilon(\overline{\DDD}) \not= \emptyset$,
then there is the greatest element $\overline{\QQQ}$ of
$\Upsilon(\overline{\DDD})$, that is,
$\overline{\QQQ} \in \Upsilon(\overline{\DDD})$ and
$\overline{\LLL} \leq \overline{\QQQ}$ for all $\overline{\LLL} \in \Upsilon(\overline{\DDD})$
(\cite{MoArZariski}, \cite{MoCharNef}).
\end{enumerate}
The purpose of this article is to generalize the above results to adelic
arithmetic divisors.

\ifmonog\section{Green functions on analytic spaces over a compete discrete valuation field}\fi
\ifpaper\subsection{Green functions on analytic spaces over a compete discrete valuation field}\fi

Let $k$ be a field and $v$ a non-trivial complete discrete valuation of $k$.
Let $X$ be a projective and geometrically integral variety over $k$.
Let $X^{\an}$ be the analytification of $X$ in the sense of
Berkovich \cite{Be}.
\index{\AdelDivSymbol}{0X:X^{an}@$X^{\an}$}%
Note that $X^{\an}$ is a compact Hausdorff space.
Let $\Rat(X)$ be the rational function field of $X$.
Let $D$ be an $\RR$-Cartier divisor on $X$, that is, 
$D = a_1 D_1 + \cdots + a_r D_r$ for some Cartier divisors
$D_1, \ldots, D_r$ on $X$ and $a_1, \ldots, a_r \in \RR$.
Let $X = \bigcup_{i=1}^N U_i$ be an affine open covering of $X$ such that
each $D_j$ is given by $f_{ji} \in \Rat(X)^{\times}$ on $U_i$ for $j=1, \ldots, r$.
We say a continuous function 
\[
g : X^{\an} \setminus \bigcup\nolimits_{j=1}^r \Supp(D_j)^{\an} \to \RR
\]
is a {\em $D$-Green function of $C^0$-type on $X^{\an}$}
\index{\AdelDivSubject}{Green function of C^0-type@Green function of $C^0$-type}%
if $g + \sum_{j=1}^r a_{j} \log \vert f_{ji} \vert^2$ 
extends to a continuous function on $U_i^{\an}$ for each $i = 1, \ldots, N$.

Let $\XXX$ be a model of $X$ over $\Spec(k^{\circ})$, that is,
$\XXX$ is a projective and flat integral scheme over $\Spec(k^{\circ})$ such that
the generic fiber of $\XXX \to \Spec(k^{\circ})$ is $X$,
where 
\[
k^{\circ} := \{ f \in k \mid v(f) \leq 1 \}.
\]
We assume that there are Cartier divisors $\DDD_1, \ldots, \DDD_r$ on $\XXX$
such that $\DDD_j \cap X = D_j$ for $j=1, \ldots, r$.
We set $\DDD := a_1 \DDD_1 + \cdots + a_r \DDD_r$.
The pair $(\XXX, \DDD)$ is called a {\em model of $(X, D)$}.
\index{\AdelDivSubject}{model of (X, D)@model of $(X, D)$}%
For $x \in X^{\an} \setminus \bigcup_{j=1}^r \Supp(D_j)^{\an}$, 
let $f_1, \ldots, f_r$ be local equations of $\DDD_1, \ldots, \DDD_r$ at 
$\xi = r_{\XXX}(x)$, respectively,
where $r_{\XXX} : X^{\an} \to \XXX_{\circ}$ is the reduction map and
$\XXX_{\circ}$ is the central fiber of $\XXX \to \Spec(k^{\circ})$.
We define $g_{(\XXX,\, \DDD)}(x)$ to be
\[
g_{(\XXX,\,\DDD)}(x) := -\sum_{j=1}^r a_j \log \vert f_j (x) \vert^2.
\]%
\index{\AdelDivSymbol}{0g:g_{(XXX,DDD)}@$g_{(\XXX,\,\DDD)}$}%
It is easy to see that $g_{(\XXX,\,\DDD)}$ is a $D$-Green function of $C^0$-type on $X^{\an}$.
We call it the {\em Green function induced by the model $(\XXX, \DDD)$}.
\index{\AdelDivSubject}{Green function induced by model@Green function induced by model}%

We say a $D$-Green function $g$ is {\em of $(C^0 \cap \Tpsh)$-type} 
\index{\AdelDivSubject}{of (C^0 \cap \Tpsh)-type@of $(C^0 \cap \Tpsh)$-type}%
if $D$ is nef and
there is a sequence $\{ (\XXX_n, \DDD_n) \}_{n=1}^{\infty}$ of
models of $(X, D)$ with the following properties:
\begin{enumerate}
\renewcommand{\labelenumi}{(\roman{enumi})}
\item
For each $n \geq 1$, $\DDD_n$ is relatively nef with respect to $\XXX_n \to \Spec(k^{\circ})$.

\item
If we set $\phi_n = g_{(\XXX_n,\, \DDD_n)} - g$, then $\lim_{n\to\infty} \Vert \phi_n \Vert_{\sup} = 0$.
\end{enumerate}

\ifmonog\section{Adelic arithmetic divisors}\fi
\ifpaper\subsection{Adelic arithmetic divisors}\fi

Let $K$ be a number field and $O_K$ the ring of integers in $K$.
We denote the set of all maximal ideals of $O_K$ by $M_K$.
For $P \in M_K$, 
the valuation $v_P$ of $K$ at $P$ is given by
\[
v_P(f) = \#(O_K/P)^{-\ord_P(f)}.
\]%
\index{\AdelDivSymbol}{0v:v_P@$v_P$}%
Let $K_P$ be the completion of $K$ with respect to $v_P$.
\index{\AdelDivSymbol}{0K:K_P@$K_P$}%
Let $X$ be a $d$-dimensional, projective, smooth and geometrically integral
variety over $K$ and let
$X_P := X \times_{\Spec(K)} \Spec(K_P)$, which is also
a projective, smooth and geometrically integral
variety over $K_P$.
\index{\AdelDivSymbol}{0X:X_P@$X_P$}%
Let $X(\CC)$ be the set of all $\CC$-valued points of $X$, that is,
\[
X(\CC) := \left\{ x : \Spec(\CC) \to X \mid \text{$x$ is a morphism as schemes} \right\}.
\]%
\index{\AdelDivSymbol}{0X:X(CC)@$X(\CC)$}%
Let $F_{\infty} : X(\CC) \to X(\CC)$ be the complex conjugation map, that is,
for $x \in X(\CC)$, $F_{\infty}(x)$ is given by the composition of
morphisms $\Spec(\CC) \overset{-^{a}}{\longrightarrow} \Spec(\CC)$ and
$\Spec(\CC) \overset{x}{\to} X$,
where $\Spec(\CC) \overset{-^{a}}{\to} \Spec(\CC)$ is the morphism induced by
the complex conjugation. 
\index{\AdelDivSymbol}{0F:F_{infty}@$F_{\infty}$}%
The space of $F_{\infty}$-invariant real valued continuous functions on $X(\CC)$
is denoted by $C^0_{F_{\infty}}(X(\CC))$, that is,
\[
C^0_{F_{\infty}}(X(\CC)) := \left\{ \varphi \in C^0(X(\CC)) \mid \varphi \circ F_{\infty} = \varphi \right\}.
\]%
\index{\AdelDivSymbol}{0C:C^0_{F_{infty}}(X(CC))@$C^0_{F_{\infty}}(X(\CC))$}%
A pair $\overline{D} = (D, g)$ of an $\RR$-Cartier divisor $D$ on $X$ and a collection
of Green functions 
\[
g = \left\{ g_P \right\}_{P \in M_K} 
\cup \left\{ g_{\infty} \right\}
\]
is called an {\em adelic arithmetic $\RR$-Cartier divisor of $C^0$-type on $X$} 
\index{\AdelDivSubject}{adelic arithmetic R-Cartier divisor of C^0-type@adelic arithmetic $\RR$-Cartier divisor of $C^0$-type}%
if the following conditions are satisfied:
\begin{enumerate}
\renewcommand{\labelenumi}{(\arabic{enumi})}
\item
For each $P \in M_K$,
$g_P$ is a $D$-Green function of $C^0$-type on $X^{\an}_P$.
In addition, there are a non-empty open set $U$ of $\Spec(O_K)$,
a normal model $\XXX_U$ of $X$ over $U$ and an $\RR$-Cartier divisor $\DDD_U$ on $\XXX_U$
such that $\DDD_U \cap X = D$ and $g_P$ is a $D$-Green function induced by the model
$(\XXX_U, \DDD_U)$ for all $P \in U \cap M_K$. 

\item
The function $g_{\infty}$ is an $F_{\infty}$-invariant $D$-Green function of $C^0$-type on $X(\CC)$.
\end{enumerate}
For simplicity, a collection
of Green functions $g = \{ g_P \}_{P \in M_K} \cup \{ g_{\infty} \}$
is often expressed by the following symbol:
\[
g = \sum_{P \in M_K} g_P [P] + g_{\infty} [\infty].
\]
We denote the space of all adelic arithmetic $\RR$-Cartier divisors of $C^0$-type on $X$ by
$\aDiv_{C^0}^{\ad}(X)_{\RR}$.
\index{\AdelDivSymbol}{0Div:aDiv_{C^0}^{ad}(X)_{RR}@$\aDiv_{C^0}^{\ad}(X)_{\RR}$}%

Let $\overline{D} = (D, g)$ be an adelic arithmetic $\RR$-Cartier divisor of $C^0$-type on $X$.
We define $H^0(X, D)$ to be
\[
H^0(X, D) := \left\{ \phi \in \Rat(X)^{\times} \mid D + (\phi) \geq 0 \right\} \cup \{ 0 \}.
\]%
\index{\AdelDivSymbol}{0H:H^0(X,D)@$H^0(X, D)$}%
For $\phi \in H^0(X, D)$ and $\wp \in M_K \cup \{ \infty \}$,
$\vert \phi \vert \exp(-g_{\wp}/2)$ extends to a continuous function, so that its supremum
is denoted by $\Vert \phi \Vert_{g_{\wp}}$. 
\index{\AdelDivSymbol}{0n:Vert phi Vert_{g_{wp}}@$\Vert \phi \Vert_{g_{\wp}}$}%
The set $\aH(X, \overline{D})$ of small sections 
of $\overline{D}$ and
the volume $\avol(\overline{D})$ of $\overline{D}$ are defined by
\[
\begin{cases}
\aH(X, \overline{D}) := \left\{ \phi \in H(X, D) \mid 
\text{$\Vert \phi \Vert_{g_{\wp}} \leq 1$ for all $\wp \in M_K \cup \{ \infty \}$} \right\},\\[2.0ex]
{\displaystyle \avol(\overline{D}) := \limsup_{n\to\infty}\frac{\log \# \aH(X, n \overline{D})}%
{n^{d+1}/(d+1)!}},
\end{cases}
\]
respectively.
\index{\AdelDivSymbol}{0H:aH(X,overline{D})@$\aH(X, \overline{D})$}%
\index{\AdelDivSymbol}{0v:avol(overline{D})@$\avol(\overline{D})$}%
Similarly as given for arithmetic divisors,
we can also introduce several kinds of the positivity of $\overline{D}$ as follows:
\begin{enumerate}
\item[$\bullet$] Big:
$\avol(\overline{D}) > 0$.

\item[$\bullet$] Relatively nef: 
$g_{P}$ is of $(C^0 \cap \Tpsh)$-type for
all $P \in M_K$ and
the first Chern current $c_1(D, g_{\infty})$
is positive.

\item[$\bullet$] Nef: $\overline{D}$ is relatively nef and 
the height function arising from $\overline{D}$ is non-negative.
\end{enumerate}
Further, $\overline{D}$ is said to be {\em integrable} 
\index{\AdelDivSubject}{integrable adelic arithmetic R-Cartier divisor@integrable adelic arithmetic $\RR$-Cartier divisor}%
if $\overline{D} = \overline{D}' - \overline{D}''$ for some
relatively nef adelic arithmetic $\RR$-Cartier divisors $\overline{D}'$ and
$\overline{D}''$ of $C^0$-type on $X$.
For integrable adelic arithmetic $\RR$-Cartier divisors
$\overline{D}_1, \ldots, \overline{D}_{d+1}$ of $C^0$-type on $X$, 
the arithmetic intersection number
\[
\adeg \left( \overline{D}_1 \cdots \overline{D}_{d+1} \right)
\]
can be defined 
\ifmonog(cf. Section~\ref{subsec:global:int:number}). \fi
\ifpaper(cf. Subsection~\ref{subsec:global:int:number}). \fi
\index{\AdelDivSymbol}{0d:adeg(overline{D}_1 cdots overline{D}_{d+1})@$\adeg(\overline{D}_1 \cdots \overline{D}_{d+1})$}%

\ifmonog\section{Main results}\fi
\ifpaper\subsection{Main results}\fi

Let $X$ be a $d$-dimensional, projective, smooth and geometrically integral
variety over a number field $K$.
The following theorems are the main results of this article.
Theorem~\ref{thm:cont:volume:intro},
Theorem~\ref{thm:G:H:I:T:adelic:arith:intro},
Theorem~\ref{thm:Fujita:approx:adel:intro} and
Theorem~\ref{thm:Zariski:decomp:adelic:arithmetic:divisor:intro}
are generalizations of (4), (5), (6) and (7), respectively.
The properties (1), (2) and (3) also hold for adelic arithmetic divisors
(cf. Theorem~\ref{thm:avol:lim}).
Several similar results on arithmetic toric varieties are known. For
details, see \cite{BPS} and \cite{BMPS}.
The adelic version of Fujita's approximation theorem has been already established by
Boucksom and Chen \cite{BC}.
In this article, we give another proof of it and generalize it to $\RR$-divisors.
Further, Theorem~\ref{thm:char:nef:general:intro} is a generalization of the result proved in
\cite{MoCharNef}.

\begin{Theorem}[Continuity of the volume function for adelic arithmetic divisors]
\label{thm:cont:volume:intro}
The volume function $\avol : \aDiv_{C^0}^{\ad}(X)_{\RR} \to \RR$ is continuous in the following sense:
Let $\overline{D}_1, \ldots, \overline{D}_r$, $\overline{A}_1, \ldots, \overline{A}_{r'}$ be 
adelic arithmetic $\RR$-Cartier divisors of $C^0$-type on $X$.
Let $\{ P_1, \ldots, P_s \}$ be a finite subset of $M_K$.
For a compact subset $B$ in $\RR^r$ and a positive number $\epsilon$, 
there are positive numbers $\delta$ and $\delta'$ such that
\[
\left\vert \avol\left(\sum_{i=1}^r a_i \overline{D}_i + \sum_{j=1}^{r'} \delta_j \overline{A}_j + 
\left(0, \sum_{l=1}^{s} \varphi_{P_l} [P_l] + \varphi_{\infty} [\infty] \right) \right) -
\avol\left(\sum_{i=1}^r a_i \overline{D}_i \right) \right\vert
\leq \epsilon
\]
holds for all $a_1, \ldots, a_r, \delta_1, \ldots, \delta_{r'} \in \RR$, 
$\varphi_{P_1} \in C^0(X_{P_1}^{\an}), \ldots, \varphi_{P_s} \in C^0(X_{P_s}^{\an})$
and $\varphi_{\infty} \in C^0_{F_{\infty}}(X(\CC))$
with $(a_1, \ldots, a_r) \in B$, $\sum_{j=1}^{r'} \vert \delta_j \vert \leq \delta$
and $\sum_{l=1}^s \Vert \varphi_{P_l} \Vert_{\sup} + \Vert \varphi_{\infty} \Vert_{\sup}
\leq \delta'$.
\end{Theorem}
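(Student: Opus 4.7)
The plan is to decompose the perturbation into (a) the variation of coefficients $a_i$ over the compact set $B$, (b) the linear correction $\sum_j \delta_j \overline{A}_j$, and (c) the continuous Green function perturbation $(0, \sum_l \varphi_{P_l}[P_l] + \varphi_\infty[\infty])$, and to estimate each in turn. The starting observation for (c) is that adding any continuous function $\varphi$ to a Green function $g_\wp$ scales the associated sup norm of sections multiplicatively:
\[
\exp(-\tfrac{1}{2}\Vert\varphi\Vert_{\sup})\,\Vert\phi\Vert_{g_\wp} \;\leq\; \Vert\phi\Vert_{g_\wp + \varphi} \;\leq\; \exp(\tfrac{1}{2}\Vert\varphi\Vert_{\sup})\,\Vert\phi\Vert_{g_\wp}.
\]
Summing the rescaling factors over the finitely many places $\{P_1, \ldots, P_s, \infty\}$, the set $\aH$ of small sections of the perturbed divisor is sandwiched between $\aH$'s with unit bounds replaced by $\exp(\pm \tfrac{1}{2} n \delta')$, and hence the effect of (c) on $\avol$ is bounded by the growth rate of $\#\{\phi \in H^0(X, nD) : \Vert\phi\Vert_{n g_\wp} \leq e^{nt}\}$ in $t$ near $0$, which is controlled by the limit formula (property~(2) of the excerpt).

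For (a) and (b), I would reduce to the classical continuity theorem (property~(4)). Pick a non-empty open set $U \subset \Spec(O_K)$ such that every $\overline{D}_i$ and $\overline{A}_j$ is induced over $U$ by a common normal model $\XXX_U$ with $\RR$-Cartier divisors $\DDD_{U,i}$ and $\EEE_{U,j}$. Fix a projective normal arithmetic extension $\widetilde{\XXX} \to \Spec(O_K)$ of $\XXX_U$, together with extensions $\widetilde{\DDD}_i, \widetilde{\EEE}_j$ to $\widetilde{\XXX}$. For each $(a, \delta)$, the classical arithmetic $\RR$-Cartier divisor $\sum a_i (\widetilde{\DDD}_i, g_{i,\infty}) + \sum \delta_j (\widetilde{\EEE}_j, g_{j,\infty})$ on $\widetilde{\XXX}$ has arithmetic volume that differs from $\avol\bigl(\sum a_i \overline{D}_i + \sum \delta_j \overline{A}_j\bigr)$ only through continuous Green function discrepancies at the finitely many places $P \in M_K \setminus U$; these discrepancies are precisely of the type treated in (c). Applying the classical continuity on $\widetilde{\XXX}$ to this family then handles (a) and (b) uniformly in $(a, \delta)$.

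The main obstacle will be the joint control of (b) and (c): varying $(a, \delta)$ simultaneously shifts the model Green functions at each exceptional place $P_l$ and the continuous discrepancies there, and these shifts must be kept uniformly bounded in sup norm so that step (c) can absorb them. The key technical input is a compactness argument on the Berkovich space $X_{P_l}^{\an}$: since $g_{D_i, P_l} - g_{(\widetilde{\XXX}, \widetilde{\DDD}_i)}|_{X_{P_l}^{\an}}$ and the analogous function for $\overline{A}_j$ are fixed elements of $C^0(X_{P_l}^{\an})$, the total discrepancy depends linearly on $(a, \delta)$ and its sup norm stays uniformly bounded as $(a, \delta)$ ranges over $B \times \{|\delta_j| \leq 1\}$. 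Combining this bound with the estimate from (c) and the classical continuity on $\widetilde{\XXX}$ yields $\delta, \delta'$ uniformly in $a \in B$, completing the proof.
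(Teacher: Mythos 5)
Your overall strategy — decompose into (a) the variation of coefficients $a_i$ over the compact set, (b) the linear correction $\sum_j\delta_j\overline{A}_j$, and (c) the continuous Green function perturbation; then reduce (a)–(b) to the classical continuity theorem on a fixed normal arithmetic model by absorbing the discrepancies at finitely many places into step (c) — is essentially the same as the paper's approach (which does the absorption via Theorem~\ref{thm:approx:adelic:arith} rather than by fixing a single extension, but this is a cosmetic difference), and your observation that the discrepancies depend linearly on $(a,\delta)$, hence uniformly bounded in sup norm over $B\times\{\sum|\delta_j|\leq 1\}$, is correct and needed.

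The genuine gap is in the justification of the key quantitative estimate for (c). You correctly note the multiplicative rescaling $\exp(-\frac12\|\varphi\|_{\sup})\|\phi\|_{g_\wp}\leq\|\phi\|_{g_\wp+\varphi}\leq\exp(\frac12\|\varphi\|_{\sup})\|\phi\|_{g_\wp}$, but then assert that the resulting change in $\avol$ is ``bounded by the growth rate of $\#\{\phi:\|\phi\|_{ng_\wp}\leq e^{nt}\}$ in $t$ near $0$, which is controlled by the limit formula (property~(2)).'' This does not follow: the limit theorem says the defining $\limsup$ is a $\lim$ for a fixed divisor, and gives no quantitative control over how the limit changes when a Green function is shifted by a constant $t$. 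In addition, your ``sandwich of $\aH$'s with unit bounds replaced by $\exp(\pm\frac12 n\delta')$'' is not a set of the same shape as $\aH$: the perturbations act independently at several places, so there is no single rescaled $\aH$ to compare against. What is actually needed (and what the paper supplies in Lemma~\ref{lem:vol:comp:a} and Proposition~\ref{prop:vol:comp:C:0}) is an explicit counting argument, place by place: for $\infty$ a lattice-counting bound $\ah(M,e^{-\lambda}\|\cdot\|)\leq\ah(M,\|\cdot\|)+\lambda\,\rank M+O(\rank M)$ (formula~\eqref{eqn:lem:h:0:chi:02}), and for a finite place $P$ a multiplication-by-$p^{a_n}$ argument sending $\aH(X,n(\overline{D}^\tau+(0,a[P])))$ into $\aH(X,n\overline{D}^\tau)$, followed by a cokernel count. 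Without this your argument for (c) is incomplete, and since the whole proof (including the reduction of (a)–(b) to the classical case) rests on an estimate of this type, the gap is not cosmetic.

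Apart from this one missing ingredient, the plan is sound: once you have the explicit inequality
\[
\Bigl|\avol\Bigl(\overline{D}+\Bigl(0,\sum_l\varphi_{P_l}[P_l]+\varphi_\infty[\infty]\Bigr)\Bigr)-\avol(\overline{D})\Bigr|\leq\frac{(d+1)[K:\QQ]\vol(X,D)}{2}\Bigl(\sum_l\|\varphi_{P_l}\|_{\sup}+\|\varphi_\infty\|_{\sup}\Bigr),
\]
your reduction to a fixed model $\widetilde{\XXX}$, the uniform bound on the discrepancies over $B\times\{\sum|\delta_j|\leq 1\}$, and the classical continuity theorem of \cite{MoArZariski} combine exactly as you describe to produce the required $\delta$ and $\delta'$.
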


\begin{Theorem}[Generalized Hodge index theorem for adelic arithmetic divisors]
\label{thm:G:H:I:T:adelic:arith:intro}
Let $\overline{D}$ be a relatively nef adelic arithmetic $\RR$-Cartier divisor of $C^0$-type on $X$.
Then
\[
\adeg(\overline{D}^{d+1}) \leq \avol(\overline{D}).
\]
Moreover, if $\overline{D}$ is nef, then
$\adeg(\overline{D}^{d+1}) = \avol(\overline{D})$.
\end{Theorem}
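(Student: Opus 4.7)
The plan is to reduce the adelic statement to the classical generalized Hodge index theorem (item~(5) of the introduction) by approximating $\overline{D}$ by arithmetic divisors induced from projective models over $\Spec(O_K)$, and then invoking the continuity of both the volume and the arithmetic intersection number.

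First I would exploit the adelic structure of the relatively nef $\overline{D} = (D,g)$. By the definition of an adelic divisor, there is an open set $U \subset \Spec(O_K)$ and a normal model $(\XXX_U, \DDD_U)$ of $(X, D)$ over $U$ inducing $g_P$ for every $P \in U \cap M_K$; only at the finitely many primes $P \in S := M_K \setminus U$ is nontrivial approximation needed. For each such $P$, since $g_P$ is of $(C^0 \cap \Tpsh)$-type, there is a sequence of models $(\XXX_{P,n}, \DDD_{P,n})$ of $(X_P, D)$ over $\Spec(K_P^\circ)$ with $\DDD_{P,n}$ relatively nef and $\Vert g_{(\XXX_{P,n},\DDD_{P,n})} - g_P \Vert_{\sup} \to 0$. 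Spreading these local models out and gluing them with $\XXX_U$ (after possibly dominating by a common blow-up) yields a sequence $\{\XXX_n\}$ of projective normal models of $X$ over $\Spec(O_K)$ equipped with relatively nef $\RR$-Cartier divisors $\DDD_n$. At the Archimedean place the positivity of $c_1(D, g_\infty)$ allows one to regularize $g_\infty$ uniformly by smooth semipositive Green functions $g_{\infty,n}$ of Richberg type, as in the classical theory.

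Second, for each $n$ assemble the classical arithmetic $\RR$-Cartier divisor $\overline{\DDD}_n := (\DDD_n, g_{\infty,n})$ on $\XXX_n$, and let $\overline{D}^{(n)}$ denote the adelic divisor that it induces on $X$. By construction $\overline{\DDD}_n$ is relatively nef in the sense of the introduction, and the adelic difference $\overline{D} - \overline{D}^{(n)}$ has the form $(0, \sum_{P \in S} \varphi_{P,n}[P] + \varphi_{\infty,n}[\infty])$ with all $\varphi_{?,n} \to 0$ uniformly. The classical generalized Hodge index theorem gives
\[
\adeg(\overline{\DDD}_n^{\,d+1}) \leq \avol(\overline{\DDD}_n),
\]
with equality when $\overline{\DDD}_n$ is nef. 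Now pass to the limit: Theorem~\ref{thm:cont:volume:intro} gives $\avol(\overline{D}^{(n)}) \to \avol(\overline{D})$, while continuity of the adelic arithmetic intersection number under uniform perturbations of the Green-function data (established in Subsection~\ref{subsec:global:int:number}) gives $\adeg((\overline{D}^{(n)})^{d+1}) \to \adeg(\overline{D}^{d+1})$. Matching these limits with the corresponding quantities on $\XXX_n$ yields the desired $\adeg(\overline{D}^{d+1}) \leq \avol(\overline{D})$. For the nef case, one must further arrange the $\overline{\DDD}_n$ to be globally nef; using the non-negativity of the height function attached to $\overline{D}$ together with an $\epsilon$-perturbation by a fixed ample arithmetic divisor $\overline{A}$ on $\XXX_n$ (and then $\epsilon \to 0$, again by continuity) makes this possible, and the equality follows from the equality case of~(5).

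The main obstacle will be two-fold. First, the spreading-out and gluing step must produce, for each $n$, a single projective model of $X$ over $\Spec(O_K)$ on which the resulting $\RR$-Cartier divisor is both globally relatively nef and sufficiently close (in the sense of induced Green functions) to the prescribed data at every prime in $S$; this requires careful control over the blow-ups used to dominate $\XXX_U$ and the local $\XXX_{P,n}$ simultaneously, together with a verification that $\RR$-linear combinations of relatively nef classes can be accommodated. Second, and more analytically delicate, is the continuity of the adelic arithmetic intersection number in the Green-function variables at the non-Archimedean places — the analogue at finite places of the Bedford--Taylor/Chambert-Loir calculus — which the paper must (and does) develop in the earlier subsection on global intersection numbers; once it is in hand, the above reduction to the classical case is straightforward.
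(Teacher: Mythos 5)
Your overall strategy---approximate $\overline{D}$ by arithmetic $\RR$-Cartier divisors arising from a sequence of models over $\Spec(O_K)$, apply the classical generalized Hodge index theorem on each model, then pass to the limit using continuity of the volume and of the intersection pairing---is exactly the route the paper takes, so the plan is sound. But there are two places where your implementation is more laborious than necessary. First, the classical GHIT of item (5) is already stated for arithmetic $\RR$-Cartier divisors of $C^0$-type with positive Chern \emph{current}, so the Archimedean regularization of $g_\infty$ by smooth semipositive Green functions is superfluous in the main reduction; the paper keeps $g_\infty$ fixed throughout and only touches the non-Archimedean data (it does regularize $g_\infty$, but only inside the self-contained Claim that establishes the model case). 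Second, for the nef equality you propose to force nefness of the approximants by adding $\epsilon\overline{A}$ for a fixed ample $\overline{A}$ and then letting $\epsilon\to 0$; while this can be made to work (with $\epsilon_n\to 0$ chosen to dominate the uniform error at the bad primes), the paper avoids the extra limit entirely by arranging the approximation to be \emph{one-sided from above}, i.e.\ $\overline{D}\leq(\DDD_n,g_\infty)^{\ad}$ with the same generic divisor (this is property (3) of Proposition~\ref{prop:rel:nef:pseudo:effective}); then the monotonicity of the global degree (Lemma~\ref{lem:global:degree:comp}) makes each $(\DDD_n,g_\infty)$ nef automatically when $\overline{D}$ is. Finally, the paper's Claim actually proves the stronger equality $\adeg(\overline{\DDD}^{d+1})=\avol_\chi(\overline{\DDD})$ for relatively nef model divisors and deduces the inequality via Minkowski's theorem $\avol_\chi\leq\avol$; your argument only reproduces the weaker inequality but that is all the theorem asserts, so this is merely a difference in information obtained, not a gap.
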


\begin{Theorem}[Fujita's approximation theorem for adelic arithmetic divisors]
\label{thm:Fujita:approx:adel:intro}
Let $\overline{D}$ be a big adelic arithmetic $\RR$-Cartier divisor of $C^0$-type on $X$.
Then, for any positive number $\epsilon$, there are a birational morphism $\mu : Y \to X$
of smooth, projective and geometrically integral varieties over $K$ and
a nef adelic arithmetic $\RR$-Cartier divisor $\overline{Q}$ of $C^0$-type on $Y$ such that
$\overline{Q} \leq \mu^*(\overline{D})$ and 
$\avol(\overline{D}) - \epsilon \leq \avol(\overline{Q}) \leq \avol(\overline{D})$.
\end{Theorem}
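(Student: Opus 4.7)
The strategy is to reduce to the classical Fujita approximation theorem (property (6) above) via a two-step approximation. First, approximate $\overline{D}$ from below by an adelic arithmetic divisor $\overline{D}_{\mathrm{mod}}$ coming from an ordinary arithmetic $\RR$-Cartier divisor of $C^0$-type on some projective normal arithmetic variety $\XXX$ over $\Spec(O_K)$. Then apply the classical Fujita theorem to $\overline{D}_{\mathrm{mod}}$, and transfer the resulting nef classical arithmetic divisor back to the adelic setting on the generic fiber.

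By the definition of an adelic arithmetic divisor, only finitely many places---the bad finite places $P \in M_K \setminus U$ together with $\infty$---carry Green functions that are not already of model type. Fix a projective normal model $\XXX$ of $X$ extending $\XXX_U$, together with an $\RR$-Cartier divisor $\DDD$ extending $\DDD_U$ with $\DDD \cap X = D$; this produces ambient model-induced Green functions at every finite place. For each bad $P$ (resp.\ for $\infty$), the continuous difference $g_P - g_{(\XXX,\DDD)}|_{X_P^{\an}}$ on $X_P^{\an}$ (resp.\ the difference between $g_\infty$ and a fixed smooth reference Green function on $X(\CC)$) can be uniformly approximated from below: at finite places by model functions (density of model functions in $C^0(X_P^{\an})$, a Stone--Weierstrass type statement in non-Archimedean analytic geometry), and at infinity by smooth $F_\infty$-invariant functions. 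After absorbing the finite-place model modifications into $\XXX$ by a blow-up over the bad places, one obtains a projective normal arithmetic variety (still called $\XXX$) carrying an ordinary arithmetic $\RR$-Cartier divisor of $C^0$-type whose associated adelic arithmetic divisor $\overline{D}_{\mathrm{mod}}$ satisfies $\overline{D}_{\mathrm{mod}} \leq \overline{D}$ and is $\sup$-close to $\overline{D}$ at every modified place.

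By Theorem~\ref{thm:cont:volume:intro}, once the $\sup$-distances are chosen small enough, $|\avol(\overline{D}) - \avol(\overline{D}_{\mathrm{mod}})| < \epsilon/2$, and $\overline{D}_{\mathrm{mod}}$ remains big. The classical Fujita theorem applied on $\XXX$ yields a birational morphism $\nu : \YYY \to \XXX$ of generically smooth normal projective arithmetic varieties and a classically nef arithmetic $\RR$-Cartier divisor $\overline{Q}_{\mathrm{arith}}$ of $C^0$-type on $\YYY$ with $\overline{Q}_{\mathrm{arith}} \leq \nu^*\overline{D}_{\mathrm{mod}}$ and $\avol(\overline{D}_{\mathrm{mod}}) - \epsilon/2 \leq \avol(\overline{Q}_{\mathrm{arith}})$. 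Set $Y := \YYY_K$ and $\mu := \nu_K : Y \to X$; then $Y$ is smooth and, being birational to $X$, geometrically integral over $K$. Define $\overline{Q}$ as the adelic arithmetic divisor on $Y$ whose Green functions are those induced by $\overline{Q}_{\mathrm{arith}}$ at every place. One verifies: (i) adelic nefness of $\overline{Q}$ follows from classical nefness of $\overline{Q}_{\mathrm{arith}}$, since model Green functions associated to relatively nef models are of $(C^0 \cap \Tpsh)$-type; (ii) adelic and classical arithmetic volumes coincide on model-induced data; (iii) the inequality $\overline{Q}_{\mathrm{arith}} \leq \nu^*\overline{D}_{\mathrm{mod}}$ on $\YYY$ translates to $\overline{Q} \leq \mu^*\overline{D}$ as adelic arithmetic divisors on $Y$. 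Chaining the two volume estimates gives $\avol(\overline{D}) - \epsilon \leq \avol(\overline{Q})$, and the reverse inequality $\avol(\overline{Q}) \leq \avol(\overline{D})$ follows from $\overline{Q} \leq \mu^*\overline{D}$ together with the birational invariance of $\avol$.

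The principal technical obstacle is the approximation step: at each bad finite place $P$, one must produce a model-induced Green function on $X_P^{\an}$ that lies below $g_P$ and is $\sup$-close to it, and---crucially---realize all these local approximants simultaneously as the restriction of a single $\RR$-Cartier divisor on one global projective normal model $\XXX$ over $\Spec(O_K)$. The local density statement is standard non-Archimedean pluripotential theory, while the global assembly requires a careful blow-up argument exploiting the disjointness of the bad places on $\Spec(O_K)$. Once these approximations are in place, the identifications between adelic and classical nefness and between adelic and classical arithmetic volumes on model-induced data are routine consequences of the definitions.
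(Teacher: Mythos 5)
Your strategy is the same as the paper's: approximate $\overline{D}$ from below by a model-induced arithmetic $\RR$-Cartier divisor on a global normal model $\XXX$ over $\Spec(O_K)$, use continuity of $\avol$ to control the loss, apply the classical Chen--Yuan Fujita theorem on $\XXX$, and pass to the generic fiber. The paper packages the approximation step as Theorem~\ref{thm:approx:adelic:arith} and Proposition~\ref{prop:vol:approx}, which handle the global assembly you flag as the ``principal technical obstacle''; one small redundancy in your write-up is the step that approximates $g_\infty$ by a smooth function --- since arithmetic $\RR$-Cartier divisors of $C^0$-type on $\XXX$ already allow continuous Green functions at $\infty$, only the finitely many bad finite places need modifying, and $g_\infty$ can be carried over unchanged.
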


\begin{Theorem}[Zariski decompositions for adelic arithmetic divisors on curves]
\label{thm:Zariski:decomp:adelic:arithmetic:divisor:intro}
We assume $d=1$.
Let $\overline{D}$ be an adelic arithmetic $\RR$-Cartier divisor of $C^0$-type on $X$.
Let $\Upsilon(\overline{D})$ be the set of all
nef adelic arithmetic $\RR$-Cartier divisors $\overline{L}$ of
$C^0$-type on $X$ with $\overline{L} \leq \overline{D}$.
If $\Upsilon(\overline{D}) \not= \emptyset$,
then there is the greatest element $\overline{Q}$ of
$\Upsilon(\overline{D})$, that is,
$\overline{Q} \in \Upsilon(\overline{D})$ and
$\overline{L} \leq \overline{Q}$ for all $\overline{L} \in \Upsilon(\overline{D})$.
\index{\AdelDivSymbol}{0U:Upsilon(overline{D})@$\Upsilon(\overline{D})$}%
Moreover, 
the natural map 
$\aH(X, a \overline{Q}) \to \aH(X, a \overline{D})$
is bijective
for $a \in \RR_{>0}$.
In particular, $\avol(\overline{Q}) = \avol(\overline{D})$.
\end{Theorem}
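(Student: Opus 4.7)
The plan is to construct $\overline{Q}$ as the pointwise upper envelope of the Green functions of elements of $\Upsilon(\overline{D})$ at every place, adapting the arithmetic-surface argument of \cite{MoArZariski} and \cite{MoCharNef} to the adelic setting. For each $\wp \in M_{K} \cup \{\infty\}$ I would tentatively set
\[
g^{Q}_{\wp}(x) := \sup\bigl\{ g^{L}_{\wp}(x) : \overline{L} \in \Upsilon(\overline{D})\bigr\},
\]
regularized upper semicontinuously; the inequality $g^{L}_{\wp} \leq g^{D}_{\wp}$ for every $\overline{L} \in \Upsilon(\overline{D})$ keeps the envelope finite, and the underlying $\RR$-divisor $Q$ is recovered from the asymptotic behavior of $g^{Q}_{\wp}$ along $\Supp(D)$.

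The first task is to verify that $\overline{Q} = (Q, \{g^{Q}_{\wp}\})$ is a genuine element of $\aDiv^{\ad}_{C^{0}}(X)_{\RR}$ that is moreover nef. Continuity of each $g^{Q}_{\wp}$ is the crucial point: $g^{Q}_{\wp}$ is the envelope of a family of $(C^{0}\cap \Tpsh)$-type functions bounded above by the continuous PSH function $g^{D}_{\wp}$, and since $\dim X = 1$ this reduces to a subharmonic envelope on a Riemann surface (at $\wp = \infty$) and its Berkovich analog on the curve $X^{\an}_{P}$ (at $\wp = P$). The adelic compatibility condition at cofinitely many finite places is verified by fixing a model witnessing this condition for $\overline{D}$ and showing that on the corresponding open set of places the envelope cannot improve upon the model-induced Green function. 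Relative nefness is automatic from the envelope closure property of PSH functions, and the height inequality $\adeg(\srest{\overline{Q}}{x}) \geq 0$ for each closed point $x \in X$ is obtained by approximating $\overline{Q}$ from below by a sequence in $\Upsilon(\overline{D})$ and invoking continuity of the height. Maximality and $\overline{Q} \leq \overline{D}$ are immediate from the construction.

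For the bijection $\aH(X, a\overline{Q}) \to \aH(X, a\overline{D})$, injectivity is clear from $\overline{Q} \leq \overline{D}$. For surjectivity I would associate to each nonzero $\phi \in \aH(X, a\overline{D})$ an auxiliary $\overline{L}_{\phi} \in \Upsilon(\overline{D})$ with $\phi \in \aH(X, a\overline{L}_{\phi})$: at every place $\wp$ take the PSH envelope of the family of PSH $D$-Green functions $h$ satisfying $(2/a)\log|\phi| \leq h \leq g^{D}_{\wp}$, where the lower bound lies below the upper bound precisely because $\phi$ is a small section of $a\overline{D}$. The construction of $\overline{L}_{\phi}$ as an envelope inherits nefness by the same argument as for $\overline{Q}$, and then maximality yields $\overline{L}_{\phi} \leq \overline{Q}$, so $\phi \in \aH(X, a\overline{Q})$. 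The identity $\avol(\overline{Q}) = \avol(\overline{D})$ is an immediate consequence.

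The main obstacle will be the continuity and adelic compatibility of the envelope Green functions, which demands a careful coupling of Archimedean pluripotential theory, one-dimensional Berkovich potential theory, and the model-theoretic conditions in the definition of $\aDiv^{\ad}_{C^{0}}(X)_{\RR}$. The hypothesis $\dim X = 1$ is used essentially to keep the envelope operation tractable in both the Archimedean and non-Archimedean settings and to reduce the height non-negativity condition to closed points rather than higher-dimensional horizontal subvarieties.
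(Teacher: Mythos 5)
Your envelope strategy captures the right high-level picture—and the bijection follows from maximality—but the proposal glosses over exactly the points where the paper does most of its work. Three concrete issues.

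\textbf{Continuity of the envelope.} You ``regularize upper semicontinuously,'' but an adelic arithmetic $\RR$-Cartier divisor of $C^0$-type requires \emph{continuous} Green functions, and the u.s.c.\ regularization of a sup of continuous PSH-type functions is not automatically continuous, neither on $X(\CC)$ nor on the Berkovich curve $X_P^{\an}$. The paper avoids proving a free-standing envelope regularity theorem by working model-theoretically: Theorem~\ref{thm:vertical:Zariski:decomp} builds the local greatest element as a uniform limit of relatively nef $\RR$-Cartier divisors $\QQQ_n$ on a tower of regular models $\XXX_n$, using Lemma~\ref{lem:vertical:Zariski:decomp} (the model-level Zariski decomposition), and explicitly verifies the $(C^0\cap\Tpsh)$-type condition. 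If you want to argue via envelopes you need to \emph{prove} the envelope is continuous and $(C^0\cap\Tpsh)$; the paper's proof is, in effect, that proof, and it is not short.

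\textbf{Adelic compatibility (model condition at cofinitely many places).} You assert this ``by fixing a model witnessing the condition for $\overline{D}$ and showing the envelope cannot improve upon the model-induced Green function.'' That is precisely what needs an argument: one must check that over a suitable cofinite open $U\subseteq\Spec(O_K)$ the local Zariski decomposition at each $P\in U$ is simply given by the Zariski closure of $Q$. This is the content of the proof of Theorem~\ref{thm:vertical:Zariski:decomp:global}, which shrinks $U$ to the locus where $\XXX_U\to U$ is smooth and $\DDD_U$ is horizontal; without it the candidate $\overline{Q}$ need not belong to $\aDiv^{\ad}_{C^0}(X)_\RR$.

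\textbf{Nefness of $\overline{Q}$ is not a ``continuity of the height'' argument.} Relative nefness of the envelope is soft, but the global height inequality $\adeg(\srest{\overline{Q}}{x})\geq 0$ does not follow merely by letting $\overline{L}_n\nearrow\overline{Q}$; the map $\overline{L}\mapsto\adeg(\srest{\overline{L}}{x})$ has no useful continuity in the crude topology you invoke. The paper isolates the required limit statement as Lemma~\ref{lem:nef:seq:approx:nef}, whose proof first reduces to a model (via Theorem~\ref{thm:approx:adelic:arith}) and then appeals to the compactness/limit theorem \cite[Theorem~7.1]{MoArZariski} for nef arithmetic $\RR$-divisors on arithmetic surfaces—an essentially two-dimensional phenomenon. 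This is the genuine mathematical content and cannot be replaced by a naive limiting argument.

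Finally, the surjectivity of $\aH(X,a\overline{Q})\to\aH(X,a\overline{D})$ does not require a second envelope construction: a small section $\phi\in\aH(X,a\overline{D})$ gives $a\overline{D}+\widehat{(\phi)}\geq 0$, i.e.\ $(1/a)\widehat{(\phi^{-1})}\leq\overline{D}$, and since $(1/a)\widehat{(\phi^{-1})}$ is nef it lies in $\Upsilon(\overline{D})$, hence is $\leq\overline{Q}$, which gives $\phi\in\aH(X,a\overline{Q})$ directly (Corollary~\ref{cor:Zariski:decomp:adelic:arithmetic:divisor}). Your $\overline{L}_\phi$ detour adds a second copy of the unresolved nefness problem.
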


\begin{Theorem}[Numerical characterization of nef adelic arithmetic divisors on curves]
\label{thm:char:nef:general:intro}
We assume $d=1$.
Let $\overline{D}$ be an integrable adelic arithmetic $\RR$-Cartier divisor on $X$.
Then $\overline{D}$ is nef if and only if $\overline{D}$ is pseudo-effective and
$\adeg(\overline{D}^2) = \avol(\overline{D})$.
\end{Theorem}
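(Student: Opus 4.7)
The forward direction is immediate: a nef adelic arithmetic divisor $\overline{D}$ is in particular pseudo-effective, and the equality $\adeg(\overline{D}^2) = \avol(\overline{D})$ is exactly the second assertion of the generalized Hodge index theorem (Theorem~\ref{thm:G:H:I:T:adelic:arith:intro}).

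For the reverse implication, the plan is to apply the adelic Zariski decomposition (Theorem~\ref{thm:Zariski:decomp:adelic:arithmetic:divisor:intro}) to $\overline{D}$ and show that its ``negative part'' vanishes. First I would verify $\Upsilon(\overline{D}) \neq \emptyset$: fixing an ample adelic arithmetic divisor $\overline{A}$, pseudo-effectivity ensures that $\overline{D} + \epsilon \overline{A}$ is big for every $\epsilon > 0$, so Zariski decomposition applied to $\overline{D} + \epsilon \overline{A}$ produces a nef $\overline{Q}_{\epsilon} \leq \overline{D} + \epsilon \overline{A}$, and a limit argument (using the continuity of the volume function, Theorem~\ref{thm:cont:volume:intro}) extracts a nef divisor dominated by $\overline{D}$. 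Then Theorem~\ref{thm:Zariski:decomp:adelic:arithmetic:divisor:intro} produces the greatest nef element $\overline{Q} \in \Upsilon(\overline{D})$, and we set $\overline{N} := \overline{D} - \overline{Q}$.

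The crucial step is the orthogonality $\adeg(\overline{Q}\cdot\overline{N}) = 0$: if it were strictly positive, then for a small $t > 0$ the divisor $\overline{Q} + t\overline{N}$ would remain nef and dominated by $\overline{D}$, contradicting the maximality of $\overline{Q}$ in $\Upsilon(\overline{D})$. Granting this and using the generalized Hodge index theorem for the nef divisor $\overline{Q}$, we expand
\[
\adeg(\overline{D}^2) = \adeg(\overline{Q}^2) + 2\adeg(\overline{Q}\cdot\overline{N}) + \adeg(\overline{N}^2) = \avol(\overline{Q}) + \adeg(\overline{N}^2).
\]
Combining $\avol(\overline{Q}) = \avol(\overline{D})$ (from Theorem~\ref{thm:Zariski:decomp:adelic:arithmetic:divisor:intro}) with the hypothesis $\adeg(\overline{D}^2) = \avol(\overline{D})$ yields $\adeg(\overline{N}^2) = 0$. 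A negativity/rigidity lemma for the negative part of the adelic Zariski decomposition then forces $\overline{N} = 0$: one shows $\adeg(\overline{N}^2) \leq 0$ with equality only when $\overline{N} = 0$, so that $\overline{D} = \overline{Q}$ is nef.

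The main obstacle will be the simultaneous handling of all non-archimedean places in both the orthogonality and the rigidity steps: each requires perturbing $\overline{Q}$ in the direction of $\overline{N}$ while preserving the $(C^0 \cap \Tpsh)$-type condition at every $P \in M_K$ (so that the perturbed divisor stays relatively nef). The archimedean piece follows the pattern of \cite{MoCharNef}, but the Berkovich-theoretic perturbation lemma at the finite places, together with the compatibility of such perturbations with the construction of $\overline{Q}$ from Theorem~\ref{thm:Zariski:decomp:adelic:arithmetic:divisor:intro}, is the genuinely new technical ingredient on which the rest of the plan rests.
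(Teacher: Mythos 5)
Your plan has a genuine gap at each of the three places where you acknowledge needing a lemma, and the route is quite different from the paper's. First, the step where you try to show $\Upsilon(\overline{D}) \neq \emptyset$ from pseudo-effectivity alone is exactly what the paper explicitly flags as an open conjecture in the remark following Corollary~\ref{cor:Zariski:decomp:adelic:arithmetic:divisor}; the paper sidesteps it by a case distinction (big, $\deg(D) > 0$ but not big, $\deg(D) = 0$), reducing the first two to the case where $\overline{D}$ is big via a perturbation $(0, \epsilon[\infty])$ and treating the degree-zero case separately with Corollary~\ref{cor:HI:adelic}. Second, the orthogonality argument you sketch is not sound: $\adeg(\overline{Q}\cdot\overline{N}) > 0$ gives no control on whether $\overline{Q} + t\overline{N}$ stays relatively nef, since adding $t\overline{N}$ may destroy the $(C^0 \cap \Tpsh)$-type condition at any place and you have no estimate on heights at closed points either. (Orthogonality \emph{can} be shown, but by a different route: feed $\adeg((\overline{Q}+\epsilon\overline{N})^2) \leq \avol(\overline{Q}+\epsilon\overline{N}) \leq \avol(\overline{D}) = \avol(\overline{Q}) = \adeg(\overline{Q}^2)$ into a quadratic in $\epsilon$ and combine with $\adeg(\overline{Q}\cdot\overline{N}) \geq 0$ from Proposition~\ref{prop:intersection:nef:pseudo:effective}; this is how the paper argues in Corollary~\ref{cor:characterization:Zariski:decomp}.) Third, and most seriously, the ``negativity/rigidity lemma'' you rely on --- that $\adeg(\overline{N}^2) = 0$ forces $\overline{N} = 0$ --- is essentially equivalent to the theorem you are trying to prove: in the paper, the strict inequality $\adeg(\overline{B}^2) < 0$ for $0 \lneqq \overline{B} \leq \overline{D}-\overline{Q}$ is established \emph{as a consequence of} Theorem~\ref{thm:char:nef:general} (see the proof of Corollary~\ref{cor:characterization:Zariski:decomp}), so deploying it here would be circular. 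Even the weaker $\leq 0$ part is insufficient, because when $\deg(N) = 0$ the Hodge index theorem (Corollary~\ref{cor:HI:adelic}) only yields $\overline{N} = \widehat{(\psi)}_{\RR} + (0,\lambda[\infty])$, not $\overline{N} = 0$, and when $\deg(N) > 0$ it does not apply at all.

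The paper's actual argument in the big case bypasses this entirely: the hypothesis forces $\adeg(\overline{D}^2) = \avol_{\chi}(\overline{D}) = \avol(\overline{D})$, which (i) gives relative nefness of $\overline{D}$ via the equality case of Theorem~\ref{thm:GHI:adelic}, and (ii) feeds into the Okounkov-body machinery of Theorem~\ref{thm:vol:cvol:mu:zero} to conclude $\mu_{\QQ,\xi}(\overline{D}) = 0$ for all $\xi$. Then Theorem~\ref{thm:zariski:decomp:mu} identifies $\mu_{\QQ,\xi}(\overline{D})$ with $\mult_\xi(N)$ for the negative part $N = D - Q$ of the Zariski decomposition, giving $N = 0$; since $D = Q$ and $\overline{Q} \leq \overline{D}$ is nef, Lemma~\ref{lem:global:degree:comp} transfers non-negativity of heights to $\overline{D}$, so $\overline{D}$ is nef. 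What you would need to add, concretely, is the use of the $\acvol$--$\avol$ comparison to get relative nefness for free, and the asymptotic-multiplicity/Okounkov machinery of Subsection~\ref{subset:necessary:condition:equality:avol:cvol} to kill the divisorial negative part; that is the technical core, not a self-intersection-negativity lemma.
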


\ifmonog\section{Conventions and terminology}\fi
\ifpaper\subsection{Conventions and terminology}\fi

\ifmonog\subsection{}\fi
\ifpaper\subsubsection{}\fi
\label{CT:top:space:cont}
For a topological space $M$, the set of all real valued continuous functions on $M$ is denote by $C^0(M)$.
\index{\AdelDivSymbol}{0C:C^0(M)@$C^0(M)$}%
Note that $C^0(M)$ forms an $\RR$-algebra.

\ifmonog\subsection{}\fi
\ifpaper\subsubsection{}\fi
\label{CT:nonArchimedean:value}
Let $k$ be a field and $v$ a non-Archimedean valuation of $k$.
We define $k^{\circ}$ and $k^{\circ\circ}$  to be
\[
k^{\circ} := \{ x \in k \mid v(x) \leq 1 \}\quad\text{and}\quad
k^{\circ\circ} := \{ x \in k \mid v(x) < 1 \}.
\]%
\index{\AdelDivSymbol}{0kc:k^{circ}@$k^{\circ}$}%
\index{\AdelDivSymbol}{0kcc:k^{circ circ}@$k^{\circ\circ}$}%
Note that $k^{\circ}$ is a valuation ring and $k^{\circ\circ}$ is its maximal ideal.
If $v$ is discrete and compete, then $k^{\circ}$ is excellent.

\ifmonog\subsection{}\fi
\ifpaper\subsubsection{}\fi
\label{CT:norm:module}
Let $M$ be a finitely generated $\ZZ$-module and let $\Vert\cdot\Vert$ be a norm of $M_{\RR} := M \otimes_{\ZZ} \RR$.
We define $\ah(M, \Vert\cdot\Vert)$ and $\achi(M, \Vert\cdot\Vert)$ to be
\[
\begin{cases}
\ah(M, \Vert\cdot\Vert) := \log \# \{ x \in M \mid \Vert x \Vert \leq 1 \}, \\[2ex]
{\displaystyle \achi(M, \Vert\cdot\Vert) := \log \left( \frac{\vol(B(M, \Vert\cdot\Vert))}{\vol(M_{\RR}/(M/M_{tor}))} \right) 
+ \log \#(M_{tor})},
\end{cases}
\]
where $B(M, \Vert\cdot\Vert)$ is the unit ball with respect to $\Vert\cdot\Vert$
(i.e. $B(M, \Vert\cdot\Vert) := \{ x \in M_{\RR} \mid \Vert x \Vert \leq 1 \}$), 
$M_{tor}$ is the torsion subgroup of $M$ and $\vol(M_{\RR}/(M/M_{tor}))$ is the volume of the fundamental domain of 
$M_{\RR}/(M/M_{tor})$.
\index{\AdelDivSymbol}{0h:ah(M,Vert cdot Vert)@$\ah(M, \Vert\cdot\Vert)$}%
\index{\AdelDivSymbol}{0c:achi(M,Vert cdot Vert)@$\achi(M, \Vert\cdot\Vert)$}%

\ifmonog\subsection{}\fi
\ifpaper\subsubsection{}\fi
\label{CT:S:variety}
Let $S$ be a noetherian  integral scheme.
An integral scheme $X$ over $S$ is called a {\em variety over $S$}
\index{\AdelDivSubject}{variety over a noetherian  integral scheme@variety over a noetherian  integral scheme}%
if $X$ is flat, separated and of finite type over $S$.
If $S$ is given by $\Spec(O_K)$ (i.e. $K$ is a number field and $O_K$ is the ring of integers in $K$),
then a variety over $S$ is often called an {\em arithmetic variety}. 
\index{\AdelDivSubject}{arithmetic variety@arithmetic variety}%

\ifmonog\subsection{}\fi
\ifpaper\subsubsection{}\fi
\label{CT:model}
Let $S$ be a noetherian  integral scheme and $k$ the rational function field of $S$.
Let $X$ be a projective variety over $k$.
A projective variety $\XXX$ over $S$ is called a {\em model of $X$ over $S$} 
\index{\AdelDivSubject}{model@model}%
if the generic fiber of $\XXX \to S$ is $X$.
Moreover, if $\XXX$ are normal (resp. regular), 
then $\XXX$ is called a {\em normal model of $X$ over $S$} (resp {\em regular model of $X$ over $S$}).
\index{\AdelDivSubject}{normal model@normal model}%
\index{\AdelDivSubject}{regular model@regular model}%
Note that if $\XXX$ is normal (resp. regular), 
then $X$ is also normal (resp. regular).
We assume that $S$ is an excellent Dedekind scheme, $\dim X = 1$ and $X$ is smooth over $k$.
By \cite{Lip}, 
for any model $\XXX$ of $X$ over $S$, there is a regular model $\XXX'$ of $X$ over $S$
together with a birational morphism $\XXX' \to \XXX$.

\ifmonog\subsection{}\fi
\ifpaper\subsubsection{}\fi
\label{CT:rel:nef}
Let $f : \XXX \to S$ be a proper morphism of noetherian schemes.
Let $C$ be a curve on $\XXX$, that is,
a $1$-dimensional reduced and irreducible closed subscheme on $\XXX$.
The curve $C$ is said to be {\em vertical} with respect to $f : \XXX \to S$
\index{\AdelDivSubject}{vertical curve@vertical curve}%
if $f(C)$ is a closed point of $S$.
For $\LLL \in \Pic(\XXX) \otimes \RR$,
we say $\LLL$ is {\em relatively nef} 
\index{\AdelDivSubject}{relatively nef@relatively nef}%
with respect to $f : \XXX \to S$ if
$\deg(\rest{\LLL}{C}) \geq 0$ for all vertical curves 
$C$ on $\XXX$.
Let $\DDD$ be an $\RR$-Cartier divisor on $\XXX$, that is,
$\DDD = a_1 \DDD_1 + \cdots + a_r \DDD_r$ for some
Cartier divisors $\DDD_1, \ldots, \DDD_r$ on $\XXX$ and
$a_1, \ldots, a_r \in \RR$.
The $\RR$-Cartier divisor $\DDD$ is said to be {\em relatively nef} with respect to $f : \XXX \to S$ if 
$\OOO_{\XXX}(\DDD_1)^{\otimes a_1} \otimes \cdots \otimes \OOO_{\XXX}(\DDD_r)^{\otimes a_r} \in \Pic(\XXX) \otimes \RR$
is relatively nef with respect to $f : \XXX \to S$.

\ifmonog\subsection{}\fi
\ifpaper\subsubsection{}\fi
\label{CT:ord:function}
Let $(A, m)$ be a $1$-dimensional noetherian local domain.
For $x \in A \setminus \{ 0 \}$, we define $\ord_A(x)$ to be $\ord_A(x) := \lentgh_{A}(A/xA)$.
\index{\AdelDivSymbol}{0o:ord_A@$\ord_A$}%
It is easy to see that 
\[
\ord_A(xy) = \ord_A(x) + \ord_A(y)
\]
for $x, y \in A \setminus \{ 0 \}$, so that
it extends to $F^{\times}$ as a homomorphism,
where $F$ is the quotient field of $A$.
Further, if we set $F^{\times}_{\RR} := F^{\times} \otimes_{\ZZ} \RR$,
then $\ord_A$ also extends to $F^{\times}_{\RR}$.
Let $X$ be a noetherian integral scheme and $\gamma$ a point of $X$ such that
$\dim \OOO_{X, \gamma} = 1$.
Then $\ord_{\OOO_{X, \gamma}}$ is often denoted by $\ord_{\gamma}$ or $\ord_{\Gamma}$,
where $\Gamma$ is the closure of $\{ \gamma \}$.
\index{\AdelDivSymbol}{0o:ord_{gamma}@$\ord_{\gamma}$}%
\index{\AdelDivSymbol}{0o:ord_{Gamma}@$\ord_{\Gamma}$}%

\ifmonog\subsection{}\fi
\ifpaper\subsubsection{}\fi
\label{CT:max:min:divisor}
Let $X$ be a regular scheme and let $\Div(X)$ be the group of Cartier divisors on $X$.
\index{\AdelDivSymbol}{0Div:Div(X)@$\Div(X)$}%
We set $\Div(X)_{\RR} := \Div(X) \otimes_{\ZZ} \RR$, whose element is called an {\em $\RR$-Cartier divisor}.
\index{\AdelDivSubject}{R-Cartier divisor@$\RR$-Cartier divisor}%
\index{\AdelDivSymbol}{0Div:Div(X)_{RR}@$\Div(X)_{\RR}$}%
As $X$ is regular,
an $\RR$-Cartier divisor $D$ has a unique expression 
\[
D = \sum_{\Gamma} a_{\Gamma} \Gamma,
\]
where $a_{\Gamma} \in \RR$ and $\Gamma$ runs over all prime divisors on $X$.
For $D_1, \ldots, D_r \in \Div(X)_{\RR}$,
we set 
\[
D_1 = \sum_{\Gamma} a_{1, \Gamma} \Gamma, \ \ldots,\  D_r = \sum_{\Gamma} a_{r, \Gamma} \Gamma.
\]
We define $\max \{ D_1, \ldots, D_r \}$ and $\min \{ D_1, \ldots, D_r \}$
to be
\[
\begin{cases}
{\displaystyle \max \{ D_1, \ldots, D_r \} := \sum_{\Gamma} \max \{ a_{1, \Gamma}, \ldots, a_{r, \Gamma} \} \Gamma}, \\[3ex]
{\displaystyle  \min \{ D_1, \ldots, D_r \} := \sum_{\Gamma} \min \{ a_{1, \Gamma}, \ldots, a_{r, \Gamma} \} \Gamma}.
\end{cases}
\]%
\index{\AdelDivSymbol}{0m:max \{ D_1, ldots, D_r \}@$\max \{ D_1, \ldots, D_r \}$}%
\index{\AdelDivSymbol}{0m:min \{ D_1, ldots, D_r \}@$\min \{ D_1, \ldots, D_r \}$}%


\ifmonog\chapter{Preliminaries}\fi
\ifpaper\section{Preliminaries}\fi
The goal of this 
\ifmonog chapter \fi
\ifpaper section \fi
is to prepare several kinds of materials for the later sections.
In
\ifmonog Section~\ref{subsec:closedness:support:R:Cartier:divisor}, \fi
\ifpaper Subsection~\ref{subsec:closedness:support:R:Cartier:divisor}, \fi
we consider the support of an $\RR$-Cartier divisor.
In 
\ifmonog Section~\ref{subsec:analytification:algebraic:schemes}, \fi
\ifpaper Subsection~\ref{subsec:analytification:algebraic:schemes}, \fi
we quickly review the analytification of an algebraic scheme in the sense of Berkovich \cite{Be}.
\ifmonog Section~\ref{subsec:Misc:lemmas} \fi
\ifpaper Subsection~\ref{subsec:Misc:lemmas} \fi
is devoted to the proof of several lemmas.

\ifmonog\section{$\RR$-Cartier divisors on a noetherian integral scheme}\fi
\ifpaper\subsection{$\RR$-Cartier divisors on a noetherian integral scheme}\fi
\label{subsec:closedness:support:R:Cartier:divisor}

Let $A$ be a noetherian integral domain and $F$ the quotient field of $A$.
Let $\KK$ be either $\ZZ$ or $\QQ$ or $\RR$.
We set 
\[
F^{\times}_{\KK} := (F^{\times}, \times) \otimes_{\ZZ} \KK
\quad\text{and}\quad
(A_p^{\times})_{\KK} :=
(A_p^{\times}, \times) \otimes_{\ZZ} \KK
\]
for $p \in \Spec(A)$.
As $\KK$ is flat over $\ZZ$, we have
$(A_p^{\times})_{\KK} \subseteq F^{\times}_{\KK}$.
For $f \in F^{\times}_{\KK}$, we define $V_{\KK}(f)$ to be
\[
V_{\KK}(f) := \left\{ p \in \Spec(A) \mid f \not\in (A_p^{\times})_{\KK} \right\}.
\]
Let us begin with the following proposition:

\begin{Proposition}
\label{prop:closedness:support}
\begin{enumerate}
\renewcommand{\labelenumi}{(\arabic{enumi})}
\item
$V_{\RR}(f) = V_{\QQ}(f)$ for $f \in F^{\times}_{\QQ}$.

\item
Let $f \in F^{\times}$. Then $V_{\QQ}(f) = \bigcap_{n=1}^{\infty} V_{\ZZ}(f^n)$.
Moreover, if $A$ is normal,
then $V_{\QQ}(f) = V_{\ZZ}(f)$.

\item 
For $f \in F^{\times}_{\KK}$, the set
$V_{\KK}(f)$
is closed in $\Spec(A)$.
\end{enumerate}
\end{Proposition}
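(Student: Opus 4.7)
The whole proposition rests on the observation that, since $\KK \in \{\ZZ, \QQ, \RR\}$ is flat over $\ZZ$, tensoring the exact sequence $1 \to A_p^\times \to F^\times \to F^\times/A_p^\times \to 1$ with $\KK$ yields $0 \to (A_p^\times)_\KK \to F^\times_\KK \to (F^\times/A_p^\times)_\KK \to 0$; hence $p \notin V_\KK(f)$ if and only if the image of $f$ in $(F^\times/A_p^\times)_\KK$ is zero. All three parts will be read off from this quotient.

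For (1), since $\RR$ is free over $\QQ$, the natural map $(F^\times/A_p^\times)_\QQ \hookrightarrow (F^\times/A_p^\times)_\RR$ is injective, so for $f \in F^\times_\QQ$ its image vanishes on one side exactly when it does on the other. For (2), the image of $f \in F^\times$ in $(F^\times/A_p^\times)_\QQ$ vanishes iff its class in $F^\times/A_p^\times$ is torsion, that is, $f^n \in A_p^\times$ for some $n \geq 1$; this gives $V_\QQ(f) = \bigcap_{n \geq 1} V_\ZZ(f^n)$. When $A$ is normal, so is each $A_p$, and $f^n \in A_p^\times$ makes both $f$ and $f^{-1}$ integral over $A_p$ (via $X^n - f^n = 0$ and $X^n - f^{-n} = 0$), hence both lie in $A_p$; the intersection therefore collapses to $V_\ZZ(f)$.

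For (3), first treat $\KK = \ZZ$. Let $I_f := \{a \in A : af \in A\}$, a nonzero ideal; then $f \in A_p$ iff $I_f \not\subseteq p$, so $\{p \in \Spec(A) : f \notin A_p\} = V(I_f)$ is closed, and $V_\ZZ(f) = V(I_f) \cup V(I_{f^{-1}})$ is closed. For $\KK = \QQ$ and $f \in F^\times_\QQ$, choose $N$ with $f^N \in F^\times$; since $(A_p^\times)_\QQ$ is a $\QQ$-subspace one has $V_\QQ(f) = V_\QQ(f^N)$, and (2) applied to $f^N$ expresses this as an intersection of the closed sets $V_\ZZ((f^N)^n)$. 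For $\KK = \RR$ and $f = \sum_i a_i \otimes f_i$ with $a_i \in \RR$, $f_i \in F^\times$, pick a $\QQ$-basis $c_1, \ldots, c_s$ of $\Span_\QQ(a_1, \ldots, a_r) \subseteq \RR$, write $a_i = \sum_j q_{ij} c_j$ with $q_{ij} \in \QQ$, and rearrange to $f = \sum_j c_j \otimes h_j$ with $h_j := \sum_i q_{ij} f_i \in F^\times_\QQ$. Extending $\{c_j\}$ to a $\QQ$-basis of $\RR$ realizes $(F^\times/A_p^\times)_\RR = (F^\times/A_p^\times)_\QQ \otimes_\QQ \RR$ as a direct sum in which the image of $f$ has as its nonzero coordinates the images of the $h_j$; hence $f \in (A_p^\times)_\RR$ iff every $h_j \in (A_p^\times)_\QQ$, and $V_\RR(f) = V_\QQ(h_1) \cup \cdots \cup V_\QQ(h_s)$ is closed by the $\QQ$-case.

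The one substantive step is the $\RR$-case of (3): the reduction from a real-exponent ``product'' to finitely many rational-exponent pieces using a $\QQ$-basis, and the verification that membership in $(A_p^\times)_\RR$ is detected componentwise. This is the only place where the specific freeness of $\RR$ as a $\QQ$-vector space plays a visible role; everything else is either integer-case bookkeeping via the denominator ideal or the standard integral-closure argument in the normal setting.
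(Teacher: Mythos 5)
Your proof is correct and follows essentially the same route as the paper. The one presentational difference is in part (3) for $\KK = \RR$: you derive the reduction to finitely many $\QQ$-pieces on the spot via the direct-sum decomposition $F^\times_\RR = F^\times_\QQ \otimes_\QQ \RR$ over a $\QQ$-basis of $\RR$, whereas the paper first arranges $f = f_1^{a_1}\cdots f_r^{a_r}$ with the $a_i$ $\QQ$-linearly independent and then invokes its Lemma~\ref{lem:linear:comb:R} to conclude $f \in (A_p^\times)_\RR$ iff each $f_i \in (A_p^\times)_\QQ$; these are the same linear-algebra fact packaged differently. Likewise, for (2) under normality you supply the integral-closure argument that the paper dismisses as ``obvious,'' and for (3) with $\KK=\ZZ$ your ideal $I_{f^{-1}}$ coincides with the paper's $J = If$. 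No gaps.
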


\begin{proof}
(1) By Lemma~\ref{lem:linear:comb:R} in
\ifmonog Section~\ref{subsec:Misc:lemmas}, \fi 
\ifpaper Subsection~\ref{subsec:Misc:lemmas}, \fi 
$(A_p^{\times})_{\QQ} = F^{\times}_{\QQ} \cap
(A_p^{\times})_{\RR}$, and hence (1) follows.

(2) As $F^{\times}_{\QQ}/(A_p^{\times})_{\QQ} = (F^{\times}/A_p^{\times}) \otimes_{\ZZ} \QQ$,
$f = 1$ in $F^{\times}_{\QQ}/(A_p^{\times})_{\QQ}$ if and only if
$f^n = 1$ in $F^{\times}/A_p^{\times}$ for some $n \in \ZZ_{>0}$.
Thus the first assertion follows.
The second assertion is obvious because $V_{\ZZ}(f) = V_{\ZZ}(f^n)$ if $A$ is normal.

\medskip
(3) First we prove that $V_{\ZZ}(f)$ is closed for $f \in F^{\times}$.
We set 
\[
I = \{ a \in A \mid a f \in A \}\quad\text{and}\quad J = I f.
\]
Clearly $I$ and $J$ are ideals of $A$.
Note that $I_p = \{ a \in A_p \mid a f \in A_p \}$ by
\cite[Corollary~3.15]{AM}). Thus,
\[
f \in A_p^{\times} \quad\Longleftrightarrow\quad
\text{$I_p = A_p$ and $J_p = A_p$},
\]
so that $V_{\ZZ}(f) = \Supp(\Spec(A/I)) \cup \Supp(\Spec(A/J))$,
which is closed.

Next let us see that $V(f)_{\QQ}$ is closed for $f \in F^{\times}_{\QQ}$.
Clearly we may assume that $f \in F^{\times}$ because, for $n \in \ZZ_{>0}$,
$f \in (A_p^{\times})_{\QQ}$ if and only if $f^n \in (A_p^{\times})_{\QQ}$.
Thus, by (2), $V_{\QQ}(f)$ is closed.

Finally we consider the case $\KK = \RR$.
We can find $f_1, \ldots, f_r \in F^{\times}$ and
$a_1, \ldots, a_r \in \RR$ such that
$f = f_1^{a_1} \cdots f_r^{a_r}$ and $a_1, \ldots, a_r$ are linearly independent over $\QQ$.
Then, by Lemma~\ref{lem:linear:comb:R},
\[
f \in (A_p^{\times})_{\RR}\quad\Longleftrightarrow\quad
f_1, \ldots, f_r \in (A_p^{\times})_{\QQ},
\]
and hence
$V_{\RR}(f)  = \bigcup_{i=1}^r V_{\QQ}(f_i)$, which is closed by the previous observation.
\end{proof}

\begin{Definition}
\label{def:Supp:RR:Cartier:div}
Let $X$ be a noetherian integral scheme and let $\Rat(X)$ be the rational function field of $X$.
\index{\AdelDivSymbol}{0R:Rat(X)@$\Rat(X)$}%
Let $\Div(X)$ be the group of Cartier divisors on $X$, that is,
\[
\Div(X) := H^0\left(X, \Rat(X)^{\times}/\OOO_X^{\times}\right).
\]%
\index{\AdelDivSymbol}{0Div:Div(X)@$\Div(X)$}%
Let $\KK$ be either $\ZZ$ or $\QQ$ or $\RR$.
We set 
\[
\Div(X)_{\KK} := \Div(X) \otimes_{\ZZ} \KK
\quad\text{and}\quad
\Rat(X)^{\times}_{\KK} := \Rat(X)^{\times} \otimes_{\ZZ} \KK.
\]%
\index{\AdelDivSymbol}{0Div:Div(X)_{KK}@$\Div(X)_{\KK}$}%
\index{\AdelDivSymbol}{0R:Rat(X)^{times}_{KK}@$\Rat(X)^{\times}_{\KK}$}%
An element of $\Div(X)_{\KK}$ (reps. $\Rat(X)^{\times}_{\KK}$)
is called a {\em $\KK$-Cartier divisor on $X$} (reps. {\em $\KK$-rational function on $X$}).
\index{\AdelDivSubject}{K-Cartier divisor@$\KK$-Cartier divisor}%
\index{\AdelDivSubject}{K-rational function@$\KK$-rational function}%
A $\KK$-rational function $f \in \Rat(X)^{\times}_{\KK}$ naturally gives rise to a $\KK$-Cartier divisor,
which is called the {\em $\KK$-principal divisor of $f$} 
\index{\AdelDivSubject}{K-principal divisor@$\KK$-principal divisor}%
and is denoted by
$(f)_{\KK}$.
Occasionally, $(f)_{\KK}$ is denoted by $(f)$ for simplicity.
For $D \in \Div(X)_{\KK}$ (i.e., $D= a_1 D_1 + \cdots + a_r D_r$ for
some $D_1, \ldots, D_r \in \Div(X)$ and $a_1, \ldots, a_r \in \KK$),
there is an affine open covering $X = \bigcup_{i=1}^N \Spec(A_i)$ of $X$ such that
$D$ is given by some $f_i \in \Rat(X)^{\times}_{\KK}$ 
and $f_i/f_j \in (\OOO_{X, p}^{\times})_{\KK} (:= \OOO_{X, p}^{\times} \otimes_{\ZZ} \KK)$
for all $p \in U_i \cap U_j$, so that
$V_{\KK}(f_i) = V_{\KK}(f_j)$ on $U_i \cap U_j$, where $U_i = \Spec(A_i)$ for $i=1, \ldots, N$.
Therefore, we have a closed set $Z$ on $X$ such that
$\rest{Z}{U_i} = V_{\KK}(f_i)$ for all $i=1,\ldots,N$.
It is called the {\em $\KK$-support of $D$} 
\index{\AdelDivSubject}{K-support@$\KK$-support}%
\index{\AdelDivSymbol}{0S:Supp_K(D)@$\Supp_{\KK}(D)$}%
and is denoted by $\Supp_{\KK}(D)$.
By Proposition~\ref{prop:closedness:support},
$\Supp_{\RR}(D) = \Supp_{\QQ}(D)$ for $D \in \Div(X)_{\QQ}$ and
$\Supp_{\QQ}(D) = \bigcap_{n=1}^{\infty} \Supp_{\ZZ}(nD)$ for $D \in \Div(X)$.

From now on, 
we assume that $X$ is normal. Then $\Supp_{\QQ}(D) = \Supp_{\ZZ}(D)$ for $D \in \Div(X)$.
For a $\KK$-Cartier divisor $D$ on $X$,
the {\em associated $\KK$-Weil divisor $D_W$ of $D$} 
\index{\AdelDivSubject}{associated K-Weil divisor@associated $\KK$-Weil divisor}%
is defined by
\[
D_W := \sum_{\text{$\Gamma$ : prime divisor}} \ord_{\Gamma}(f_{\Gamma}) \Gamma,
\]
where $f_{\Gamma}$ is a local equation of $D$ at $\Gamma$.
\index{\AdelDivSymbol}{0D:D_W@$D_W$}%
The {\em support of $D$ as a Weil-divisor} is denoted by $\Supp_W(D)$, that is,
\[
\Supp_W(D) := \bigcup_{\ord_{\Gamma}(f_{\Gamma}) \not= 0} \Gamma.
\] 
\index{\AdelDivSubject}{support as a Weil-divisor@support as a Weil-divisor}%
\index{\AdelDivSymbol}{0S:Supp_W(D)@$\Supp_W(D)$}%
\end{Definition}

\begin{Proposition}
\label{prop:supp:Cartier:Weil}
We assume that $X$ is normal.
Let $D$ be a $\KK$-Cartier divisor on $X$.
Then $\Supp_W(D) \subseteq \Supp_{\KK}(D)$.
Further, if $X$ is regular, then $\Supp_W(D) = \Supp_{\KK}(D)$.
\end{Proposition}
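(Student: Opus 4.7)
The plan is to prove the two inclusions separately: the containment $\Supp_W(D) \subseteq \Supp_{\KK}(D)$ for any normal $X$, and the reverse containment under the regularity hypothesis. Both will proceed by analyzing a local equation of $D$ and exploiting the compatibility of $\ord$ with the $\KK$-tensoring that was already built up in the preliminaries.

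For the first inclusion, I would fix a prime divisor $\Gamma$ with generic point $\gamma$ appearing in $\Supp_W(D)$, so that a local equation $f_\Gamma \in \Rat(X)^\times_{\KK}$ of $D$ at $\Gamma$ satisfies $\ord_\Gamma(f_\Gamma) \neq 0$. Since $X$ is normal and $\dim \OOO_{X,\gamma}=1$, the ring $\OOO_{X,\gamma}$ is a DVR; thus $\ord_\gamma$ extends (as in \ref{CT:ord:function}) to a homomorphism $\Rat(X)^\times_{\KK}\to\KK$ that vanishes identically on $(\OOO_{X,\gamma}^\times)_{\KK}$. Therefore $f_\Gamma \notin (\OOO_{X,\gamma}^\times)_{\KK}$, which means $\gamma \in \Supp_{\KK}(D)$. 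Since $\Supp_{\KK}(D)$ is closed by Proposition~\ref{prop:closedness:support}(3), it contains $\Gamma = \overline{\{\gamma\}}$.

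For the reverse inclusion under the regularity assumption, I would argue by contrapositive: pick $p \notin \Supp_W(D)$ and show $p \notin \Supp_{\KK}(D)$. Choose a local equation $f = f_1^{a_1}\cdots f_r^{a_r}$ of $D$ at $p$, with $f_i \in \Rat(X)^\times$ and $a_i \in \KK$. Using Lemma~\ref{lem:linear:comb:R}, I may assume without loss of generality that $a_1,\ldots,a_r$ are $\QQ$-linearly independent (the cases $\KK=\ZZ$ and $\KK=\QQ$ reduce to a single $f \in \Rat(X)^\times$). The assumption $p \notin \Supp_W(D)$ translates, near $p$, into $\sum_i a_i\,\ord_\Gamma(f_i) = 0$ for every prime divisor $\Gamma$ through $p$; since the $\ord_\Gamma(f_i) \in \ZZ$ and the $a_i$ are $\QQ$-linearly independent, this forces $\ord_\Gamma(f_i)=0$ for every $i$ and every such $\Gamma$.

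Finally, regularity of $X$ gives that $\OOO_{X,p}$ is a regular local ring, hence a UFD, hence in particular normal, so $\OOO_{X,p}=\bigcap_{\mathrm{ht}(\mathfrak{q})=1}(\OOO_{X,p})_{\mathfrak{q}}$. Since the height-one primes of $\OOO_{X,p}$ correspond to the prime divisors of $X$ passing through $p$, the vanishing $\ord_\Gamma(f_i)=0$ gives $f_i,f_i^{-1}\in (\OOO_{X,p})_{\mathfrak{q}}$ for every such $\mathfrak{q}$, and intersecting we obtain $f_i \in \OOO_{X,p}^\times$ for each $i$. Hence $f \in (\OOO_{X,p}^\times)_{\KK}$, i.e.\ $p \notin \Supp_{\KK}(D)$. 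The step I expect to be the main obstacle is the reduction to $\QQ$-linearly independent exponents, since one must keep track of how the coefficients interact with the integer-valued orders; this is precisely the content Lemma~\ref{lem:linear:comb:R} was set up to handle, so once it is invoked the rest is essentially formal.
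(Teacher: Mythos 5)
Your proof is correct, and for the reverse inclusion it takes a genuinely different route from the paper's. The first inclusion is essentially the same (you argue directly at the generic point and invoke closedness; the paper argues contrapositively at an arbitrary $p$ — same content). For the second inclusion the paper argues directly: given $p\in\Supp_{\KK}(D)$, it factors the local equation $f_i=u\,h_1^{a_1}\cdots h_r^{a_r}$ in the UFD $\OOO_{X,p}$ with $h_j$ pairwise nonassociate prime elements and $u\in(\OOO_{X,p}^\times)_{\KK}$, notes some $a_j\neq 0$ since otherwise $f_i\in(\OOO_{X,p}^\times)_{\KK}$, and reads off $\ord_{\Gamma_j}(f_i)=a_j\neq 0$ for $\Gamma_j=V(h_j)$. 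You instead argue contrapositively, reducing to $\QQ$-linearly independent exponents $a_i$ and deducing $\ord_\Gamma(f_i)=0$ for every $i$ and every prime divisor $\Gamma$ through $p$, then concluding $f_i\in\OOO_{X,p}^\times$ via the identity $\OOO_{X,p}=\bigcap_{\mathrm{ht}\,\mathfrak{q}=1}(\OOO_{X,p})_{\mathfrak{q}}$. What your route buys: it uses only normality, not factoriality, so it in fact establishes $\Supp_W(D)=\Supp_{\KK}(D)$ for all normal $X$ (the paper's UFD factorization genuinely requires regularity); it is also structurally parallel to the paper's own proof of Proposition~\ref{prop:closedness:support}(3), so the $\QQ$-linear-independence reduction is used uniformly. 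Two small notational points: the reduction to $\QQ$-linearly independent exponents is elementary linear algebra and is not what Lemma~\ref{lem:linear:comb:R} is for — that lemma is the tool for the subsequent deduction $\ord_\Gamma(f_i)=0$ (applied with $V=\QQ$, $W=0$, $x_i=\ord_\Gamma(f_i)$); and for $\KK=\QQ$ the local equation is a single element of $\Rat(X)^\times_{\QQ}$ (of the form $g^{1/N}$), not of $\Rat(X)^\times$, though this makes no difference to the argument.
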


\begin{proof}
We use the same notation as in Definition~\ref{def:Supp:RR:Cartier:div}.
Let $p \in U_i \setminus \Supp_{\KK}(D)$.
Then $f_i \in (\OOO_{X, p}^{\times})_{\KK}$. In particular,
$\ord_{\Gamma}(f_i) = 0$ for all prime divisors $\Gamma$ with $p \in \Gamma$, and hence
$p \not\in \Supp_W(D)$, as desired.

We assume that $X$ is regular. Let $p \in U_i \cap \Supp_{\KK}(D)$.
As $\OOO_{X, p}$ is a UFD, there are distinct prime elements $h_1, \ldots, h_r \in \OOO_{X,p}$ modulo $\OOO_{X, p}^{\times}$, 
$u \in (\OOO_{X, p}^{\times})_{\KK}$
and $a_1, \ldots, a_r \in \RR$ such that
$f_i = u h_1^{a_1} \cdots h_r^{a_r}$.
If $a_1 = \cdots = a_r = 0$, then $f_i \in (\OOO_{X, p}^{\times})_{\KK}$, which contracts to $p \in U_i \cap \Supp_{\KK}(D)$,
so that we may assume that $a_1, \ldots, a_r \in \RR_{\not= 0}$.
Since $h_1, \ldots, h_r$ are distinct modulo $\OOO_{X, p}^{\times}$,
$\Gamma_1 = \Spec(\OOO_{X, p}/h_1), \ldots, \Gamma_r = \Spec(\OOO_{X, p}/h_r)$ give rise to distinct prime divisors.
In addition, $\ord_{\Gamma_j}(f_i) = a_j$ for $j=1, \ldots, r$.
Therefore, $p \in \Supp_W(D)$.
\end{proof}

Finally let us consider Hartogs' lemma for $\RR$-rational functions.

\begin{Lemma}[Hartogs' lemma for $\RR$-rational functions]
\label{lem:Hartogs:R:rat:fun}
Let $A$ be a normal and noetherian domain and
$F$ the quotient field of $A$.
For $x \in F_{\RR}^{\times}$, if $\ord_{\Gamma}(x) \geq 0$ for all prime divisors $\Gamma$ of $A$,
then there are $x_1,\ldots, x_r \in A \setminus \{ 0 \}$ and $a_1, \ldots, a_r \in \RR_{>0}$ with
$x = x_1^{a_1} \cdots x_r^{a_r}$.
\end{Lemma}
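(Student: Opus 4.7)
The plan is to realize $x$ inside a finitely generated subgroup of $F^\times$ and then apply elementary convex geometry on the lattice of orders at the finitely many prime divisors that appear. First I would write $x = f_1^{b_1} \cdots f_s^{b_s}$ with $f_i \in F^\times$ and $b_i \in \RR$, and let $G \subseteq F^\times$ be the subgroup generated by $f_1, \ldots, f_s$. Since $\RR$ is flat over $\ZZ$, $G_\RR$ embeds into $F^\times_\RR$ and contains $x$.

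Only finitely many prime divisors $\Gamma_1, \ldots, \Gamma_N$ of $A$ satisfy $\ord_{\Gamma_j}(f_i) \neq 0$ for some $i$, and for every $f \in G$ the order $\ord_\Gamma(f)$ vanishes for $\Gamma$ outside this list. Consider the homomorphism
\[
\phi : G \to \ZZ^N, \qquad \phi(f) := (\ord_{\Gamma_1}(f), \ldots, \ord_{\Gamma_N}(f)).
\]
Since $A$ is a normal noetherian domain (hence Krull), $A = \bigcap_{\mathfrak{p}\text{ height }1} A_{\mathfrak{p}}$, which gives $\ker(\phi) = G \cap A^\times$ and $\phi^{-1}(\ZZ^N_{\geq 0}) = G \cap A$. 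Tensoring with $\RR$ yields the exact sequence
\[
0 \to (G \cap A^\times)_\RR \to G_\RR \to \phi(G)_\RR \to 0,
\]
and the hypothesis $\ord_\Gamma(x) \geq 0$ for all prime divisors $\Gamma$ translates into $\phi(x) \in \phi(G)_\RR \cap \RR^N_{\geq 0}$.

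Now $\phi(G)_\RR \cap \RR^N_{\geq 0}$ is a rational polyhedral cone with respect to the full-rank lattice $\phi(G) \subseteq \phi(G)_\RR$, so by Gordan's lemma it is generated as a convex cone by finitely many lattice points $v_1, \ldots, v_r \in \phi(G) \cap \ZZ^N_{\geq 0}$. Lifting each $v_k$ to $x_k \in G \cap A$ and writing $\phi(x) = \sum_k a_k v_k$ with $a_k \geq 0$, we obtain $u := x \cdot \prod_k x_k^{-a_k} \in (G \cap A^\times)_\RR$. Expanding $u = w_1^{c_1} \cdots w_m^{c_m}$ with $w_j \in A^\times$ and $c_j \in \RR$, I would replace each factor $w_j^{c_j}$ having $c_j < 0$ by $(w_j^{-1})^{-c_j}$ (using $A^\times \subseteq A \setminus \{0\}$) and drop factors whose exponents vanish. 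Combining everything yields the desired expression $x = x_1^{a_1} \cdots x_r^{a_r}$ with $x_i \in A \setminus \{0\}$ and $a_i \in \RR_{>0}$.

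The main obstacle is the convex-geometric step: verifying that the rational polyhedral cone $\phi(G)_\RR \cap \RR^N_{\geq 0}$ is generated by its lattice points in $\phi(G)$, which depends on $\phi(G)$ being a full-rank lattice in $\phi(G)_\RR$ and is standard once set up. Normality of $A$ enters crucially in identifying $\ker(\phi)$ with $G \cap A^\times$ and $\phi^{-1}(\ZZ^N_{\geq 0})$ with $G \cap A$; without normality one could have $\ord_\Gamma(f) \geq 0$ for all $\Gamma$ without $f \in A$.
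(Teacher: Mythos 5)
Your proof is correct and takes a genuinely different route from the paper's. The paper writes $x = y_1^{c_1}\cdots y_r^{c_r}$ with $c_i\in\RR_{>0}$ linearly independent over $\QQ$, observes via that linear independence that each $y_i$ has trivial order outside the finite set $\Sigma'$ of primes where $\ord_\Gamma(x)>0$, and then invokes its rational-approximation Lemma~\ref{lem:approx:positive} (with the functionals $\phi_l = \ord_\Gamma$, $\Gamma\in\Sigma'$) to replace the $y_i$ by $\QQ_{>0}$-power-products $x_i$ whose orders are strictly positive on $\Sigma'$ and zero elsewhere; clearing denominators and applying algebraic Hartogs then lands the $x_i$ in $A$. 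You instead package everything into the finitely generated group $G$, pass through the order map $\phi : G \to \ZZ^N$ and, using that the cone $\phi(G)_\RR \cap \RR^N_{\geq 0}$ is rational polyhedral with respect to the full-rank lattice $\phi(G)$, express $\phi_\RR(x)$ as a nonnegative combination of lattice generators, lifting these to $G\cap A$ and correcting by a term in $(G\cap A^\times)_\RR$. Both arguments rely on normality via algebraic Hartogs to pass from nonnegative orders to membership in $A$, and both are convex-geometric at heart, but yours works directly with the cone of effective elements while the paper massages the exponent vector; your version scales more transparently to similar ``rationalization'' statements, whereas the paper's version, by design, also keeps $\Supp(x_i)\subseteq\Sigma'$, a side benefit you do not get for free.

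Two minor points to tighten up. First, what you need is that a \emph{rational} polyhedral cone is generated over $\RR_{\geq 0}$ by finitely many lattice points; this is a consequence of Minkowski--Weyl plus rationality of the extreme rays (a rational ray contains a lattice point), not Gordan's lemma. Gordan's lemma is the stronger statement that the monoid $\sigma\cap\phi(G)$ is finitely generated, which you do not use. Second, to land precisely in the statement's format with all exponents in $\RR_{>0}$, you should explicitly discard both the $x_k$ with $a_k = 0$ and the $w_j$ with $c_j = 0$ (you mention the latter but not the former), with the degenerate case $x=1$ handled by $1 = 1^1$.
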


\begin{proof}
Let $\Sigma$ be the set of all prime divisors of $A$.
Clearly we can find $y_1, \ldots, y_r \in F^{\times}$ and $c_1, \ldots, c_r \in \RR_{>0}$
such that $x = y_1^{c_1} \cdots y_r^{c_r}$ and $c_1, \ldots, c_r$ are linearly independent over $\QQ$.
We set $\Sigma' = \{ \Gamma \in \Sigma \mid \ord_{\Gamma}(x) > 0 \}$.
Let us see the following:

\begin{Claim}
$\Sigma'$ is a finite set and $\ord_{\Gamma}(y_i) = 0$ for all $\Gamma \in \Sigma \setminus \Sigma'$ and $i=1, \ldots, r$.
\end{Claim}

\begin{proof}
If $\Gamma \not\subseteq \Supp_{\ZZ}((x)_{\RR})$, then $\ord_{\Gamma}(x) = 0$, so that
$\Sigma'$ is a finite set.
Moreover, if $\Gamma \in \Sigma \setminus \Sigma'$,
then 
\[
0 = \ord_{\Gamma}(x) = c_1 \ord_{\Gamma}(y_1) + \cdots + c_r \ord_{\Gamma}(y_r),
\]
and hence $\ord_{\Gamma}(y_1) = \cdots = \ord_{\Gamma}(y_r) = 0$ by
the linear independency of $c_1, \ldots, c_r$ over $\QQ$.
\end{proof}

By virtue of Lemma~\ref{lem:approx:positive} in 
\ifmonog Section~\ref{subsec:Misc:lemmas}, \fi
\ifpaper Subsection~\ref{subsec:Misc:lemmas}, \fi
there are $e_{ij} \in \QQ_{>0}$ ($i, j = 1, \ldots, r$) and $a_1, \ldots, a_r \in \RR_{>0}$
such that, if we set $x_i = y_1^{e_{i1}} \cdots y_r^{e_{ir}}$ for
$i=1, \ldots, r$, then $x = x_1^{a_1} \cdots x_r^{a_r}$ and
$\ord_{\Gamma}(x_i) > 0$ for all $\Gamma \in \Sigma'$ and $i=1, \ldots, r$.
Replacing $e_{ij}$ by $e e_{ij}$ and $a_i$ by $a_i/e$ for some $e \in \ZZ_{>0}$,
we may assume that $e_{ij} \in \ZZ$ for all $i, j$. In particular, $x_i \in F^{\times}$ for $i=1, \ldots, r$.
Note that, for $\Gamma \in \Sigma \setminus \Sigma'$,
\[
\ord_{\Gamma}(x_i) = e_{i1}\ord_{\Gamma}(y_1) + \cdots + e_{ir} \ord_{\Gamma}(y_r) = 0,
\]
and hence $\ord_{\Gamma}(x_i) \geq 0$ for all $\Gamma \in \Sigma$ and $i=1, \ldots, r$.
Therefore, by algebraic Hartogs' lemma, $x_i \in A \setminus \{ 0 \}$ for $i=1, \ldots, r$,
as required.
\end{proof}

\ifmonog \section[Analytification of algebraic schemes]{Analytification of algebraic schemes over a complete valuation field}\fi
\ifpaper \subsection{Analytification of algebraic schemes over a complete valuation field}\fi
\label{subsec:analytification:algebraic:schemes}

Throughout this 
\ifmonog section, \fi
\ifpaper subsection, \fi
$k$ is a field and $v$ is a complete valuation of $k$.
Here we quickly review the analytification of algebraic schemes over $k$
in the sense of Berkovich \cite{Be}.

Let $A$ be a $k$-algebra.
We say a map $\vert\cdot\vert : A \to \RR_{\geq 0}$ is a {\em multiplicative semi-norm over $k$} 
\index{\AdelDivSubject}{multiplicative semi-norm@multiplicative semi-norm}%
if the following conditions are satisfied:
\begin{enumerate}
\renewcommand{\labelenumi}{(\arabic{enumi})}
\item
$\vert a + b \vert \leq \vert a \vert + \vert b \vert$ for all $a, b \in A$.

\item
$\vert a b \vert = \vert a \vert \vert b \vert$ for all $a, b \in A$.

\item
$\vert a \vert = v(a)$ for all $a \in k$.
\end{enumerate}
Let $x = \vert\cdot\vert_x$ be a multiplicative semi-norm over $k$.
We set $p_{x} := \left\{ a \in A \mid \vert a \vert_x = 0 \right\}$, which
is a prime ideal of $A$.
We call $p_{x}$ the {\em associated prime of $x$}.
\index{\AdelDivSubject}{associated prime of the multiplicative semi-norm@associated prime of the multiplicative semi-norm}%
The residue field at $p_{x}$ is denoted by $k(x)$.
Clearly $x$ descents to a valuation $v_x$ of $k(x)$
such that $v_x(a) = v(a)$ for all $a \in k$.
The field $k(x)$ and the valuation $v_x$ are called
the {\em residue field of $x$} and the {\em associated valuation of $x$}, respectively.
\index{\AdelDivSubject}{residue field@residue field}%
\index{\AdelDivSubject}{associated valuation@associated valuation}%
Conversely, let $v'$ be a valuation of the residue field $k(p)$ at $p \in \Spec(A)$
such that $v'(a) = v(a)$ for all $a \in k$. If we set
$\vert a \vert := v'(a \mod p)$ for $a \in A$,
then $\vert\cdot\vert$ yields a multiplicative semi-norm over $k$ whose residue field and 
associated valuation are 
$k(p)$ and $v'$, respectively.
In particular, this observation shows that if $v$ is non-Archimedean,
then $\vert\cdot\vert$ is also non-Archimedean, that is,
$\vert a + b \vert \leq \max \{ \vert a \vert, \vert b \vert \}$ for all $a, b \in A$.

We denote the set of all multiplicative semi-norms over $k$ by $\Spec^{\an}_k(A)$.
\index{\AdelDivSymbol}{0S:Spec^{an}_k(A)@$\Spec^{\an}_k(A)$}%
For $x = \vert\cdot\vert_x \in \Spec^{\an}_k(A)$, $\vert a \vert_x$ is 
often denoted by $\vert a(x) \vert$.
We equip the weakest topology to $\Spec^{\an}_k(A)$ such that
the map $\Spec^{\an}_k(A) \to \RR_{\geq 0}$ given by $x \mapsto \vert a(x) \vert$ is continuous for every $a \in A$, that is,
the collection 
\[
  \Big\{ \{ x \in \Spec^{\an}_k(A) \mid \vert a(x) \vert \in U \} 
  \Big\}_{a, U}\quad\text{(where $a \in A$ and $U$ is an open set in $\RR_{\geq 0}$)}
\]
forms a subbasis of the topology.
A map $\Spec^{\an}_k(A) \to \Spec(A)$ given by $x \mapsto p_x$ is denoted by $p$.
It is easy to see that $p : \Spec^{\an}_k(A) \to \Spec(A)$ is continuous.

Let $f : A \to B$ be a homomorphism of $k$-algebras.
We define a map 
\[
f^{\an} : \Spec^{\an}_k(B) \to \Spec^{\an}_k(A)
\]
to be $\vert a \vert_{f^{\an}(y)} = \vert f(a) \vert_{y}$
for $y = \vert\cdot\vert_y \in \Spec^{\an}_k(B)$ and $a \in A$.
We can easily check that $f^{\an} : \Spec^{\an}_k(B) \to \Spec^{\an}_k(A)$ is continuous.
Let $s$ be a non-nilpotent element of $A$.
Let $\iota : A \to A_s$ be the canonical homomorphism. Then we can see that
$\iota^{\an}$ yields a homeomorphism
\frontmatterforspececialeqn
\begin{equation}
\label{eqn:homeo:open}
\Spec^{\an}_k(A_s)  \underset{\text{homeo}}{\overset{\approx}{\longrightarrow}} \left\{ x \in \Spec^{\an}_k(A) \mid \vert s(x) \vert \not= 0 \right\}.
\end{equation}
\backmatterforspececialeqn

\medskip
Let $X$ be an algebraic scheme over $k$, that is, a scheme separated and of finite type over $k$.
If $X = \Spec(A)$ is an affine scheme over $k$,
then $X^{\an} := \Spec^{\an}_k(A)$.
In general, if $X = \bigcup_{i=1}^N U_i$ is an affine open covering of $X$,
then $X^{\an}$ is defined by gluing together $U_i^{\an}$ as a topological space (cf. \eqref{eqn:homeo:open}).
\index{\AdelDivSymbol}{0X:X^{an}@$X^{\an}$}%
For each $i$, we can define $p : U_i^{\an} \to U_i$, which
can be extended to a continuous map
$p : X^{\an} \to X$.
Let $f : X \to Y$ be a morphism of algebraic schemes over $k$.
We can see that
$f$ induces a natural continuous map $f^{\an} : X^{\an} \to Y^{\an}$.
\index{\AdelDivSymbol}{0f:f^{an}@$f^{\an}$}%

\medskip
From now on, we assume that $v$ is non-Archimedean and $X$ is proper over $k$.
Let $\XXX$ be a proper and flat scheme over $\Spec(k^{\circ})$ such that
the generic fiber of $\XXX \to \Spec(k^{\circ})$ is $X$.
Let $\XXX_{\circ}$ be the central fiber of $\XXX \to \Spec(k^{\circ})$, that is,
$\XXX_{\circ} = \XXX \times_{\Spec(k^{\circ})} \Spec(k^{\circ}/k^{\circ\circ})$
(for the definitions of $k^{\circ}$ and $k^{\circ\circ}$, see Conventions and terminology~\ref{CT:nonArchimedean:value}). 
Let 
\[
r_{\XXX} : X^{\an} \to \XXX_{\circ}
\]
be the {\em reduction map} induced by $\XXX \to \Spec(k^{\circ})$,
which can be defined in the following way:
\index{\AdelDivSubject}{reduction map@reduction map}%
\index{\AdelDivSymbol}{0r:r_{XXX}@$r_{\XXX}$}%
For $x \in X^{\an}$, let $k(x)$ be the residue field of $x$. 
Then, by using the valuation criterion of properness,
there is a morphism $t : \Spec(k(x)^{\circ}) \to \XXX$ such that the following diagram is commutative:
\[
\xymatrix{
\Spec(k(x)) \ar[r] \ar[d] &  \XXX \ar[d] \\
\Spec(k(x)^{\circ}) \ar[ru]^t \ar[r] & \Spec(k^{\circ}) \\
}
\]
Then $r_{\XXX}(x)$ is given by $t(k(x)^{\circ\circ})$.
The morphism $t : \Spec(k(x)^{\circ}) \to \XXX$ yields a 
homomorphism $\OOO_{\XXX, r_{\XXX}(x)} \to k(x)^{\circ}$.
In particular, 
\frontmatterforspececialeqn
\begin{equation}
\label{eqn:norm:leq:1}
\text{$\vert f \vert_x \leq 1$ for all $f \in \OOO_{\XXX, r_{\XXX}(x)}$.}
\end{equation}
\backmatterforspececialeqn
It is well-known that $r_{\XXX} : X^{\an} \to \XXX_{\circ}$
is anti-continuous, that is,
for any open set $U$ of $\XXX_{\circ}$, $r_{\XXX}^{-1}(U)$ is closed (cf. \cite[Section~2.4]{Be}).
Let $Y$ be another proper algebraic scheme over $k$ and $\mu : Y \to X$ a morphism over $k$.
Let $\mathcal{Y} \to \Spec(k^{\circ})$ be 
a proper and flat scheme over $\Spec(k^{\circ})$ such that
the generic fiber of $\YYY \to \Spec(k^{\circ})$ is $Y$ and
there is a morphism $\tilde{\mu} : \mathcal{Y} \to \XXX$
over $\Spec(k^{\circ})$ as an extension of $\mu$.
It is easy to see that the following diagram is commutative:
\frontmatterforspececialeqn
\begin{equation}
\label{eqn:reduction:comm:1}
\xymatrix{
\ar @{} [dr] |{\Box}
Y^{\an} \ar[r]^{r_{\mathcal{Y}}} \ar[d]_{\mu^{\an}} & \YYY_{\circ} \ar[d]^{\tilde{\mu}} \\
X^{\an} \ar[r]^{r_{\XXX}} &  \XXX_{\circ}
}
\end{equation}
\backmatterforspececialeqn

\ifmonog\section{Miscellaneous lemmas}\fi
\ifpaper\subsection{Miscellaneous lemmas}\fi
\label{subsec:Misc:lemmas}

In this 
\ifmonog section, \fi
\ifpaper subsection, \fi
we prove seven lemmas, which are non-trivial and
indispensable for other 
\ifmonog sections. \fi
\ifpaper subsections. \fi

\begin{Lemma}
\label{lem:linear:comb:R}
Let $V$ be a vector space over $\QQ$ and $W$ a subspace of $V$ over $\QQ$.
Let $x_1, \ldots, x_r \in V$ and $a_1, \ldots, a_r \in \RR$ such that
$a_1, \ldots, a_r$ are linearly independent over $\QQ$.
If $a_1 x_1 + \cdots + a_r x_r \in W \otimes_{\QQ} \RR$,
then $x_1, \ldots, x_r \in W$.
\end{Lemma}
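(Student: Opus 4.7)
The plan is to split $V$ as a $\QQ$-vector space into $W$ and a complement, and then reduce to a finite-dimensional calculation.

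First I would choose a $\QQ$-linear complement $W'$ of $W$ in $V$, so that $V = W \oplus W'$ as $\QQ$-vector spaces. Since $\RR$ is flat over $\QQ$ (it is even free), tensoring preserves the direct sum: $V \otimes_{\QQ} \RR = (W \otimes_{\QQ} \RR) \oplus (W' \otimes_{\QQ} \RR)$. Decompose each $x_i$ uniquely as $x_i = w_i + w_i'$ with $w_i \in W$ and $w_i' \in W'$. The hypothesis that $\sum_i a_i x_i \in W \otimes_{\QQ} \RR$ then forces the $W'$-component to vanish, i.e.\ $\sum_{i=1}^r a_i w_i' = 0$ in $W' \otimes_{\QQ} \RR$. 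Thus the claim $x_i \in W$ (i.e.\ $w_i' = 0$) is reduced to the following statement: if $y_1, \ldots, y_r$ are elements of a $\QQ$-vector space and $\sum_i a_i y_i = 0$ in its tensor product with $\RR$, with $a_i$ linearly independent over $\QQ$, then each $y_i = 0$.

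Next I would prove this reduced statement by passing to the finite-dimensional $\QQ$-span $U := \QQ y_1 + \cdots + \QQ y_r$. Again by flatness, the inclusion $U \hookrightarrow W'$ induces an injection $U \otimes_{\QQ} \RR \hookrightarrow W' \otimes_{\QQ} \RR$, so the relation $\sum_i a_i y_i = 0$ already holds in $U \otimes_{\QQ} \RR$. Picking a $\QQ$-basis $e_1, \ldots, e_d$ of $U$ and writing $y_i = \sum_j c_{ij} e_j$ with $c_{ij} \in \QQ$, the relation becomes $\sum_j \bigl(\sum_i a_i c_{ij}\bigr) e_j = 0$ in $U \otimes_{\QQ} \RR$. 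Since $e_1, \ldots, e_d$ are an $\RR$-basis of $U \otimes_{\QQ} \RR$, we get $\sum_i a_i c_{ij} = 0$ in $\RR$ for each $j$, and the $\QQ$-linear independence of $a_1, \ldots, a_r$ forces $c_{ij} = 0$ for all $i, j$; hence $y_i = 0$.

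There is no substantial obstacle here: the argument is a straightforward use of flatness of $\RR$ over $\QQ$ together with the defining property of linear independence. The only mild subtlety is to make sure that the passage from $V \otimes_{\QQ} \RR$ to $W' \otimes_{\QQ} \RR$ and then to $U \otimes_{\QQ} \RR$ preserves vanishing, which is exactly what flatness guarantees.
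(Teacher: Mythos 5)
Your proof is correct, and it takes a route that is dual to the paper's own argument rather than identical to it. The paper works with the space $H = \Hom_{\QQ}(V/W,\QQ)$ of $\QQ$-linear functionals vanishing on $W$: applying the $\RR$-extension of any $\phi \in H$ to the hypothesis gives $a_1\phi(x_1) + \cdots + a_r\phi(x_r) = 0$ with $\phi(x_i) \in \QQ$, whence all $\phi(x_i) = 0$ by $\QQ$-independence of the $a_i$, and since $W$ is the common kernel of the functionals in $H$ this forces $x_i \in W$. You instead fix a $\QQ$-linear complement $W'$ of $W$, project onto the $W' \otimes_{\QQ} \RR$ summand, and then reduce to a finite-dimensional coordinate computation. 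Both arguments hinge on the same core fact (a $\QQ$-linear relation $\sum_i a_i q_i = 0$ with $q_i \in \QQ$ and the $a_i$ $\QQ$-independent forces all $q_i = 0$), and both implicitly or explicitly use the existence of a complement to $W$; the paper's functional-theoretic phrasing avoids choosing a basis and a finite-dimensional reduction, while your version is more concrete and makes the role of flatness (really, freeness of $\RR$ over $\QQ$ and compatibility of $\otimes$ with direct sums) explicit at each step.
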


\begin{proof}
We set $H := \left\{ \phi \in \Hom_{\QQ}(V, \QQ) \mid \rest{\phi}{W} = 0 \right\} = \Hom_{\QQ}(V/W, \QQ)$.
For $\phi \in H$, the natural extension to $\Hom_{\RR}(V \otimes_{\QQ} \RR, \RR)$ 
is denoted by $\phi_{\RR}$.
As $a_1 x_1 + \cdots + a_r x_r \in W \otimes_{\QQ} \RR$, we have 
\[
0 = \phi_{\RR}(a_1 x_1 + \cdots + a_r x_r) = 
a_1 \phi(x_1) + \cdots + a_r \phi(x_r)
\] 
for all $\phi \in H$.
Thus $\phi(x_1) = \cdots = \phi(x_r) = 0$ for all $\phi \in H$ because
$a_1, \ldots, a_r$ are linearly independent over $\QQ$ and $\phi(x_1), \ldots, \phi(x_r) \in \QQ$.
Therefore, $x_1, \ldots,  x_r \in W$.
\end{proof}

\begin{Lemma}
\label{lem:approx:positive}
Let $V$ be a vector space over $\RR$.
Let $x_1, \ldots, x_r \in V$, $a_1, \ldots, a_r \in \RR_{>0}$,
$x := a_1 x_1 + \cdots + a_r x_r$ and
$\phi_1, \ldots, \phi_m \in \Hom_{\RR}(V, \RR)$.
If $\phi_l(x) > 0$ for all $l=1, \ldots, m$,
then there are $x'_1, \ldots, x'_r \in \QQ_{>0} x_1 + \cdots + \QQ_{>0} x_r$  and
$a'_1, \ldots, a'_r \in \RR_{>0}$ such that
$x = a'_1 x'_1 + \cdots + a'_r x'_r$ and
$\phi_l(x'_i) > 0$ for all $i=1, \ldots, r$ and $l=1, \ldots, m$.
\end{Lemma}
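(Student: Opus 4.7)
The plan is to take each $x'_i$ to be a small rational perturbation of a common vector $y\approx x$, so that the positivity $\phi_l(y)>0$ (inherited from $\phi_l(x)>0$) is preserved on each $x'_i$, while the perturbation provides the freedom needed to arrange $\sum_i a'_i x'_i = x$ with positive $a'_i$. Concretely, fix $\epsilon\in\QQ_{>0}$ and $\tilde a=(\tilde a_1,\ldots,\tilde a_r)\in\QQ_{>0}^r$ with $\tilde a$ close to $a=(a_1,\ldots,a_r)$, and set
\[
x'_i := \sum_{j=1}^r (\tilde a_j + \epsilon\,\delta_{ij})\, x_j = y + \epsilon\, x_i,
\qquad y := \sum_{j=1}^r \tilde a_j x_j.
\]
Then $x'_i\in\QQ_{>0}x_1+\cdots+\QQ_{>0}x_r$ by construction.

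Next I would impose $x=\sum_i a'_i x'_i$. Since $\sum_i a'_i x'_i = sy+\epsilon\sum_j a'_j x_j = \sum_j (s\tilde a_j+\epsilon a'_j) x_j$ with $s:=\sum_i a'_i$, it suffices to require $s\tilde a_j+\epsilon a'_j=a_j$ for every $j$. Summing over $j$ gives $s=\bar a/(\epsilon+\bar{\tilde a})$ where $\bar a:=\sum_i a_i$ and $\bar{\tilde a}:=\sum_i\tilde a_i$, and hence
\[
a'_j = \frac{\epsilon a_j + a_j \bar{\tilde a} - \bar a\, \tilde a_j}{\epsilon(\epsilon+\bar{\tilde a})}.
\]
The numerator equals $\epsilon a_j>0$ at $\tilde a=a$, so by continuity $a'_j>0$ whenever $\tilde a$ is sufficiently close to $a$.

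For the positivity under the functionals, $\phi_l(x'_i) = \phi_l(y)+\epsilon\phi_l(x_i)$; since $\phi_l(y)\to\phi_l(x)>0$ as $\tilde a\to a$, I would first fix $\epsilon$ small enough that $\epsilon\max_{i,l}|\phi_l(x_i)| < \frac{1}{2}\min_l\phi_l(x)$ and then choose a rational $\tilde a$ close enough to $a$ that $|\phi_l(y)-\phi_l(x)|<\frac{1}{2}\min_l\phi_l(x)$ for every $l$. This makes $\phi_l(x'_i)>0$ for all $i,l$ simultaneously. All the finitely many constraints on $(\tilde a,\epsilon)$ are open and hold in the limit $(\tilde a,\epsilon)\to(a,0^+)$, so density of $\QQ_{>0}$ in $\RR_{>0}$ yields an admissible rational choice.

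The main (mild) obstacle is conceptual rather than technical: one cannot just rationally perturb the individual $x_i$, because the original vectors need not satisfy $\phi_l(x_i)>0$, so the $\phi_l$-positivity would be lost. The whole purpose of the ansatz is that the bulk of each $x'_i$ is the common vector $y\approx x$, which carries the $\phi_l$-positivity from $x$, while the small summand $\epsilon x_i$ is a rank-$r$ correction whose only job is to make the linear system for $a'$ solvable with positive coefficients and to keep all coordinates in $\QQ_{>0}$.
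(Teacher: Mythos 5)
Your proof is correct, and it takes a genuinely different route from the paper's. The paper argues by a double induction: first on the size of the set $I=\{i : \phi_1(x_i)\le 0\}$, using a ``reallocation'' claim that trades a bad index $s\in I$ for a rational combination of the good ones while preserving positivity of the $a$-coefficients, and then on $m$, peeling off one functional at a time (noting $\QQ_{>0}x'_1+\cdots+\QQ_{>0}x'_r\subseteq\QQ_{>0}x_1+\cdots+\QQ_{>0}x_r$ so the earlier gains persist). You instead build all the $x'_i$ at once as small perturbations $x'_i=y+\epsilon x_i$ of a single rational approximation $y=\sum_j\tilde a_j x_j$ of $x$; the formula for $a'_j$ comes from matching coefficients, and the simultaneous positivity of every $\phi_l(x'_i)$ drops out because the dominant term $\phi_l(y)$ inherits the strict positivity $\phi_l(x)>0$, with the correction $\epsilon\phi_l(x_i)$ made negligible. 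Your construction handles all $m$ functionals in a single pass and makes the openness of all constraints transparent; the paper's argument is more combinatorial and avoids any continuity/density appeal, but costs a nested induction. The only subtlety in your version, which you handled correctly, is the order of quantifiers: $\epsilon$ must be fixed first (for the $\phi_l$-bounds), and then $\tilde a$ must be chosen close enough to $a$ both for the $\phi_l$-bounds on $y$ and for the positivity of the numerators $\epsilon a_j + a_j\bar{\tilde a} - \bar a\tilde a_j$, whose required closeness does depend on $\epsilon$.
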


\begin{proof}
Let us begin with the following claim:

\begin{Claim}
\label{claim:lem:approx:positive:01}
Let $y_1, \ldots, y_s \in V$, $b_1, \ldots, b_s \in \RR_{>0}$ and $\psi \in \Hom_{\RR}(V, \RR)$.
If $\psi(y_i) > 0$ for $i = 1, \ldots, s-1$ and $\psi(b_1 y_1 + \cdots + b_s y_s) > 0$,
then there are $c_1, \ldots, c_{s-1} \in \QQ_{> 0}$
such that $b_i - c_i b_s  > 0$ for $i=1, \ldots, s-1$ and
\[
\psi(c_1 y_1 + \cdots + c_{s-1} y_{s-1} + y_s) > 0.
\]
\end{Claim}

\begin{proof}
As $b_1 \psi(y_1) + \cdots + b_{s-1}\psi(y_{s-1}) + b_s \psi(y_s) > 0$,
we have
\[
-\psi(y_s) < (b_1/b_s) \psi(y_1) + \cdots + (b_{s-1}/b_s) \psi(y_{s-1}).
\]
Therefore, we can find $c_1, \ldots, c_{s-1} \in \QQ_{> 0}$ such that
\[
-\psi(y_s) < c_1 \psi(y_1) + \cdots + c_{s-1} \psi(y_{s-1})
\quad\text{and}\quad
c_i < b_i/b_s
\]
for $i=1, \ldots, s-1$, as required.
\end{proof}

First we consider the case where $m=1$.
We set $I = \left\{ i \mid \phi_1(x_i) \leq 0 \right\}$ and $J = \left\{ i \mid \phi_1(x_i) > 0 \right\}$.
We prove the lemma for $m=1$ by induction on $\#(I)$.
As 
\[
\phi_1(x) = a_1 \phi_1(x_1) + \cdots + a_r \phi_1(x_r) > 0,
\]
we have $J \not= \emptyset$.
Clearly we may assume that $I \not= \emptyset$. 
Choose $s \in I$.
Note that 
\[
\phi_1\left(\sum_{j \in J} a_j x_j + a_s x_s \right) > 0,
\]
so that,
applying Claim~\ref{claim:lem:approx:positive:01} to $\{ x_j \}_{j\in J} \cup \{ x_s \}$, 
$\{ a_j \}_{j\in J} \cup \{ a_s \}$ and $\phi_1$,
we can find $c_j \in \QQ_{> 0}$ ($j \in J$) 
such that $a_j - c_j a_s  > 0$ for $j \in J$ and
\[
\phi_1\left(\sum_{j \in J} c_j x_j + x_s\right) > 0.
\]
Note that
\[
 x = \sum_{j \in J} (a_j - c_j a_s) x_j + a_s \left(\sum_{j \in J} c_j x_j + x_s\right)
 + \sum_{i \in I \setminus \{ s \}} a_i x_i.
\]
Thus, by the induction hypothesis,
the assertion of the lemma for $m=1$ follows.

\medskip
In general, we prove the lemma induction on $m$.
The previous observation shows that it holds for $m=1$.
By hypothesis of induction, there are $x'_1, \ldots, x'_r \in \QQ_{>0} x_1 + \cdots + \QQ_{>0} x_r$  and
$a'_1, \ldots, a'_r \in \RR_{>0}$ such that
$x = a'_1 x'_1 + \cdots + a'_r x'_r$ and
$\phi_l(x'_i) > 0$ for all $i=1, \ldots, r$ and $l=1, \ldots, m-1$.
Moreover, by the case where $m=1$, we can find
$x''_1, \ldots, x''_r \in \QQ_{>0} x'_1 + \cdots + \QQ_{>0} x'_r$  and
$a''_1, \ldots, a''_r \in \RR_{>0}$ such that
$x = a''_1 x''_1 + \cdots + a''_r x''_r$ and
$\phi_m(x''_i) > 0$ for all $i=1, \ldots, r$.
Note that $\phi_l$ ($l=1, \ldots, m-1$) is positive on $\QQ_{>0} x'_1 + \cdots + \QQ_{>0} x'_r$
and 
\[
\QQ_{>0} x'_1 + \cdots + \QQ_{>0} x'_r \subseteq \QQ_{>0} x_1 + \cdots + \QQ_{>0} x_r.
\]
Thus the assertion follows.
\end{proof}

\begin{Lemma}
\label{lem:h:0:chi}
Let $M$ be a finitely generated $\ZZ$-module and let $\Vert\cdot\Vert$ and $\Vert\cdot\Vert'$
be norms of $M_{\RR} := M \otimes_{\ZZ} \RR$.
Let $M'$ be a submodule of $M$ such that $M/M'$ is a torsion group, so that
$M_{\RR} = M'_{\RR} (:= M' \otimes_{\ZZ} \RR)$.
If $\Vert\cdot\Vert \leq \Vert\cdot\Vert'$,
then 
\begin{align}
\addtocounter{Claim}{1}
\label{eqn:lem:h:0:chi:01}
& \qquad \achi(M', \Vert\cdot\Vert') \leq \achi(M, \Vert\cdot\Vert). \\
\intertext{Moreover, for $\lambda \in \RR_{\geq 0}$, the following formulae hold:}
\addtocounter{Claim}{1}
\label{eqn:lem:h:0:chi:02}
& \qquad \ah(M, \exp(-\lambda)\Vert\cdot\Vert) \leq \ah(M, \Vert\cdot\Vert) + \lambda \rank M + \log(3) \rank M, \\
\addtocounter{Claim}{1}
\label{eqn:lem:h:0:chi:03}
& \qquad \achi(M, \exp(-\lambda)\Vert\cdot\Vert) = \achi(M, \Vert\cdot\Vert) + \lambda \rank M, \\
\addtocounter{Claim}{1}
\label{eqn:lem:h:0:chi:04}
& \qquad \ah(M, \Vert\cdot\Vert) \leq \ah(M', \Vert\cdot\Vert) + \log \#(M/M') + \log(6) \rank M, \\
\addtocounter{Claim}{1}
\label{eqn:lem:h:0:chi:05}
& \qquad \achi(M, \Vert\cdot\Vert) = \achi(M', \Vert\cdot\Vert) + \log \#(M/M').
\end{align}
\end{Lemma}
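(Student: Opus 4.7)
The plan is to establish the five formulas in the order (3), (5), (1), (2), (4), since each relies on material handled before it. Set $n := \rank M$, $L := M/M_{tor}$ and $L' := M'/M'_{tor}$, viewed as lattices in $M_{\RR} = M'_{\RR}$. The norms vanish on $M_{tor}$, so every torsion element lies in the unit ball; hence
\[
\#\{ x \in M \mid \Vert x \Vert \leq 1 \} = \#(M_{tor}) \cdot \#\bigl(L \cap B(M,\Vert\cdot\Vert)\bigr),
\]
and similarly with $\exp(-\lambda)\Vert\cdot\Vert$ or with $M$ replaced by $M'$. This factorization separates the torsion bookkeeping from the lattice-point count.

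Formula \eqref{eqn:lem:h:0:chi:03} is immediate: $B(M, \exp(-\lambda)\Vert\cdot\Vert) = \exp(\lambda) \cdot B(M,\Vert\cdot\Vert)$, so its Lebesgue measure scales by $\exp(\lambda n)$, and the remaining ingredients of $\achi$ are unchanged. For \eqref{eqn:lem:h:0:chi:05}, since $M/M'$ is torsion, $L'$ has finite index $d$ in $L$. A direct snake-lemma analysis of $0 \to M' \to M \to M/M' \to 0$ gives the identity
\[
d = [L:L'] = \#(M/M') \cdot \#(M'_{tor}) / \#(M_{tor}),
\]
and $\vol(M_{\RR}/L') = d \cdot \vol(M_{\RR}/L)$ then yields \eqref{eqn:lem:h:0:chi:05} by substitution in the definition of $\achi$. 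Formula \eqref{eqn:lem:h:0:chi:01} follows: since $\Vert\cdot\Vert \leq \Vert\cdot\Vert'$ implies $B(M', \Vert\cdot\Vert') \subseteq B(M', \Vert\cdot\Vert)$, one has $\achi(M', \Vert\cdot\Vert') \leq \achi(M', \Vert\cdot\Vert)$, and the latter equals $\achi(M, \Vert\cdot\Vert) - \log\#(M/M') \leq \achi(M, \Vert\cdot\Vert)$ by \eqref{eqn:lem:h:0:chi:05}.

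The substance of the lemma is \eqref{eqn:lem:h:0:chi:02}. After the torsion reduction, it suffices to prove the purely geometric bound
\[
\#\bigl(L \cap e^{\lambda} B\bigr) \leq (3 e^{\lambda})^{n} \cdot \#\bigl(L \cap B\bigr),
\qquad B := B(M,\Vert\cdot\Vert).
\]
The plan is a Minkowski-type pigeonhole argument. Set $k := \lceil 2 e^{\lambda} \rceil$ and partition $L$ into the $k^{n}$ cosets of $k L$. If $x_1, x_2 \in L \cap e^{\lambda} B$ lie in the same coset, write $x_1 - x_2 = k y$ with $y \in L$; then $\Vert y \Vert \leq 2 e^{\lambda}/k \leq 1$, so $y \in L \cap B$. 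Consequently each coset contributes at most $\#(L \cap B)$ points to $L \cap e^{\lambda} B$. Since $k \leq 2 e^{\lambda} + 1 \leq 3 e^{\lambda}$ (using $e^{\lambda} \geq 1$), the estimate follows, and taking logarithms gives \eqref{eqn:lem:h:0:chi:02}.

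Finally, \eqref{eqn:lem:h:0:chi:04} combines \eqref{eqn:lem:h:0:chi:02} with a coset-decomposition argument. Write $L = \bigsqcup_{i=1}^{d}(r_i + L')$ with $d = [L:L']$. For each nonempty $(r_i + L') \cap B$, fix $y_0$ inside it; the translation $y \mapsto y - y_0$ injects $(r_i + L') \cap B$ into $L' \cap (B - B) = L' \cap 2B$ by the symmetry and convexity of $B$. Hence $\#(L \cap B) \leq d \cdot \#(L' \cap 2B)$, and applying \eqref{eqn:lem:h:0:chi:02} to $M'$ with $\lambda = \log 2$ bounds $\#(L' \cap 2B) \leq 6^{n} \cdot \#(L' \cap B)$. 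Inserting the identity $d \cdot \#(M_{tor})/\#(M'_{tor}) = \#(M/M')$ from the analysis above and combining the torsion factors gives \eqref{eqn:lem:h:0:chi:04}. The only step requiring care is the coset-pigeonhole argument that drives \eqref{eqn:lem:h:0:chi:02}; everything else reduces to explicit volume and index identities.
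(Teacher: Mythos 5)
Your proof is correct and follows essentially the same skeleton as the paper's, with one genuine departure: for \eqref{eqn:lem:h:0:chi:02} the paper cites Lemma~1.2.2 of \cite{MoArLin} without reproducing its proof, whereas you give a self-contained coset-pigeonhole argument --- partitioning $L$ into the $k^n$ cosets of $kL$ with $k = \lceil 2e^\lambda\rceil$ and observing that two points of $L \cap e^\lambda B$ in a common coset differ by $k$ times a point of $L \cap B$ --- which is exactly the standard counting argument behind the cited lemma and makes the whole lemma self-contained. Your proofs of \eqref{eqn:lem:h:0:chi:04} and \eqref{eqn:lem:h:0:chi:05} use the same coset-decomposition and index ideas as the paper, only organized differently: you factor every count up front as $\#(M_{tor})\cdot\#(L\cap B)$ and carry the snake-lemma identity $[L:L']=\#(M/M')\,\#(M'_{tor})/\#(M_{tor})$ explicitly, whereas the paper proves \eqref{eqn:lem:h:0:chi:05} by first reducing to the torsion-free case via a commutative diagram and then invoking elementary divisors, and proves \eqref{eqn:lem:h:0:chi:04} by choosing a transversal of $M/M'$ inside the unit ball of $M$ and applying \eqref{eqn:lem:h:0:chi:02} with $\lambda=\log 2$. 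The two presentations are arithmetically equivalent; yours has the modest advantage of being fully self-contained and keeping the torsion bookkeeping uniform across all five assertions.
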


\begin{proof}
\eqref{eqn:lem:h:0:chi:01} is obvious.
\eqref{eqn:lem:h:0:chi:02} follows from \cite[Lemma~1.2.2]{MoArLin}.
\eqref{eqn:lem:h:0:chi:03} is also obvious because 
$B(M, \exp(-\lambda) \Vert\cdot\Vert) = \exp(\lambda) B(M, \Vert\cdot\Vert)$.

Let us consider \eqref{eqn:lem:h:0:chi:04}.
Let $\pi : M \to M/M'$ be the canonical homomorphism.
Let us choose $x_1, \ldots, x_N \in M$ with the following properties:
\begin{enumerate}
\renewcommand{\labelenumi}{(\roman{enumi})}
\item
$\Vert x_i \Vert \leq 1$ for all $i=1, \ldots, N$.

\item
$\pi(x_i) \not= \pi(x_j)$ for $i \not= j$.

\item
For any $x \in M$ with $\Vert x \Vert \leq 1$, there is $x_i$ with $\pi(x) = \pi(x_i)$.
\end{enumerate}
Then we can see that
\[
\left\{ x \in M \mid \Vert x \Vert \leq 1 \right\} \subseteq \left\{ x_i + x' \mid \text{$x' \in M'$ and $\Vert x' \Vert \leq 2$} \right\},
\]
and hence,
\[
\ah(M, \Vert\cdot\Vert) \leq \ah(M', (1/2)\Vert\cdot\Vert) + \log \#(M/M').
\]
Therefore, \eqref{eqn:lem:h:0:chi:04} follows from \eqref{eqn:lem:h:0:chi:02}.

For \eqref{eqn:lem:h:0:chi:05}, let us
consider the following commutative diagram:
\[
\begin{CD}
0 @>>> M' @>>> M @>>> M/M' @>>> 0 \\
@. @VVV @VVV @VVV @. \\
0 @>>> M'/M'_{tor} @>>> M/M_{tor} @>>> (M/M_{tor})/(M'/M'_{tor}) @>>> 0,
\end{CD}
\]
which show that
we may assume that $M$ is torsion free.
Let $\omega_1, \ldots, \omega_r$ be a free basis of $M$ such that
$a_1 \omega_1, \ldots, a_r \omega_r$ form a free basis of $M'$ for some $a_1, \ldots, a_r \in \ZZ_{>0}$.
Then 
\[
\vol(M_{\RR}/M') = a_1 \cdots a_r \vol(M_{\RR}/M).
\]
Thus \eqref{eqn:lem:h:0:chi:05} follows because
$\#(M'/M) = a_1 \cdots a_r$.
\end{proof}

\begin{Lemma}
\label{lem:mult:linear:diff:formula}
Let $A$ and $M$ be $\ZZ$-modules and let
$f : M^n \to A$ be a multi-linear map, that is,
\[
f(x_1, \ldots, x_i - x'_i, \ldots, x_n) = f(x_1, \ldots, x_i, \ldots, x_n) - f(x_1, \ldots, x'_i, \ldots, x_n)
\]
for all $i=1, \ldots, n$ and $x_1, \ldots, x_i, x'_i, \ldots, x_n \in M$.
Then, for $x_1, \ldots, x_n, x'_1, \ldots, x'_n \in M$,
\[
f(x'_1, \ldots, x'_n) = f(x_1, \ldots, x_n) +
\sum_{i=1}^n f(x'_1, \ldots, x'_{i-1}, \delta_i, x_{i+1}, \ldots, x_n),
\]
where $\delta_i = x'_i - x_i$.
\end{Lemma}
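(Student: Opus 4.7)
The plan is to prove this by a telescoping argument, exploiting multilinearity in one slot at a time. I would introduce the intermediate tuples
\[
T_i := (x'_1, \ldots, x'_i, x_{i+1}, \ldots, x_n)\quad (i = 0, 1, \ldots, n),
\]
so that $T_0 = (x_1,\ldots,x_n)$ and $T_n = (x'_1,\ldots,x'_n)$. These interpolate between the two input tuples by swapping one coordinate at a time from $x_i$ to $x'_i$.

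Then I would write the telescoping identity
\[
f(x'_1, \ldots, x'_n) - f(x_1, \ldots, x_n) = \sum_{i=1}^n \bigl( f(T_i) - f(T_{i-1}) \bigr),
\]
and observe that $T_i$ and $T_{i-1}$ differ only in the $i$-th slot, where $T_i$ has $x'_i$ and $T_{i-1}$ has $x_i$. By applying the given multilinearity hypothesis in the $i$-th argument (with $x'_i - x_i = \delta_i$), we get
\[
f(T_i) - f(T_{i-1}) = f(x'_1, \ldots, x'_{i-1}, \delta_i, x_{i+1}, \ldots, x_n).
\]
Summing over $i$ and rearranging yields the claimed formula.

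There is essentially no obstacle here; the only things to verify are that the telescoping indexing is correctly set up so that $T_n$ and $T_0$ really are the desired endpoints, and that the hypothesis in the statement (which is phrased only as additivity in the form $f(\ldots, x_i - x'_i, \ldots) = f(\ldots, x_i, \ldots) - f(\ldots, x'_i, \ldots)$) suffices to conclude $f(\ldots, \delta_i, \ldots) = f(\ldots, x'_i, \ldots) - f(\ldots, x_i, \ldots)$. The latter is just the hypothesis applied with $x'_i$ and $x_i$ interchanged (i.e. with $(x_i, x'_i) \rightsquigarrow (x'_i, x_i)$), so no extra assumption is needed. If one prefers, the identity can alternatively be established by induction on $n$, reducing the step $n \to n+1$ to the $n=1$ case (which is immediate from the hypothesis) applied in the last coordinate, but the telescoping proof above is cleaner and essentially one line.
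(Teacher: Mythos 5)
Your proof is correct and rests on exactly the same idea as the paper's: interpolating from $(x_1,\ldots,x_n)$ to $(x'_1,\ldots,x'_n)$ by changing one coordinate at a time and applying multilinearity in that slot. The paper phrases this as an induction on $n$, which when unrolled yields precisely your telescoping sum, so the two arguments are essentially the same.
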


\begin{proof}
We prove it by induction on $n$. In the case where $n=1$, the above means that
$f(x'_1) = f(x_1) + f(x'_1 - x_1)$, which is obvious. In general, using the induction hypothesis,
we have
\[
f(x'_1, \ldots, x'_n) = f(x'_1, \ldots, x'_{n-1}, x_n) + f(x'_1, \ldots, x'_{n-1}, \delta_{n-1})
\]
and
\begin{multline*}
\qquad\qquad f(x'_1, \ldots, x'_{n-1}, x_n) = f(x_1, \ldots, x_{n-1}, x_n) \\
 + \sum_{i=1}^{n-1} f(x'_1, \ldots, x'_{i-1}, \delta_i, x_{i+1}, \ldots, x_{n-1}, x_n).
 \quad
\end{multline*}
Thus we have the assertion.
\end{proof}

\begin{Lemma}
\label{lem:multi:symmetric:partition}
Let $V$ be a vector space over $\RR$ and $W$ a subspace of $V$ over $\RR$.
Let 
\[
\langle\ \rangle : W \times V^{l-1} \to \RR
\]
be a multi-linear map over $\RR$.
We assume that $\langle\ \rangle$ is symmetric, that is,
\[
\langle x_1, x_2, \ldots, x_l \rangle = \langle x_2, x_1, \ldots, x_l \rangle
\]
for all $x_1, x_2 \in W$ and $x_3, \ldots, x_l \in V$ and
\[
\langle x_1, \ldots, x_i, \ldots, x_j, \ldots, x_l \rangle = \langle x_1, \ldots, x_j, \ldots, x_i, \ldots, x_l \rangle
\]
for all $x_1 \in W$, $x_2, \ldots, x_l \in V$ and $2 \leq i < j \leq l$.
For simplicity, $\langle x_1,\ldots, x_l \rangle$ is denoted by
$\langle x_1 \cdots x_l \rangle$ or $\left\langle \prod_i x_i \right\rangle$. Then 
\begin{multline*}
\sum_{\emptyset \not= I \subseteq \{ 1, \ldots, l \}} \left\langle \prod_{i \in I}(a_i + b_i ) \cdot \prod_{j \in \{ 1, \ldots, l \} \setminus I} x_j \right\rangle \\
= \sum_{\emptyset \not= I \subseteq \{ 1, \ldots, l \}} \left\langle \prod_{i \in I}b_i  \cdot \prod_{j \in \{ 1, \ldots, l \} \setminus I} x_j \right\rangle+
\sum_{\emptyset \not= I \subseteq \{ 1, \ldots, l \}} \left\langle \prod_{i \in I}a_i  \cdot \prod_{j \in \{ 1, \ldots, l \} \setminus I} (x_j + b_j) \right\rangle \\
\end{multline*}
holds for $a_1, \ldots, a_l, b_1, \ldots, b_l \in W$ and $x_1, \ldots, x_l \in V$.
\end{Lemma}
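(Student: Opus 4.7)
The plan is to expand both sides by multilinearity so that each becomes a sum of terms of the shape $\left\langle \prod_{j \in J} a_j \cdot \prod_{k \in K} b_k \cdot \prod_{m \in L} x_m \right\rangle$ indexed by triples $(J,K,L)$ of pairwise disjoint subsets of $\{1,\ldots,l\}$ whose union is $\{1,\ldots,l\}$, and then to compare the resulting indexing conditions. The symmetry hypotheses guarantee that this abbreviated notation is unambiguous provided that at least one of $J, K$ is nonempty, since $a_i, b_i \in W$ and a $W$-valued factor can then be moved to the required first slot.

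First I would treat the left-hand side. For every nonempty $I \subseteq \{1,\ldots,l\}$, multilinearity in the positions indexed by $I$ gives
\[
\left\langle \prod_{i \in I}(a_i + b_i) \cdot \prod_{j \notin I} x_j \right\rangle = \sum_{J \subseteq I} \left\langle \prod_{j \in J} a_j \cdot \prod_{k \in I \setminus J} b_k \cdot \prod_{m \notin I} x_m \right\rangle.
\]
Writing $K := I \setminus J$ and summing over nonempty $I$, the LHS equals the sum of $\left\langle \prod_{j \in J} a_j \cdot \prod_{k \in K} b_k \cdot \prod_{m \notin J \cup K} x_m \right\rangle$ over all ordered pairs $(J,K)$ of disjoint subsets of $\{1,\ldots,l\}$ with $J \cup K \neq \emptyset$, each such pair arising exactly once via $I = J \cup K$.

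Next, the first sum on the right-hand side is by inspection the subsum over pairs with $J = \emptyset$ and $K \neq \emptyset$. Expanding $\prod_{j \notin I}(x_j + b_j)$ in the second sum by multilinearity and reindexing with $J := I$ and $K$ the chosen subset of $\{1,\ldots,l\} \setminus I$ yields
\[
\sum_{\emptyset \neq I}\sum_{K \subseteq \{1,\ldots,l\} \setminus I} \left\langle \prod_{i \in I} a_i \cdot \prod_{k \in K} b_k \cdot \prod_{m \notin I \cup K} x_m \right\rangle,
\]
i.e., the subsum over all disjoint pairs $(J,K)$ with $J \neq \emptyset$. Since every disjoint pair with $J \cup K \neq \emptyset$ falls into exactly one of the two cases ``$J = \emptyset$, $K \neq \emptyset$'' or ``$J \neq \emptyset$'', adding the two subsums recovers precisely the LHS.

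The identity is thus a purely combinatorial expansion, and I do not expect any serious obstacle; the main thing to verify is the bookkeeping of the reindexation. The only conceptual point is that the symmetry assumptions are exactly what is required for the abbreviated notation $\langle \cdots \rangle$ to make sense in every expanded term, namely that some $a_j$ or $b_k$ (both lying in $W$) can be placed in the first argument whenever $J \cup K \neq \emptyset$.
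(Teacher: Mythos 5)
Your proposal is correct and is essentially the paper's own proof: both sides are expanded by multilinearity into terms indexed by disjoint pairs $(J,K)$ with $J\cup K\ne\emptyset$, and the identity follows from the trivial partition of these pairs according to whether $J=\emptyset$. Your explicit remark on why the symmetry hypotheses make the abbreviated product notation well-defined (a $W$-valued factor can always occupy the first slot when $J\cup K\ne\emptyset$) is a sound point the paper leaves implicit.
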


\begin{proof}
If we set 
\begin{multline*}
\qquad\quad
A := \sum_{\emptyset \not= I \subseteq \{ 1, \ldots, l \}} 
\left\langle \prod_{i \in I}(a_i + b_i ) \cdot \prod_{j \in \{ 1, \ldots, l \} \setminus I} x_j \right\rangle \\
- \sum_{\emptyset \not= I \subseteq \{ 1, \ldots, l \}} \left\langle \prod_{i \in I}b_i  \cdot \prod_{j \in \{ 1, \ldots, l \} \setminus I} x_j \right\rangle,
\qquad\quad
\end{multline*}
then
\begin{align*}
A & = \sum_{\emptyset \not= I \subseteq \{ 1, \ldots, l \}} \sum_{L \subseteq I}
\left\langle \prod_{i \in L}a_i \cdot \prod_{i' \in I \setminus L} b_{i'} \cdot \prod_{j \in \{ 1, \ldots, l \} \setminus I} x_j \right\rangle \\
& \qquad\qquad\qquad\qquad\qquad - \sum_{\emptyset \not= I \subseteq \{ 1, \ldots, l \}} \left\langle \prod_{i \in I}b_i  \cdot \prod_{j \in \{ 1, \ldots, l \} \setminus I} x_j \right\rangle\\
& = \sum_{\emptyset \not= I \subseteq \{ 1, \ldots, l \}} \sum_{\emptyset \not = L \subseteq I}
\left\langle \prod_{i \in L}a_i \cdot \prod_{i' \in I \setminus L} b_{i'} \cdot \prod_{j \in \{ 1, \ldots, l \} \setminus I} x_j \right\rangle \\
& = \sum_{\emptyset \not = L \subseteq  \{ 1, \ldots, l \}} \sum_{M \amalg M' = \{ 1, \ldots, l \} \setminus L} 
\left\langle \prod_{i \in L}a_i \cdot \prod_{m \in M} b_{m} \cdot \prod_{m' \in M'} x_{m'} \right\rangle \\
& = \sum_{\emptyset \not= L \subseteq \{ 1, \ldots, l \}} 
\left\langle \prod_{i \in L}a_i \cdot \prod_{j \in \{ 1, \ldots, l \} \setminus L } (x_j + b_j)  \right\rangle,
\end{align*}
as desired.
\end{proof}

\begin{Lemma}
\label{lem:dekind:extension:R:Cartier}
Let $S$ be a connected Dedekind scheme and $k$ the rational function field of $S$.
Let $X$ be a projective variety over $k$. Then we have the following:
\begin{enumerate}
\renewcommand{\labelenumi}{(\arabic{enumi})}
\item
There exists a model of $X$ over $S$ \rom{(}cf. Conventions and terminology~\rom{\ref{CT:model}}\rom{)}.

\item
Let $J$ be an invertible fractional ideal sheaf on $X$.
Then there are a model $\XXX$ of $X$ over $S$ and an invertible 
fractional ideal sheaf $\JJJ$ on $\XXX$ such that $\JJJ \cap X = J$.

\item
Let $D$ be an $\RR$-Cartier divisor on $X$.
Then there are a model $\XXX$ of $X$ over $S$ and an $\RR$-Cartier divisor $\DDD$
on $\XXX$ such that $\DDD \cap X = D$.
\end{enumerate}
\end{Lemma}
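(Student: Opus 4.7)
The proof proceeds in the three parts (1), (2), (3) in sequence, each built on the previous.

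For (1), the plan is to fix a projective embedding $X \hookrightarrow \PP^N_k$ and to let $\XXX$ be the scheme-theoretic closure of $X$ in $\PP^N_S$. Closedness in $\PP^N_S$ gives projectivity of $\XXX$ over $S$; integrality is inherited from $X$, since the scheme-theoretic closure of an integral subscheme is integral; and flatness over $S$ follows because $S$ is Dedekind (every local ring is a field or a DVR) and $\XXX$ is integral and dominates $S$, hence torsion-free over each stalk $\OOO_{S,s}$.

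For (2), the strategy is to extend $J$ to a (possibly non-invertible) coherent fractional ideal sheaf on some model and then to make it invertible by a birational modification. A standard decomposition gives $J = J_1 \otimes_{\OOO_X} J_2^{-1}$ for coherent ideal sheaves $J_1, J_2 \subseteq \OOO_X$ (write $J = \OOO_X(D)$ with $D = D^+ - D^-$ and $D^\pm$ effective, so each $J_i$ can be taken to be the ideal sheaf of one of the $D^\pm$). Starting from a model $\XXX_0$ furnished by (1), let $\widetilde{J}_i \subseteq \OOO_{\XXX_0}$ be the scheme-theoretic closures, which are coherent ideal sheaves extending $J_i$. Blow up $\XXX_0$ along $\widetilde{J}_1$, then along the pullback of $\widetilde{J}_2$, obtaining $\XXX \to \XXX_0$ on which both $\widetilde{J}_i \OOO_{\XXX}$ are invertible. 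Since $J_1, J_2$ are already invertible on $X$, each blow-up is an isomorphism over the generic fiber, so $\XXX$ is again a model of $X$, and $\JJJ := (\widetilde{J}_1 \OOO_{\XXX}) \otimes (\widetilde{J}_2 \OOO_{\XXX})^{-1}$ is an invertible fractional ideal sheaf on $\XXX$ with $\JJJ \cap X = J$.

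For (3), write $D = a_1 D_1 + \cdots + a_r D_r$ with $D_i \in \Div(X)$ and $a_i \in \RR$. Apply (2) to each $\OOO_X(D_i)$ to obtain a model $\XXX_i$ and a Cartier divisor $\DDD_i$ on $\XXX_i$ with $\DDD_i \cap X = D_i$. To place everything on a single model, take the scheme-theoretic closure $\XXX$ of the diagonal embedding $X \hookrightarrow \XXX_1 \times_S \cdots \times_S \XXX_r$: this is again integral, $S$-projective, and flat as in (1), and the projections give birational $S$-morphisms $p_i : \XXX \to \XXX_i$. Setting $\DDD := \sum_{i} a_i\, p_i^* \DDD_i \in \Div(\XXX)_{\RR}$ gives the required $\RR$-Cartier divisor with $\DDD \cap X = D$.

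The main technical point is part (2): producing a model on which the chosen extension of $J$ is invertible while preserving the generic fiber. The key underlying fact is that the blow-up of an ideal which is already invertible on an open subscheme is an isomorphism over that subscheme, so the successive blow-ups used to invert $\widetilde{J}_1$ and $\widetilde{J}_2$ do not disturb $X$. Once this is secured, part (3) reduces to combining the individual extensions on a common model via fiber products and scheme-theoretic closures, both of which are routine.
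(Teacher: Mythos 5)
Your proof of (1) is identical to the paper's, and your proof of (3) is a reasonable elaboration of the paper's one-line ``(3) is a consequence of (2).'' Where you genuinely diverge is in (2). The paper first spreads $J$ out to an invertible fractional ideal sheaf $\JJJ'_U$ on $\XXX'_U$ over some nonempty open $U\subseteq S$, then invokes the extension theorem for coherent subsheaves (Hartshorne, Chapter~II, Exercise~5.15, applied with ambient quasi-coherent sheaf the constant sheaf $\Rat(\XXX')$) to extend $\JJJ'_U$ to a coherent fractional ideal sheaf $\JJJ'$ on all of $\XXX'$, and finally performs a single blow-up $\Proj(\bigoplus_m \JJJ'^m)\to\XXX'$. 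You instead first factor $J = J_1\otimes J_2^{-1}$ as a quotient of two invertible ideal subsheaves of $\OOO_X$, extend $J_1, J_2$ across $\XXX_0$ by scheme-theoretic closure of the corresponding closed subschemes, and blow up twice. Both routes work. Two points are worth flagging in yours. First, the factorization $J=J_1 J_2^{-1}$ with $J_1,J_2\subseteq\OOO_X$ \emph{invertible} (equivalently, writing the Cartier divisor $D$ with $\OOO_X(D)=J$ as a difference of effective Cartier divisors) is not free: it uses projectivity of $X$ essentially --- e.g., take $I:=J^{-1}\cap\OOO_X$, choose $n$ with $I\otimes\AAA^{n}$ globally generated for an ample $\AAA$, and let a nonzero section produce an effective Cartier divisor $E$ with $\OOO_X(-E)\subseteq I$, then set $J_2:=\OOO_X(-E)$, $J_1:=J\cdot J_2$. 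The paper's use of the coherent extension theorem sidesteps this entirely, since it extends $J$ directly without needing any factorization. Second, when you call the scheme-theoretic closures ``coherent ideal sheaves,'' you mean the ideal sheaves of the scheme-theoretic closures of $V(J_i)$ in $\XXX_0$; this is fine because $X$ is schematically dense in the integral scheme $\XXX_0$, so those ideal sheaves do restrict to $J_i$ on $X$. With those clarifications made, your argument is correct; the paper's is a bit more economical (one blow-up, no factorization) at the price of invoking Hartshorne's extension exercise.
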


\begin{proof}
(1) As $X$ is projective over $k$, there is a closed embedding $\iota : X \hookrightarrow \PP_k^N$.
Let $\XXX$ be the closure of $\iota(X)$ in $\PP^N_S$.
Then $\XXX$ is integral, projective and flat over $S$ because $S$ is a 
connected Dedekind scheme.

(2) Let $\XXX'$ be a model of $X$ over $S$.
Then we can find a non-empty open set $U$ of $S$ and an invertible fractional ideal sheaf $\JJJ'_U$ on $\XXX'_U$ such that
$\JJJ'_U \cap X = J$. Therefore, 
as $\XXX$ is noetherian, 
by using the extension theorem of coherent sheaves (cf. \cite[Chapter~II, Exercise~5.15]{Hartshorne}),
we have a fractional ideal sheaf $\JJJ'$ on $\XXX'$ such that $\JJJ' \cap \XXX'_U = \JJJ'_U$.
Let $\pi : \XXX = \Proj\left( \bigoplus_{m=0}^{\infty} {\JJJ'}^m \right) \to \XXX'$
be the blowing-up by the fractional ideal sheaf $\JJJ'$.
Then, as $\JJJ := \JJJ' \OOO_{\XXX}$ is invertible, the assertion of (2) follows.

(3) is a consequence of (2).
\end{proof}

\begin{Lemma}
\label{lem:base:change:completion}
Let $k$ be a field and $v$ a valuation of $k$.
Let $k_v$ be the completion of $k$ with respect to $v$.
\index{\AdelDivSymbol}{0kv:k_v@$k_v$}%
By abuse of notation, the unique extension of $v$ to $k_v$ is also denoted by $v$.
Then we have the following:
\begin{enumerate}
\renewcommand{\labelenumi}{(\arabic{enumi})}
\item
We assume that $v$ is discrete.
Let $X$ be a projective and geometrically integral variety over $k$ and let
$\XXX$ be a model of $X$ over $\Spec(k^{\circ})$.
We set 
\[
X_v := X \times_{\Spec(k)} \Spec(k_v)
\quad\text{and}\quad
\XXX_v := \XXX \times_{\Spec(k^{\circ})} \Spec(k_v^{\circ}).
\] 
Let $\pi : \XXX_v \to \XXX$ be the projection, and let
$(\XXX_v)_{\circ}$ and $\XXX_{\circ}$ be the central fibers of
\[
\XXX_v \to \Spec(k_v^{\circ})
\quad\text{and}\quad
\XXX \to \Spec(k^{\circ}),
\]
respectively
\rom{(}cf. Conventions and terminology~\rom{\ref{CT:nonArchimedean:value}}\rom{)}.
\index{\AdelDivSymbol}{0X:XXX_{circ}@$\XXX_{\circ}$}%
If we choose $\xi_v \in (\XXX_v)_{\circ}$ and $\xi \in \XXX_{\circ}$ with $\pi(\xi_v) = \xi$,
then we have the following:
\begin{enumerate}
\renewcommand{\labelenumii}{(\arabic{enumi}.\arabic{enumii})}
\item
$\XXX_v$ is a model of $X_v$ over $\Spec(k_v^{\circ})$.

\item
$\OOO_{\XXX,\, \xi}$ is regular if and only if $\OOO_{\XXX_v,\, \xi_v}$ is regular.

\item
We assume that $k^{\circ}$ is excellent.
Then $\OOO_{\XXX,\, \xi}$ is normal if and only if $\OOO_{\XXX_v,\, \xi_v}$ is normal.
\end{enumerate}

\item
Let $A$ be a $k$-algebra and $A_v := A \otimes_k k_v$.
Let $x = \vert\cdot\vert_x$ and $x' = \vert\cdot\vert_{x'}$ be seminorms of $A_v$.
If $\vert a \otimes 1 \vert_x = \vert a \otimes 1 \vert_{x'}$
for all $a \in A$, then $x = x'$.
\end{enumerate}
\end{Lemma}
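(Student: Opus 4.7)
My plan splits naturally into the two parts of the lemma. For Part (1), I would verify each clause of the definition of a model in turn. Projectivity, flatness, separatedness, and finite type are all preserved under arbitrary base change, and the generic fiber is $\XXX \times_{k^\circ} k_v = X \otimes_k k_v = X_v$. The nontrivial point for (1.1) is integrality of $\XXX_v$: since $X$ is geometrically integral, $X_v$ is integral, and by flatness of $\pi : \XXX_v \to \XXX$ every generic point of $\XXX_v$ lies above the generic point $\eta$ of $\XXX$, yielding a unique generic point and hence irreducibility; reducedness is inherited from $\XXX$ by faithful flatness of $k^\circ \to k_v^\circ$ together with reducedness of its fibers.

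For (1.2) and (1.3), the key calculation would be that for $\xi \in \XXX_\circ$ the residue field coincidence $k^\circ/m = k_v^\circ/m_v$ forces $k(\xi) \otimes_{k^\circ} k_v^\circ = k(\xi)$, so $\pi^{-1}(\xi) = \{\xi_v\}$ with $k(\xi_v) = k(\xi)$ and $m_\xi \OOO_{\XXX_v,\xi_v} = m_{\xi_v}$. Thus $\OOO_{\XXX,\xi} \to \OOO_{\XXX_v,\xi_v}$ is a flat local homomorphism whose closed fiber is the field $k(\xi)$. For (1.2), regularity then ascends (flat map with regular closed fiber) and descends (faithfully flat descent of regularity). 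For (1.3), I would argue that the $m_\xi$-adic and $m_{\xi_v}$-adic completions $\widehat{\OOO_{\XXX,\xi}}$ and $\widehat{\OOO_{\XXX_v,\xi_v}}$ coincide, since term-by-term $\OOO_{\XXX,\xi}/m_\xi^n = \OOO_{\XXX_v,\xi_v}/m_{\xi_v}^n$; excellence of $k^\circ$ is inherited by the finite-type algebras $\OOO_{\XXX,\xi}$ and $\OOO_{\XXX_v,\xi_v}$, which lets us transfer normality between each ring and its common completion.

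For Part (2), the strategy is density and continuity. On the subring $A \otimes 1 \subset A_v$ the two seminorms agree by hypothesis, and since both extend $v$ on $k_v$ one has $\vert a \otimes \lambda \vert_x = v(\lambda)\cdot\vert a \otimes 1\vert_x$ and similarly for $x'$, for every $a \in A$ and $\lambda \in k_v$. For a general $z = \sum_i a_i \otimes \lambda_i \in A_v$, I would pick $\mu_{i,n} \in k$ with $v(\lambda_i - \mu_{i,n}) \to 0$, so that $z_n := \sum_i a_i \otimes \mu_{i,n} = \bigl(\sum_i a_i \mu_{i,n}\bigr) \otimes 1$ lies in $A \otimes 1$, giving $\vert z_n\vert_x = \vert z_n\vert_{x'}$. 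The triangle inequality gives $\vert z - z_n\vert_x \leq \sum_i \vert a_i \otimes 1\vert_x \cdot v(\lambda_i - \mu_{i,n}) \to 0$ and likewise for $x'$, and the reverse triangle inequality then yields $\vert z\vert_x = \lim \vert z_n\vert_x = \lim \vert z_n\vert_{x'} = \vert z\vert_{x'}$.

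The subtlest step is the completion identification in (1.3): once one verifies that $m_\xi$ generates $m_{\xi_v}$ and that residue fields agree, the inverse systems $\{\OOO_{\XXX,\xi}/m_\xi^n\}$ and $\{\OOO_{\XXX_v,\xi_v}/m_{\xi_v}^n\}$ match term-by-term, and excellence does the rest. Everything else is essentially bookkeeping, modulo the reducedness statement in (1.1), for which one invokes that the fibers of $k^\circ \to k_v^\circ$ are geometrically reduced in the discrete setting so that reducedness descends along the faithfully flat base change.
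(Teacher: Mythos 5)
Your proposal is correct and follows largely the same route as the paper: both begin by establishing the local identification $\pi^{-1}(\xi)=\{\xi_v\}$, $k(\xi_v)=k(\xi)$, and $m_\xi\OOO_{\XXX_v,\xi_v}=m_{\xi_v}$ via the computation $\kappa(\xi)\otimes_{k^\circ}k_v^\circ\simeq\kappa(\xi)$, and both prove (2) by the same density-and-triangle-inequality argument. The one genuine divergence is (1.2). The paper derives $\widehat{\OOO}_{\XXX_v,\xi_v}\simeq\widehat{\OOO}_{\XXX,\xi}$ first (from the chain $\OOO_{\XXX,\xi}\subseteq\OOO_{\XXX_v,\xi_v}\subseteq\widehat{\OOO}_{\XXX,\xi}$ together with $m_\xi\OOO_{\XXX_v,\xi_v}=m_{\xi_v}$, citing Liu) and then reads off both (1.2) and (1.3) as statements about a ring and its completion. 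You instead handle (1.2) without completions, noting that $\OOO_{\XXX,\xi}\to\OOO_{\XXX_v,\xi_v}$ is flat local with residue-field closed fiber and applying ascent of regularity along flat maps with regular fibers for one direction and faithfully flat descent for the other; the completion identification is then invoked only for (1.3). This is a valid alternative that trades a single unified reduction for a slightly more elementary step-by-step argument. For (1.3) your term-by-term matching of $\OOO_{\XXX,\xi}/m_\xi^n$ with $\OOO_{\XXX_v,\xi_v}/m_{\xi_v}^n$ is a fine substitute for the paper's citation, and the use of excellence to transfer normality across completion is the same. One small point to firm up in (1.1): the clean statement to invoke is that a scheme flat over the DVR $k_v^\circ$ with integral generic fiber is itself integral (this is what the paper's cited lemma provides); your argument via flatness of $\XXX_v\to\XXX$ plus descent of reducedness can be made to work, but as written it is a bit loose about why the generic points of $\XXX_v$ are controlled and why the relevant fibers of $k^\circ\to k_v^\circ$ are geometrically reduced.
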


\begin{proof}
(1) We need to see that $\XXX_v$ is integral.
As $\XXX_v \to \Spec(k_v^{\circ})$ is flat and the generic fiber of
$\XXX_v \to \Spec(k_v^{\circ})$ is integral,
(1.1) follows from \cite[Lemma~4.2]{MoDC}.

\smallskip
Before staring the proofs of (1.2) and (1.3),
let us see
$m_{\xi_v} = m_{\xi}\OOO_{\XXX_v,\, \xi_v}$.
Let $\kappa(\xi)$ be the residue field at $\xi$.
Here we consider an exact sequence
\[
\begin{CD}
\kappa(\xi)\otimes_{k^{\circ}} k_v^{\circ\circ}  @>{\alpha}>> 
\kappa(\xi)\otimes_{k^{\circ}}
k_v^{\circ} @>>> \kappa(\xi) \otimes_{k^{\circ}} (k_v^{\circ}/k_v^{\circ\circ}) @>>> 0
\end{CD}
\]
induced by $0 \to k_v^{\circ\circ} \to 
k_v^{\circ} \to k_v^{\circ}/k_v^{\circ\circ} \to 0$.
Note that $\alpha = 0$
because $\alpha(a \otimes \varpi b)=  a \otimes \varpi b = 
\varpi a \otimes b  = 0$ for $a \in \kappa(\xi)$ and $b \in k_v^{\circ}$.
Therefore, 
\[
\kappa(\xi) \otimes_{k^{\circ}} k_v^{\circ} \simeq 
\kappa(\xi) \otimes_{k^{\circ}} (k_v^{\circ}/k_v^{\circ\circ}) \simeq 
\kappa(\xi) \otimes_{k^{\circ}} (k^{\circ}/k^{\circ\circ}) \simeq
\kappa(\xi).
\]
On the other hand, performing $\otimes_{\OOO_{\XXX,\,\xi}} \OOO_{\XXX_v,\, \xi_v}$ to
the exact sequence $0 \to m_{\xi} \to \OOO_{\XXX,\,\xi} \to \kappa(\xi) \to 0$,
we obtain
\[
0 \to m_{\xi} \OOO_{\XXX_v,\, \xi_v} \to \OOO_{\XXX_v,\, \xi_v} \to \kappa(\xi) 
\otimes_{\OOO_{\XXX,\,\xi}} \OOO_{\XXX_v,\, \xi_v} \to 0.
\]
As $\pi$ induces the isomorphism $(\XXX_v)_{\circ} \overset{\sim}{\longrightarrow} \XXX_{\circ}$,
$\pi^{-1}(\xi) = \{ \xi_v \}$, so that
$\OOO_{\XXX_v,\, \xi_v} = \OOO_{\XXX,\, \xi} \otimes_{k^{\circ}} k_v^{\circ}$.
Therefore,
\[
\kappa(\xi) \otimes_{\OOO_{\XXX,\,\xi}} \OOO_{\XXX_v,\, \xi_v} =
\kappa(\xi) \otimes_{\OOO_{\XXX,\,\xi}} (\OOO_{\XXX,\, \xi} \otimes_{k^{\circ}} k_v^{\circ}) \simeq
\kappa(\xi) \otimes_{k^{\circ}} k_v^{\circ} \simeq \kappa(\xi),
\]
which shows that $m_{\xi} \OOO_{\XXX_v,\, \xi_v}$ is the maximal ideal of
$\OOO_{\XXX_v,\, \xi_v}$, and hence $m_{\xi} \OOO_{\XXX_v,\, \xi_v} = m_{\xi_v}$.

\smallskip
Since $\OOO_{\XXX, \xi} \subseteq \OOO_{\XXX_v,\xi_v} \subseteq \widehat{\OOO}_{\XXX, \xi}$ and
$m_{\xi} \OOO_{\XXX_v,\, \xi_v} = m_{\xi_v}$, by \cite[Chapter~1, Theorem~3.16]{Liu},
we have $\widehat{\OOO}_{\XXX_v,\xi_v} \simeq \widehat{\OOO}_{\XXX, \xi}$.
Thus (1.2) follows from \cite[Proposition~11.24]{AM}.
Further, (1.3) follows from \cite[VI, 7.8.3, (v)]{EGA}.

\medskip
(2)
For $\alpha \in A_v$, we set $\alpha = a_1 \otimes \lambda_1 + \cdots + a_r \otimes \lambda_r$,
where $a_1, \ldots, a_r \in A$ and $\lambda_1, \ldots, \lambda_r \in k_v$.
Then we can find sequences $\{ \lambda_{1, n} \}_{n=1}^{\infty}, \ldots, \{ \lambda_{r, n} \}_{n=1}^{\infty}$
in $k$
such that 
$\lambda_i = \lim_{n\to\infty} \lambda_{i, n}$ for
$i=1, \ldots, r$.
Here we set 
\[
\alpha_n = a_1 \otimes \lambda_{1,n} + \cdots + a_r \otimes \lambda_{r, n} = 
(\lambda_{1,n} a_1 + \cdots + \lambda_{r, n} a_r) \otimes 1.
\]
Then,
\begin{align*}
\left| \vert \alpha_n \vert_{x} - \vert \alpha \vert_{x} \right| & \leq
\vert \alpha_n - \alpha \vert_x =
\left\vert a_1 \otimes (\lambda_{1, n} - \lambda_1) + \cdots + a_r \otimes (\lambda_{r, n} - \lambda_r)\right\vert_x \\
& \leq \left\vert a_1 \otimes (\lambda_{1, n} - \lambda_1) \right\vert_x
+ \cdots + \left\vert a_1 \otimes (\lambda_{r, n} - \lambda_r)\right\vert_x \\
& = \left\vert (a_1 \otimes 1) \cdot( 1 \otimes (\lambda_{1, n} - \lambda_1)) \right\vert_x
+ \cdots + \left\vert (a_r \otimes 1) \cdot (1 \otimes  (\lambda_{r, n} - \lambda_r))\right\vert_x \\
& = \vert a_1 \otimes 1 \vert_x v( \lambda_{1, n} - \lambda_1)
+ \cdots + \vert a_1 \otimes 1 \vert_x v(\lambda_{r, n} - \lambda_r),
\end{align*}
and hence $\lim_{n\to\infty} \vert \alpha_n \vert_{x} = \vert \alpha \vert_{x}$.
In the same way, $\lim_{n\to\infty} \vert \alpha_n \vert_{x'} = \vert \alpha \vert_{x'}$.
On the other hand, by our assumption,
\[
\vert \alpha_n \vert_x = \vert (\lambda_{1,n} a_1 + \cdots + \lambda_{r, n} a_r) \otimes 1 \vert_x =
\vert (\lambda_{1,n} a_1 + \cdots + \lambda_{r, n} a_r) \otimes 1 \vert_{x'} = \vert \alpha_n \vert_{x'}
\]
for all $n \geq 1$.
Therefore, $\vert \alpha \vert_x = \vert \alpha \vert_{x'}$, and hence $x = x'$.
\end{proof}

\ifmonog\chapter[Adelic $\RR$-Cartier divisors]{Adelic $\RR$-Cartier divisors over a discrete valuation field}\fi
\ifpaper\section{Adelic $\RR$-Cartier divisors over a discrete valuation field}\fi
In this 
\ifmonog chapter, \fi
\ifpaper section, \fi
we introduce an adelic $\RR$-Cartier divisor on a projective variety 
over a discrete valuation field and study 
their basic properties.
Roughly speaking, an adelic $\RR$-Cartier divisor is a pair of an $\RR$-Cartier divisor and 
a Green function on the analytification of the given variety, which is an analogue of
Arakelov divisors (i.e. arithmetic divisors) on an arithmetic variety.

Throughout this 
\ifmonog chapter, \fi
\ifpaper section, \fi
let $k$ be a field and $v$ a discrete valuation.
We set
\[
k^{\circ} := \{ a \in k \mid v(a) \leq 1 \}
\quad\text{and}\quad
k^{\circ\circ} := \{ a \in k \mid v(a) < 1 \}.
\]
Let $\varpi$ be a uniformizing parameter of $v$, that is, $k^{\circ\circ} = \varpi k^{\circ}$.
Note that $v$ might be trivial, so that we do not exclude the case where $\varpi = 0$.
Let us begin with Green functions on analytic spaces over a complete discrete valuation field.

\ifmonog\section[Green functions on analytic spaces]{Green functions on analytic spaces over a discrete valuation field}\fi
\ifpaper\subsection{Green functions on analytic spaces over a discrete valuation field}\fi
\label{subsec:green:function:analytic:space}

We assume that $v$ is complete.
Let $X$ be a projective and geometrically integral variety over $k$.
Let $\Rat(X)$ be the rational function field of $X$.
Let $U = \Spec(A)$ be an affine open set of $X$.
Let $p \in U$ and $x = \vert\cdot\vert_x \in U^{\an}$ such that $p_x \subseteq p$,
where $p_x$ is the associated prime of $x$ (cf. 
\ifmonog Section~\ref{subsec:analytification:algebraic:schemes}). \fi
\ifpaper Subsection~\ref{subsec:analytification:algebraic:schemes}). \fi
Then we have a natural extension $\vert\cdot\vert_x : A_p \to \RR_{\geq 0}$ of $\vert\cdot\vert_x$ on $A$
given by $\vert a/s \vert_x = \vert a \vert_x/\vert s \vert_x$ for $a \in A$ and $s \in A \setminus p$,
which yields the group homomorphism $\vert\cdot\vert_x : A_p^{\times} \to \RR_{>0}$.
Thus we obtain a canonical extension $(A_p^{\times})_{\RR} \to \RR_{>0}$,
which is also denoted by $\vert\cdot\vert_x$ by abuse of notation.
Let $f \in \Rat(X)^{\times}_{\RR}$ and $x \in U^{\an} \setminus V_{\RR}(f)^{\an}$
(see 
\ifmonog Section~\ref{subsec:closedness:support:R:Cartier:divisor} \fi 
\ifpaper Subsection~\ref{subsec:closedness:support:R:Cartier:divisor} \fi 
for the definitions of $ \Rat(X)^{\times}_{\RR}$ and
$V_{\RR}(f)$).
As $p_x \not\in V_{\RR}(f)$, we get
$f \in (A_{p_x}^{\times})_{\RR}$, and hence $\vert f(x) \vert \in \RR_{>0}$.
Therefore, we have a map
$U^{\an} \setminus V_{\RR}(f)^{\an} \to \RR$ given by $x \mapsto \log \vert f(x) \vert^2$. 
We denote it by $\log \vert f \vert^2$.
Clearly $\log \vert f \vert^2$ is continuous on $U^{\an} \setminus V_{\RR}(f)^{\an}$.

\begin{Definition}
Let $D$ be an $\RR$-Cartier divisor on $X$, that is, 
$D \in \Div(X)_{\RR} (:= \Div(X) \otimes_{\ZZ} \RR)$.
Let $X = \bigcup_{i=1}^N U_i$ be an affine open covering of $X$ such that
$D$ is given by $f_i \in \Rat(X)^{\times}_{\RR}$ on $U_i$.
A continuous function $g : X^{\an} \setminus \Supp_{\RR}(D)^{\an} \to \RR$
is called a {\em $D$-Green function of $C^0$-type on $X^{\an}$} 
\index{\AdelDivSubject}{Green function of C^0-type@Green function of $C^0$-type}%
if $g + \log \vert f_i \vert^2$ 
extends to a continuous function on $U_i^{\an}$ for each $i = 1, \ldots, N$.
\end{Definition}

For example, for $f \in \Rat(X)^{\times}_{\RR}$,
$-\log \vert f \vert^2$ is $(f)_{\RR}$-Green function of $C^0$-type on $X^{\an}$,
where $(f)_{\RR}$ is the $\RR$-principal divisor of $f$ (cf. 
\ifmonog Section~\ref{subsec:closedness:support:R:Cartier:divisor}). \fi
\ifpaper Subsection~\ref{subsec:closedness:support:R:Cartier:divisor}). \fi
We set
\[
C^0_{\eta}(X^{\an}) := \varinjlim_{\text{$U$: Zariski open set of $X$}} C^0(U^{\an}).
\]%
\index{\AdelDivSymbol}{0C:C^0_{eta}(X^{an})@$C^0_{\eta}(X^{\an})$}%
The space of all Green functions forms a subspace of $C^0_{\eta}(X^{\an})$ over $\RR$.
More precisely, we have the following proposition:

\begin{Proposition}
\label{prop:diff:two:D:Green:fun}
Let $D$ and $D'$ be $\RR$-Cartier divisors on $X$.
Let $g$ be a $D$-Green function of $C^0$-type on $X^{\an}$ and $g'$ 
a $D'$-Green function of $C^0$-type on $X^{\an}$.
Then we have the following:
\begin{enumerate}
\renewcommand{\labelenumi}{(\arabic{enumi})}
\item
For $a, b \in \RR$,
$a g + bg'$ is an $(aD + bD')$-Green function of $C^0$-type.

\item
If $D = D'$, then
$\max \{ g, g' \}$ and $\min \{ g, g' \}$ are $D$-Green functions of $C^0$-type.
\end{enumerate}
\end{Proposition}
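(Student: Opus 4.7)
The proof is local on an affine open cover, so my plan is to choose a common covering on which both Green functions can be described by the defining condition and then check each claim directly.

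For (1), the plan is as follows. Pick an affine open covering $X = \bigcup_{i=1}^N U_i$ fine enough that $D$ is given by some $f_i \in \Rat(X)^{\times}_{\RR}$ and $D'$ by some $f'_i \in \Rat(X)^{\times}_{\RR}$ on each $U_i$. Then $aD + bD'$ is represented on $U_i$ by $f_i^a (f'_i)^b$. On $U_i^{\an} \setminus (\Supp_{\RR}(D) \cup \Supp_{\RR}(D'))^{\an}$ the identity
\[
(ag + bg') + \log\vert f_i^a (f'_i)^b\vert^2 = a\bigl(g + \log\vert f_i\vert^2\bigr) + b\bigl(g' + \log\vert f'_i\vert^2\bigr)
\]
holds, and each of the two summands on the right extends by hypothesis to a continuous function on $U_i^{\an}$. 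Hence their linear combination does too, which is the required extension condition for $ag+bg'$ on the slightly larger open set $U_i^{\an} \setminus \Supp_{\RR}(aD+bD')^{\an}$ (note $\Supp_{\RR}(aD+bD') \subseteq \Supp_{\RR}(D) \cup \Supp_{\RR}(D')$, so nothing goes wrong on the boundary).

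For (2), the only mild subtlety is that $g$ and $g'$ may have been introduced with different choices of local equations for $D$. So first I would verify that the defining condition is independent of the choice of local equation: if $f_i, \tilde f_i \in \Rat(X)^{\times}_{\RR}$ both represent $D$ on $U_i$, then $f_i/\tilde f_i \in (\OOO_{X,p}^{\times})_{\RR}$ for every $p \in U_i$, and writing $f_i/\tilde f_i = u_1^{c_1}\cdots u_r^{c_r}$ with $u_j \in \OOO_X(U_i)^{\times}$ shows that $\log\vert f_i/\tilde f_i\vert^2 = \sum_j c_j \log\vert u_j\vert^2$ is continuous on $U_i^{\an}$ (since each $\vert u_j(\cdot)\vert$ is continuous and strictly positive there). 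Therefore, after passing to a common refinement, I may assume $g$ and $g'$ are defined by the same local equations $f_i$ on a common covering.

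Once this reduction is made, the rest is immediate: on $U_i^{\an} \setminus \Supp_{\RR}(D)^{\an}$
\[
\max\{g,g'\} + \log\vert f_i\vert^2 = \max\bigl\{g + \log\vert f_i\vert^2,\ g' + \log\vert f_i\vert^2\bigr\},
\]
and the right-hand side is the maximum of two continuous functions on $U_i^{\an}$, hence continuous and equal to the desired extension. The same argument works for $\min$ by replacing $\max$ with $\min$. The only step that requires any real thought is the change-of-local-equation lemma used in the reduction for (2); beyond that the proposition is a direct unwinding of the definition.
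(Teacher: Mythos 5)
Your proof is correct and follows essentially the same route as the paper's: for (1), both rewrite $ag+bg'$ relative to the local equation $f_i^a(f'_i)^b$ of $aD+bD'$ and use linearity of the extension condition; for (2), both reduce to the elementary fact that $\max$ and $\min$ of continuous functions are continuous. The one difference is that you explicitly verify that the Green-function condition is independent of the choice of local equation for $D$ on $U_i$ before reducing to a common covering with common equations $f_i$ — the paper leaves this well-definedness implicit and simply fixes a covering adapted to $D$ from the start (which is automatic once $D=D'$). Your extra step is harmless and in fact clarifies why the reduction to a common covering is legitimate, but it is not a genuinely different argument.
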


\begin{proof}
(1)
Let $X = \bigcup_{i=1}^N U_i$ be an affine open covering of $X$ such that
$D$ and $D'$ are given by $f_i$ and $f'_i$ on $U_i$, respectively.
By our assumption, there are continuous functions $\varphi$ and $\varphi'$ on $U^{\an}_i$
such that $g = -\log \vert f_i \vert^2 + \varphi$ and $g' = -\log \vert f'_i \vert^2 + \varphi'$.
Thus 
\[
ag + bg' =  -\log \vert f_i^a {f'_i}^b \vert^2 + a \varphi + b \varphi'.
\]
Note that $f_i^a {f'_i}^b$ is a local equation of $a D + bD'$ on $U_i$.
Thus (1) follows.

(2) Note that 
\[
\max \{ g, g' \} = -\log \vert f_i \vert^2 + \max\{ \varphi, \varphi' \}
\quad\text{and}\quad
\min \{ g, g' \} = -\log \vert f_i \vert^2 + \min\{ \varphi, \varphi' \}
\]
on $U_i^{\an} \setminus \Supp_{\RR}(D)^{\an}$.
Moreover, $\max\{ \varphi, \varphi' \}$ and $\min\{ \varphi, \varphi' \}$
are continuous on $U_i^{\an}$, as required.
\end{proof}

Next let us consider a norm arising from a Green function.

\begin{Proposition}
\label{prop:ext:cont:norm}
We assume that $X$ is normal. 
We set 
\[
H^0(X, D) := \left\{ \phi \in \Rat(X)^{\times} \mid (\phi) + D \geq 0 \right\} \cup \{ 0 \}.
\]%
\index{\AdelDivSymbol}{0H:H^0(X,D)@$H^0(X, D)$}%
Let $g$ be a $D$-Green function of $C^0$-type on $X^{\an}$.
Then we have the following:
\begin{enumerate}
\renewcommand{\labelenumi}{(\arabic{enumi})}
\item
For $\phi \in H^0(X, D)$,
$\vert \phi \vert \exp(-g/2)$ extends to a continuous function $\vartheta$ on $X^{\an}$.
We denote $\Vert \vartheta \Vert_{\sup}$ by $\Vert \phi \Vert_{g}$.
\index{\AdelDivSymbol}{0n:Vert phi Vert_{g}@$\Vert \phi \Vert_{g}$}%

\item
The following formulae hold:
\begin{enumerate}
\renewcommand{\labelenumii}{(\arabic{enumi}.\arabic{enumii})}
\item
$\Vert a \phi \Vert_g = v(a) \Vert \phi \Vert_{g}$ for all $a \in k$ and $\phi \in H^0(X, D)$.
\item
$\Vert \phi_1 + \phi_2 \Vert_{g} \leq \max \left\{ \Vert \phi_1 \Vert_{g}, \Vert \phi_2 \Vert_{g} \right\}$
for all $\phi_1, \phi_2 \in H^0(X, D)$.
\end{enumerate}
\end{enumerate}
\end{Proposition}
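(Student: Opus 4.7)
\emph{Proof plan.} The plan is to argue locally on an affine open covering and use Hartogs' lemma for $\RR$-rational functions (Lemma~\ref{lem:Hartogs:R:rat:fun}) to make sense of $\vert \phi f_i \vert$ as a continuous function on $U_i^{\an}$; part (2) will then follow at once from the fact that every $\vert\cdot\vert_x$ is a non-Archimedean multiplicative semi-norm.

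For (1) I would fix an affine open covering $X = \bigcup_{i=1}^N U_i$ on which $D$ has a local equation $f_i \in \Rat(X)_{\RR}^{\times}$, and write $g = -\log \vert f_i \vert^2 + \varphi_i$ with $\varphi_i \in C^0(U_i^{\an})$. On $U_i^{\an} \setminus \Supp_{\RR}(D)^{\an}$ one then has the identity
\[
\vert \phi \vert \exp(-g/2) = \vert \phi f_i \vert \exp(-\varphi_i/2),
\]
so the task reduces to extending $\vert \phi f_i \vert$ continuously to all of $U_i^{\an}$. Since $(\phi) + D \geq 0$ as an $\RR$-Weil divisor on the normal scheme $X$, restriction to $U_i$ gives $\ord_{\Gamma}(\phi f_i) \geq 0$ for every prime divisor $\Gamma$ of $\OOO(U_i)$. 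Lemma~\ref{lem:Hartogs:R:rat:fun} therefore furnishes a factorization $\phi f_i = h_1^{b_1} \cdots h_s^{b_s}$ with $h_l \in \OOO(U_i) \setminus \{0\}$ and $b_l \in \RR_{>0}$. I would then set
\[
\vartheta_i(x) := \vert h_1(x) \vert^{b_1} \cdots \vert h_s(x) \vert^{b_s} \exp(-\varphi_i(x)/2),
\]
which is continuous on $U_i^{\an}$ (being a product of continuous non-negative functions raised to positive real powers times a continuous function) and agrees with $\vert \phi \vert \exp(-g/2)$ off $\Supp_{\RR}(D)^{\an}$. Independence of the factorization, and agreement of $\vartheta_i$ with $\vartheta_j$ on $(U_i \cap U_j)^{\an}$ (where $f_i/f_j \in (\OOO_{X,p}^{\times})_{\RR}$ makes $\vert f_i/f_j \vert$ a continuous positive function on the overlap), both follow by continuity once one knows that $X^{\an} \setminus \Supp_{\RR}(D)^{\an}$ is dense in $X^{\an}$. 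The local pieces thus glue to a global $\vartheta \in C^0(X^{\an})$.

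For (2), both formulas are inherited from the underlying multiplicative semi-norms. For every $x \in X^{\an}$ one has $\vert a\phi(x) \vert = v(a) \vert \phi(x) \vert$, and multiplying by $\exp(-g(x)/2)$ and taking the supremum over $x \in X^{\an}$ yields (2.1). Since $v$ is non-Archimedean, every $\vert \cdot \vert_x$ is non-Archimedean by the remarks in Subsection~\ref{subsec:analytification:algebraic:schemes}, so $\vert (\phi_1 + \phi_2)(x) \vert \leq \max\{ \vert \phi_1(x) \vert,\, \vert \phi_2(x) \vert\}$, and the same operation produces (2.2).

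The main obstacle is in (1): one must make rigorous sense of $\vert \phi f_i \vert$ when $\phi f_i$ is only an $\RR$-rational function, show that the Hartogs factorization yields a continuous extension independent of all choices, and verify the compatibility of these local extensions on overlaps. Once the density of $X^{\an} \setminus \Supp_{\RR}(D)^{\an}$ in $X^{\an}$ is in hand, each of these compatibilities reduces by continuity to the local continuity supplied by Lemma~\ref{lem:Hartogs:R:rat:fun}.
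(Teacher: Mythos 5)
Your argument matches the paper's proof essentially step for step: cover $X$ by affine opens on which $D$ has a local equation $f_i$, apply Hartogs' lemma for $\RR$-rational functions to factor $\phi f_i$ as a product $\prod_l h_l^{b_l}$ with $h_l$ regular and $b_l>0$, observe that $\prod_l \vert h_l\vert^{b_l}\exp(-\varphi_i/2)$ is continuous on $U_i^{\an}$ and extends $\vert\phi\vert\exp(-g/2)$, and deduce (2) pointwise from the multiplicativity and non-Archimedean property of the semi-norms. The only difference is that you make explicit the density/gluing compatibility checks which the paper leaves implicit; the approach and the key lemma used are the same.
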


\begin{proof}
(1) Clearly we may assume that $\phi \not= 0$.
Let $X = \bigcup_{i=1}^N \Spec(A_i)$ be an affine open covering of $X$ such that
$D$ is given by $h_i \in \Rat(X)^{\times}_{\RR}$ on $\Spec(A_i)$.
Since $D + (\phi)$ is effective as a Weil divisor, 
$\ord_{\Gamma}(\phi h_i) \geq 0$ for any prime divisor $\Gamma$ on $\Spec(A_i)$.
Thus, by virtue of Hartogs' lemma for $\RR$-rational functions (cf. Lemma~\ref{lem:Hartogs:R:rat:fun}),
there are $u_1, \ldots, u_r \in A_i \setminus \{ 0 \}$ and $a_1, \ldots, a_r \in \RR_{>0}$
with $\phi h_i = u_1^{a_1} \cdots u_r^{a_r}$.
In particular, $\vert \phi h_i \vert = \vert u_1 \vert^{a_1} \cdots \vert u_r \vert^{a_r}$ 
is continuous on $\Spec^{\an}_k(A_i)$.
On the other hand, there is a continuous function $\varphi_i$ on $\Spec^{\an}_k(A_i)$
such that $g = -\log \vert h_i \vert^2 + \varphi_i$ on $\Spec^{\an}_k(A_i)$.
Therefore,
\[
\vert \phi \vert \exp(-g/2) = \vert \phi h_i \vert \exp(-\varphi_i/2)
\]
is continuous on $\Spec^{\an}_k(A_i)$.

\medskip
(2.1) is obvious. (2.2) is also obvious 
because $\vert \phi_1 + \phi_2 \vert \leq \max \{ \vert \phi_1 \vert, \vert \phi_2 \vert \}$
on some dense open set.
\end{proof}

A pair $(\XXX, \DDD)$ is called a {\em model of $(X, D)$}
\index{\AdelDivSubject}{model of (X, D)@model of $(X, D)$}%
if $\XXX$ is a model of $X$ over $\Spec(k^{\circ})$
(cf. Conventions and terminology~\ref{CT:model})
and $\DDD$ is an $\RR$-Cartier divisor on $\XXX$ with $\DDD \cap X = D$.
The $\RR$-Cartier divisor $\DDD$ is often called a {\em model of $D$ on $\XXX$}.
\index{\AdelDivSubject}{model of D@model of $D$}%
For $x \in X^{\an} \setminus \Supp_{\RR}(D)^{\an}$, let $f$ be a local equation of $\DDD$ at 
$\xi = r_{\XXX}(x)$, where $r_{\XXX}$ is the reduction map $X^{\an} \to \XXX_{\circ}$
(cf.
\ifmonog Section~\ref{subsec:analytification:algebraic:schemes}). \fi
\ifpaper Subsection~\ref{subsec:analytification:algebraic:schemes}). \fi
As $p_x \in \Spec(\OOO_{\XXX,\xi})$ and $f \in (\OOO_{\XXX,p_x}^{\times})_{\RR}$,
we have $\vert f (x) \vert \not= 0$, so that we can define $g_{(\XXX,\, \DDD)}(x)$ to be
\[
g_{(\XXX,\,\DDD)}(x) := -\log \vert f (x) \vert^2.
\]%
\index{\AdelDivSymbol}{0g:g_{(XXX,DDD)}@$g_{(\XXX,\,\DDD)}$}%
Let $f'$ be another local equation of $\DDD$ at $\xi$.
Then there is $u \in (\OOO_{\XXX,\xi}^{\times})_{\RR}$ such that $f' = f u$, and hence
$\vert f(x) \vert = \vert f'(x) \vert$ because $\vert u(x) \vert = 1$ (cf. \eqref{eqn:norm:leq:1}).
Therefore, $g_{(\XXX,\, \DDD)}(x)$ does not depend on the choice of $f$.
Let us see the following proposition:

\begin{Proposition}
\label{prop:green:model}
\begin{enumerate}
\renewcommand{\labelenumi}{(\arabic{enumi})}
\item
$g_{(\XXX,\, \DDD)}$ is a $D$-Green function of $C^0$-type on $X^{\an}$.

\item
Let $Y$ be another projective and geometrically integral variety over $k$
and let $\nu : Y \to X$ be a morphism over $k$.
Let $\mathcal{Y}$ be a model of $Y$ such that 
there is a morphism $\tilde{\nu} : \mathcal{Y} \to \XXX$
over $\Spec(k^{\circ})$ as an extension of $\nu$.
Then $g_{(\mathcal{Y}, \tilde{\nu}^*(\DDD))} = \nu^{\an} \circ g_{(\XXX,\, \DDD)}$
on $Y^{\an} \setminus \Supp_{\RR}(\nu^*(D))^{\an}$.

\item
Let $\tilde{\mu} : \XXX' \to \XXX$ be the normalization of $\XXX$. Let $X'$ the generic fiber of $\XXX' \to \Spec(k^{\circ})$
and $\mu : X' \to X$ the induced morphism \rom{(}note that $X'$ is normal\rom{)}.
We assume that the associated $\RR$-Weil divisor of $\mu^*(D)$ is effective.
Then the associated $\RR$-Weil divisor of $\tilde{\mu}^*(\DDD)$ is effective
if and only if
$g_{(\XXX,\, \DDD)} \geq 0$ on $X^{\an} \setminus \Supp_{\RR}(D)^{\an}$.
\end{enumerate}
\end{Proposition}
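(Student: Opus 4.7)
The three parts are proved in order: (2) uses the construction from (1), and (3) reduces to the normal case via (2). For (1), the verification is local. Choose a finite affine open cover $\XXX = \bigcup_{i=1}^N \XXX_i$ such that $\DDD|_{\XXX_i}$ admits a local equation $f_i \in \Rat(X)^{\times}_{\RR}$, and set $U_i := \XXX_i \cap X$. Then $\{U_i\}$ is an affine open cover of $X$ on which $D$ has local equation $f_i$, so it suffices to show $g_{(\XXX,\DDD)} + \log|f_i|^2$ extends continuously to $U_i^{\an}$. By definition, on the closed subset $r_\XXX^{-1}((\XXX_i)_\circ) \subseteq U_i^{\an}$ this sum is identically $0$. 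For $x \in U_i^{\an}$ with $r_\XXX(x) \in (\XXX_j)_\circ$ for some other $j$, the $t$-construction of $r_\XXX$ via the valuation criterion forces the morphism $\Spec(k(x)^\circ) \to \XXX$ to factor through $\XXX_j$, so in particular $p_x \in U_j$ and $x \in U_i^{\an} \cap U_j^{\an}$; on this open set the sum equals $\log|f_i/f_j|^2$, a continuous function of $x$. The strata glue consistently because at any $x$ with $r_\XXX(x) \in (\XXX_i \cap \XXX_j)_\circ$, the unit $f_i/f_j \in ((\OOO_{\XXX_i \cap \XXX_j})^{\times})_{\RR}$ has Berkovich norm $1$ at $x$ by~\eqref{eqn:norm:leq:1}, so the two defining formulas agree.

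For (2), I exploit the commutative square~\eqref{eqn:reduction:comm:1} of reduction maps. Given $y \in Y^{\an} \setminus \Supp_{\RR}(\nu^*D)^{\an}$, let $\eta := r_{\YYY}(y)$, so $\tilde{\nu}(\eta) = r_{\XXX}(\nu^{\an}(y))$. If $f$ is a local equation of $\DDD$ at $\tilde{\nu}(\eta)$, then $\tilde{\nu}^*(f)$ is a local equation of $\tilde{\nu}^*(\DDD)$ at $\eta$, and by construction of $\nu^{\an}$ one has $|\tilde{\nu}^*(f)(y)| = |f(\nu^{\an}(y))|$. Therefore
\[
g_{(\YYY,\, \tilde{\nu}^*(\DDD))}(y) = -\log|\tilde{\nu}^*(f)(y)|^2 = -\log|f(\nu^{\an}(y))|^2 = g_{(\XXX,\, \DDD)}(\nu^{\an}(y)).
\]

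For (3), applying (2) to $\tilde\mu: \XXX' \to \XXX$ yields $g_{(\XXX',\, \tilde\mu^*(\DDD))} = g_{(\XXX,\, \DDD)} \circ \mu^{\an}$; since $\mu$ is finite surjective, $\mu^{\an}$ is surjective, so $g_{(\XXX,\DDD)} \geq 0$ if and only if $g_{(\XXX',\, \tilde\mu^*\DDD)} \geq 0$. Replacing $(\XXX, X, D, \DDD)$ by $(\XXX', X', \mu^*D, \tilde\mu^*\DDD)$, I may assume $\XXX$ is normal and $D_W \geq 0$ on $X$. For $\DDD_W \geq 0 \Rightarrow g_{(\XXX,\DDD)} \geq 0$: Hartogs' lemma (Lemma~\ref{lem:Hartogs:R:rat:fun}) applied in the normal local ring $\OOO_{\XXX,\xi}$ yields a factorization $f = u_1^{a_1} \cdots u_r^{a_r}$ of any local equation $f$ of $\DDD$ at $\xi$, with $u_j \in \OOO_{\XXX,\xi}$ and $a_j > 0$; for $x \in X^{\an}$ with $r_\XXX(x) = \xi$, $|f(x)| = \prod_j |u_j(x)|^{a_j} \leq 1$ by~\eqref{eqn:norm:leq:1}, giving $g_{(\XXX,\DDD)}(x) \geq 0$. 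Conversely, suppose $\ord_\Gamma(\DDD_W) < 0$ for some prime divisor $\Gamma \subset \XXX$. If $\Gamma$ is horizontal, then $\Gamma \cap X$ is a prime divisor on $X$ with the same negative coefficient in $D_W$, contradicting $D_W \geq 0$. If $\Gamma$ is vertical, the DVR $\OOO_{\XXX,\Gamma}$ on $\Rat(X)$ extends $v$ on $k$ and defines a Berkovich point $x_\Gamma \in X^{\an}$ with $r_\XXX(x_\Gamma)$ equal to the generic point of $\Gamma$; then $|f(x_\Gamma)| > 1$ (from $\ord_\Gamma(f) < 0$), so $g_{(\XXX,\DDD)}(x_\Gamma) < 0$, contradicting non-negativity.

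\textbf{Main obstacle.} The most delicate issue is the continuity in (1): since $r_\XXX$ is only anti-continuous, the natural decomposition $X^{\an} = \bigcup_i r_\XXX^{-1}((\XXX_i)_\circ)$ is by closed (not open) subsets, so the Green function must be glued across strata using the unit behavior encoded in~\eqref{eqn:norm:leq:1}. A secondary technical point in (3) is the construction of the Berkovich point $x_\Gamma$ from a vertical prime divisor $\Gamma$, which requires appropriately normalizing the valuation on $\Rat(X)$ defined by $\OOO_{\XXX,\Gamma}$ so that it genuinely extends $v$ on $k$.
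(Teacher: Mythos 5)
Your argument follows the same overall structure as the paper's: the decomposition of $X^{\an}$ into the closed sets $C_j := r_{\XXX}^{-1}(\mathcal{U}_j \cap \XXX_{\circ})$ for (1), the commutative square of reduction maps for (2), and the reduction to the normal case together with Hartogs' lemma and the Berkovich points attached to vertical prime divisors for (3). In (1) you are in fact more careful than the paper: the paper's final line asserts that $\psi := g_{(\XXX,\,\DDD)} + \log\vert f_i\vert^2$ vanishes on all of $U_i^{\an}$, but this only holds on $C_i \cap U_i^{\an}$; for $x \in C_j \cap U_i^{\an}$ with $j \ne i$ the reduction $r_{\XXX}(x)$ lies in $\mathcal{U}_j$ and one has $\psi(x) = \log\vert f_i/f_j(x)\vert^2$, which is generally nonzero. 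Your pasting argument across the finitely many closed strata $C_j \cap U_i^{\an}$, using that $f_j/f_{j'}$ is a unit at $r_{\XXX}(x)$ whenever $r_{\XXX}(x) \in \mathcal{U}_j \cap \mathcal{U}_{j'}$ so that the formulas agree on overlaps, is the argument that is actually needed. One small remark: you explicitly check the compatibility only for overlaps $C_i$ versus $C_j$; the case $C_j$ versus $C_{j'}$ with both indices different from $i$ is needed too (same one-line computation). In (3) you also spell out why horizontal components of $\DDD_W$ cause no difficulty, which the paper leaves implicit, and for the vertical case the precise normalization $\vert f\vert_{x_{\Gamma}} = v(\varpi)^{\ord_{\Gamma}(f)/\ord_{\Gamma}(\varpi)}$ that you flag as a technical point is exactly what ensures $x_{\Gamma}$ restricts to $v$ on $k$, and should be written out.
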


\begin{proof}
(1)
Let $\XXX = \bigcup_{i=1}^N \Spec(\mathcal{A}_i)$ be an affine open covering of $\XXX$ such that
we have a local equation $f_i \in \Rat(X)^{\times}_{\RR}$ of $\DDD$ on $\mathcal{U}_i := \Spec(\mathcal{A}_i)$,
where $\mathcal{A}_i$ is a $k^{\circ}$-algebra  for each $i = 1, \ldots, N$.
We set $C_i = r_{\XXX}^{-1}(\mathcal{U}_i \cap \XXX_{\circ})$.
Then $C_i$ is closed (cf. \cite[Section~2.4]{Be}) and $\bigcup_{i=1}^N C_i = X^{\an}$.

First let us see that
$g_{(\XXX,\, \DDD)} : X^{\an} \setminus \Supp_{\RR}(D)^{\an} \to \RR$ is continuous.
By our construction, 
$g_{(\XXX,\, \DDD)} (x) = -\log \vert f_{i}(x) \vert^2$
for $x \in C_i \setminus \Supp_{\RR}(D)^{\an}$.
Thus $g_{(\XXX,\, \DDD)}$ is continuous on $C_i\setminus \Supp_{\RR}(D)^{\an}$.
Let $Z$ be a closed subset in $\RR_{\geq 0}$.
As 
\[
\left(\rest{g_{(\XXX,\, \DDD)}}{C_i \setminus \Supp_{\RR}(D)^{\an}}\right)^{-1}(Z)
= g_{(\XXX,\, \DDD)}^{-1}(Z) \cap (C_i \setminus \Supp_{\RR}(D)^{\an}), 
\]
$g_{(\XXX,\, \DDD)}^{-1}(Z) \cap (C_i \setminus \Supp_{\RR}(D)^{\an})$ is closed in $C_i \setminus \Supp_{\RR}(D)^{\an}$, and
hence 
\[
g_{(\XXX,\, \DDD)}^{-1}(Z) \cap (C_i \setminus \Supp_{\RR}(D)^{\an})
\]
is closed in $X^{\an} \setminus \Supp_{\RR}(D)^{\an}$.
Note that
\begin{multline*}
\qquad \bigcup_{i=1}^N \left(g_{(\XXX,\, \DDD)}^{-1}(Z) \cap (C_i \setminus \Supp_{\RR}(D)^{\an})\right)  \\
= 
g_{(\XXX,\, \DDD)}^{-1}(Z) \cap \bigcup_{i=1}^N (C_i \setminus \Supp_{\RR}(D)^{\an})
= g_{(\XXX,\, \DDD)}^{-1}(Z).\qquad
\end{multline*}
Thus $g_{(\XXX,\, \DDD)}^{-1}(Z)$ is closed in $X^{\an} \setminus \Supp_{\RR}(D)^{\an}$, 
so that $g_{(\XXX,\, \DDD)}$ is continuous on $X^{\an} \setminus \Supp_{\RR}(D)^{\an}$.

Since $f_i$ is a local equation of $D$ on $U_i = \mathcal{U}_i \cap X$, in order to see that
$g_{(\XXX,\, \DDD)}$ is a $D$-Green function of $C^0$-type, it is sufficient to see that
$\psi = g_{(\XXX,\, \DDD)} + \log \vert f_i \vert^2$ extends to a continuous function on $U_i^{\an}$,
which is obvious because $\psi = 0$ on $U_i^{\an}$.

\medskip
(2) First note that $\nu(\Supp_{\RR}(\nu^*(D))) \subseteq \Supp_{\RR}(D)$, so that
\[
(\nu^{\an})^{-1}(X^{\an} \setminus \Supp_{\RR}(D)^{\an}) \subseteq Y^{\an} \setminus \Supp_{\RR}(\nu^*(D))^{\an}.
\]
We set $C'_i = r_{\mathcal{Y}}^{-1}(\tilde{\nu}^{-1}(\mathcal{U}_i) \cap \mathcal{Y}_{\circ})$.
Let $y \in C'_i \setminus \Supp(\nu^*(D))^{\an}$, $\xi' = r_{\mathcal{Y}}(y)$
and $\xi = r_{\XXX}(\nu^{\an}(y))$.
Note that $\xi = \tilde{\nu}(\xi') \in \mathcal{U}_i \cap \XXX_{\circ}$ by \eqref{eqn:norm:leq:1}.
Then, as $\tilde{\nu}^*(f_i)$ is a local equation of $\tilde{\nu}^*(\DDD)$ at $\xi'$,
\[
g_{(\mathcal{Y},\, \tilde{\nu}^*(\DDD))}(y) = -\log \vert \tilde{\nu}^*(f_i) \vert^2_y
= -\log \vert f_i \vert^2_{\nu^{\an}(y)} = g_{(\XXX,\, \DDD)}(\nu^{\an}(y)),
\]
as required.

\medskip
(3)
By virtue of (2), $g_{(\XXX',\: \tilde{\mu}^*(\DDD))} = g_{(\XXX,\, \DDD)} \circ \mu^{\an}$ on
$(X')^{\an} \setminus \Supp_{\RR}(\mu^*(D))^{\an}$.
Moreover, $\mu^{\an} : (X')^{\an} \to X^{\an}$ is surjective by \cite[Proposition~3.4.6]{Be}.
Thus we may assume that $\XXX$ is normal.

First we assume that 
$\DDD$  is effective as a Weil divisor.
Then, $\ord_{\Gamma}(f_i) \geq 0$ for any prime divisor $\Gamma$ on $\mathcal{U}_i$.
Thus, by Hartogs' lemma for $\RR$-rational functions (cf. Lemma~\ref{lem:Hartogs:R:rat:fun}),
there are $h_1, \ldots, h_r \in \AAA_i \setminus \{ 0 \}$ and $a_1, \ldots, a_r \in \RR_{>0}$
with $f_i = h_1^{a_1} \cdots h_r^{a_r}$.
Note that $\vert h_j \vert_x \leq 1$ for $j=1, \ldots, r$ and $x \in C_i \setminus \Supp_{\RR}(D)^{\an}$,
and hence $\vert f_i \vert_x \leq 1$, as required.

\smallskip
Next we assume that $g_{(\XXX,\, \DDD)} \geq 0$ on $X^{\an} \setminus \Supp_{\RR}(D)^{\an}$.
Let us see that $\DDD$ is effective as a Weil divisor.
If $v$ is trivial, then $\DDD = D$ is effective,
so that we may assume that $v$ is non-trivial.
It is sufficient to show that the coefficient of $\DDD_W$ 
with respect to a vertical 
prime divisor $\Gamma$ is non-negative.
Let us consider a multiplicative seminorm $x_{\Gamma} = \vert\cdot\vert_{x_{\Gamma}}$ given by
\[
\vert f \vert_{x_{\Gamma}} := v(\varpi)^{\ord_{\Gamma}(f)/\ord_{\Gamma}(\varpi)}
\]
for $f \in \Rat(X)$,
where $\varpi$ is a uniformizing parameter of $k^{\circ}$.
As $x_{\Gamma} \in X^{\an} \setminus \Supp_{\RR}(D)^{\an}$,
if $x_{\Gamma} \in C_i$, then 
\[
0 \leq g_{(\XXX,\, \DDD)}(x_{\Gamma}) = -\log \vert f_i \vert^2_{x_{\Gamma}} = 
-2 \log v(\varpi) \frac{\ord_{\Gamma}(f_i)}{\ord_{\Gamma}(\varpi)},
\]
and hence $\ord_{\Gamma}(f_i) \geq 0$, as desired.
\end{proof}

Here we discuss a more sophisticated maximum problem of Green functions on a smooth projective curve
than (2) in Proposition~\ref{prop:diff:two:D:Green:fun}.

\begin{Proposition}
\label{prop:max:Green:adelic:01}
We assume that $X$ is a smooth projective curve over $k$. 
Let $D_1, \ldots, D_r$ be $\RR$-Cartier divisors on $X$ and
let $D := \max \{ D_1, \ldots, D_r \}$ \rom{(}Conventions and terminology~\rom{\ref{CT:max:min:divisor}}\rom{)}.
For each $i=1, \ldots, r$,
let $g_i$ be a $D_i$-Green function of $C^0$-type on $X^{\an}$.
We set 
\[
g := \max \{ g_1, \ldots, g_r \}
\]
on $X^{\an} \setminus (\Supp_{\RR}(D_1)^{\an} \cup \cdots \cup \Supp_{\RR}(D_r)^{\an})$.
Then $g$ extends to a continuous function on $X^{\an} \setminus \Supp_{\RR}(D)^{\an}$, and
$g$ yields a $D$-Green function of $C^0$-type on $X^{\an}$, which is also denoted by
\[
\max \{ g_1, \ldots, g_r \}
\]
by abuse of notation.
\end{Proposition}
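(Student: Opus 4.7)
The proof is local on $X^{\an}$: continuity of an extension is a local property, and local continuous extensions of a function defined on a dense open set glue automatically. So I fix $x_{0}\in X^{\an}\setminus\Supp_{\RR}(D)^{\an}$ and pass to an affine open $U=\Spec(A)$ of $X$ containing $p_{x_{0}}$ on which $D$ and each $D_{i}$ have local equations $f,f_{1},\ldots,f_{r}\in\Rat(X)^{\times}_{\RR}$. Since $X$ is a smooth curve, $U$ is regular, and the definition of $\max\{D_{1},\ldots,D_{r}\}$ (Conventions and terminology~\ref{CT:max:min:divisor}) gives $\ord_{\Gamma}(D)=\max_{i}\ord_{\Gamma}(D_{i})$ for every prime divisor $\Gamma$ of $U$. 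In particular each $D-D_{i}$ is effective as an $\RR$-Weil divisor on $U$, and Hartogs' lemma for $\RR$-rational functions (Lemma~\ref{lem:Hartogs:R:rat:fun}) allows us to write $f/f_{i}=h_{i,1}^{b_{i,1}}\cdots h_{i,r_{i}}^{b_{i,r_{i}}}$ with $h_{i,j}\in A\setminus\{0\}$ and $b_{i,j}\in\RR_{>0}$.

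Setting $\varphi_{i}:=g_{i}+\log\vert f_{i}\vert^{2}$, which by the Green function hypothesis extends continuously across $U^{\an}$, the identity
\[
g+\log\vert f\vert^{2}\;=\;\max_{1\leq i\leq r}\Bigl(\varphi_{i}+\sum_{j=1}^{r_{i}}b_{i,j}\log\vert h_{i,j}\vert^{2}\Bigr)
\]
holds on the dense open subset $U^{\an}\setminus\bigcup_{i}\Supp_{\RR}(D_{i})^{\an}$. Each summand on the right is a continuous function $U^{\an}\to[-\infty,+\infty)$, attaining $-\infty$ at $x$ exactly when some $h_{i,j}$ lies in $p_{x}$. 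The central finiteness claim is that at every $x\in U^{\an}$ at least one of these summands is finite. When $p_{x}$ is the generic point of $X$ this is automatic since all $h_{i,j}\neq 0$; when $p_{x}$ is a closed point $\Gamma$, the index $i_{0}$ with $\ord_{\Gamma}(D_{i_{0}})=\ord_{\Gamma}(D)$, which exists by the very definition of $\max$, satisfies $\ord_{\Gamma}(f/f_{i_{0}})=0$ and hence $\ord_{\Gamma}(h_{i_{0},j})=0$ for every $j$ (being non-negative integers with weighted sum zero and positive weights), so every $\vert h_{i_{0},j}\vert_{x}>0$ and the $i_{0}$-th summand is finite at $x$.

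Consequently the right-hand side of the identity is a max of finitely many $[-\infty,+\infty)$-valued continuous functions that is finite at every point, hence an $\RR$-valued continuous function on $U^{\an}$. This provides a continuous extension of $g+\log\vert f\vert^{2}$ across $U^{\an}$, and subtracting $\log\vert f\vert^{2}$ (continuous on $U^{\an}\setminus\Supp_{\RR}(D)^{\an}$) gives the desired continuous extension of $g$ to $U^{\an}\setminus\Supp_{\RR}(D)^{\an}$. Since these local extensions agree on their dense common domain, they glue to a global continuous function on $X^{\an}\setminus\Supp_{\RR}(D)^{\an}$ which by construction is a $D$-Green function of $C^{0}$-type. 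The main technical obstacle I anticipate is the passage from the algebraic inequality $D-D_{i}\geq 0$ to the topological statement that the displayed max of possibly $-\infty$-valued continuous functions is a continuous $\RR$-valued function; both the regularity of $X$ (for the coefficient-wise definition of $\max\{D_{i}\}$) and Hartogs' lemma (to factor $f/f_{i}$ as a product of regular functions with positive real exponents) enter essentially here.
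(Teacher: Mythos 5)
Your proof is correct but takes a genuinely different route from the paper's. You invoke Hartogs' lemma for $\RR$-rational functions (Lemma~\ref{lem:Hartogs:R:rat:fun}) to factor each $f/f_i$ as a product $\prod_j h_{i,j}^{b_{i,j}}$ of regular functions with positive real exponents, thereby rewriting $g+\log\vert f\vert^2$ as a maximum of finitely many continuous $[-\infty,+\infty)$-valued functions on $U^{\an}$; the crux is then a pointwise order computation ($\ord_{p_x}(f/f_{i_0})=0$ for an index $i_0$ realizing the coefficient maximum at $p_x$, forcing $\ord_{p_x}(h_{i_0,j})=0$ for all $j$) showing that this maximum never equals $-\infty$. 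The paper's proof instead fixes an arbitrary $D$-Green function $g'$, observes that $g-g'=\max\{g_1-g',\ldots,g_r-g'\}$ is a maximum of $(D_i-D)$-Green functions whose coefficient-wise maximum is the zero divisor, and so reduces to the special case $D=0$; that case is handled by a purely topological argument near each $\tilde{x}_j$, partitioning the indices according to whether the coefficient $a_{ij}$ vanishes or is negative, and producing an open punctured neighborhood of $\tilde{x}_j$ on which the maximum is attained among the indices with $a_{ij}=0$ (the remaining $g_{i'}$ being upper-semicontinuous with value $-\infty$ at $\tilde{x}_j$ and hence eventually dominated). Your approach buys a single uniform local computation with no reduction step and no need to produce a reference $D$-Green function in advance, at the price of leaning on Hartogs' factorization; the paper's approach is more elementary (usc and openness only) at the price of a case split and an implicit appeal to the existence of some $D$-Green function $g'$.
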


\begin{proof}
Let $x_1, \ldots, x_N$ be closed points of $X$ such that, for each $i=1, \ldots, r$,
\[
D_i = a_{i1} x_1 + \cdots + a_{iN} x_N
\]
for some $a_{ij} \in \RR$. If we set $a_j = \max \{ a_{1j}, \ldots, a_{rj} \}$ for $j=1, \ldots, N$,
then 
\[
D = a_1 x_1 + \cdots + a_N x_N.
\]
Let $\tilde{x}_j$ be a unique point of $X^{\an}$ such that $\{ \tilde{x}_j \} = \{ x_j \}^{\an}$.

\begin{Claim}
If $D = 0$, then $g$ is continuous on $X^{\an}$.
\end{Claim}

\begin{proof}
Clearly $g$ is continuous on $X^{\an} \setminus \{ \tilde{x}_1, \ldots, \tilde{x}_N \}$, so that
we need to show that $g$ is continuous at each $\tilde{x}_j$.
Let us choose an affine open set $U$ of $X$ such that 
\[
\{ x_1, \ldots, x_N \} \cap U = \{ x_j \}.
\]
As $a_{ij} \leq 0$, we can see that $g_i$ is upper-semicontinuous on $U^{\an}$.
Moreover, $g_i$ is continuous on $U^{\an}$ if $a_{ij} = 0$.
We set 
\[
I := \{ i = 1, \ldots, r \mid a_{ij} = 0 \}
\quad\text{and}\quad
I' := \{ i = 1, \ldots, r \mid a_{ij} < 0 \}.
\]
Note that $I \not= \emptyset$ because $\max \{ a_{1j}, \ldots, a_{rj} \} = a_j = 0$, so that
we choose $i_0 \in I$.
Here we set
\[
V := \left\{ x \in U^{\an} \mid
\text{$g_{i_0}(x) > g_{i_0}(\tilde{x}_j) - 1$ and $g_{i'}(x) < g_{i_0}(\tilde{x}_j) - 1$ for all $i' \in I'$}
\right\}.
\]
Then $V$ is an open set and $\tilde{x}_j \in V$.
Further 
\[
\max \{ g_i(x) \mid i \in I \} \geq g_{i_0}(x) > g_{i_0}(\tilde{x}_j) - 1 > \max \{ g_{i'}(x) \mid i' \in I' \}
\]
on $V \setminus \{ \tilde{x}_j \}$, and hence $g(x) = \max \{ g_i(x) \mid i \in I \}$ on $V \setminus \{ \tilde{x}_j \}$.
Therefore, the claim follows because $g_i$ is continuous on $U^{\an}$ for $i \in I$.
\end{proof}

Let $g'$ be a $D$-Green function of $C^0$-type on $X^{\an}$.
It is sufficient to see that $g - g'$ extends to a continuous function on $X^{\an}$.
Clearly $g - g'$ is continuous on $X^{\an} \setminus \{ \tilde{x}_1, \ldots, \tilde{x}_N \}$ and
\[
g - g' = \max \{ g_1 - g', \ldots, g_r - g' \}
\]
on $X^{\an} \setminus \{ \tilde{x}_1, \ldots, \tilde{x}_N \}$.
Note that $g_i - g'$ is a $(D_i - D)$-Green function of $C^0$-type on $X^{\an}$ and
$\max \{ D_1 - D, \ldots, D_r - D \} = 0$,
so that the assertion follows from the above claim.
\end{proof}

Finally let us consider a Green function of $(C^0 \cap \Tpsh)$-type,
which is a counterpart of a semipositive metric.

\begin{Definition}
Let $g$ be a $D$-Green function of $C^0$-type on $X^{\an}$.
We say $g$ is {\em of $(C^0 \cap \Tpsh)$-type} 
\index{\AdelDivSubject}{of (C^0 \cap \Tpsh)-type@of $(C^0 \cap \Tpsh)$-type}%
if $D$ is nef and
there is a sequence $\{ (\XXX_n, \DDD_n) \}_{n=1}^{\infty}$ of
models  of $(X, D)$ with the following properties:
\begin{enumerate}
\renewcommand{\labelenumi}{(\arabic{enumi})}
\item
For each $n \geq 1$, $\DDD_n$ is relatively nef with respect to $\XXX_n \to \Spec(k^{\circ})$
(cf Conventions and terminology~\ref{CT:rel:nef}).

\item
If we set $\phi_n = g_{(\XXX_n,\, \DDD_n)} - g$, then $\lim_{n\to\infty} \Vert \phi_n \Vert_{\sup} = 0$.
\end{enumerate}
\end{Definition}

As an application of results in Appendix (cf. Corollary~\ref{cor:nef:approx:nef}),
we have the following characterization of relatively nef divisors.

\begin{Proposition}
\label{prop:PSH:imply:nef}
Let $\XXX$ be a normal model of $X$ and let $\DDD$ be an $\RR$-Cartier divisor on $\XXX$.
Then $g_{(\XXX, \DDD)}$ is of $(C^0 \cap \Tpsh)$-type if and only if
$\DDD$ is relatively nef.
\end{Proposition}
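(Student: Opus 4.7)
The plan is to prove each implication separately, with the ``only if'' direction carrying most of the substance.

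For the ``if'' direction, suppose $\DDD$ is relatively nef on $\XXX$. First I would verify that $D := \DDD \cap X$ is nef on the generic fiber: given any integral curve $C \subset X$, its scheme-theoretic closure $\bar{C}$ in $\XXX$ is flat over $\Spec(k^{\circ})$, with special fiber $\bar{C}_{\circ}$ a $1$-cycle supported on vertical curves of $\XXX$; flatness then gives $\deg_C D = \DDD \cdot \bar{C}_{\circ} \geq 0$. With $D$ nef, the constant sequence $(\XXX_n, \DDD_n) := (\XXX, \DDD)$ satisfies the two conditions in the definition of $(C^0 \cap \Tpsh)$-type trivially, since $\phi_n \equiv 0$.

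For the ``only if'' direction, take a sequence $\{(\XXX_n, \DDD_n)\}$ of models with each $\DDD_n$ relatively nef such that $\phi_n := g_{(\XXX_n, \DDD_n)} - g_{(\XXX, \DDD)} \to 0$ uniformly. To show $\DDD$ is relatively nef on $\XXX$, fix a vertical curve $C$ on $\XXX$. For each $n$, form a normal model $\YYY_n$ with birational morphisms $\mu_n \colon \YYY_n \to \XXX$ and $\nu_n \colon \YYY_n \to \XXX_n$, for instance by normalizing the closure of the diagonal in $\XXX \times_{\Spec(k^{\circ})} \XXX_n$. By Proposition~\ref{prop:green:model}(2), $g_{(\YYY_n, \mu_n^*\DDD)} = g_{(\XXX, \DDD)}$ and $g_{(\YYY_n, \nu_n^*\DDD_n)} = g_{(\XXX_n, \DDD_n)}$, so the vertical $\RR$-Cartier divisor $E_n := \mu_n^*\DDD - \nu_n^*\DDD_n$ on $\YYY_n$ satisfies $g_{(\YYY_n, E_n)} = -\phi_n$. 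Letting $\tilde{C}$ be a vertical lift of $C$ to $\YYY_n$, relative nefness of $\nu_n^*\DDD_n$ yields
\[
\DDD \cdot C \;=\; \nu_n^*\DDD_n \cdot \tilde{C} + E_n \cdot \tilde{C} \;\geq\; E_n \cdot \tilde{C}.
\]

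The heart of the argument is to show $|E_n \cdot \tilde{C}| \to 0$ as $n \to \infty$. Evaluating $g_{(\YYY_n, E_n)} = -\phi_n$ at the Shilov point $x_{\Gamma}$ attached to each vertical prime divisor $\Gamma$ on $\YYY_n$, and using the formula from the proof of Proposition~\ref{prop:green:model}(3), expresses the Weil-coefficient of $\Gamma$ in $E_n$ as $\phi_n(x_\Gamma)\,\ord_\Gamma(\varpi)/(2 \log v(\varpi))$. The factor $\ord_\Gamma(\varpi)$ is a priori unbounded as $\YYY_n$ varies, but it is cancelled by the same factor implicit in the subscheme intersection $\Gamma \cdot \tilde{C}$, so the net intersection number is $O(\|\phi_n\|_{\sup})$. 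The main obstacle is organizing this uniform decay cleanly across the sequence of models $\YYY_n$; this is precisely what the cited Corollary~\ref{cor:nef:approx:nef} in the Appendix is designed to package, by reducing the approximation-by-relatively-nef-models condition to a clean statement about relative nefness on a single chosen normal model, from which the conclusion is immediate.
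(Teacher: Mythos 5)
Your \textbf{``if'' direction} is correct: passing to the flat closure $\bar{C}$ of a curve $C\subset X$ and comparing degrees on generic and special fibers shows $D$ is nef, after which the constant sequence $(\XXX_n,\DDD_n):=(\XXX,\DDD)$ trivially witnesses the $(C^0\cap\Tpsh)$-condition. This matches what is implicitly expected.

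Your \textbf{``only if'' direction} has two problems. First, the direct estimate you sketch does not close. You claim that the growth of $\ord_{\Gamma}(\varpi)$ in the Weil coefficient of $E_n$ is ``cancelled'' by the intersection number $\Gamma\cdot\tilde{C}$, giving $|E_n\cdot\tilde{C}|=O(\|\phi_n\|_{\sup})$. But the only identity you have for free is $(\YYY_n)_{\circ}\cdot\tilde{C}=0$, which is a \emph{signed} cancellation $\sum_{\Gamma}\ord_{\Gamma}(\varpi)(\Gamma\cdot\tilde{C})=0$; the coefficients of $E_n$ vary in sign with $\phi_n(x_\Gamma)$, so no bound on $\sum_{\Gamma}\ord_{\Gamma}(\varpi)\,|\Gamma\cdot\tilde{C}|$ follows, and since $\tilde{C}$ lies inside $\Supp((\YYY_n)_\circ)$ you cannot invoke positivity of intersection with effective vertical divisors either. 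Moreover both $\tilde{C}$ and the vertical prime divisors change with $\YYY_n$, so there is no fixed quantity to bound uniformly. This sketch, taken literally, would fail.

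Second, when you fall back to Corollary~\ref{cor:nef:approx:nef} (which is indeed the tool the paper intends, as it explicitly cites it), you do not verify its hypothesis. Condition (3) of the corollary requires $\lim_{n\to\infty}\mult_w(\DDD_n)=\mult_w(\DDD)$ for every divisorial valuation $w\in\DVal_k(\XXX)$, and this is the whole content of the bridge from the Green-function formulation to the intersection-theoretic one. The verification is short but essential: for a divisorial valuation $w=a\,\ord_\Gamma$ with Shilov point $x_w\in X_v^{\an}$, the formula from Proposition~\ref{prop:green:model}(3) gives $\mult_w(\DDD)=\dfrac{a\,\ord_\Gamma(\varpi)}{-2\log v(\varpi)}\,g_{(\XXX,\DDD)}(x_w)$, where the prefactor is a constant depending only on $w$ and not on the model. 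Since $\|\phi_n\|_{\sup}\to 0$ implies $g_{(\XXX_n,\DDD_n)}(x_w)\to g_{(\XXX,\DDD)}(x_w)$ for each fixed $x_w$, we get $\mult_w(\DDD_n)\to\mult_w(\DDD)$, and Corollary~\ref{cor:nef:approx:nef} then yields relative nefness of $\DDD$. You should replace the hand-wave with this two-line computation; with it, the argument is complete and coincides with the paper's intended route.
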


In addition,
we have the following propositions.

\begin{Proposition}
\label{prop:Green:psh:C0:approx}
We assume that $v$ is non-trivial.
Let $D$ be a nef $\RR$-Cartier divisor on $X$.
Let $g$ be a $D$-Green function of  $(C^0 \cap \Tpsh)$-type.
Then there are sequences $\{ (\XXX_n, \DDD_n) \}_{n=1}^{\infty}$ and $\{ (\XXX_n, \DDD'_n) \}_{n=1}^{\infty}$ of
models of $(X, D)$ with the following properties:
\begin{enumerate}
\renewcommand{\labelenumi}{(\arabic{enumi})}
\item
For all $n \geq 1$, $\DDD_n$ and $\DDD'_n$  are relatively nef 
with respect to $\XXX_n \to \Spec(k^{\circ})$.

\item
$g_{(\XXX_n,\, \DDD_n)} \leq g \leq g_{(\XXX_n,\, \DDD'_n)}$ for all $n \geq 1$.

\item
If we set $\phi_n = g_{(\XXX_n,\, \DDD_n)} - g$ and $\phi'_n = g_{(\XXX_n,\, \DDD'_n)} - g$,
then 
\[
\lim_{n\to\infty} \Vert \phi_n \Vert_{\sup} = \lim_{n\to\infty} \Vert \phi'_n \Vert_{\sup} = 0.
\]
\end{enumerate}
\end{Proposition}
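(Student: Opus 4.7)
The plan is to reduce the problem to the basic approximation already granted by the definition of $(C^0 \cap \Tpsh)$-type, and then correct the approximating sequence by a constant shift implemented at the level of divisors. First, by definition there exists a sequence $\{(\YYY_n, \EEE_n)\}$ of models of $(X,D)$ with $\EEE_n$ relatively nef and $\psi_n := g_{(\YYY_n, \EEE_n)} - g$ satisfying $\lim_{n\to\infty}\Vert\psi_n\Vert_{\sup} = 0$. What is needed beyond this is to arrange the two one-sided bounds $g_{(\XXX_n,\DDD_n)} \leq g \leq g_{(\XXX_n,\DDD'_n)}$ while keeping both divisors relatively nef and still converging uniformly.

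The key observation is that the uniformizer $\varpi$ of $k^\circ$ provides a canonical constant-shift mechanism. Regarded as a rational function on $\YYY_n$, its principal divisor $(\varpi)$ is Cartier and numerically trivial, so adding any real multiple of $(\varpi)$ to $\EEE_n$ preserves relative nefness. Since $\varpi \in k^\times$ is a unit on $X$, we also have $(\varpi) \cap X = 0$, so the resulting divisor remains a model of $D$. Moreover, because every $x \in X^{\an}$ is a multiplicative seminorm extending $v$ on $k$, we have $\vert \varpi(x)\vert = v(\varpi) < 1$ identically on $X^{\an}$. Hence for any $a \in \RR$, if $f$ is a local equation of $\EEE_n$ at $r_{\YYY_n}(x)$, then $f\varpi^a$ is a local equation of $\EEE_n + a(\varpi)$, giving
\[
g_{(\YYY_n,\,\EEE_n + a(\varpi))}(x) = g_{(\YYY_n,\,\EEE_n)}(x) - 2a\log v(\varpi),
\]
a genuine constant shift of the Green function.

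With this in hand, set $c := -2\log v(\varpi) > 0$, $\epsilon_n := \Vert\psi_n\Vert_{\sup}/c$, $\XXX_n := \YYY_n$, $\DDD_n := \EEE_n - \epsilon_n(\varpi)$ and $\DDD'_n := \EEE_n + \epsilon_n(\varpi)$. Both divisors are relatively nef and both pairs are models of $(X,D)$, and the identity above yields
\[
g_{(\XXX_n,\DDD_n)} = g + \psi_n - \Vert\psi_n\Vert_{\sup} \leq g \leq g + \psi_n + \Vert\psi_n\Vert_{\sup} = g_{(\XXX_n,\DDD'_n)},
\]
so $\Vert g_{(\XXX_n,\DDD_n)} - g\Vert_{\sup}$ and $\Vert g_{(\XXX_n,\DDD'_n)} - g\Vert_{\sup}$ are each bounded by $2\Vert\psi_n\Vert_{\sup} \to 0$. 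The only real point of attention is the observation that $\vert\varpi(x)\vert = v(\varpi)$ for all $x \in X^{\an}$, and that $(\varpi)$ is principal on every model (so numerically trivial and trivial on $X$); once these are noted, the construction is entirely formal and no regularization, diagonal argument, or dependence on the structure of the central fiber is needed. The non-triviality hypothesis on $v$ is used precisely to ensure $c > 0$ so that $\epsilon_n$ is finite.
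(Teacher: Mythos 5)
Your proof is correct and is essentially the same construction as the paper's: the paper shifts by $\frac{\Vert\phi''_n\Vert_{\sup}}{-2\log v(\varpi)}(\XXX_n)_{\circ}$, and since $v$ is a non-trivial complete discrete valuation with $\varpi$ a uniformizer, the central fiber $(\XXX_n)_{\circ}$ (as a Cartier divisor) is precisely the principal divisor $(\varpi)$ on $\XXX_n$, so your shift by $\epsilon_n(\varpi)$ is literally the same divisor shift. The only cosmetic difference is that you justify the preservation of relative nefness by noting $(\varpi)$ is principal hence numerically trivial, whereas the paper leaves this implicit in the use of the central fiber.
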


\begin{proof}
By its definition,
there is a sequence $\{ (\XXX_n, \DDD''_n) \}_{n=1}^{\infty}$ of
models  of $(X, D)$ with the following properties:
\begin{enumerate}
\renewcommand{\labelenumi}{(\roman{enumi})}
\item
For all $n \geq 1$, $\DDD''_n$ is relatively nef with respect to $\XXX_n \to \Spec(k^{\circ})$.

\item
If we set $\phi''_n = g_{(\XXX_n,\, \DDD''_n)} - g$, then $\lim_{n\to\infty} \Vert \phi''_n \Vert_{\sup} = 0$.
\end{enumerate}
Here we set
\[
\DDD_n := \DDD''_n - \frac{\Vert \phi''_n \Vert_{\sup}}{-2\log v(\varpi)} (\XXX_n)_{\circ}
\quad\text{and}\quad
\DDD_n := \DDD''_n + \frac{\Vert \phi''_n \Vert_{\sup}}{-2\log v(\varpi)} (\XXX_n)_{\circ},
\]
where $(\XXX_n)_{\circ}$ is the central fiber of $\XXX_n \to \Spec(k^{\circ})$.
Then
\[
g_{(\XXX_n,\, \DDD_n)} = g_{(\XXX_n,\, \DDD''_n)} - \Vert \phi''_n \Vert_{\sup}
\quad\text{and}\quad
g_{(\XXX_n,\, \DDD'_n)} = g_{(\XXX_n,\, \DDD''_n)} + \Vert \phi''_n \Vert_{\sup}, 
\]
and hence
\[
g_{(\XXX_n,\, \DDD_n)} - g = \phi''_n - \Vert \phi''_n \Vert_{\sup} \leq 0
\quad\text{and}\quad
g_{(\XXX_n,\, \DDD'_n)} - g = \phi''_n + \Vert \phi''_n \Vert_{\sup} \geq 0,
\]
as required.
\end{proof}

\ifmonog\section{Definition of adelic $\RR$-Cartier divisors}\fi
\ifpaper\subsection{Definition of adelic $\RR$-Cartier divisors}\fi
\label{subsec:def:adelic:R:Cartier:divisor}

We assume that 
$k^{\circ}$ is excellent.
Let $X$ be a projective, smooth and geometrically integral variety over $k$.
Let $k_v$ be the completion of $k$ with respect to $v$.
By abuse of notation, the unique extension of $v$ to $k_v$ is also denoted by $v$.
We set 
\[
X_v := X \times_{\Spec(k)} \Spec(k_v), 
\]
which is also
a projective, smooth and geometrically integral variety over $k_v$.

A pair $\overline{D} = (D, g)$
is called an {\em adelic $\RR$-Cartier divisor of $C^0$-type on $X$} if
$D$ is an $\RR$-Cartier divisor on $X$ and $g$ is
a $D$-Green function of $C^0$-type on $X_v^{\an}$.
\index{\AdelDivSubject}{adelic R-Cartier divisor of $C^0$-type@adelic $\RR$-Cartier divisor of $C^0$-type}%
If $D$ is nef and $g$ is of $(\Tpsh \cap C^0)$-type,
then $\overline{D}$ is said to be {\em relatively nef}.
\index{\AdelDivSubject}{relatively nef adelic R-Cartier divisor@relatively nef adelic $\RR$-Cartier divisor}%
Moreover, we say $\overline{D}$ is {\em integrable} 
\index{\AdelDivSubject}{integrable adelic R-Cartier divisor@integrable adelic $\RR$-Cartier divisor}%
if there are relatively nef adelic $\RR$-Cartier divisors
$\overline{D}'$ and $\overline{D}''$ of $C^0$-type on $X$ such that
$\overline{D} = \overline{D}' - \overline{D}''$.
We say a continuous function $\phi$ on $X_v^{\an}$ is {\em integrable}
if $(0, \phi)$ is integrable as an adelic $\RR$-Cartier divisor.
Let $\overline{D}' = (D', g')$ be another adelic $\RR$-Cartier divisor of $C^0$-type on $X$.
For $a, a' \in \RR$, we define $a\overline{D} + a'\overline{D}'$ to be
\[
a\overline{D} + a'\overline{D}' : = (aD + a'D', ag + a'g').
\]
The space of all adelic $\RR$-Cartier divisors of $C^0$-type is denoted by
$\Div_{C^0}^{\ad}(X)_{\RR}$, which forms
a vector space over $\RR$ by the above formula.
\index{\AdelDivSymbol}{0Div:Div_{C^0}^{ad}(X)_{RR}@$\Div_{C^0}^{\ad}(X)_{\RR}$}%
For $\overline{D}_1 = (D_1, g_1),
\overline{D}_2 = (D_2, g_2) \in \Div_{C^0}^{\ad}(X)_{\RR}$,
we define $\overline{D}_1 \leq \overline{D}_2$ to be
\[
\overline{D}_1 \leq \overline{D}_2
\quad\overset{\operatorname{def}}{\Longleftrightarrow}\quad
\text{$D_1 \leq D_2$ and $g_{1} \leq g_{2}$}.
\]%
\index{\AdelDivSymbol}{0D:D_1 leq D_2@$D_1 \leq D_2$}%
Let $\XXX$ be a normal model of $X$ over $\Spec(k^{\circ})$ and let $\DDD$ 
be an $\RR$-Cartier divisor on $\XXX$.
The pair
$(\XXX, \DDD)$ gives rise to an adelic $\RR$-Cartier divisor of $C^0$-type on $X$,
that is, the pair $(\DDD \cap X, g_{(\XXX,\, \DDD)})$ of $\DDD \cap X$ and
$g_{(\XXX,\, \DDD)}$.
We denote it by $\DDD^{\ad}$ and it is
called the {\em associated adelic $\RR$-Cartier divisor with $\DDD$}.
\index{\AdelDivSubject}{associated adelic R-Cartier divisor@associated adelic $\RR$-Cartier divisor}%
By abuse of notation,
we often use the notations $\DDD \leq \overline{D}_2$ and $\overline{D}_1 \leq \DDD$ instead of
$\DDD^{\ad} \leq \overline{D}_2$ and $\overline{D}_1 \leq \DDD^{\ad}$, respectively

\begin{Proposition}
\label{prop:comp:adelic}
Let $\XXX$ be a normal model of $X$ over $\Spec(k^{\circ})$ and let $\Div(\XXX)_{\RR}$ be
the group of $\RR$-Cartier divisors on $\XXX$.
Let $\iota : \Div(\XXX)_{\RR} \to \Div^{\ad}_{C^0}(X)_{\RR}$
be the map given by $\DDD \mapsto \DDD^{\ad}$.
Then we have the following:
\begin{enumerate}
\renewcommand{\labelenumi}{(\arabic{enumi})}
\item
The map $\iota : \Div(\XXX)_{\RR} \to \Div^{\ad}_{C^0}(X)_{\RR}$ is an
injective homomorphism of $\RR$-vector spaces.

\item
$\DDD_1 \leq \DDD_2$ $\Longleftrightarrow$ 
$\DDD^{\ad}_1 \leq \DDD^{\ad}_2$.
\end{enumerate}
\end{Proposition}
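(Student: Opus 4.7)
The plan is to verify $\RR$-linearity of $\iota$ first, then prove (2) by reduction to Proposition~\ref{prop:green:model}(3) applied to the difference $\DDD_2-\DDD_1$, and finally deduce injectivity in (1) from (2).

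For linearity, the equality $(a\DDD_1+b\DDD_2)\cap X = a(\DDD_1\cap X)+b(\DDD_2\cap X)$ on the Cartier side is immediate. On the Green function side I would argue pointwise: at any $x\in X_v^{\an}$ outside the relevant support, if $f_1,f_2$ are local equations of $\DDD_1,\DDD_2$ at $\xi=r_{\XXX}(x)$, then $f_1^a f_2^b$ is a local equation of $a\DDD_1+b\DDD_2$ at $\xi$, and multiplicativity of $\vert\cdot\vert_x$ on $(\OOO_{\XXX,\xi}^{\times})_{\RR}$ yields
\[
g_{(\XXX,\,a\DDD_1+b\DDD_2)}(x) = -\log\vert f_1^a f_2^b(x)\vert^2 = a\, g_{(\XXX,\,\DDD_1)}(x) + b\, g_{(\XXX,\,\DDD_2)}(x).
\]
This also shows the right-hand side is independent of the choice of local equations, so $\iota$ is a well-defined $\RR$-linear map.

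For (2), I would set $\EEE:=\DDD_2-\DDD_1$ and $E:=\EEE\cap X$. Since $\XXX$ is already normal, the normalization map $\tilde\mu$ appearing in Proposition~\ref{prop:green:model}(3) is the identity, so that proposition specializes to: provided $E$ is effective as an $\RR$-Weil divisor on $X$, the $\RR$-Cartier divisor $\EEE$ is effective as an $\RR$-Weil divisor on $\XXX$ if and only if $g_{(\XXX,\,\EEE)}\geq 0$. Combining this with the linearity above, so that $g_{(\XXX,\,\EEE)}=g_{(\XXX,\,\DDD_2)}-g_{(\XXX,\,\DDD_1)}$, both implications of (2) follow: $\DDD_1\leq\DDD_2$ is equivalent to the simultaneous conditions $D_1\leq D_2$ and $g_{(\XXX,\,\DDD_1)}\leq g_{(\XXX,\,\DDD_2)}$, which is precisely $\DDD_1^{\ad}\leq\DDD_2^{\ad}$.

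Injectivity of $\iota$ in (1) is then an antisymmetry consequence of (2): $\iota(\DDD_1)=\iota(\DDD_2)$ gives $\DDD_1^{\ad}\leq\DDD_2^{\ad}$ and $\DDD_2^{\ad}\leq\DDD_1^{\ad}$, hence $\DDD_1\leq\DDD_2$ and $\DDD_2\leq\DDD_1$ by (2), so $\DDD_1=\DDD_2$. The delicate point is the equivalence between $\RR$-Cartier and $\RR$-Weil effectiveness on the normal scheme $\XXX$, which is needed to convert Weil-effectiveness of $\EEE$ produced by Proposition~\ref{prop:green:model}(3) back into the inequality $\DDD_1\leq\DDD_2$ of $\RR$-Cartier divisors; this equivalence is furnished by Hartogs' lemma for $\RR$-rational functions (Lemma~\ref{lem:Hartogs:R:rat:fun}) applied to each stalk of $\OOO_{\XXX}$.
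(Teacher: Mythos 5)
Your proof is correct and follows the same route as the paper's (one-line) argument: linearity is immediate from the local formula for $g_{(\XXX,\,\DDD)}$, part (2) reduces to Proposition~\ref{prop:green:model}(3) applied to $\EEE := \DDD_2 - \DDD_1$ (which gives a clean equivalence since $\XXX$ is already normal, so $\tilde\mu = \operatorname{id}$), and injectivity in (1) follows from (2) by antisymmetry. The only extraneous remark is your final paragraph's appeal to Lemma~\ref{lem:Hartogs:R:rat:fun}: on the normal scheme $\XXX$ the inequality $\DDD_1 \leq \DDD_2$ for $\RR$-Cartier divisors already means that the associated $\RR$-Weil divisor of $\DDD_2 - \DDD_1$ is effective, so Proposition~\ref{prop:green:model}(3) hands back exactly this conclusion and no further Cartier/Weil translation is needed.
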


\begin{proof}
Clearly $\iota$ is a homomorphism of $\RR$-vector spaces.
(2) is a consequence of Proposition~\ref{prop:green:model}.
The injectivity of $\iota$ follows from (2).
\end{proof}

\ifmonog\section{Local degree}\fi
\ifpaper\subsection{Local degree}\fi
\label{subsec:local:degree}

We assume that $k^{\circ}$ is excellent and $k$ is perfect.
We use the same notation as in 
\ifmonog Section~\ref{subsec:def:adelic:R:Cartier:divisor}. \fi
\ifpaper Subsection~\ref{subsec:def:adelic:R:Cartier:divisor}. \fi
Let $x$ be a closed point of $X$ with $x \not\in \Supp_{\RR}(D)$.
Let $k(x)$ be the residue field at $x$.
As $k(x)$ is separable over $k$,
we have $k(x) \otimes_{k} k_v = k_1 \oplus \cdots \oplus k_l$
for some finite separable extensions $k_1, \ldots. k_l$ over $k_v$.
Note that each $k_i$ has the unique extension $v_i$ of $v$.
The {\em local degree of $\overline{D}$ along $x$ over $v$} 
\index{\AdelDivSubject}{local degree@local degree}%
\index{\AdelDivSymbol}{0d:adeg_v(overline{D}{x})@$\adeg_v(\srest{\overline{D}}{x})$}%
is defined by
\[
\adeg_v(\srest{\overline{D}}{x}) := \sum_{i=1}^l \frac{[k_i : k_v]}{2} g(v_i).
\]

Here we assume that $k$ is a number field and $v(f) = \#(O_k/P)^{-\ord_P(f)}$,
where $O_k$ is the ring of integers in $k$ and $P$ is a maximal ideal of $O_k$.
Let $\XXX$ be a normal model of $X$ over $\Spec((O_k)_P)$.
Let $\DDD = a_1 \DDD_1 + \cdots + a_l \DDD_r$ be an $\RR$-Cartier divisor on $\XXX$
such that $\DDD \cap X = D$, $a_1, \ldots, a_r \in \RR$ and
$\DDD_1, \ldots, \DDD_r$ are effective Cartier divisors on $\XXX$.
We assume that $g = g_{(\XXX,\, \DDD)}$ and
$x \not\in \Supp_{\ZZ}(\DDD_1) \cup \cdots \cup \Supp_{\ZZ}(\DDD_r)$.
Let $O_{k(x)}$ be the ring of integers in $k(x)$.
Then it is easy to see that
\frontmatterforspececialeqn
\begin{equation}
\label{eqn:local:degree:arithmetic}
\adeg_v(\srest{\overline{D}}{x}) = \sum_{j=1}^r a_j \log \# \left ( \left(O_{k(x)}(\DDD_j)/O_{k(x)}\right)_P \right).
\end{equation}
\backmatterforspececialeqn

\ifmonog \section{Local intersection number}\fi
\ifpaper \subsection{Local intersection number}\fi
\label{subsec:local:intersection}

We assume that $k^{\circ}$ is excellent and $v$ is non-trivial.
We use the same notation as in 
\ifmonog Section~\ref{subsec:def:adelic:R:Cartier:divisor}. \fi
\ifpaper Subsection~\ref{subsec:def:adelic:R:Cartier:divisor}. \fi
Let $\phi$ be a continuous function on $X^{\an}_v$.
Let $\XXX$ be a normal model of $X$ and
let $\LLL_1, \ldots, \LLL_{d}$ be $\RR$-Cartier divisors on $\XXX$.
Let $\Gamma_1, \ldots, \Gamma_r$ be
irreducible components of the central fiber $\XXX_{\circ}$
of $\XXX \to \Spec(k^{\circ})$.
Let $v_j$ be the discrete valuation arising from $\Gamma_j$ such that $\rest{v_j}{k} = v$. 
By Lemma~\ref{lem:base:change:completion}, 
there is a unique $\tilde{v}_j \in X_v^{\an}$ such that
the restriction of $\tilde{v}_j$ to $\Rat(X)$ is $v_j$.
For each $i$ and $j$, we can choose a unique real number $\lambda_{ij}$ such that
$\Gamma_j \not\subseteq \Supp_{W} (\LLL_i + \lambda_{ij} \XXX_{\circ})$ (for $\Supp_W$, see Definition~\ref{def:Supp:RR:Cartier:div}).
Then the number given by
\[
\sum_{j=1}^r \frac{\phi(\tilde{v}_j)\ord_{\Gamma_j}(\varpi)}{-2 \log v(\varpi)} \deg_{k^{\circ}/k^{\circ\circ}} 
\left(\rest{(\LLL_1 + \lambda_{1j}\XXX_{\circ})}{\Gamma_j} \cdots \rest{(\LLL_d + \lambda_{dj}\XXX_{\circ})}{\Gamma_j}\right)
\]
is denoted by $\adeg_{v}(\LLL_1 \cdots \LLL_{d} ; \phi)$,
where $\deg_{k^{\circ}/k^{\circ\circ}}$ is the degree over $k^{\circ}/k^{\circ\circ}$.
\index{\AdelDivSymbol}{0d:adeg_v(LLL_1 cdots LLL_{d};phi)@$\adeg_v(\LLL_1 \cdots \LLL_{d} ; \phi)$}%
Obviously, $\adeg_{v}(\LLL_1 \cdots \LLL_{d} ; \phi)$ is multi-linear with respect to $\LLL_1, \ldots,  \LLL_{d}$.
In addition,
\[
\adeg_{v}(\LLL_1 \cdots \LLL_{d} ; a\phi + a'\phi') = a \adeg_{v}(\LLL_1 \cdots \LLL_{d} ; \phi) + 
a' \adeg_{v}(\LLL_1 \cdots \LLL_{d} ; \phi')
\]
for $a, a' \in \RR$ and $\phi, \phi' \in C^0(X^{\an}_v)$.
Let $\EEE$ be a vertical $\RR$-Cartier divisor on $\XXX$ and let $\EEE = a_1 \Gamma_1 + \cdots + a_r \Gamma_r$
be the irreducible decomposition of $\EEE$ as a Weil divisor.
Let $\phi_{\EEE}$ be the continuous function arising from $\EEE$, that is,
$\phi_{\EEE} = g_{(\XXX,\, \EEE)}$. Then
\begin{align*}
\adeg_{v}(\LLL_1 \cdots \LLL_{d} ; \phi_{\EEE}) & =
\sum_{j=1}^r a_j  \deg_{k^{\circ}/k^{\circ\circ}} 
\left(\rest{(\LLL_1 + \lambda_{1j}\XXX_{\circ})}{\Gamma_j} \cdots \rest{(\LLL_d + \lambda_{dj}\XXX_{\circ})}{\Gamma_j}\right) \\
& = \deg_{k^{\circ}/k^{\circ\circ}} 
\left( \LLL_1 \cdots \LLL_d \cdot \EEE \right).
\end{align*}
If $\LLL_1, \ldots, \LLL_d$ are relatively nef and
$\phi \leq \phi'$, then
\frontmatterforspececialeqn
\begin{equation}
\label{eqn:phi:leq:phi:prime:ineq}
\adeg_{v}(\LLL_1 \cdots \LLL_{d} ; \phi)
\leq \adeg_{v}(\LLL_1 \cdots \LLL_{d} ; \phi').
\end{equation}
\backmatterforspececialeqn
Let us begin with the following lemma:

\begin{Lemma}
\label{lem:rel:nef:diff:adelic}
Let $\LLL_1, \ldots, \LLL_{d+2}, \LLL'_1, \ldots, \LLL'_{d+2}$
be relatively nef
$\RR$-Cartier divisors on $\XXX$. 
We assume that there are $a_1, \ldots, a_d, a_{d+1} \in \RR_{\geq 0}$
with the following properties:
\begin{enumerate}
\renewcommand{\labelenumi}{(\arabic{enumi})}
\item
For all $i = 1, \ldots, d$, $\LLL_i \cap X = \LLL'_i \cap X$ and
$-a_{i} \XXX_{\circ} \leq
\LLL'_i - \LLL_i  \leq a_{i} \XXX_{\circ}$.

\item
$\LLL_{d+1} \cap X = \LLL_{d+2} \cap X$ and $\LLL'_{d+1} \cap X = \LLL'_{d+2} \cap X$.
Moreover, 
\[
-2a_{d+1}g_{\left(\XXX,\, \XXX_{\circ}\right)} \leq \psi' - \psi \leq
2a_{d+1}g_{\left(\XXX,\, \XXX_{\circ}\right)},
\]
where
$\psi := g_{\left(\XXX,\, \LLL_{d+1} - \LLL_{d+2}\right)}$ and
$\psi' := g_{(\XXX,\, \LLL'_{d+1} - \LLL'_{d+2})}$.
\end{enumerate}
Then we have the following inequality:
\[
\left\vert\adeg_{v} (\LLL_1 \cdots \LLL_{d};\psi) - 
\adeg_{v} (\LLL'_1 \cdots \LLL'_{d};\psi') \right\vert
\leq
\sum_{i=1}^{d+1}  2a_{i} \deg (L_1 \cdots L_{i-1}\cdot L_{i+1} \cdots L_{d+1}) ,
\]
where $L_i := \LLL_i \cap X$ for $i=1, \ldots, d+1$.
\end{Lemma}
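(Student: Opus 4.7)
The plan is to insert the intermediate term $\adeg_{v}(\LLL'_1 \cdots \LLL'_d ; \psi)$, splitting the difference into a \emph{divisor part} (same function $\psi$, divisors changing from $\LLL_i$ to $\LLL'_i$) and a \emph{function part} (divisors fixed as $\LLL'_i$, function changing from $\psi$ to $\psi'$). I expect these two parts to contribute respectively the first $d$ summands and the $(d+1)$-st summand of the right-hand side of the lemma.

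For the function part, I apply the monotonicity inequality~\eqref{eqn:phi:leq:phi:prime:ineq}. Linearity of $\adeg_{v}(\LLL'_1 \cdots \LLL'_d ; -)$ in its function argument, combined with the hypothesis $-2a_{d+1} g_{(\XXX,\, \XXX_\circ)} \leq \psi' - \psi \leq 2a_{d+1} g_{(\XXX,\, \XXX_\circ)}$ and the relative nefness of $\LLL'_1, \ldots, \LLL'_d$, yields
\[
\left|\adeg_{v}(\LLL'_1 \cdots \LLL'_d ; \psi) - \adeg_{v}(\LLL'_1 \cdots \LLL'_d ; \psi')\right| \leq 2a_{d+1} \adeg_{v}(\LLL'_1 \cdots \LLL'_d ; g_{(\XXX,\, \XXX_\circ)}).
\]
The identity $\adeg_{v}(\MMM_1 \cdots \MMM_d ; g_{(\XXX,\, \EEE)}) = \deg_{k^{\circ}/k^{\circ\circ}}(\MMM_1 \cdots \MMM_d \cdot \EEE)$ for vertical $\EEE$ (established just before the lemma), applied with $\EEE = \XXX_\circ$, combined with the flatness-based equality $\deg_{k^{\circ}/k^{\circ\circ}}(\LLL'_1 \cdots \LLL'_d \cdot \XXX_\circ) = \deg(L'_1 \cdots L'_d) = \deg(L_1 \cdots L_d)$, produces the bound $2a_{d+1} \deg(L_1 \cdots L_d)$, matching the $i=d+1$ summand.

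For the divisor part, since $\psi = g_{(\XXX,\, \LLL_{d+1} - \LLL_{d+2})}$ and $\LLL_{d+1} - \LLL_{d+2}$ is vertical, the same vertical-divisor identity yields
\[
\adeg_{v}(\LLL_1 \cdots \LLL_d ; \psi) - \adeg_{v}(\LLL'_1 \cdots \LLL'_d ; \psi) = \deg_{k^{\circ}/k^{\circ\circ}}\bigl((\LLL_1 \cdots \LLL_d - \LLL'_1 \cdots \LLL'_d) \cdot (\LLL_{d+1} - \LLL_{d+2})\bigr).
\]
I then apply Lemma~\ref{lem:mult:linear:diff:formula} to the multilinear functional $(\MMM_1, \ldots, \MMM_d) \mapsto \deg_{k^{\circ}/k^{\circ\circ}}(\MMM_1 \cdots \MMM_d \cdot (\LLL_{d+1} - \LLL_{d+2}))$, telescoping the right-hand side into a sum over $i=1, \ldots, d$ of terms each involving the vertical divisor $\delta_i := \LLL'_i - \LLL_i$. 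Each such term is bounded using the key auxiliary estimate: for relatively nef $\MMM_1, \ldots, \MMM_d$ and vertical $\delta$ with $-a\XXX_\circ \leq \delta \leq a\XXX_\circ$, one has $|\deg_{k^{\circ}/k^{\circ\circ}}(\MMM_1 \cdots \MMM_d \cdot \delta)| \leq a \deg(M_1 \cdots M_d)$; this estimate follows directly from \eqref{eqn:phi:leq:phi:prime:ineq} applied to $\phi = g_{(\XXX,\, \delta)}$ sandwiched between $\pm a g_{(\XXX,\, \XXX_\circ)}$, together once more with the vertical-divisor identity. Splitting $\LLL_{d+1} - \LLL_{d+2}$ and using the equalities $L_j = L'_j$ for $j \leq d$ and $L_{d+1} = L_{d+2}$ from the hypotheses, each telescoped summand acquires the bound $2a_i \deg(L_1 \cdots L_{i-1} \cdot L_{i+1} \cdots L_{d+1})$, which is precisely the $i$-th term of the right-hand side.

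The main obstacle is the choice of intermediate term: inserting $\adeg_{v}(\LLL'_1 \cdots \LLL'_d ; \psi)$ rather than the symmetrically tempting $\adeg_{v}(\LLL_1 \cdots \LLL_d ; \psi')$ is essential, because the function argument in the ``divisor part'' must remain a Green function of a vertical divisor so that the identity $\adeg_{v}(\MMM_1 \cdots \MMM_d ; g_{(\XXX,\, \EEE)}) = \deg_{k^{\circ}/k^{\circ\circ}}(\MMM_1 \cdots \MMM_d \cdot \EEE)$ applies; the horizontal parts of $\LLL_{d+1}$ and $\LLL'_{d+1}$ are not controlled by the hypotheses, so one cannot use $\psi'$ as the bridge. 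Once this choice is made, everything reduces to the key vertical-perturbation estimate and to bookkeeping with multi-linearity.
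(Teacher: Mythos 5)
Your proof is correct and follows essentially the same approach as the paper: inserting the intermediate term $\adeg_v(\LLL'_1 \cdots \LLL'_d; \psi)$, bounding the function change via \eqref{eqn:phi:leq:phi:prime:ineq}, and telescoping the divisor change with Lemma~\ref{lem:mult:linear:diff:formula} together with the vertical-divisor estimate; your ``key auxiliary estimate'' packages exactly the computation the paper performs via a swap of which vertical divisor plays the role of the function. One small clarification on your final paragraph: since condition (2) also gives $\LLL'_{d+1} \cap X = \LLL'_{d+2} \cap X$, the divisor $\LLL'_{d+1} - \LLL'_{d+2}$ is likewise vertical, so $\psi'$ is also the Green function of a vertical divisor and the alternate intermediate term $\adeg_v(\LLL_1 \cdots \LLL_d; \psi')$ would in fact bridge the gap equally well by the same argument---the choice is a matter of bookkeeping convenience, not necessity.
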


\begin{proof}
As $\LLL'_1, \ldots, \LLL'_{d}$
are relatively nef and 
$-2a_{d+1}g_{\left(\XXX,\, \XXX_{\circ}\right)} \leq \psi' - \psi \leq
2a_{d+1}g_{\left(\XXX,\, \XXX_{\circ}\right)}$, by using \eqref{eqn:phi:leq:phi:prime:ineq},
we have
\[
\left\vert \adeg_{v} (\LLL'_1 \cdots \LLL'_{d};\psi') - \adeg_{v} (\LLL'_1 \cdots \LLL'_{d};\psi) \right\vert \leq
2a_{d+1} \deg(L_1 \cdots L_{d}),
\]
and hence, 
\begin{multline*}
\left\vert\adeg_{v} (\LLL_1 \cdots \LLL_{d};\psi) - 
\adeg_{v} (\LLL'_1 \cdots \LLL'_{d};\psi') \right\vert \\
\leq \left\vert\adeg_{v} (\LLL_1 \cdots \LLL_{d};\psi) - 
\adeg_{v} (\LLL'_1 \cdots \LLL'_{d};\psi) \right\vert
+ 2a_{d+1} \deg(L_1 \cdots L_{d}).
\end{multline*}
On the other hand, by Lemma~\ref{lem:mult:linear:diff:formula}, 
\begin{multline*}
\qquad\adeg_{v} (\LLL'_1 \cdots \LLL'_{d};\psi) =
\adeg_{v} (\LLL_1 \cdots \LLL_{d};\psi) \\
+
\sum_{i=1}^{d} \adeg_{v} (\LLL'_1 \cdots \LLL'_{i-1}
\cdot \EEE_i \cdot \LLL_{i+1} \cdots\LLL_{d}; \psi),\qquad
\end{multline*}
where $\EEE_i = \LLL'_i - \LLL_i$.
Let $\phi_i$ be the continuous function arising from $\EEE_i$.
Then, as 
\begin{multline*}
\qquad\adeg_{v} (\LLL'_1 \cdots \LLL'_{i-1}
\cdot \EEE_i \cdot \LLL_{i+1} \cdots\LLL_{d}; \psi) \\
= \adeg_{v} (\LLL'_1 \cdots \LLL'_{i-1}
\cdot (\LLL_{d+1} - \LLL_{d+2}) \cdot \LLL_{i+1} \cdots\LLL_{d}; \phi_i)\qquad
\end{multline*}
and $-a_{i} g_{\left(\XXX,\, \XXX_{\circ}\right)} \leq
\phi_i \leq a_{i} g_{\left(\XXX,\, \XXX_{\circ}\right)}$,
by using \eqref{eqn:phi:leq:phi:prime:ineq},
we can see that
\begin{multline*}
\left\vert \adeg_{v} (\LLL'_1 \cdots \LLL'_{i-1}
\cdot \EEE_i \cdot \LLL_{i+1} \cdots\LLL_{d}; \psi) \right\vert \\
\leq 
\left\vert \adeg_{v} (\LLL'_1 \cdots \LLL'_{i-1}
\cdot \LLL_{d+1}\cdot \LLL_{i+1} \cdots\LLL_{d}; \phi_i) \right\vert \\
\qquad\qquad\qquad\qquad+\left\vert \adeg_{v} (\LLL'_1 \cdots \LLL'_{i-1}
\cdot \LLL_{d+2} \cdot \LLL_{i+1} \cdots\LLL_{d}; \phi_i) \right\vert \\
\leq 2a_i \deg(L_1 \cdots L_{i-1} \cdot L_{i+1} \cdots L_{d} \cdot L_{d+1}),
\end{multline*}
as desired.
\end{proof}

The next proposition guarantees the intersection pairing of integrable
adelic $\RR$-Cartier divisors along an integrable continuous function.

\begin{PropDef}
\label{propdef:adelic:intersection}
Let $\overline{L}_1 = (L_1, g_1), \ldots, \overline{L}_{d} = (L_{d}, g_{d})$ be
relatively nef adelic $\RR$-Cartier divisors on $X$, and let $\phi$ be an integrable continuous function
on $X^{\an}_v$.
Then there are sequences
\begin{multline*}
\qquad\left\{ (\XXX_{1, n}, \LLL_{1, n}) \right\}_{n=1}^{\infty}, \ldots, \left\{ (\XXX_{d, n}, \LLL_{d, n}) \right\}_{n=1}^{\infty}, \\
\left\{ (\XXX_{d+1, n}, \LLL_{d+1, n}) \right\}_{n=1}^{\infty}, \left\{ (\XXX_{d+2, n}, \LLL_{d+2, n}) \right\}_{n=1}^{\infty}\qquad
\end{multline*}
with the following properties:
\begin{enumerate}
\renewcommand{\labelenumi}{(\arabic{enumi})}
\item
$\XXX_{i, n}$ is a normal model of $X$ over $\Spec(k^{\circ})$ and
$\LLL_{i,n}$ is a
relatively nef\ \  $\RR$-Cartier divisor on $\XXX_{i, n}$
for $i=1, \ldots, d+2$ and $n \geq 1$.

\item
$\LLL_{i,n} \cap X = L_i$ for $i=1, \ldots, d$ and $n \geq 1$.

\item
There is an $\RR$-Cartier divisor $L_{d+1}$ on $X$ such that
$L_{d+1} = \LLL_{d+1, n} \cap X = \LLL_{d+2, n} \cap X$ for all $n \geq 1$.

\item
If we set $\phi_{i, n} := g_i - g_{\left(\XXX_{i,n},\, \LLL_{i, n}\right)}$,
then $\lim_{n\to\infty} \Vert \phi_{i,n} \Vert_{\sup} = 0$ for $i=1, \ldots, d$.

\item
If we set $\psi_n := g_{(\XXX_{d+1, n},\, \LLL_{d+1, n})} - g_{(\XXX_{d+2, n},\, \LLL_{d+2, n})}$,
then 
\[
\lim_{n\to\infty} \Vert \psi_n - \phi \Vert_{\sup} = 0.
\]
\end{enumerate}
For sequences
\begin{multline*}
\qquad\left\{ (\XXX_{1, n}, \LLL_{1, n}) \right\}_{n=1}^{\infty}, \ldots, \left\{ (\XXX_{d, n}, \LLL_{d, n}) \right\}_{n=1}^{\infty}, \\
\left\{ (\XXX_{d+1, n}, \LLL_{d+1, n}) \right\}_{n=1}^{\infty}, \left\{ (\XXX_{d+2, n}, \LLL_{d+2, n}) \right\}_{n=1}^{\infty}\qquad
\end{multline*}
satisfying the above properties,
let $\YYY_n$ be a normal model of $X$ over $\Spec(k^{\circ})$ together with
birational morphisms 
\[
\mu_{i, n} : \YYY_n \to \XXX_{i, n}
\]
for $i=1, \ldots, d$.
Then the following limits
\[
\begin{cases}
\lim_{n\to\infty} \adeg_{v} 
\left( \mu_{1,n}^*(\LLL_{1, n}) \cdots \mu_{d, n}^*(\LLL_{d, n}) ; \phi \right), \\[2ex]
\lim_{n\to\infty} \adeg_{v} 
\left( \mu_{1,n}^*(\LLL_{1, n}) \cdots \mu_{d, n}^*(\LLL_{d, n}) ; \psi_n \right)
\end{cases}
\]
exist and
\begin{multline*}
\qquad\lim_{n\to\infty} \adeg_{v} 
\left( \mu_{1,n}^*(\LLL_{1, n}) \cdots \mu_{d, n}^*(\LLL_{d, n}) ; \phi \right) \\
=
\lim_{n\to\infty} \adeg_{v} 
\left( \mu_{1,n}^*(\LLL_{1, n}) \cdots \mu_{d, n}^*(\LLL_{d, n}) ; \psi_n \right).\qquad
\end{multline*}
Moreover, the above limits do not depend on the choice of
the sequences 
\begin{multline*}
\qquad\left\{ (\XXX_{1,n}, \LLL_{1, n}) \right\}_{n=1}^{\infty}, \ldots, \left\{ (\XXX_{d,n}, \LLL_{d, n}) \right\}_{n=1}^{\infty}, \\
\left\{ (\XXX_{d+1, n}, \LLL_{d+1, n}) \right\}_{n=1}^{\infty}, \left\{ (\XXX_{d+2, n}, \LLL_{d+2, n}) \right\}_{n=1}^{\infty},\qquad
\end{multline*}
so that
this limit is denoted by $\adeg_{v} \left( \overline{L}_1 \cdots \overline{L}_{d} ; \phi \right)$.
\index{\AdelDivSymbol}{0d:adeg_v(overline{L}_1 cdots overline{L}_{d};phi)@$\adeg_v(\overline{L}_1 \cdots \overline{L}_{d} ; \phi)$}%
\end{PropDef}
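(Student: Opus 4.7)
The plan is to carry out four main steps: (i) construct the required sequences using the approximation results from earlier in the section; (ii) show both limits exist by a Cauchy-sequence argument based on Lemma~\ref{lem:rel:nef:diff:adelic}; (iii) show the two limits coincide using the monotonicity inequality~\eqref{eqn:phi:leq:phi:prime:ineq} together with $\psi_n \to \phi$ uniformly; and (iv) deduce independence from the choice of sequences by interleaving.

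For (i), for each $i=1,\ldots,d$, Proposition~\ref{prop:Green:psh:C0:approx} applied to $\overline{L}_i$ yields $(\XXX_{i,n},\LLL_{i,n})$ with $\LLL_{i,n}$ relatively nef, $\LLL_{i,n}\cap X = L_i$, and $g_{(\XXX_{i,n},\LLL_{i,n})} \to g_i$ uniformly. Since $\phi$ is integrable, by definition $(0,\phi) = \overline{D}' - \overline{D}''$ with $\overline{D}',\overline{D}''$ relatively nef; because the divisor part of $(0,\phi)$ is zero, we can write $\overline{D}' = (L_{d+1},g')$ and $\overline{D}'' = (L_{d+1},g'')$ for a common $L_{d+1}$, with $\phi = g' - g''$. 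Applying Proposition~\ref{prop:Green:psh:C0:approx} to each of $\overline{D}',\overline{D}''$ produces $(\XXX_{d+1,n},\LLL_{d+1,n})$ and $(\XXX_{d+2,n},\LLL_{d+2,n})$, and $\psi_n \to \phi$ uniformly follows.

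For (ii), fix $n,m$ and choose a normal model $\ZZZ$ together with birational morphisms to $\YYY_n,\YYY_m$ and to $\XXX_{d+1,n},\XXX_{d+2,n},\XXX_{d+1,m},\XXX_{d+2,m}$; pull all divisors back to $\ZZZ$. For each $i=1,\ldots,d$, the pulled-back divisors of $\LLL_{i,n}$ and $\LLL_{i,m}$ have common generic fibre $L_i$, and their Green functions differ on $X_v^{\an}$ by at most $\Vert\phi_{i,n}\Vert_{\sup} + \Vert\phi_{i,m}\Vert_{\sup}$. Evaluating at the valuations $\tilde{v}_j$ associated with the irreducible components of $\ZZZ_{\circ}$ translates this sup-norm bound into a bound of the form $\pm a_i^{n,m}\,\ZZZ_{\circ}$ on the vertical difference of $\RR$-Cartier divisors, with $a_i^{n,m} \to 0$. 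A parallel estimate handles indices $d+1$ and $d+2$, where $\Vert\psi_n - \psi_m\Vert_{\sup}\to 0$. Lemma~\ref{lem:rel:nef:diff:adelic} then bounds $|B_n - B_m|$ (where $B_n$ denotes the second limit candidate) by a sum all of whose terms tend to zero, so $\{B_n\}$ is Cauchy; pulling two entirely distinct admissible sequences back to a common model in the same way will give independence. For (iii), by multilinearity of $\adeg_v(\cdots;\cdot)$ in the function slot, $A_n - B_n = \adeg_v(\mu_{1,n}^*(\LLL_{1,n})\cdots\mu_{d,n}^*(\LLL_{d,n});\phi - \psi_n)$, where $A_n$ denotes the first limit candidate. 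Since the pulled-back divisors on $\YYY_n$ are relatively nef and $|\phi - \psi_n| \leq \Vert\phi - \psi_n\Vert_{\sup}$ pointwise, \eqref{eqn:phi:leq:phi:prime:ineq} applied to the constant functions $\pm \Vert\phi-\psi_n\Vert_{\sup}$ (whose local intersection numbers are explicit multiples of $\deg(L_1\cdots L_d)$ via the identity $g_{(\YYY_n,\EEE)} = c$ for $\EEE = (c/(-2\log v(\varpi)))\,(\YYY_n)_{\circ}$) gives $|A_n - B_n| = O(\Vert\phi - \psi_n\Vert_{\sup}) \to 0$. Hence $\{A_n\}$ converges with the same limit as $\{B_n\}$. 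Step (iv) is then immediate: any two admissible sequences can be interleaved into a third admissible sequence, and the two original limits are limits of subsequences of the interleaved one.

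The main obstacle is the translation in step (ii) between sup-norm control on Green functions and control on vertical $\RR$-Cartier divisors in terms of multiples of the central fibre, which is needed to put the data into the precise form required by Lemma~\ref{lem:rel:nef:diff:adelic}. Although elementary, this relies on evaluating Green functions at the distinguished points $\tilde{v}_j$ and exploits the non-triviality of $v$ (so that $-2\log v(\varpi) > 0$ may appear as a denominator). A secondary subtlety is organising the common models so that all four families of divisors, together with $\phi$ replaced by $\psi_n$ or $\psi_m$, can simultaneously be compared on a single dominant refinement.
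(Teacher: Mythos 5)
Your proposal follows essentially the same route as the paper: build the sequences from Proposition~\ref{prop:Green:psh:C0:approx} and the integrability decomposition, translate sup-norm control on Green functions into bounds of the form $\pm\epsilon\,\ZZZ_\circ$ on vertical $\RR$-Cartier differences (the paper routes this through Proposition~\ref{prop:comp:adelic}, but your direct evaluation at the $\tilde v_j$ is the same content), run the Cauchy argument via Lemma~\ref{lem:rel:nef:diff:adelic}, and compare $A_n$ with $B_n$ using \eqref{eqn:phi:leq:phi:prime:ineq}. Your interleaving trick for the independence step is a mild economy over the paper's approach, which instead repeats the Cauchy estimate once more on a common dominating model to compare the two sequences directly; both are correct.
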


\begin{proof}
The existence of sequences are obvious by the relative nefness 
of $\overline{L}_1, \ldots, \overline{L}_{d}$ and the integrability of $\phi$.

We set
\[
\begin{cases}
A_n = \adeg_{v} \left( \mu_{1, n}^*(\LLL_{1, n}) \cdots \mu_{d, n}^*(\LLL_{d, n}) ; \phi \right), \\[1ex]
B_n = \adeg_{v} \left( \mu_{1, n}^*(\LLL_{1, n}) \cdots \mu_{d, n}^*(\LLL_{d, n}) ; \psi_n \right).
\end{cases}
\]
For a positive number $\epsilon$, there is $N$ such that
\[
\Vert \psi_n - \phi \Vert_{\sup} \leq 2\epsilon(-\log v(\varpi))
\quad\text{and}\quad
\Vert \phi_{i, n} \Vert_{\sup} \leq \epsilon(-\log v(\varpi))
\]
for all $n \geq N$ and
$i=1, \ldots, d$.
Then, for $n, m \geq N$,
\begin{multline*}
\qquad \left\vert g_{\left(\XXX_{i,n},\, \LLL_{i, n}\right)} - g_{\left(\XXX_{i,m},\, \LLL_{i, m}\right)} \right\vert 
\leq \left\vert g_i - g_{\left(\XXX_{i,n},\, \LLL_{i, n}\right)} \right\vert \\
+
\left\vert g_i - g_{\left(\XXX_{i,m},\, \LLL_{i, m}\right)} \right\vert
\leq 2 \epsilon(-\log v(\varpi))\qquad
\end{multline*}
for $i=1, \ldots, d$.
Let us choose  a normal model $\ZZZ_{n, m}$ of $X$ together with
birational morphisms $\tau_n : \ZZZ_{n, m} \to \YYY_n$ and
$\tau_m : \ZZZ_{n,m} \to \YYY_m$.
Then the above inequality implies
\[
-\epsilon g_{(\ZZZ_{n,m},\, (\ZZZ_{n,m})_{\circ})} \leq 
g_{\left(\ZZZ_{n,m},\, \tau_{n}^*(\mu_{i,n}^*(\LLL_{i, n})) - 
\tau_{m}^*(\mu_{i, m}^*(\LLL_{i, m}))\right)} \leq \epsilon g_{(\ZZZ_{n,m},\, (\ZZZ_{n,m})_{\circ})},
\]
so that, by Proposition~\ref{prop:comp:adelic},
\[
-\epsilon (\ZZZ_{n, m})_{\circ} \leq \tau_{n}^*(\mu_{i,n}^*(\LLL_{i, n})) - \tau_{m}^*(\mu_{i,m}^*(\LLL_{i, m}))
\leq \epsilon (\ZZZ_{n, m})_{\circ}
\]
for $i=1, \ldots, d$.
On the other hand,
\[
\Vert \psi_n - \psi_m \Vert_{\sup}  \leq \Vert \psi_n - \phi \Vert_{\sup} + \Vert \psi_m - \phi \Vert_{\sup}
\leq 4 \epsilon(-\log v(\varpi)).
\]
Therefore, by using Lemma~\ref{lem:rel:nef:diff:adelic},
we have
\[
\left\vert  B_n - B_m \right\vert
\leq 2 \epsilon 
\sum_{i=1}^{d+1} 
\deg (L_1 \cdots L_{i-1} \cdot L_{i+1} \cdots L_{d+1})
\]
for $n, m \geq N$, which shows that
the sequence $\{ B_n \}_{n=1}^{\infty}$ is a Cauchy sequence, so that
its limit exists. Further, as 
$0 \leq \vert \phi - \psi_n \vert \leq 2 \epsilon(-\log v(\varpi))$,
by \eqref{eqn:phi:leq:phi:prime:ineq},
we have 
\[
\vert A_n - B_n \vert \leq \epsilon \deg(L_1 \cdots L_d),
\]
so that $\lim_{n\to\infty} A_n$ exists and
$\lim_{n\to\infty} A_n =\lim_{n\to\infty} B_n$.

\medskip
Let $\left\{ (\XXX'_{1,n}, \LLL'_{1, n}) \right\}_{n=1}^{\infty}, \ldots, 
\left\{ (\XXX'_{d+2,n}, \LLL'_{d+2, n}) \right\}_{n=1}^{\infty}$
be another sequences satisfying the above properties
(1), (2), (3), (4) and (5).
For the above sequences, $L_{d+1}$ in the property (3),
$\phi_{i,n}$ in the property (4) and
$\psi_n$ in the property (5) are denoted by
$L'_{d+1}$, $\phi'_{i,n}$ and $\psi'_n$, respectively.
Replacing $\YYY_n$ by a suitable model of $X$,
we may assume that there are 
birational morphisms $\mu'_{i, n} : \YYY_n \to \XXX'_{i, n}$ ($i=1, \ldots, d$).
For a positive number $\epsilon$, there is $N$ such that
\[
\begin{cases}
\Vert \psi_n - \phi \Vert_{\sup} \leq 2\epsilon(-\log v(\varpi)),\quad
\Vert \phi_{i, n} \Vert_{\sup} \leq \epsilon(-\log v(\varpi)),\\
\Vert \psi'_n - \phi \Vert_{\sup} \leq 2\epsilon(-\log v(\varpi)),\quad
\Vert \phi'_{i, n} \Vert_{\sup} \leq \epsilon(-\log v(\varpi))
\end{cases}
\]
for all $n \geq N$ and
$i=1, \ldots, d$.
Then, 
\begin{multline*}
\qquad\left\vert g_{(\YYY_{n},\, \mu_{i,n}^*(\LLL_{i, n}))} - g_{(\YYY_{n},\, \mu^{\prime *}_{i,n}(\LLL'_{i, n}))} \right\vert \\
\leq \left\vert g_i - g_{(\YYY_{n},\, \mu_{i,n}^*(\LLL_{i, n}))} \right\vert +
\left\vert g_i - g_{(\YYY_{n},\, \mu^{\prime *}_{i,n}(\LLL'_{i, n}))} \right\vert
\leq 2 \epsilon(-\log v(\varpi))
\end{multline*}
for $n \geq N$ and $i=1, \ldots, d$.
Therefore, in the similar way as before,
\[
- \epsilon
 (\YYY_n)_{\circ} \leq \mu_{i, n}^*(\LLL_{i, n}) - \mu^{\prime *}_{i, n}(\LLL'_{i, n})
\leq \epsilon(\YYY_n)_{\circ}.
\]
Moreover, 
\[
\left\Vert \psi_n - \psi'_n
\right\Vert_{\sup} \leq \Vert \psi_n - \phi \Vert_{\sup} + \Vert \psi'_n - \phi \Vert_{\sup} 
\leq 4 \epsilon(-\log v(\varpi))
\]
for $n \geq N$, and hence
the uniqueness of the limit follows from Lemma~\ref{lem:rel:nef:diff:adelic}.
\end{proof}

Let $\phi$ be an integrable continuous function on $X_v^{\an}$.
Let $\overline{L}_1, \ldots, \overline{L}_i, \overline{L}'_i,
\ldots, \overline{L}_{d}$ be relatively nef adelic $\RR$-Cartier divisors
of $C^0$-type on $X$. Then it is easy to see that
\frontmatterforspececialeqn
\begin{multline}
\label{eqn:multlinear:cone:intersection:adelic}
\adeg_{v}(\overline{L}_1 \cdots (a \overline{L}_i + a' \overline{L}'_i) \cdots \overline{L}_{d};\phi) \\
=
a \adeg_{v}(\overline{L}_1 \cdots \overline{L}_i \cdots \overline{L}_{d};\phi) +
a' \adeg_{v}(\overline{L}_1 \cdots \overline{L}'_i \cdots \overline{L}_{d};\phi)
\end{multline}
\backmatterforspececialeqn
for $a, a' \in \RR_{\geq 0}$, and that
\frontmatterforspececialeqn
\begin{equation}
\label{eqn:com:cone:intersection:adelic}
\adeg_{v}(\overline{L}_1 \cdots \overline{L}_i \cdots \overline{L}_j \cdots \overline{L}_{d};\phi)
=
\adeg_{v}(\overline{L}_1 \cdots \overline{L}_j \cdots \overline{L}_i \cdots \overline{L}_{d};\phi).
\end{equation}
\backmatterforspececialeqn

Let $\overline{L}_1, \ldots, \overline{L}_{d}$ be integrable adelic $\RR$-Cartier divisors
of $C^0$-type on $X$. For each $i$, we choose 
relatively nef adelic $\RR$-Cartier divisors $\overline{L}_{i, +1}$ and
$\overline{L}_{i, -1}$ of $C^0$-type such that $\overline{L}_i = \overline{L}_{i, +1} - \overline{L}_{i, -1}$.
By using \eqref{eqn:multlinear:cone:intersection:adelic},
it is not difficult to see that the quantity 
\[
\sum_{\epsilon_1,  \dots, \epsilon_{d+1} \in \{ \pm 1 \}} \epsilon_1 \cdots \epsilon_{d+1}
\adeg_{v} (\overline{L}_{1, \epsilon_1} \cdots \overline{L}_{d, \epsilon_{d}};\phi )
\]
does not depend on the choice of
$\overline{L}_{1, +1}, \overline{L}_{1, -1}, \ldots, \overline{L}_{d, +1}, \overline{L}_{d, -1}$,
so that it is denoted by $\adeg(\overline{L}_1 \cdots \overline{L}_{d};\phi)$.
Further, \eqref{eqn:multlinear:cone:intersection:adelic} and \eqref{eqn:com:cone:intersection:adelic} extends to
the following formula:
\frontmatterforspececialeqn
\begin{multline}
\label{eqn:multlinear:adelic:intersection}
\adeg_{v}(\overline{L}_1 \cdots (a \overline{L}_i + a' \overline{L}'_i) \cdots \overline{L}_{d};\phi) \\
=
a \adeg_{v}(\overline{L}_1 \cdots \overline{L}_i \cdots \overline{L}_{d};\phi) +
a' \adeg_{v}(\overline{L}_1 \cdots \overline{L}'_i \cdots \overline{L}_{d};\phi)
\end{multline}
\backmatterforspececialeqn
and
\frontmatterforspececialeqn
\begin{equation}
\label{eqn:com:intersection:adelic}
\adeg_{v}(\overline{L}_1 \cdots \overline{L}_i \cdots \overline{L}_j \cdots \overline{L}_{d};\phi)
=
\adeg_{v}(\overline{L}_1 \cdots \overline{L}_j \cdots \overline{L}_i \cdots \overline{L}_{d};\phi)
\end{equation}
\backmatterforspececialeqn
for $a, a' \in \RR$ and integrable adelic $\RR$-Cartier divisors
$\overline{L}_1, \ldots, \overline{L}_i, \overline{L}'_i,\ldots,\overline{L}_{d}$
of $C^0$-type on $X$.
Here let us consider a consequence of Proposition~\ref{propdef:adelic:intersection}.

\begin{Proposition}
\label{prop:intersection:com}
Let $\overline{L}_1 =(L_1, g_1), \ldots, \overline{L}_{d} = (L_d, g_d)$ be integrable adelic $\RR$-Cartier divisors
of $C^0$-type on $X$, and let $\phi$ be an integrable continuous function
on $X^{\an}_v$.
If $L_i = 0$, then
\[
\adeg_v(\overline{L}_1 \cdots \overline{L}_i \cdots \overline{L}_d ; \phi) =
\adeg_v(\overline{L}_1 \cdots (0, \phi) \cdots \overline{L}_d ; g_i).
\]
\end{Proposition}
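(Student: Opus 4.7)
The strategy is to push everything down to the model level, where the proposition collapses to the symmetry of the ordinary intersection product. By the commutativity~\eqref{eqn:com:intersection:adelic} of $\adeg_v$ in its divisor arguments I may assume $i=d$, and by $\RR$-multilinearity~\eqref{eqn:multlinear:adelic:intersection} in the remaining slots I may assume $\overline{L}_1,\ldots,\overline{L}_{d-1}$ are relatively nef. Since $(0,g_d)$ and $(0,\phi)$ are integrable, I decompose them as $(0,g_d)=\overline{A}_+-\overline{A}_-$ and $(0,\phi)=\overline{B}_+-\overline{B}_-$ with $\overline{A}_{\pm},\overline{B}_{\pm}$ relatively nef; because the differences have zero divisor part one has $\overline{A}_\pm=(A,h_{A,\pm})$ and $\overline{B}_\pm=(B,h_{B,\pm})$ with $g_d=h_{A,+}-h_{A,-}$ and $\phi=h_{B,+}-h_{B,-}$.

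Next I realize all data on a common sequence of models. Applying Proposition~\ref{prop:Green:psh:C0:approx} to each of $\overline{L}_j$ ($1\le j\le d-1$), $\overline{A}_\pm$, $\overline{B}_\pm$ and passing to normal models dominating the resulting ones, I obtain normal models $\XXX_n$ of $X$ together with relatively nef $\RR$-Cartier divisors $\LLL_{j,n},\AAA_{\pm,n},\BBB_{\pm,n}$ on $\XXX_n$ satisfying $\LLL_{j,n}\cap X=L_j$, $\AAA_{+,n}\cap X=\AAA_{-,n}\cap X=A$, $\BBB_{+,n}\cap X=\BBB_{-,n}\cap X=B$, and whose induced Green functions converge uniformly to $g_j$, $h_{A,\pm}$, $h_{B,\pm}$ respectively. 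Setting
\[
\EEE_n := \AAA_{+,n}-\AAA_{-,n},\qquad \FFF_n := \BBB_{+,n}-\BBB_{-,n},
\]
these are vertical $\RR$-Cartier divisors on $\XXX_n$, and $\phi_{\EEE_n}\to g_d$, $\phi_{\FFF_n}\to\phi$ uniformly on $X_v^{\an}$.

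Applying multilinearity of the adelic intersection together with Proposition~\ref{propdef:adelic:intersection}, using $\psi_n=\phi_{\FFF_n}$ as the approximating sequence for $\phi$, I obtain
\begin{align*}
\text{LHS} &= \adeg_v(\overline{L}_1\cdots\overline{L}_{d-1}\cdot\overline{A}_+;\phi)-\adeg_v(\overline{L}_1\cdots\overline{L}_{d-1}\cdot\overline{A}_-;\phi) \\
&= \lim_{n\to\infty}\adeg_v(\LLL_{1,n}\cdots\LLL_{d-1,n}\cdot\EEE_n;\phi_{\FFF_n}) \\
&= \lim_{n\to\infty}\deg_{k^{\circ}/k^{\circ\circ}}(\LLL_{1,n}\cdots\LLL_{d-1,n}\cdot\EEE_n\cdot\FFF_n),
\end{align*}
the last equality being the model-level identity $\adeg_v(\LLL_1\cdots\LLL_d;\phi_\EEE)=\deg_{k^{\circ}/k^{\circ\circ}}(\LLL_1\cdots\LLL_d\cdot\EEE)$ recorded in
\ifmonog Section~\ref{subsec:local:intersection}.\fi
\ifpaper Subsection~\ref{subsec:local:intersection}.\fi
The same computation applied to the right-hand side of the proposition, with the roles of $\EEE_n$ and $\FFF_n$ interchanged, expresses it as $\lim_n\deg_{k^{\circ}/k^{\circ\circ}}(\LLL_{1,n}\cdots\LLL_{d-1,n}\cdot\FFF_n\cdot\EEE_n)$. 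These two limits agree by commutativity of intersection numbers on the model $\XXX_n$, which proves the proposition.

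The main technical obstacle lies in the middle step: one must simultaneously approximate all of $\overline{L}_1,\ldots,\overline{L}_{d-1}$, $\overline{A}_\pm$ and $\overline{B}_\pm$ by relatively nef models on a single sequence $\XXX_n$ with uniform convergence of every Green function, which requires a careful coordination of separate model-approximation sequences via common refinements. Once this common-model framework is in place, the entire argument reduces to the elementary symmetry of the classical intersection pairing.
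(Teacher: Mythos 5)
Your proof is correct and follows essentially the same route as the paper's: both decompose the integrable data into relatively nef pieces, realize them on a common sequence of models, and then observe that for vertical $\EEE_n$, $\FFF_n$ the identity $\adeg_v(\LLL_{1,n}\cdots\LLL_{d-1,n}\cdot\EEE_n;\phi_{\FFF_n})=\deg_{k^\circ/k^{\circ\circ}}(\LLL_{1,n}\cdots\LLL_{d-1,n}\cdot\EEE_n\cdot\FFF_n)=\adeg_v(\LLL_{1,n}\cdots\LLL_{d-1,n}\cdot\FFF_n;\phi_{\EEE_n})$ reduces everything to symmetry of the model-level intersection product. The only cosmetic difference is that you first normalize $i=d$ and reduce to relatively nef $\overline{L}_1,\ldots,\overline{L}_{d-1}$, whereas the paper carries the $\epsilon_j=\pm1$ indices throughout and recombines by multilinearity at the end.
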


\begin{proof}
For $j=1, \ldots, d$,
let
\[
\overline{L}_{j, +1} = (L_{j, +1}, g_{j,+1})
\quad\text{and}\quad
\overline{L}_{j, -1}  = (L_{j, -1}, g_{j,-1})
\]
be relatively nef adelic $\RR$-Cartier divisors  
of $C^0$-type on $X$ such that $\overline{L}_j = \overline{L}_{j, +1} - \overline{L}_{j, -1}$.
Moreover, we choose relative nef adelic $\RR$-Cartier divisors 
\[
\overline{L}_{d+1, +1}= (L_{d+1, +1}, g_{d+1,+1})
\quad\text{and}\quad
\overline{L}_{d+1,-1} = (L_{d+1, -1}, g_{d+1,-1})
\]
of $C^0$-type
on $X$ such that $\overline{L}_{d+1, +1} - \overline{L}_{d+1, -1} = (0, \phi)$.
Then there are sequences
\begin{multline*}
\left\{ (\XXX_{1, +1, n}, \LLL_{1, +1, n}) \right\}_{n=1}^{\infty}, 
\left\{ (\XXX_{1, -1, n}, \LLL_{1, -1, n}) \right\}_{n=1}^{\infty}, \ldots, \\
\left\{ (\XXX_{d+1, +1, n}, \LLL_{d+1, +1, n}) \right\}_{n=1}^{\infty}, 
\left\{ (\XXX_{d+1, -1, n}, \LLL_{d+1, -1, n}) \right\}_{n=1}^{\infty}
\end{multline*}
satisfying the following conditions:
\begin{enumerate}
\renewcommand{\labelenumi}{(\alph{enumi})}
\item
$\XXX_{j, \epsilon, n}$ is a normal model of $X$ over $\Spec(k^{\circ})$ for $j=1, \ldots, d+1$,
$\epsilon = \pm 1$ and $n \geq 1$.

\item
$\LLL_{j,\epsilon, n}$ is a nef $\RR$-Cartier divisor on $\XXX_{j, \epsilon, n}$ such that
$\LLL_{j,\epsilon, n} \cap X = L_{j,\epsilon}$ for $j=1, \ldots, d+1$,
$\epsilon = \pm 1$ and $n \geq 1$. 

\item
If we set $\phi_{j, \epsilon, n} = g_{j,\epsilon} - g_{\left(\XXX_{j,\epsilon, n},\, \LLL_{j, \epsilon, n}\right)}$,
then $\lim_{n\to\infty} \Vert \phi_{j,\epsilon, n} \Vert_{\sup} = 0$ for $j=1, \ldots, d+1$ and
$\epsilon = \pm 1$.
\end{enumerate}
Here we set
\[
\begin{cases}
\psi_n = g_{(\XXX_{d+1, +1, n},\, \LLL_{d+1, +1, n})}  - g_{(\XXX_{d+1, -1, n},\, \LLL_{d+1, -1, n})}, \\
\theta_n = g_{(\XXX_{i, +1, n},\, \LLL_{i, +1, n})}  - g_{(\XXX_{i, -1, n},\, \LLL_{i, -1, n})}.
\end{cases}
\]
Then, by Proposition-Definition~\ref{propdef:adelic:intersection},
\begin{align*}
&\lim_{n\to\infty} \adeg_v(\LLL_{1,\epsilon_1, n} \cdots \LLL_{i, \epsilon_i, n} \cdots \LLL_{d, \epsilon_d, n} ; \psi_n) =
\adeg_v(\overline{L}_{1,\epsilon_1} \cdots \overline{L}_{i, \epsilon_i} \cdots \overline{L}_{d, \epsilon_d} ; \phi) \\
\intertext{and}
& \lim_{n\to\infty} \adeg_v(\LLL_{1,\epsilon_1, n} \cdots \LLL_{d+1, \epsilon_{d+1}, n} \cdots \LLL_{d, \epsilon_d, n} ; \theta_n) \\
& \qquad\qquad\qquad\qquad\qquad\qquad\qquad\qquad = 
\adeg_v(\overline{L}_{1,\epsilon_1} \cdots \overline{L}_{d+1, \epsilon_{d+1}} \cdots \overline{L}_{d, \epsilon_d} ; g_i).
\end{align*}
Therefore, if we set $\LLL_{i, n} = \LLL_{i, +1, n} - \LLL_{i, -1, n}$ for $i=1, \ldots, d+1$,
then
\[
\begin{cases}
\lim_{n\to\infty} \adeg_v(\LLL_{1, n} \cdots \LLL_{i, n} \cdots \LLL_{d, n} ; \psi_n) =
\adeg_v(\overline{L}_{1} \cdots \overline{L}_{i} \cdots \overline{L}_{d} ; \phi), \\[2ex]
\lim_{n\to\infty} \adeg_v(\LLL_{1, n} \cdots \LLL_{d+1, n} \cdots \LLL_{d, n} ; \theta_n) =
\adeg_v(\overline{L}_{1} \cdots (0, \phi) \cdots \overline{L}_{d} ; g_i).
\end{cases}
\]
On the other hand,
note that 
\[
 \adeg_v(\LLL_{1, n} \cdots \LLL_{i, n} \cdots \LLL_{d, n} ; \psi_n) =
 \adeg_v(\LLL_{1, n} \cdots \LLL_{d+1, n} \cdots \LLL_{d, n} ; \theta_n).
\]
Thus the assertion of the proposition follows.
\end{proof}

The results of this 
\ifmonog section \fi
\ifpaper subsection \fi
leads to the following definition:

\begin{Definition}
\label{def:intersection:integral:div}
Let $\overline{L}_1 =(L_1, g_1), \ldots, \overline{L}_{d+1} = (L_{d+1}, g_{d+1})$ be integrable adelic $\RR$-Cartier divisors
of $C^0$-type on $X$. 
By Proposition~\ref{prop:intersection:com} and \eqref{eqn:com:intersection:adelic}, if $L_i = 0$ for some $i$, then
\[
\adeg_v(\overline{L}_1 \cdots   \overline{L}_{d+1})
\]
is well-defined, that is,
\[
\adeg_v(\overline{L}_1 \cdots  \overline{L}_{d+1}) := \adeg_v(\overline{L}_1 \cdots \overline{L}_{i-1} \cdot \overline{L}_{i+1} \cdots
\overline{L}_{d+1} ; g_i).
\]%
\index{\AdelDivSymbol}{0d:adeg_v(overline{L}_1 cdots  overline{L}_{d+1})@$\adeg_v(\overline{L}_1 \cdots  \overline{L}_{d+1})$}%
Moreover, it is symmetric and multi-linear.
\end{Definition}

\begin{Proposition}
\label{prop:misc:intersection:adelic}
Let $\overline{L}_1, \ldots, \overline{L}_d$ be integrable adelic $\RR$-Cartier divisors of $C^0$-type on $X$, and
let $\phi$ and $\phi'$ be continuous functions on $X_v^{\an}$. Then we have the following:
\begin{enumerate}
\renewcommand{\labelenumi}{(\arabic{enumi})}
\item
For $f \in \Rat(X)^{\times}_{\RR}$, 
$\adeg_v(((f), -\log \vert f \vert^2) \cdot \overline{L}_2 \cdots \overline{L}_d ; \phi) = 0$.
 
\item
If  $\overline{L}_1, \ldots, \overline{L}_d$ are relatively nef and
$\phi' \leq \phi$ on $X_v^{\an}$, then
\[
\adeg_v(\overline{L}_1  \cdots \overline{L}_d ; \phi') \leq
\adeg_v(\overline{L}_1  \cdots \overline{L}_d ; \phi).
\]

\item
Let 
\begin{multline*}
\qquad\qquad\overline{L}_{1,1} = (L_{1,1}, g_{1,1}),\overline{L}_{1,-1} = (L_{1,-1}, g_{1,-1}), \ldots, \\
\overline{L}_{d,1} = (L_{d,1}, g_{d,1}),\overline{L}_{d,-1} = (L_{d,-1}, g_{d,-1})
\end{multline*}
be 
relatively nef adelic $\RR$-Cartier divisors of $C^0$-type on $X$ such that
$\overline{L}_i = \overline{L}_{i,1} - \overline{L}_{i,-1}$ for $i=1, \ldots, d$.
Then
\[
\left| \adeg_v(\overline{L}_1 \cdots \overline{L}_d ; \phi) \right|
\leq \frac{\Vert \phi \Vert_{\sup}}{-2 \log v(\varpi)} \sum_{\epsilon_1, \ldots, \epsilon_d \in \{ \pm 1 \}}
\deg(L_{1, \epsilon_1} \cdots L_{d, \epsilon_d}).
\]
\end{enumerate}
\end{Proposition}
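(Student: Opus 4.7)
All three parts reduce to the model picture via Proposition-Definition~\ref{propdef:adelic:intersection} and then follow from the corresponding properties of the local intersection pairing $\adeg_v(\LLL_1 \cdots \LLL_d; \phi)$ established earlier in this subsection.

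For (1), multi-linearity \eqref{eqn:multlinear:adelic:intersection} allows me to assume $f \in \Rat(X)^{\times}$. Fixing any normal model $\XXX$, the adelic divisor $((f),-\log|f|^2)$ coincides with $\DDD^{\ad}$, where $\DDD := (f)_{\XXX}$ is the principal Cartier divisor of $f$ on $\XXX$; its integrability follows from the decomposition $\DDD^{\ad} = (N\HHH + \DDD)^{\ad} - (N\HHH)^{\ad}$, with $\HHH$ any relatively ample Cartier divisor on $\XXX$ and $N$ large enough that $N\HHH + \DDD$ is still relatively nef. I then choose a single sequence of normal models $\YYY_n$ on which relatively nef approximations $\LLL_{i,n}$ of each $\overline{L}_i$ $(i = 2,\dots,d)$ coexist with vertical divisors $\EEE_n$ whose Green functions $\psi_n := g_{(\YYY_n,\EEE_n)}$ approximate $\phi$ uniformly. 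By Proposition-Definition~\ref{propdef:adelic:intersection} together with the identity $\adeg_v(\LLL_1\cdots\LLL_d;g_{(\XXX,\EEE)}) = \deg_{k^\circ/k^{\circ\circ}}(\LLL_1\cdots\LLL_d\cdot\EEE)$, the left-hand side of (1) equals
\[
\lim_{n \to \infty} \deg_{k^\circ/k^{\circ\circ}}\bigl((f)_{\YYY_n} \cdot \LLL_{2,n} \cdots \LLL_{d,n} \cdot \EEE_n\bigr),
\]
which vanishes termwise since $\OOO_{\YYY_n}\bigl((f)_{\YYY_n}\bigr) \simeq \OOO_{\YYY_n}$ as line bundles.

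Part (2) is immediate from Proposition-Definition~\ref{propdef:adelic:intersection}: take a common sequence of relatively nef model approximations $\LLL_{i,n}$ of the $\overline{L}_i$, so that both sides of the inequality are realized as the limits of $\adeg_v(\LLL_{1,n}\cdots\LLL_{d,n};\phi)$ and $\adeg_v(\LLL_{1,n}\cdots\LLL_{d,n};\phi')$ respectively; the model-level monotonicity~\eqref{eqn:phi:leq:phi:prime:ineq} then passes to the limit.

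For (3), multi-linearity~\eqref{eqn:multlinear:adelic:intersection} yields the expansion
\[
\adeg_v(\overline{L}_1\cdots\overline{L}_d;\phi) = \sum_{(\epsilon_1,\ldots,\epsilon_d) \in \{\pm 1\}^d} \epsilon_1\cdots\epsilon_d\,\adeg_v(\overline{L}_{1,\epsilon_1}\cdots\overline{L}_{d,\epsilon_d};\phi),
\]
and for each tuple $\epsilon$ the family $\overline{L}_{1,\epsilon_1},\dots,\overline{L}_{d,\epsilon_d}$ is relatively nef, so part~(2) applied with the constants $\pm\Vert\phi\Vert_{\sup}$ in place of $\phi'$ and $\phi$ gives
\[
\bigl|\adeg_v(\overline{L}_{1,\epsilon_1}\cdots\overline{L}_{d,\epsilon_d};\phi)\bigr| \leq \adeg_v(\overline{L}_{1,\epsilon_1}\cdots\overline{L}_{d,\epsilon_d};\Vert\phi\Vert_{\sup}).
\]
The constant-function case is computed directly from the defining formula: since $\OOO_\XXX(\XXX_\circ)$ is trivial, $\XXX_\circ|_{\Gamma_j}$ is numerically trivial, so the $\lambda_{ij}$-corrections drop out and the sum collapses to $(\Vert\phi\Vert_{\sup}/(-2\log v(\varpi)))\deg_{k^\circ/k^{\circ\circ}}(\LLL_1\cdots\LLL_d\cdot\XXX_\circ)$, which by the projection formula for the flat map $\XXX_n \to \Spec(k^\circ)$ equals $(\Vert\phi\Vert_{\sup}/(-2\log v(\varpi)))\deg(L_{1,\epsilon_1}\cdots L_{d,\epsilon_d})$. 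Summing over the $2^d$ tuples $\epsilon$ via the triangle inequality produces the claimed bound. The only technically delicate point, concentrated in part~(1), is the bookkeeping needed to realize the principal divisor $(f)$, the relatively nef approximations $\LLL_{i,n}$, and the vertical divisors $\EEE_n$ on a single common sequence of normal models after pullback.
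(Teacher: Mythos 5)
Your proof is correct and follows the same route as the paper's: pass to the model picture via Proposition-Definition~\ref{propdef:adelic:intersection} and then invoke the relevant model-level fact for each part (triviality of $\OOO_{\YYY_n}((f)_{\YYY_n})$ for (1), the inequality~\eqref{eqn:phi:leq:phi:prime:ineq} for (2), and linearity plus the constant-function identity $\adeg_v(\LLL_1\cdots\LLL_d;-2\log v(\varpi)) = \deg_{k^\circ/k^{\circ\circ}}(\LLL_1\cdots\LLL_d\cdot\XXX_\circ) = \deg(L_1\cdots L_d)$ for (3)). The paper's write-up is terser and leaves the limit passage implicit; the one loose phrase in yours is calling the $\LLL_{i,n}$ for $i\ge 2$ ``relatively nef approximations of $\overline{L}_i$'' when the $\overline{L}_i$ are only integrable, so the $\LLL_{i,n}$ should be differences of relatively nef model divisors obtained from the decomposition $\overline{L}_i=\overline{L}_{i,1}-\overline{L}_{i,-1}$ — this is harmless for the vanishing in (1), which only uses that $(f)_{\YYY_n}$ is principal.
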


\begin{proof}
Let $\XXX$ be a normal model of $X$ over $\Spec(k^{\circ})$ and let $\LLL_1, \LLL_2, \ldots, \LLL_d$ be
$\RR$-Cartier divisors on $\XXX$

(1) By the definition of $\adeg_v$,
we have $\adeg_v((f)_{\XXX} \cdot \LLL_2 \cdots \LLL_d ; \phi) = 0$.
Thus (1) follows.

(2) If $\LLL_1, \ldots, \LLL_d$ are relatively nef, then
$\adeg_v(\LLL_1  \cdots \LLL_d ; \phi') \leq
\adeg_v(\LLL_1  \cdots \LLL_d ; \phi)$ (cf. \eqref{eqn:phi:leq:phi:prime:ineq}), so that
we have (2).

(3) First of all, note that
\[
\adeg_v(\overline{L}_1 \cdots \overline{L}_d ; \phi) = \sum_{\epsilon_1, \ldots, \epsilon_d \in \{ \pm 1 \}}
\epsilon_1 \cdots \epsilon_d 
\adeg_v(\overline{L}_{1, \epsilon_1} \cdots \overline{L}_{d, \epsilon_d};\phi).
\]
Therefore, by using (2),
\begin{align*}
\left| \adeg_v(\overline{L}_1 \cdots \overline{L}_d ; \phi) \right| & \leq \sum_{\epsilon_1, \ldots, \epsilon_d \in \{ \pm 1 \}}
\left| \adeg_v(\overline{L}_{1, \epsilon_1} \cdots \overline{L}_{d, \epsilon_d};\phi) \right| \\
& \leq \sum_{\epsilon_1, \ldots, \epsilon_d \in \{ \pm 1 \}}
\adeg_v(\overline{L}_{1, \epsilon_1} \cdots \overline{L}_{d, \epsilon_d}; \Vert \phi \Vert_{\sup}) \\
& =  
\frac{\Vert \phi \Vert_{\sup}}{-2 \log v(\varpi)} \sum_{\epsilon_1, \ldots, \epsilon_d \in \{ \pm 1 \}}
\deg(L_{1, \epsilon_1} \cdots L_{d, \epsilon_d}),
\end{align*}
as required.
\end{proof}

\bigskip
Finally let us consider Zariski's lemma for integrable functions.
We assume that $\dim X = 1$ and $v$ is complete.
Let $C^0_{\integrable}(X^{\an})$ be the space of integrable continuous functions on $X^{\an}$.
Let  
\[
\langle\ ,\ \rangle : C^0_{\integrable}(X^{\an}) \times C^0_{\integrable}(X^{\an}) \to \RR
\]
be a map given by 
$\langle \varphi, \psi \rangle := \adeg_v((0,\varphi) ; \psi) = \adeg_v((0,\varphi) \cdot (0, \psi))$,
which is bilinear and symmetric (see Definition~\ref{def:intersection:integral:div}).

\begin{Lemma}[Zariski's lemma for integrable functions]
\label{lem:Zariski:adelic}
The above pairing $\langle\ ,\ \rangle$ is negative semi-definite.
Moreover, for $\varphi \in C^0_{\integrable}(X^{\an})$,
$\langle\varphi ,\varphi \rangle = 0$ if and only if $\varphi$ is a constant function.
\end{Lemma}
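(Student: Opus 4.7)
The plan is to reduce both statements to the classical Zariski's lemma on arithmetic surfaces via the model approximation developed in the preceding subsections. Since $\varphi \in C^0_{\integrable}(X^{\an})$, by definition there exist relatively nef adelic $\RR$-Cartier divisors $\overline{M}' = (M, h')$ and $\overline{M}'' = (M, h'')$ of $C^0$-type on $X$ (with the same underlying $\RR$-Cartier divisor $M$, since the difference $(0, \varphi)$ has underlying divisor $0$) such that $h' - h'' = \varphi$. Applying Proposition~\ref{prop:Green:psh:C0:approx} to each of $h'$ and $h''$ separately and taking common regular refinements of the resulting models via Lipman's theorem (Conventions and terminology~\ref{CT:model}), I obtain a sequence of normal (indeed regular) models $\XXX_n$ of $X$ together with relatively nef $\RR$-Cartier divisors $\DDD'_n, \DDD''_n$ on $\XXX_n$ satisfying $\DDD'_n \cap X = \DDD''_n \cap X = M$ and $g_{(\XXX_n,\,\DDD'_n)} \to h'$, $g_{(\XXX_n,\,\DDD''_n)} \to h''$ uniformly on $X^{\an}$. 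Setting $\EEE_n := \DDD'_n - \DDD''_n$, each $\EEE_n$ is a vertical $\RR$-Cartier divisor on $\XXX_n$ (its generic fiber vanishes), and $g_{(\XXX_n,\,\EEE_n)}$ converges uniformly to $\varphi$.

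Next I compute $\langle \varphi, \varphi \rangle$ as a limit. By Definition~\ref{def:intersection:integral:div} and the bilinearity of the intersection product,
\[
\langle \varphi, \varphi \rangle = \adeg_v\bigl((0,\varphi);\varphi\bigr) = \adeg_v(\overline{M}';\varphi) - \adeg_v(\overline{M}'';\varphi).
\]
Proposition-Definition~\ref{propdef:adelic:intersection} lets me evaluate each of the two terms on the right-hand side as $\lim_n \adeg_v(\DDD'_n; g_{(\XXX_n,\,\EEE_n)})$ and $\lim_n \adeg_v(\DDD''_n; g_{(\XXX_n,\,\EEE_n)})$ respectively. Invoking the computation carried out just before Lemma~\ref{lem:rel:nef:diff:adelic}, namely $\adeg_v(\LLL;\phi_{\EEE}) = \deg_{k^{\circ}/k^{\circ\circ}}(\LLL \cdot \EEE)$ when $\EEE$ is vertical and $\phi_{\EEE} = g_{(\XXX,\,\EEE)}$, these expressions simplify to $\deg(\DDD'_n \cdot \EEE_n)$ and $\deg(\DDD''_n \cdot \EEE_n)$. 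Subtracting yields the clean formula
\[
\langle \varphi, \varphi \rangle = \lim_{n \to \infty} \deg_{k^{\circ}/k^{\circ\circ}}(\EEE_n \cdot \EEE_n).
\]

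Now each $\XXX_n$ is a regular projective $2$-dimensional scheme flat over the discrete valuation ring $\Spec(k^{\circ})$, with connected central fiber $F_n = (\XXX_n)_{\circ}$ (by properness and the geometric integrality of $X$). The classical Zariski lemma on such an arithmetic surface, applied to the vertical $\RR$-divisor $\EEE_n$, gives $\EEE_n^2 \leq 0$, with equality if and only if $\EEE_n \in \RR \cdot F_n$. Taking limits produces $\langle \varphi, \varphi \rangle \leq 0$, establishing negative semi-definiteness.

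For the equality case, a standard Cauchy--Schwarz argument applied to the negative semi-definite pairing shows that if $\langle \varphi, \varphi \rangle = 0$, then $\langle \varphi, \psi \rangle = 0$ for every $\psi \in C^0_{\integrable}(X^{\an})$. I then want to test this against model functions: given a regular model $\XXX$ of $X$ and a vertical prime component $\Gamma$ of $\XXX_{\circ}$, I observe that $g_{(\XXX,\,\Gamma)}$ is integrable because, taking a relatively very ample $\AAA$ on $\XXX$, both $\AAA$ and $\AAA + \epsilon\Gamma$ are relatively nef for small enough $\epsilon > 0$, and $\epsilon(0, g_{(\XXX,\,\Gamma)}) = (\AAA+\epsilon\Gamma)^{\ad} - \AAA^{\ad}$. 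Evaluating $\langle \varphi, g_{(\XXX,\,\Gamma)} \rangle = 0$ via the limit formula against the approximating sequence $\EEE_n$ (pulled back to a common refinement of $\XXX_n$ and $\XXX$) forces $\varphi$ to annihilate all such model functions; since model functions of this form are dense in $C^0(X^{\an})/\RR$, it follows that $\varphi$ is constant. The main obstacle is precisely this last step: bridging ``$\varphi$ is orthogonal to every model function'' with ``$\varphi$ is constant.'' The cleanest way I see is to combine negative definiteness of the intersection form on vertical $\RR$-divisors modulo $\RR F_n$ with a uniform comparison, on the reduction graph, between the sup-norm of $g_{(\XXX_n,\,\EEE_n - c_n F_n)}$ and $\sqrt{-\EEE_n^2}$ for a suitable choice $c_n \in \RR$; this then shows that $\varphi - \lim_n(-2 c_n \log v(\varpi))$ vanishes in sup-norm, hence $\varphi$ is constant.
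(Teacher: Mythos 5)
Your argument for negative semi-definiteness is correct and is essentially the same reduction the paper carries out, just unpacked more explicitly: the paper cites the density theorem (Theorem~\ref{thm:density:local:model}) together with Proposition-Definition~\ref{propdef:adelic:intersection} and the classical Zariski lemma, while you run the model approximation by hand via Proposition~\ref{prop:Green:psh:C0:approx}, Lipman's theorem, and the formula $\adeg_v(\LLL;\phi_{\EEE}) = \deg_{k^{\circ}/k^{\circ\circ}}(\LLL\cdot\EEE)$ to arrive at $\langle \varphi,\varphi\rangle = \lim_n \deg(\EEE_n^2) \leq 0$. The identification $\langle \varphi,\varphi\rangle = \lim_n\deg(\EEE_n\cdot\EEE_n)$ checks out and is a clean way to see the sign.

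The equality case is where you have a genuine gap, and you say as much yourself. Your Cauchy--Schwarz step and the test against model functions $g_{(\XXX,\Gamma)}$ are fine, but the passage from ``$\varphi$ is orthogonal to every model function'' to ``$\varphi$ is constant'' does not follow from density alone, since the intersection pairing is not the $L^2$ pairing, and your sketch (a uniform comparison between $\Vert g_{(\XXX_n, \EEE_n - c_nF_n)}\Vert_{\sup}$ and $\sqrt{-\EEE_n^2}$ on the reduction graph) is an analytic estimate that you have not established and that would itself require a real argument. The paper closes this gap by a different, sharper observation. First it uses orthogonality to the model functions $g_{(\XXX,C_i)}$ together with the equality case of classical Zariski's lemma to show that $\varphi(x_1) = \cdots = \varphi(x_r)$ for the divisorial points $x_j$ of the components $C_j$ of the central fiber of every regular model; hence $\varphi$ is constant, say equal to $a$, on the set $X^{\an}_{\zeros}$ of all such divisorial valuations. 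Then it exploits the rigid structure of model functions: if $\theta = g_{(\XXX,\Theta)}$ with $\Theta = \sum_i c_iC_i$ vertical on a regular model $\XXX$ whose central fiber has multiplicities $a_i$, then $c_i = a_i\theta(x_i)/(-2\log v(\varpi))$, so a bound $|\theta(x_i)|\leq\epsilon$ on $X^{\an}_{\zeros}$ forces $-\frac{\epsilon}{-2\log v(\varpi)}\XXX_{\circ}\leq\Theta\leq\frac{\epsilon}{-2\log v(\varpi)}\XXX_{\circ}$ and hence $|\theta|\leq\epsilon$ everywhere. Combining with density gives $\Vert\varphi - a\Vert_{\sup}\leq 2\epsilon$ for all $\epsilon$, i.e.\ $\varphi\equiv a$. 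You would need to supply this (or an equivalent) argument to complete the proof.
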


\begin{proof}
The proof can be found in \cite{YZ}. For reader's convenience, we prove it here.
Let $\MM(X^{\an})$ be the group of model functions on $X^{\an}$ (cf. 
\ifmonog Section~\ref{subsec:model:function}). \fi
\ifpaper Subsection~\ref{subsec:model:function}). \fi
Then, by virtue of Zariski's lemma (cf. \cite[Lemma~1.1.4]{MoD}),
for $\theta \in \MM(X^{\an}) \otimes_{\ZZ} \RR$,
$\langle \theta, \theta \rangle \leq 0$.
Therefore, by the density theorem (cf. Theorem~\ref{thm:density:local:model}) together with
Proposition-Definition~\ref{propdef:adelic:intersection}, we can see that
$\langle \varphi, \varphi \rangle \leq 0$ for all $\varphi \in C^0_{\integrable}(X^{\an})$.
Thus $\langle\ ,\ \rangle$ is negative semi-definite.
Clearly if $\varphi$ is a constant function, then $\langle\varphi ,\varphi \rangle = 0$.
Conversely, we assume $\langle\varphi ,\varphi \rangle = 0$.
Then, by \cite[Lemma~1.1.3]{MoD},
$\langle \psi, \varphi \rangle = 0$
for all $\psi \in C^0_{\integrable}(X^{\an})$.

Let $\XXX$ be a regular model of $X$ over $\Spec(k^{\circ})$ and
$\XXX_{\circ} = a_1 C_1 + \cdots + a_r C_r$ the irreducible decomposition of the central fiber $\XXX_{\circ}$ as a cycle.
Let $x_j$ be the point of $X^{\an}$ corresponding to $C_j$. 
For $i=1, \ldots, r$, if $\psi_i = g_{(\XXX, C_i)}$, then
\begin{align*}
\deg\left(C_i \cdot \sum\nolimits_{j=1}^r \varphi(x_j) a_j C_j \right)  & = 
-2 \log v(\varpi) \sum\nolimits_{j=1}^r \frac{\varphi(x_j) a_j }{-2 \log v(\varpi)} \deg(C_i \cdot C_j) \\
& = -2 \log v(\varpi) \langle \psi_i, \varphi \rangle = 0.
\end{align*}
Therefore, by the equality condition of Zariski's lemma (cf. \cite[Lemma~1.1.4]{MoD}),
we have
\[
\varphi(x_1) = \cdots = \varphi(x_r).
\]

Let $X^{\an}_{\zeros}$ be a subset of $X^{\an}$ consisting of valuations arising from irreducible components of
the central fiber of any regular model of $X$.
The above observation shows that $\rest{\varphi}{X^{\an}_{\zeros}}$ is a constant function $a$.
We set $\lambda := \varphi - a$ on $X^{\an}$.
By the density theorem, for any positive number $\epsilon$,
there is $\theta \in \MM(X^{\an}) \otimes_{\ZZ} \RR$ such that $\vert \lambda - \theta \vert \leq \epsilon$.
Thus $\vert \theta(x) \vert \leq \epsilon$ for all $x \in X^{\an}_{\zeros}$, which implies that 
$\vert \theta \vert \leq \epsilon$ on $X^{\an}$. 
Indeed, if $\theta$ is given by a vertical $\RR$-Cartier divisor 
$\Theta = c_1 C_1 + \cdots + c_r C_r$ on some regular model $\XXX$ as before (i.e.
$\theta = g_{(\XXX, \Theta)}$),
then $c_i = a_i \theta(x_i)/(-2\log v(\varpi))$, so that
\[
-(\epsilon/(-2\log v(\varpi))) \XXX_{\circ} \leq \Theta \leq (\epsilon/(-2\log v(\varpi))) \XXX_{\circ},
\]
which means that $\vert \theta \vert \leq \epsilon$ on $X^{\an}$.
Thus $\vert \lambda \vert \leq 2 \epsilon$, and hence
$\lambda = 0$.
\end{proof}

\ifmonog\chapter{Local and global density theorems}\fi
\ifpaper\section{Local and global density theorems}\fi
The density theorem in terms of global model functions was established
by Gubler \cite[Theorem~7.12]{GL} and X. Yuan \cite[Lemma~3.5]{YB}.
Recently an elementary proof was found by Boucksom, Favre and Jonsson \cite{BFJ}.
Unfortunately, they assume that the characteristic of the residue field is zero,
which is a strong restriction for our purpose.
In this 
\ifmonog chapter, \fi
\ifpaper section, \fi
we will give a proof of the density theorem in general by using their ideas.

Let $k$ be a field and $v$ a non-trivial discrete valuation of $k$.
Note that $v$ is multiplicative.
Let $\varpi$ be a uniformizing parameter of $k^{\circ}$, that is,
$k^{\circ\circ} = \varpi k^{\circ}$ (for the definition of $k^{\circ}$ and $k^{\circ\circ}$,
see Conventions and terminology~\ref{CT:nonArchimedean:value}). 
Let $X$ be a projective and geometrically integral variety over $k$ and
let $\Rat(X)$ be the rational function field of $X$.

\ifmonog\section{Vertical fractional ideal sheaves and birational system of models}\fi
\ifpaper\subsection{Vertical fractional ideal sheaves and birational system of models}\fi
\label{subsec:vertical:fractional:ideal:sheaf}

Let $\XXX$ be a model of $X$ over $\Spec(k^{\circ})$ (cf. Conventions and terminology~\ref{CT:model}).
The central fiber of $\XXX \to \Spec(k^{\circ})$ is denoted by $\XXX_{\circ}$,
that is, $\XXX_{\circ} := \XXX \times_{\Spec(k^{\circ})} \Spec(k^{\circ}/k^{\circ\circ})$.
A non-zero coherent subsheaf $\JJJ$ of $\Rat(X)$ on $\XXX$ is called a {\em fractional ideal sheaf on $\XXX$}.
\index{\AdelDivSubject}{fractional ideal sheaf@fractional ideal sheaf}%
It is said to be {\em vertical} 
\index{\AdelDivSubject}{vertical fractional ideal sheaf@vertical fractional ideal sheaf}%
if there is $m \in \ZZ_{\geq 0}$ such that
$\varpi^m \JJJ$ is an ideal sheaf 
and $\Supp(\OOO_{\XXX}/\varpi^m \JJJ) \subseteq \XXX_{\circ}$.
Let $\DDD$ be a vertical Cartier divisor on $\XXX$, that is,
$\Supp_{\ZZ}(\DDD) \subseteq \XXX_{\circ}$ (for the definition of $\Supp_{\ZZ}$, 
see 
\ifmonog Section~\ref{subsec:closedness:support:R:Cartier:divisor}). \fi
\ifpaper Subsection~\ref{subsec:closedness:support:R:Cartier:divisor}). \fi
Note that $\OOO_{\XXX}(\DDD)$ is a vertical fractional sheaf.
Indeed, let $\xi \in \XXX_{\circ}$ and $f$ a local equation of $\DDD$ at $\xi$. 
Then $f$ is a unit element of $((\OOO_{\XXX, \xi})_S)_p$ for all 
$p \in \Spec((\OOO_{\XXX, \xi})_S)$,
where $S$ is a multiplicative set given by $\{ 1, \varpi, \varpi^2 , \ldots \}$,
and hence $f \in (\OOO_{\XXX, \xi})_S$. Thus
we can find $m_{\xi} \in \ZZ_{\geq 0}$ such that $\varpi^{m_{\xi}} f \in \OOO_{\XXX, \xi}$.
This observation shows that $\varpi^m \OOO_{\XXX}(\DDD) \subseteq \OOO_{\XXX}$ for some $m \in \ZZ_{\geq 0}$, as required.

A set $\Psi$ of  models of $X$ is called a {\em birational system of  models of $X$}
\index{\AdelDivSubject}{birational system of models@birational system of models}%
if the following conditions are satisfied:
\begin{enumerate}
\renewcommand{\labelenumi}{(\arabic{enumi})}
\item
For any $\XXX \in \Psi$ and any vertical fractional ideal sheaf $\JJJ$ on $\XXX$,
there is $\XXX' \in \Psi$ together with a birational morphism $\nu : \XXX' \to \XXX$
such that $\JJJ \OOO_{\XXX'}$ is invertible.

\item
For any $\XXX_1, \XXX_2 \in \Psi$, there is $\XXX_3 \in \Psi$ together with
birational morphisms $\nu_1 : \XXX_3 \to \XXX_1$ and $\nu_2 : \XXX_3 \to \XXX_1$.
\end{enumerate}

\begin{Remark}
\label{rem:birational:system:integral:model}
The set of all  models of $X$ is a birational system of  models of $X$.
In addition, fixing a model $\mathcal{Z}$ of $X$,
the set of all  models $\XXX$ over $\mathcal{Z}$ (that is,
there is a birational morphism $\XXX \to \mathcal{Z}$)
forms a birational system of  models of $X$.
\end{Remark}

\ifmonog\section{Model functions}\fi
\ifpaper\subsection{Model functions}\fi
\label{subsec:model:function}

We assume that $v$ is complete.
Let $\XXX \to \Spec(k^{\circ})$ be a model of $X$.
Let $\JJJ$ be a vertical fractional ideal sheaf on $\XXX$.
According to \cite[Subsection~2.3]{BFJ}, we define a function $\log \vert \JJJ \vert$ on $X^{\an}$ to be
\[
\log \vert \JJJ \vert (x) := \log \max \left\{ \left\vert h(x) \right\vert \mid 
h \in \JJJ_{r_{\XXX}(x)} \right\},
\]
where $r_{\XXX} : X^{\an} \to \XXX_{\circ}$ is the reduction map induced by the model $\XXX$.
\index{\AdelDivSymbol}{0l:log vert JJJ vert@$\log \vert \JJJ \vert$}%
For example, $\log \vert \varpi \OOO_{\XXX} \vert = \log v(\varpi)$.
Note that $\log \vert \JJJ \vert = \log \vert \JJJ \OOO_{\XXX'} \vert$
for a birational morphism $\nu : \XXX' \to \XXX$ of  models of $X$ and a vertical fractional ideal sheaf $\JJJ$ on $\XXX$.
Moreover, if $\JJJ$ is invertible,
then $\log \vert \JJJ \vert = g_{(\XXX,\, \JJJ)}/2$ (cf. 
\ifmonog Section~\ref{subsec:green:function:analytic:space}). \fi
\ifpaper Subsection~\ref{subsec:green:function:analytic:space}). \fi
Let us fix a birational system $\Psi$ of  models of $X$.
A function $\varphi$ on $X^{\an}$ is called a {\em model function with respect to $\Psi$} 
\index{\AdelDivSubject}{model function@model function}%
if there are $\XXX \in \Psi$ and a vertical fractional ideal sheaf $\JJJ$ on $\XXX$ such that
$\varphi = \log \vert \JJJ \vert$.
The set of all model functions with respect to $\Psi$ is denoted by $\MM(X^{\an};\Psi)$.
\index{\AdelDivSymbol}{0M:MM(X^{an};Psi)@$\MM(X^{\an};\Psi)$}%
Then we have the following generalization of \cite[Proposition~2.2]{BFJ}.

\begin{Proposition}
\label{prop:Boolean:separate:points}
\begin{enumerate}
\renewcommand{\labelenumi}{(\arabic{enumi})}
\item
For $\varphi \in \MM(X^{\an};\Psi)$, there are $\XXX \in \Psi$ and a vertical
Cartier divisor $\DDD$ on $\XXX$ such that $\varphi = \log \vert \OOO_{\XXX}(\DDD)\vert$.
In particular, $\MM(X^{\an};\Psi)$ forms an abelian group and
$\MM(X^{\an};\Psi) \subseteq C^0(X^{\an})$ \rom{(}cf. Proposition~\rom{\ref{prop:green:model}}\rom{)}.

\item
For $\varphi_1, \varphi_2 \in \MM(X^{\an};\Psi)$, 
$\max \{ \varphi_1, \varphi_2 \} \in \MM(X^{\an};\Psi)$.

\item
For $x, y \in X^{\an}$ with $x \not= y$, there is $\varphi \in \MM(X^{\an};\Psi)$ such that
$\varphi(x) \not= \varphi(y)$
\end{enumerate}
\end{Proposition}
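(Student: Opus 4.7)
The plan is to dispose of (1) and (2) directly via the two axioms of the birational system $\Psi$, and then attack the separation statement (3) via a projective-embedding construction.

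For (1), I would start from $\varphi = \log|\JJJ|$ with $\JJJ$ vertical on some $\XXX \in \Psi$. Axiom (1) of the birational system furnishes $\XXX' \in \Psi$ and a birational morphism $\nu : \XXX' \to \XXX$ along which $\JJJ\OOO_{\XXX'}$ becomes invertible, hence equals $\OOO_{\XXX'}(\DDD)$ for a Cartier divisor $\DDD$. The inclusion $\varpi^m \OOO_{\XXX'}(\DDD) \subseteq \OOO_{\XXX'}$ with quotient supported on $(\XXX')_\circ$ is inherited directly from verticality of $\JJJ$, so $\DDD$ is vertical, and since pullback preserves $\log|\cdot|$ for vertical ideal sheaves one has $\varphi = \log|\OOO_{\XXX'}(\DDD)|$. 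Continuity follows from the identity $\log|\OOO_{\XXX'}(\DDD)| = g_{(\XXX',\DDD)}/2$ together with the fact that verticality forces $\DDD \cap X = 0$, placing $g_{(\XXX',\DDD)}$ in the continuous framework of Proposition~\ref{prop:green:model}(1). The abelian group structure is then immediate: given $\varphi_i = \log|\OOO_{\XXX_i}(\DDD_i)|$, axiom (2) supplies a common dominating model $\XXX_3 \in \Psi$ with $\nu_i : \XXX_3 \to \XXX_i$, and then $\varphi_1 + \varphi_2 = \log|\OOO_{\XXX_3}(\nu_1^*\DDD_1 + \nu_2^*\DDD_2)|$ while $-\varphi_1 = \log|\OOO_{\XXX_1}(-\DDD_1)|$.

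For (2), axiom (2) reduces to the case of two vertical ideals $\JJJ_1, \JJJ_2$ on a common $\XXX_3 \in \Psi$. Their sum $\JJJ := \JJJ_1 + \JJJ_2$ is itself vertical, since $\varpi^{\max(m_1,m_2)}\JJJ \subseteq \OOO_{\XXX_3}$ with quotient supported in the central fiber. The non-Archimedean inequality $|h_1(z) + h_2(z)| \leq \max(|h_1(z)|, |h_2(z)|)$, used together with the choices $h_i = 0$, gives
\[
\log|\JJJ|(z) = \max(\log|\JJJ_1|(z), \log|\JJJ_2|(z)) = \max(\varphi_1, \varphi_2)(z).
\]

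For (3), the first task is to exhibit $f \in \Rat(X)^\times$ with $|f(x)| \neq |f(y)|$: embedding $X \hookrightarrow \PP^N_k$ by a very ample $L$, two distinct Berkovich points of $X^{\an}$ go to distinct Berkovich points of $(\PP^N_k)^{\an}$, and suitable ratios of homogeneous coordinates pulled back to $X$ supply the required rational function. The second task is to manufacture from $f$ a vertical model function detecting the difference. The naive choice $\log|f\OOO_\XXX + \varpi^n \OOO_\XXX|$ fails because horizontal poles of $f$ prevent verticality, so instead I extend $L$ to a very ample line bundle $\LLL$ on some $\XXX \in \Psi$ (after refining by axiom (1)), write $f = s/t$ with $s, t \in H^0(\XXX, \LLL)$, and form $\JJJ_n := (s, t) \otimes \LLL^{-1} + \varpi^n \OOO_\XXX$ as a fractional ideal sheaf on $\XXX$. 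For $n$ large the $\varpi^n$-term confines the residual base locus of $(s, t)$ to the central fiber, making $\JJJ_n$ vertical, and for $n$ chosen so that $v(\varpi)^n$ lies strictly below the relevant values of $|s|/|t|$ at the reduction points $r_\XXX(x)$ and $r_\XXX(y)$, the expression $\log|\JJJ_n|$ reads off $|f|$ at each point and therefore separates them.

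The principal obstacle is the construction in (3). Two delicate points must be verified: verticality of $\JJJ_n$, which requires showing that adding $\varpi^n \OOO_\XXX$ truly confines the quotient $\OOO_\XXX/\varpi^m \JJJ_n$ to the central fiber for some $m$, and the non-vanishing of $\log|\JJJ_n|(x) - \log|\JJJ_n|(y)$, which requires careful analysis of the trivialization of $\LLL$ at the reduction points and of the interaction between the sections $s, t$ and $\varpi^n$. The remaining steps are routine applications of the axioms.
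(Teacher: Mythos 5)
Your treatment of parts (1) and (2) matches the paper exactly: in (1) you resolve $\JJJ$ to an invertible sheaf via axiom (1) of the birational system and invoke $\log|\OOO_{\XXX'}(\DDD)|=g_{(\XXX',\DDD)}/2$ for continuity; in (2) you dominate both models via axiom (2) and use $\JJJ_1+\JJJ_2$ together with the non-Archimedean inequality. Both of these are fine.

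Part (3), however, has a genuine gap at the final step. Your ideal sheaf is $\JJJ_n=(s,t)\otimes\LLL^{-1}+\varpi^n\OOO_\XXX$, and at a point $z$ with $\xi=r_\XXX(z)$ and local trivialization $\ell$ of $\LLL$ at $\xi$, the definition gives
\[
\log|\JJJ_n|(z)=\max\bigl\{\,\log|s/\ell|_z,\ \log|t/\ell|_z,\ n\log v(\varpi)\,\bigr\}.
\]
This reads off $\max(|s/\ell|,|t/\ell|)$, \emph{not} the ratio $|f|=|s/t|$. These two are unrelated: if $\log|s/\ell|_x=-2$, $\log|t/\ell|_x=-1$ while $\log|s/\ell|_y=-1$, $\log|t/\ell|_y=-2$, then $\log|f|_x=-1\neq 1=\log|f|_y$ and yet $\log|\JJJ_n|(x)=\log|\JJJ_n|(y)=-1$ for large $n$. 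So the existence of $f$ with $|f(x)|\neq|f(y)|$ does not produce the desired inequality for your $\JJJ_n$, and the claim that ``$\log|\JJJ_n|$ reads off $|f|$'' is simply not what the formula says. One can repair this by splitting $(s,t)$ into two ideals $\JJJ'_n=(s)\otimes\LLL^{-1}+(\varpi^n)$ and $\JJJ''_n=(t)\otimes\LLL^{-1}+(\varpi^n)$ and taking the \emph{difference} $\log|\JJJ'_n|-\log|\JJJ''_n|$ (which is still a model function by your part (1)); for $n$ large the trivialization factors cancel and the difference does compute $\log|f|$ away from the zero locus. But this requires care about the case where $s$ or $t$ vanishes at $x$ or $y$, which you have not addressed.

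The paper avoids all of this by not starting from a rational function on $X$. It fixes a model $\XXX\in\Psi$ and splits into two cases. If $r_\XXX(x)\neq r_\XXX(y)$, the maximal ideal $\mathfrak{m}$ at $r_\XXX(x)$ gives a vertical ideal with $\log|\mathfrak{m}|(x)<0\leq\log|\mathfrak{m}|(y)$ outright. If $r_\XXX(x)=r_\XXX(y)=\xi$, then — and this is the key observation you are missing — the affine ring $\mathcal{A}$ of any chart of $\XXX$ around $\xi$ already separates $x$ and $y$: both associated primes lie in $\Spec(\mathcal{A}[1/\varpi])$, and a seminorm is determined by its values on $\mathcal{A}$ because $|a/\varpi^m|=|a|/v(\varpi)^m$. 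So one can take $f\in\mathcal{A}$, an honest element of the model's structure sheaf with $|f|_x,|f|_y\leq 1$, rather than a rational function $s/t$ with possible poles and zeros. Then $\JJJ_m=\III+\varpi^m\OOO_\XXX$ with $\III_\xi=f\OOO_{\XXX,\xi}$ yields $\log|\JJJ_m|(z)=\max\{\log|f|_z,m\log v(\varpi)\}$ with no trivialization to track and no $(s,t)$ ambiguity, and separation follows by taking $m$ large. This is both shorter and structurally cleaner than the ratio-of-sections construction.
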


\begin{proof}
The assertions of the proposition can be proved by the ideas of \cite[Proposition~2.2]{BFJ}.

(1) We choose $\XXX \in \Psi$ and a vertical fractional ideal sheaf $\JJJ$ on $\XXX$
such that $\varphi = \log \vert \JJJ \vert$.
By our assumption, there is $\XXX' \in \Psi$ together with a birational morphism
$\nu : \XXX' \to \XXX$ such that $\JJJ \OOO_{X'}$ is invertible, that is,
there is a vertical Cartier divisor $\DDD'$ on $\XXX'$ such that
$\JJJ \OOO_{\mathcal{X'}} = \OOO_{\XXX'}(\DDD')$.
Clearly $\log \vert \JJJ \vert = \log \vert \OOO_{\XXX'}(\DDD')\vert$,
as desired.

\medskip
(2) By (1) and the property of $\Psi$,
we can find $\XXX \in \Psi$ and vertical Cartier divisors 
$\DDD_1$ and $\DDD_2$ on $\XXX$
such that $\varphi_1 = \log \vert \OOO_{\XXX}(\DDD_1) \vert$ and 
$\varphi_2 = \log \vert \OOO_{\XXX}(\DDD_2) \vert$.
If we set $\JJJ =\OOO_{\XXX}(\DDD_1) + \OOO_{\XXX}(\DDD_2)$ in $\Rat(X)$, then
$\JJJ$ is a vertical fractional ideal sheaf of $\XXX$ and
$\log \vert \JJJ \vert = \max \{ \varphi_1, \varphi_2 \}$.

\medskip
(3) Fix $\XXX \in \Psi$.
First we assume that $r_{\XXX}(x) \not= r_{\XXX}(y)$.
Let $\mathfrak{m}$ be the maximal ideal at $r_{\XXX}(x)$.
Then $\log \vert \mathfrak{m} \vert(x) < 0$ and $\log \vert \mathfrak{m} \vert(y) \geq 0$,
as desired. 
Next we assume that $\xi = r_{\XXX}(x) = r_{\XXX}(y)$.
Let $\mathcal{U} = \Spec(\mathcal{A})$ be an affine open neighborhood of $\xi$.
We can find $f \in \mathcal{A}$ such that $\vert f  \vert_x \not= \vert f \vert_y$.
Since $\XXX$ is noetherian, there is an ideal sheaf $\III$ on $\XXX$ such that
$\III_{\xi} = f \OOO_{\XXX, \xi}$. For each $m \in \ZZ_{> 0}$, we set
$\JJJ_m = \III + \varpi^m \OOO_{\XXX}$. Note that $\JJJ_m$ is a vertical ideal sheaf on $\XXX$.
Moreover,
\[
\begin{cases}
\log \vert \JJJ_m \vert(x) = \max \{ \log \vert f \vert_x, m \log v(\varpi) \}, \\
\log \vert \JJJ_m \vert(y) = \max \{ \log \vert f \vert_y, m \log v(\varpi) \},
\end{cases}
\]
where $\log (0) = -\infty$. As $\vert f \vert_x \not= \vert f\vert_y$ and $v(\varpi) < 1$,
if $m$ is sufficiently large, then 
\[
\log \vert \JJJ_m \vert(x) \not= \log \vert \JJJ_m \vert(y),
\]
as required.
\end{proof}

\ifmonog\section{Density theorems}\fi
\ifpaper\subsection{Density theorems}\fi
\label{subsec:density:theorem}

Let $k_v$ be the completion of $k$ with respect to $v$.
By abuse of notation, the unique extension of $v$ to $k_v$ is also denoted by $v$.
We set $X_v = X \times_{\Spec(k)} \Spec(k_v)$, which is also
a projective and geometrically integral variety over $k_v$.
For a model $\XXX \to \Spec(k^{\circ})$ of $X$, 
$\XXX_v := \XXX \times_{\Spec(k^{\circ})} \Spec(k_v^{\circ})$ 
is also a model of $X_v$ by (1.1) in Lemma~\ref{lem:base:change:completion}.
The projection $\XXX_v \to \XXX$ is denoted by $\pi_{\XXX}$.
Let $\Psi$ be a system of models of $X$.

\begin{Proposition}
$\Psi_v := \left\{ \XXX_v \mid \XXX \in \Psi \right\}$ forms a system of models of $X_v$.
\end{Proposition}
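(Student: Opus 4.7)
The plan is to verify the two defining axioms of a birational system of models for $\Psi_v$. First, by (1.1) of Lemma~\ref{lem:base:change:completion}, each $\XXX_v$ is an integral model of $X_v$ over $\Spec(k_v^{\circ})$, and the projection $\pi_{\XXX} : \XXX_v \to \XXX$ is flat. For axiom (2), given $\XXX_{1,v}, \XXX_{2,v} \in \Psi_v$, apply axiom (2) of $\Psi$ to obtain $\XXX_3 \in \Psi$ with birational morphisms $\nu_i : \XXX_3 \to \XXX_i$ for $i=1,2$. Base changing to $\Spec(k_v^{\circ})$ yields morphisms $\nu_{i,v} : \XXX_{3,v} \to \XXX_{i,v}$ in $\Psi_v$; since each $\nu_i$ is an isomorphism over a dense open subscheme of $\XXX_i$, flat base change produces an isomorphism over the corresponding open in $\XXX_{i,v}$, which remains dense by the integrality of $\XXX_{i,v}$, so $\nu_{i,v}$ is birational.

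For axiom (1), fix $\XXX \in \Psi$ and a vertical fractional ideal sheaf $\JJJ_v$ on $\XXX_v$. The crux is to descend $\JJJ_v$ to a vertical fractional ideal sheaf $\JJJ$ on $\XXX$ with $\JJJ\,\OOO_{\XXX_v} = \JJJ_v$. Choose $m \geq 0$ with $\III_v := \varpi^m \JJJ_v \subseteq \OOO_{\XXX_v}$ and $\Supp(\OOO_{\XXX_v}/\III_v) \subseteq \XXX_{v,\circ}$; coherence together with support on the central fiber forces $\III_v \supseteq \varpi^N \OOO_{\XXX_v}$ for some $N \geq 0$. Hence $\III_v/\varpi^N \OOO_{\XXX_v}$ is an ideal sheaf on $\XXX_v \otimes_{k_v^{\circ}} (k_v^{\circ}/\varpi^N k_v^{\circ})$. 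Because $k_v^{\circ}$ is the $\varpi$-adic completion of $k^{\circ}$, the canonical identification $k_v^{\circ}/\varpi^N k_v^{\circ} = k^{\circ}/\varpi^N k^{\circ}$ gives $\XXX_v \otimes_{k_v^{\circ}} (k_v^{\circ}/\varpi^N k_v^{\circ}) = \XXX/\varpi^N \XXX$. Lifting this ideal back through the surjection $\OOO_{\XXX} \to \OOO_{\XXX}/\varpi^N$, I obtain an ideal sheaf $\III$ on $\XXX$ with $\III \supseteq \varpi^N \OOO_{\XXX}$ and $\III\,\OOO_{\XXX_v} = \III_v$; setting $\JJJ := \varpi^{-m}\III$ completes the descent. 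Now apply axiom (1) of $\Psi$ to $\JJJ$ to produce $\XXX' \in \Psi$ with a birational morphism $\nu : \XXX' \to \XXX$ such that $\JJJ\,\OOO_{\XXX'}$ is invertible. Then $\nu_v : \XXX'_v \to \XXX_v$ is birational by the argument for axiom (2), and $\JJJ_v\,\OOO_{\XXX'_v} = (\JJJ\,\OOO_{\XXX_v})\,\OOO_{\XXX'_v} = (\JJJ\,\OOO_{\XXX'})\,\OOO_{\XXX'_v}$ is the pullback of an invertible sheaf, hence invertible.

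The main obstacle is the descent step in axiom (1); the other verifications are formal consequences of flat base change together with (1.1) of Lemma~\ref{lem:base:change:completion}. The descent itself rests squarely on the identification $k_v^{\circ}/\varpi^N k_v^{\circ} = k^{\circ}/\varpi^N k^{\circ}$ for every $N$, which permits vertical data on $\XXX_v$ supported within the $N$-th infinitesimal neighborhood of the central fiber to be reinterpreted as vertical data on $\XXX$.
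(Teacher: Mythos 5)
Your argument is correct and essentially reproduces the paper's proof: the paper reduces the Proposition to its Lemma~\ref{lem:vertical:ideal:descent}(2), whose proof is exactly your descent step (multiply $\JJJ_v$ by $\varpi^m$ to get an ideal sheaf, find $N$ with $\varpi^N\OOO_{\XXX_v}\subseteq\varpi^m\JJJ_v$ via the fact that the quotient is coherent and $\varpi$-torsion, then transport through $\OOO_{\XXX_v}/\varpi^N\cong\OOO_{\XXX}/\varpi^N$). Your additional verification of the domination axiom via flat base change is something the paper leaves implicit, but the substance and the key idea are the same.
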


\begin{proof}
The assertion follows from (2) in the following lemma.
\end{proof}

\begin{Lemma}
\label{lem:vertical:ideal:descent}
\begin{enumerate}
\renewcommand{\labelenumi}{(\arabic{enumi})}
\item
Let $\JJJ_v$ be a vertical ideal sheaf on $\XXX_v$.
Then there is $n \in \ZZ_{\geq 0}$ such that $\varpi^n \OOO_{\XXX_v} \subseteq \JJJ_v$.

\item
Let $\JJJ_v$ be a vertical fractional ideal sheaf on $\XXX_v$.
Then there is a vertical fractional ideal sheaf $\JJJ$ on $\XXX$ such that
$\JJJ \OOO_{\XXX_v} = \JJJ_v$.
\end{enumerate}
\end{Lemma}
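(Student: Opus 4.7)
The plan for (1) is the standard noetherian argument: because $\JJJ_v$ is a vertical ideal sheaf, the coherent quotient $\OOO_{\XXX_v}/\JJJ_v$ has support contained in the central fiber $(\XXX_v)_{\circ} = V(\varpi)$. On the noetherian scheme $\XXX_v$, any coherent sheaf supported on $V(\varpi)$ is killed by a power of $\varpi$ (affine-locally, the finitely many module generators each have $\varpi$ in the radical of their annihilator), and this power of $\varpi$ gives the desired $n$ with $\varpi^n \OOO_{\XXX_v} \subseteq \JJJ_v$.

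For (2), I first use (1) to reduce to the ideal-sheaf case: pick $m \geq 0$ with $\JJJ'_v := \varpi^m \JJJ_v \subseteq \OOO_{\XXX_v}$ a vertical ideal sheaf, and apply (1) to obtain $n$ with $\varpi^n \OOO_{\XXX_v} \subseteq \JJJ'_v$. The central input is then the base-change identification
\[
\OOO_{\XXX_v}/\varpi^n \OOO_{\XXX_v} \;\cong\; \OOO_{\XXX}/\varpi^n\OOO_{\XXX},
\]
which follows from $k_v^{\circ}/\varpi^n k_v^{\circ} \cong k^{\circ}/\varpi^n k^{\circ}$ (as $k_v^{\circ}$ is the $\varpi$-adic completion of $k^{\circ}$) together with the flatness of $\XXX$ over $\Spec(k^{\circ})$; affine-locally this is the obvious identity $\AAA_v/\varpi^n \AAA_v = \AAA/\varpi^n \AAA$. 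Under it, the coherent subsheaf $\JJJ'_v/\varpi^n\OOO_{\XXX_v}$ of $\OOO_{\XXX_v}/\varpi^n\OOO_{\XXX_v}$ corresponds to a coherent subsheaf $\overline{\JJJ'} \subseteq \OOO_{\XXX}/\varpi^n\OOO_{\XXX}$. I then take $\JJJ' \subseteq \OOO_{\XXX}$ to be the preimage of $\overline{\JJJ'}$ under the quotient $\OOO_{\XXX} \twoheadrightarrow \OOO_{\XXX}/\varpi^n\OOO_{\XXX}$; this is a coherent ideal sheaf on $\XXX$ containing $\varpi^n \OOO_{\XXX}$, hence vertical.

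A direct affine-local check, using the identity $\AAA/\varpi^n\AAA = \AAA_v/\varpi^n\AAA_v$ together with the inclusion $\varpi^n\OOO_{\XXX_v} \subseteq \JJJ'_v$, then shows $\JJJ'\OOO_{\XXX_v} = \JJJ'_v$ (an element of $\JJJ'_v$ is congruent mod $\varpi^n\OOO_{\XXX_v}$ to an element of $\JJJ'$, and the mod-$\varpi^n$ remainder already lies in $\JJJ'\OOO_{\XXX_v}$). Setting $\JJJ := \varpi^{-m}\JJJ'$ then produces the desired vertical fractional ideal sheaf on $\XXX$ satisfying $\JJJ \OOO_{\XXX_v} = \JJJ_v$.

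I expect the main technical obstacle to be making this descent step rigorous as a statement about sheaves: the identification modulo $\varpi^n$ is transparent affine-locally, but one must verify that the preimage construction yields a coherent sheaf on $\XXX$ whose base change really recovers $\JJJ'_v$ globally, which is a mild instance of faithfully flat descent exploiting the faithful flatness of $k^{\circ} \hookrightarrow k_v^{\circ}$ to glue the affine-local descents into a well-defined coherent subsheaf of $\OOO_{\XXX}$.
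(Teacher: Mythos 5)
Your proposal is correct and follows the paper's own route: both parts use the noetherian/coherence argument to bound $\varpi$-torsion, and part (2) reduces to the ideal case via (1), identifies $\OOO_{\XXX}/\varpi^n\OOO_{\XXX} \cong \OOO_{\XXX_v}/\varpi^n\OOO_{\XXX_v}$ via $\pi_{\XXX}^*$, and defines $\JJJ$ as the preimage of $\JJJ_v/\varpi^n\OOO_{\XXX_v}$ in $\OOO_{\XXX}$. Your closing worry about needing faithfully flat descent is unnecessary: the mod-$\varpi^n$ identification is a single global isomorphism of coherent sheaves, and the preimage of a coherent subsheaf under a surjection of coherent $\OOO_{\XXX}$-modules on a noetherian scheme is itself a coherent ideal sheaf, so $\JJJ$ is defined directly and no gluing step is required.
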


\begin{proof}
(1)
For $\xi \in (\XXX_v)_{\circ}$, 
$(\OOO_{\XXX_v, \xi}/(\JJJ_v)_{\xi})_S = 0$ because $\Supp(\OOO_{\XXX_v}/\JJJ_v) \subseteq (\XXX_v)_{\circ}$,
where $S$ is a multiplicative set given by $\{ 1, \varpi, \varpi^2 , \ldots \}$. Therefore,
\[
\varpi^{m_{\xi}} (\OOO_{\XXX_v, \xi}/(\JJJ_v)_{\xi}) = 0
\]
for some $m_{\xi} \in \ZZ_{\geq 0}$, and hence
$\varpi^{m_{\xi}} \OOO_{\XXX_v, \xi} \subseteq (\JJJ_v)_{\xi}$.
Thus the assertion follows.

(2)
Clearly we may assume that $\JJJ_v$ is an ideal sheaf.
Then, by (1), there is $n \in \ZZ_{\geq 0}$ such that $\varpi^n \OOO_{\XXX_v} \subseteq \JJJ_v$.
As 
\[
\begin{CD}
\JJJ_v/\varpi^n \OOO_{\XXX_v} \subseteq \OOO_{\XXX_v}/\varpi^n \OOO_{\XXX_v} @<{\sim}<{\pi^*_{\XXX}}< \OOO_{\XXX}/\varpi^n \OOO_{\XXX},
\end{CD}
\]
we can find an ideal sheaf $\JJJ$ on $\XXX$ such that $\varpi^n \OOO_{\XXX} \subseteq \JJJ$ and
$\JJJ_v/\varpi^n \OOO_{\XXX_v} \simeq \JJJ/\varpi^n \OOO_{\XXX}$, so that we can easily see that
$\JJJ \OOO_{\XXX_v} = \JJJ_v$.
\end{proof}

\begin{Theorem}[Local density theorem]
\label{thm:density:local:model}
$\MM(X_v^{\an};\Psi_v) \otimes_{\ZZ} \QQ$ is dense in $C^0(X_v^{\an})$
with respect to the supremum norm $\Vert\cdot\Vert_{\sup}$.
\end{Theorem}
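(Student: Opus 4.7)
The plan is to apply the Kakutani--Krein (lattice) form of the Stone--Weierstrass theorem on the compact Hausdorff space $X_v^{\an}$. Let $L := \MM(X_v^{\an};\Psi_v) \otimes_{\ZZ} \QQ$ and let $H$ be its closure in $C^0(X_v^{\an})$ under the supremum norm. I will show that $H$ is a closed $\RR$-linear sublattice of $C^0(X_v^{\an})$ that contains every real constant and separates points; Kakutani--Krein then forces $H = C^0(X_v^{\an})$, which is the desired density statement.

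The first step is to verify that $\Psi_v$ is itself a birational system of models of $X_v$, so that Proposition~\ref{prop:Boolean:separate:points} may be applied to the pair $(X_v,\Psi_v)$. Given a vertical fractional ideal sheaf $\JJJ_v$ on some $\XXX_v \in \Psi_v$, Lemma~\ref{lem:vertical:ideal:descent} descends it to a vertical fractional ideal sheaf $\JJJ$ on $\XXX$ with $\JJJ\OOO_{\XXX_v} = \JJJ_v$. Since $\Psi$ is a birational system, a birational morphism $\XXX' \to \XXX$ in $\Psi$ makes $\JJJ\OOO_{\XXX'}$ invertible; the base change $\XXX'_v \to \XXX_v$ then lies in $\Psi_v$ and resolves $\JJJ_v$, and the second axiom is handled analogously by fibre products over $\Spec(k_v^{\circ})$. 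With $\Psi_v$ a birational system, Proposition~\ref{prop:Boolean:separate:points} yields that $\MM(X_v^{\an};\Psi_v)$ is closed under $\max$, separates points of $X_v^{\an}$, and contains the nonzero constant $\log v(\varpi) = \log\vert\varpi\OOO_{\XXX_v}\vert$.

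Next I would check the required properties of $H$. Every element of $L$ has the form $(1/N)\varphi$ with $\varphi \in \MM(X_v^{\an};\Psi_v)$ and $N \in \ZZ_{>0}$, because $\MM(X_v^{\an};\Psi_v)$ is an abelian group; through the identity $\max\{(1/N)\varphi,(1/N)\psi\} = (1/N)\max\{\varphi,\psi\}$ and a common denominator, stability under $\max$ transfers to $L$, and Lipschitz continuity of $\max$ and $\min$ in sup norm transfers the lattice property from $L$ to $H$. For $\RR$-linearity, if $f \in L$ and $\alpha_n \in \QQ$ converges to $\alpha \in \RR$, then $\alpha_n f \to \alpha f$ uniformly because $f$ is bounded on the compact space $X_v^{\an}$, so $\alpha f \in H$; a further limit argument gives $\alpha h \in H$ for all $h \in H$. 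Consequently $\RR \log v(\varpi) = \RR \subseteq H$, so $H$ contains all real constants, and separation of points is inherited from $L$ via Proposition~\ref{prop:Boolean:separate:points}.

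Finally, given distinct $x, y \in X_v^{\an}$ and prescribed values $a, b \in \RR$, a separating $\varphi_0 \in L$ combined with a suitable real affine scaling in $H$ produces $f \in H$ with $f(x) = a$ and $f(y) = b$, and the Kakutani--Krein theorem then yields $H = C^0(X_v^{\an})$. The main obstacle is the descent step for $\Psi_v$, namely that every vertical fractional ideal sheaf on $\XXX_v$ arises from one on $\XXX$; this is exactly the content of Lemma~\ref{lem:vertical:ideal:descent} and relies on the isomorphism $\OOO_{\XXX_v}/\varpi^n\OOO_{\XXX_v} \simeq \OOO_{\XXX}/\varpi^n\OOO_{\XXX}$. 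Once this descent and the consequent transfer of Proposition~\ref{prop:Boolean:separate:points} to $\Psi_v$ is in place, the rest is the routine Kakutani--Krein machinery applied to the lattice and separation properties already secured.
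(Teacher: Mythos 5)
Your proposal is correct and follows essentially the same route as the paper: verify via Lemma~\ref{lem:vertical:ideal:descent} that $\Psi_v$ is a birational system for $X_v$, apply Proposition~\ref{prop:Boolean:separate:points} to obtain closure under $\max$, point separation, and the nonzero constant $\log v(\varpi)$, and then invoke the lattice (Kakutani--Krein) form of Stone--Weierstrass on the compact Hausdorff space $X_v^{\an}$. The paper packages the last step as Lemma~\ref{lem:Boolean:Stone:Weierstrass} (which takes the closure, upgrades $\QQ$-linearity to $\RR$-linearity, and cites the Hewitt--Stromberg lattice Stone--Weierstrass theorem), but the content is identical to what you carry out by hand.
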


\begin{proof}
We set $T = X_v^{\an}$ and
$\Sigma = \MM(X_v^{\an};\Psi_v) \otimes_{\ZZ} \QQ$.
It is well known that $T$ is a compact Hausdorff space (cf. \cite[Theorem~3.4.8]{Be}).
Thus, if we can check the conditions (1) -- (4) in Lemma~\ref{lem:Boolean:Stone:Weierstrass},
we have the assertion.

(1) follows from (3) in Proposition~\ref{prop:Boolean:separate:points}.

(2) Note that $\log v(\varpi) \in \MM(X_v^{\an};\Psi_v) \subseteq \Sigma$.

(3) is obvious. 

(4)
Let us check that $\max\{ \psi_1, \psi_2 \} \in \Sigma$ for $\psi_1, \psi_2 \in \Sigma$.
We choose $n \in \ZZ_{>0}$ such that $n \varphi_1, n \varphi_2 \in \MM(X_v^{\an};\Psi_v)$.
Then, by (2) in Proposition~\ref{prop:Boolean:separate:points},
\[
n \max \{\psi_1, \psi_2 \} = \max \{n \psi_1, n \psi_2 \} \in \Sigma,
\]
and hence $\max \{\psi_1, \psi_2 \} \in \Sigma$.
\end{proof}

\begin{Lemma}
\label{lem:Boolean:Stone:Weierstrass}
Let $T$ be a compact Hausdorff space.
Let $\Sigma$ be a subset of $C^0(T)$ with the following properties:
\begin{enumerate}
\renewcommand{\labelenumi}{(\arabic{enumi})}
\item
For any $x, y \in T$ with $x \not= y$, there is $f \in \Sigma$ such that $f(x) \not= f(y)$.

\item
$\RR^{\times} \cap \Sigma \not= \emptyset$.

\item
$\Sigma$ forms a $\QQ$-vector space.

\item
For $f, g \in \Sigma$, $\max \{ f, g \} \in \Sigma$.
\end{enumerate}
Then $\Sigma$ is dense in $C^0(T)$ with respect to the supremum norm $\Vert\cdot\Vert_{\sup}$.
\end{Lemma}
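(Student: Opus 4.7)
The plan is to reduce to the Kakutani--Stone lattice version of the Stone--Weierstrass theorem by passing to the closure $\overline{\Sigma}$ of $\Sigma$ in $C^0(T)$ with respect to $\Vert\cdot\Vert_{\sup}$, and then showing $\overline{\Sigma} = C^0(T)$ by a direct lattice-theoretic approximation argument.

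First I would verify that $\overline{\Sigma}$ is an $\RR$-linear subspace of $C^0(T)$ that contains every real constant. Condition (2) provides a nonzero constant $c \in \RR^{\times}\cap\Sigma$, and then (3) yields $q = (q/c)\cdot c \in \Sigma$ for every $q \in \QQ$; hence $\QQ \subseteq \Sigma$ and $\RR \subseteq \overline{\Sigma}$. Since $T$ is compact every element of $\overline{\Sigma}$ is uniformly bounded, so if $f_n \to f$ uniformly with $f_n \in \Sigma$ and $q_n \to a \in \RR$ with $q_n \in \QQ$, then $q_n f_n \to a f$ uniformly; combined with the $\QQ$-linearity in (3), this gives closure of $\overline{\Sigma}$ under $\RR$-scalar multiplication. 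Next, by condition (4) the set $\Sigma$ is closed under pairwise maxima, and via $\min\{f,g\} = -\max\{-f,-g\}$ together with (3) it is also closed under pairwise minima; since $\max$ and $\min$ are continuous with respect to $\Vert\cdot\Vert_{\sup}$, these operations extend to $\overline{\Sigma}$, which is therefore a closed $\RR$-vector sublattice of $C^0(T)$ containing the constants and separating points.

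The remaining step is the standard two-stage lattice approximation of Kakutani--Stone. Fix $f \in C^0(T)$ and $\epsilon > 0$. For each pair of distinct points $x, y \in T$, condition (1) provides some $h_{xy} \in \Sigma$ with $h_{xy}(x) \neq h_{xy}(y)$, and since $\overline{\Sigma}$ is an $\RR$-vector space containing the constants there exist $a, b \in \RR$ with
\[
g_{xy} := a\, h_{xy} + b \in \overline{\Sigma}, \qquad g_{xy}(x) = f(x), \qquad g_{xy}(y) = f(y)
\]
(take $g_{xy} = f(x)$ if $x = y$). Fixing $x$, for each $y \in T$ the open set $\{z \in T : g_{xy}(z) < f(z) + \epsilon\}$ contains $y$; covering $T$ by finitely many such sets and taking the corresponding finite minimum yields $g_x \in \overline{\Sigma}$ with $g_x(x) = f(x)$ and $g_x < f + \epsilon$ on $T$. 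Then the open sets $\{z \in T : g_x(z) > f(z) - \epsilon\}$ cover $T$, and a finite maximum of the associated $g_x$'s produces $g \in \overline{\Sigma}$ with $\Vert g - f \Vert_{\sup} < \epsilon$. Hence $\overline{\Sigma} = C^0(T)$, which proves the density of $\Sigma$.

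The only genuinely delicate point is the upgrade from $\QQ$-linearity to $\RR$-linearity in the closure, which is where the compactness of $T$ (giving uniform boundedness of continuous functions) is used; once that is in place, the double compactness argument with finite $\min$ and $\max$ is routine.
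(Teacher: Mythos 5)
Your argument follows the paper's strategy exactly: pass to the closure $\overline{\Sigma}$, verify that it is a closed $\RR$-vector sublattice of $C^0(T)$ separating points and containing the constants, and invoke the lattice (Kakutani--Stone) form of the Stone--Weierstrass theorem --- the paper simply cites Hewitt--Stromberg, Theorem~7.29, at this step, whereas you spell out the standard two-stage $\min/\max$ covering argument. One small correction to your constants step: from $c\in\RR^{\times}\cap\Sigma$ and the $\QQ$-linearity of $\Sigma$ you obtain $\{qc : q\in\QQ\}\subseteq\Sigma$, not $\QQ\subseteq\Sigma$ (since $q/c$ need not be rational when $c$ is irrational); however this set is already dense in $\RR$, so the desired conclusion $\RR\subseteq\overline{\Sigma}$ still holds.
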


\begin{proof}
Let $\overline{\Sigma}$ be the closure of $\Sigma$ with respect to $\Vert\cdot\Vert_{\sup}$.
Then it is easy to see that $\overline{\Sigma}$ has the following properties:
\begin{enumerate}
\renewcommand{\labelenumi}{(\arabic{enumi})'}
\item
For any $x, y \in T$ with $x \not= y$, there is $f \in \overline{\Sigma}$ such that $f(x) \not= f(y)$.

\item
$1 \in \overline{\Sigma}$.

\item
$\Sigma$ forms an $\RR$-vector space.

\item
For $f, g \in \overline{\Sigma}$, $\max \{ f, g \} \in \overline{\Sigma}$.
\end{enumerate}
Thus, by \cite[Theorem~7.29]{HS}, $\overline{\Sigma}$ is dense in $C^0(T)$.
Note that $\overline{\Sigma} = \overline{\overline{\Sigma}}$.
Therefore the assertion follows.
\end{proof}

\begin{Definition}
A function $\varphi$ on $X_v^{\an}$ is called a {\em global model function with respect to $\Psi$} 
\index{\AdelDivSubject}{global model function@global model function}%
if there are $\XXX \in \Psi$ and a vertical fractional ideal sheaf $\JJJ$ on $\XXX$ such that
$\varphi = \log \vert \JJJ \OOO_{\XXX_v} \vert$.
The set of all global model functions with respect to $\Psi$ is denoted by $\MM(X;\Psi)$.
\index{\AdelDivSymbol}{0M:MM(X;Psi)@$\MM(X;\Psi)$}%
\end{Definition}

By using (2) in Lemma~\ref{lem:vertical:ideal:descent},
we can see 
\[
\MM(X;\Psi) = \MM(X_v^{\an};\Psi_v),
\]
Thus the local density theorem (cf. Theorem~\ref{thm:density:local:model}) implies
the following main result of this 
\ifmonog chapter. \fi
\ifpaper section. \fi

\begin{Theorem}[Global density theorem]
\label{thm:density:global:model}
$\MM(X;\Psi) \otimes_{\ZZ} \QQ$ is dense in $C^0(X_v^{\an})$
with respect to the supremum norm $\Vert\cdot\Vert_{\sup}$.
\end{Theorem}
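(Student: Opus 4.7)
The plan is to deduce the Global Density Theorem directly from the Local Density Theorem (Theorem~\ref{thm:density:local:model}) by identifying $\MM(X;\Psi)$ with $\MM(X_v^{\an};\Psi_v)$. Both sides are subsets of $C^0(X_v^{\an})$, so the only thing to check is that the two recipes for producing model functions yield the same set.

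First I would verify the inclusion $\MM(X;\Psi) \subseteq \MM(X_v^{\an};\Psi_v)$. Given $\XXX \in \Psi$ and a vertical fractional ideal sheaf $\JJJ$ on $\XXX$, pick $m \in \ZZ_{\geq 0}$ with $\varpi^m \JJJ \subseteq \OOO_{\XXX}$ and $\Supp(\OOO_{\XXX}/\varpi^m \JJJ) \subseteq \XXX_{\circ}$. Pulling back along the flat map $\pi_{\XXX} : \XXX_v \to \XXX$, we get $\varpi^m(\JJJ \OOO_{\XXX_v}) \subseteq \OOO_{\XXX_v}$ with support contained in $\pi_{\XXX}^{-1}(\XXX_{\circ}) = (\XXX_v)_{\circ}$; hence $\JJJ \OOO_{\XXX_v}$ is a vertical fractional ideal sheaf on $\XXX_v \in \Psi_v$, and by definition $\log \vert \JJJ \OOO_{\XXX_v} \vert \in \MM(X_v^{\an};\Psi_v)$.

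For the reverse inclusion $\MM(X_v^{\an};\Psi_v) \subseteq \MM(X;\Psi)$, I would invoke Lemma~\ref{lem:vertical:ideal:descent}(2): any vertical fractional ideal sheaf $\JJJ_v$ on $\XXX_v$ descends to a vertical fractional ideal sheaf $\JJJ$ on $\XXX$ with $\JJJ \OOO_{\XXX_v} = \JJJ_v$. Then $\log \vert \JJJ_v \vert = \log \vert \JJJ \OOO_{\XXX_v} \vert$ lies in $\MM(X;\Psi)$ by definition. This gives
\[
\MM(X;\Psi) = \MM(X_v^{\an};\Psi_v).
\]
Tensoring with $\QQ$ and appealing to Theorem~\ref{thm:density:local:model} applied to $X_v$ and the birational system $\Psi_v$ (which is a birational system of models of $X_v$, as noted in the proposition just before Lemma~\ref{lem:vertical:ideal:descent}), we conclude that $\MM(X;\Psi) \otimes_{\ZZ} \QQ$ is dense in $C^0(X_v^{\an})$ with respect to $\Vert \cdot \Vert_{\sup}$.

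There is really no hard step left once Lemma~\ref{lem:vertical:ideal:descent}(2) and the Local Density Theorem are in hand; the only potentially delicate point — descent of vertical ideals from $\XXX_v$ to $\XXX$ — is already packaged in that lemma via the identification $\OOO_{\XXX_v}/\varpi^n \OOO_{\XXX_v} \simeq \OOO_{\XXX}/\varpi^n \OOO_{\XXX}$. Thus the proof reduces to assembling these ingredients in the order above.
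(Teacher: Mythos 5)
Your proof is correct and takes essentially the same approach as the paper: both establish the identification $\MM(X;\Psi) = \MM(X_v^{\an};\Psi_v)$ via Lemma~\ref{lem:vertical:ideal:descent}(2) and then invoke the Local Density Theorem. The paper states the equality of the two sets without spelling out the easy inclusion, which you verify explicitly using flatness of $\pi_{\XXX}$; otherwise the arguments are identical.
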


The following theorem is an application of the global density theorem.

\begin{Theorem}[Approximation theorem of adelic $\RR$-divisors]
\label{thm:approx:adelic}
We assume that $k^{\circ}$ is excellent and $X$ is normal.
Let $\overline{D} = (D, g)$ be an adelic $\RR$-Cartier divisor of $C^0$-type
on $X$.
For any positive number $\epsilon > 0$, 
there is a normal 
model $\XXX$ of $X$ over $\Spec(k^{\circ})$, and $\RR$-Cartier divisors $\DDD_1$ and $\DDD_2$ on 
$\XXX$ such that
\[
\overline{D} - (0, \epsilon) \leq \DDD_1 \leq \overline{D} \leq \DDD_2 \leq 
\overline{D} + (0, \epsilon).
\]
\end{Theorem}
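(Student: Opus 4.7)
The plan is to reduce the statement, via the global density theorem (Theorem~\ref{thm:density:global:model}), to approximating a continuous function on $X_v^{\an}$ by Green functions of vertical $\QQ$-Cartier divisors on normal models. To start, Lemma~\ref{lem:dekind:extension:R:Cartier}(3), applied to the Dedekind scheme $\Spec(k^{\circ})$, produces a model $\XXX_0$ of $X$ over $\Spec(k^{\circ})$ together with an $\RR$-Cartier divisor $\DDD_0$ satisfying $\DDD_0 \cap X = D$; replacing $\XXX_0$ by its normalization (finite over $\XXX_0$ since $k^{\circ}$ is excellent) and $\DDD_0$ by its pullback, I may assume $\XXX_0$ is normal. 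Since both $g$ and $g_{(\XXX_0,\,\DDD_0)}$ are $D$-Green functions of $C^0$-type on $X_v^{\an}$, Proposition~\ref{prop:diff:two:D:Green:fun}(1) shows that $\phi := g - g_{(\XXX_0,\,\DDD_0)}$ extends to a continuous function on all of $X_v^{\an}$.

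Next, let $\Psi$ denote the set of normal models of $X$ admitting a birational morphism to $\XXX_0$. This is a birational system: closure under blow-up-and-normalize (using excellence of $k^{\circ}$) gives property (1), and the normalization of the closure of $X$ inside an appropriate fiber product gives property (2). Theorem~\ref{thm:density:global:model} then yields some $\theta \in \MM(X;\Psi) \otimes_{\ZZ} \QQ$ with $\Vert \phi - \theta \Vert_{\sup} \leq \epsilon/2$. Combining Proposition~\ref{prop:Boolean:separate:points}(1) (via the identification $\MM(X;\Psi) = \MM(X_v^{\an};\Psi_v)$) with the closure properties of $\Psi$, I may, after passing to a common refinement, realize $\theta$ in the form $\theta = \sum_i q_i \log|\OOO_{\XXX}(\EEE_i) \OOO_{\XXX_v}|$ for a single $\XXX \in \Psi$, rationals $q_i$, and vertical Cartier divisors $\EEE_i$ on $\XXX$. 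Using $\log|\OOO_{\XXX}(\EEE_i)| = g_{(\XXX,\,\EEE_i)}/2$, this rewrites as $\theta = g_{(\XXX,\,\FFF)}$, where $\FFF := \frac{1}{2}\sum_i q_i \EEE_i$ is a vertical $\QQ$-Cartier divisor on $\XXX$.

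To finish, I absorb the remaining $\epsilon/2$ slack via a multiple of the central fiber. Since $g_{(\XXX,\,\XXX_\circ)} = -2\log v(\varpi)$ is a positive constant, I set $c := \epsilon / (-4\log v(\varpi)) > 0$ and, letting $\mu : \XXX \to \XXX_0$ denote the structural birational morphism, define
\[
\DDD_1 := \mu^*(\DDD_0) + \FFF - c\,\XXX_\circ, \qquad \DDD_2 := \mu^*(\DDD_0) + \FFF + c\,\XXX_\circ.
\]
By Proposition~\ref{prop:green:model}(2), $g_{(\XXX,\,\mu^*\DDD_0)} = g_{(\XXX_0,\,\DDD_0)}$, so $g_{(\XXX,\,\DDD_1)} = g_{(\XXX_0,\,\DDD_0)} + \theta - \epsilon/2$ and $g_{(\XXX,\,\DDD_2)} = g_{(\XXX_0,\,\DDD_0)} + \theta + \epsilon/2$. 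Combined with $|\phi - \theta| \leq \epsilon/2$ and $\phi = g - g_{(\XXX_0,\,\DDD_0)}$, this yields $g - \epsilon \leq g_{(\XXX,\,\DDD_1)} \leq g \leq g_{(\XXX,\,\DDD_2)} \leq g + \epsilon$; since $\DDD_i \cap X = D$ ($\FFF$ and $\XXX_\circ$ being vertical), this is the desired chain of inequalities.

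The main obstacle I expect is the step of realizing the density-theorem-produced rational combination of model functions as the Green function of a single vertical $\QQ$-Cartier divisor on a common normal model in $\Psi$. This rests essentially on Proposition~\ref{prop:Boolean:separate:points}(1) together with the closure properties of the birational system $\Psi$ (blow-up-and-normalize, and common refinement through fiber products), which allow an arbitrary vertical fractional ideal sheaf to be replaced by a vertical Cartier divisor on a refined normal model.
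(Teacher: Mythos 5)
Your proof is correct and follows essentially the same route as the paper's: choose an initial model $(\XXX_0,\DDD_0)$, set $\phi = g - g_{(\XXX_0,\DDD_0)}$, approximate $\phi$ within $\epsilon/2$ by a global model function via Theorem~\ref{thm:density:global:model}, realize that model function as the Green function of a vertical $\QQ$-Cartier divisor on a normal model, and absorb the residual $\epsilon/2$ slack with a multiple of the central fiber $\XXX_\circ$. The only cosmetic difference is that you work from the outset inside a birational system of normal models (verifying the blow-up-and-normalize and fiber-product closure properties), whereas the paper's proof runs the density theorem over the system of all models over $\XXX_0$ and normalizes only at the final step; both are equivalent and rely on excellence of $k^{\circ}$ in the same way.
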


\begin{proof}
Let us begin with the following claim:

\begin{Claim}
\label{claim:thm:approx:adelic:arith:01}
For a positive number $\epsilon$,
there is a normal 
model $\XXX$ of $X$ over $\Spec(k^{\circ})$ and an $\RR$-Cartier divisor $\DDD$ on $\XXX$
such that $\DDD \cap X = D$ and
\[
\Vert g_{(\XXX, \DDD)} - g \Vert_{\sup} \leq \epsilon/2.
\]
\end{Claim}

\begin{proof}
First let us choose a model $\XXX_0$ of $X$ over $\Spec(k^{\circ})$ and
an $\RR$-Cartier divisor $\DDD_0$ on $\XXX_0$ such that $D = \DDD_0 \cap X$.
Let $g_{0}$ be the $D$-Green function of $C^0$-type on $X_v^{\an}$
arising from the model $(\XXX_0, \DDD_0)$.
We set $\phi := g - g_{0}$. Then $\phi$ is a continuous function on $X^{\an}_v$. 
Let $\Psi$ be the set of all models of $X$ over $\XXX_0$
(cf. Remark~\ref{rem:birational:system:integral:model}).
By the global density theorem (cf. Theorem~\ref{thm:density:global:model}), 
there is a global model function $\varphi$ on $X^{\an}_v$ with respect to $\Psi$ such that
$\Vert \phi - \varphi \Vert_{\sup} \leq \epsilon/2$, that is,
there are a model $\XXX'$ in $\Psi$ together with a birational morphism
$\mu : \XXX' \to \XXX_0$, a vertical Cartier divisor $\EEE'$ on $\XXX'$ and
$a \in \QQ$ such that $\varphi = a g_{(\XXX', \EEE')}$.
Let $\pi : \XXX \to \XXX'$ be the normalization of $\XXX'$ and $\DDD := a \pi^*(\EEE') + \pi^*(\mu^*(\DDD_0))$.
As $k^{\circ}$ is an excellent, $\pi$ is a finite morphism, that is,
$\XXX \in \Psi$. In addition, $g_{(\XXX, \DDD)} = \varphi + g_0$.
Therefore, we have the assertion of the claim.
\end{proof}

We set $g_{1} = g_{(\XXX, \DDD)} - \epsilon/2$ and $g_{2} = g_{(\XXX, \DDD)} + \epsilon/2$.
Then
\[
g - \epsilon \leq g_{1} \leq g \leq g_{2} \leq g + \epsilon
\]
on $X_v^{\an}$.
Moreover, note that the global model function arising from the central fiber of $\XXX \to \Spec(k^{\circ})$ 
is a constant function. Thus the assertion follows.
\end{proof}

\ifmonog\chapter{Adelic arithmetic $\RR$-Cartier divisors}\fi
\ifpaper\section{Adelic arithmetic $\RR$-Cartier divisors}\fi
In this 
\ifmonog chapter, \fi
\ifpaper section, \fi
we will introduce adelic arithmetic $\RR$-Cartier divisors on
arithmetic varieties and 
investigate their several basic properties.

Throughout this 
\ifmonog chapter, \fi
\ifpaper section, \fi
let $K$ be a number field and let $X$ be a $d$-dimensional,
projective, smooth and geometrically integral
variety over $K$.

\ifmonog\section{Definition and basic properties}\fi
\ifpaper\subsection{Definition and basic properties}\fi
\label{subsec:def:basic:prop}

Let $O_K$ be the ring of integers in $K$.
We denote the set of all maximal ideals of $O_K$ by $M_K$.
For $P \in M_K$, 
the valuation $v_P$ of $K$ at $P$ is given by
\[
v_P(x) = \#(O_K/P)^{-\ord_P(x)}.
\]%
\index{\AdelDivSymbol}{0v:v_P@$v_P$}%
Let $K_P$ be the completion of $K$ with respect to $v_P$ and let
$X_P := X \times_{\Spec(K)} \Spec(K_P)$, which is also
a projective, smooth and geometrically integral
variety over $K_P$.
\index{\AdelDivSymbol}{0K:K_P@$K_P$}%
Let $X(\CC)$ be the set of all $\CC$-valued points of $X$, that is,
\[
X(\CC) := \left\{ x : \Spec(\CC) \to X \mid \text{$x$ is a morphism as schemes} \right\}.
\]%
\index{\AdelDivSymbol}{0X:X(CC)@$X(\CC)$}%
Note that $X(\CC)$ is a projective complex manifold and
$X(\CC)$ is not necessarily connected.
Let $F_{\infty} : X(\CC) \to X(\CC)$ be the complex conjugation map, that is,
for $x \in X(\CC)$, $F_{\infty}(x)$ is given by the composition of
morphisms 
\[
\Spec(\CC) \overset{-^{a}}{\longrightarrow} \Spec(\CC)\quad\text{and}\quad
\Spec(\CC) \overset{x}{\to} X,
\]
where $\Spec(\CC) \overset{-^{a}}{\to} \Spec(\CC)$ is the morphism arising from
the complex conjugation. 
\index{\AdelDivSymbol}{0F:F_{infty}@$F_{\infty}$}%
The complex conjugation map
$F_{\infty} : X(\CC) \to X(\CC)$ is an anti-holomorphic isomorphism.
The space of $F_{\infty}$-invariant real valued continuous functions on $X(\CC)$
is denoted by $C^0_{F_{\infty}}(X(\CC))$, that is,
\[
C^0_{F_{\infty}}(X(\CC)) := \{ f \in C^0(X(\CC)) \mid f \circ F_{\infty} = f \}.
\]%
\index{\AdelDivSymbol}{0C:C^0_{F_{infty}}(X(CC))@$C^0_{F_{\infty}}(X(\CC))$}%

\begin{Definition}
\label{def:adelic:arithmetic:div}
A pair $\overline{D} = (D, g)$ of an $\RR$-Cartier divisor $D$ on $X$ and a collection
of Green functions $g = \{ g_P \}_{P \in M_K} 
\cup \{ g_{\infty} \}$ is called an 
{\em adelic arithmetic $\RR$-Cartier divisor of $C^0$-type on $X$} 
\index{\AdelDivSubject}{adelic arithmetic R-Cartier divisor of C^0-type@adelic arithmetic $\RR$-Cartier divisor of $C^0$-type}%
if the following conditions (1) and (2) hold:
\begin{enumerate}
\renewcommand{\labelenumi}{(\arabic{enumi})}
\item
For all $P \in M_K$,
$g_P$ is a $D$-Green function of $C^0$-type on $X^{\an}_P$.
In addition, there exist a non-empty open set $U$ of $\Spec(O_K)$,
a normal model $\XXX_U$ of $X$ over $U$ and an $\RR$-Cartier divisor $\DDD_U$ on $\XXX_U$
such that $\DDD_U \cap X = D$ and $g_P$ is a $D$-Green function induced by
$(\XXX_U, \DDD_U)$ for all $P \in U \cap M_K$. 

\item
The Green function $g_{\infty}$ is a $D$-Green function of $C^0$-type on $X(\CC)$ such that
$g_{\infty}  = g_{\infty} \circ F_{\infty}$ (cf. \cite[Section~5]{MoArZariski}).
\end{enumerate}
Moreover, a pair $\overline{D}' = (D, g')$ of an $\RR$-Cartier divisor $D$ on $X$ and a collection
of Green functions $g' = \{ g'_P \}_{P \in M_K}$ is called a 
{\em global adelic $\RR$-Cartier divisor of $C^0$-type on $X$} 
\index{\AdelDivSubject}{global adelic R-Cartier divisor of C^0-type@global adelic $\RR$-Cartier divisor of $C^0$-type}%
if the above condition (1) holds
for $\{ g'_P \}_{P \in M_K}$, that is,
$g'_P$ is a $D$-Green function of $C^0$-type on $X^{\an}_P$ for all $P \in M_K$, and
there exist a non-empty open set $U'$ of $\Spec(O_K)$,
a normal model $\XXX'_{U'}$ of $X$ over $U'$ and an $\RR$-Cartier divisor $\DDD'_{U'}$ on $\XXX'_{U'}$
such that $\DDD'_{U'} \cap X = D$ and $g'_P$ is a $D$-Green function induced by
$(\XXX'_{U'}, \DDD'_{U'})$ for all $P \in U' \cap M_K$. 

The pair $(\XXX_U, \DDD_U)$ in the condition (1) is called a {\em defining model of $\overline{D}$ over $U$}.
\index{\AdelDivSubject}{defining model@defining model}%
If we forget the Green function $g_{\infty}$ on $X(\CC)$ from $\overline{D}$,
we have a global adelic $\RR$-Cartier divisor on $X$, which is denoted by 
$\overline{D}^{\tau}$ and is called the {\em truncation of $\overline{D}$}.
\index{\AdelDivSubject}{truncation of the adelic arithmetic R-Cartier divisor@truncation of the adelic arithmetic $\RR$-Cartier divisor}%
\index{\AdelDivSymbol}{0D:overline{D}^{tau}@$\overline{D}^{\tau}$}%
For simplicity, a collection
of Green functions $g = \{ g_P \}_{P \in M_K} \cup \{ g_{\infty} \}$
is often denoted by the following symbol:
\[
g = \sum_{P \in M_K} g_P [P] + g_{\infty} [\infty].
\]
Let $\Rat(X)$ be the rational function field of $X$.
For $\varphi \in \Rat(X)^{\times}$,
we define $\widehat{(\varphi)}$ to be
\[
\widehat{(\varphi)} := \left( (\varphi), \sum_{P \in M_K} (-\log \vert \varphi_P \vert^2)[P] +
(-\log \vert \varphi_{\infty} \vert^2) [\infty] \right),
\]
where $\varphi_P$ and $\varphi_{\infty}$ are the rational functions on $X_P^{\an}$ and
$X(\CC)$ induced by $\varphi$, respectively.
The adelic arithmetic divisor $\widehat{(\varphi)}$ is called an {\em adelic arithmetic principal divisor}.
\index{\AdelDivSubject}{adelic arithmetic principal divisor@adelic arithmetic principal divisor}%
\index{\AdelDivSymbol}{0p:widehat{(varphi)}@$\widehat{(\varphi)}$}%
Let $\overline{D}_1 = (D_1, g_1)$ and $\overline{D}_2 = (D_2, g_2)$ be adelic
arithmetic $\RR$-Cartier divisors of $C^0$-type on $X$.
For $a_1, a_2 \in \RR$, we define $a_1\overline{D}_1 + a_2 \overline{D}_2$ to be
\[
a_1\overline{D}_1 + a_2\overline{D}_2 : = (a_1D_1 + a_2D_2 , a_1g_1 + a_1g_2),
\]
where $a_1g_1 + a_2g_2 = \sum_{P \in M_K} (a_1 (g_1)_P + a_2 (g_2)_P)[P] + 
(a_1 (g_1)_{\infty} + a_2 (g_2)_{\infty})[\infty]$.
The space of all adelic arithmetic $\RR$-Cartier divisors of $C^0$-type on $X$ is denoted by
$\aDiv_{C^0}^{\ad}(X)_{\RR}$, which forms an $\RR$-vector space by the previous definition.
\index{\AdelDivSymbol}{0Div:aDiv_{C^0}^{ad}(X)_{RR}@$\aDiv_{C^0}^{\ad}(X)_{\RR}$}%
Moreover,
we define $\overline{D}_1 \leq \overline{D}_2$ by the following conditions:
\index{\AdelDivSymbol}{0D:overline{D}_1 leq overline{D}_2@$\overline{D}_1 \leq \overline{D}_2$}%
\begin{enumerate}
\renewcommand{\labelenumi}{(\alph{enumi})}
\item
$D_1 \leq D_2$.

\item
$(g_{1})_P \leq (g_{2})_P$ for all $P \in M_K$ and
$(g_{1})_{\infty} \leq (g_{2})_{\infty}$.
\end{enumerate}
Similarly, for global adelic $\RR$-Cartier divisors 
\[
(D_1, \{ (g_1)_P \}_{P \in M_K})
\quad\text{and}\quad
(D_2, \{ (g_2)_P \}_{P \in M_K}),
\] 
\[
(D_1, \{ (g_1)_P \}_{P \in M_K}) \leq (D_2, \{ (g_2)_P \}_{P \in M_K})
\]
is defined by $D_1 \leq D_2$ and $(g_{1})_P \leq (g_{2})_P$ for all $P \in M_K$.
\end{Definition}

Let $\XXX$ be a normal model of $X$ over $\Spec(O_K)$ and let $\overline{\DDD} = (\DDD, g_{\infty})$ 
be an arithmetic $\RR$-Cartier divisor of $C^0$-type on $\XXX$
(cf. \cite[Section~5]{MoArZariski}).
The pair
$(\XXX, \overline{\DDD})$ gives rise to an adelic arithmetic $\RR$-Cartier divisor of $C^0$-type on $X$,
that is,
\[
\left( \DDD \cap X, \sum_{P \in M_K} g_{(\XXX_{(P)},\, \DDD_{(P)})}[P] + g_{\infty} [\infty] \right),
\]
where $\XXX_{(P)}$ is the localization of $\XXX \to \Spec(O_K)$ at $P$ and
$\DDD_{(P)}$ is the resection of $\DDD$
to $\XXX_{(P)}$.
\index{\AdelDivSymbol}{0X:XXX_{(P)}@$\XXX_{(P)}$}%
\index{\AdelDivSymbol}{0D:DDD_{(P)}@$\DDD_{(P)}$}%
We use the symbol $\XXX_{(P)}$ to distinguish it from $X_P$ at the beginning of this 
\ifmonog section. \fi
\ifpaper subsection. \fi
We denote it by $\overline{\DDD}^{\ad}$ and it is
called the {\em associated adelic arithmetic $\RR$-Cartier divisor with $\overline{\DDD}$}.
\index{\AdelDivSubject}{associated adelic arithmetic R-Cartier divisor@associated adelic arithmetic $\RR$-Cartier divisor}%
\index{\AdelDivSymbol}{0D:overline{DDD}^{ad}@$\overline{\DDD}^{\ad}$}%
Note that $\widehat{(\varphi)} = \left(\widehat{(\varphi)}_{\XXX}\right)^{\ad}$ for $\varphi \in \Rat(\XXX)^{\times}$,
where $\widehat{(\varphi)}_{\XXX}$ is the arithmetic principal divisor of $\varphi$ on $\XXX$.
Similarly, the {\em associated global adelic $\RR$-Cartier divisor $\DDD^{\ad}$ with $\DDD$}
\index{\AdelDivSubject}{associated global adelic R-Cartier divisor@associated global adelic $\RR$-Cartier divisor}%
\index{\AdelDivSymbol}{0D:DDD^{ad}@$\DDD^{\ad}$}%
is defined by
\[
\DDD^{\ad} := \left( \DDD \cap X, \left\{ g_{(\XXX_{(P)},\, \DDD_{(P)})} \right\}_{P \in M_K} \right).
\]
By abuse of notation,
we often use the notations $\overline{\DDD} \leq \overline{D}_2$ and $\overline{D}_1 \leq \overline{\DDD}$ instead of
$\overline{\DDD}^{\ad} \leq \overline{D}_2$ and $\overline{D}_1 \leq \overline{\DDD}^{\ad}$ respectively.
The following proposition is the arithmetic version of
Proposition~\ref{prop:comp:adelic} and it follows from
Proposition~\ref{prop:comp:adelic}.

\begin{Proposition}
\label{prop:comp:adelic:arith}
Let $\XXX$ be a normal model of $X$ over $\Spec(O_K)$ and let $\aDiv_{C^0}(\XXX)_{\RR}$ be
the space of arithmetic $\RR$-Cartier divisors of $C^0$-type on $\XXX$.
Let 
\[
\iota : \aDiv_{C^0}(\XXX)_{\RR} \to \aDiv^{\ad}_{C^0}(X)_{\RR}
\]
be the map given by $\overline{\DDD} \mapsto \overline{\DDD}^{\ad}$.
Then we have the following:
\begin{enumerate}
\renewcommand{\labelenumi}{(\arabic{enumi})}
\item
The map $\iota : \aDiv_{C^0}(\XXX)_{\RR} \to \aDiv^{\ad}_{C^0}(X)_{\RR}$ is an
injective homomorphism of $\RR$-vector spaces.

\item
$\overline{\DDD}_1 \leq \overline{\DDD}_2$ $\Longleftrightarrow$ 
$\overline{\DDD}^{\ad}_1 \leq \overline{\DDD}^{\ad}_2$.
\end{enumerate}
\end{Proposition}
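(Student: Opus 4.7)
The plan is to reduce both assertions to the local analogue Proposition~\ref{prop:comp:adelic}, place by place, after verifying linearity of $\iota$. Linearity is essentially formal: the three pieces of the output $\overline{\DDD}^{\ad} = (\DDD \cap X,\; \{g_{(\XXX_{(P)},\,\DDD_{(P)})}\}_{P \in M_K} \cup \{g_{\infty}\})$ depend $\RR$-linearly on $\overline{\DDD} = (\DDD, g_{\infty})$. Indeed, $\DDD \mapsto \DDD \cap X$ and $(\DDD,g_{\infty}) \mapsto g_{\infty}$ are obviously linear, while for each $P \in M_K$ the assignment $\DDD \mapsto g_{(\XXX_{(P)},\,\DDD_{(P)})}$ is $\RR$-linear by the corresponding linearity of the local Green-function construction already used in the proof of Proposition~\ref{prop:comp:adelic}.

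For (2), the forward direction is immediate: if $\overline{\DDD}_1 \leq \overline{\DDD}_2$, then $\DDD_1 \leq \DDD_2$ on $\XXX$, hence $\DDD_{1,(P)} \leq \DDD_{2,(P)}$ on each $\XXX_{(P)}$, so Proposition~\ref{prop:comp:adelic}(2) (applied over the completion $K_P$, cf. Lemma~\ref{lem:base:change:completion}) gives the inequality of $P$-adic Green functions; the archimedean inequality and $D_1 \leq D_2$ are given. Conversely, assume $\overline{\DDD}^{\ad}_1 \leq \overline{\DDD}^{\ad}_2$. At each finite place $P$ the inequality of Green functions together with $D_1 \leq D_2$ on the generic fibre lets us invoke Proposition~\ref{prop:comp:adelic}(2) in reverse to conclude $\DDD_{1,(P)} \leq \DDD_{2,(P)}$ for every $P \in M_K$. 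Since effectiveness of the $\RR$-Cartier divisor $\DDD_2 - \DDD_1$ on the normal noetherian scheme $\XXX$ may be tested at each point, and every point of $\XXX$ lies either over the generic point of $\Spec(O_K)$ (handled by $D_1 \leq D_2$ on $X$) or over some closed point $P \in M_K$ (handled by the localization $\XXX_{(P)}$), we obtain $\DDD_1 \leq \DDD_2$ globally. The archimedean part of the inequality $\overline{\DDD}_1 \leq \overline{\DDD}_2$ is tautologically given by hypothesis.

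Part (1), injectivity, then follows by the standard squeeze: if $\iota(\overline{\DDD}) = 0$, then $0 \leq \overline{\DDD}^{\ad}$ and $\overline{\DDD}^{\ad} \leq 0$, and applying (2) in both directions yields $\overline{\DDD} = 0$. The only non-formal ingredient is the passage from "effective on every localization $\XXX_{(P)}$" to "effective globally on $\XXX$", and this is the step I expect to justify most carefully; however it is a purely algebro-geometric fact, not requiring any arithmetic input, since the localizations $\{\XXX_{(P)}\}_{P \in M_K}$ together with the generic fibre $X$ cover every point of $\XXX \to \Spec(O_K)$.
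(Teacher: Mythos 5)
The paper itself simply records that the arithmetic version ``follows from Proposition~\ref{prop:comp:adelic}'' and offers no further detail, and your proposal is a correct and faithful expansion of exactly that implication: you reduce each assertion to the local statement over each localization $\XXX_{(P)}$, then reassemble. The one step worth being precise about is the globalization of effectiveness in the converse of (2): effectiveness of the $\RR$-Weil divisor $(\DDD_2 - \DDD_1)_W$ on the normal scheme $\XXX$ is tested at prime divisors (codimension-one points), not arbitrary points, and each prime divisor $\Gamma$ of $\XXX$ is either horizontal (hence restricts to a prime divisor of $X$, controlled by $D_1 \leq D_2$) or vertical over some $P \in M_K$ (hence a prime divisor of $\XXX_{(P)}$, controlled by $(\DDD_1)_{(P)} \leq (\DDD_2)_{(P)}$). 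With that minor sharpening your argument coincides with what the paper leaves implicit, so no further comparison is needed.
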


The following theorem is a consequence of Theorem~\ref{thm:approx:adelic}.

\begin{Theorem}[Approximation theorem of adelic arithmetic $\RR$-divisors]
\label{thm:approx:adelic:arith}
Let $\overline{D} = (D, g)$ be an adelic arithmetic $\RR$-Cartier divisor of $C^0$-type
on $X$ and let $(\XXX_U, \DDD_U)$ be a defining 
model of $\overline{D}$ over a non-empty open set $U \subseteq \Spec(O_K)$.
For any positive number $\epsilon > 0$, 
there exist a normal 
model $\XXX_{\epsilon}$ over $\Spec(O_K)$, and $\RR$-Cartier divisors $\DDD_1$ and $\DDD_2$ on 
$\XXX_{\epsilon}$
with the following properties:
\begin{enumerate}
\renewcommand{\labelenumi}{(\arabic{enumi})}
\item
$\rest{\XXX_{\epsilon}}{U} = \XXX_U$, $\rest{\DDD_1}{U} = \DDD_U$ and $\rest{\DDD_2}{U} = \DDD_U$.

\item
If we set $S = M_K \setminus U$, $\overline{\DDD}_1 = (\DDD_1, g_{\infty})$ and
$\overline{\DDD}_2 = (\DDD_2, g_{\infty})$, then
\[
\overline{D} - \left(0, \sum_{P \in S} \epsilon [P]\right) \leq \overline{\DDD}^{\ad}_1 \leq \overline{D} \leq \overline{\DDD}^{\ad}_2 \leq 
\overline{D} + \left(0, \sum_{P \in S} \epsilon [P]\right).
\]
\end{enumerate}
\end{Theorem}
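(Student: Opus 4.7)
The plan is to reduce the global approximation to finitely many local problems, one for each place in the finite complement $S := M_K \setminus U$, and then glue the resulting local modifications into a single normal model over $\Spec(O_K)$.

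First I would extend $(\XXX_U, \DDD_U)$ to a global datum: embed $\XXX_U$ in some $\PP^N_U$, take its closure in $\PP^N_{\Spec(O_K)}$, and extend $\DDD_U$ to an $\RR$-Cartier divisor on this closure by coherent extension of the defining fractional ideal sheaves, in the manner of Lemma~\ref{lem:dekind:extension:R:Cartier}. Normalizing then yields a normal projective model $(\XXX^{(0)}, \DDD^{(0)})$ over $\Spec(O_K)$ with $\XXX^{(0)}|_U = \XXX_U$ and $\DDD^{(0)}|_U = \DDD_U$, since $\XXX_U$ is already normal. For each $P \in S$, let $g_P^{(0)}$ denote the Green function on $X_P^{\an}$ induced by $(\XXX^{(0)}, \DDD^{(0)})$, and set $\phi_P := g_P - g_P^{(0)}$; this is a continuous function on $X_P^{\an}$.

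Next, at each $P \in S$ I apply the global density theorem (Theorem~\ref{thm:density:global:model}) with $k = K$ and $v = v_P$, restricted to the birational system of normal models of $X$ above $\XXX^{(0)}_{(P)}$. This produces a normal model $\YYY^{(P)}$ over $\Spec((O_K)_P)$ together with a birational morphism $\YYY^{(P)} \to \XXX^{(0)}_{(P)}$ and a vertical $\QQ$-Cartier divisor $\FFF^{(P)}$ on $\YYY^{(P)}$ satisfying $\Vert g_{(\YYY^{(P)},\, \FFF^{(P)})} - \phi_P \Vert_{\sup} \leq \epsilon/2$. The critical ingredient making this possible over the local ring rather than over the completion $K_P^{\circ}$ is Lemma~\ref{lem:vertical:ideal:descent}, which descends vertical fractional ideal sheaves from models over $K_P^{\circ}$ to models over $(O_K)_P$ and yields the identification $\MM(X;\Psi) = \MM(X_P^{\an};\Psi_v)$. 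The morphism $\YYY^{(P)} \to \XXX^{(0)}_{(P)}$ then extends to $\XXX^{(P)} \to \XXX^{(0)}$ of normal projective models over $\Spec(O_K)$, an isomorphism outside the closed point $P$, by extending the corresponding vertical ideal sheaf trivially outside $P$, blowing up, and normalizing. Let $\XXX_{\epsilon}$ be a normal projective common refinement dominating every $\XXX^{(P)}$ for $P \in S$ via $\mu: \XXX_{\epsilon} \to \XXX^{(0)}$, and put $\DDD := \mu^{*}\DDD^{(0)} + \sum_{P \in S} \tilde{\FFF}^{(P)}$, where $\tilde{\FFF}^{(P)}$ is the pullback of $\FFF^{(P)}$ to $\XXX_{\epsilon}$. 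Then $\DDD \cap X = D$ and $\DDD|_U = \DDD_U$, while $\Vert g_{(\XXX_{\epsilon},\, \DDD),\, P} - g_P \Vert_{\sup} \leq \epsilon/2$ for $P \in S$ (and equality holds for $P \in U$).

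To close the inequalities, set $a_P := \epsilon/(4 \log \#(O_K/P))$ and $\DDD_1 := \DDD - \sum_{P \in S} a_P (\XXX_{\epsilon})_P$, $\DDD_2 := \DDD + \sum_{P \in S} a_P (\XXX_{\epsilon})_P$, where $(\XXX_{\epsilon})_P$ denotes the fiber over $P$. Since the Green function of the full fiber $(\XXX_{\epsilon})_P$ is the constant $2 \log \#(O_K/P)$ on $X_P^{\an}$, these shifts move the Green function by exactly $\mp \epsilon/2$ at each $P \in S$, while leaving the restriction to $X$ and to $U$ untouched because the added fiber components are vertical and supported over $S$. Combined with the $\epsilon/2$ bound from step~2, this gives the full chain $\overline{D} - (0, \sum_{P \in S} \epsilon[P]) \leq \overline{\DDD}_1^{\ad} \leq \overline{D} \leq \overline{\DDD}_2^{\ad} \leq \overline{D} + (0, \sum_{P \in S} \epsilon[P])$. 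The main obstacle is step~2, namely realizing the approximating model function by a vertical Cartier divisor defined already on a model over the local ring $(O_K)_P$ rather than over its completion; this rests precisely on the density theorem together with the descent supplied by Lemma~\ref{lem:vertical:ideal:descent}. The remaining bookkeeping — extension of local ideal sheaves to $\Spec(O_K)$, common refinement of finitely many models, and the fiber-shift arithmetic — is routine.
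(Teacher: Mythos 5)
Your proof is correct and fills in, with the right details, what the paper leaves implicit: the paper's entire proof of this theorem is the single sentence ``The following theorem is a consequence of Theorem~\ref{thm:approx:adelic}.'' Your reduction to the local approximation at each $P \in S$, the extension of the resulting vertical ideal sheaves from $\Spec((O_K)_P)$ to $\Spec(O_K)$ and gluing over a common refinement, and the fiber shift by $a_P = \epsilon/(4\log\#(O_K/P))$ (so that $(\XXX_{\epsilon})_P$, whose induced Green function is the constant $2\log\#(O_K/P)$, moves the local Green function by exactly $\mp\epsilon/2$) is precisely the expected deduction from the local statement.
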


\ifmonog\section{Global degree}\fi
\ifpaper\subsection{Global degree}\fi
\label{subsec:global:degree}

Let $\overline{D} = (D, g)$ be an adelic arithmetic $\RR$-Cartier divisor of $C^0$-type on $X$.
Let $x$ be a closed point of $X$. First we assume that $x \not\in \Supp_{\RR}(D)$.
For $P \in M_K$,
the local degree of $\overline{D}$ over the valuation $v_P$
is denoted by $\adeg_P(\srest{\overline{D}}{x})$ (cf. 
\ifmonog Section~\ref{subsec:local:degree}). \fi
\ifpaper Subsection~\ref{subsec:local:degree}). \fi
\index{\AdelDivSymbol}{0d:adeg_P(overline{D}{x})@$\adeg_P(\srest{\overline{D}}{x})$}%
Moreover, $\adeg_{\infty}(\srest{\overline{D}}{x})$ is defined by
\[
\adeg_{\infty}(\srest{\overline{D}}{x}) := \frac{1}{2} \sum_{\sigma : K(x) \hookrightarrow \CC} 
g_{\infty}(x_{\sigma}),
\]
where $K(x)$ is the residue field at $x$ and $x_{\sigma}$ is the $\CC$-value point
given by 
\[
\OOO_{X, x} \to K(x) \overset{\sigma}{\hookrightarrow} \CC.
\]%
\index{\AdelDivSymbol}{0d:adeg_{infty}(overline{D}{x})@$\adeg_{\infty}(\srest{\overline{D}}{x})$}%
Let $U$ be a non-empty Zariski open set of $\Spec(O_K)$ such that
$\overline{D}$ has a defining model $(\XXX_U, \DDD_U)$ over $U$.
Let $\Delta_x$ be the closure of $x$ in $\XXX_U$.
Shrinking $U$ if necessarily,
we may assume that $\Delta_x \cap \Supp(\DDD_U) = \emptyset$,
which implies that $\adeg_P(\srest{\overline{D}}{x}) = 0$ for $P \in U$.
Therefore, 
\index{\AdelDivSymbol}{0d:adeg(overline{D}{x})@$\adeg(\srest{\overline{D}}{x})$}%
we can define $\adeg(\srest{\overline{D}}{x})$ to be
\[
\adeg(\srest{\overline{D}}{x}) = \sum_{P \in M_K} \adeg_P(\srest{\overline{D}}{x})
+ \adeg_{\infty}(\srest{\overline{D}}{x}).
\]
Note that
\frontmatterforspececialeqn
\begin{equation}
\label{eqn:arith:degree:principal:01}
\adeg(\srest{\widehat{(\varphi)}}{x}) = 0
\end{equation}
\backmatterforspececialeqn
for $\varphi \in \Rat(X)^{\times}$ with $x \not\in \Supp((\varphi))$.
In general, we can find $\phi \in \Rat(X)^{\times}_{\RR}$ such that
$x \not\in \Supp(D + (\phi))$ (cf. \cite[Lemma~5.2.3]{MoArZariski}).
By using \eqref{eqn:arith:degree:principal:01},
we can see that the quantity 
$\adeg(\srest{\overline{D} + \widehat{(\phi)}}{x})$ does not depend on the choice of
$\phi$, so that it is denoted  by $\adeg(\srest{\overline{D}}{x})$
and is called the {\em global degree of $\overline{D}$ along $x$}.
\index{\AdelDivSubject}{global degree@global degree}%
The equation \eqref{eqn:arith:degree:principal:01} can be generalized as follows:
\frontmatterforspececialeqn
\begin{equation}
\label{eqn:arith:degree:principal:02}
\adeg(\srest{\widehat{(\psi)}}{x}) = 0
\end{equation}
\backmatterforspececialeqn
for all $\psi \in \Rat(X)^{\times}$.

\begin{Lemma}
\label{lem:global:degree:comp}
Let $\overline{D}_1 = (D_1, g_1)$ and $\overline{D}_2 = (D_2, g_2)$ be 
adelic arithmetic $\RR$-Cartier divisors of $C^0$-type on $X$.
If $D_1 = D_2$ and $g_1 \leq g_2$, then
$\adeg(\srest{\overline{D}_1}{x}) \leq \adeg(\srest{\overline{D}_2}{x}) $
for all closed points $x$ of $X$.
\end{Lemma}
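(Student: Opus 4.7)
The plan is to reduce to the case where $x$ does not lie on $\Supp_{\RR}(D)$, so that the global degree is visibly a finite sum of local pieces, and then to compare those pieces termwise.

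First I would handle the case $x\notin\Supp_{\RR}(D)$. Here the global degree is defined by
\[
\adeg(\srest{\overline{D}_i}{x})=\sum_{P\in M_K}\adeg_P(\srest{\overline{D}_i}{x})+\adeg_{\infty}(\srest{\overline{D}_i}{x}),
\]
and only finitely many terms are nonzero: choose defining models $(\XXX_{U_i},\DDD_{U_i})$ for $\overline{D}_i$ on nonempty open sets $U_i\subseteq\Spec(O_K)$ and, by shrinking the $U_i$ if needed, arrange that the closure of $x$ in each model avoids $\Supp(\DDD_{U_i})$, so that $\adeg_P(\srest{\overline{D}_i}{x})=0$ for $P\in U_1\cap U_2$. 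For each remaining place $P$, the formula recalled in
\ifmonog Section~\ref{subsec:local:degree} \fi
\ifpaper Subsection~\ref{subsec:local:degree} \fi
gives $\adeg_P(\srest{\overline{D}_i}{x})=\sum_j\tfrac{[k_j:K_P]}{2}(g_i)_P(v_j)$ with nonnegative weights, and the points $v_j\in X_P^{\an}\setminus\Supp_{\RR}(D)^{\an}$ are independent of $i$; the hypothesis $(g_1)_P\leq(g_2)_P$ therefore yields $\adeg_P(\srest{\overline{D}_1}{x})\leq\adeg_P(\srest{\overline{D}_2}{x})$. The same argument applied to the archimedean summand uses the inequality $(g_1)_{\infty}\leq(g_2)_{\infty}$ on $X(\CC)\setminus\Supp_{\RR}(D)(\CC)$ together with nonnegative multiplicities $1/2$ over each embedding $K(x)\hookrightarrow\CC$. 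Summing these place-by-place inequalities gives the statement of the lemma in this case.

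The general case will be reduced to the previous one by the standard device already used in
\ifmonog Section~\ref{subsec:global:degree}: \fi
\ifpaper Subsection~\ref{subsec:global:degree}: \fi
by \cite[Lemma~5.2.3]{MoArZariski} pick $\phi\in\Rat(X)^{\times}_{\RR}$ with $x\notin\Supp(D+(\phi))$, replace $\overline{D}_i$ by $\overline{D}_i+\widehat{(\phi)}$, and invoke \eqref{eqn:arith:degree:principal:02} to see that $\adeg(\srest{\overline{D}_i}{x})$ is unchanged. Crucially, since the same adelic principal divisor $\widehat{(\phi)}$ is added to both sides, the new Green function collections still satisfy the hypothesis $g_1+(\text{shift})\leq g_2+(\text{shift})$ at every place, so the previous step applies to the shifted pair.

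I expect no serious obstacle: the only care needed is the finiteness bookkeeping in the reduction step (ensuring that after adding $\widehat{(\phi)}$ the shifted divisors still have defining models away from $x$ on a cofinite set of places), which is immediate by shrinking $U_1\cap U_2$ further to exclude the finitely many places where $\phi$ contributes a nonzero local principal divisor. With that out of the way, the proof is essentially a termwise comparison of nonnegatively weighted sums.
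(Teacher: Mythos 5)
Your proof is correct, and its key observation is the same one the paper relies on: at each place $\wp \in M_K \cup \{\infty\}$, the local degree is a nonnegatively weighted sum of Green-function values, so the pointwise inequality $(g_1)_\wp \leq (g_2)_\wp$ transfers termwise to the local degrees and then to their sum. The paper's proof is shorter, though: rather than a case split on whether $x \in \Supp_{\RR}(D)$ followed by unwinding the reduction by $\widehat{(\phi)}$, it writes $\overline{D}_2 = \overline{D}_1 + (0,\phi)$ with $\phi = \sum_{P}\phi_P[P] + \phi_\infty[\infty]$ a collection of nonnegative continuous functions (this uses $D_1 = D_2$), invokes additivity of the global degree along $x$, and observes $\adeg(\srest{(0,\phi)}{x}) \geq 0$ directly. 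The point is that the divisor $(0,\phi)$ has empty $\RR$-support, so its global degree along $x$ is always given by the finite sum formula without any reduction step, and is manifestly nonnegative. That packaging absorbs all the bookkeeping (finiteness of the sum, compatibility of the shift by $\widehat{(\phi)}$ with the hypothesis) that you handle explicitly at the end of your argument.
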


\begin{proof}
As $D_1 = D_2$ and $g_1 \leq g_2$,
there are non-negative continuous functions $\phi_P$ on $X_P^{\an}$ and $\phi_{\infty}$ on
$X(\CC)$ such that $(g_2)_P = (g_1)_P + \phi_P$ and
$(g_2)_{\infty} = (g_1)_{\infty} + \phi_{\infty}$, respectively. 
We set $\phi = \sum_{P \in M_K} \phi_P[P] + \phi_{\infty} [\infty]$.
Then, as $\adeg(\srest{(0, \phi)}{x}) \geq 0$,
\[
\adeg(\srest{\overline{D}_2}{x}) = \adeg(\srest{\overline{D}_1}{x}) +
\adeg(\srest{(0, \phi)}{x}) \geq \adeg(\srest{\overline{D}_1}{x}),
\] 
as required.
\end{proof}

\ifmonog\section{Volume of adelic arithmetic $\RR$-Cartier divisors}\fi
\ifpaper\subsection{Volume of adelic arithmetic $\RR$-Cartier divisors}\fi
\label{subsec:volume:adelic:arithmetic:divisor}

Let $D$ be an $\RR$-Cartier divisor on $X$,
$\overline{D}' = (D, g')$ a global adelic $\RR$-Cartier divisor of $C^0$-type on $X$, and
$\overline{D} = (D, g)$ an adelic arithmetic $\RR$-Cartier divisor of $C^0$-type on $X$.
We define $H^0(X, D)$, $\aH(X, \overline{D}')$ and $\aH(X, \overline{D})$ to be
\[
\begin{cases}
H^0(X, D) := \left\{ \varphi \in \Rat(X)^{\times} \mid D + (\varphi) \geq 0 \right\} \cup \{ 0 \}, \\
\aH(X, \overline{D}') := \left\{ \varphi \in H^0(X, D) \mid 
\text{$\Vert \varphi \Vert_{g'_P} \leq 1$ for all $P \in M_K$} \right\},\\
\aH(X, \overline{D}) := \left\{ \varphi \in H^0(X, D) \mid 
\text{$\Vert \varphi \Vert_{g_{\wp}} \leq 1$ for all $\wp \in M_K \cup \{\infty\}$} \right\}.
\end{cases}
\]%
\index{\AdelDivSymbol}{0H:H^0(X,D)@$H^0(X,D)$}%
\index{\AdelDivSymbol}{0H:aH(X,overline{D}')@$\aH(X, \overline{D}')$}%
\index{\AdelDivSymbol}{0H:aH(X,overline{D})@$\aH(X, \overline{D})$}%
Note that 
$\aH(X, \overline{D}')$ is 
a submodule of $H^0(X, D)$ by using Proposition~\ref{prop:ext:cont:norm}.
Let us check the following proposition:

\begin{Proposition}
\label{prop:basic:prop:H:0}
\begin{enumerate}
\renewcommand{\labelenumi}{(\arabic{enumi})}
\item 
$\aH(X, \overline{D}')$ and $\aH(X, \overline{D})$ are given in the following ways:
\[
\begin{cases}
\aH(X, \overline{D}') = \left\{ \varphi \in \Rat(X)^{\times} \mid 
\overline{D}' + \widehat{(\varphi)}^{\tau} \geq 0 \right\} \cup \{ 0 \}, \\[1ex]
\aH(X, \overline{D}) = \left\{ \varphi \in \Rat(X)^{\times} \mid 
\overline{D} + \widehat{(\varphi)} \geq 0 \right\} \cup \{ 0 \}.
\end{cases}
\]

\item
We assume that $\overline{D}^{\tau} = \overline{D}'$.
If there are a normal model $\XXX$ of $X$ over $\Spec(O_K)$
and
an $\RR$-Cartier divisor $\DDD$ on $\XXX$
such that $\DDD \cap X = D$ and $g_P$ is the Green function arising from $(\XXX, \DDD)$ for each $P \in M_K$,
then
\[
\aH(X, \overline{D}') = H^0(\XXX, \DDD)\quad\text{and}\quad
\aH(X, \overline{D}) = \aH(\XXX, (\DDD, g_{\infty})).
\]

\item
$\aH(X, \overline{D}')$ is a finitely generated free $\ZZ$-module and
$\aH(X, \overline{D})$ is a finite set.
We denote $\log \# (\aH(X, \overline{D}))$ by $\ah(X, \overline{D})$.
\index{\AdelDivSymbol}{0h:ah(X,overline{D})@$\ah(X, \overline{D})$}%
\end{enumerate}
\end{Proposition}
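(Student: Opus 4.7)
My plan is to peel off the three assertions in order, reducing each to either a direct calculation with Green functions or to a classical finiteness statement via the approximation theorem proved earlier in the paper.

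For (1), the strategy is to unwind the definitions of $\Vert\varphi\Vert_{g_\wp}$ and of $\overline{D}+\widehat{(\varphi)}$. By Proposition~\ref{prop:ext:cont:norm}, $|\varphi|\exp(-g_\wp/2)$ extends continuously to all of $X_P^{\an}$ or $X(\CC)$, so
\[
\Vert\varphi\Vert_{g_\wp}\le 1\iff g_\wp+\log|\varphi|^2\ge 0
\]
on the complement of the support, which is then precisely the condition that the Green function of $\overline{D}+\widehat{(\varphi)}$ at $\wp$ (namely $g_\wp+\log|\varphi|^2$, which is a Green function for the divisor $D+(\varphi)$) be non-negative. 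Running this equivalence at every place in $M_K$ (respectively at every place in $M_K\cup\{\infty\}$), and combining with $D+(\varphi)\ge 0$, gives both reformulations in (1).

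For (2), I would invoke Proposition~\ref{prop:green:model}(3). Assume $\varphi\in H^0(X,D)$, so that the associated $\RR$-Weil divisor of $D+(\varphi)$ on $X$ is effective. For each $P\in M_K$, since $\XXX$ is normal, the model $\XXX_{(P)}$ is normal as well, and $g_P=g_{(\XXX_{(P)},\,\DDD_{(P)})}$, so Proposition~\ref{prop:green:model}(3) applied to $\DDD_{(P)}+(\varphi)$ gives
\[
\Vert\varphi\Vert_{g_P}\le 1\iff g_P+\log|\varphi|^2\ge 0 \iff \DDD_{(P)}+(\varphi)\ \text{is effective as a Weil divisor}.
\]
Requiring this for all $P$ and combining with effectivity at the horizontal prime divisors (equivalent to $D+(\varphi)\ge 0$) yields $\DDD+(\varphi)\ge 0$ as a Weil divisor on $\XXX$, hence as an $\RR$-Cartier divisor by normality; this is the statement $\varphi\in H^0(\XXX,\DDD)$. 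Adding the archimedean condition $\Vert\varphi\Vert_{g_\infty}\le 1$ then recovers $\aH(X,\overline{D})=\aH(\XXX,(\DDD,g_\infty))$.

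For (3), the plan is to dominate $\overline{D}'$ (resp.\ $\overline{D}$) by an adelic divisor coming from a single global model and then invoke the classical finiteness. Starting from a defining model $(\XXX_U,\DDD_U)$ over $U\subseteq\Spec(O_K)$, extend $\DDD_U$ to $\XXX\to\Spec(O_K)$ by Lemma~\ref{lem:dekind:extension:R:Cartier}; the places $P\in M_K\setminus U$ form a finite set, and at each of them the difference of the two Green functions is a bounded continuous function, so Theorem~\ref{thm:approx:adelic:arith} (or its global-adelic analogue) produces a normal model $(\XXX',\DDD')$ (in the arithmetic case, also an archimedean Green function $g_\infty$) with $\overline{D}'\le{\DDD'}^{\ad}$, respectively $\overline{D}\le(\DDD',g_\infty)^{\ad}$. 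Monotonicity of $\Vert\cdot\Vert_{g_\wp}$ in $g_\wp$ then gives the inclusions
\[
\aH(X,\overline{D}')\subseteq H^0(\XXX',\DDD')\quad\text{and}\quad \aH(X,\overline{D})\subseteq \aH(\XXX',(\DDD',g_\infty))
\]
by (2). The right-hand sides are a finitely generated free $\ZZ$-module and a finite set respectively by the classical theory of arithmetic divisors, so (3) follows. The main subtlety I expect is only in assembling the approximation step so that the resulting $\DDD'$ truly dominates $\overline{D}'$ at the bad places while agreeing over $U$; everything else is a direct unwinding.
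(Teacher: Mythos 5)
Your proof is essentially the paper's argument: reformulate the norm conditions as Green-function inequalities for (1), pass the Weil-effectivity comparison through the normal model for (2), and dominate by a single global model plus classical finiteness for (3). There is, however, a consistent sign slip. Since $\widehat{(\varphi)}$ is defined with Green functions $-\log|\varphi|^2$, the $\wp$-component of the Green function of $\overline{D}+\widehat{(\varphi)}$ is $g_\wp-\log|\varphi|^2$, not $g_\wp+\log|\varphi|^2$; correspondingly, since $\Vert\varphi\Vert_{g_\wp}=\sup\bigl(|\varphi|\exp(-g_\wp/2)\bigr)$, the condition $\Vert\varphi\Vert_{g_\wp}\le 1$ is equivalent to $g_\wp-\log|\varphi|^2\ge 0$. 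The two errors cancel in your chain of equivalences, so the endpoints match and the conclusion holds, but the two displayed equivalences and the parenthetical remark are false as written.

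Two smaller points. In (2), the phrase ``hence as an $\RR$-Cartier divisor by normality'' is not a valid step (effective Weil divisors on a normal scheme need not be Cartier) and is also unnecessary: $H^0(\XXX,\DDD)$ is defined directly via effectivity of the associated $\RR$-Weil divisor $\DDD+(\varphi)$, so the Weil-effectivity you obtained is already exactly the membership condition. You route (2) through Proposition~\ref{prop:green:model}(3), whereas the paper cites Proposition~\ref{prop:comp:adelic:arith}; these are two packagings of the same comparison, so that difference is cosmetic. In (3), the paper just asserts the existence of a dominating arithmetic model $\overline{\DDD}$ with $\overline{D}\le\overline{\DDD}^{\ad}$, whereas you construct one via Lemma~\ref{lem:dekind:extension:R:Cartier} and Theorem~\ref{thm:approx:adelic:arith}; that is somewhat more machinery than strictly needed (one can simply fatten an extended model at the finitely many bad places by multiples of the fibers and take any sufficiently large archimedean Green function), but your construction is correct.
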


\begin{proof}
(1) Note that
\[
\text{$\Vert \varphi \Vert_{g_{\infty}} \leq 1$}
\quad\Longleftrightarrow\quad
\text{$g_{\infty} - \log \vert \varphi \vert^2 \geq 0$ on $X(\CC)$}
\]
and
\[
\text{$\Vert \varphi \Vert_{g_P} \leq 1$}
\quad\Longleftrightarrow\quad
\text{$g_P - \log \vert \varphi \vert^2 \geq 0$ on $X^{\an}_P$}
\]
for $P \in M_K$.
Thus (1) follows.

\medskip
(2) The assertions of (2) follow from (1) and Proposition~\ref{prop:comp:adelic:arith}.

\medskip
(3) Clearly we may assume that $\overline{D}^{\tau} = \overline{D}'$.
We can find a normal model $\XXX$ of $X$ over $\Spec(O_K)$ and
an arithmetic $\RR$-Cartier divisor $\overline{\DDD} =(\DDD, h)$ of $C^0$-type on $\XXX$ such that
$\overline{D} \leq \overline{\DDD}^{\ad}$.
Thus 
\[
\aH(X, \overline{D}') \subseteq \aH(X, \DDD^{\ad}) = H^0(\XXX, \DDD)
\]
by (2).
Note that $H^0(\XXX, \DDD)$ is a finitely generated free $\ZZ$-module, so that
$\aH(X, \overline{D}')$ is also a finitely generated free $\ZZ$-module.
Since $\aH(X, \overline{D}) \subseteq \aH(\XXX, \overline{\DDD})$, the last assertion is obvious.
\end{proof}

\begin{Definition}
\label{def:volume:chi:volume}
Let $\overline{D} = (D, g)$ be an adelic arithmetic $\RR$-Cartier divisor of $C^0$-type on $X$.
The quantity $\achi(X, \overline{D})$ is defined by
\[
\achi(X, \overline{D}) := \achi\left( \aH(X, \overline{D}^{\tau}), \Vert\cdot\Vert_{g_{\infty}}\right)
\]
(cf. Conventions and terminology~\ref{CT:norm:module}).
\index{\AdelDivSymbol}{0c:achi(X,overline{D})@$\achi(X, \overline{D})$}%
Note that 
\[
\ah(X, \overline{D}) = \ah\left( \aH(X, \overline{D}^{\tau}), \Vert\cdot\Vert_{g_{\infty}}\right).
\]
Moreover, 
we define the {\em volume $\avol(\overline{D})$ of $\overline{D}$} and the
{\em $\achi$-volume $\avol_{\chi}(\overline{D})$ of $\overline{D}$} 
\index{\AdelDivSubject}{volume of adelic arithmetic R-Cartier divisor@volume of adelic arithmetic $\RR$-Cartier divisor}%
\index{\AdelDivSymbol}{0v:avol(overline{D})@$\avol(\overline{D})$}%
\index{\AdelDivSubject}{chi-volume of adelic arithmetic R-Cartier divisor@$\achi$-volume of adelic arithmetic $\RR$-Cartier divisor}%
\index{\AdelDivSymbol}{0v:avol_{chi}(overline{D})@$\avol_{\chi}(\overline{D})$}%
to be
\[
\avol(\overline{D}) := \limsup_{n\to\infty} \frac{\ah(X, n \overline{D})}{n^{d + 1}/(d + 1)!}
\]
and
\[
\avol_{\chi}(\overline{D}) := \limsup_{n\to\infty} \frac{\achi(X, n \overline{D})}{n^{d + 1}/(d + 1)!},
\]
respectively, where $d = \dim X$.
By Minkowski's theorem, $\avol_{\chi}(\overline{D}) \leq \avol(\overline{D})$.
Let $\overline{L} = (L, h)$ be
another adelic arithmetic $\RR$-Cartier divisor of $C^0$-type on $X$.
Clearly, if $\overline{L} \leq \overline{D}$, then $\ah(X,\overline{L}) \leq \ah(X, \overline{D})$
and $\avol(\overline{L}) \leq \avol(\overline{D})$.
Further, by \eqref{eqn:lem:h:0:chi:01},
if $\overline{L} \leq \overline{D}$ and $L = D$, then $\achi(X,\overline{L}) \leq \achi(X, \overline{D})$
and $\avol_{\chi}(\overline{L}) \leq \avol_{\chi}(\overline{D})$.
For the symbol $\natural$, 
$\avol_{\natural}(\overline{D})$ stands for
either $\avol(\overline{D})$ or $\avol_{\chi}(\overline{D})$,
that is,
\[
\avol_{\natural}(\overline{D}) = \begin{cases}
\avol(\overline{D}) & \text{if $\natural$ is blank},\\
\avol_{\chi}(\overline{D}) & \text{if $\natural$ is $\chi$}.
\end{cases}
\]
\end{Definition}

\ifmonog\section{Positivity of adelic arithmetic $\RR$-Cartier divisors}\fi
\ifpaper\subsection{Positivity of adelic arithmetic $\RR$-Cartier divisors}\fi
\label{subsec:positivity:adelic:arithmetic:divisor}

Here let us introduce several kinds of the positivity of adelic arithmetic divisors.

\begin{Definition}
\label{def:positive:adelic:arithmetic:divisor}
Let $\overline{D} = (D, g)$ be an adelic arithmetic $\RR$-Cartier divisor of $C^0$-type on $X$.

$\bullet$ {\bf Big}:
We say $\overline{D}$ is {\em big} 
\index{\AdelDivSubject}{big adelic arithmetic R-Cartier divisor@big adelic arithmetic $\RR$-Cartier divisor}%
if $\avol(\overline{D}) > 0$.
According as \cite{MoARH}, we can give an alternative definition, that is,
for any adelic arithmetic $\RR$-Cartier divisor $\overline{L}$ of $C^0$-type on $X$,
$\aH(X, n \overline{D} + \overline{L}) \not= \{ 0 \}$ for some positive integer $n$.
Actually two definitions are equivalent by the continuity of the volume function.

$\bullet$ {\bf Pseudo-effective}:
$\overline{D}$ is said to be {\em pseudo-effective} 
\index{\AdelDivSubject}{pseudo-effective arithmetic R-Cartier divisor@pseudo-effective arithmetic $\RR$-Cartier divisor}%
if $\overline{D} + \overline{A}$ is big for any big adelic arithmetic $\RR$-Cartier divisor 
$\overline{A}$ of $C^0$-type on $X$.

$\bullet$ {\bf Relatively nef}:  
$\overline{D}$ is said to be {\em relatively nef}
\index{\AdelDivSubject}{relatively nef adelic arithmetic R-Cartier divisor@relatively nef adelic arithmetic $\RR$-Cartier divisor}%
if the following conditions are satisfied:
\begin{enumerate}
\renewcommand{\labelenumi}{(\arabic{enumi})}
\item
For $P \in M_K$, $g_P$ is of $(C^0 \cap \Tpsh)$-type.

\item
The Green function $g_{\infty}$ on $X(\CC)$ is of $(C^0 \cap \Tpsh)$-type, that is,
the first Chern current $c_1(D, g_{\infty})$ is positive.
\end{enumerate}
If $(\XXX_U, \DDD_U)$ is a defining model of $\overline{D}$,
then $\DDD_U$ is relatively nef with respect to $\XXX_U \to U$
by Proposition~\ref{prop:PSH:imply:nef}.

$\bullet$ {\bf Nef}:
We say $\overline{D}$ is {\em nef} 
\index{\AdelDivSubject}{nef adelic arithmetic R-Cartier divisor@nef adelic arithmetic $\RR$-Cartier divisor}%
if $\overline{D}$ is relatively nef and
$\adeg(\srest{\overline{D}}{x}) \geq 0$ for all closed point $x$ of $X$.
For example, if $\phi \in \Rat(X)^{\times}$, then
the adelic arithmetic principal divisor $\widehat{(\phi)}$ of $\phi$ is nef.
\end{Definition}

Let us see the following proposition.

\begin{Proposition}
\label{prop:rel:nef:pseudo:effective}
Let $\overline{D} = (D, \{ g_P \}_{P\in M_K} \cup \{ g_{\infty} \})$ be an adelic arithmetic $\RR$-Cartier divisor of $C^0$-type on $X$.
Then we have the following:
\begin{enumerate}
\renewcommand{\labelenumi}{(\arabic{enumi})}
\item
Let $U$ be a non-empty open set of $\Spec(O_K)$ such that there is a defining model $(\XXX_U, \DDD_U)$
of $\overline{D}$ over $U$.
If $\overline{D}$ is relatively nef, then there are 
sequences $\{ (\XXX_n, \DDD_n) \}_{n=1}^{\infty}$ and $\{ (\XXX_n, \DDD'_n) \}_{n=1}^{\infty}$
with the following properties:
\begin{enumerate}
\renewcommand{\labelenumii}{(\arabic{enumi}.\arabic{enumii})}
\item
For every $n \geq 1$, $\XXX_n$ is a normal model of $X$ over $\Spec(O_K)$ such that $\rest{\XXX_n}{U} = \XXX_U$.

\item
For every $n \geq 1$, $\DDD_n$ and $\DDD'_n$ are relatively nef $\RR$-Cartier divisors on $\XXX_n$
such that $\srest{\DDD_n}{U} = \srest{\DDD'_n}{U} = \DDD_U$.

\item
$(\DDD'_n, g_{\infty})^{\ad} \leq \overline{D} \leq (\DDD_n, g_{\infty})^{\ad}$ for all $n \geq 1$.

\item
If we set 
\[
\qquad\qquad\qquad\phi_{n,P} := g_P - g_{((\XXX_n)_{(P)},\ (\DDD_n)_{(P)})}
\quad\text{and}\quad
\phi'_{n, P} := g_P - g_{((\XXX_n)_{(P)},\ (\DDD'_n)_{(P)})},
\]
then 
\[
\lim_{n\to\infty} \Vert \phi_{n, P} \Vert_{\sup} = \lim_{n\to\infty} \Vert \phi'_{n, P} \Vert_{\sup} = 0,
\]
where $(\XXX_n)_{(P)}$ is the localization of $\XXX_n \to \Spec(O_K)$ at $P$ and
$(\DDD_n)_{(P)}$ and $(\DDD'_n)_{(P)}$ are the restrictions of $\DDD_n$ and $\DDD'_n$ to $(\XXX_n)_{(P)}$,
respectively.
\end{enumerate}

\item
If $\overline{D}$ is nef, then $\overline{D}$ is pseudo-effective.

\item
If $D$ is big on $X$ and $\overline{D}$ is pseudo-effective, then
$\overline{D} + (0, \epsilon [\infty])$ is big for any positive number $\epsilon$.
\end{enumerate}
\end{Proposition}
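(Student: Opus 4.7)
My plan is to handle the three parts in sequence; parts (2) and (3) will each reduce to part (1) together with standard facts on arithmetic $\RR$-Cartier divisors on a fixed normal model.

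For part (1), set $S := M_K \setminus U$, which is a finite set of maximal ideals. At each $P \in S$, since $\overline{D}$ is relatively nef, the Green function $g_P$ is of $(C^0 \cap \Tpsh)$-type on $X_P^{\an}$, so Proposition~\ref{prop:Green:psh:C0:approx} applied over the complete discrete valuation field $K_P$ produces sequences of local models over $\Spec(K_P^{\circ})$ carrying relatively nef $\RR$-Cartier divisors that bracket $g_P$ from above and below, with sup-norm errors tending to zero. Because $K_P^{\circ}$ is excellent and $S$ is finite, each local model descends (via Lemma~\ref{lem:dekind:extension:R:Cartier}) to a model over the localization of $O_K$ at $P$, spreads to a neighborhood of $P$, and a standard algebraization and patching argument combines these local improvements with the fixed $(\XXX_U, \DDD_U)$ into a single normal model $\XXX_n$ over $\Spec(O_K)$ that restricts to $\XXX_U$ over $U$. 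Vertical corrections of the form $\pm c_n (\XXX_n)_{\circ}$ as in the proof of Proposition~\ref{prop:Green:psh:C0:approx} preserve relative nefness of $\DDD_n$ and $\DDD'_n$ over $\Spec(O_K)$ and produce the required bracketing inequality and uniform convergence at each $P \in S$.

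For part (2), let $\overline{A}$ be any big adelic arithmetic $\RR$-Cartier divisor on $X$; we must show that $\overline{D} + \overline{A}$ is big. By part (1), there is a sequence of normal models $\XXX_n$ and relatively nef $\DDD'_n$ on $\XXX_n$ with $(\DDD'_n, g_\infty)^{\ad} \leq \overline{D}$ and $(\DDD'_n, g_\infty)^{\ad} \to \overline{D}$ uniformly at every finite place. Since $g_\infty$ has positive first Chern current and each $\DDD'_n$ is relatively nef, $(\DDD'_n, g_\infty)$ is a nef arithmetic $\RR$-Cartier divisor on $\XXX_n$ in the classical Arakelov sense. Approximating $\overline{A}$ from below by a model arithmetic divisor on a common refinement (via Theorem~\ref{thm:approx:adelic:arith}) and invoking the classical ``nef $+$ big $=$ big'' principle for arithmetic $\RR$-Cartier divisors (a consequence of the arithmetic Brunn--Minkowski inequality together with property~(5) of the Introduction), we obtain $\avol((\DDD'_n, g_\infty)^{\ad} + \overline{A}) > 0$ for some $n$, whence $\avol(\overline{D} + \overline{A}) > 0$ by monotonicity of $\avol$.

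The heart of part (3) is the auxiliary claim: if $D$ is big on $X$, there exists $c_0 > 0$ such that $\overline{D} + (0, c_0 [\infty])$ is big. By Theorem~\ref{thm:approx:adelic:arith} we have a normal model $\XXX$ and an $\RR$-Cartier divisor $\DDD_1$ on $\XXX$ with $\DDD_1 \cap X = D$ and $(\DDD_1, g_\infty)^{\ad} \leq \overline{D}$; since the generic fiber $D$ is big, the standard estimate bounding the archimedean sup-norms of global sections of multiples of $D$ yields a constant $c_0 > 0$ for which the arithmetic $\RR$-divisor $(\DDD_1, g_\infty + c_0)$ on $\XXX$ is big, and via Proposition~\ref{prop:basic:prop:H:0}(2) so too is its adelic associate $(\DDD_1, g_\infty)^{\ad} + (0, c_0[\infty]) \leq \overline{D} + (0, c_0[\infty])$. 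To pass from $c_0$ to arbitrary $\epsilon > 0$, observe that the pseudo-effective cone is stable under nonnegative scalar multiplication, so $(1 - t)\overline{D}$ is pseudo-effective for $t \in [0, 1]$, and the decomposition
\[
\overline{D} + (0, t c_0 [\infty]) = (1 - t) \overline{D} + t \bigl(\overline{D} + (0, c_0 [\infty])\bigr)
\]
expresses the left-hand side as pseudo-effective plus big, hence big by the defining property of pseudo-effectivity. Choosing $t = \min(\epsilon / c_0, 1)$ (with monotonicity of $\avol$ handling $\epsilon > c_0$) completes the proof. The main obstacle lies in part (1), the global gluing of local relatively nef models at each $P \in S$ into a single normal model over $\Spec(O_K)$; parts (2) and (3) are then essentially formal consequences of (1) and the cited classical arithmetic results.
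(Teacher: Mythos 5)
Parts (1) and (3) of your proposal are broadly in line with the paper. Part (1) follows the paper's strategy of applying Proposition~\ref{prop:Green:psh:C0:approx} at the finitely many places outside $U$ and gluing with $(\XXX_U,\DDD_U)$, though I would flag that the descent from models over $\Spec(K_P^{\circ})$ to models over $\Spec((O_K)_P)$ is not what Lemma~\ref{lem:dekind:extension:R:Cartier} does; the relevant tool is Lemma~\ref{lem:vertical:ideal:descent}. Part (3) takes a legitimately different route: you prove $\overline{D}+(0,c_0[\infty])$ is big for a single large $c_0$ and then interpolate using pseudo-effectivity, whereas the paper produces an effective section of $m\overline{D}-\overline{A}$ after a constant twist and scales. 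Your route works, but the auxiliary claim (existence of $c_0$) is asserted via a vague appeal to ``standard estimates''; an elementary justification, e.g. by finiteness of $\acvol$ and formula~\eqref{eqn:lem:h:0:chi:03}, should be supplied.

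Part (2), however, has a genuine gap. You take the \emph{lower} approximant $(\DDD'_n, g_{\infty})^{\ad} \leq \overline{D}$ and assert that $(\DDD'_n, g_{\infty})$ is ``a nef arithmetic $\RR$-Cartier divisor in the classical Arakelov sense'' because $\DDD'_n$ is relatively nef and $c_1(D,g_{\infty})$ is positive. This only gives \emph{relative} nefness; nefness also requires $\adeg(\srest{(\DDD'_n,g_{\infty})}{C})\geq 0$ for every horizontal curve $C$, and the inequality $(\DDD'_n,g_{\infty})^{\ad}\leq\overline{D}$, with the same generic fiber and the same $g_{\infty}$, forces the comparison the \emph{wrong} way: by Lemma~\ref{lem:global:degree:comp}, $\adeg(\srest{(\DDD'_n,g_{\infty})^{\ad}}{x})\leq\adeg(\srest{\overline{D}}{x})$, so if some horizontal prime of $\overline{D}$ has height zero (which nefness permits), the lower approximant has a strictly negative height there and is not nef. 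Consequently ``nef $+$ big $=$ big'' does not apply. The paper circumvents this by using the \emph{upper} approximant $\DDD_n$: since $\overline{D}\leq(\DDD_n,g_{\infty})^{\ad}$, Lemma~\ref{lem:global:degree:comp} gives non-negative heights, so $(\DDD_n,g_{\infty})$ is genuinely nef; but then, as it lies above $\overline{D}$, the paper must additionally fix a model arithmetic divisor $\overline{\AAA}$ with $\overline{\AAA}-(\sum_{P\notin U}F_P,0)$ ample, so that the nef and big divisor $(\DDD_n,g_{\infty})+\epsilon\mu^*\bigl(\overline{\AAA}-(\sum_P F_P,0)\bigr)$ lies \emph{below} $\overline{D}+\epsilon\overline{\AAA}^{\ad}$, the vertical overshoot being absorbed by the ampleness margin. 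That compensating step is absent from your argument and cannot be omitted.
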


\begin{proof}
(1) By Proposition~\ref{prop:PSH:imply:nef}, $\DDD_U$ is relatively nef with respect to $\XXX_U \to U$.
Thus the assertion follows from Proposition~\ref{prop:Green:psh:C0:approx}.

(2) 
Let us choose a non-empty open set $U$ of $\Spec(O_K)$ such that $\overline{D}$ has
a defining model over $U$.
Let $\XXX$ be a normal model of $X$ over $\Spec(O_K)$ and let
$\overline{\AAA}$ be an arithmetic Cartier divisor of $C^{\infty}$-type on $\XXX$ such that
\[
\overline{\AAA} - \left(\sum\nolimits_{P \in M_K \setminus U} F_P, 0 \right)
\]
is ample,
where $F_P$ is the fiber of $\XXX \to \Spec(O_K)$ over $P$.
It is sufficient to show that $\overline{D} + \epsilon \overline{\AAA}^{\ad}$ is big
for all $\epsilon \in \RR_{>0}$.
By (1), we can choose a normal model $\XXX'$ of $X$ over $\Spec(O_K)$ and a relatively nef $\RR$-Cartier divisor $\DDD$ on $\XXX'$ such that
\[
(\DDD, g_{\infty})^{\ad} - \left(0, \sum\nolimits_{P \in M_K \setminus U} \epsilon [P]\right) \leq \overline{D} 
\leq (\DDD, g_{\infty})^{\ad}.
\]
We may assume that there is a birational morphism $\mu : \XXX' \to \XXX$.
Then $(\DDD, g_{\infty})$ is nef by Lemma~\ref{lem:global:degree:comp}
and
\[ 
\left(  (\DDD, g_{\infty}) + \epsilon \mu^*\left(\overline{\AAA} - \left(\sum\nolimits_{P \in M_K \setminus U} F_P, 0 \right)\right)\right)^{\ad}
\leq \overline{D} + \epsilon \overline{\AAA}^{\ad}.
\]
Note that $(\DDD, g_{\infty}) + \epsilon \mu^*\left(\overline{\AAA} - \left(\sum\nolimits_{P \in M_K \setminus U} F_P, 0 \right)\right)$
is nef and big by \cite[Proposition~6.2.2]{MoArZariski}, as required.

(3) Let $\overline{A}$ be a big adelic arithmetic $\RR$-Cartier divisor of $C^0$-type on $X$.
Let us see the following claim:

\begin{Claim}
There are $m \in \ZZ_{>0}$, $\phi \in \Rat(X)^{\times}$ and $\lambda \in \RR$ such that
\[
m \overline{D} - \overline{A} + \widehat{(\phi)} + (0, \lambda [\infty]) \geq 0.
\]
\end{Claim}

\begin{proof}
Since $D$ is big on $X$, there are a positive integer $m$ and
a non-zero rational function $\psi$ on $X$ such that $mD - A + (\psi)$ is effective.
We set $(L, h) := m \overline{D} - \overline{A} + \widehat{(\psi)}$.
Let $U$ be a non-empty open set of $\Spec(O_K)$ such that $\overline{L}$ has a defining model $\LLL_U$ over $U$.
As $L \geq 0$, shrinking $U$ if necessarily, we may assume that $\LLL_U$ is effective.
In particular, $h_P \geq 0$ for $P \in M_K \cap U$.
Thus there is $\lambda' \in \RR$ such that 
\[
(L, h) \geq \left(0, \sum\nolimits_{P \in M_K \setminus U} (-\lambda') [P] + (- \lambda') [\infty]\right).
\]
We choose $N \in \ZZ_{>0}$ such that $-\lambda' + 2 \ord_P(N)\#(O_K/P) \geq 0$ for all $P \in M_K \setminus U$.
If we set $\Delta = m \overline{D} - \overline{A} + \widehat{(N\psi)}$,
then
\begin{align*}
\Delta & = (L, h) + \widehat{(N)} \\
& \geq
(L, h) + \left( 0, \sum_{P \in M_K \setminus U} 2 \ord_P(N)\#(O_K/P) [P] -\log N^2[\infty] \right) \\
& \geq
\left( 0, \sum_{P \in M_K \setminus U} (-\lambda ' + 2 \ord_P(N)\#(O_K/P)) [P] - (\lambda' + \log N^2)[\infty] \right) \\
& \geq \left( 0,  -(\lambda' + \log N^2)[\infty] \right),
\end{align*}
as required.
\end{proof}
Let $n$ be a positive integer such that $\lambda/(n + m) \leq \epsilon$. Then
\begin{align*}
\overline{D} + (1/n)(\overline{A} - \widehat{(\phi)}) & \leq \overline{D} + (1/n)(m \overline{D} + (0, \lambda[\infty])) \\
& = ((n+m)/n)(\overline{D} + (0, \lambda/(n + m)[\infty])) \\
& \leq ((n+m)/n))(\overline{D} + (0, \epsilon [\infty])),
\end{align*}
so that we have the assertion.
\end{proof}

In addition to the above positivity, an adelic arithmetic $\RR$-Cartier divisor 
$\overline{D}$ of $C^0$-type on $X$ is said to be {\em integrable} 
\index{\AdelDivSubject}{integrable adelic arithmetic R-Cartier divisor@integrable adelic arithmetic $\RR$-Cartier divisor}%
if there are relatively nef adelic arithmetic $\RR$-Cartier divisors $\overline{D}'$ and
$\overline{D}''$ of $C^0$-type on $X$
such that $\overline{D} = \overline{D}' - \overline{D}''$.
The set of all integrable adelic arithmetic $\RR$-Cartier divisors of $C^0$-type on $X$
is denoted by $\aDiv^{\ad}_{\integrable}(X)_{\RR}$.
\index{\AdelDivSymbol}{0Div:aDiv^{ad}_{integrable}(X)_{RR}@$\aDiv^{\ad}_{\integrable}(X)_{\RR}$}%
Note that $\aDiv^{\ad}_{\integrable}(X)_{\RR}$ forms a subspace of $\aDiv^{\ad}_{C^0}(X)_{\RR}$ over $\RR$.

\begin{Remark}
Let $\XXX$ be a normal model of $X$ over $\Spec(O_K)$.
We recall that an arithmetic $\RR$-Cartier divisor $\overline{\DDD}$ of $C^0$-type on $\XXX$ is said to be
{\em integrable} if there are relatively nef arithmetic $\RR$-Cartier divisors $\overline{\DDD}'$ and
$\overline{\DDD}''$ of $C^0$-type on $\XXX$ such that $\overline{\DDD} = \overline{\DDD}' - \overline{\DDD}''$
(cf. \cite[Subsection~2.1]{MoD}).
\end{Remark}

Finally let us introduce the relative nefness of a global adelic $\RR$-Cartier divisor.

\begin{Definition}
Let $\overline{D} = (D, \{ g_P \}_{P \in M_K})$ be a global adelic $\RR$-Cartier divisor of $C^0$-type on $X$.
We say $\overline{D}$ is {\em relatively nef}\ \ 
\index{\AdelDivSubject}{relatively nef global adelic R-Cartier divisor@relatively nef global adelic $\RR$-Cartier divisor}%
if $g_P$ is of $(C^0 \cap \Tpsh)$-type for all $P \in M_K$.
\end{Definition}

\ifmonog\section{Global intersection number}\fi
\ifpaper\subsection{Global intersection number}\fi
\label{subsec:global:int:number}

The purpose of this 
\ifmonog section \fi
\ifpaper subsection \fi
is to construct the intersection pairing
\[
\left(\aDiv^{\ad}_{\integrable}(X)_{\RR}\right)^{d+1} \to \RR
\qquad\left((\overline{D}_1, \ldots, \overline{D}_{d+1}) \mapsto \adeg(\overline{D} \cdots \overline{D}_{d+1})\right)
\]
by using the local intersection number (cf. 
\ifmonog Section~\ref{subsec:local:intersection}). \fi
\ifpaper Subsection~\ref{subsec:local:intersection}). \fi
For this, let us begin with the following lemma.

\begin{Lemma}
\label{lem:approx:integrable}
Let $\overline{D} = (D, g)$ be an integrable adelic arithmetic $\RR$-Cartier divisor of $C^0$-type on $X$.
Then there are a normal model $\XXX$ of $X$, an integrable arithmetic $\RR$-Cartier divisor $\overline{\DDD}$ of $C^0$-type on $\XXX$,
a finite subset $\{ P_1, \ldots, P_r \}$ of $M_K$ and integrable continuous functions $\phi_1, \ldots, \phi_r$ on $X_{P_1}^{\an},
\ldots, X_{P_r}^{\an}$, respectively such that
\[
\overline{D} = \overline{\DDD}^{\ad} + \left(0, \sum_{i=1}^{r} \phi_i [P_i]\right).
\]
\end{Lemma}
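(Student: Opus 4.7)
The plan is to leverage the decomposition afforded by integrability of $\overline{D}$, apply Proposition~\ref{prop:rel:nef:pseudo:effective}(1) to each piece to obtain globally defined relatively nef models, and then take a common refinement.

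First, since $\overline{D}$ is integrable, I would write $\overline{D} = \overline{D}_1 - \overline{D}_2$ with $\overline{D}_i = (D_i, g_i)$ relatively nef adelic. Let $(\YYY^{(i)}_{U_i}, \EEE^{(i)}_{U_i})$ be defining models; shrinking to $U := U_1 \cap U_2$, Proposition~\ref{prop:PSH:imply:nef} ensures that $\EEE^{(i)}_U$ is relatively nef over $U$. Applying Proposition~\ref{prop:rel:nef:pseudo:effective}(1) (taking, say, the first term of the sequence it produces) to each $\overline{D}_i$ furnishes a normal model $\XXX^{(i)}$ of $X$ over $\Spec(O_K)$ equipped with a relatively nef $\RR$-Cartier divisor $\DDD^{(i)}$ satisfying $\rest{\XXX^{(i)}}{U} = \YYY^{(i)}_U$ and $\rest{\DDD^{(i)}}{U} = \EEE^{(i)}_U$.

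Second, I would take a common normal model $\XXX$ over $\Spec(O_K)$ dominating both $\XXX^{(1)}$ and $\XXX^{(2)}$ via birational morphisms $\mu^{(i)} : \XXX \to \XXX^{(i)}$, and set $\widetilde{\DDD}^{(i)} := \mu^{(i)*}(\DDD^{(i)})$, which remains relatively nef on $\XXX$. Put $\DDD := \widetilde{\DDD}^{(1)} - \widetilde{\DDD}^{(2)}$ and $\overline{\DDD} := (\DDD, g_\infty)$. Then $\DDD \cap X = D_1 - D_2 = D$, and the decomposition
$$\overline{\DDD} = (\widetilde{\DDD}^{(1)}, g_{1,\infty}) - (\widetilde{\DDD}^{(2)}, g_{2,\infty})$$
exhibits $\overline{\DDD}$ as integrable on $\XXX$: the algebraic parts are relatively nef on $\XXX$, and the positivity of $c_1(D_i, g_{i,\infty})$ comes from the relative nefness of $\overline{D}_i$.

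Third, I would compare $\overline{D}$ with $\overline{\DDD}^{\ad}$. Both have $\RR$-Cartier divisor $D$ and the same Archimedean Green function $g_\infty$, so they differ only through $\phi_P := g_P - g_{(\XXX_{(P)}, \DDD_{(P)})}$ at the finite places. For $P \in U$, Proposition~\ref{prop:green:model}(2) combined with $\rest{\widetilde{\DDD}^{(i)}}{U}$ being a pullback of $\EEE^{(i)}_U$ gives $g_{(\XXX_{(P)}, (\widetilde{\DDD}^{(i)})_{(P)})} = (g_i)_P$, hence $\phi_P = 0$. Therefore only the finite set $S := M_K \setminus U = \{P_1, \ldots, P_r\}$ contributes, and setting $\phi_i := \phi_{P_i}$ gives the claimed identity.

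Finally, I would verify that each $\phi_i$ is integrable on $X_{P_i}^{\an}$. The identity
$$\phi_i = \left((g_1)_{P_i} - g_{((\XXX)_{(P_i)}, (\widetilde{\DDD}^{(1)})_{(P_i)})}\right) - \left((g_2)_{P_i} - g_{((\XXX)_{(P_i)}, (\widetilde{\DDD}^{(2)})_{(P_i)})}\right)$$
realises $\phi_i$ as a difference of two continuous functions, each of the form $(0, \psi^{(j)}_i)$ with $\psi^{(j)}_i$ equal to the Green-function discrepancy between two relatively nef adelic $\RR$-Cartier divisors $(D_j, (g_j)_{P_i})$ and $(D_j, g_{((\XXX)_{(P_i)}, (\widetilde{\DDD}^{(j)})_{(P_i)})})$ on $X_{P_j}$; relative nefness of the latter uses Proposition~\ref{prop:PSH:imply:nef}. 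Hence each $\psi^{(j)}_i$ is integrable, so $\phi_i$ is integrable.

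The main technical hurdle is the existence at step two of a common model whose structure simultaneously accommodates a relatively nef extension of \emph{both} $\EEE^{(1)}_U$ and $\EEE^{(2)}_U$ across all of $\Spec(O_K)$; this is precisely where the sharper form of Proposition~\ref{prop:rel:nef:pseudo:effective}(1), rather than a naive extension via Lemma~\ref{lem:dekind:extension:R:Cartier}, is indispensable, since a crude extension of a relatively nef divisor need not remain relatively nef on the added fibres.
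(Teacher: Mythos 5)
Your proposal is correct and follows essentially the same route as the paper: decompose $\overline{D}$ into a difference of two relatively nef pieces, apply Proposition~\ref{prop:rel:nef:pseudo:effective}(1) to each to obtain relatively nef arithmetic $\RR$-Cartier divisors on a (common) normal model, and note that the local discrepancies, supported on a finite set of places, are themselves integrable. The paper's statement of the proof is terser (it asserts the common normal model and the two relatively nef divisors in one stroke and leaves the integrability of the discrepancies as a brief remark), while you spell out the intermediate steps --- pulling back to a dominating model, checking relative nefness survives pullback, and realising each $\phi_i$ as a difference of two Green-function discrepancies between relatively nef adelic divisors --- but the content is the same.
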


\begin{proof}
By definition,
there are relatively nef adelic arithmetic $\RR$-Cartier divisors $\overline{L}_1$ and
$\overline{L}_2$ of $C^0$-type on $X$
such that $\overline{D} = \overline{L}_1 - \overline{L}_2$, so that,
by using Proposition~\ref{prop:rel:nef:pseudo:effective},
we can find
a normal model $\XXX$ of $X$, relatively nef arithmetic $\RR$-Cartier divisor $\overline{\LLL}_1$ and $\overline{\LLL}_2$ of $C^0$-type on $\XXX$ and
a finite subset $\{ P_1, \ldots, P_r \}$ of $M_K$ such that
\[
\overline{L}_1 = \overline{\LLL}_1^{\ad} + \left(0, \sum_{i=1}^r \varphi_i [P_i]\right)
\quad\text{and}\quad
\overline{L}_2 = \overline{\LLL}_2^{\ad} + \left(0, \sum_{i=1}^r \psi_i [P_i]\right),
\]
where $\varphi_i$ and $\psi_i$ are continuous functions on $X_{P_i}^{\an}$.
Note that $\varphi_i$ and $\psi_i$ are integrable.
Thus, if we set $\overline{\DDD} = \overline{\LLL}_1 - \overline{\LLL}_2$ and
$\phi_i = \varphi_i - \psi_i$, then we have the assertion.
\end{proof}

Let $\overline{D}_1 = (D_1, g_1), \ldots, \overline{D}_{d+1} = (D_{d+1}, g_{d+1})$
be integrable adelic arithmetic $\RR$-Cartier divisors of $C^0$-type on $X$.
Then, by Lemma~\ref{lem:approx:integrable},
there are a normal model $\XXX$ of $X$, integrable arithmetic $\RR$-Cartier divisors $\overline{\DDD}_1, \ldots, \overline{\DDD}_{d+1}$ of $C^0$-type on $\XXX$,
and a finite subset $S$ of $M_K$ such that
\[
\overline{D}_i = \overline{\DDD}_i^{\ad} + \left(0, \sum_{P \in S} \phi_{i, P}[P] \right),
\]
where $\phi_{i, P}$'s are integrable continuous functions $X_{P}^{\an}$.
We would like to define the intersection number $\adeg(\overline{D}_1 \cdots \overline{D}_{d+1})$ to be
\begin{multline*}
\adeg(\overline{D}_1 \cdots \overline{D}_{d+1}) := \adeg(\overline{\DDD}_1 \cdots \overline{\DDD}_{d+1}) \\
+ \sum_{P \in S} \sum_{\substack{I \subseteq \{ 1, \ldots, d+1\} \\ I \not= \emptyset}}
  \log \#(O_K/P) \adeg_P\left(\prod_{i \in I} (0, \phi_{i, P}) \cdot \prod_{j \not\in I} (\DDD_j)_{(P)} \right),
\end{multline*}
where $\adeg_P$ is the local intersection number at $P$ (cf. 
\ifmonog Section~\ref{subsec:local:intersection}) \fi
\ifpaper Subsection~\ref{subsec:local:intersection}) \fi
and $(\DDD_j)_{(P)}$ means the restriction of $\DDD_j$ to the localization of $\XXX \to \Spec(O_K)$ at $P$.
\index{\AdelDivSymbol}{0d:adeg(overline{D}_1 cdots overline{D}_{d+1})@$\adeg(\overline{D}_1 \cdots \overline{D}_{d+1})$}%
For this purpose, we need to see that the above formula does not depend on the choice of $\XXX$,
$\overline{\DDD}_1, \ldots, \overline{\DDD}_{d+1}$ and $S$.
We denote the right hand side of the above by 
$\Delta(\XXX, \overline{\DDD}_1, \ldots, \overline{\DDD}_{d+1}, S)$.
Let $\XXX'$, $\overline{\DDD}'_1, \ldots, \overline{\DDD}'_{d+1}$ and $S'$ be another choice.
In order to check 
\[
\Delta(\XXX, \overline{\DDD}_1, \ldots, \overline{\DDD}_{d+1}, S) =
\Delta(\XXX', \overline{\DDD}'_1, \ldots, \overline{\DDD}'_{d+1}, S'),
\]
we may assume that $\XXX' = \XXX$ and $S' = S$.
Note that there are vertical $\RR$-Cartier divisors $\EEE_1, \ldots, \EEE_{d+1}$ on $\XXX$
such that $\Supp_{\RR}(\EEE_1), \ldots, \Supp_{\RR}(\EEE_{d+1}) \subseteq \sum_{P \in S} F_P$ and
$\DDD'_i = \DDD_i + \EEE_i$ for $i=1, \ldots, d+1$,
where $F_P$ is the fiber of $\XXX \to \Spec(O_K)$ over $P$.
Thus it is sufficient to show that
\[
\Delta(\XXX, \overline{\DDD}_1, \ldots, \overline{\DDD}_{d+1}, S) =
\Delta(\XXX, \overline{\DDD}_1 + (\EEE_1, 0) , \ldots, \overline{\DDD}_{d+1} + (\EEE_{d+1}, 0), S)
\]
for all vertical $\RR$-Cartier divisors $\EEE_1, \ldots, \EEE_{d+1}$ on $\XXX$
with 
\[
\Supp_{\RR}(\EEE_1), \ldots, \Supp_{\RR}(\EEE_{d+1}) \subseteq \sum_{P \in S} F_P.
\]
If we can show
\frontmatterforspececialeqn
\begin{equation}
\label{eqn:Delta:induction}
\Delta(\XXX, \overline{\DDD}_1, \ldots, \overline{\DDD}_l, \ldots, \overline{\DDD}_{d+1}, S) =
\Delta(\XXX, \overline{\DDD}_1, \ldots, \overline{\DDD}_l + (\EEE, 0), \ldots,
\overline{\DDD}_{d+1}, S)
\end{equation}
\backmatterforspececialeqn
for a vertical $\RR$-Cartier divisor $\EEE$ on $\XXX$
with $\Supp_{\RR}(\EEE) \subseteq \sum_{P \in S} F_P$, then
\begin{align*}
\Delta(\XXX, \overline{\DDD}_1, \ldots, \overline{\DDD}_{d+1}, S) &
= \Delta(\XXX, \overline{\DDD}_1 + (\EEE_1, 0), \overline{\DDD}_2, \ldots, \overline{\DDD}_{d+1}, S) \\
& = \Delta(\XXX, \overline{\DDD}_1 + (\EEE_1, 0), \overline{\DDD}_2 + (\EEE_2, 0), \overline{\DDD}_3, 
\ldots, \overline{\DDD}_{d+1}, S) \\
& = \cdots = \Delta(\XXX, \overline{\DDD}_1 + (\EEE_1, 0) , \ldots, \overline{\DDD}_{d+1} + (\EEE_{d+1}, 0), S).
\end{align*}
Therefore, it suffices to check \eqref{eqn:Delta:induction}.
We set $e_P = g_{(\XXX_{(P)},\, \EEE_{(P)})}$. Then
\[
\overline{D}_l = \left(\overline{\DDD}_l + (\EEE, 0)\right)^{\ad} + \left(0, \sum_{P \in S} (\phi_{l, P}-e_P)[P] \right),
\]
so that
\begin{multline*}
\Delta(\XXX, \overline{\DDD}_1, \ldots, \overline{\DDD}_l + (\EEE, 0), \ldots,
\overline{\DDD}_{d+1}, S)= 
\adeg(\overline{\DDD}_1 \cdots (\overline{\DDD}_l + (\EEE, 0)) \cdots \overline{\DDD}_{d+1}) + \\
\sum_{P \in S} \sum_{\substack{I \subseteq \{ 1, \ldots, d+1\} \\ I \not= \emptyset}}
  \log \#(O_K/P) \adeg_P\left(\prod_{i \in I} (0, \phi_{i, P} - \delta_{il}e_P) \cdot 
  \prod_{j \not\in I} (\DDD_j + \delta_{jl} \EEE)_{(P)} \right).
\end{multline*}
Note that
\begin{multline*}
\adeg(\overline{\DDD}_1 \cdots (\overline{\DDD}_l + (\EEE, 0)) \cdots \overline{\DDD}_{d+1}) =
\adeg(\overline{\DDD}_1 \cdots \overline{\DDD}_{d+1}) \\
+ \sum_{P \in S} \log \#(O_K/P) \adeg_P((\DDD_1)_{(P)} \cdots (\DDD_{l-1})_{(P)} \cdot (\DDD_{l+1})_{(P)} \cdots (\DDD_{d+1})_{(P)} \cdot \EEE_{(P)}).
\end{multline*}
Moreover,
\begin{multline*}
 \adeg_P\left(\prod_{i \in I} (0, \phi_{i, P} - \delta_{il}e_P) \cdot 
  \prod_{j \not\in I} (\DDD_j + \delta_{jl} \EEE)_{(P)} \right) \\
= \begin{cases}
{\displaystyle \adeg_P\left(\prod_{i \in I} (0, \phi_{i, P}) \cdot 
  \prod_{j \not\in I} (\DDD_j)_{(P)} \right) } \\
\qquad\qquad\qquad {\displaystyle - \adeg_P\left(\prod_{i \in I \setminus \{ l \}} 
  (0, \phi_{i, P}) \cdot 
  \prod_{j \not\in I} (\DDD_j)_{(P)} \cdot e_P \right)} & \text{if $l \in I$}, \\
{\displaystyle   \adeg_P\left(\prod_{i \in I} (0, \phi_{i, P}) \cdot 
  \prod_{j \not\in I} (\DDD_j)_{(P)} \right)} \\
\qquad\qquad\qquad {\displaystyle + \adeg_P\left(\prod_{i \in I} 
  (0, \phi_{i, P}) \cdot 
  \prod_{j \not\in I \cup \{ l \}} (\DDD_j)_{(P)} \cdot e_P \right)} & \text{if $l \not\in I$},
  \end{cases}
\end{multline*}
and hence
\begin{multline*}
\sum_{P \in S} \sum_{\substack{I \subseteq \{ 1, \ldots, d+1\} \\ I \not= \emptyset}}
  \log \#(O_K/P) \adeg_P\left(\prod_{i \in I} (0, \phi_{i, P} - \delta_{il}e_P) \cdot 
  \prod_{j \not\in I} (\DDD_j + \delta_{jl} \EEE)_{(P)} \right) \\
  =
  \sum_{P \in S} \sum_{\substack{I \subseteq \{ 1, \ldots, d+1\} \\ I \not= \emptyset}}
  \log \#(O_K/P) \adeg_P\left(\prod_{i \in I} (0, \phi_{i, P}) \cdot 
  \prod_{j \not\in I} (\DDD_j)_{(P)} \right) \\
  -
  \sum_{P \in S} \log \#(O_K/P) \adeg_P((\DDD_1)_{(P)} \cdots (\DDD_{l-1})_{(P)} \cdot (\DDD_{l+1})_{(P)} \cdots (\DDD_{d+1})_{(P)} \cdot e_P).
\end{multline*}
Therefore, we have \eqref{eqn:Delta:induction}.
By our construction, it is easy to see that
the map 
\[
(\overline{D}_1, \ldots, \overline{D}_{d+1}) \mapsto \adeg(\overline{D} \cdots \overline{D}_{d+1})
\]
is multi-linear and symmetric (cf. \eqref{eqn:multlinear:adelic:intersection},
\eqref{eqn:com:intersection:adelic} and Proposition~\ref{prop:intersection:com}).

Let $\overline{D}_1, \ldots, \overline{D}_{d+1}$, $\overline{D}'_1, \ldots, \overline{D}'_{d+1}$
be integrable adelic arithmetic $\RR$-Cartier divisors of $C^0$-type on $X$.
Let $T$ be a finite set of $M_K$ and let $\varphi_{1, P}, \ldots, \varphi_{d,P}$ be
integrable continuous functions on $X_P^{\an}$ for $P \in T$.
By using Lemma~\ref{lem:multi:symmetric:partition}, we can see that if
\[
\overline{D}'_i = \overline{D}_i + \left( 0, \sum\nolimits_{P \in T} \varphi_{i, P}[P] \right) 
\]
for $i=1, \ldots, d+1$, then
\frontmatterforspececialeqn
\begin{multline}
\label{eqn:formula:intersection:adelic}
\adeg(\overline{D}'_1 \cdots \overline{D}'_{d+1}) = \adeg(\overline{D}_1 \cdots \overline{D}_{d+1}) \\
+ \sum_{P \in T} \sum_{\substack{I \subseteq \{ 1, \ldots, d+1\} \\ I \not= \emptyset}}
  \log \#(O_K/P) \adeg_P\left(\prod_{i \in I} (0, \varphi_{i, P}) \cdot \prod_{j \not\in I} (D_j, (g_j)_P) \right),
\end{multline}
\backmatterforspececialeqn
where $\overline{D}_j = \left(D_j, \sum\nolimits_{P} (g_j)_P[P] + (g_j)_{\infty} [\infty]\right)$
for $j=1, \ldots, d+1$.

\begin{Proposition}
\label{prop:intersection:nef:pseudo:effective}
Let $\overline{D}_1, \ldots, \overline{D}_d, \overline{D}_{d+1}$ be integrable
adelic arithmetic $\RR$-Cartier divisors of $C^0$-type on $X$.
Then we have the following:
\begin{enumerate}
\renewcommand{\labelenumi}{(\arabic{enumi})}
\item
For $\phi \in \Rat(X)^{\times}_{\RR}$,
$\adeg(\overline{D}_1 \cdots \overline{D}_d \cdot \widehat{(\phi)}) = 0$.

\item
If $\overline{D}_1, \ldots, \overline{D}_d$ are nef and $\overline{D}_{d+1}$ is pseudo-effective, then
\[
\adeg(\overline{D}_1 \cdots \overline{D}_d \cdot \overline{D}_{d+1}) \geq 0.
\]
\end{enumerate}
\end{Proposition}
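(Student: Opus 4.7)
For \textbf{(1)}, the plan is to bring everything onto a single normal model and exploit both the classical arithmetic Bezout formula and the local vanishing in Proposition~\ref{prop:misc:intersection:adelic}(1). By Lemma~\ref{lem:approx:integrable}, after enlarging everything, there is a normal model $\XXX$ of $X$, integrable arithmetic $\RR$-Cartier divisors $\overline{\DDD}_1,\ldots,\overline{\DDD}_d$ of $C^0$-type on $\XXX$, a finite subset $S\subseteq M_K$, and integrable functions $\phi_{i,P}\in C^0(X_P^{\an})$ such that $\overline{D}_i = \overline{\DDD}_i^{\ad} + (0,\sum_{P\in S}\phi_{i,P}[P])$ for $i=1,\ldots,d$. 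The key observation is that since a local equation of $(\phi)$ at $P$ is $\phi$ itself, one has $g_{(\XXX_{(P)},(\phi)_{(P)})} = -\log|\phi|^2$, so $\widehat{(\phi)} = (\widehat{(\phi)}_{\XXX})^{\ad}$ with no perturbation needed on the $(d+1)$-th slot. Applying formula \eqref{eqn:formula:intersection:adelic} yields
\[
\adeg(\overline{D}_1\cdots\overline{D}_d\cdot\widehat{(\phi)}) = \adeg(\overline{\DDD}_1\cdots\overline{\DDD}_d\cdot\widehat{(\phi)}_{\XXX}) + \sum_{P\in S}\sum_{\emptyset\neq I\subseteq\{1,\ldots,d\}} \log\#(O_K/P)\,\adeg_P(\Lambda_{I,P}),
\]
where $\Lambda_{I,P} = \prod_{i\in I}(0,\phi_{i,P})\cdot\prod_{j\notin I,j\leq d}(D_j,(g_j)_P)\cdot((\phi),-\log|\phi|^2)$. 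The first term vanishes by the classical arithmetic Bezout formula on $\XXX$, and each summand in the second vanishes by Proposition~\ref{prop:misc:intersection:adelic}(1) applied at $v_P$.

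For \textbf{(2)}, the plan is a perturbation-and-effectivization argument. Fix a nef adelic arithmetic divisor $\overline{A}$ with $\avol(\overline{A})>0$, obtained from an ample arithmetic $\RR$-Cartier divisor on some normal model of $X$. By multi-linearity,
\[
\adeg(\overline{D}_1\cdots\overline{D}_d\cdot(\overline{D}_{d+1}+\epsilon\overline{A})) = \adeg(\overline{D}_1\cdots\overline{D}_d\cdot\overline{D}_{d+1}) + \epsilon\,\adeg(\overline{D}_1\cdots\overline{D}_d\cdot\overline{A}),
\]
so it suffices to prove non-negativity after replacing $\overline{D}_{d+1}$ by $\overline{D}_{d+1}+\epsilon\overline{A}$ and then letting $\epsilon\to 0$. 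Since $\overline{D}_{d+1}$ is pseudo-effective and $\epsilon\overline{A}$ is big (by positive homogeneity of $\avol$), $\overline{D}_{d+1}'\!:=\overline{D}_{d+1}+\epsilon\overline{A}$ is big. Hence there exist $n\in\ZZ_{>0}$ and a non-zero $\psi\in\aH(X,n\overline{D}_{d+1}')$. By Proposition~\ref{prop:basic:prop:H:0}(1), $n\overline{D}_{d+1}'+\widehat{(\psi)}\geq 0$. Applying part (1) to remove $\widehat{(\psi)}$ from the intersection, the problem reduces to showing
\[
\adeg(\overline{D}_1\cdots\overline{D}_d\cdot\overline{E}) \geq 0
\]
for any effective integrable adelic arithmetic divisor $\overline{E}$ when $\overline{D}_1,\ldots,\overline{D}_d$ are nef.

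For this last step, the plan is to reduce to the model case via the two approximation devices already developed. Using Proposition~\ref{prop:rel:nef:pseudo:effective}(1), choose simultaneously nef arithmetic models of each $\overline{D}_i$ on a common normal $\XXX$ sandwiching $\overline{D}_i$ from above and below up to $\sum_{P\in S}\epsilon'[P]$ corrections; using Lemma~\ref{lem:approx:integrable} write $\overline{E}=\overline{\EEE}^{\ad}+(0,\sum_{P\in S}\phi_{E,P}[P])$, approximating the $\phi_{E,P}$ by model functions via the global density Theorem~\ref{thm:density:global:model} so that $\overline{\EEE}$ itself can be taken effective up to an arbitrarily small vertical perturbation. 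Formula \eqref{eqn:formula:intersection:adelic} expresses the intersection as a global model piece plus local correction terms bounded by Proposition~\ref{prop:misc:intersection:adelic}(3); the global piece is non-negative by the classical fact that nef arithmetic divisors intersected with an effective arithmetic divisor give a non-negative number (\cite[Subsection~6.4]{MoArZariski}), and the local corrections vanish in the limit. The main obstacle is controlling the effectivity of the approximating $\overline{\EEE}$ together with the signs of the local correction terms uniformly as the approximations refine; this is handled by choosing the vertical perturbations large enough to dominate the corrections while still tending to zero.
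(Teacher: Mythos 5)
The skeleton of your plan matches the paper's: reduce to a common normal model via Lemma~\ref{lem:approx:integrable}, expand via \eqref{eqn:formula:intersection:adelic}, and kill the local correction terms with Proposition~\ref{prop:misc:intersection:adelic}; for (2), perturb by a nef big divisor, extract a small section $\psi$, and use (1) to reduce to an effective residual. This is the route the paper takes.

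The gap is in what you label ``classical.'' In (1) the model-level input you need is $\adeg(\overline{\DDD}_1\cdots\overline{\DDD}_d\cdot\widehat{(\phi)})=0$ for \emph{integrable arithmetic $\RR$-Cartier divisors of $C^0$-type} on $\XXX$ and $\phi\in\Rat(X)^{\times}_{\RR}$; in (2) you need $\adeg(\overline{\DDD}_1\cdots\overline{\DDD}_d\cdot\overline{\DDD}_{d+1})\geq 0$ for nef $C^0$-type $\overline{\DDD}_i$ and effective $\overline{\DDD}_{d+1}$. Neither is a direct quotation of the $C^\infty$, integer-coefficient arithmetic Bezout or positivity statements. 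The paper spends most of the proof establishing exactly these model-level facts: write $\phi$ as a real combination of elements of $\Rat(X)^{\times}$, split each $\overline{\DDD}_i$ into a difference of relatively nef divisors of $C^0$-type, approximate the relatively nef $C^0$ Green functions by $C^\infty$ ones with semipositive curvature (\cite[Theorem~4.6]{MoArZariski}), and pass to the limit in the intersection pairing via \cite[Lemma~1.2.1]{MoD}; for (2) one additionally needs an $\epsilon$-perturbation by an ample divisor together with \cite[Proposition~2.4.2]{MoArZariski} and \cite[Proposition~2.3]{MoARH}. These approximation steps carry the bulk of the proposition's content and cannot be waved away without a reference that already treats the $C^0$-type $\RR$-divisor case on a model. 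Finally, the effectivity control in your last paragraph is not quite right: the paper does not ``choose vertical perturbations large enough to dominate the corrections''---rather it approximates the effective adelic $\overline{E}$ from above via Theorem~\ref{thm:approx:adelic:arith}, so that $\overline{\EEE}_n^{\ad}\geq\overline{E}\geq 0$ and Proposition~\ref{prop:comp:adelic:arith} forces $\overline{\EEE}_n$ to be effective on the model, while the local corrections tend to zero on their own by Proposition~\ref{prop:misc:intersection:adelic}(3). The paper also separates the approximation of $\overline{E}$ from that of the nef $\overline{D}_i$ into two stages, which is cleaner than your merged sketch.
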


\begin{proof}
(1) Let us begin with the following claim:

\begin{Claim}
\label{claim:prop:intersection:nef:pseudo:effective:01}
Let $\XXX$ be a normal model of $X$ and let $\overline{\DDD}_1, \ldots, \overline{\DDD}_d$ be
integrable arithmetic $\RR$-Cartier divisors of $C^0$-type on $\XXX$. 
Then $\adeg(\overline{\DDD}_1 \cdots \overline{\DDD}_{d} \cdot \widehat{(\phi)}) = 0$.
\end{Claim}

\begin{proof}
{\bf Step~1} (the case where $\overline{\DDD}_1, \ldots, \overline{\DDD}_d$ are of $C^{\infty}$-type) : 
If $\overline{\DDD}_1, \ldots, \overline{\DDD}_d$ are arithmetic Cartier divisors of $C^{\infty}$-type on $\XXX$
and $\phi \in \Rat(X)^{\times}$, then the assertion is well-known.
On the other hand, by \cite[Proposition~2.4.2]{MoArZariski},
we can find arithmetic Cartier divisors $\overline{\EEE}_1, \ldots, \overline{\EEE}_r$ of $C^{\infty}$-type
on $\XXX$, $\phi_1, \ldots, \phi_l \in \Rat(X)^{\times}$,
$a_{ij} \in \RR$ ($i=1, \ldots, d$, $j=1, \ldots, r$) and $b_1, \ldots, b_l \in \RR$ such that
$\overline{\DDD}_i = \sum_{j=1}^r a_{ij} \overline{\EEE}_j$ and $\phi = \phi_1^{b_1} \cdots \phi_l^{b_l}$.
Thus, using the linearity of the intersection pairing, we have the assertion.

{\bf Step~2}  (the case where $\overline{\DDD}_1, \ldots, \overline{\DDD}_d$ are relatively nef) : 
Let $\overline{\AAA}$ be an ample arithmetic Cartier divisor of $C^{\infty}$-type on $\XXX$.
As 
\[
\lim_{n \to \infty} \adeg((\overline{\DDD}_1 + (1/n) \overline{\AAA} ) \cdots (\overline{\DDD}_{d} + (1/n) \overline{\AAA}) \cdot \widehat{(\phi)}) = \adeg(\overline{\DDD}_1 \cdots \overline{\DDD}_{d} \cdot \widehat{(\phi)}),
\]
we may assume that $\DDD_1, \ldots, \DDD_{d}$ is ample on $X$.
Then, by \cite[Theorem~4.6]{MoArZariski}, there are sequences $\{ f_{1, n} \}_{n=1}^{\infty}, \ldots
\{ f_{d, n} \}_{n=1}^{\infty}$ of $F_{\infty}$-invariant continuous functions on $X(\CC)$
such that 
\[
\lim_{n \to \infty} \Vert f_{i, n} \Vert_{\sup} = 0
\]
and
$\overline{\DDD}_i + (0, f_{i,n})$ is relatively nef and of $C^{\infty}$-type for $i=1, \ldots, d$ and $n \geq 1$.
Therefore, by using \cite[Lemma~1.2.1]{MoD} together with Step~1, we have
\[
\adeg(\overline{\DDD}_1 \cdots \overline{\DDD}_{d} \cdot \widehat{(\phi)}) = 
\lim_{n \to \infty} \adeg((\overline{\DDD}_1 + (0, f_{1, n})) \cdots (\overline{\DDD}_{d} + (0, f_{d,n})) \cdot \widehat{(\phi)}) = 0.
\]

{\bf Step~3} (general case) : 
Since $\overline{\DDD}_i$ is integrable, there are relatively nef arithmetic $\RR$-Cartier divisors
$\overline{\LLL}_i$ and $\overline{\MMM}_i$ of $C^0$-type on $\XXX$ such that $\overline{\DDD}_i = \overline{\LLL}_i - \overline{\MMM}_i$.
Thus the assertion follows from Step~2.
\end{proof}

Let us start the proof of (1).
By Lemma~\ref{lem:approx:integrable},
we can find a normal model $\XXX$ of $X$, 
integrable arithmetic $\RR$-Cartier divisors  $\overline{\DDD}_1, \ldots, \overline{\DDD}_{d}$ 
of $C^0$-type on $\XXX$,
and  a finite subset $S$ of $M_K$ such that
\[
\overline{D}_i = \overline{\DDD}_i^{\ad} + \left(0, \sum_{P \in S} \phi_{i, P}[P] \right),
\]
where $\phi_{i, P}$'s are integrable continuous functions $X_{P}^{\an}$. Then, by \eqref{eqn:formula:intersection:adelic},
\begin{multline*}
\adeg(\overline{D}_1 \cdots \overline{D}_{d} \cdot \widehat{(\phi)}) = \adeg(\overline{\DDD}_1 \cdots \overline{\DDD}_{d} \cdot \widehat{(\phi)}) \\
+ \sum_{P \in S} \sum_{\substack{I \subseteq \{ 1, \ldots, d+1\} \\ I \not= \emptyset}}
  \log \#(O_K/P) \adeg_P\left(\prod_{i \in I} (0, \phi_{i, P}) \cdot \prod_{j \not\in I} (\DDD_j)_{(P)} \cdot (\phi)_{(P)}\right).
\end{multline*}
Therefore, (1) follows from Claim~\ref{claim:prop:intersection:nef:pseudo:effective:01} and 
(1) in Proposition~\ref{prop:misc:intersection:adelic}.

\bigskip
(2) First let us see the following claim:

\begin{Claim}
\label{claim:prop:intersection:nef:pseudo:effective:02}
Let $\XXX$ be a normal model of $X$ and $\overline{\DDD}_1, \ldots, \overline{\DDD}_d, \overline{\DDD}_{d+1}$ be
integrable arithmetic $\RR$-Cartier divisors of $C^0$-type on $\XXX$.
\begin{enumerate}
\renewcommand{\labelenumi}{(\alph{enumi})}
\item
If $\overline{\DDD}_1, \ldots, \overline{\DDD}_d$ are nef and
$\overline{\DDD}_{d+1}$ is effective, then
\[
\adeg(\overline{\DDD}_1 \cdots \overline{\DDD}_{d} \cdot \overline{\DDD}_{d+1}) \geq 0.
\]

\item
Let $\overline{E}$ be an effective and integrable adelic arithmetic $\RR$-Cartier divisor of $C^0$-type on $X$.
If $\overline{\DDD}_1, \ldots, \overline{\DDD}_d$ are nef, then
\[
\adeg(\overline{\DDD}^{\ad}_1 \cdots \overline{\DDD}^{\ad}_{d} \cdot \overline{E}) \geq 0.
\]
\end{enumerate}
\end{Claim}

\begin{proof}
(a)
Let $\overline{\AAA}$ be an ample arithmetic Cartier divisor of $C^{\infty}$-type on $\XXX$.
It is sufficient to show that
\[
\adeg\left( \left( \overline{\DDD}_1 + \epsilon \overline{\AAA}\right) \cdots \left(\overline{\DDD}_d + \epsilon \overline{\AAA} \right)
\cdot \overline{\DDD}_{d+1} \right) \geq 0
\]
for $\epsilon > 0$.
First we assume that $\overline{\DDD}_1, \ldots, \overline{\DDD}_{d}$ are of $C^{\infty}$-type.
Then,  
by \cite[Proposition~6.2.2]{MoArZariski},
$\overline{\DDD}_i + \epsilon \overline{\AAA}$ is ample for every $i$, that is,
there are ample arithmetic Cartier divisors $\overline{\AAA}_1, \ldots, \overline{\AAA}_r$  of $C^{\infty}$-type on $\XXX$
such that $\overline{\DDD}_i + \epsilon \overline{\AAA} = \sum_{j=1}^r a_{ij} \overline{\AAA}_{j}$ for some
$a_{ij} \in \RR_{\geq 0}$.
On the other hand, by \cite[Proposition~2.4.2]{MoArZariski},
there are effective arithmetic Cartier divisors $\overline{\EEE}_1, \ldots, \overline{\EEE}_r$ of $C^{\infty}$-type on $\XXX$
such that $\overline{\DDD}_{d+1} = b_1 \overline{\EEE}_1 + \cdots +  b_l \overline{\EEE}_l$ for
some  $b_1, \ldots, b_l \in \RR_{\geq 0}$,
and hence the assertion follows from \cite[Proposition~2.3]{MoARH}.

In general, as before,
by \cite[Theorem~4.6]{MoArZariski}, there are sequences 
\[
\{ f_{1, n} \}_{n=1}^{\infty}, \ldots,
\{ f_{d, n} \}_{n=1}^{\infty}
\]
of $F_{\infty}$-invariant non-negative continuous functions on $X(\CC)$
such that 
\[
\lim_{n \to \infty} \Vert f_{i, n} \Vert_{\sup} = 0
\]
and
$\overline{\DDD}_i + \epsilon \overline{\AAA} + (0, f_{i,.n})$ is 
nef and of $C^{\infty}$-type for $i=1, \ldots, d$ and $n \geq 1$.
Therefore, by \cite[Lemma~1.2.1]{MoD}, we have
\begin{multline*}
\adeg\left(\left( \overline{\DDD}_1 + \epsilon \overline{\AAA}\right) \cdots \left( \overline{\DDD}_1 + \epsilon \overline{\AAA}\right)
 \cdot \overline{\DDD}_{d+1} \right) \\
 = 
\lim_{n \to \infty} \adeg\left(\left(\overline{\DDD}_1 + \epsilon \overline{\AAA} + (0, f_{1, n})\right) \cdots \left(\overline{\DDD}_{d} + \epsilon \overline{\AAA} + (0, f_{d,n})\right) \cdot \overline{\DDD}_{d+1}\right) \geq 0,
\end{multline*}
as required.

\medskip
(b) By Theorem~\ref{thm:approx:adelic:arith}, there is a finite subset $S$ of $M_K$ with the following property:
for any $n \in \ZZ_{>0}$, there are a normal model $\XXX_n$ of $X$ together with
a birational morphism $\mu_n : \XXX_n \to \XXX$, and
an integrable arithmetic $\RR$-Cartier divisor $\overline{\EEE}_n$ of $C^0$-type on $\XXX_n$ such that
\[
\overline{E} \leq \overline{\EEE}_n^{\ad} \leq \overline{E} + \left( 0, \sum\nolimits_{P \in S} (1/n)[P] \right).
\]
Then $\overline{\EEE}_n$ is effective by (2) in Proposition~\ref{prop:comp:adelic:arith}
and, if we set
\[
\overline{\EEE}_n^{\ad} = \overline{E} + \left( 0, \sum\nolimits_{P \in S} \psi_{n, P}[P] \right),
\]
then $\psi_{n,P}$ is a continuous function on $X_P^{\an}$ with $0 \leq \psi_{n, P} \leq 1/n$ for
every $n$ and $P \in S$. Therefore, by (a)
\begin{multline*}
\adeg(\overline{\DDD}^{\ad}_1 \cdots \overline{\DDD}^{\ad}_{d} \cdot \overline{E}) =
\adeg(\mu_n^*(\overline{\DDD}_1) \cdots \mu_n^*(\overline{\DDD}_{d}) \cdot \overline{\EEE}_n) \\
\qquad\qquad\qquad\qquad - \sum_{P \in S} \log \#(O_K/P) \adeg_P(\LLL_1 \cdots \LLL_d ; \psi_{n, P}) \\
\geq - \sum_{P \in S} \log \#(O_K/P) \adeg_P(\LLL_1 \cdots \LLL_d ; \psi_{n, P})
\end{multline*}
On the other hand, by (3) in Proposition~\ref{prop:misc:intersection:adelic},
\[
\left| \adeg_P(\LLL_1 \cdots \LLL_d ; \psi_{n, P}) \right| \leq \frac{1}{-2n \log v(\varpi)} 
\deg((\LLL_1 \cap X) \cdots (\LLL_d \cap X)).
\]
Thus we have the assertion of (b).
\end{proof}

\begin{Claim}
\label{claim:prop:intersection:nef:pseudo:effective:03}
If $\overline{D}_1, \ldots, \overline{D}_d$ are nef and $\overline{D}_{d+1}$ is effective, then
\[
\adeg(\overline{D}_1 \cdots \overline{D}_d \cdot \overline{D}_{d+1}) \geq 0.
\]
\end{Claim}

\begin{proof}
By (1) in Proposition~\ref{prop:rel:nef:pseudo:effective}, as before,
we can find a finite subset $T$ of $M_K$ with the following property:
for any $n \in \ZZ_{>0}$, there are a normal model $\XXX_n$ and
relatively nef arithmetic $\RR$-Cartier divisors 
\[
\overline{\DDD}_{1,n}, \ldots, \overline{\DDD}_{d,n}
\]
of $C^0$-type on $\XXX_n$ such that
\[
\overline{D}_i \leq \overline{\DDD}_{i, n}^{\ad} \leq \overline{D}_i + \left( 0, \sum\nolimits_{P \in T} (1/n)[P] \right)
\]
for $i=1, \ldots, d$. Note that $\overline{\DDD}_{i, n}$ is nef for every $i$ and $n$ by Lemma~\ref{lem:global:degree:comp},
and if we set 
\[
\overline{\DDD}_{i,n}^{\ad} = \overline{D}_i + \left( 0, \sum\nolimits_{P \in T} \varphi_{i, n, P}[P] \right),
\]
then $\varphi_{i, n,P}$ is a continuous function on $X_P^{\an}$ with $0 \leq \varphi_{i, n, P} \leq 1/n$ for
every $n$ and $P \in T$. Then, by \eqref{eqn:formula:intersection:adelic},
\begin{multline*}
\adeg(\overline{D}_1 \cdots \overline{D}_d \cdot \overline{D}_{d+1}) =
\adeg(\overline{\DDD}^{\ad}_{1,n} \cdots \overline{\DDD}^{\ad}_{d,n} \cdot \overline{D}_{d+1}) + 
\sum_{P \in T} \sum_{\substack{I \subseteq \{ 1, \ldots, d+1\} \\ I \not= \emptyset}} \\
  \log \#(O_K/P) \adeg_P\left(\prod_{i \in I} (0, -\varphi_{i,n, P}) \cdot \prod_{j \not\in I} (\DDD_j)_{(P)} \cdot (D_{d+1}, (g_{d+1})_P)
  \right),
\end{multline*}
where $\overline{D}_{d+1} = (D_{d+1}, \sum_{P} (g_{d+1})_P[P] + (g_{d+1})_{\infty} [\infty])$.
By using (3) in Proposition~\ref{prop:misc:intersection:adelic}, it is easy to see that
\[
\lim_{n\to\infty} \adeg_P\left(\prod_{i \in I} (0, -\varphi_{i,n, P}) \cdot \prod_{j \not\in I} (\DDD_j)_{(P)} \cdot (D_{d+1}, (g_{d+1})_P)
  \right) = 0
\]
for all $P \in T$ and $I \subseteq \{ 1, \ldots, d\}$ with $I \not= \emptyset$ (cf. the proof of (2) in
Claim~\ref{claim:prop:intersection:nef:pseudo:effective:02}).
Therefore, the assertion follows from (2) in Claim~\ref{claim:prop:intersection:nef:pseudo:effective:02}.
\end{proof}

Let $\overline{B}$ be a nef and big adelic arithmetic $\RR$-Cartier divisors of $C^0$-type on $X$.
For $\epsilon > 0$,
as $\overline{D}_{d+1} + \epsilon \overline{B}$ is big, there are $n \in \ZZ_{>0}$ and
$\psi \in \Rat(X)^{\times}$ such that 
\[
\overline{D}_{d+1} + \epsilon \overline{B} + (1/n)\widehat{(\psi)} \geq 0,
\]
so that,
by using (1) and Claim~\ref{claim:prop:intersection:nef:pseudo:effective:03},
\begin{multline*}
\adeg(\overline{D}_1 \cdots \overline{D}_d \cdot \overline{D}_{d+1}) + \epsilon
\adeg(\overline{D}_1 \cdots \overline{D}_d \cdot \overline{B}) \\
=
\adeg\left(\overline{D}_1 \cdots \overline{D}_d \cdot 
\left(\overline{D}_{d+1} + \epsilon \overline{B} + (1/n)\widehat{(\psi)}\right)\right) \geq 0.
\end{multline*}
Thus the assertion follows.
\end{proof}

\ifmonog\chapter{Continuity of the volume function}\fi
\ifpaper\section{Continuity of the volume function}\fi
The purpose of this 
\ifmonog chapter \fi
\ifpaper section \fi
is to consider the continuity of the volume function and its applications.
The continuity of the volume function is a very fundamental result in
the theory of birational Arakelov geometry.
It actually has a lot of applications by using perturbation methods.
The generalized Hodge index theorem is one of significant examples,
which is a generalization of results due to
Faltings-Gillet-Soul\'{e}-Zhang (cf. \cite{FaCAS}, \cite{GSRR} and \cite{ZhPos}).

Throughout this 
\ifmonog chapter, \fi
\ifpaper section, \fi
let $K$ be a number field and let $X$ be a $d$-dimensional,
projective, smooth and geometrically integral
variety over $K$.

\ifmonog\section{Basic properties of the volume}\fi
\ifpaper\subsection{Basic properties of the volume}\fi
\label{subsec:basic:prop:volume}

In this 
\ifmonog section, \fi
\ifpaper subsection, \fi
we investigate several basic properties of the volume function.
First of all, let us begin with the finiteness of $\avol$,
the limit theorem and the positive homogeneity of $\avol$.

\begin{Theorem}
\label{thm:avol:lim}
\begin{enumerate}
\renewcommand{\labelenumi}{(\arabic{enumi})}
\item \rom{(Finiteness)}
$\avol(\overline{D}) < \infty$.

\item \rom{(Limit theorem)}
The `` $\limsup$'' in the definition of $\avol$ \rom{(}cf. Definition~\rom{\ref{def:volume:chi:volume}}\rom{)}
can be replaced by `` $\lim$'', that is, 
\[
\avol(\overline{D}) 
= \lim_{t\to\infty} \frac{\ah(X, t \overline{D})}{t^{d + 1}/(d + 1)!},
\]
where $t$ is a positive real number.

\item \rom{(Positive homogeneity)}
$\avol(a \overline{D}) = a^{d + 1} \avol(\overline{D})$ for $a \in \RR_{\geq 0}$.
\end{enumerate}
\end{Theorem}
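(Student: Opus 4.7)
The strategy for all three assertions is to sandwich $\overline{D}$ between associated adelic divisors coming from arithmetic $\RR$-Cartier divisors on a model, and then reduce each claim to the corresponding classical property for arithmetic divisors on a model (properties~(1), (2), (4) of the introduction).

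For the finiteness~(1), I would apply Theorem~\ref{thm:approx:adelic:arith} with any $\epsilon > 0$ to produce a normal model $\XXX$ and an arithmetic $\RR$-Cartier divisor $\overline{\DDD}_2 = (\DDD_2, g_\infty)$ of $C^0$-type on $\XXX$ with $\overline{D} \leq \overline{\DDD}_2^{\ad}$. By Proposition~\ref{prop:basic:prop:H:0}\,(2), $\aH(X, n\overline{\DDD}_2^{\ad}) = \aH(\XXX, n\overline{\DDD}_2)$ for every integer $n \geq 1$, so monotonicity gives $\ah(X, n\overline{D}) \leq \ah(\XXX, n\overline{\DDD}_2)$. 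Dividing by $n^{d+1}/(d+1)!$ and passing to the $\limsup$ yields $\avol(\overline{D}) \leq \avol(\overline{\DDD}_2)$, which is finite by the classical finiteness of the arithmetic volume.

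For the limit theorem~(2), fix a non-empty open subset $U \subseteq \Spec(O_K)$ admitting a defining model for $\overline{D}$ and set $S := M_K \setminus U$. For each $\epsilon > 0$, Theorem~\ref{thm:approx:adelic:arith} produces a normal model $\XXX_\epsilon$ and arithmetic $\RR$-Cartier divisors $\overline{\DDD}_{1,\epsilon}, \overline{\DDD}_{2,\epsilon}$ of $C^0$-type on $\XXX_\epsilon$ with
\[
\overline{\DDD}_{1,\epsilon}^{\ad} \leq \overline{D} \leq \overline{\DDD}_{2,\epsilon}^{\ad} \leq \overline{\DDD}_{1,\epsilon}^{\ad} + \Bigl(0,\ \sum_{P \in S} 2\epsilon[P]\Bigr).
\]
The classical limit theorem for arithmetic divisors, together with the identification $\ah(X, n\overline{\DDD}_{i,\epsilon}^{\ad}) = \ah(\XXX_\epsilon, n\overline{\DDD}_{i,\epsilon})$, yields the sandwich
\[
\avol(\overline{\DDD}_{1,\epsilon}) \leq \liminf_{n\to\infty} \frac{\ah(X, n\overline{D})}{n^{d+1}/(d+1)!} \leq \limsup_{n\to\infty} \frac{\ah(X, n\overline{D})}{n^{d+1}/(d+1)!} \leq \avol(\overline{\DDD}_{2,\epsilon}).
\]
Now $\overline{\DDD}_{2,\epsilon} - \overline{\DDD}_{1,\epsilon}$ is of the form $(\EEE_\epsilon, 0)$ with $\EEE_\epsilon$ a vertical $\RR$-Cartier divisor supported over $S$, of size going to $0$ with $\epsilon$. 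By the classical continuity of the arithmetic volume (property~(4) of the introduction), applied with the $\overline{\AAA}_j$ spanning the vertical divisors over $S$ on a common base model, we conclude that $\avol(\overline{\DDD}_{2,\epsilon}) - \avol(\overline{\DDD}_{1,\epsilon}) \to 0$ as $\epsilon \to 0$. Hence the $\liminf$ and $\limsup$ coincide, proving the limit theorem for integer $n$. The real-parameter version is obtained by the same approximation argument combined with the real-parameter version of the arithmetic limit theorem.

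Granted~(2), the positive homogeneity~(3) is immediate: for $a \in \RR_{>0}$, the substitution $s = ta$ gives
\[
\avol(a\overline{D}) = \lim_{t\to\infty} \frac{\ah(X,(ta)\overline{D})}{t^{d+1}/(d+1)!} = a^{d+1}\lim_{s\to\infty} \frac{\ah(X,s\overline{D})}{s^{d+1}/(d+1)!} = a^{d+1}\avol(\overline{D}),
\]
and the case $a = 0$ is trivial.

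The principal obstacle is reconciling the continuity of the arithmetic volume, which is stated on a fixed model, with the fact that the model $\XXX_\epsilon$ produced by the approximation theorem varies with $\epsilon$. This is handled by first fixing a sufficiently fine base model dominating the defining model of $\overline{D}$ over $U$, then arranging (via refinement, using the flexibility in Theorem~\ref{thm:approx:adelic:arith} together with Theorem~\ref{thm:density:global:model}) that each $\XXX_\epsilon$ dominates this base and that $\EEE_\epsilon$ lies in a fixed finite-dimensional $\RR$-vector space of vertical divisors over $S$, so that property~(4) applies uniformly.
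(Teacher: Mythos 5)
Your handling of (1) and (3) matches the paper's: (1) is established by monotonicity after choosing any arithmetic $\RR$-Cartier divisor on a model dominating $\overline{D}$, and (3) follows from (2) by substituting $s = ta$.

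In (2), however, the step where you invoke the continuity of the arithmetic volume function has a genuine gap, and your proposed fix does not close it. Property (4) of the introduction is an existence statement (given $B$, $\epsilon$, there \emph{exist} $\delta, \delta'$), stated on a \emph{single} arithmetic variety $\XXX$ for a \emph{fixed} finite family $\overline{\DDD}_1, \ldots, \overline{\DDD}_r, \overline{\AAA}_1, \ldots, \overline{\AAA}_s$. In your argument the divisor $\overline{\DDD}_{1,\epsilon}$ and the ambient model $\XXX_\epsilon$ both vary with $\epsilon$, so the $\delta$ produced by the continuity statement could a priori degenerate as $\epsilon \to 0$. The remedy you sketch --- arranging that each $\XXX_\epsilon$ dominates a fixed base and that $\EEE_\epsilon := \DDD_{2,\epsilon} - \DDD_{1,\epsilon}$ lives in a fixed finite-dimensional space of vertical divisors over $S$ --- cannot be carried out: the global density theorem forces the approximating models $\XXX_\epsilon$ to become arbitrarily fine as $\epsilon \to 0$, introducing new exceptional vertical components over $S$ that do not descend to any common base. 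So $\{\EEE_\epsilon\}_\epsilon$ does not stay in a fixed finite-dimensional space, and property (4) cannot be applied ``uniformly'' in the way you describe.

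The paper circumvents this with a different, model-independent mechanism. Since $\DDD_\epsilon^{\ad} \leq \overline{D} \leq (\DDD_\epsilon + \epsilon\sum_{P\in S} F_P, g_\infty)^{\ad}$ and since $\sum_{P\in S} F_P \leq (N)$ on $\XXX_\epsilon$ for any integer $N$ divisible by all primes in $S$, one passes to the principal divisor $(N)$ and then uses
\[
\avol\!\left(\overline{\DDD}_\epsilon + \epsilon\bigl((N),0\bigr)\right)
= \epsilon^{d+1}\,\avol\!\left(\tfrac{1}{\epsilon}\overline{\DDD}_\epsilon + \bigl((N),0\bigr) - \widehat{(N)}\right)
= \avol\!\left(\overline{\DDD}_\epsilon + (0, 2\epsilon\log N)\right),
\]
exploiting that $((N),0) - \widehat{(N)} = (0, 2\log N)$ is a \emph{purely archimedean constant shift}. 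For such shifts there is a quantitative bound (\cite[Proposition~4.6]{MoContExt}) of the form $\epsilon\log(N)(d+1)[K:\QQ]\vol(X,D)$, which depends only on $\vol(X,D)$ and $N$ --- both fixed --- and not on the model $\XXX_\epsilon$. Letting $\epsilon \to 0$ closes the argument. In short: the paper does not approximate the change in volume across $S$ by soft continuity on a varying model, but instead reduces it to a fixed archimedean shift for which an explicit, model-independent estimate is already in the literature. You would need to either import that quantitative estimate explicitly, or reproduce this dominate-by-$(N)$-and-shift-to-infinity trick, for your proof of (2) to be complete.
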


\begin{proof}
(1) is obvious because we can find a normal model $\XXX$ of $X$ over $\Spec(O_K)$ and
an arithmetic $\RR$-Cartier divisor $\overline{\DDD}$ of $C^0$-type on $\XXX$ such that
$\overline{D} \leq \overline{\DDD}^{\ad}$.

\medskip
(2) Let $(\XXX_U, \DDD_U)$ be a defining model of $\overline{D}$ over a non-empty
open set $U$ of $\Spec(O_K)$. By Theorem~\ref{thm:approx:adelic:arith},
for any $\epsilon > 0$,
there is a normal model $\XXX_{\epsilon}$ of $X$ and an arithmetic
$\RR$-Cartier divisor $\overline{\DDD}_{\epsilon}$ of $C^0$-type on $\XXX_{\epsilon}$
such that $\rest{\XXX_{\epsilon}}{U} = \XXX_U$,
$\rest{\DDD_{\epsilon}}{U} = \DDD_U$ and
\[
\overline{\DDD}_{\epsilon}^{\ad}
\leq \overline{D} \leq \overline{\DDD}_{\epsilon}^{\ad} + \left(0, \sum\nolimits_{P \in S} 2 \epsilon \log \#(O_K/P)
[P] \right).
\]
where $S = M_K \setminus U$. 
Note that the fiber $F_P$ of $\XXX_{\epsilon} \to \Spec(O_K)$ over $P$ yields
a constant function $2 \log \#(O_K/P)$ on $X_{P}^{\an}$.
Thus the above inequalities mean that
\[
\overline{\DDD}_{\epsilon}^{\ad}
\leq \overline{D} \leq \left( \overline{\DDD}_{\epsilon} + 
\epsilon \left(\sum\nolimits_{P \in S} F_P, 0 \right)\right)^{\ad}.
\]
As $\avol\left( \overline{\DDD}_{\epsilon} \right)$ and
$\avol\left( \overline{\DDD}_{\epsilon} + \epsilon \left(\sum_{P}  F_P, 0 \right) \right)$ can be
expressed by ``$\lim$'' (cf. \cite[Theorem~5.1]{MoContExt} and \cite[Theorem~5.2.2]{MoArZariski}),
if we set
\[
\Delta = \limsup_{t\to\infty} \frac{\ah(X, t \overline{D})}{t^{d + 1}/(d + 1)!} -
\liminf_{t\to\infty} \frac{\ah(X, t \overline{D})}{t^{d + 1}/(d + 1)!},
\]
then
\[
0 \leq \Delta \leq \avol\left( \overline{\DDD}_{\epsilon} + \epsilon \left(\sum\nolimits_{P \in S}  F_P, 0 \right) \right)
- \avol\left( \overline{\DDD}_{\epsilon} \right).
\]
Let us choose a positive number $N$ such that $N \in P$ for all $P \in S$.
Then $\sum_{P \in S} F_P \leq (N)$. Thus, 
by using \cite[Proposition~4.6]{MoContExt} and \cite[Theorem~5.2.2]{MoArZariski}, 
\begin{align*}
\Delta & \leq \avol\left( \overline{\DDD}_{\epsilon} + \epsilon \left(\sum\nolimits_{P}  F_P, 0 \right) \right)
- \avol\left( \overline{\DDD}_{\epsilon}\right) \\
& \leq \avol\left( \overline{\DDD}_{\epsilon} + \epsilon ((N),0) \right)
- \avol\left( \overline{\DDD}_{\epsilon}\right) \\
& = \epsilon^{d + 1}\avol\left((1/\epsilon)\overline{\DDD}_{\epsilon} + ((N), 0)\right) - \avol\left( \overline{\DDD}_{\epsilon} \right) \\
& = \epsilon^{d + 1}\avol\left((1/\epsilon)\overline{\DDD}_{\epsilon} + ((N), 0) - \widehat{(N)}\right) - \avol\left( \overline{\DDD}_{\epsilon} \right) \\
& = \avol\left(\overline{\DDD}_{\epsilon} + \epsilon((N), 0) - \epsilon \widehat{(N)}\right) - \avol\left( \overline{\DDD}_{\epsilon}  \right) \\
& = \avol\left(\overline{\DDD}_{\epsilon} + (0, 2\epsilon\log N) \right) - \avol\left( \overline{\DDD}_{\epsilon} \right)\\
&\leq
\epsilon \log(N) (d + 1) [K : \QQ] \vol(X, D).
\end{align*}
Note that $\log(N) (d + 1) [K : \QQ] \vol(X, D)$ does not depend on $\epsilon$,
so that $\Delta = 0$.

\medskip
(3) If $a= 0$, then the assertion is obvious. Otherwise, by using (2),
\begin{align*}
\avol(a \overline{D}) & = \lim_{t\to\infty} \frac{\ah(X, t a \overline{D})}{t^{d + 1}/(d + 1)!}
= a^{d +1} \lim_{t\to\infty} \frac{\ah(X, t a \overline{D})}{(ta)^{d + 1}/(d + 1)!} \\
& = a^{d +1} \avol(\overline{D}),
\end{align*}
as desired.
\end{proof}

Next let us consider the following estimate for the proof of the continuity of $\avol$ and $\avol_{\chi}$.

\begin{Proposition}
\label{prop:vol:comp:C:0}
Let $\overline{D} = (D, g)$ be an adelic arithmetic $\RR$-Cartier divisor on $X$.
Let $\varphi_1 \in C^0(X_{P_1}^{\an}),
\ldots, \varphi_r \in C^0(X_{P_r}^{\an}), \varphi_{\infty} \in C^0_{F_{\infty}}(X(\CC))$, where 
$P_1, \ldots, P_r \in M_K$.
Then 
\begin{multline*}
\left \vert \avol_{\natural}\left(\overline{D} + \left(0,\sum_{i=1}^r \varphi_i [P_i] + \varphi_{\infty} [\infty] \right)\right) - \avol_{\natural}(\overline{D}) \right\vert \\
\leq  \frac{(d + 1) [K : \QQ] \vol(X, D)}{2} \left( \sum_{i=1}^r \Vert \varphi_i \Vert_{\sup} + 
\Vert \varphi_{\infty} \Vert_{\sup} \right),
\end{multline*}
where $\avol_{\natural}$ is either $\avol$ or $\avol_{\chi}$
\rom{(}see Definition~\rom{\ref{def:volume:chi:volume}}\rom{)}.
\end{Proposition}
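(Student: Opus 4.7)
The plan is to reduce the general continuous perturbation to constant perturbations at a single place, and then to handle the archimedean and non-archimedean cases separately.

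Set $c_{P_i} := \Vert \varphi_i \Vert_{\sup}$ for $i = 1, \ldots, r$ and $c_\infty := \Vert \varphi_\infty \Vert_{\sup}$. Since $-c_\wp \leq \varphi_\wp \leq c_\wp$ pointwise for each $\wp \in \{P_1, \ldots, P_r, \infty\}$ and the underlying $\RR$-Cartier divisor $D$ is unchanged by the perturbation, monotonicity of $\avol$ and $\avol_{\chi}$ (cf.~Definition~\ref{def:volume:chi:volume}) reduces the statement to the case where each $\varphi_\wp$ is the constant $\pm c_\wp$. A telescoping argument across the finite list of places then further reduces the problem to showing, for a single place $\wp$ and a single constant $c \geq 0$, that
\[
\left\vert \avol_{\natural}(\overline{D} + (0, c[\wp])) - \avol_{\natural}(\overline{D}) \right\vert \leq \frac{c (d+1) [K:\QQ] \vol(X, D)}{2}.
\]

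In the archimedean case $\wp = \infty$, the truncation $\overline{D}^\tau$ is not affected, so $M_n := \aH(X, (n\overline{D})^\tau)$ serves as the common $\ZZ$-lattice for both sides. The norm at infinity scales as $\Vert \cdot \Vert_{n(g_\infty + c)} = e^{-nc/2} \Vert \cdot \Vert_{n g_\infty}$. Applying \eqref{eqn:lem:h:0:chi:02} and \eqref{eqn:lem:h:0:chi:03} of Lemma~\ref{lem:h:0:chi} gives
\[
\ah(X, n\overline{D} + n(0, c[\infty])) \leq \ah(X, n\overline{D}) + \tfrac{nc}{2} \rank M_n + \log(3) \rank M_n,
\]
together with the exact equality $\achi(X, n\overline{D} + n(0, c[\infty])) = \achi(X, n\overline{D}) + \tfrac{nc}{2} \rank M_n$. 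Dividing by $n^{d+1}/(d+1)!$ and using the asymptotic $\rank_\ZZ M_n \leq \rank_\ZZ H^0(X, nD) \sim [K:\QQ] n^d \vol(X, D)/d!$ then produces the desired bound, with the factor $[K:\QQ]$ appearing naturally from the comparison between $K$-dimension and $\ZZ$-rank.

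In the non-archimedean case $\wp = P_i$, the crucial observation is that a constant perturbation is realized by a vertical $\RR$-Cartier divisor on any model: if $\XXX$ is a model of $X$ over $\Spec(O_K)$, $F_{P_i}$ denotes its fiber over $P_i$, and $\EEE := \tfrac{c}{2 \log \#(O_K/P_i)} F_{P_i}$, then a direct computation of $g_{(\XXX_{(P_j)},\, F_{P_i})}$ shows $(\EEE, 0)^{\ad} = (0, c[P_i])$ (the Green function induced by $F_{P_i}$ is the constant $2 \log \#(O_K/P_i)$ at $P_i$ and vanishes at every other finite place and at infinity). By Theorem~\ref{thm:approx:adelic:arith}, we sandwich $\overline{D}$ between the associated adelic divisors of arithmetic $\RR$-Cartier divisors $\overline{\DDD}_1, \overline{\DDD}_2$ on a common model, up to an arbitrarily small error at the finitely many relevant places; then $\overline{D} + (0, c[P_i])$ is sandwiched between the adelic divisors of $\overline{\DDD}_j + (\EEE, 0)$. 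Invoking the classical quantitative form of continuity of the arithmetic volume function on a fixed model (the quantitative form underlying property~(4) of the introduction, cf.~\cite{MoCont}, \cite{MoContExt}, \cite{MoArZariski}), one obtains the estimate with the same factor $(d+1)[K:\QQ]\vol(X,D)/2 \cdot c$, by letting the approximation error tend to zero.

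The main obstacle is the non-archimedean case: one must translate the local analytic perturbation at $P_i$ into vertical-divisor data on a model, and then ensure that the quantitative arithmetic continuity estimate transfers cleanly through the approximation procedure of Theorem~\ref{thm:approx:adelic:arith} uniformly in the error. The archimedean case is a fairly direct application of Lemma~\ref{lem:h:0:chi}.
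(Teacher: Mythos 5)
Your overall architecture — reduce to a single-place constant perturbation by monotonicity and telescoping, then bound the jump at each place — matches the paper's proof of this proposition, which passes through a single-place lemma (Lemma~\ref{lem:vol:comp:a}) and then telescopes. Your treatment of the archimedean place is correct and is essentially the paper's part~(2) of that lemma: the lattice $\aH(X, n\overline{D}^{\tau})$ is unchanged, the norm at infinity rescales by $e^{-nc/2}$, and \eqref{eqn:lem:h:0:chi:02}--\eqref{eqn:lem:h:0:chi:03} close the estimate.

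The gap is in the non-archimedean case. You appeal to a ``classical quantitative form of continuity of the arithmetic volume function on a fixed model'' for perturbation by a vertical $\RR$-Cartier divisor $\EEE = \tfrac{c}{2\log\#(O_K/P_i)}F_{P_i}$, with the constant $\tfrac{(d+1)[K:\QQ]\vol(X,D)}{2}$. No such named estimate exists in the cited references; what is available there is the archimedean constant-perturbation bound (which you already use at $\infty$) and qualitative $\epsilon$-$\delta$ continuity. The quantitative vertical estimate is exactly what is at stake. The paper proves it directly (and more sharply, with an extra factor $1/[K_P:\QQ_p]$) via a lattice inclusion: for $a_n = \bigl\lceil \tfrac{na}{2\ord_P(p)\log\#(O_K/P)}\bigr\rceil$ one shows $p^{a_n}\,\aH\bigl(X, n(\overline{D}^{\tau}+(0,a[P]))\bigr) \subseteq \aH(X, n\overline{D}^{\tau})$, then compares via \eqref{eqn:lem:h:0:chi:04}--\eqref{eqn:lem:h:0:chi:05}. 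If you want to stay with the vertical-divisor viewpoint, the step you are missing is the conversion to an archimedean perturbation through a principal divisor: since $\ord_{P_i}(p)F_{P_i} \le (p)$ on any model, monotonicity and $\widehat{(p)}$-invariance give $\avol_{\natural}(\overline{\DDD}+(\EEE,0)) \le \avol_{\natural}\bigl(\overline{\DDD}+(0,\tfrac{c\log p}{\ord_{P_i}(p)\log\#(O_K/P_i)})\bigr)$, and then the archimedean bound yields the constant $\tfrac{(d+1)[K:\QQ]\vol(X,D)}{2[K_{P_i}:\QQ_p]}\,c$ — the analogue of what the paper does with $\widehat{(N)}$ in the proof of Theorem~\ref{thm:avol:lim}(2). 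Without either of these two explicit mechanisms, ``invoking'' the estimate is circular: it is the estimate. Finally, note that the sandwich via Theorem~\ref{thm:approx:adelic:arith} is avoidable here — once the single-place constant estimate is established as above, the telescoping already handles a general $\overline{D}$ without any model approximation, which is why the paper's proof does not use that theorem at this step.
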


Let us begin with the following lemma:

\begin{Lemma}
\label{lem:vol:comp:a}
Let $\overline{D} = (D, g)$ be an adelic arithmetic $\RR$-Cartier divisor of $C^0$-type on $X$
and $P \in M_K$.
Then we have the following:
\begin{enumerate}
\renewcommand{\labelenumi}{(\arabic{enumi})}
\item
${\displaystyle \avol_{\natural}(\overline{D} + (0, a [P])) \leq \avol_{\natural}(\overline{D}) +
\frac{(d + 1) [K : \QQ]\vol(X, D)}{2[K_P: \QQ_p]}a}$ for $a \in \RR_{\geq 0}$,
where $p$ is the prime number with $p\ZZ = \ZZ \cap P$.

\item
${\displaystyle \avol_{\natural}(\overline{D} + (0, a [\infty])) \leq \avol_{\natural}(\overline{D}) +
\frac{(d + 1) [K : \QQ]\vol(X, D)}{2}a}$ for $a \in \RR_{\geq 0}$.
\end{enumerate}
\end{Lemma}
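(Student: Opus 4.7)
The plan is to prove (2) first by comparing lattice-point counts at the archimedean place, and then derive (1) from (2) via multiplication by a power of the rational prime $p$ below $P$. The two key ingredients are Lemma~\ref{lem:h:0:chi} and the asymptotic rank bound
\[
\rank_{\ZZ}\aH(X, n\overline{D}^{\tau}) \leq [K:\QQ]\dim_{K} H^0(X, nD) = \tfrac{[K:\QQ]\vol(X, D)}{d!}\,n^d + o(n^d),
\]
which follows from the approximation Theorem~\ref{thm:approx:adelic:arith} together with the embedding $\aH(X, n\overline{D}^{\tau}) \hookrightarrow H^0(\XXX, n\DDD)$ for a suitable normal model $(\XXX, \DDD)$ (cf. Proposition~\ref{prop:basic:prop:H:0}).

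For (2), adding $a[\infty]$ to $g$ multiplies the archimedean norm by $e^{-a/2}$ while leaving the finite-place conditions defining $\aH(X, n\overline{D}^{\tau})$ unchanged, so
\[
\ah(X, n\overline{D} + (0, na[\infty])) = \ah\bigl(\aH(X, n\overline{D}^{\tau}),\, e^{-na/2}\Vert\cdot\Vert_{ng_\infty}\bigr).
\]
Applying \eqref{eqn:lem:h:0:chi:02} with $\lambda = na/2$ for the $\avol$-statement, or the exact identity \eqref{eqn:lem:h:0:chi:03} for the $\avol_{\chi}$-statement, gives
\[
\ah(X, n\overline{D} + (0, na[\infty])) \leq \ah(X, n\overline{D}) + (na/2 + \log 3)\rank_{\ZZ}\aH(X, n\overline{D}^{\tau}),
\]
and the analogous equation for $\achi$. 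Dividing by $n^{d+1}/(d+1)!$, letting $n \to \infty$, and invoking the rank estimate yields (2).

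For (1), let $p$ be the rational prime with $p\ZZ = P \cap \ZZ$, and set $k_n := \lceil na/(2[K_P:\QQ_p]\log p)\rceil$. Write $M := \aH(X, n\overline{D}^{\tau} + (0, na[P]))$ and $M' := \aH(X, n\overline{D}^{\tau})$; clearly $M' \subseteq M$. Multiplication by $p^{k_n}$ scales the $P$-adic norm by $\#(O_K/P)^{-k_n e_P} = p^{-k_n[K_P:\QQ_p]} \leq e^{-na/2}$, is non-expanding at the other finite places, and scales the archimedean norm by $p^{k_n}$. Consequently $p^{k_n}M \subseteq M'$, so $M/M'$ is a torsion group of order at most $p^{k_n\rank_{\ZZ} M}$. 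Applying \eqref{eqn:lem:h:0:chi:04} (resp. \eqref{eqn:lem:h:0:chi:05} for $\avol_{\chi}$) to the pair $(M, M')$ with the norm $\Vert\cdot\Vert_{ng_\infty}$ gives
\[
\ah(X, n\overline{D} + (0, na[P])) \leq \ah(X, n\overline{D}) + \bigl(k_n\log p + \log 6\bigr)\rank_{\ZZ}M,
\]
and dividing by $n^{d+1}/(d+1)!$, letting $n\to\infty$, and using $k_n\log p = na/(2[K_P:\QQ_p]) + O(1)$ together with the asymptotic rank bound, yields (1).

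The only nontrivial input is the asymptotic bound on $\rank_{\ZZ}\aH(X, n\overline{D}^{\tau})$; everything else is routine manipulation of Lemma~\ref{lem:h:0:chi}. The main care needed is in part (1), where the identities $[K_P:\QQ_p] = e_P f_P$ and $\#(O_K/P) = p^{f_P}$ must be tracked correctly through the $p$-adic rescaling so that the final constant emerges as $(d+1)[K:\QQ]\vol(X, D)/(2[K_P:\QQ_p])$.
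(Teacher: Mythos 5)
Your proof is correct and follows essentially the same route as the paper's. For part (1) you prove the inclusion $p^{k_n}\aH(X, n\overline{D}^{\tau} + (0, na[P])) \subseteq \aH(X, n\overline{D}^{\tau})$ and then apply \eqref{eqn:lem:h:0:chi:04}/\eqref{eqn:lem:h:0:chi:05} to the resulting index bound, which is precisely the paper's argument — the only cosmetic difference is that you write $k_n = \lceil na/(2[K_P:\QQ_p]\log p)\rceil$ while the paper writes $a_n = \lceil na/(2\ord_P(p)\log\#(O_K/P))\rceil$, and these are equal since $\ord_P(p)\log\#(O_K/P) = e_P f_P\log p = [K_P:\QQ_p]\log p$. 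Part (2) is likewise the paper's argument via \eqref{eqn:lem:h:0:chi:02}/\eqref{eqn:lem:h:0:chi:03}.
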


\begin{proof}
(1)
For each $n \in \ZZ_{>0}$, let $a_n$ be the round down of 
$\frac{na}{2 \ord_P(p)\log \#(O_K/P)}$, that is,
\[
a_n = \left\lceil \frac{n a }{2 \ord_P(p)\log \#(O_K/P)} \right\rceil.
\]
Then we have the following claim:

\begin{Claim}
$p^{a_n} \aH(X, n(\overline{D}^{\tau} + (0, a  [P]))) \subseteq \aH(X, n\overline{D}^{\tau})$.
\end{Claim}

\begin{proof}
Let $\phi \in \aH(X, n(\overline{D}^{\tau} + (0, a  [P])))$.
Let $Q \in \Spec(O_K)$. If $Q \not= P$, then
\[
\Vert p^{a_n} \phi \Vert_{n g_Q} = v_{Q}(p)^{a_n} \Vert \phi \Vert_{n g_Q} \leq 1.
\]
Otherwise (i.e. $Q = P$), 
\begin{align*}
\Vert p^{a_n} \phi \Vert_{n g_P} & = v_{P}(p)^{a_n} \Vert \phi \Vert_{n g_P}
= \exp\left(-a_n \ord_P(p) \log \#(O_K/P) + \frac{na}{2}\right) \Vert \phi \Vert_{n (g_P +a)} \\
& \leq \exp\left( \frac{- n a }{2\ord_P(p)\log \#(O_K/P)}  \ord_P(p) \log \#(O_K/P) + \frac{na}{2} \right)\Vert \phi \Vert_{n (g_P +a)} \\
& = \Vert \phi \Vert_{n (g_P +a)} \leq 1,
\end{align*}
as required.
\end{proof}

We set 
\[
\begin{cases}
Q_n = \Coker\left(\aH\left(X, n\overline{D}^{\tau}\right) \to \aH\left(X, n(\overline{D}^{\tau} + (0, a  [P])\right)\right), \\[1ex]
r_n = \dim_\QQ H^0(X, nD) = [K : \QQ]\dim_K H^0(X, nD).
\end{cases}
\]
Then, by \eqref{eqn:lem:h:0:chi:04} and \eqref{eqn:lem:h:0:chi:05},
\[
\begin{cases}
\ah(X, n(\overline{D} + (0, a  [P]))) \leq
\ah(X, n\overline{D}) + \log \#(Q_n) + \log(6) r_n, \\
\achi(X, n(\overline{D} + (0, a  [P]))) =
\achi(X, n\overline{D}) + \log \#(Q_n).
\end{cases}
\]
By the above claim,
\begin{align*}
\log \#(Q_n) & \leq
\log \# \left(  \aH\left(X, n(\overline{D}^{\tau} + (0, a  [P]))\right) \big/p^{a_n} \aH\left(X, n(\overline{D}^{\tau} + (0, a  [P]))\right)\right) \\
& = a_n r_n \log(p).
\end{align*}
Therefore, (1) follows because
\begin{align*}
\lim_{n\to\infty} \frac{a_n r_n \log(p)}{n^{d+1}/(d+1)!} & = \frac{a(d+1)[K: \QQ] \log(p)}{2 \ord_P(p) \log \#(O_K/P)}
\lim_{n\to\infty} \frac{\dim_K H^0(X, nD)}{n^d/d!} \\
& = \frac{a(d+1)[K: \QQ]\vol(X, D)}{2 \ord_P(p) [O_K/P : \ZZ/p\ZZ]}  = \frac{a(d+1)[K: \QQ]\vol(X, D)}{2 [K_P: \QQ_p]}.
\end{align*}

\bigskip
(2) By \eqref{eqn:lem:h:0:chi:02} and \eqref{eqn:lem:h:0:chi:03},
\[
\begin{cases}
\ah(X, n(\overline{D} + (0, a  [\infty]))) \leq
\ah(X, n\overline{D}) + (na/2) r_n + \log(3) r_n, \\
\achi(X, n(\overline{D} + (0, a  [\infty]))) =
\achi(X, n\overline{D}) + (na/2) r_n .
\end{cases}
\]
Therefore, (2) follows.
\end{proof}

\begin{proof}[Proof of Proposition~\rom{\ref{prop:vol:comp:C:0}}]
\addtocounter{Theorem}{1}
\setcounter{Claim}{0}
Let us start the proof of Proposition~\ref{prop:vol:comp:C:0}.
First we check the following special cases:

\begin{Claim}
\begin{enumerate}
\renewcommand{\labelenumi}{(\arabic{enumi})}
The following inequalities hold:
\item
${\displaystyle \left \vert \avol_{\natural}\left(\overline{D} + \left(0,\varphi_1 [P_1] \right)\right) - 
\avol_{\natural}(\overline{D}) \right\vert
\leq  \frac{(d + 1) [K : \QQ] \vol(X, D)}{2} \Vert \varphi_1 \Vert_{\sup}}$.

\item
${\displaystyle \left \vert \avol_{\natural}\left(\overline{D} + \left(0,\varphi_{\infty} [\infty] \right)\right) - 
\avol_{\natural}(\overline{D}) \right\vert
\leq  \frac{(d + 1) [K : \QQ] \vol(X, D)}{2} \Vert \varphi_{\infty} \Vert_{\sup}}$.
\end{enumerate}
\end{Claim}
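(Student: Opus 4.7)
The plan is to sandwich the given continuous function between its sup-norm constant bounds and then invoke the single-point constant-perturbation estimates already supplied by Lemma~\ref{lem:vol:comp:a}, taking advantage of the monotonicity of $\avol_{\natural}$ in the Green-function variable.

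For (1), I would start from the pointwise inequalities $-\Vert\varphi_1\Vert_{\sup}\leq \varphi_1\leq \Vert\varphi_1\Vert_{\sup}$ on $X_{P_1}^{\an}$, which yield
\[
\overline{D}-(0,\Vert\varphi_1\Vert_{\sup}[P_1])\leq \overline{D}+(0,\varphi_1[P_1])\leq \overline{D}+(0,\Vert\varphi_1\Vert_{\sup}[P_1]).
\]
Crucially, all three adelic arithmetic divisors share the same underlying $\RR$-Cartier divisor $D$, so the monotonicity remarks recorded in Definition~\ref{def:volume:chi:volume} apply equally to $\avol$ and $\avol_{\chi}$; that is the only place where the hypothesis ``same $D$'' is needed for $\avol_{\chi}$.

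For the upper estimate of $\avol_{\natural}(\overline{D}+(0,\varphi_1[P_1]))-\avol_{\natural}(\overline{D})$, I apply Lemma~\ref{lem:vol:comp:a}(1) with $a=\Vert\varphi_1\Vert_{\sup}\geq 0$ directly to $\overline{D}$. For the lower estimate, I apply the same lemma with $\overline{D}$ replaced by $\overline{D}-(0,\Vert\varphi_1\Vert_{\sup}[P_1])$ and with the same $a$; since this shifted divisor still has underlying $\RR$-Cartier divisor $D$, the constant $\vol(X,D)$ appearing on the right-hand side is unchanged. Combining the two estimates via the sandwich yields
\[
\left\vert \avol_{\natural}(\overline{D}+(0,\varphi_1[P_1]))-\avol_{\natural}(\overline{D})\right\vert \leq \frac{(d+1)[K:\QQ]\vol(X,D)}{2[K_{P_1}:\QQ_p]}\Vert\varphi_1\Vert_{\sup},
\]
and dropping the factor $[K_{P_1}:\QQ_p]\geq 1$ from the denominator gives the (slightly weaker) inequality asserted in the claim. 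The proof of (2) is entirely parallel, using Lemma~\ref{lem:vol:comp:a}(2) in place of (1); no factor analogous to $[K_{P_1}:\QQ_p]$ appears at the archimedean place, so the stated bound is sharp in that case. I do not anticipate any genuine obstacle: the analytic content is encapsulated in Lemma~\ref{lem:vol:comp:a}, and what remains is a one-line sandwiching argument together with the observation that both $\avol$ and $\avol_{\chi}$ are monotone when the underlying $\RR$-Cartier divisor is held fixed.
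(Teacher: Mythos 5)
Your argument matches the paper's proof in all essentials: sandwich $\varphi_1$ (resp.\ $\varphi_\infty$) between $\pm\Vert\varphi_1\Vert_{\sup}$, use monotonicity of $\avol_{\natural}$ under Green-function shifts with fixed underlying divisor, and apply Lemma~\ref{lem:vol:comp:a} once directly to $\overline{D}$ and once to the shifted divisor $\overline{D}-(0,\Vert\varphi_1\Vert_{\sup}[P_1])$, finally discarding the factor $[K_{P_1}:\QQ_p]\geq 1$. Your explicit remark about why $\vol(X,D)$ is preserved under the shift, and where the $\avol_{\chi}$ monotonicity requires the common underlying $\RR$-divisor, usefully surfaces points the paper leaves implicit, but the route is the same.
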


\begin{proof}
(1)
By using (1) in Lemma~\ref{lem:vol:comp:a},
\begin{align*}
\avol_{\natural}(\overline{D} + (0, \varphi_1  [P_1])) - \avol_{\natural}(\overline{D}) & \leq
\avol_{\natural}(\overline{D} + (0, \Vert \varphi_1 \Vert_{\sup} [P_1])) -\avol_{\natural}(\overline{D}) \\
& \leq \frac{(d + 1) [K : \QQ] \vol(X, D)}{2} \Vert \varphi_1 \Vert_{\sup}.
\end{align*}
Applying (1) in Lemma~\ref{lem:vol:comp:a} to the case where $\overline{D}$ is
$\overline{D} - (0, a [P_1])$, we have
\[
\avol_{\natural}(\overline{D})
\leq  \avol_{\natural}(\overline{D}  - (0, a [P_1])) + (a/2)  (d + 1) [K : \QQ] \vol(X, D).
\]
Therefore,
\begin{align*}
\avol_{\natural}(\overline{D}) - \avol_{\natural}(\overline{D} + (0, \varphi_1  [P_1])) & \leq
\avol_{\natural}(\overline{D}) - \avol_{\natural}(\overline{D} - (0, \Vert \varphi_1 \Vert_{\sup} [P_1])) \\
& \leq \frac{(d + 1) [K : \QQ] \vol(X, D)}{2} \Vert \varphi_1 \Vert_{\sup}.
\end{align*}
Thus we have (1).

\medskip
(2) can be shown in the same way as (1) by using (2) in Lemma~\ref{lem:vol:comp:a}.
\end{proof}

\medskip
In general, we set
\[
\overline{D}_j =
\begin{cases}
\overline{D} + (0, \varphi_{\infty} [\infty])& \text{if $j=0$}, \\[1ex]
\overline{D} + \left( 0,\sum_{i=1}^j \varphi_i [P_i] + \varphi_{\infty} [\infty] \right)& \text{if $j\geq 1$}.
\end{cases}
\]
Then, as
\begin{multline*}
\left \vert \avol_{\natural}\left(\overline{D} + \left(0,\sum_{i=1}^r \varphi_i [P_i] + \varphi_{\infty} [\infty]\right)\right) - \avol_{\natural}(\overline{D}) \right\vert \\
\leq
\sum_{j=1}^r 
\left \vert \avol_{\natural}\left( \overline{D}_j \right) -  \avol_{\natural}\left( \overline{D}_{j-1} \right) \right\vert + 
\left\vert \avol_{\natural}\left(\overline{D}_0\right)
- \avol_{\natural}\left(\overline{D}\right) \right\vert,
\end{multline*}
using the previous claim,
we have the assertion.
\end{proof}

As a consequence of the above estimate,
we have the following proposition, so that
we can deduce Fujita's approximation theorem for adelic arithmetic $\RR$-Cartier divisors
(cf. Theorem~\ref{thm:Fujita:approx:adel}).

\begin{Proposition}
\label{prop:vol:approx}
Let $\overline{D}$ be an
adelic arithmetic $\RR$-Cartier divisor of $C^0$-type on $X$.
Then, for a positive number $\epsilon$, there are
a normal model of $\XXX$ over $\Spec(O_K)$ and
arithmetic $\RR$-Cartier divisors $\overline{\DDD}$ and $\overline{\DDD}'$
of $C^0$-type on $\XXX$ such that
\[
\overline{\DDD}^{\ad} \leq \overline{D} \leq {\overline{\DDD}'}^{\ad},\quad
0 \leq \avol(\overline{D}) - \avol(\overline{\DDD}) \leq \epsilon
\quad\text{and}\quad
0 \leq \avol(\overline{\DDD}') - \avol(\overline{D}) \leq \epsilon.
\]
\end{Proposition}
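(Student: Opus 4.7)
The plan is to deduce this proposition directly from the approximation theorem of adelic arithmetic $\RR$-divisors (Theorem~\ref{thm:approx:adelic:arith}) combined with the comparison estimate of Proposition~\ref{prop:vol:comp:C:0}. The idea is that Theorem~\ref{thm:approx:adelic:arith} already provides models squeezing $\overline{D}$ from above and below with an $L^\infty$-error that can be made arbitrarily small at the finitely many ``bad'' non-archimedean places, and Proposition~\ref{prop:vol:comp:C:0} converts this $L^\infty$-bound into a bound on the volume difference.

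First I fix once and for all a defining model $(\XXX_U, \DDD_U)$ of $\overline{D}$ over a non-empty open set $U \subseteq \Spec(O_K)$ and set $S := M_K \setminus U$, a finite set. Given $\delta > 0$, Theorem~\ref{thm:approx:adelic:arith} yields a normal model $\XXX_\delta$ of $X$ and $\RR$-Cartier divisors $\DDD_1,\DDD_2$ on $\XXX_\delta$ such that $\rest{\XXX_\delta}{U} = \XXX_U$, $\rest{\DDD_1}{U} = \rest{\DDD_2}{U} = \DDD_U$, and, writing $\overline{\DDD} := (\DDD_1, g_\infty)$ and $\overline{\DDD}' := (\DDD_2, g_\infty)$ (where $g_\infty$ is the archimedean part of $\overline{D}$, which is automatically a $D$-Green function of $C^0$-type on $X(\CC)$),
\[
\overline{D} - \left(0, \sum_{P \in S} \delta\,[P]\right)
\leq \overline{\DDD}^{\ad}
\leq \overline{D}
\leq {\overline{\DDD}'}^{\ad}
\leq \overline{D} + \left(0, \sum_{P \in S} \delta\,[P]\right).
\]
The middle inequalities supply the required order relations $\overline{\DDD}^{\ad} \leq \overline{D} \leq {\overline{\DDD}'}^{\ad}$, and, by the monotonicity of $\avol$ recalled in Definition~\ref{def:volume:chi:volume}, they also force $\avol(\overline{\DDD}) \leq \avol(\overline{D}) \leq \avol(\overline{\DDD}')$, giving the non-negativity parts of both estimates.

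To bound the differences from above, I write
\[
\overline{D} - \overline{\DDD}^{\ad} = \left(0, \sum_{P \in S} \varphi_P\,[P]\right),
\qquad
{\overline{\DDD}'}^{\ad} - \overline{D} = \left(0, \sum_{P \in S} \varphi'_P\,[P]\right),
\]
where $\varphi_P, \varphi'_P \in C^0(X_P^{\an})$ satisfy $0 \leq \varphi_P, \varphi'_P \leq \delta$; in particular $\Vert \varphi_P\Vert_{\sup}, \Vert \varphi'_P\Vert_{\sup} \leq \delta$. Applying Proposition~\ref{prop:vol:comp:C:0} (with $r = \#S$, $\varphi_\infty = 0$, and $\overline{D}$ replaced by $\overline{\DDD}^{\ad}$ in the first case, by $\overline{D}$ in the second) gives
\[
\avol(\overline{D}) - \avol(\overline{\DDD}) \leq \tfrac{1}{2}(d+1)[K:\QQ]\vol(X,D)\,\#S\,\delta,
\]
and analogously for $\avol(\overline{\DDD}') - \avol(\overline{D})$. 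Choosing $\delta$ at the outset so that the right-hand side is at most $\epsilon$ (for instance $\delta := 2\epsilon/\{(d+1)[K:\QQ]\vol(X,D)\,\#S + 1\}$, with the $+1$ covering the degenerate case $\vol(X,D) = 0$) completes the argument.

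I do not foresee any serious obstacle: all the ingredients are in place, and the only bookkeeping points are that the approximation theorem produces $\DDD_1,\DDD_2$ with $\DDD_i \cap X = D$ (so that pairing with $g_\infty$ genuinely yields arithmetic $\RR$-Cartier divisors of $C^0$-type on $\XXX_\delta$) and that the archimedean parts of $\overline{\DDD}^{\ad}, \overline{D}, {\overline{\DDD}'}^{\ad}$ all coincide, so the perturbations entering the estimate of Proposition~\ref{prop:vol:comp:C:0} are supported only at the finite places in $S$.
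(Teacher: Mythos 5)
Your argument is correct and follows essentially the same route as the paper's proof: fix a defining model over an open $U$, invoke Theorem~\ref{thm:approx:adelic:arith} to sandwich $\overline{D}$ by $\overline{\DDD}^{\ad}$ and ${\overline{\DDD}'}^{\ad}$ up to a small perturbation supported at $S = M_K \setminus U$, and then convert that $L^{\infty}$-bound into a volume bound via Proposition~\ref{prop:vol:comp:C:0}. The only cosmetic difference is which divisor you take as the ``base'' when applying Proposition~\ref{prop:vol:comp:C:0} (you apply it directly to $\overline{\DDD}^{\ad}$ and $\overline{D}$, whereas the paper first bounds $\avol(\overline{\DDD})$ from below by $\avol(\overline{D} - (0, \sum_{P\in S}\epsilon'[P]))$ and then applies the Proposition with base $\overline{D}$); both give the identical constant since $\DDD\cap X = D$.
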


\begin{proof}
Let $U$ be a non-empty open set of $\Spec(O_K)$ such that
$\overline{D}$
has defining models $\DDD_{U}$ over $U$. We set $S = M_K \setminus U$.
We choose a positive number $\epsilon'$ such that
\[
\epsilon' (d + 1) [K : \QQ] \avol(X,D) \#(S)  \leq 2 \epsilon.
\]
By Theorem~\ref{thm:approx:adelic:arith},
there are a normal model of $\XXX$ and arithmetic $\RR$-Cartier divisors 
$\overline{\DDD}$ and $\overline{\DDD}'$ of $C^0$-type on $\XXX$ such that
\[
\overline{D} -\left(0, \sum\nolimits_{P \in S} \epsilon' [P]\right) \leq \overline{\DDD}
\leq \overline{D} \leq \overline{\DDD}' \leq
\overline{D} + \left(0, \sum\nolimits_{P \in S} \epsilon' [P]\right).
\]
Then
\begin{align*}
0 & \leq \avol(\overline{D}) - \avol(\overline{\DDD}) \leq \avol(\overline{D}) -
\avol\left( \overline{D} -\left(0, \sum\nolimits_{P \in S} \epsilon' [P]\right) \right) \\
\intertext{and}
0 & \leq \avol(\overline{\DDD}') - \avol(\overline{D}) \leq 
\avol\left( \overline{D} +\left(0, \sum\nolimits_{P \in S} \epsilon' [P]\right) \right) - \avol(\overline{D}).
\end{align*}
On the other hand, by Proposition~\ref{prop:vol:comp:C:0},
\begin{align*}
\avol(\overline{D}) -
\avol\left( \overline{D} -\left(0, \sum\nolimits_{P \in S} \epsilon' [P]\right) \right) & \leq
(\epsilon'/2) (d + 1) [K : \QQ] \avol(X,D) \#(S)\\
\intertext{and} 
\avol\left( \overline{D} +\left(0, \sum\nolimits_{P \in S} \epsilon' [P]\right) \right) - \avol(\overline{D})
& \leq
(\epsilon'/2) (d + 1) [K : \QQ] \avol(X,D) \#(S).
\end{align*}
Thus the assertion follows.
\end{proof}

The following theorem is the adelic version of arithmetic Fujita's approximation theorem.
It has been already established by
Boucksom and H. Chen \cite{BC}.
Here we give another proof of it and generalize it to $\RR$-divisors.

\begin{Theorem}[Fujita's approximation theorem for adelic arithmetic divisors]
\label{thm:Fujita:approx:adel}
Let $\overline{D}$ be a big adelic arithmetic $\RR$-Cartier divisor of $C^0$-type on $X$.
Then, for any positive number $\epsilon$, there are a birational morphism $\mu : Y \to X$
of smooth, projective and geometrically integral varieties over $K$ and
a nef adelic arithmetic $\RR$-Cartier divisor $\overline{Q}$ of $C^0$-type on $Y$ such that
$\overline{Q} \leq \mu^*(\overline{D})$ and $\avol(\overline{Q}) \geq \avol(\overline{D}) - \epsilon$.
\end{Theorem}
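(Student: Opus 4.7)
The plan is to bootstrap the adelic statement from the already-known arithmetic Fujita approximation theorem (property (6) in the introduction) via the approximation result Proposition~\ref{prop:vol:approx}. Roughly, any big adelic arithmetic $\RR$-Cartier divisor on $X$ can be sandwiched between adelic realizations of arithmetic $\RR$-Cartier divisors on some normal model $\XXX$, so the non-trivial content reduces to the classical case where we may invoke (6) directly.

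First I would apply Proposition~\ref{prop:vol:approx} to $\overline{D}$ with tolerance $\epsilon/2$: choose a normal model $\XXX$ of $X$ over $\Spec(O_K)$ and an arithmetic $\RR$-Cartier divisor $\overline{\DDD}$ of $C^0$-type on $\XXX$ with $\overline{\DDD}^{\ad}\le\overline{D}$ and $\avol(\overline{D})-\avol(\overline{\DDD})\le\epsilon/2$. After shrinking $\epsilon$ if necessary we may assume $\avol(\overline{\DDD})>0$, so $\overline{\DDD}$ is big. Next, applying the classical arithmetic Fujita approximation theorem (item (6) in the introduction) to $\overline{\DDD}$ with tolerance $\epsilon/2$, I obtain a birational morphism $\mu_0:\mathcal{Y}\to\XXX$ of generically smooth normal projective arithmetic varieties and a nef arithmetic $\RR$-Cartier divisor $\overline{\QQQ}$ of $C^0$-type on $\mathcal{Y}$ with $\overline{\QQQ}\le\mu_0^{*}(\overline{\DDD})$ and $\avol(\overline{\DDD})-\avol(\overline{\QQQ})\le\epsilon/2$.

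Now I set $Y$ equal to the generic fibre of $\mathcal{Y}\to\Spec(O_K)$ and $\mu:Y\to X$ equal to the induced birational map on generic fibres; by generic smoothness of $\mathcal{Y}$ and the fact that $Y$ is $K$-birational to the geometrically integral variety $X$, the generic fibre $Y$ is smooth, projective and geometrically integral over $K$. Finally I take $\overline{Q}:=\overline{\QQQ}^{\ad}$. Monotonicity and functoriality of $(\cdot)^{\ad}$ (cf.\ Proposition~\ref{prop:comp:adelic:arith}) give
\[
\overline{Q}=\overline{\QQQ}^{\ad}\le\mu_0^{*}(\overline{\DDD})^{\ad}=\mu^{*}(\overline{\DDD}^{\ad})\le\mu^{*}(\overline{D}),
\]
while Proposition~\ref{prop:basic:prop:H:0}(2) yields $\aH(Y,n\overline{Q})=\aH(\mathcal{Y},n\overline{\QQQ})$ for every $n\ge1$, hence $\avol(\overline{Q})=\avol(\overline{\QQQ})$. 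Chaining the two approximations,
\[
\avol(\overline{Q})=\avol(\overline{\QQQ})\ge\avol(\overline{\DDD})-\epsilon/2\ge\avol(\overline{D})-\epsilon,
\]
which is the required estimate.

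The main obstacle is verifying that $\overline{Q}=\overline{\QQQ}^{\ad}$ is nef \emph{as an adelic arithmetic divisor}, and that passage to the $\ad$-realization is compatible with the pull-back by $\mu_0$. The nefness splits into three pieces: at each $P\in M_K$ the Green function attached to the localization of $\overline{\QQQ}$ must be of $(C^{0}\cap\Tpsh)$-type, which is exactly the conclusion of Proposition~\ref{prop:PSH:imply:nef} applied to the relatively nef $\RR$-Cartier divisor underlying $\overline{\QQQ}$; the Archimedean Green function is already of $(C^{0}\cap\Tpsh)$-type by hypothesis on $\overline{\QQQ}$; and non-negativity of the adelic degree along closed points of $Y$ follows from non-negativity of the arithmetic degrees of $\overline{\QQQ}$ along horizontal $1$-dimensional closed integral subschemes of $\mathcal{Y}$ via the formula \eqref{eqn:local:degree:arithmetic}. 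The compatibility $\mu_0^{*}(\overline{\DDD})^{\ad}=\mu^{*}(\overline{\DDD}^{\ad})$ is routine once one unwinds the definition of $g_{(\XXX,\,\DDD)}$ using the functoriality statement in Proposition~\ref{prop:green:model}(2).
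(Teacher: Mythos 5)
Your proposal is correct and follows essentially the same two-step route as the paper's own proof: first apply Proposition~\ref{prop:vol:approx} to pass to a model divisor $\overline{\DDD}$ within $\epsilon/2$ of $\overline{D}$, then invoke the arithmetic (Chen--Yuan) Fujita approximation on $\overline{\DDD}$ and descend to the generic fiber via $(\cdot)^{\ad}$. The additional verifications you record (nefness of $\overline{\QQQ}^{\ad}$ via Proposition~\ref{prop:PSH:imply:nef}, preservation of $\aH$ under $(\cdot)^{\ad}$, compatibility of pullback with the adelic realization) are all correct and fill in details the paper leaves implicit.
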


\begin{proof}
By Proposition~\ref{prop:vol:approx}, we can find a normal model $\XXX$ of $X$ over $\Spec(O_K)$ and
an arithmetic $\RR$-Cartier divisor $\overline{\DDD}$ of $C^0$-type on $\XXX$ such that
\[
\overline{\DDD} \leq \overline{D}
\quad\text{and}\quad
\avol(\overline{\DDD}) \geq \avol(\overline{D}) - \epsilon/2.
\]
Moreover, by virtue of
Fujita's approximation theorem for arithmetic $\RR$-Cartier divisors due to Chen-Yuan 
(cf. \cite{HChenFujita},
\cite{YuanVol} and \cite[Theorem~5.2.2]{MoArZariski}),
there are a birational morphism $\tilde{\mu} : \mathcal{Y} \to \XXX$ of
generically smooth, normal and projective  arithmetic varieties and a nef 
arithmetic $\RR$-Cartier divisor $\overline{\QQQ}$ of $C^0$-type on $\mathcal{Y}$
such that 
\[
\overline{\QQQ} \leq \tilde{\mu}^*(\overline{\DDD})
\quad\text{and}\quad
\avol(\overline{\QQQ}) \geq \avol(\overline{\DDD}) - \epsilon/2.
\]
Thus if we set $Y = \mathcal{Y} \times_{\Spec(O_K)} \Spec(K)$, $\mu = \rest{\tilde{\mu}}{Y}$ and
$\overline{Q} = \overline{\QQQ}^{\ad}$,
then we have the assertion.
\end{proof}

\ifmonog\section{Proof of the continuity of the volume function}\fi
\ifpaper\subsection{Proof of the continuity of the volume function}\fi
\label{subsec:proof:cont:vol}

The purpose of this 
\ifmonog section \fi
\ifpaper subsection \fi
is to prove the continuity of the volume function for
adelic arithmetic $\RR$-Cartier divisors of $C^0$-type.
Namely we have the following theorem:

\begin{Theorem}[Continuity of the volume functions for adelic arithmetic divisors]
\label{thm:cont:volume}
The volume function $\avol : \aDiv_{C^0}^{\ad}(X)_{\RR} \to \RR$ is continuous in the following sense:
Let $\overline{D}_1, \ldots, \overline{D}_r$, $\overline{A}_1, \ldots, \overline{A}_{r'}$ be 
adelic arithmetic $\RR$-Cartier divisors of $C^0$-type on $X$.
Let $\{ P_1, \ldots, P_s \}$ be a finite subset of $M_K$.
For a compact subset $B$ in $\RR^r$ and a positive number $\epsilon$, 
there are positive numbers $\delta$ and $\delta'$ such that
\[
\left\vert \avol\left(\sum_{i=1}^r a_i \overline{D}_i + \sum_{j=1}^{r'} \delta_j \overline{A}_j + 
\left(0, \sum_{l=1}^{s} \varphi_{P_l} [P_l] + \varphi_{\infty} [\infty] \right) \right) -
\avol\left(\sum_{i=1}^r a_i \overline{D}_i \right) \right\vert
\leq \epsilon
\]
holds for all $a_1, \ldots, a_r, \delta_1, \ldots, \delta_{r'} \in \RR$, 
$\varphi_{P_1} \in C^0(X_{P_1}^{\an}), \ldots, \varphi_{P_s} \in C^0(X_{P_s}^{\an})$
and $\varphi_{\infty} \in C^0_{F_{\infty}}(X(\CC))$
with $(a_1, \ldots, a_r) \in B$, $\sum_{j=1}^{r'} \vert \delta_j \vert \leq \delta$
and $\sum_{l=1}^s \Vert \varphi_{P_l} \Vert_{\sup} + \Vert \varphi_{\infty} \Vert_{\sup}
\leq \delta'$.
\end{Theorem}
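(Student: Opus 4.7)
The plan is to reduce the adelic continuity to the already-known continuity of the classical arithmetic volume function on a fixed normal model (\cite{MoCont}, \cite{MoContExt}), using Theorem~\ref{thm:approx:adelic:arith} to pass between adelic and geometric descriptions and Proposition~\ref{prop:vol:comp:C:0} to absorb all residual Green-function errors.

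First I would choose a single normal model $\XXX$ of $X$ over $\Spec(O_K)$ which simultaneously dominates defining models of all the $\overline{D}_i$ and $\overline{A}_j$; let $U\subseteq\Spec(O_K)$ be a non-empty open set on which all these defining models agree with $\XXX$, and enlarge the distinguished set to $S=\{P_1,\ldots,P_s\}\cup(M_K\setminus U)$. For each $\eta>0$, Theorem~\ref{thm:approx:adelic:arith} produces arithmetic $\RR$-Cartier divisors $\overline{\DDD}_i,\overline{\AAA}_j$ of $C^0$-type on $\XXX$ together with continuous functions $\phi_{i,P},\psi_{j,P}$ on $X_P^{\an}$ for $P\in S$, each of sup-norm at most $\eta$, such that
\[
\overline{D}_i-\overline{\DDD}_i^{\ad}=\Bigl(0,\sum_{P\in S}\phi_{i,P}[P]\Bigr),\qquad
\overline{A}_j-\overline{\AAA}_j^{\ad}=\Bigl(0,\sum_{P\in S}\psi_{j,P}[P]\Bigr).
\]

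Next, $(a_1,\ldots,a_r,\delta_1,\ldots,\delta_{r'})\mapsto \vol\bigl(X,\sum a_iD_i+\sum\delta_jA_j\bigr)$ is continuous on $\RR^{r+r'}$, so it is bounded by some constant $M$ on the compact set $B\times\{(\delta_1,\ldots,\delta_{r'}):\sum|\delta_j|\le1\}$. By the classical continuity of the arithmetic volume function on the fixed model $\XXX$, there exist $\delta\in(0,1]$ and $\delta''>0$ so that for every $(a_1,\ldots,a_r)\in B$, every $(\delta_1,\ldots,\delta_{r'})$ with $\sum|\delta_j|\le\delta$, and every $F_\infty$-invariant $\phi\in C^0(\XXX(\CC))$ with $\|\phi\|_{\sup}\le\delta''$,
\[
\Bigl|\avol\Bigl(\sum a_i\overline{\DDD}_i+\sum\delta_j\overline{\AAA}_j+(0,\phi)\Bigr)-\avol\Bigl(\sum a_i\overline{\DDD}_i\Bigr)\Bigr|\le \epsilon/3.
\]
Expanding $\sum a_i\overline{D}_i+\sum\delta_j\overline{A}_j+(0,\sum_l\varphi_{P_l}[P_l]+\varphi_\infty[\infty])$ via the $\overline{\DDD}_i^{\ad},\overline{\AAA}_j^{\ad}$ rewrites it as the geometric piece $\sum a_i\overline{\DDD}_i^{\ad}+\sum\delta_j\overline{\AAA}_j^{\ad}$ plus a Green-function perturbation $(0,\sum_{P\in S}\Phi_P[P]+\varphi_\infty[\infty])$, where
\[
\|\Phi_P\|_{\sup}\le \eta\Bigl(\sum|a_i|+\sum|\delta_j|\Bigr)+\|\varphi_P\|_{\sup},
\]
with the convention $\|\varphi_P\|_{\sup}=0$ for $P\notin\{P_1,\ldots,P_s\}$. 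Two applications of Proposition~\ref{prop:vol:comp:C:0}, controlled by the uniform bound $M$, show that by choosing $\eta$ and $\delta'$ sufficiently small, passing between the adelic and geometric formulations costs at most $\epsilon/3$ on either side. Three uses of the triangle inequality then close the argument.

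The principal obstacle will be uniformity of the estimates in the coefficients: the Lipschitz constant in Proposition~\ref{prop:vol:comp:C:0} depends on $\vol(X,D)$, which itself varies with $(a_1,\ldots,a_r,\delta_1,\ldots,\delta_{r'})$, and the geometric continuity on $\XXX$ must hold uniformly over $B$. Both are handled by continuity of $\vol(X,\cdot)$ and by invoking the continuity theorem of \cite{MoCont} and \cite{MoContExt} in the uniform-in-compact-coefficients form stated there. A secondary technical point is the simultaneous application of Theorem~\ref{thm:approx:adelic:arith} to all the $\overline{D}_i$ and $\overline{A}_j$ on a single common model, which is resolved by dominating all their defining models by one normal $\XXX$ before invoking the approximation.
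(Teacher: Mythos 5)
Your proposal is correct and follows essentially the same route as the paper's proof: approximate all the adelic divisors simultaneously by arithmetic divisors on a single normal model via Theorem~\ref{thm:approx:adelic:arith}, invoke the classical continuity of $\avol$ for arithmetic $\RR$-Cartier divisors on that fixed model, control the residual Green-function discrepancies at the finitely many bad places with Proposition~\ref{prop:vol:comp:C:0}, and combine with the triangle inequality; the uniformity over the compact set $B$ is handled, as in the paper, by bounding $\vol(X,\cdot)$ on a compact neighbourhood in coefficient space. The only cosmetic difference is that you fold the distinguished places $\{P_1,\ldots,P_s\}$ into the approximation step, whereas the paper handles the local perturbations $\varphi_{P_l}$ as a separate final application of Proposition~\ref{prop:vol:comp:C:0}; the resulting estimates and budget of $\epsilon$ across the triangle inequality are organized slightly differently but lead to the same conclusion.
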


\begin{proof}
Let us choose a non-empty open set $U$ of $\Spec(O_K)$ such that
$\overline{D}_i$ ($i=1, \ldots, r$) and $\overline{A}_j$ ($j=1,\ldots, r'$)
have defining models $\DDD_{i,U}$ and
$\AAA_{j,U}$ over $U$, respectively. 
We set $T = M_K \setminus U$ and
\[
C = \max \left\{\left. \vol\left(X, \sum_{i=1}^{r} a_iD_i +
\sum_{j=1}^{r'} \delta_j A_j\right) \left( \sum_{i=1}^r \vert a_i \vert + 1
 \right) \ \right|\  
\begin{array}{l}
 (a_1, \ldots, a_r) \in B\\
 \sum_{j=1}^{r'} \vert \delta_j \vert \leq 1 
\end{array}
\right\}.
\]
We choose a positive number $\epsilon'$ such that
\[
\epsilon' (d + 1) [K : \QQ] \#(T) C \leq \epsilon/3.
\]
By Theorem~\ref{thm:approx:adelic:arith},
there are a normal model of $\XXX$ and arithmetic $\RR$-Cartier divisors 
$\overline{\DDD}_1, \ldots, \overline{\DDD}_r, \overline{\AAA}_1, \ldots,
\overline{\AAA}_{r'}$ of $C^0$-type on $\XXX$ such that
\[
\overline{\DDD}_i^{\ad}
\leq \overline{D}_i \leq
\overline{\DDD}_i^{\ad} + \left(0, \sum_{P \in T} \epsilon' [P]\right)
\quad\text{and}\quad
\overline{\AAA}_j^{\ad} 
\leq \overline{A}_j \leq
\overline{\AAA}_j^{\ad} + \left(0, \sum_{P \in T} \epsilon' [P]\right)
\]
for all $i=1,\ldots,r$ and $j=1,\ldots,r'$.
Then,
\begin{multline*}
\sum_{i=1}^r a_i \overline{\DDD}_i^{\ad} + \sum_{j=1}^{r'} \delta_j \overline{\AAA}_j^{\ad} -
\left(0, \sum_{P \in T} \epsilon'\left( \sum_{i=1}^r \vert a_i \vert + \sum_{j=1}^{r'} \vert \delta_j \vert \right) [P]\right) \\
\hskip-15em 
\leq \sum_{i=1}^r a_i \overline{D}_i + \sum_{j=1}^{r'} \delta_j \overline{A}_j \\
\leq
\sum_{i=1}^r a_i \overline{\DDD}^{\ad}_i + \sum_{j=1}^{r'} \delta_j \overline{\AAA}^{\ad}_j +
\left(0, \sum_{P \in T} \epsilon'\left( \sum_{i=1}^r \vert a_i \vert + \sum_{j=1}^{r'} \vert \delta_j \vert \right) [P]\right).
\end{multline*}
Therefore, by using Proposition~\ref{prop:vol:comp:C:0},
\addtocounter{Claim}{1}
\begin{multline}
\label{eqn:thm:cont:volume:01}
\left| \avol\left( \sum_{i=1}^r a_i \overline{\DDD}_i + \sum_{j=1}^{r'} \delta_j \overline{\AAA}_j \right)
- \avol\left(  \sum_{i=1}^r a_i \overline{D}_i + \sum_{j=1}^{r'} \delta_j \overline{A}_j \right)
\right| \\
\leq \frac{\epsilon' (d + 1) [K : \QQ] \#(T)}{2}
\vol\left(X, \sum_{i=1}^{r} a_iD_i +
\sum_{j=1}^{r'} \delta_j A_j\right) \left( \sum_{i=1}^r \vert a_i \vert + \sum_{j=1}^{r'} \vert \delta_j \vert
 \right)
\end{multline}
for all $a_1, \ldots, a_r, \delta_1, \ldots, \delta_{r'} \in \RR$.
Moreover, by \cite[Theorem~5.2.2]{MoArZariski},
there is a positive number $\delta$ such that $\delta \leq 1$ and
\[
\left| 
\avol\left(  \sum_{i=1}^r a_i \overline{\DDD}_i + \sum_{j=1}^{r'} \delta_j \overline{\AAA}_j \right)
- \avol\left(  \sum_{i=1}^r a_i \overline{\DDD}_i \right)
\right| \leq \epsilon/6
\]
for all $(a_1, \ldots, a_r) \in B$ and $\delta_1, \ldots, \delta_{r'} \in \RR$
with 
\[
\vert \delta_1 \vert + \cdots + \vert \delta_{r'} \vert \leq \delta.
\]
Therefore, if we set
\[
\Delta = \left| 
\avol\left(  \sum_{i=1}^r a_i \overline{D}_i + \sum_{j=1}^{r'} \delta_j \overline{A}_j \right)
- \avol\left(  \sum_{i=1}^r a_i \overline{D}_i \right)
\right|,
\]
then
\begin{multline*}
\Delta \leq 
\left| 
\avol\left(  \sum_{i=1}^r a_i \overline{\DDD}_i + \sum_{j=1}^{r'} \delta_j \overline{\AAA}_j \right)
- \avol\left(  \sum_{i=1}^r a_i \overline{\DDD}_i \right)
\right| \\
\hskip-10em
+ \left| 
\avol\left(  \sum_{i=1}^r a_i \overline{\DDD}_i  \right)
- \avol\left(  \sum_{i=1}^r a_i \overline{D}_i \right)
\right| \\
+ \left| 
\avol\left(  \sum_{i=1}^r a_i \overline{D}_i + \sum_{j=1}^{r'} \delta_j \overline{A}_j \right)
- \avol\left(  \sum_{i=1}^r a_i \overline{\DDD}_i + \sum_{j=1}^{r'} \delta_j \overline{\AAA}_j\right)
\right|.
\end{multline*}
Thus, by using \eqref{eqn:thm:cont:volume:01},
 for $(a_1, \ldots, a_r) \in B$ and $\delta_1, \ldots, \delta_{r'} \in \RR$ with
\[
\vert \delta_1 \vert + \cdots + \vert \delta_{r'} \vert \leq \delta,
\]
we have
\addtocounter{Claim}{1}
\begin{equation}
\label{eqn:thm:cont:volume:02}
\Delta \leq \epsilon/6 + \epsilon/6 + \epsilon/6 = \epsilon/2.
\end{equation}
On the other hand, by Proposition~\ref{prop:vol:comp:C:0},
{\allowdisplaybreaks
\begin{multline*}
\left\vert \avol\left(\sum_{i=1}^r a_i \overline{D}_i + \sum_{j=1}^{r'} \delta_j \overline{A}_j + 
\left(0, \sum_{l=1}^{s} \varphi_{P_l} [P_l] + \varphi_{\infty} [\infty] \right) \right) \right. \\
\qquad\qquad\qquad\qquad\qquad\qquad\qquad \left. -
\avol\left(\sum_{i=1}^r a_i \overline{D}_i + \sum_{j=1}^{r'} \delta_j \overline{A}_j
\right) \right\vert \\
\leq
\frac{(d + 1) [K : \QQ] \vol\left(X, \sum_{i=1}^{r} a_iD_i +
\sum_{j=1}^{r'} \delta_j A_j\right)}{2} \left( \sum_{l=1}^s \Vert \varphi_{P_i} \Vert_{\sup} + \Vert \varphi_{\infty} \Vert_{\sup} \right).
\end{multline*}}
Here we set
\[
C' = \max \left\{\left. \vol\left(X, \sum_{i=1}^{r} a_iD_i +
\sum_{j=1}^{r'} \delta_j A_j\right) \ \right|\  (a_1, \ldots, a_r) \in B,\ \sum_{j=1}^{r'} \vert \delta_j \vert \leq \delta \right\}
\]
and choose a positive number $\delta'$ such that
\[
(d + 1) [K : \QQ] C' \delta' \leq \epsilon.
\]
Then
\begin{multline*}
\left\vert \avol\left(\sum_{i=1}^r a_i \overline{D}_i + \sum_{j=1}^{r'} \delta_j \overline{A}_j + 
\left(0, \sum_{l=1}^{s} \varphi_{P_l} [P_l] + \varphi_{\infty} [\infty] \right) \right) \right. \\
\left. -
\avol\left(\sum_{i=1}^r a_i \overline{D}_i + \sum_{j=1}^{r'} \delta_j \overline{A}_j
\right) \right\vert \leq \epsilon/2
\end{multline*}
for all $a_1, \ldots, a_r, \delta_1, \ldots, \delta_{r'} \in \RR$, 
$\varphi_{P_1} \in C^0(X_{P_1}^{\an}), \ldots, \varphi_{P_s} \in C^0(X_{P_s}^{\an})$
and $\varphi_{\infty} \in C^0_{F_{\infty}}(X(\CC))$
with $(a_1, \ldots, a_r) \in B$, $\sum_{j=1}^{r'} \vert \delta_j \vert \leq \delta$
and $\sum_{l=1}^s \Vert \varphi_{P_l} \Vert_{\sup} + \Vert \varphi_{\infty} \Vert_{\sup}
\leq \delta'$.
Thus, by the above estimate together with \eqref{eqn:thm:cont:volume:02}, 
we have the assertion.
\end{proof}

\begin{Theorem}
\label{thm:cont:chi:volume}
The $\hat{\chi}$-volume function $\avol_{\chi} : \aDiv_{C^0}^{\ad}(X)_{\RR} \to \RR$ is continuous in the following sense:
Let $\overline{D}_1, \ldots, \overline{D}_r$, $\overline{A}_1, \ldots, \overline{A}_{r'}$ be 
adelic arithmetic $\RR$-Cartier divisors of $C^0$-type on $X$.
Let $\{ P_1, \ldots, P_s \}$ be a finite subset of $M_K$.
For $a_1, \ldots, a_r \in \RR$ and $\epsilon \in \RR_{>0}$, 
there are positive numbers $\delta$ and $\delta'$ such that
\[
\left\vert \avol_{\chi}\left(\sum_{i=1}^r a_i \overline{D}_i + \sum_{j=1}^{r'} \delta_j \overline{A}_j + 
\left(0, \sum_{l=1}^{s} \varphi_{P_l} [P_l] + \varphi_{\infty} [\infty] \right) \right) -
\avol_{\chi}\left(\sum_{i=1}^r a_i \overline{D}_i \right) \right\vert
\leq \epsilon
\]
holds for all $\delta_1, \ldots, \delta_{r'} \in \RR$, 
$\varphi_{P_1} \in C^0(X_{P_1}^{\an}), \ldots, \varphi_{P_s} \in C^0(X_{P_s}^{\an})$
and $\varphi_{\infty} \in C^0_{F_{\infty}}(X(\CC))$
with $\sum_{j=1}^{r'} \vert \delta_j \vert \leq \delta$
and $\sum_{l=1}^s \Vert \varphi_{P_l} \Vert_{\sup} + \Vert \varphi_{\infty} \Vert_{\sup}
\leq \delta'$.
\end{Theorem}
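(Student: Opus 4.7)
The plan is to adapt the proof of Theorem~\ref{thm:cont:volume} to $\avol_{\chi}$, which essentially amounts to replacing $\avol$ by $\avol_{\chi}$ throughout while reusing the three building blocks already set up: the approximation theorem (Theorem~\ref{thm:approx:adelic:arith}), the Green-function perturbation estimate (Proposition~\ref{prop:vol:comp:C:0}, which was formulated for $\avol_{\natural}\in\{\avol,\avol_{\chi}\}$ precisely with this application in mind), and the pointwise continuity of the $\hat{\chi}$-volume for arithmetic $\RR$-Cartier divisors of $C^0$-type on a fixed normal model, which is the $\hat{\chi}$-analogue of \cite[Theorem~5.2.2]{MoArZariski}. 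In contrast to Theorem~\ref{thm:cont:volume}, no compactness in $(a_1,\ldots,a_r)$ is needed, since the conclusion is required only at a fixed base point.

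Concretely, I would first choose a non-empty open $U\subseteq \Spec(O_K)$ over which all $\overline{D}_i$ and $\overline{A}_j$ admit defining models, set $T:=M_K\setminus U$, and restrict a priori to $\sum_{j}|\delta_j|\le 1$; on this bounded set, by continuity of the geometric volume for $\RR$-divisors, the quantity
\[
C_0:=\sup\Bigl\{\vol\bigl(X,\textstyle\sum a_iD_i+\sum\delta_jA_j\bigr)\bigl(\sum|a_i|+\sum|\delta_j|+1\bigr):\sum|\delta_j|\le 1\Bigr\}
\]
is finite. For a parameter $\epsilon'>0$ to be fixed later, apply Theorem~\ref{thm:approx:adelic:arith} on a common normal model $\XXX$ to obtain arithmetic $\RR$-Cartier divisors $\overline{\DDD}_i,\overline{\AAA}_j$ of $C^0$-type on $\XXX$ with
\[
\overline{\DDD}_i^{\ad}\le\overline{D}_i\le\overline{\DDD}_i^{\ad}+\bigl(0,\textstyle\sum_{P\in T}\epsilon'[P]\bigr),\qquad \overline{\AAA}_j^{\ad}\le\overline{A}_j\le\overline{\AAA}_j^{\ad}+\bigl(0,\textstyle\sum_{P\in T}\epsilon'[P]\bigr).
\]
Applying Proposition~\ref{prop:vol:comp:C:0} to $\avol_{\chi}$ and to the total $C^0$-perturbation of size $\epsilon'(\sum|a_i|+\sum|\delta_j|)$ supported on $T$ then yields the analogue of \eqref{eqn:thm:cont:volume:01}, namely
\[
\Bigl|\avol_{\chi}\bigl(\textstyle\sum a_i\overline{D}_i+\sum\delta_j\overline{A}_j\bigr)-\avol_{\chi}\bigl(\sum a_i\overline{\DDD}_i+\sum\delta_j\overline{\AAA}_j\bigr)\Bigr|\le\tfrac{\epsilon'(d+1)[K:\QQ]\#(T)}{2}\,C_0,
\]
which is $\le\epsilon/3$ once $\epsilon'$ is chosen small enough; the same bound holds at $\delta_j=0$.

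Next, invoking pointwise continuity of $\avol_{\chi}$ on $\aDiv_{C^0}(\XXX)_{\RR}$ at the point $\sum a_i\overline{\DDD}_i$, one selects $\delta\in(0,1]$ so that
\[
\Bigl|\avol_{\chi}\bigl(\textstyle\sum a_i\overline{\DDD}_i+\sum\delta_j\overline{\AAA}_j\bigr)-\avol_{\chi}\bigl(\sum a_i\overline{\DDD}_i\bigr)\Bigr|\le\epsilon/3\qquad\text{whenever }\sum|\delta_j|\le\delta.
\]
A final application of Proposition~\ref{prop:vol:comp:C:0}, this time to the perturbation by $(0,\sum\varphi_{P_l}[P_l]+\varphi_{\infty}[\infty])$ and using
\[
C':=\sup\Bigl\{\vol\bigl(X,\textstyle\sum a_iD_i+\sum\delta_jA_j\bigr):\sum|\delta_j|\le\delta\Bigr\}<\infty,
\]
allows one to choose $\delta'>0$ with $(d+1)[K:\QQ]C'\delta'/2\le\epsilon/3$. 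Combining the three $\epsilon/3$-contributions by the triangle inequality, with $\avol_{\chi}(\sum a_i\overline{\DDD}_i+\sum\delta_j\overline{\AAA}_j)$ serving as an intermediate quantity, gives the estimate $\le\epsilon$.

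The principal obstacle, and the only genuinely new input beyond the proof of Theorem~\ref{thm:cont:volume}, is securing the pointwise continuity of $\avol_{\chi}$ for arithmetic $\RR$-Cartier divisors of $C^0$-type on a fixed arithmetic variety. For $\avol$ this is \cite[Theorem~5.2.2]{MoArZariski}; the $\hat{\chi}$-analogue is less visible in the literature and must either be cited from Moriwaki's continuity work or be established by a parallel argument (it is standard but needs verification in the $C^0$ setting). Once this ingredient is in hand, the rest is a mechanical repackaging of estimates already available; note in particular that $\avol_{\chi}$ is not uniformly continuous in $(a_1,\ldots,a_r)$ over compact parameter sets, which is exactly why the theorem is formulated at a fixed base point rather than uniformly, as is possible for $\avol$.
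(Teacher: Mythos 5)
Your approach is essentially the paper's: reuse the setup of Theorem~\ref{thm:cont:volume} at a fixed base point, pass to a common model via Theorem~\ref{thm:approx:adelic:arith}, control the defect by Proposition~\ref{prop:vol:comp:C:0} (formulated for $\avol_{\natural}$ precisely for this reuse), and close with the pointwise continuity of $\avol_{\chi}$ for arithmetic $\RR$-Cartier divisors of $C^0$-type on the fixed model. The one ingredient you flag as ``less visible in the literature'' is indeed the crux, and the paper supplies it by citing Ikoma, \emph{Boundedness of the successive minima on arithmetic varieties}, Corollary~3.4.4, so your concern is legitimate and correctly localized; you would simply need this citation to complete the argument.

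Two minor remarks. First, your closing bookkeeping is slightly off: the triangle inequality actually produces four terms (the $C^0$-perturbation step, the approximation step at $(\delta_j)$, the Ikoma step, and the approximation step at $\delta_j=0$), so three $\epsilon/3$-budgets do not suffice; the paper resolves this by allotting $\epsilon/6$ to each of the three ``model-side'' terms and $\epsilon/2$ to the $C^0$-perturbation. This is a trivial rescaling and does not affect the structure of the argument. Second, your parenthetical claim that $\avol_{\chi}$ fails to be uniformly continuous over compact parameter sets in $(a_1,\ldots,a_r)$ is not something the paper asserts or needs; the theorem is stated at a fixed base point simply because that is all Ikoma's input delivers, not because uniformity is known to fail.
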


\begin{proof}
The proof is almost same as the proof of Theorem~\ref{thm:cont:volume}
in the case where $B = \{ (a_1, \ldots, a_r) \}$.
We use the same notation as in the proof of Theorem~\ref{thm:cont:volume}.
Then, by \cite[Corollary~3.4.4]{Ikoma},
there is a positive number $\delta$ such that $\delta \leq 1$ and
\[
\left| 
\avol_{\chi}\left(  \sum_{i=1}^r a_i \overline{\DDD}_i + \sum_{j=1}^{r'} \delta_j \overline{\AAA}_j \right)
- \avol_{\chi}\left(  \sum_{i=1}^r a_i \overline{\DDD}_i \right)
\right| \leq \epsilon/6
\]
for $\delta_1, \ldots, \delta_{r'} \in \RR$
with $\vert \delta_1 \vert + \cdots + \vert \delta_{r'} \vert \leq \delta$.
Thus, by virtue of Proposition~\ref{prop:vol:comp:C:0},
we can show the similar estimate
\[
\left| 
\avol_{\chi}\left(  \sum_{i=1}^r a_i \overline{D}_i + \sum_{j=1}^{r'} \delta_j \overline{A}_j \right)
- \avol_{\chi}\left(  \sum_{i=1}^r a_i \overline{D}_i \right)
\right| \leq \epsilon/2
\]
as \eqref{eqn:thm:cont:volume:02}.
The remaining part is exactly same as the proof of Theorem~\ref{thm:cont:volume}
by using Proposition~\ref{prop:vol:comp:C:0}.
\end{proof}

\ifmonog\section{Applications}\fi
\ifpaper\subsection{Applications}\fi
\label{subsec:app:cont:vol}

Here we would like to give two applications of the continuity of the volume function, that is,
the log-concavity of $\avol$ and the generalized Hodge index theorem for adelic arithmetic divisors.

\begin{Theorem}
Let $\overline{D}_1$ and $\overline{D}_2$ be pseudo-effective 
adelic arithmetic $\RR$-Cartier divisors of
$C^0$-type on $X$.
Then
\[
\avol(\overline{D}_1 + \overline{D}_2)^{1/(d + 1)}
\geq \avol(\overline{D}_1)^{1/(d + 1)} +
\avol(\overline{D}_2)^{1/(d + 1)}.
\]
\end{Theorem}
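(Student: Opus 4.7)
The plan is to reduce, via continuity of $\avol$ and the approximation of adelic arithmetic divisors by arithmetic divisors on models, to the classical log-concavity of the arithmetic volume on a fixed arithmetic variety.

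First I would reduce to the case where both $\overline{D}_1$ and $\overline{D}_2$ are big. Fix any big adelic arithmetic $\RR$-Cartier divisor $\overline{A}$ of $C^0$-type on $X$; by pseudo-effectivity, $\overline{D}_i + \epsilon \overline{A}$ is big for every $\epsilon > 0$. If the inequality is established for big divisors, then applying it to the pair $\overline{D}_1 + \epsilon \overline{A}$, $\overline{D}_2 + \epsilon \overline{A}$ and letting $\epsilon \to 0$ gives the pseudo-effective case, because $\avol$ is continuous on $\aDiv_{C^0}^{\ad}(X)_{\RR}$ by Theorem~\ref{thm:cont:volume}.

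Assuming now that both $\overline{D}_i$ are big, I would fix $\eta > 0$ and apply Proposition~\ref{prop:vol:approx} separately to $\overline{D}_1$ and to $\overline{D}_2$. This produces normal models $\XXX_1,\XXX_2$ of $X$ over $\Spec(O_K)$ and arithmetic $\RR$-Cartier divisors $\overline{\DDD}_i$ of $C^0$-type on $\XXX_i$ satisfying $\overline{\DDD}_i^{\ad} \leq \overline{D}_i$ and $\avol(\overline{\DDD}_i) \geq \avol(\overline{D}_i) - \eta$. Choosing a normal model $\XXX$ of $X$ together with birational morphisms to $\XXX_1$ and $\XXX_2$ and pulling back, I may assume that $\overline{\DDD}_1$ and $\overline{\DDD}_2$ are defined on the same $\XXX$, without changing $\avol$ or the inequalities $\overline{\DDD}_i^{\ad} \leq \overline{D}_i$. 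Monotonicity of $\avol$ then yields
\[
\avol(\overline{D}_1 + \overline{D}_2) \;\geq\; \avol(\overline{\DDD}_1^{\ad} + \overline{\DDD}_2^{\ad}) \;=\; \avol(\overline{\DDD}_1 + \overline{\DDD}_2).
\]

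Next I would invoke the log-concavity of the arithmetic volume on the arithmetic variety $\XXX$ for big arithmetic $\RR$-Cartier divisors of $C^0$-type, established by Yuan \cite{YuanVol} and extended to the $C^0$ and $\RR$-setting in \cite{MoArZariski}:
\[
\avol(\overline{\DDD}_1 + \overline{\DDD}_2)^{1/(d+1)} \;\geq\; \avol(\overline{\DDD}_1)^{1/(d+1)} + \avol(\overline{\DDD}_2)^{1/(d+1)}.
\]
Combining this with the previous display and letting $\eta \to 0$ gives the desired Brunn--Minkowski inequality.

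The main obstacle is really bookkeeping: one must pass simultaneously to a common model $\XXX$ while preserving the lower bounds on the model volumes, and one needs the classical log-concavity in the precise generality of big arithmetic $\RR$-Cartier divisors of $C^0$-type on a generically smooth normal projective arithmetic variety, rather than merely in the ``ample $C^{\infty}$'' setting in which it was first proved. Both points are handled by continuity of $\avol_{\natural}$ together with the approximation theorems already established in the present paper (Theorem~\ref{thm:approx:adelic:arith} and Proposition~\ref{prop:vol:approx}) and the extension of Yuan's inequality to the $C^0$ and $\RR$-setting in \cite{MoArZariski}.
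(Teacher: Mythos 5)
Your proof is correct, but it takes a slightly different route from the paper's. The paper invokes Theorem~\ref{thm:approx:adelic:arith} directly to approximate each $\overline{D}_i$ \emph{from above} by arithmetic $\RR$-Cartier divisors $\overline{\DDD}_i$ on a common normal model, i.e. $\overline{D}_i \leq \overline{\DDD}_i^{\ad} \leq \overline{D}_i + (0,\sum_{P\in T}\epsilon[P])$; since $\overline{\DDD}_i^{\ad} \geq \overline{D}_i$, pseudo-effectivity is automatically inherited by the $\overline{\DDD}_i$, so no preliminary reduction to the big case is needed, and one applies Yuan's log-concavity on the model directly to pseudo-effective divisors, passing to the limit by continuity on the left-hand side only. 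You instead first perturb by $\epsilon\overline{A}$ to reduce to the big case, and then approximate each $\overline{D}_i$ \emph{from below} via Proposition~\ref{prop:vol:approx}, which gives $\overline{\DDD}_i^{\ad} \leq \overline{D}_i$ with $\avol(\overline{\DDD}_i) \geq \avol(\overline{D}_i) - \eta$; here the preliminary reduction is genuinely necessary, because approximation from below does not preserve pseudo-effectivity, whereas in the big case the volume bound forces $\overline{\DDD}_i$ to be big. Both routes combine approximation, the classical inequality on the model, and continuity of $\avol$; the paper's version is shorter because the upward approximation avoids the extra perturbation step, while yours is perhaps the more natural thing to reach for when proving a lower bound, since lower approximations let you use monotonicity of $\avol$ directly on the sum.
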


\begin{proof}
Let $U$ be a non-empty open set of $\Spec(O_K)$ such that
$\overline{D}_1$ and $\overline{D}_2$ have defining models $\DDD_{1,U}$ and
$\DDD_{2,U}$ over $U$. We set $T = M_K \setminus U$.
For a positive number $\epsilon$,
by Theorem~\ref{thm:approx:adelic:arith},
there are a normal model of $\XXX$ of $X$ and arithmetic $\RR$-Cartier divisors 
$\overline{\DDD}_1$ and $\overline{\DDD}_2$ of $C^0$-type on $\XXX$ such that
\[
\overline{D}_1 \leq
\overline{\DDD}_1 \leq \overline{D}_1 + \left(0, \sum_{P \in T} \epsilon [P], 0 \right)
\quad\text{and}\quad
\overline{D}_2 \leq
\overline{\DDD}_2 \leq \overline{D}_2 + \left(0, \sum_{P \in T} \epsilon [P], 0 \right).
\]
As $\overline{\DDD}_1$ and $\overline{\DDD}_2$ are pseudo-effective,
by \cite[Theorem~B]{YuanVol} or \cite[Theorem~5.2.2]{MoArZariski}, we have
\[
\avol(\overline{\DDD}_1 + \overline{\DDD}_2)^{1/(d + 1)}
\geq \avol(\overline{\DDD}_1)^{1/(d + 1)} +
\avol(\overline{\DDD}_2)^{1/(d + 1)},
\]
and hence
\[
\avol\left(\overline{D}_1 + \overline{D}_2 + 2 \epsilon 
\left(0, \sum_{P \in T} [P], 0 \right)\right)^{1/(d + 1)}
\geq \avol(\overline{D}_1)^{1/(d + 1)} +
\avol(\overline{D}_2)^{1/(d + 1)}.
\]
Thus the assertion follows from the continuity of $\avol$ on
$\aDiv_{C^0}^{\ad}(X)_{\RR}$ (cf. Theorem~\ref{thm:cont:volume}).
\end{proof}

\begin{Theorem}[Generalized Hodge index theorem for adelic arithmetic divisors]
\label{thm:G:H:I:T:adelic:arith}
Let $\overline{D} = (D, g)$ be a relatively nef adelic arithmetic $\RR$-Cartier divisor of $C^0$-type on $X$.
Then $\adeg(\overline{D}^{d+1}) = \avol_{\chi}(\overline{D})$.
In particular, $\adeg(\overline{D}^{d+1}) \leq \avol(\overline{D})$.
Moreover, if $\overline{D}$ is nef, then
$\adeg(\overline{D}^{d+1}) = \avol(\overline{D})$.
\end{Theorem}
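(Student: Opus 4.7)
The plan is to reduce the theorem to the classical generalized Hodge index theorem for arithmetic $\RR$-Cartier divisors of $C^0$-type on normal models (property (5) in the introduction, together with its $\avol_\chi$ refinement) via approximation by models, then pass to the limit using the continuity of $\avol_\chi$, $\avol$, and the arithmetic intersection pairing.

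First I would apply Proposition~\ref{prop:rel:nef:pseudo:effective}(1) to the relatively nef adelic divisor $\overline{D}$: this yields a sequence of normal models $\XXX_n$ of $X$ over $\Spec(O_K)$ and relatively nef $\RR$-Cartier divisors $\DDD_n, \DDD_n'$ on $\XXX_n$ such that, writing $\overline{\DDD}_n := (\DDD_n, g_\infty)$ and $\overline{\DDD}_n' := (\DDD_n', g_\infty)$, one has $(\overline{\DDD}_n')^{\ad} \leq \overline{D} \leq \overline{\DDD}_n^{\ad}$ and, at each finite place $P$, the sup-norms of the Green function differences tend to $0$. The vertical Green function corrections are concentrated on a finite set $S\subseteq M_K$ away from which $\overline{D}$ admits a defining model. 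Since each $\overline{\DDD}_n$ (resp.\ $\overline{\DDD}_n'$) is a relatively nef arithmetic $\RR$-Cartier divisor of $C^0$-type on $\XXX_n$, the arithmetic Hilbert--Samuel formula gives
\[
\adeg(\overline{\DDD}_n^{d+1}) = \avol_\chi(\overline{\DDD}_n), \qquad \adeg((\overline{\DDD}_n')^{d+1}) = \avol_\chi(\overline{\DDD}_n').
\]

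Passing to the limit, continuity of $\avol_\chi$ (Theorem~\ref{thm:cont:chi:volume}) immediately yields $\avol_\chi(\overline{\DDD}_n^{\ad}) \to \avol_\chi(\overline{D})$ and similarly for the lower approximant. On the intersection side, formula~\eqref{eqn:formula:intersection:adelic} expresses $\adeg(\overline{\DDD}_n^{d+1}) - \adeg(\overline{D}^{d+1})$ as a finite sum, indexed by $P\in S$ and $\emptyset \neq I \subseteq\{1,\dots,d+1\}$, of local terms of the form $\adeg_P(\prod_{i\in I}(0,\varphi_{i,P,n})\cdot \prod_{j\notin I}(D,g_{j,P}))$; by Proposition~\ref{prop:misc:intersection:adelic}(3) each such term is bounded by a constant multiple of the sup-norm of the $\varphi_{i,P,n}$, and hence tends to zero. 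This produces the desired equality $\adeg(\overline{D}^{d+1}) = \avol_\chi(\overline{D})$, and the inequality $\adeg(\overline{D}^{d+1}) \leq \avol(\overline{D})$ follows from Minkowski's inequality $\avol_\chi \leq \avol$ recorded in Definition~\ref{def:volume:chi:volume}.

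For the moreover part, suppose $\overline{D}$ is nef. Since the upper approximant $\overline{\DDD}_n^{\ad}$ dominates $\overline{D}$ and shares its underlying $\RR$-Cartier divisor on $X$, Lemma~\ref{lem:global:degree:comp} gives $\adeg(\srest{\overline{\DDD}_n^{\ad}}{x}) \geq \adeg(\srest{\overline{D}}{x}) \geq 0$ for every closed point $x\in X$, so that $\overline{\DDD}_n$ is itself nef. The classical generalized Hodge index theorem for nef arithmetic $\RR$-Cartier divisors now gives $\adeg(\overline{\DDD}_n^{d+1}) = \avol(\overline{\DDD}_n)$; combining this with the continuity of $\avol$ (Theorem~\ref{thm:cont:volume}) and the convergence of intersection numbers established above yields $\adeg(\overline{D}^{d+1}) = \avol(\overline{D})$.

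The hard part will be the controlled passage to the limit of the intersection numbers: as the models $\XXX_n$ and the vertical divisors $\DDD_n$ vary with $n$, one must verify that the local contributions at each place $P\in S$ in formula~\eqref{eqn:formula:intersection:adelic} degenerate uniformly. This is precisely where Proposition~\ref{prop:misc:intersection:adelic}(3) is decisive, furnishing the uniform sup-norm bound that allows the limit to commute with the intersection pairing; the rest of the argument is then a clean application of the continuity theorems.
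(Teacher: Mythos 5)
Your proposal follows essentially the same strategy as the paper: approximate $\overline{D}$ by arithmetic $\RR$-Cartier divisors on a sequence of normal models via Proposition~\ref{prop:rel:nef:pseudo:effective}, invoke the $\avol_{\chi}$-Hodge identity on a fixed model, and then pass to the limit using Theorem~\ref{thm:cont:chi:volume}, Theorem~\ref{thm:cont:volume}, and a uniform sup-norm control on the local intersection contributions. The one place where you wave your hands is the statement ``$\adeg(\overline{\DDD}_n^{d+1}) = \avol_{\chi}(\overline{\DDD}_n)$ for relatively nef arithmetic $\RR$-Cartier divisors of $C^0$-type,'' which you attribute to ``the arithmetic Hilbert--Samuel formula''; this is not literally item~(5) of the introduction (which is stated with $\avol$ and for nef $\overline{\DDD}$), and the paper in fact devotes a four-step Claim to establishing the $\avol_{\chi}$ version for merely relatively nef $C^0$-type $\RR$-divisors, starting from Gillet--Soul\'e's arithmetic Riemann--Roch and passing through $C^{\infty}$-approximation of metrics and ample perturbation on the model — so that step deserves to be spelled out rather than cited.
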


\begin{proof}
Let $\XXX$ be a normal model of $X$ over $\Spec(O_K)$ and
$\overline{\DDD} = (\DDD, g_{\infty})$ an arithmetic $\RR$-Cartier divisor of $C^0$-type on $\XXX$.
First let us see the following claim:

\begin{Claim}
If $\overline{\DDD}$ is relatively nef, then
$\adeg(\overline{\DDD}^{d+1}) = \avol_{\chi}(\overline{\DDD})$.
\end{Claim}

\begin{proof}
We divide the proof into four steps:

{\bf Step 1} (the case where $\overline{\DDD}$ is an arithmetic $\QQ$-Cartier divisor of $C^{\infty}$-type,
$\DDD$ is ample on $\XXX$ and
$c_1(\overline{\DDD})$ is a positive form) :
This is a classic case. For example, it follows from the arithmetic Riemann-Roch theorem
due to Gillet-Soul\'{e}
(cf. \cite{GSRR}).

{\bf Step 2} (the case where
$\overline{\DDD}$ is of $C^{\infty}$-type, $\DDD$ is relatively nef,
$c_1(\overline{\DDD})$ is a semi-positive form) :
As any arithmetic Cartier divisor of $C^{\infty}$-type can be written by
a difference of ample arithmetic Cartier divisors of $C^{\infty}$-type,
we can find ample
arithmetic Cartier divisors $\overline{\AAA}_1, \ldots, \overline{\AAA}_l$
of $C^{\infty}$-type and real numbers $a_1, \ldots, a_l$ such that
\[
\overline{\DDD} = a_1 \overline{\AAA}_1 + \cdots + a_l \overline{\AAA}_l.
\]
Then, for any rational numbers $b_1, \ldots, b_l$ with $a_i < b_i$ for all $i$,
$b_1 \AAA_1 + \cdots + b_l \AAA_l$ is ample and
$c_1(b_1 \overline{\AAA}_1 + \cdots + b_l \overline{\AAA}_l)$ is positive because
\[
b_1 \overline{\AAA}_1 + \cdots + b_l \overline{\AAA}_l =
\overline{\DDD} + (b_1 -a_1) \overline{\AAA}_1 + \cdots + (b_l-a_l) \overline{\AAA}_l.
\]
Thus, by Step~1,
\[
\adeg ((b_1 \overline{\AAA}_1 + \cdots + b_l \overline{\AAA}_l)^{d+1}) = 
\avol_{\chi}(b_1 \overline{\AAA}_1 + \cdots + b_l \overline{\AAA}_l).
\]
Therefore,
the assertion follows from Theorem~\ref{thm:cont:chi:volume}.

{\bf Step 3} (the case where $\DDD \cap X$ is ample on $X$) :
Let $h$ be an $F_{\infty}$-invariant $\DDD$-Green function of $C^{\infty}$-type on $X(\CC)$  such that
$c_1(\DDD, h)$ is a positive form.
Then there is a continuous function $\phi$ on $X(\CC)$ such that
$g_{\infty} = h + \phi$, and hence
$c_1(\DDD, h) + dd^c([\phi]) \geq 0$.
Thus, by \cite[Lemma~4.2]{MoArZariski}, 
there is a sequence $\{ \phi_n \}_{n=1}^{\infty}$ 
of $F_{\infty}$-invariant $C^{\infty}$-functions on $X(\CC)$ with the following properties:
\begin{enumerate}
\renewcommand{\labelenumi}{(\alph{enumi})}
\item
$\lim_{n\to\infty} \Vert \phi_n - \phi \Vert_{\sup} = 0$.

\item
If we set $\overline{\EEE}_n = (\DDD, h + \phi_n)$, then
$c_1(\overline{\EEE}_n)$ is a semipositive form.
\end{enumerate}
Then, by Step~2, 
$\adeg(\overline{\EEE}_n^{d+1}) =
\avol_{\chi}(\overline{\EEE}_n)$ for all $n$.
As $\overline{\EEE}_n = \overline{\DDD} + (0, \phi_n - \phi)$, by Theorem~\ref{thm:cont:chi:volume},
\[
\lim_{n\to\infty} \acvol(\overline{\EEE}_n) = \acvol(\overline{\DDD}).
\]
Moreover, by using \cite[Lemma~1.2.1]{MoD},
\[
\lim_{n\to\infty} \adeg(\overline{\EEE}_n^{d+1}) = \adeg(\overline{\DDD}^{d+1}),
\]
as required.

{\bf Step 4} (general case) :
Finally we prove the assertion of the claim.
Let $\overline{\AAA}$ be an ample arithmetic Cartier divisor 
of $C^{\infty}$-type on $\XXX$.
Then,  
since $\DDD + \epsilon \AAA$ is ample on $X$ for any positive number $\epsilon$,
we have $\adeg((\overline{\DDD} + \epsilon\overline{\AAA})^{d+1}) =
\avol_{\chi}(\overline{\DDD} + \epsilon\overline{\AAA})$ by Step~3.
Thus, the assertion follows from Theorem~\ref{thm:cont:chi:volume}.
\end{proof}

We assume that $\overline{D}$ is relatively nef.
Let us choose a non-empty open set $U$ of $\Spec(O_K)$, 
a normal model $\XXX_U$ over $U$ and
a relatively nef $\RR$-Cartier divisor $\DDD_U$ on $\XXX_U$ such that
$\DDD_U \cap X = D$ and $g_P$ is the Green function arising from  $(\XXX_U, \DDD_U)$ for all $P \in U \cap M_K$.
Moreover, by Proposition~\ref{prop:rel:nef:pseudo:effective},
there is a sequence $\{ (\XXX_n, \DDD_n) \}_{n=1}^{\infty}$ with the following properties:
\begin{enumerate}
\renewcommand{\labelenumi}{(\arabic{enumi})}
\item
$\XXX_n$ is a normal model of $X$ over $\Spec(O_K)$
such that $\rest{\XXX_n}{U} = \XXX_U$.

\item
$\DDD_n$ is relatively nef $\RR$-Cartier divisor on $\XXX_n$ and $\rest{\DDD_n}{U} = \DDD_U$.

\item
$\overline{D} \leq (\DDD_n, g_{\infty})^{\ad}$.

\item
If we set $\phi_{n, P} = g_{((\XXX_n)_{(P)},\,  (\DDD_n)_{(P)})} - g_P$ for $P \in M_K \setminus U$, 
then
\[
\lim_{n\to\infty} \Vert \phi_{n, P} \Vert_{\sup} = 0.
\]
\end{enumerate}
As $(\DDD_n, g_{\infty})^a = \overline{D} + \left(0, \sum_{P \in M_K \setminus U} \phi_{n,P}[P] \right)$,
by Theorem~\ref{thm:cont:volume} and Theorem~\ref{thm:cont:chi:volume}, 
\addtocounter{Claim}{1}
\begin{equation}
\label{eqn:thm:G:H:I:T:adelic:arith:01}
\lim_{n\to\infty} \avol((\DDD_n, g_{\infty})) = \avol(\overline{D})
\quad\text{and}\quad
\lim_{n\to\infty} \avol_{\chi}((\DDD_n, g_{\infty})) = \avol_{\chi}(\overline{D}).
\end{equation}
Here let us see 
\addtocounter{Claim}{1}
\begin{equation}
\label{eqn:thm:G:H:I:T:adelic:arith:02}
\lim_{n\to\infty} \adeg ((\DDD_n, g_{\infty})^{d+1}) = \adeg(\overline{D}^{d+1}).
\end{equation}
Indeed, we set 
\[
\psi_P = g_P - g_{((\XXX_1)_{(P)},\,  (\DDD_1)_{(P)})}
\quad\text{and}\quad
\psi_{n, P} = g_{((\XXX_n)_{(P)},\,  (\DDD_n)_{(P)})} - g_{((\XXX_1)_{(P)},\,  (\DDD_1)_{(P)})}
\]
for $P \in M_K \setminus U$.
Note that $\phi_{n, P} = \psi_P - \psi_{n,P}$.
Then, by using Lemma~\ref{lem:mult:linear:diff:formula}, we can see
\begin{multline*}
\adeg((\DDD_n, g_{\infty})^{d+1}) =
\adeg((\DDD_1, g_{\infty})^{d+1}) \\
+ \sum_{i=1}^{d+1} \sum_{P \in M_K \setminus U} \log \#(O_K/P)
\adeg_P((\DDD_n)_{(P)}^{i-1} \cdot (\DDD_1)_{(P)}^{d+1-i} ; \psi_{n,P}).
\end{multline*}
Thus, by virtue of Proposition~\ref{propdef:adelic:intersection},
\begin{multline*}
\lim_{n\to\infty} \adeg((\DDD_n, g_{\infty})^{d+1})
=
\adeg((\DDD_1, g_{\infty})^{d+1}) \\
+ \sum_{i=1}^{d+1} \sum_{P \in M_K \setminus U} \log \#(O_K/P)
\adeg_P((D, g_P)^{i-1} \cdot (\DDD_1)_{(P)}^{d+1-i} ; \psi_{P})
= \adeg(\overline{D}^{d+1}),
\end{multline*}
as desired.

By \eqref{eqn:thm:G:H:I:T:adelic:arith:01} and \eqref{eqn:thm:G:H:I:T:adelic:arith:02}
together with the above claim, 
we have the first assertion.
If $\overline{D}$ is nef, then $(\DDD_n, g_{\infty})$ is also nef by the property (3) and
Lemma~\ref{lem:global:degree:comp}, and
hence the second assertion follows from \eqref{eqn:thm:G:H:I:T:adelic:arith:01} and \eqref{eqn:thm:G:H:I:T:adelic:arith:02}
by using \cite[Proposition~6.4.2]{MoArZariski} and 
\cite[Proposition~2.1.1]{MoD}.
\end{proof}

\ifmonog\chapter[Zariski decompositions of adelic arithmetic divisors]{Zariski decompositions of adelic arithmetic divisors on arithmetic surfaces}\fi
\ifpaper\section{Zariski decompositions of adelic arithmetic divisors on arithmetic surfaces}\fi

Let $\XXX$ be a regular projective arithmetic surface and
let $\overline{\DDD}$ be an arithmetic $\RR$-Cartier divisor of $C^0$-type on $\XXX$.
The set of all nef arithmetic $\RR$-Cartier divisors $\overline{\LLL}$ 
of $C^0$-type on $\XXX$ with
$\overline{\LLL} \leq \overline{\DDD}$ is denoted
by $\Upsilon(\overline{\DDD})$.
In \cite[Theorem~9.2.1]{MoArZariski}, it is shown that if
$\Upsilon(\overline{D}) \not= \emptyset$, then
$\Upsilon(\overline{D})$ has the greatest element $\overline{\QQQ}$, that is,
$\overline{\QQQ} \in \Upsilon(\overline{D})$ and
$\overline{\LLL} \leq \overline{\QQQ}$ for all
$\overline{\LLL} \in \Upsilon(\overline{D})$.
If we set $\overline{\NNN} := \overline{\DDD} - \overline{\QQQ}$,
then $\overline{\DDD} = \overline{\QQQ} + \overline{\NNN}$ yields the
Zariski decomposition of $\overline{\DDD}$.
For example, we can see that the natural map
\[
\aH(\XXX, n \overline{\QQQ}) \to \aH(\XXX, n \overline{\DDD})
\]
is bijective for every $n \in  \ZZ_{> 0}$.
In particular, $\avol(\overline{\QQQ}) = \avol(\overline{\DDD})$, so that it gives rise to
a refinement of
Fujita's approximation theorem for arithmetic divisors.
In this 
\ifmonog chapter, \fi
\ifpaper section, \fi
we consider a generalization of the above result to
an adelic arithmetic $\RR$-Cartier divisor.

\ifmonog\section[Local Zariski decompositions of adelic divisors]{Local Zariski decompositions of adelic divisors on algebraic curves}\fi
\ifpaper\subsection{Local Zariski decompositions of adelic divisors on algebraic curves}\fi
\label{subsec:Zariski:decomp:adelic:divisor}

Let $k$ be a field and $v$ a non-trivial discrete valuation of $k$.
We assume that $k^{\circ}$ is excellent.
Let $\varpi$ be a uniformizing parameter of $k^{\circ}$.
Let $X$ be a projective, smooth and geometrically integral
curve over $k$. 
Let $k_v$ be the completion of $k$ with respect to $v$ and
$X_v := X \times_{\Spec(k)} \Spec(k_v)$.
The purpose of this 
\ifmonog section \fi
\ifpaper subsection \fi
is to prove the following theorem:

\begin{Theorem}[Local Zariski decomposition]
\label{thm:vertical:Zariski:decomp}
Let $\overline{D} = (D, g)$ be an adelic $\RR$-Cartier divisor on $X$ and
let $Q$ be an $\RR$-Cartier divisor on $X$ with $Q \leq D$.
Here we set
\[
\Sigma(\overline{D}; Q) := \left\{ \overline{L} \ \left| \ 
\begin{array}{l}
\text{$\overline{L}$ is a relatively nef adelic $\RR$-Cartier divisor on $X$} \\
\text{such that $L \leq Q$ and
$\overline{L} \leq \overline{D}$}
\end{array}
\right\}\right..
\]%
\index{\AdelDivSymbol}{0S:Sigma(overline{D};Q)@$\Sigma(\overline{D}; Q)$}%
We assume that $\deg(Q) \geq 0$. Then
there exists a $Q$-Green function $q$ of $(C^0 \cap \Tpsh)$-type on $X_v^{\an}$
such that $\overline{Q} := (Q, q)$ gives rise to the greatest element of $\Sigma(\overline{D}; Q)$,
that is, $\overline{Q}\in \Sigma(\overline{D}; Q)$ and $\overline{L} \leq \overline{Q}$ for all
$\overline{L} \in \Sigma(\overline{D}; Q)$. 
Moreover, we have the following:
\begin{enumerate}
\renewcommand{\labelenumi}{(\arabic{enumi})}
\item
If $\overline{D}$ is given by an $\RR$-Cartier divisor $\DDD$ on a regular model $\XXX$ of $X$
over $\Spec(k^{\circ})$,
then $\overline{Q}$ is given by a relatively nef
$\RR$-Cartier divisor $\QQQ$ on $\XXX$. 

\item
If $Q = D$, then $\adeg_v(\overline{Q};g-q) = 0$
\rom{(}cf. Proposition-Definition~\rom{\ref{propdef:adelic:intersection}}\rom{)}.
\end{enumerate}
\end{Theorem}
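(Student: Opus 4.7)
The plan is to perform the Zariski decomposition first on suitably chosen regular models over $\Spec(k^\circ)$ and then pass to the adelic limit. By Theorem~\ref{thm:approx:adelic} combined with resolution for arithmetic surfaces (\cite{Lip}), I would select for each $n \geq 1$ a regular model $\XXX_n$ of $X$ with $\XXX_{n+1}$ dominating $\XXX_n$, together with an $\RR$-Cartier divisor $\DDD_n^+$ on $\XXX_n$ satisfying $\DDD_n^+ \cap X = D$, $\overline{D} \leq (\DDD_n^+)^{\ad}$, and $\Vert g_{(\XXX_n,\,\DDD_n^+)} - g \Vert_{\sup} \leq 1/n$. On each model define
\[
\Sigma_n := \{ \QQQ \in \Div(\XXX_n)_{\RR} \mid \QQQ \text{ relatively nef},\ \QQQ \leq \DDD_n^+,\ \QQQ \cap X \leq Q \}.
\]
The hypothesis $\deg(Q) \geq 0$ together with Zariski's lemma on the negative semidefinite intersection form of vertical components guarantees $\Sigma_n \neq \emptyset$, while the Fujita-type argument (that $\max(\QQQ_1, \QQQ_2)$ is relatively nef whenever $\QQQ_1, \QQQ_2$ are) shows $\Sigma_n$ is closed under pairwise componentwise maximum. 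Boundedness above by $\DDD_n^+$ then yields a greatest element $\QQQ_n \in \Sigma_n$; compare \cite[Section~9]{MoArZariski} for the analogous global construction.

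Setting $q_n := g_{(\XXX_n,\,\QQQ_n)}$, the next step is to show the sequence $(q_n)$ is uniformly Cauchy with $\Vert q_n - q_m \Vert_{\sup} = O(1/n + 1/m)$. The idea is that on a common refinement of $\XXX_n$ and $\XXX_m$, $\QQQ_n$ fails to be a candidate for $\Sigma_m$ by at most a vertical $\RR$-Cartier divisor whose associated Green function has sup-norm $O(1/n + 1/m)$, so up to such a small vertical correction $\QQQ_n$ sits in $\Sigma_m$ and maximality of $\QQQ_m$ (together with the symmetric inequality) controls the difference. The uniform limit $q := \lim_n q_n$ is then a continuous function on $X_v^{\an} \setminus \Supp_{\RR}(Q)^{\an}$ which, by Proposition~\ref{prop:PSH:imply:nef}, is a $Q$-Green function of $(C^0 \cap \Tpsh)$-type. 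To verify that $\overline{Q} := (Q, q)$ is the greatest element of $\Sigma(\overline{D}; Q)$, given any $\overline{L} = (L, \ell) \in \Sigma(\overline{D}; Q)$ I would apply Proposition~\ref{prop:Green:psh:C0:approx} to approximate $\ell$ from below by $g_{(\YYY_m,\,\MMM_m)}$ with $\MMM_m$ relatively nef and $\MMM_m \cap X = L \leq Q$, and then compare on a common refinement of $\XXX_n$ and $\YYY_m$: the maximality of $\QQQ_n$ (after correcting $\MMM_m$ by an $O(1/n)$ vertical perturbation) forces $\ell \leq q$ in the limit.

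Assertion~(1) is then immediate, since when $\overline{D}$ already comes from a regular model one may take the constant approximating sequence and $\overline{Q}$ is produced directly by the model-level construction. For assertion~(2), when $Q = D$ the classical orthogonality of the relative Zariski decomposition on the surface $\XXX_n$ gives $\QQQ_n \cdot (\DDD_n^+ - \QQQ_n) = 0$, which rewrites (via Definition~\ref{def:intersection:integral:div}) as $\adeg_v(\QQQ_n^{\ad};\, g_{(\XXX_n,\,\DDD_n^+)} - q_n) = 0$; passing to the adelic limit using Proposition-Definition~\ref{propdef:adelic:intersection} yields $\adeg_v(\overline{Q};\, g - q) = 0$. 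The principal technical obstacle in this outline is the uniform Cauchy estimate for $(q_n)$: quantitatively controlling how the positive part of a Zariski decomposition on an arithmetic surface varies under a small vertical perturbation of the input divisor requires, beyond Zariski's lemma, careful analysis of the support of the negative part, and it is here that the excellence of $k^\circ$ (ensuring regular resolutions persist under completion via Lemma~\ref{lem:base:change:completion}) and the negative semidefiniteness of the vertical intersection form enter decisively.
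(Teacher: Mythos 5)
Your proposal follows essentially the same route as the paper: approximate $\overline D$ by $\RR$-Cartier divisors on regular models $\XXX_n$ (using Theorem~\ref{thm:approx:adelic} and resolution of surfaces), take the greatest relatively nef element $\QQQ_n$ on each model (the paper's Lemma~\ref{lem:vertical:Zariski:decomp}), prove the Green functions $q_n = g_{(\XXX_n,\,\QQQ_n)}$ converge uniformly, set $q = \lim q_n$, and verify maximality of $\overline Q = (Q,q)$ by approximating any competitor $\overline L$ with Proposition~\ref{prop:Green:psh:C0:approx}. Assertions (1) and (2) are handled the same way the paper handles them.

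One remark on the point you flag as the "principal technical obstacle": the uniform Cauchy estimate for $(q_n)$ is not as delicate as you fear. You do not need any analysis of the support of the negative part. The entire estimate rests on a one-line monotonicity: if $\DDD' \leq \DDD + \epsilon\,\XXX_\circ$ on a regular model $\XXX$, then the greatest element $\QQQ(\DDD')$ of $\Sigma_\XXX(\DDD';Q)$ satisfies $\QQQ(\DDD') \leq \QQQ(\DDD) + \epsilon\,\XXX_\circ$, because $\QQQ(\DDD') - \epsilon\,\XXX_\circ$ is still relatively nef (the central fiber $\XXX_\circ$ is the pullback of a divisor on $\Spec(k^\circ)$, hence has zero intersection with every vertical curve) and lies below $\DDD$, so maximality of $\QQQ(\DDD)$ applies. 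Combined with the compatibility of greatest elements under pullback along birational morphisms (Lemma~\ref{lem:vertical:Zariski:decomp}(2)), this is exactly how the paper squeezes $\QQQ_{n+1}$ between $\pi_{n+1}^*(\QQQ_n) \pm \frac{2}{n(n+1)}(\XXX_{n+1})_\circ$; your $O(1/n + 1/m)$ bound on a common refinement is the same estimate in slightly different packaging. Your one-sided approximation from above is also fine, since the sup-norm control gives two-sided bounds after comparison. The only small inaccuracy is the citation of Proposition~\ref{prop:PSH:imply:nef} for concluding that $q$ is of $(C^0\cap\Tpsh)$-type: that proposition is about a single model, whereas what you actually need is just the definition of $(C^0\cap\Tpsh)$-type, which is satisfied directly because the sequence $(\XXX_n, \QQQ_n)$ of relatively nef models converges uniformly to $q$.
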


Before starting the proof of Theorem~\ref{thm:vertical:Zariski:decomp},
we need to prepare two lemmas.

\begin{Lemma}
\label{lem:for:vertical:Zariski:01}
Let $\XXX$ be a regular model of $X$ over $\Spec(k^{\circ})$.
Then we have the following:
\begin{enumerate}
\renewcommand{\labelenumi}{(\arabic{enumi})}
\item
Let $\pi : \XXX' \to \XXX$ be a birational morphism of regular models of $X$ over $\Spec(k^{\circ})$, 
and let $\LLL'$ be a relatively nef $\RR$-Cartier divisor on $\XXX'$.
Then $\pi_*(\LLL')$ is relatively nef and 
$\pi^*(\pi_*(\LLL')) - \LLL'$ is effective.

\item
If $\LLL_1, \ldots, \LLL_l$ are relatively nef $\RR$-Cartier divisors on $\XXX$, then
\[
\max \{ \LLL_1, \ldots, \LLL_l \}
\]
is also relatively nef
\rom{(}for the definition of $\max \{ \LLL_1, \ldots, \LLL_l \}$, see
Conventions and terminology~\rom{\ref{CT:max:min:divisor}}\rom{)}.

\item
Let $\DDD$ be an $\RR$-Cartier divisor on $\XXX$ and $g = g_{(\XXX,\, \DDD)}$.
Then $g$ is of $(C^0 \cap \Tpsh)$-type if and only $\DDD$ is relatively nef.
\end{enumerate}
\end{Lemma}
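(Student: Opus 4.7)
The plan is to dispose of (3) immediately, since it is exactly the content of Proposition~\ref{prop:PSH:imply:nef} applied to the normal (in fact regular) model $\XXX$; no additional argument is needed in either direction.

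For (1), the key point is that $\XXX$ and $\XXX'$ are regular two-dimensional noetherian schemes, so Weil and Cartier divisors agree and the Weil-theoretic pushforward extends $\RR$-linearly. To verify that $\pi_*(\LLL')$ is relatively nef, I would fix a vertical curve $C \subseteq \XXX$ and invoke the projection formula $\deg(\pi_*(\LLL') \cdot C) = \deg(\LLL' \cdot \pi^*(C))$; the pullback $\pi^*(C)$ is the sum of the strict transform $\widetilde{C}$ and a non-negative combination of $\pi$-exceptional curves, all of which are vertical on $\XXX'$, so the right-hand side is a sum of non-negative intersection numbers. For the effectiveness of $\pi^*(\pi_*(\LLL')) - \LLL'$, I would write this difference as $\sum_i a_i E_i$ supported on the $\pi$-exceptional locus; intersecting with a fixed exceptional $E_j$ yields $\sum_i a_i (E_i \cdot E_j) = -\LLL' \cdot E_j \leq 0$ by projection formula (since $\pi_*(E_j)=0$), and the negative definiteness of the intersection form on the exceptional curves forces $a_i \geq 0$ for every $i$.

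Part (2) can be handled by a local Weil-coefficient argument. Using the coefficient-wise definition of $\max \{ \LLL_1, \ldots, \LLL_l \}$ recalled in Conventions and terminology~\ref{CT:max:min:divisor}, fix a vertical curve $C$ on $\XXX$ and choose an index $i_0$ for which the coefficient of $C$ in $\LLL_{i_0}$ attains the maximum over $i=1,\ldots,l$. Then $\max \{ \LLL_1, \ldots, \LLL_l \} - \LLL_{i_0}$ is effective as an $\RR$-Cartier divisor (using regularity of $\XXX$) and has coefficient zero at $C$, so the intersection degree of this difference with $C$ is non-negative; combining with the relative nefness of $\LLL_{i_0}$ gives the relative nefness of the max.

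The main technical obstacle is the negativity lemma needed in part (1): one has to confirm that $\pi : \XXX' \to \XXX$, being a birational morphism of excellent regular arithmetic surfaces, has $\pi$-exceptional curves supported in finitely many vertical fibers and negative definite intersection matrix on each such fiber. This reduces to Lipman's theorem on resolution of excellent surfaces (cf.\ Conventions and terminology~\ref{CT:model}), which exhibits $\pi$ as a composition of blow-ups at closed points, combined with the classical Zariski--Mumford negativity statement for contracted configurations on regular surfaces.
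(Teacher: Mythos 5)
Your proof is correct. Parts (2) and (3), as well as the first half of (1), run essentially as in the paper: (3) is a direct application of Proposition~\ref{prop:PSH:imply:nef}; in (2) one picks, for each vertical $C$, an index $i_0$ attaining the maximal $C$-coefficient, so that $\max\{\LLL_1,\dots,\LLL_l\}-\LLL_{i_0}$ is effective and does not contain $C$; and the relative nefness of $\pi_*(\LLL')$ follows from the projection formula applied to the effective vertical cycle $\pi^*(C)$. For the effectiveness of $\pi^*(\pi_*(\LLL'))-\LLL'$ you diverge from the paper: you set $B=\sum_i a_iE_i$, observe $(B\cdot E_j)=-(\LLL'\cdot E_j)\leq 0$ for every exceptional $E_j$, and conclude $a_i\geq 0$ from negative definiteness of the exceptional intersection form. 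The paper instead factors $\pi$ into a chain of point blow-ups via \cite[Theorem~9.2.2]{Liu} and runs a short induction, using only $(C\cdot C)<0$ for the single exceptional curve of a point blow-up. Both arguments are valid; yours is non-inductive and appeals to the full Zariski--Mumford negativity lemma, while the paper's is more elementary and self-contained once the factorization theorem for regular arithmetic surfaces is available. One small correction: that factorization theorem is due to Liu (and is what the paper cites), not Lipman --- Lipman's theorem, referenced in Conventions and terminology~\ref{CT:model}, is resolution of singularities for excellent two-dimensional schemes, which is a different statement from the factorization of a birational morphism between two already-regular models.
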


\begin{proof}
(1) Let $C$ be an irreducible component of the central fiber $\XXX_{\circ}$
of $\XXX \to \Spec(k^{\circ})$.
Then 
\[
0 \leq (\LLL' \cdot \pi^*(C)) = (\pi_*(\LLL') \cdot C).
\]
Thus $\pi_*(\LLL')$ is relatively nef.

Let us consider the second assertion.
By \cite[Theorem~9.2.2]{Liu},
$\pi$ can be obtained by a succession of blowing-ups at closed points.
We prove it by induction on the number of blowing-ups.
First we consider the case where $\pi$ is a blowing-up at a closed point.
Let $C$ be the exceptional curve of $\pi$.
Then 
\[
\pi^*(\pi_*(\LLL')) - \LLL' = a C
\]
for some $a \in \RR$.
As
\[
((\pi^*(\pi_*(\LLL')) - \LLL') \cdot C) = -(\LLL' \cdot C) \leq 0
\quad\text{and}\quad
(C \cdot C) < 0,
\]
we have $a \geq 0$, as required.
In general, we decompose $\pi$ into two birational morphisms
$\pi_1 : \XXX' \to \XXX_1$ and $\pi_2 : \XXX_1 \to \XXX$ of
regular models of $X$, that is, $\pi = \pi_2 \circ \pi_1$.
Note that $(\pi_1)_*(\LLL')$ is relatively nef by the previous observation.
Thus, by the induction hypothesis,
\[
\pi_1^* (\pi_1)_*(\LLL') - \LLL'
\quad\text{and}\quad
\pi_2^*(\pi_2)_*((\pi_1)_*(\LLL')) - (\pi_1)_*(\LLL')
\]
are effective, so that
\[
\pi^*(\pi_*(\LLL')) - \pi_1^* (\pi_1)_*(\LLL') =
\pi_1^*\left( \pi_2^*(\pi_2)_*((\pi_1)_*(\LLL')) - (\pi_1)_*(\LLL') \right)
\]
is also effective. Therefore, as
\[
\pi^*(\pi_*(\LLL')) - \LLL' = 
\left( \pi^*(\pi_*(\LLL')) - \pi_1^* (\pi_1)_*(\LLL') \right)
+ \left( \pi_1^* (\pi_1)_*(\LLL') - \LLL' \right),
\]
we have the assertion.

\medskip
(2) We set $\LLL'_i := \max \{ \LLL_1, \ldots, \LLL_l \} - \LLL_i$ for each $i$.
Let $C$ be an irreducible component of $\XXX_{\circ}$. Then there is $i$ such that
$C \not\subseteq \Supp_{\RR}(\LLL'_i)$.
As $\LLL'_i$ is effective, we have $\deg(\rest{\LLL'_i}{C}) \geq 0$, so that
\[
\deg(\rest{\max \{ \LLL_1, \ldots, \LLL_l \}}{C}) = \deg(\rest{\LLL_i}{C}) + \deg(\rest{\LLL'_i}{C}) \geq 0.
\]

\medskip
(3) This is a special case of Proposition~\ref{prop:PSH:imply:nef}.
\end{proof}

\begin{Lemma}
\label{lem:vertical:Zariski:decomp}
Let $\XXX$ be a regular model of $X$ and $\DDD$ an $\RR$-Cartier divisor on $\XXX$.
Let $Q$ be an $\RR$-Cartier divisor on $X$ with $Q \leq D := \DDD \cap X$.
Here we set
\[
\Sigma_{\XXX}(\DDD; Q) := \left\{ \LLL \ \left| \ 
\begin{array}{l}
\text{$\LLL$ is a relatively nef $\RR$-Cartier divisor on $\XXX$} \\
\text{such that $\LLL \cap X \leq Q$ and
$\LLL \leq \DDD$}
\end{array}
\right\}\right..
\]%
\index{\AdelDivSymbol}{0S:Sigma_{XXX}(DDD;Q)@$\Sigma_{\XXX}(\DDD; Q)$}%
\begin{enumerate}
\renewcommand{\labelenumi}{(\arabic{enumi})}
\item
We assume that $\deg(Q) \geq 0$. Then
there is a relatively nef $\RR$-Cartier divisor $\QQQ$ on $\XXX$ such that
$\QQQ \cap X = Q$ and
$\QQQ$ gives rise to
the greatest element of $\Sigma_{\XXX}(\DDD; Q)$,
that is, $\QQQ \in \Sigma_{\XXX}(\DDD; Q)$ and $\LLL \leq \QQQ$ for all
$\LLL \in \Sigma_{\XXX}(\DDD; Q)$. 
Moreover, if $Q = D$, then $(\QQQ \cdot \DDD - \QQQ) = 0$, that is,
$\adeg_v(\QQQ^{\ad} ; g_{(\XXX,\ \DDD - \QQQ)}) = 0$.

\item
Let $\pi : \XXX' \to \XXX$ be a birational morphism of regular models of $X$.
If $\QQQ$ is the greatest element of $\Sigma_{\XXX}(\DDD; Q)$, then
$\pi^*(\QQQ)$ is also the greatest element of
\[
\Sigma_{\XXX'}(\pi^*(\DDD); Q).
\]
\end{enumerate}
\end{Lemma}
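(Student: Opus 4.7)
My approach to part (1) is a polyhedral Zariski-type argument. Since $\XXX$ is regular, $Q$ admits a unique horizontal extension $H_Q$ to $\XXX$, and similarly $D$ has horizontal extension $H$ with $H_Q \leq H$. I will write $\DDD = H + V$ with $V = \sum_i v_i C_i$ vertical, where $C_1, \ldots, C_r$ are the irreducible components of the central fiber $\XXX_{\circ}$. First, I plan to show $\Sigma_{\XXX}(\DDD; Q)$ is nonempty by exhibiting $H_Q - N\XXX_{\circ}$ as a member for $N$ large: since $(\XXX_{\circ} \cdot C_j) = 0$ and $(H_Q \cdot C_j) \geq 0$, this is relatively nef; and $-N\XXX_{\circ} \leq V$ for $N$ large, so it is $\leq \DDD$. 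Next, given any $\LLL \in \Sigma_{\XXX}(\DDD; Q)$ with horizontal part $H'$, I will observe that replacing $\LLL$ by $H_Q + (\LLL - H')$ yields a larger member of $\Sigma_{\XXX}(\DDD; Q)$ (since $H_Q - H'$ is horizontal effective, so meets each $C_j$ non-negatively), so the greatest element, if it exists, must have horizontal part exactly $H_Q$.

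\textbf{Constructing the maximum.} Fixing horizontal part $H_Q$, the set of admissible vertical coefficients is the polyhedron
\[
\Sigma' := \left\{ x \in \RR^r : x_i \leq v_i \text{ and } (H_Q \cdot C_j) + \textstyle\sum_i x_i (C_i \cdot C_j) \geq 0 \text{ for all } j \right\}.
\]
By Lemma~\ref{lem:for:vertical:Zariski:01}(2), $\Sigma'$ is closed under componentwise max. For each $i$, $q_i := \sup\{x_i : x \in \Sigma'\}$ is finite since $q_i \leq v_i$; I will show it is attained by taking a maximizing sequence, replacing each term by its componentwise max with a fixed element of $\Sigma'$ to make it bounded below as well, and extracting a convergent subsequence in the closed polyhedron. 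Choosing $x^{(i)} \in \Sigma'$ with $x^{(i)}_i = q_i$ and setting $z := \max(x^{(1)}, \ldots, x^{(r)}) \in \Sigma'$, we have $z_i \geq q_i$ while also $z_i \leq q_i$, hence $z = q \in \Sigma'$. The corresponding divisor $\QQQ := H_Q + \sum_i q_i C_i$ will then be the greatest element of $\Sigma_{\XXX}(\DDD; Q)$, and $\QQQ \cap X = Q$ by construction.

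\textbf{The identity for $Q = D$.} In this case $\DDD - \QQQ = \sum_i c_i C_i$ is vertical and effective, so $(\QQQ \cdot (\DDD - \QQQ)) = \sum_i c_i (\QQQ \cdot C_i) \geq 0$. If the sum were strictly positive, there would exist $i_0$ with $c_{i_0} > 0$ and $(\QQQ \cdot C_{i_0}) > 0$; then I would verify that $\QQQ + \epsilon C_{i_0}$ (for $0 < \epsilon \leq c_{i_0}$ sufficiently small) remains in $\Sigma_{\XXX}(\DDD; D)$: horizontal part is unchanged, $\QQQ + \epsilon C_{i_0} \leq \DDD$ holds since $\epsilon \leq c_{i_0}$, relative nefness at $C_{i_0}$ is preserved because the strictly positive $(\QQQ \cdot C_{i_0})$ absorbs the non-positive correction $\epsilon (C_{i_0}^2)$, and at $C_j$ for $j \neq i_0$ it is preserved because $(C_{i_0} \cdot C_j) \geq 0$. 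This would contradict the maximality of $\QQQ$.

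\textbf{Plan for part (2).} Given $\LLL' \in \Sigma_{\XXX'}(\pi^*(\DDD); Q)$, I will push it forward to $\XXX$: by Lemma~\ref{lem:for:vertical:Zariski:01}(1), $\pi_*(\LLL')$ is relatively nef; its horizontal part equals that of $\LLL'$, hence is $\leq Q$; and $\pi_*(\LLL') \leq \pi_*(\pi^*(\DDD)) = \DDD$ because the pushforward of the effective divisor $\pi^*(\DDD) - \LLL'$ is effective. Thus $\pi_*(\LLL') \in \Sigma_{\XXX}(\DDD; Q)$, so $\pi_*(\LLL') \leq \QQQ$ by maximality. Applying $\pi^*$ and using the inequality $\LLL' \leq \pi^*(\pi_*(\LLL'))$ from Lemma~\ref{lem:for:vertical:Zariski:01}(1) yields $\LLL' \leq \pi^*(\QQQ)$. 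Since $\pi^*(\QQQ)$ itself visibly lies in $\Sigma_{\XXX'}(\pi^*(\DDD); Q)$, it is the greatest element. The main technical hurdle is the attainment of the coordinate-wise supremum on the unbounded polyhedron $\Sigma'$, which I handle by exploiting closure under componentwise max to assemble the simultaneous supremum from individual coordinate-maximizers.
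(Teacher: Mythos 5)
Your argument has a genuine gap in the non-emptiness step. You assert that $H_Q - N\XXX_{\circ}$ is relatively nef for $N$ large ``since $(\XXX_{\circ}\cdot C_j)=0$ and $(H_Q\cdot C_j)\geq 0$.'' But the hypothesis is only $\deg(Q)\geq 0$, not $Q\geq 0$, so $H_Q$ (the closure of $Q$, a horizontal $\RR$-divisor with possibly negative coefficients) need not satisfy $(H_Q\cdot C_j)\geq 0$ for each component $C_j$ of the central fiber. Concretely, if $\XXX_{\circ}=C_1\cup C_2$ and $Q=3x_1-x_2$ with $x_1$ reducing into $C_1$ only and $x_2$ into $C_2$ only, then $\deg(Q)=2\geq 0$ but $(H_Q\cdot C_2)=-1<0$. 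Worse, subtracting $N\XXX_{\circ}$ cannot fix this: since $(\XXX_{\circ}\cdot C_j)=0$ for every $j$, the intersection numbers $((H_Q-N\XXX_{\circ})\cdot C_j)=(H_Q\cdot C_j)$ are independent of $N$, so if even one is negative, no choice of $N$ produces a relatively nef divisor, and $\Sigma'$ may a priori be empty.

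The paper fills exactly this gap by a case split. When $\deg(Q)=0$, the closure $\PPP'$ has $(\PPP'\cdot\XXX_{\circ})=0$, and Zariski's lemma on the intersection form of the fiber supplies vertical coefficients $a_i$ with $(\sum a_iC_i\cdot C_j)=(\PPP'\cdot C_j)$ for all $j$, so that $\PPP_0=\PPP'-\sum a_iC_i$ is relatively nef. When $\deg(Q)>0$, one chooses $\phi\in\Rat(\XXX)_{\QQ}^{\times}$ with $Q+(\phi)_X\geq 0$, closes up the \emph{effective} divisor $Q+(\phi)_X$ (whose closure meets each $C_j$ non-negatively by effectivity), and subtracts the principal divisor $(\phi)$ to land back on $Q$. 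Only after producing such a $\PPP_0$ does the subtraction of $n\XXX_{\circ}$ make sense, to get below $\DDD$. The remainder of your argument—restricting to horizontal part $H_Q$ via the max operation, the compactness argument on the truncated polyhedron $\Sigma'\cap\{x\geq x^{(0)}\}$ for the attainment of the coordinate-wise suprema, the orthogonality $(\QQQ\cdot(\DDD-\QQQ))=0$ when $Q=D$ by perturbing at a component with positive intersection, and the pushforward–pullback argument for part (2)—is correct and essentially reproduces the paper's proof in a more explicitly polyhedral language.
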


\begin{proof}
(1)
Let us begin with following claim:

\begin{Claim}
\label{claim:lem:vertical:Zariski:decomp:01}
\begin{enumerate}
\renewcommand{\labelenumi}{(\roman{enumi})}
\item
There is a relatively nef\ \  $\RR$-Cartier divisor $\PPP_0$ on $\XXX$ with
$\PPP_0 \cap X = Q$.

\item
There is $\PPP \in \Sigma_{\XXX}(\DDD; Q)$ with $\PPP \cap X = Q$.
\end{enumerate}
\end{Claim}

\begin{proof}
(i) First we assume that $\deg(Q) = 0$. Let $\PPP'$ be the closure of $Q$ in $\XXX$.
Let $C_1, \ldots, C_r$ be irreducible components of $\XXX_{\circ}$.
As $(\PPP' \cdot \XXX_{\circ}) = 0$, by Zariski's lemma,
we can find $a_1, \ldots, a_r \in \RR$ such that
\[
\left( \sum_{i=1}^r a_i C_i \cdot C_j\right) = (\PPP' \cdot C_j)
\]
for all $j=1, \ldots, r$.
Thus, if we set $\PPP_0 = \PPP' - \sum_{i=1}^r a_i C_i$,
then $\PPP_0$ is relatively nef and $\PPP_0 \cap X = Q$.

Next we assume that $\deg(Q) > 0$.
Then there is $\phi \in \Rat(\XXX)^{\times}_{\QQ}$ such that
$Q + (\phi)_X \geq 0$, where $(\phi)_X$ is the $\QQ$-principal divisor of $\phi$ on $X$.
Let $\PPP'$ be the closure of $Q + (\phi)_X$ in $\XXX$.
As $Q + (\phi)_X$ is effective, $\PPP'$ is relatively nef.
Here we set $\PPP_0 = \PPP' - (\phi)$ on $\XXX$.
Then $\PPP_0$ is relatively nef and 
\[
\PPP_0 \cap X = \PPP' \cap X - (\phi) \cap X =
Q + (\phi)_X - (\phi)_X = Q.
\]

(ii) follows from (i) because
$\PPP_0 - n \XXX_{\circ} \leq \DDD$ for a sufficiently large $n$ and
$\PPP_0 - n \XXX_{\circ}$ is relatively nef.
\end{proof}

For a prime divisor $C$ on $\XXX$ (that is, $C$ is a reduced and irreducible curve on $\XXX$), 
we set
\[
q_C := \sup \left\{ \mult_C(\LLL) \mid \LLL \in \Sigma_{\XXX}(\DDD; Q) \right\},
\]
which exists in $\RR$ because $\mult_C(\LLL) \leq \mult_C(\DDD)$ for all $\LLL \in \Sigma_{\XXX}(\DDD; Q)$.
We fix $\PPP \in \Sigma_{\XXX}(\DDD; Q)$ with $\PPP \cap X = Q$ by using Claim~\ref{claim:lem:vertical:Zariski:decomp:01}.

\begin{Claim}
\label{claim:lem:vertical:Zariski:decomp:03}
There is a sequence $\{ \LLL_n \}_{n=1}^{\infty}$ of $\RR$-Cartier divisors in $\Sigma_{\XXX}(\DDD; Q)$ such that
$\PPP \leq \LLL_n$ for all $n \geq 1$ and $\lim_{n\to\infty} \mult_{C}(\LLL_n) = q_C$ for 
all prime divisors
$C$ in $\Supp_{\RR}(\DDD) \cup \Supp_{\RR}(\PPP)$.
\end{Claim}

\begin{proof}
For each prime divisor
$C$ in $\Supp_{\RR}(\DDD) \cup \Supp_{\RR}(\PPP)$, 
there is a sequence $\{ \LLL_{C, n} \}_{n=1}^{\infty}$ in $\Sigma_{\XXX}(\DDD; Q)$ such that
\[
\lim_{n\to\infty} \mult_{C}(\LLL_{C,n}) = q_{C}.
\]
If we set 
\[
\LLL_n = \max \left( \{ \LLL_{C, n} \}_{C \subseteq
\Supp_{\RR}(\DDD) \cup \Supp_{\RR}(\PPP)} \cup \{ \PPP \} \right),
\]
then
$\PPP \leq \LLL_n$ and $\LLL_n \in \Sigma_{\XXX}(\DDD; Q)$ 
by (2) in Lemma~\ref{lem:for:vertical:Zariski:01}.
Moreover, as
\[
\mult_{C}(\LLL_{C, n}) \leq \mult_{C}(\LLL_n) \leq q_{C},
\]
$\lim_{n\to\infty} \mult_{C}(\LLL_n) = q_{C}$.
\end{proof}

Since $\max \{ \PPP, \LLL \} \in \Sigma_{\XXX}(\DDD; Q)$ for all $\LLL \in \Sigma_{\XXX}(\DDD; Q)$ 
by (2) in Lemma~\ref{lem:for:vertical:Zariski:01},
we have 
\[
\mult_C(\PPP) \leq q_C \leq \mult_C(\DDD).
\]
In particular, if $C \not\subseteq \Supp_{\RR}(\DDD) \cup \Supp_{\RR}(\PPP)$, then $q_C = 0$, so that we can set
$\QQQ := \sum_C q_C C$.
Clearly $\QQQ \cap X = Q$ because $\PPP \leq \QQQ$.
Moreover, $\QQQ \in \Sigma_{\XXX}(\DDD; Q)$ by  Claim~\ref{claim:lem:vertical:Zariski:decomp:03}, and
$\LLL \leq \QQQ$ for all $\LLL \in \Sigma_{\XXX}(\DDD; Q)$ by our construction.

Here we assume that $Q = D$.
Then $\DDD - \QQQ$ is vertical.
Let $C$ be an irreducible component of $\Supp_{\RR}(\DDD - \QQQ)$.
If $(\QQQ \cdot C) > 0$, then $\QQQ + \epsilon C$ is relatively nef for a sufficiently small positive number
$\epsilon$. Moreover, $\QQQ + \epsilon C \leq \DDD$.
This is a contradiction, so that  $(\QQQ \cdot C) = 0$.
Therefore, $(\QQQ \cdot \DDD - \QQQ) = 0$.

\bigskip
(2) Clearly $\pi^*(\QQQ) \in \Sigma_{\XXX'}(\pi^*(\DDD); Q)$.
Let $\LLL' \in \Sigma_{\XXX'}(\pi^*(\DDD); Q)$.
As $\pi_*(\LLL')$ is relatively nef by (1) in Lemma~\ref{lem:for:vertical:Zariski:01},
we have $\pi_*(\LLL') \in \Sigma_{\XXX}(\DDD; Q)$, and hence
$\pi_*(\LLL') \leq \QQQ$. Thus, by using (1) in Lemma~\ref{lem:for:vertical:Zariski:01},
\[
\LLL' \leq \pi^*(\pi_*(\LLL')) \leq \pi^*(\QQQ),
\]
as required.
\end{proof}

\begin{proof}[Proof of Theorem~\rom{\ref{thm:vertical:Zariski:decomp}}]
Let us start the proof of Theorem~\ref{thm:vertical:Zariski:decomp}.
We fix a regular model $\XXX_0$ with the following properties:

\begin{enumerate}
\renewcommand{\labelenumi}{(\alph{enumi})}
\item
If $\overline{D}$ is given by an $\RR$-Cartier divisor $\DDD$ on a regular model $\XXX$,
then $\XXX_0 = \XXX$.

\item
There is a relatively nef  $\RR$-Cartier divisor $\QQQ_0$ on $\XXX_0$ with
$\QQQ_0 \cap X = Q$ (for details, see Claim~\ref{claim:lem:vertical:Zariski:decomp:01}).
\end{enumerate}
By Theorem~\ref{thm:approx:adelic},
for each $n \geq 1$,
we can find a regular model $\XXX_n$ and 
an $\RR$-Cartier divisor $\DDD_n$ on $\XXX_n$ such that
\[
\overline{D} - \frac{1}{n(n+1)} (\XXX_n)_{\circ}^{\ad} \leq \DDD_n^{\ad} \leq  \overline{D} + \frac{1}{n(n+1)} (\XXX_n)_{\circ}^{\ad}.
\]
Replacing $\XXX_n$ by a suitable regular model of $X$ if necessarily,
we may assume that there is a birational morphism $\pi_{n+1} : \XXX_{n+1} \to \XXX_n$
for every $n \geq 0$.
Note that if $\overline{D}$ is given by an $\RR$-Cartier divisor $\DDD$ on $\XXX$,
then $\XXX_n = \XXX$ and $\DDD_n = \DDD$ for all $n \geq 1$.
By using (1) in Lemma~\ref{lem:vertical:Zariski:decomp},
let $\QQQ_n$ be the greatest element
of $\Sigma_{\XXX_n}(\DDD_n; Q)$.
Let us check the following claim:

\frontmatterforspececialclaim
\begin{Claim}
\begin{enumerate}
\renewcommand{\labelenumi}{(\roman{enumi})}
\item The following inequalities
\[
\qquad\qquad
\DDD_{n+1} - \frac{2}{n(n+1)} (\XXX_{n+1})_{\circ} \leq \pi^*_{n+1}(\DDD_n) \leq \DDD_{n+1} + \frac{2}{n(n+1)} (\XXX_{n+1})_{\circ}
\]
hold for all $n \geq 1$.

\item Moreover,
\[
\qquad\qquad
\QQQ_{n+1} - \frac{2}{n(n+1)} (\XXX_{n+1})_{\circ} \leq \pi^*_{n+1}(\QQQ_n) \leq \QQQ_{n+1} + \frac{2}{n(n+1)} (\XXX_{n+1})_{\circ}
\]
hold for all $n \geq 1$.
\end{enumerate}
\end{Claim}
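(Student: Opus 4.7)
The plan is to deduce (i) directly from the defining approximation property of $(\XXX_n,\DDD_n)$ given by Theorem~\ref{thm:approx:adelic}, and then bootstrap (i) to (ii) by combining the maximality defining $\QQQ_{n+1}$ with the compatibility statement Lemma~\ref{lem:vertical:Zariski:decomp}(2).

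First I would observe that, since $(\XXX_n)_{\circ}$ and $(\XXX_{n+1})_{\circ}$ are both the Cartier divisor cut out by the uniformizing parameter $\varpi$ of $k^{\circ}$ via the structure morphism, one has $\pi_{n+1}^{*}((\XXX_n)_{\circ}) = (\XXX_{n+1})_{\circ}$. Since $\pi_{n+1}$ induces the identity on the generic fiber $X$, it follows from Proposition~\ref{prop:green:model}(2) that $(\pi_{n+1}^{*}(\DDD_n))^{\ad}=\DDD_n^{\ad}$, so the approximation
\[
\overline{D} - \tfrac{1}{n(n+1)} (\XXX_n)_{\circ}^{\ad} \leq \DDD_n^{\ad} \leq \overline{D} + \tfrac{1}{n(n+1)} (\XXX_n)_{\circ}^{\ad}
\]
rewrites, after pulling back along $\pi_{n+1}$, as
\[
\overline{D} - \tfrac{1}{n(n+1)} (\XXX_{n+1})_{\circ}^{\ad} \leq (\pi_{n+1}^{*}(\DDD_n))^{\ad} \leq \overline{D} + \tfrac{1}{n(n+1)} (\XXX_{n+1})_{\circ}^{\ad}.
\]
Combining this with the analogous approximation for $\DDD_{n+1}$ and using the elementary estimate $\tfrac{1}{n(n+1)}+\tfrac{1}{(n+1)(n+2)} \leq \tfrac{2}{n(n+1)}$ yields (i) at the level of adelic divisors; Proposition~\ref{prop:comp:adelic}(2) then transports the inequality back to $\Div(\XXX_{n+1})_{\RR}$.

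For (ii), the crucial structural input is that $(\XXX_{n+1})_{\circ}$ is numerically trivial on every vertical curve of $\XXX_{n+1}$, so that adding or subtracting a non-negative multiple of $(\XXX_{n+1})_{\circ}$ preserves relative nefness and does not affect the restriction to $X$. To prove the upper bound on $\pi_{n+1}^{*}(\QQQ_n)$, I would set
\[
\LLL := \pi_{n+1}^{*}(\QQQ_n) - \tfrac{2}{n(n+1)}(\XXX_{n+1})_{\circ}.
\]
Since the pullback of a relatively nef divisor by a birational morphism of regular models is relatively nef, $\pi_{n+1}^{*}(\QQQ_n)$ is relatively nef, hence so is $\LLL$; moreover $\LLL\cap X = \QQQ_n\cap X = Q$, and (i) combined with $\QQQ_n \leq \DDD_n$ gives $\LLL \leq \DDD_{n+1}$. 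Thus $\LLL \in \Sigma_{\XXX_{n+1}}(\DDD_{n+1};Q)$, and the maximality of $\QQQ_{n+1}$ forces $\pi_{n+1}^{*}(\QQQ_n) \leq \QQQ_{n+1} + \tfrac{2}{n(n+1)}(\XXX_{n+1})_{\circ}$.

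The reverse direction proceeds symmetrically, but now one invokes Lemma~\ref{lem:vertical:Zariski:decomp}(2), which identifies $\pi_{n+1}^{*}(\QQQ_n)$ as the greatest element of $\Sigma_{\XXX_{n+1}}(\pi_{n+1}^{*}(\DDD_n);Q)$: the divisor $\QQQ_{n+1}-\tfrac{2}{n(n+1)}(\XXX_{n+1})_{\circ}$ is relatively nef, restricts to $Q$, and—by (i) applied to $\DDD_{n+1}\leq \pi_{n+1}^{*}(\DDD_n)+\tfrac{2}{n(n+1)}(\XXX_{n+1})_{\circ}$—is dominated by $\pi_{n+1}^{*}(\DDD_n)$, hence lies in this set and is bounded above by $\pi_{n+1}^{*}(\QQQ_n)$. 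I do not expect any serious obstacle here; the only care required is careful bookkeeping of how pullback interacts with the associated adelic divisor and with the error terms $\tfrac{1}{n(n+1)}$ normalized in Theorem~\ref{thm:approx:adelic}. Once (i) is in hand, (ii) is a formal consequence of the maximality characterizations of $\QQQ_n$ and $\QQQ_{n+1}$ together with the numerical triviality of $(\XXX_{n+1})_{\circ}$.
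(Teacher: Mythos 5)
Your proposal is correct and follows essentially the same route as the paper: for (i) you pull back the defining approximations along $\pi_{n+1}$, use $\pi_{n+1}^*((\XXX_n)_\circ)=(\XXX_{n+1})_\circ$ and $\tfrac{1}{n(n+1)}+\tfrac{1}{(n+1)(n+2)}\le\tfrac{2}{n(n+1)}$, and transfer the adelic inequality back via Proposition~\ref{prop:comp:adelic}; for (ii) you show membership of the shifted divisors in the relevant sets $\Sigma_{\XXX_{n+1}}(\cdot;Q)$ and invoke the maximality of $\QQQ_{n+1}$ and Lemma~\ref{lem:vertical:Zariski:decomp}(2), exactly as the paper does. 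The only difference is that you spell out the implicit intermediate steps (pullback compatibility of the associated adelic divisor, numerical triviality of $(\XXX_{n+1})_\circ$) that the paper leaves tacit.
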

\backmatterforspececialclaim

\begin{proof}
(i) The first inequality follows from the following observation:
\begin{align*}
\pi^*_{n+1}(\DDD_n)^{\ad} & \geq \overline{D} - \frac{1}{n(n+1)} \pi_{n+1}^*((\XXX_n)_{\circ})^{\ad} \\
& \geq
\left( \DDD_{n+1}^{\ad} - \frac{1}{(n+1)(n+2)} (\XXX_{n+1})_{\circ}^{\ad} \right) - \frac{1}{n(n+1)} (\XXX_{n+1})_{\circ}^{\ad} \\
& \geq \left( \DDD_{n+1} - \frac{2}{n(n+1)} (\XXX_{n+1})_{\circ}\right) ^{\ad}.
\end{align*}
The second inequality is similar.

(ii) Note that $\pi_{n+1}^*(\QQQ_{n}) - (2/n(n+2)) (\XXX_{n+1})_{\circ} \in \Sigma_{\XXX_{n+1}}(\DDD_{n+1}; Q)$ because
\[
\pi_{n+1}^*(\DDD_{n}) - (2/n(n+2)) (\XXX_{n+1})_{\circ} \leq \DDD_{n+1}
\]
by (i),
so that $\pi_{n+1}^*(\QQQ_{n}) - (2/n(n+2)) (\XXX_{n+1})_{\circ} \leq \QQQ_{n+1}$.
Similarly 
\[
\QQQ_{n+1} - (2/n(n+2)) (\XXX_{n+1})_{\circ} \in \Sigma_{\XXX_{n+1}}(\pi_{n+1}^*(\DDD_{n}); Q)
\]
by using (i), and hence $\QQQ_{n+1} - (2/n(n+2)) (\XXX_{n+1})_{\circ} \leq \pi_{n+1}^*(\QQQ_n)$
by (2) in Lemma~\ref{lem:vertical:Zariski:decomp}.
\end{proof}

Let $\EEE_n := \QQQ_n - \rho_n^*(\QQQ_0)$,
where $\rho_{n} := \pi_1 \circ \cdots \circ \pi_{n} : \XXX_n \to \XXX_0$.
Then $\EEE_n$ is vertical and
\[
\EEE_{n+1} - \frac{2}{n(n+1)} (\XXX_{n+1})_{\circ} \leq \pi^*_{n+1}(\EEE_n) 
\leq \EEE_{n+1} + \frac{2}{n(n+1)} (\XXX_{n+1})_{\circ}
\]
by (ii) of the previous claim,
so that
\[
\varphi_{n+1} - 4\left( \frac{1}{n} - \frac{1}{n+1} \right)(-\log v(\varpi)) \leq \varphi_n \leq \varphi_{n+1} + 
4\left( \frac{1}{n} - \frac{1}{n+1} \right) (-\log v(\varpi)),
\]
where $\varphi_n := g_{(\XXX_n,\, \EEE_n)}$.
Therefore, if we set 
\[
\varphi'_n = \varphi_n - \frac{4(-\log v(\varpi))}{n}
\quad\text{and}\quad
\varphi''_n = \varphi_n  + \frac{4(-\log v(\varpi))}{n},
\]
then
\[
\varphi'_1 \leq \cdots \leq \varphi'_n \leq \varphi'_{n+1} \leq
\cdots \leq \varphi''_{n+1} \leq \varphi''_n \leq \cdots \leq \varphi''_1,
\]
and hence $\varphi(x) := \lim_{n\to\infty} \varphi_n(x)$ exists for each $x \in X_v^{\an}$.
Moreover, as
\[
\vert \varphi_n(x) - \varphi(x) \vert \leq \varphi''_n(x) - \varphi'_n(x)  \leq (8/n)(-\log v(\varpi)),
\]
the sequence $\{ \varphi_n \}_{n=1}^{\infty}$ converges to $\varphi$ uniformly.
In particular, $\varphi$ is continuous on $X_v^{\an}$.
We set $q := g_{(\XXX_0,\, \QQQ_0)} + \varphi$.
As $\QQQ_n$ is relatively nef, $q$ is a $Q$-Green function of $(\Tpsh \cap C^0)$-type.
Note that in the case where
$\overline{D}$ is given by a relatively nef $\RR$-Cartier divisor $\DDD$ on $\XXX$,
then $q = g_{(\XXX,\, \QQQ_1)}$.

\bigskip
Let us see that $\overline{Q} := (Q, q)$ is the greatest element of
$\Sigma(\overline{D}; Q)$.
As $\{ \varphi_n \}_{n=1}^{\infty}$ converges $\varphi$ uniformly
and
\[
g_{(\XXX_0,\, \QQQ_0)} + \varphi_n = g_{(\XXX_n,\, \QQQ_n)}  \leq g_{(\XXX_n,\, \DDD_n)} \leq g + 
\frac{-2\log v(\varpi)}{n(n+1)},
\]
we can see that $\overline{Q} \leq \overline{D}$,
that is, $\overline{Q} \in \Sigma(\overline{D};Q)$, so that we need to see that
$\overline{L} \leq \overline{Q}$ for all $\overline{L} = (L, g_L) \in \Sigma(\overline{D};Q)$.

First we assume that $\overline{L}$ is given by an $\RR$-Cartier divisor $\LLL$ on a regular model $\YYY$.
By (3) in Lemma~\ref{lem:for:vertical:Zariski:01}, $\LLL$ is relatively nef.
For each $n \geq 1$,
we choose a regular model $\YYY_n$ of $X$ such that
there are birational morphisms $\nu_n : \YYY_n \to \YYY$ and $\mu_n : \YYY_n \to \XXX_n$.
If we set $\FFF_n = (\mu_n)_*(\nu_n^*(\LLL))$, then
$\FFF_n$ is relatively nef by (1) in Lemma~\ref{lem:for:vertical:Zariski:01}.
Moreover, as 
\[
\nu_n^*(\LLL)^{\ad} \leq \overline{D} \leq \left( \mu_n^*(\DDD_n) + \frac{1}{n(n+1)} (\YYY_n)_{\circ}\right)^{\ad}, 
\]
we have
$\FFF_n \leq \DDD_n + (1/n(n+1)) (\XXX_{n})_{\circ}$ by using Proposition~\ref{prop:comp:adelic}, 
and hence 
\[
\FFF_n \leq \QQQ_n + (1/n(n+1)) (\XXX_n)_{\circ}
\]
because $\FFF_n - (1/n(n+1)) (\XXX_n)_{\circ} \in \Sigma_{\XXX_n}(\DDD_n ; Q)$.
Therefore, by (1) in Lemma~\ref{lem:for:vertical:Zariski:01},
\[
\nu_n^*(\LLL) \leq \mu_n^*(\FFF_n) \leq \mu_n^*(\QQQ_n) +  \frac{1}{n(n+1)} (\YYY_n)_{\circ}.
\]
In particular, 
\[
g_{(\YYY,\, \LLL)} \leq g_{(\XXX_n,\, \QQQ_n)} + \frac{-2\log v (\varpi)}{n(n+1)} =
g_{(\XXX_0,\, \QQQ_0)} + \varphi_n + \frac{-2\log v (\varpi)}{n(n+1)}.
\]
Note that $\{ \varphi_n \}_{n=1}^{\infty}$ converges $\varphi$ uniformly, so that
we have $g_{(\YYY,\, \LLL)} \leq q$.

In general, by Proposition~\ref{prop:Green:psh:C0:approx},
there are a sequence $\{ \YYY_l \}_{l=1}^{\infty}$ of regular models of $X$ and
a sequence $\{ \LLL_l \}_{l=1}^{\infty}$ of relatively nef $\RR$-Cartier divisors  such that
$\LLL_l$ is defined on $\YYY_l$, $\LLL_l \cap X = L$,
$(\LLL_l)^{\ad} \leq \overline{L}$ and $g_L = \lim_{l\to\infty} g_{(\YYY_l,\LLL_l)}$ uniformly.
By the previous observation, $g_{(\YYY_l,\LLL_l)} \leq q$ for all $l$, so that
$g_{L} \leq q$

Let us see the additional assertions (1) and (2) in the theorem.
The assertion (1) is obvious by our construction. 
Let us consider (2).
We assume that $Q = D$.
If we set $\theta_n := g_{(\XXX_n,\ \DDD_n)}  - g_{(\XXX_n, \QQQ_n)}$ and $\theta = g - q$, then
$\{ \theta_n \}_{n=1}^{\infty}$ converges to $\theta$ uniformly because
\[
\theta_n  = (g_{(\XXX_n,\ \DDD_n)} - g) +  g - (g_{(\XXX_0, \QQQ_0)} + \varphi_n).
\]
Thus, by Proposition-Definition~\ref{propdef:adelic:intersection},
\[
\lim_{n\to\infty} \adeg_v(\QQQ_n;\theta_n) = \adeg_v(\overline{Q}; \theta).
\]
On the other hand, $\adeg_v(\QQQ_n; \theta_n) = 0$ by Lemma~\ref{lem:vertical:Zariski:decomp},
so that the assertion (2) follows.
\end{proof}

Finally we consider the maximal of two Green functions,
which will be used in the next 
\ifmonog section. \fi
\ifpaper subsection. \fi

\begin{Proposition}
\label{prop:max:Green:adelic}
Let $D_1$ and $D_2$ be $\RR$-Cartier divisors on $X$ and
let $D_3 := \max \{ D_1, D_2 \}$.
For $i=1, 2$,
let $g_i$ be a $D_i$-Green function of $C^0$-type on $X_v^{\an}$.
\begin{enumerate}
\renewcommand{\labelenumi}{(\arabic{enumi})}
\item
$\max \{ g_1, g_2 \}$ is a $D_3$-Green function of $C^0$-type on $X_v^{\an}$.

\item
If $g_1$ and $g_2$ are of $(C^0 \cap \Tpsh)$-type, then
$\max \{ g_1, g_2 \}$ is also of $(C^0 \cap \Tpsh)$-type.
\end{enumerate}
\end{Proposition}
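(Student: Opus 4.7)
The plan is to handle (1) by direct citation of the earlier result on maxima of Green functions, and to handle (2) by realizing $\max\{g_1, g_2\}$ as the Green-function component of the greatest element produced by the local Zariski decomposition (Theorem~\ref{thm:vertical:Zariski:decomp}).

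For (1), I would simply apply Proposition~\ref{prop:max:Green:adelic:01} to the base change $X_v$ over the complete field $k_v$: $X_v$ is a smooth projective geometrically integral curve over $k_v$, the pullbacks $(D_i)_{X_v}$ and their maximum are defined coefficientwise on prime divisors, and the formation of $\max$ commutes with the pullback $X_v \to X$ since $X_v$ is again a smooth curve. Thus the cited proposition immediately gives that $g_3 := \max\{g_1, g_2\}$ extends to a $D_3$-Green function of $C^0$-type on $X_v^{\an}$.

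For (2), I first verify that $D_3$ is nef on $X$. Since each $g_i$ is of $(C^0 \cap \Tpsh)$-type, by definition $D_i$ is nef, hence $\deg(D_i) \geq 0$; and since $D_3 - D_1$ is effective by Conventions and terminology~\ref{CT:max:min:divisor}, we get $\deg(D_3) \geq \deg(D_1) \geq 0$. Next, with $\overline{D} := (D_3, g_3)$ (an adelic $\RR$-Cartier divisor of $C^0$-type by part~(1)), I would apply Theorem~\ref{thm:vertical:Zariski:decomp} with $Q := D_3$: since $\deg(D_3) \geq 0$, the theorem yields a $D_3$-Green function $q$ of $(C^0 \cap \Tpsh)$-type on $X_v^{\an}$ such that $\overline{Q} := (D_3, q)$ is the greatest element of $\Sigma(\overline{D}; D_3)$. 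From $\overline{Q} \leq \overline{D}$ we get $q \leq g_3$. Conversely, for each $i = 1, 2$, the pair $(D_i, g_i)$ is relatively nef by hypothesis, satisfies $D_i \leq D_3$, and satisfies $g_i \leq g_3$ by the very definition of $g_3$; hence $(D_i, g_i) \in \Sigma(\overline{D}; D_3)$. Maximality of $\overline{Q}$ then gives $g_i \leq q$ for $i = 1, 2$, whence $g_3 \leq q$. Combining the two inequalities yields $q = g_3$, so $g_3$ is of $(C^0 \cap \Tpsh)$-type.

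The only step with genuine content beyond citation is the sandwich argument—observing on the one hand that both $(D_1, g_1)$ and $(D_2, g_2)$ belong to the set $\Sigma(\overline{D}; D_3)$, and on the other hand that the greatest element satisfies $\overline{Q} \leq \overline{D}$—which together force the equality $q = g_3$. No further obstacle is expected, since the real work (the existence of the greatest element and the fact that its Green-function part is of $(C^0 \cap \Tpsh)$-type) has already been done in Theorem~\ref{thm:vertical:Zariski:decomp}.
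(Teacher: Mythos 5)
Your proof of part~(1) is the same as the paper's (reduction to $X_v$ and citation of Proposition~\ref{prop:max:Green:adelic:01}). For part~(2), however, you take a genuinely different route. The paper argues directly: it applies Proposition~\ref{prop:Green:psh:C0:approx} to produce, for each $n$, a common regular model $\XXX_n$ carrying relatively nef divisors $\DDD_{1,n}$, $\DDD_{2,n}$ approximating $g_1$, $g_2$ from below, and Theorem~\ref{thm:approx:adelic} to produce a model divisor $\EEE_n$ bounding $\max\{g_1,g_2\}$ from above. It then sandwiches $\DDD_{3,n} := \max\{\DDD_{1,n},\DDD_{2,n}\}$ between these via Proposition~\ref{prop:comp:adelic}, which gives $\max\{g_1,g_2\} - 1/n \leq g_{(\XXX_n,\,\DDD_{3,n})} \leq \max\{g_1,g_2\} + 1/n$ with $\DDD_{3,n}$ relatively nef by Lemma~\ref{lem:for:vertical:Zariski:01}~(2), and that is the defining approximation. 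You instead invoke the already-proved local Zariski decomposition (Theorem~\ref{thm:vertical:Zariski:decomp}) with $Q = D_3$ and $\overline{D} = (D_3, g_3)$, and identify $g_3$ with the Green-function part $q$ of the greatest element by a two-sided comparison: $q \leq g_3$ since $\overline{Q} \leq \overline{D}$, and $g_3 \leq q$ since each $(D_i, g_i)$ lies in $\Sigma(\overline{D}; D_3)$ and hence is $\leq \overline{Q}$. Your argument is valid — the logical order in the paper places Theorem~\ref{thm:vertical:Zariski:decomp} strictly before this proposition, and nothing in its proof depends on the proposition, so there is no circularity. The trade-off is transparency versus economy: the paper's direct construction shows explicitly how the approximating sequence of models arises and only uses tools proved earlier in the same subsection, while your sandwich argument is shorter but leans on the full strength of the Zariski-decomposition machinery. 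Both establish the claim.
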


\begin{proof}
(1) Let $\pi : X_v \to X$ be the canonical morphism.
It is easy to see that 
\[
\max \{ \pi^*(D_1), \pi^*(D_2) \} = \pi^*(\max \{ D_1, D_2 \}),
\]
so that the assertion follows from Proposition~\ref{prop:max:Green:adelic:01}.

(2)
For each $n \geq 1$, by Proposition~\ref{prop:Green:psh:C0:approx},
there are a regular model $\XXX'_n$ of $X$ and relatively nef 
$\RR$-Cartier divisors $\DDD'_{1,n}$ and $\DDD'_{2,n}$ on $\XXX'_n$
such that
\[
0 \leq g_1 - g_{(\XXX'_n,\, \DDD'_{1,n})} \leq 1/n
\quad\text{and}\quad
0 \leq g_2 - g_{(\XXX'_n,\, \DDD'_{2,n})} \leq 1/n.
\]
On the other hand, by Theorem~\ref{thm:approx:adelic},
we can find a
regular model $\XXX''_n$ of $X$ and an $\RR$-Cartier divisors $\EEE''_n$ on $\XXX''_n$
such that $\EEE_n'' \cap X = D_3$ and
\[
\max \{ g_1, g_2 \} \leq g_{(\XXX''_n,\, \EEE''_n)} \leq  \max \{ g_1, g_2 \} + 1/n.
\]
We choose birational morphisms
$\nu_n : \XXX_n \to \XXX'_n$ and $\mu_n : \XXX_n \to \XXX''_n$ and we set
\[
\DDD_{1,n} := \nu_n^*(\DDD'_{1,n}), \ 
\DDD_{2,n} := \nu_n^*(\DDD'_{2,n}), \ 
\DDD_{3,n} := \max \{\DDD_{1,n}, \DDD_{2,n} \}
\ \text{and}\ 
\EEE_n := \mu_n^*(\EEE''_n).
\]
As $\DDD_{1,n} \leq \DDD_{3,n}$ and $\DDD_{2,n} \leq \DDD_{3,n}$,
we have 
\[
\max \{ g_{(\XXX_n,\, \DDD_{1,n})} , g_{(\XXX_n,\, \DDD_{2,n})} \}
\leq g_{(\XXX_n,\, \DDD_{3,n})},
\]
so that
\begin{align*}
\max \{ g_1, g_2 \}  - 1/n & =
\max \{ g_1 -1/n ,\ g_2  -1/n \} \\
& \leq \max \{ g_{(\XXX_n,\, \DDD_{1,n})} , g_{(\XXX_n,\, \DDD_{2,n})} \}
\leq g_{(\XXX_n,\, \DDD_{3,n})}.
\end{align*}
Moreover,
\[
g_{(\XXX_n,\, \DDD_{1,n})} \leq g_1 \leq \max \{ g_1, g_2 \} \leq g_{(\XXX_n,\, \EEE_n)}
\]
and
\[
g_{(\XXX_n,\, \DDD_{2,n})} \leq g_2 \leq \max \{ g_1, g_2 \} \leq g_{(\XXX_n,\, \EEE_n)},
\]
and hence, by Proposition~\ref{prop:comp:adelic},
\[
\DDD_{1,n} \leq \EEE_n
\quad\text{and}\quad
\DDD_{2,n} \leq \EEE_n,
\]
so that
$\DDD_{3,n} \leq \EEE_n$.
Therefore,
\[
\max \{ g_1, g_2 \}  - 1/n \leq g_{(\XXX_n,\, \DDD_{3,n})} \leq \max \{ g_1, g_2 \} + 1/n.
\]
Note that $\DDD_{3,n}$ is relatively nef by (2) in Lemma~\ref{lem:for:vertical:Zariski:01}, 
and hence
$\max \{ g_1, g_2 \}$ is of $(C^0 \cap \Tpsh)$-type.
\end{proof}

\ifmonog\section[Proof of Zariski decompositions]{Proof of Zariski decompositions for adelic arithmetic divisors}\fi
\ifpaper\subsection{Proof of Zariski decompositions for adelic arithmetic divisors}\fi
\label{subsec:proof:Zariski:decomp:adelic:arithmetic:divisor}

In this 
\ifmonog section, \fi
\ifpaper subsection, \fi
we give the proof of Zariski decompositions for adelic arithmetic divisors.
Let $X$ be a projective, smooth and geometrically integral
curve over a number field $K$.
Let us begin with decompositions for global adelic divisors.

\begin{Theorem}
\label{thm:vertical:Zariski:decomp:global}
Let $\overline{D} = (D, \{ g_P \}_{P \in M_K})$ be a global adelic $\RR$-Cartier divisor on $X$ 
\rom{(}cf. Definition~\rom{\ref{def:adelic:arithmetic:div}}\rom{)} and
let $Q$ be an $\RR$-Cartier divisor on $X$ with $Q \leq D$.
Here we set
\[
\Sigma\left(\overline{D}; Q\right) := \left\{ \overline{L} = (L, \{ l_P \}_{P \in M_K}) \left|
\begin{array}{l}
\text{$\overline{L}$ is a relatively nef global adelic $\RR$-Cartier} \\
\text{divisor on $X$ such that $L \leq Q$ and
$\overline{L} \leq \overline{D}$}
\end{array}
\right\}\right..
\]%
\index{\AdelDivSymbol}{0S:Sigma(overline{D};Q)@$\Sigma(\overline{D}; Q)$}%
If $\deg(Q) \geq 0$, then
there exists a $Q$-Green function $q_P$ of $(\Tpsh \cap C^0)$-type on $X_P^{\an}$
for each $P \in M_K$
such that $\overline{Q} := (Q, \{ q_P \}_{P \in M_K})$ 
gives rise to the greatest element of $\Sigma(\overline{D}; Q)$,
that is, $\overline{Q}\in \Sigma(\overline{D}; Q)$ and 
$\overline{L} \leq \overline{Q}$ for all
$\overline{L} \in \Sigma(\overline{D}; Q)$. 
Moreover, if there are a non-empty Zariski open set $U$ of\ \  $\Spec(O_K)$, 
a regular  model $\XXX_U$ of $X$ over $U$ and
an $\RR$-Cartier divisor $\DDD_U$ on $\XXX_U$ such that
$g_P$ is the Green function arising from $\DDD_U$ for all $P \in U \cap M_K$, then
there is a relatively nef $\RR$-Cartier divisor $\QQQ_U$ on $\XXX_U$ such that
$q_P$ is given by $\QQQ_U$ for all $P \in U \cap M_K$.
\end{Theorem}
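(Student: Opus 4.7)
\textbf{The plan} is to build $\overline{Q}$ place-by-place by invoking the local Zariski decomposition Theorem~\ref{thm:vertical:Zariski:decomp}, and then to check that the resulting collection $\{q_P\}_{P \in M_K}$ satisfies the adelic compatibility condition by exhibiting a single relatively nef model over a cofinite subset of $\Spec(O_K)$. First I would choose a non-empty Zariski open $U \subseteq \Spec(O_K)$, a normal model $\XXX_U$ of $X$ over $U$ and an $\RR$-Cartier divisor $\DDD_U$ on $\XXX_U$ such that $g_P$ is induced by $(\XXX_U, \DDD_U)$ for every $P \in U \cap M_K$. Using Lipman's theorem (cf.\ Conventions and terminology~\ref{CT:model}), I may replace $\XXX_U$ by a regular model dominating it, so that $\XXX_U$ is regular --- this matches the hypothesis of the additional assertion, and the $g_P$ are unaffected by pullback.

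Next I would run the construction of Lemma~\ref{lem:vertical:Zariski:decomp} globally on $\XXX_U$: for each prime divisor $C$ of $\XXX_U$ set
\[
q_C := \sup\left\{ \mult_C(\LLL) \mid \LLL \in \Sigma_{\XXX_U}(\DDD_U; Q)\right\},
\]
and define $\QQQ_U := \sum_C q_C\, C$. Since $\deg Q \geq 0$, the analogue of Claim~\ref{claim:lem:vertical:Zariski:decomp:01} produces some $\PPP \in \Sigma_{\XXX_U}(\DDD_U; Q)$ with $\PPP \cap X = Q$ (take the closure of $Q$ in $\XXX_U$ and correct vertically using Zariski's lemma on each fiber of $\XXX_U \to U$), so only finitely many $C$ contribute to $\QQQ_U$; by Lemma~\ref{lem:for:vertical:Zariski:01}(2), $\QQQ_U$ is relatively nef on $\XXX_U$ and is the greatest element of $\Sigma_{\XXX_U}(\DDD_U; Q)$. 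Set $q_P := g_{((\XXX_U)_{(P)},\, (\QQQ_U)_{(P)})}$ for $P \in U \cap M_K$, which is of $(C^0 \cap \Tpsh)$-type by Proposition~\ref{prop:PSH:imply:nef}; for each of the finitely many $P \in M_K \setminus U$, let $(Q, q_P)$ be the greatest element of $\Sigma((D, g_P); Q)$ furnished by Theorem~\ref{thm:vertical:Zariski:decomp}. Then $\overline{Q} := (Q, \{q_P\}_{P \in M_K})$ is a relatively nef global adelic $\RR$-Cartier divisor with $\overline{Q} \leq \overline{D}$, i.e.\ $\overline{Q} \in \Sigma(\overline{D}; Q)$, and the final assertion about $\QQQ_U$ on the regular $\XXX_U$ is built into this construction.

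For maximality I would take any $\overline{L} = (L, \{l_P\}) \in \Sigma(\overline{D}; Q)$; place-by-place $(L, l_P) \in \Sigma((D, g_P); Q)$. For $P \in M_K \setminus U$ the inequality $l_P \leq q_P$ is exactly the maximality in Theorem~\ref{thm:vertical:Zariski:decomp}; for $P \in U \cap M_K$, the same theorem together with its additional assertion~(1) shows the local maximal Green function is given by a relatively nef divisor $\QQQ^{\mathrm{loc}}_P$ on $(\XXX_U)_{(P)}$, and the sup-of-multiplicities description of $\QQQ_U$ forces $\QQQ^{\mathrm{loc}}_P = (\QQQ_U)_{(P)}$, hence $l_P \leq q_P$; so $\overline{L} \leq \overline{Q}$. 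The main obstacle will be adapting Lemma~\ref{lem:vertical:Zariski:decomp} from a regular surface over a complete DVR to the relative setting $\XXX_U \to U$: one must justify that $\max$ of relatively nef $\RR$-Cartier divisors preserves relative nefness over $U$, that the Zariski-lemma input used to produce $\PPP$ still applies, and that the local greatest elements glue. Each of these reduces, via Lemma~\ref{lem:base:change:completion}, to the fact that vertical curves on $\XXX_U$ lie over individual closed points, so relative nefness over $U$ decomposes fiberwise and the complete-DVR results of the previous subsection apply after localization at each $P \in U \cap M_K$.
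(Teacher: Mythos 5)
Your proposal is correct in spirit but takes the opposite route from the paper's proof, and the key identity that makes the two routes agree is asserted rather than proved. The paper sets $q_P$ to be the \emph{local} greatest element at every $P\in M_K$ (via Theorem~\ref{thm:vertical:Zariski:decomp}), and then shows that a single model $\QQQ_U$ over $U$ exists by the explicit observation that over the cofinite set $U'=\{P\in U\mid \XXX_U\to U\text{ smooth over }P,\ \DDD_U\text{ horizontal over }P\}$ the local decomposition $\QQQ_{(P)}$ is nothing but the Zariski closure of $Q$ in $(\XXX_U)_{(P)}$ --- so only finitely many $P\in U\setminus U'$ require a genuine vertical modification, and these patch. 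You instead build $\QQQ_U$ directly as a \emph{global} supremum over $\Sigma_{\XXX_U}(\DDD_U;Q)$, use it to define $q_P$ for $P\in U$, and appeal to the local theorem only for $P\notin U$. Both philosophies work, but they trade which gluing statement must be verified.

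Where you have a real gap is the sentence ``the sup-of-multiplicities description of $\QQQ_U$ forces $\QQQ^{\mathrm{loc}}_P=(\QQQ_U)_{(P)}$.'' The inclusion $(\QQQ_U)_{(P)}\le\QQQ^{\mathrm{loc}}_P$ is immediate (localizing a global element gives a local element), but your maximality argument needs the \emph{converse} inequality $\QQQ^{\mathrm{loc}}_P\le(\QQQ_U)_{(P)}$, and that does not follow formally from taking suprema: the local set $\Sigma_{(\XXX_U)_{(P)}}((\DDD_U)_{(P)};Q)$ is a priori larger than the set of localizations of global elements. To close this you must show that any local $\LLL'\in\Sigma_{(\XXX_U)_{(P)}}((\DDD_U)_{(P)};Q)$ arises (after replacing it by $\max\{\LLL',\PPP_{(P)}\}$ so that its horizontal part is $Q$) as the localization of some $\LLL\in\Sigma_{\XXX_U}(\DDD_U;Q)$: since $\LLL'-\PPP_{(P)}$ is then vertical over $P$, extend it by zero over $U\setminus\{P\}$ to a vertical divisor $\EEE$ on $\XXX_U$ and set $\LLL:=\PPP+\EEE$; relative nefness and the bound by $\DDD_U$ are checked fiber by fiber, using $\LLL'$ over $P$ and $\PPP$ elsewhere. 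With that one lemma supplied, your construction and maximality argument go through; the remaining items you flag (the analogue of Lemma~\ref{lem:vertical:Zariski:decomp} over the Dedekind base $U$, the $\max$-stability of relative nefness, and the existence of $\PPP$ via the fiberwise Zariski-lemma correction) do reduce fiberwise as you say and are unproblematic.
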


\begin{proof}
Let us choose a non-empty Zariski open set $U$ of $\Spec(O_K)$,
a regular  model $\XXX_U$ of $X$ over $U$ and
an $\RR$-Cartier divisor $\DDD_U$ on $\XXX_U$ such that
$g_P$ is given by $\DDD_U$ for all $P \in U \cap M_K$.
Moreover, we set
\[
U' = \left\{ P \in U \mid 
\text{$\XXX_U \to U$ is smooth over $P$ and $\DDD_U$ is horizontal over $P$} \right\}.
\]
By Theorem~\ref{thm:vertical:Zariski:decomp},
for each $P \in M_K$,
we can find a $Q$-Green function $q_P$ of $(C^0 \cap \Tpsh)$-type on $X_P^{\an}$
such that
$(Q, q_P)$ yields the greatest element of
\[
\left\{ (L, l_P) \ \left| \ 
\begin{array}{l}
\text{$L$ is a nef $\RR$-Cartier divisor on $X$, $l_P$ is an $L$-Green function
} \\
\text{of $(C^0 \cap \Tpsh)$-type on $X_P^{\an}$, $L \leq Q$ and
$(L, l_P) \leq (D, g_P)$}
\end{array}
\right\}\right..
\]
For each $P \in U$, we set
$(\XXX_U)_{(P)} := \XXX_U \times_U \Spec((O_K)_P)$.
Then, by Theorem~\ref{thm:vertical:Zariski:decomp} again,
$q_P$ is obtain by a relatively nef $\RR$-Cartier divisor $\QQQ_{(P)}$ on $(\XXX_U)_{(P)}$.
Note that if $P \in U'$, then
$\QQQ_{(P)}$ is actually given by the Zariski closure of $Q$ in $(\XXX_U)_{(P)}$.
Therefore, we can find a relatively nef $\RR$-Cartier divisor $\QQQ_U$
on $\XXX_U$ such that $\QQQ_U \cap (\XXX_U)_{(P)} = \QQQ_{(P)}$.
Therefore, $\overline{Q} := (Q, \{ q_P \}_{P \in M_K})$ forms a global
adelic $\RR$-Cartier divisor on $X$.
By our construction, it is obvious that
$\overline{Q}$ is the greatest element of $\Sigma\left(\overline{D}; Q\right)$.
Further, the second assertion of the theorem is
also obvious by our construction.
\end{proof}

As a corollary,
we have the relative version of Corollary~\ref{cor:Zariski:decomp:adelic:arithmetic:divisor}.

\begin{Corollary}
\label{cor:Zariski:rel:decomp:adelic:arithmetic:divisor}
Let $\overline{D} = (D, g)$ be an adelic arithmetic $\RR$-Cartier divisor of $C^0$-type on $X$.
Let $\Upsilon_{rel}(\overline{D})$ be the set of all
relatively nef adelic arithmetic $\RR$-Cartier divisors $\overline{L}$ of
$C^0$-type on $X$ with $\overline{L} \leq \overline{D}$.
\index{\AdelDivSymbol}{0U:Upsilon_{rel}(overline{D})@$\Upsilon_{rel}(\overline{D})$}%
If $\deg(D) \geq 0$,
then there is the greatest element $\overline{Q} = (Q, q)$ of
$\Upsilon_{rel}(\overline{D})$, that is,
$\overline{Q} \in \Upsilon_{rel}(\overline{D})$ and
$\overline{L} \leq \overline{Q}$ for all $\overline{L} \in \Upsilon_{rel}(\overline{D})$.
Moreover, we have the following properties:
\begin{enumerate}
\renewcommand{\labelenumi}{(\arabic{enumi})}
\item
$\overline{D} - \overline{Q}$ is vertical, that is, $D = Q$.

\item
For every $a \in \RR_{>0}$, the natural homomorphism
$\aH(X, a \overline{Q}^{\tau}) \to \aH(X, a\overline{D}^{\tau})$ is bijective.
Further, $\Vert \phi \Vert_{aq_{\infty}} = \Vert \phi \Vert_{ag_{\infty}}$ for all
$\phi \in H^0(X(\CC), a D)$.
In particular, 
\[
\hat{\chi}(X, a \overline{Q}) = \hat{\chi}(X, a \overline{D})
\quad\text{and}\quad
\avol_{\chi}(\overline{Q}) = \avol_{\chi}(\overline{D}).
\]

\item
$\overline{Q}$ is perpendicular to $\overline{D} - \overline{Q}$, that is,
$\adeg (\overline{Q} \cdot \overline{D} - \overline{Q}) = 0$.
\end{enumerate}
\end{Corollary}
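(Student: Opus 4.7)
The plan is to build $\overline{Q}$ place-by-place, combining Theorem~\ref{thm:vertical:Zariski:decomp:global} at the finite places with a psh upper-envelope construction at infinity that mirrors the archimedean local decomposition already developed for arithmetic surfaces in \cite{MoArZariski}. First I would apply Theorem~\ref{thm:vertical:Zariski:decomp:global} to the truncation $\overline{D}^{\tau} = (D, \{g_P\}_{P \in M_K})$ with $Q = D$; since $\deg(D) \geq 0$ this yields the greatest element $(D, \{q_P\}_{P \in M_K})$ of $\Sigma(\overline{D}^{\tau}; D)$, whose non-archimedean Green functions $q_P$ are of $(C^0 \cap \Tpsh)$-type and, by the final assertion of the theorem, come from a relatively nef $\RR$-Cartier divisor on a regular model over a dense open of $\Spec(O_K)$. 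At the archimedean place I define
\[
q_{\infty}(x) := \sup\bigl\{ l(x) \bigm| l \text{ is a $D$-Green function of } (C^0 \cap \Tpsh)\text{-type on } X(\CC) \text{ with } l \leq g_{\infty} \bigr\},
\]
which is continuous and itself of $(C^0 \cap \Tpsh)$-type by the archimedean local Zariski decomposition of \cite[Section~9]{MoArZariski}. Setting $\overline{Q} := (D, \sum_{P \in M_K} q_P[P] + q_{\infty}[\infty])$ produces the candidate greatest element.

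To prove maximality, take $\overline{L} = (L, \sum_P l_P[P] + l_{\infty}[\infty]) \in \Upsilon_{rel}(\overline{D})$. Since $L$ is nef on the curve $X$ we have $\deg L \geq 0$ and $L \leq D$, so $\overline{L}^{\tau} \in \Sigma(\overline{D}^{\tau}; D)$, and Theorem~\ref{thm:vertical:Zariski:decomp:global} forces $l_P \leq q_P$ for every $P$. The archimedean comparison $l_{\infty} \leq q_{\infty}$ is the one subtle point, since $l_{\infty}$ is only an $L$-Green function: away from $\Supp(D)$ both functions are continuous with $l_{\infty} \leq g_{\infty}$, while near points of $\Supp(D) \setminus \Supp(L)$ one has $q_{\infty} \to +\infty$ and $l_{\infty}$ bounded, so $l_{\infty}$ extends by $+\infty$ along $\Supp(D) \setminus \Supp(L)$ to a $D$-Green function of $(C^0 \cap \Tpsh)$-type bounded above by $g_{\infty}$, which is $\leq q_{\infty}$ by the definition of the envelope.

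Properties (1)--(3) then follow as below. (1) is immediate since $Q = D$ by construction. For (2), given $\phi \in \aH(X, a\overline{D}^{\tau})$ the relation $a\overline{D}^{\tau} + \widehat{(\phi)}^{\tau} \geq 0$ is equivalent to $\widehat{(\phi^{-1})}^{\tau} \leq a\overline{D}^{\tau}$; the truncation of the nef principal divisor $\widehat{(\phi^{-1})}$ is relatively nef (Definition~\ref{def:positive:adelic:arithmetic:divisor}), so maximality of $a\overline{Q}^{\tau}$ yields $\widehat{(\phi^{-1})}^{\tau} \leq a\overline{Q}^{\tau}$, i.e., $\phi \in \aH(X, a\overline{Q}^{\tau})$, proving bijectivity. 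The archimedean norm equality $\Vert\phi\Vert_{aq_{\infty}} = \Vert\phi\Vert_{ag_{\infty}}$ uses the fact that the supremum of $\vert\phi\vert\exp(-aq_{\infty}/2)$ is attained on the contact set $\{q_{\infty} = g_{\infty}\}$ (otherwise $q_{\infty}$ could be raised locally while preserving the envelope conditions, contradicting maximality); the consequences $\achi(X, a\overline{Q}) = \achi(X, a\overline{D})$ and $\avol_{\chi}(\overline{Q}) = \avol_{\chi}(\overline{D})$ are then immediate from Definition~\ref{def:volume:chi:volume}. For (3), the vanishing is proved place-by-place: (2) of Theorem~\ref{thm:vertical:Zariski:decomp} with $Q = D$ supplies the non-archimedean vanishing, while at infinity the Monge--Amp\`ere measure associated to $(D, q_{\infty})$ is supported on the contact set $\{q_{\infty} = g_{\infty}\}$, where $g_{\infty} - q_{\infty} = 0$.

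The main obstacle will be the archimedean step, namely the continuity of the envelope $q_{\infty}$ and the contact-set identities underlying the norm equality in (2) and the vanishing in (3). These are precisely the analytic ingredients developed in \cite[Section~9]{MoArZariski}, and their adaptation to the present adelic framework should be essentially routine, with the place-by-place structure of the construction doing the rest of the work.
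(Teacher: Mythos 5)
Your overall plan is the same as the paper's: apply Theorem~\ref{thm:vertical:Zariski:decomp:global} to $\overline{D}^{\tau}$ with $Q = D$ at the finite places, take the psh upper envelope at infinity (the paper cites \cite[Theorem~4.6]{MoArZariski} for the same object), and verify (1)--(3) place-by-place. The ingredients are right, and properties (1) and (2) are handled exactly as in the paper: for the bijectivity you use $-(1/a)\widehat{(\phi)}^{\tau}$ being relatively nef and maximality, and for the archimedean norm identity and the vanishing in (3) the paper simply invokes \cite[Lemma~1.3]{MoCharNef} and part (2) of Theorem~\ref{thm:vertical:Zariski:decomp} rather than rederiving the contact-set statements, but your heuristic is the right one.

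The one genuine gap is the archimedean maximality step. You write that for $\overline{L} \in \Upsilon_{rel}(\overline{D})$ with $L \lneq D$, the $L$-Green function $l_{\infty}$ ``extends by $+\infty$ along $\Supp(D) \setminus \Supp(L)$ to a $D$-Green function of $(C^0 \cap \Tpsh)$-type.'' That is not a legitimate operation: a $D$-Green function must have the precise $-\mult_x(D)\log|z|^2$ singularity at a point $x$ with $\mult_x(D)>0$, and a bounded function with a finite value there cannot be made into one by declaring it $+\infty$ at that point. So the envelope maximality of $q_{\infty}$ among $D$-Green functions does not directly apply to $l_{\infty}$. The correct replacement is the one the paper uses in the proof of Theorem~\ref{thm:Zariski:decomp:adelic:arithmetic:divisor}: form $\max\{l_{\infty}, q_{\infty}\}$, which by \cite[Lemma~9.1.1]{MoArZariski} (or the analogue of Proposition~\ref{prop:max:Green:adelic}) is a $\max\{L,D\}=D$-Green function of $(C^0 \cap \Tpsh)$-type still bounded above by $g_{\infty}$; maximality of $q_{\infty}$ then gives $\max\{l_{\infty}, q_{\infty}\} \leq q_{\infty}$, hence $l_{\infty} \leq q_{\infty}$. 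With that substitution your proof is sound.
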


\begin{proof}
Applying Theorem~\ref{thm:vertical:Zariski:decomp:global} to the case $Q = D$,
we have the greatest element 
\[
\left(D, \sum\nolimits_{P \in M_K} q_P[P]\right)
\]
of
$\Sigma\left(\overline{D}^{\tau}; D\right)$.
Moreover, by using \cite[Theorem~4.6]{MoArZariski},
there is a $D$-Green function $q_{\infty}$ of $(C^0 \cap \Tpsh)$-type on $X(\CC)$
such that $q_{\infty}$ yields 
the greatest element of
\[
\left\{ h_{\infty} \mid \text{$h_{\infty}$ is a $D$-Green function of $(C^0 \cap \Tpsh)$-type 
on $X(\CC)$ and $h_{\infty} \leq g_{\infty}$} \right\}.
\]
Thus $\left(D, \sum\nolimits_{P \in M_K} q_P[P] + q_{\infty}[\infty]\right)$
is our desired adelic arithmetic $\RR$-Cartier divisor.

The property (1) is obvious. 
For (2), we suppose $\phi \in \aH(X, a\overline{D}^{\tau})$,
that is, 
\[
-(1/a) (\phi)^{\ad} \leq \overline{D}^{\tau}
\]
by Proposition~\ref{prop:basic:prop:H:0}.
Note that $-(1/a) (\phi)^{\ad}$ is relatively nef, so that
$-(1/a) (\phi)^{\ad} \leq \overline{Q}^{\tau}$.
Therefore, $\phi \in \aH(X, a\overline{Q}^{\tau})$ by Proposition~\ref{prop:basic:prop:H:0}.
The assertion $\Vert \cdot \Vert_{aq_{\infty}} = \Vert \cdot \Vert_{ag_{\infty}}$ on
$H^0(X(\CC), a D)$ follows from \cite[Lemma~1.3]{MoCharNef}.
Further, (3) is a consequence of (2) in Theorem~\ref{thm:vertical:Zariski:decomp} and
\cite[Lemma~1.3]{MoCharNef}.
\end{proof}

The following theorem is one of the main results of this article.

\begin{Theorem}
\label{thm:Zariski:decomp:adelic:arithmetic:divisor}
Let $\overline{D}$ be an adelic arithmetic $\RR$-Cartier divisor of $C^0$-type on $X$.
Let $R$ be an $\RR$-Cartier divisor on $X$ with $R \leq D$.
Let $\Upsilon(\overline{D};R)$ be the set of all
nef adelic arithmetic $\RR$-Cartier divisors $\overline{L} =(L,l)$ of
$C^0$-type on $X$ with $L \leq R$ and $\overline{L} \leq \overline{D}$.
\index{\AdelDivSymbol}{0U:Upsilon(overline{D};R)@$\Upsilon(\overline{D};R)$}%
If\ \ $\Upsilon(\overline{D};R) \not= \emptyset$,
then there is the greatest element $\overline{Q}$ of
$\Upsilon(\overline{D};R)$, that is,
$\overline{Q} \in \Upsilon(\overline{D};R)$ and
$\overline{L} \leq \overline{Q}$ for all $\overline{L} \in \Upsilon(\overline{D};R)$.
\end{Theorem}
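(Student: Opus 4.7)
The plan is to construct $\overline{Q}$ in two stages: first I identify its horizontal $\RR$-Cartier divisor $Q$ as the pointwise supremum of the horizontal parts of all elements of $\Upsilon(\overline{D};R)$, and then I produce the Green functions $q_P$ ($P \in M_K$) and $q_\infty$ by invoking the place-by-place relative Zariski decompositions already established. The key structural input is the stability of $\Upsilon(\overline{D};R)$ under binary maxima.

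To verify this stability, given $\overline{L}_1 = (L_1, l_1)$ and $\overline{L}_2 = (L_2, l_2)$ in $\Upsilon(\overline{D};R)$, I set $\overline{L}_3$ to have horizontal part $\max\{L_1, L_2\}$ and Green function $\max\{l_{1,\wp}, l_{2,\wp}\}$ at each place $\wp \in M_K \cup \{\infty\}$. Proposition~\ref{prop:max:Green:adelic} yields the $(C^0 \cap \Tpsh)$-property at each non-archimedean place, and the classical fact that the maximum of two continuous semipositive Green functions on a compact Riemann surface is again semipositive handles the archimedean place; hence $\overline{L}_3$ is a relatively nef adelic arithmetic $\RR$-Cartier divisor with $\overline{L}_3 \leq \overline{D}$ and horizontal part below $R$. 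For nefness at a closed point $x$, shifting by a suitable $\widehat{(\phi)}$ to move $x$ off the supports of $L_1, L_2, \max\{L_1, L_2\}$, and $(\phi)$, the pointwise inequalities $\max\{l_{1,\wp}, l_{2,\wp}\}(x) \geq l_{i,\wp}(x)$ propagate through the local degree formulas of Subsection~\ref{subsec:local:degree} to $\adeg(\overline{L}_3|_x) \geq \adeg(\overline{L}_i|_x) \geq 0$. Hence $\overline{L}_3 \in \Upsilon(\overline{D};R)$.

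Next, set $Q := \sup\{L : (L,l) \in \Upsilon(\overline{D};R)\}$ coefficient-by-coefficient. Fixing any $\overline{L}_0 \in \Upsilon(\overline{D};R)$ and replacing $\overline{L}$ by $\max\{\overline{L}, \overline{L}_0\}$ when needed, I obtain $L_0 \leq L \leq R$, so $Q$ is a well-defined $\RR$-Cartier divisor with $L_0 \leq Q \leq R$ and $\deg Q \geq \deg L_0 \geq 0$ (the last inequality holds because the horizontal part of a relatively nef adelic arithmetic $\RR$-Cartier divisor on $X$ must be nef by the very definition of $(C^0 \cap \Tpsh)$-type, and on a curve nefness is equivalent to $\deg \geq 0$). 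I then apply Theorem~\ref{thm:vertical:Zariski:decomp:global} to $\overline{D}^\tau$ with horizontal bound $Q$ to obtain the greatest relatively nef global adelic $\RR$-Cartier divisor $(Q^\circ, \{q_P\}_{P \in M_K})$ with $Q^\circ \leq Q$ and $(Q^\circ, \{q_P\}) \leq \overline{D}^\tau$. For any $\overline{L} \in \Upsilon(\overline{D};R)$, the truncation $\overline{L}^\tau$ is a competitor, so $L \leq Q^\circ$; passing to the supremum over $\overline{L}$ forces $Q^\circ = Q$. The archimedean Zariski decomposition of \cite[Theorem~4.6]{MoArZariski}, applied as in the proof of Corollary~\ref{cor:Zariski:rel:decomp:adelic:arithmetic:divisor}, then supplies the greatest $Q$-Green function $q_\infty$ of $(C^0 \cap \Tpsh)$-type with $q_\infty \leq g_\infty$; the candidate class is non-empty because any $Q$-Green function of $(C^0 \cap \Tpsh)$-type (which exists since $\deg Q \geq 0$) can be shifted by a sufficiently negative constant to lie below $g_\infty$. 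Set $\overline{Q} := (Q, \{q_P\}_{P \in M_K} \cup \{q_\infty\})$.

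To finish, I verify that $\overline{Q}$ is nef and is the greatest element of $\Upsilon(\overline{D};R)$. For any $\overline{L} \in \Upsilon(\overline{D};R)$, the inequality $\overline{L} \leq \overline{Q}$ holds at the non-archimedean places by the greatest-element property of Theorem~\ref{thm:vertical:Zariski:decomp:global} applied to $\overline{L}^\tau$, and at the archimedean place because $\max\{l_\infty, q_\infty\}$ is again a $Q$-Green function of $(C^0 \cap \Tpsh)$-type bounded by $g_\infty$, hence $\leq q_\infty$ by maximality of $q_\infty$, whence $l_\infty \leq q_\infty$. Then $\overline{Q} - \overline{L}$ has non-negative horizontal part and non-negative Green functions at every place, so the principal-divisor shift and place-by-place argument used in the stability step yield $\adeg(\overline{Q}|_x) \geq \adeg(\overline{L}|_x) \geq 0$ for every closed point $x$, proving nefness of $\overline{Q}$. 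The main obstacle I expect is the coordination between the non-archimedean greatest-element (with variable horizontal part $\leq Q$) and the archimedean greatest-element (for the fixed horizontal part $Q$), together with the careful bookkeeping required to compare Green functions for different horizontal divisors at a given closed point via principal-divisor shifts; once the finite-place decomposition pins the horizontal part to $Q$ exactly, the existing archimedean Zariski decomposition applies without modification.
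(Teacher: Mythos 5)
Your construction of $Q$ (as the pointwise supremum of the horizontal parts, made finite by saturating with a fixed $\overline L_0$ via binary maxima) and of $\overline Q = (Q, \{q_P\}_{P \in M_K}\cup\{q_\infty\})$ (via Theorem~\ref{thm:vertical:Zariski:decomp:global} and \cite[Theorem~4.6]{MoArZariski}) coincides with the paper's, and your proof that $\overline Q$ is the \emph{greatest} element once it is known to be nef is also correct. The gap is in the nefness of $\overline Q$ itself, specifically the sentence ``$\overline{Q} - \overline{L}$ has non-negative horizontal part and non-negative Green functions at every place, so the principal-divisor shift and place-by-place argument used in the stability step yield $\adeg(\srest{\overline{Q}}{x}) \geq \adeg(\srest{\overline{L}}{x}) \geq 0$.'' An effective adelic arithmetic $\RR$-Cartier divisor $\overline E$ does \emph{not} in general satisfy $\adeg(\srest{\overline E}{x}) \geq 0$ when $x \in \Supp_{\RR}(E)$: to evaluate the global degree one must first replace $\overline E$ by $\overline E + \widehat{(\phi)}$ with $x \notin \Supp(E + (\phi))$, and that shift destroys effectiveness, so the local contributions can turn negative. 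The disjoint-supports trick that salvages this in the binary-max stability step (there, every $x$ lies outside either $\Supp_{\RR}(\max\{L_1,L_2\}-L_1)$ or $\Supp_{\RR}(\max\{L_1,L_2\}-L_2)$, so one can always take $\overline E$ supported away from $x$) is exactly what is unavailable here: the coefficient $\mult_x(Q) = \sup_{\overline L} \mult_x(L)$ is a supremum that need not be attained, so there may be \emph{no} $\overline L \in \Upsilon(\overline D;R)$ with $x \notin \Supp_{\RR}(Q - L)$.

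The paper closes this gap with a genuinely additional idea. Using the max-stability of the bounded family $\Upsilon([\overline M, \overline D]; R)$, one produces a single sequence $\{\overline L_n\}$ of nef divisors with $\overline M \leq \overline L_n \leq \overline Q$ and $\mult_x(L_n) \to \mult_x(Q)$ for every closed point $x$ simultaneously. The conclusion $\adeg(\srest{\overline Q}{x}) \geq 0$ is then extracted from this sequence by Lemma~\ref{lem:nef:seq:approx:nef}, whose proof is itself non-trivial: it descends to an arithmetic model, invokes a compactness result for nef arithmetic $\RR$-Cartier divisors (\cite[Theorem~7.1]{MoArZariski}) to pass to a subsequential limit $(\LLL, g_\infty)$ with the right horizontal part and a ``lower semicontinuity'' of local degrees along curves, and only then reads off the non-negativity. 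You will not be able to avoid a limiting argument of this kind; the pointwise comparison of heights via a single competitor $\overline L$ is not enough.
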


First let us consider two lemmas.

\begin{Lemma}
\label{lem:nef:seq:approx:nef}
Let $\overline{M}$ and
$\overline{Q}$ be adelic arithmetic $\RR$-Cartier divisors of $C^0$-type  on $X$ and
let $\{ \overline{L}_n \}_{n=1}^{\infty}$ be a sequence of 
adelic arithmetic $\RR$-Cartier divisors of $C^0$-type on $X$
with the following properties:
\begin{enumerate}
\renewcommand{\labelenumi}{(\arabic{enumi})}
\item
$\overline{L}_n$ is nef for all $n \geq 1$.

\item
$\overline{M} \leq \overline{L}_n \leq \overline{Q}$ for all $n \geq 1$.

\item
For all closed closed points $x$ of $X$,
$\lim_{n\to\infty} \mult_x(L_n) = \mult_x(Q)$.
\end{enumerate}
Then $\adeg(\srest{\overline{Q}}{x}) \geq 0$ for all closed points $x \in X$.
\end{Lemma}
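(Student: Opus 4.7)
For a fixed closed point $x \in X$, the plan is to establish $\lim_{n\to\infty} \adeg(\srest{\overline{L}_n}{x}) = \adeg(\srest{\overline{Q}}{x})$, after which the non-negativity of each $\adeg(\srest{\overline{L}_n}{x})$ (forced by the nefness of $\overline{L}_n$) immediately yields the conclusion. First I would reduce to the case $\mult_x(Q) = 0$: using \cite[Lemma~5.2.3]{MoArZariski} choose $\phi_0 \in \Rat(X)^{\times}_{\RR}$ with $x \notin \Supp(Q + (\phi_0))$ and replace $\overline{Q}$, $\overline{L}_n$, $\overline{M}$ by $\overline{Q} + \widehat{(\phi_0)}$, $\overline{L}_n + \widehat{(\phi_0)}$, $\overline{M} + \widehat{(\phi_0)}$; this preserves conditions (1)--(3) and, by \eqref{eqn:arith:degree:principal:02}, does not affect any global degree along $x$.

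Next, put $a_n := -\mult_x(L_n) \geq 0$, so $a_n \to 0$ by (3). Choose $\phi \in \Rat(X)^{\times}$ with $\mult_x((\phi)) = 1$ and set $\overline{L}'_n := \overline{L}_n + a_n \widehat{(\phi)}$, so that $\mult_x(L'_n) = 0$ and $\adeg(\srest{\overline{L}'_n}{x}) = \adeg(\srest{\overline{L}_n}{x}) \geq 0$. With both $\overline{Q}$ and $\overline{L}'_n$ carrying trivial multiplicity at $x$, the local degrees of \ifmonog Section~\ref{subsec:local:degree}\fi\ifpaper Subsection~\ref{subsec:local:degree}\fi\ are given directly by sums of Green-function values at the points $v_1, \ldots, v_\ell$ of $X_P^{\an}$ above $x$ and at the $\CC$-points $x_\sigma$ above $x$. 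Taking a local uniformizer $t$ at $x$ and writing $l_{n,P} = c_{n,P} + a_n \log|t|^2$ with $c_{n,P}$ continuous at $v_i$, one obtains $l'_{n,P}(v_i) = c_{n,P}(v_i) - a_n \log|\phi/t|^2(v_i)$, and the correction term vanishes in the limit since $\phi/t$ is a unit at $x$; the analogous statement holds at the archimedean place. Consequently, $\adeg(\srest{\overline{Q}}{x}) - \adeg(\srest{\overline{L}_n}{x})$ differs from $\sum_{P,i} \tfrac{[k_i:K_P]}{2}(q_P(v_i) - c_{n,P}(v_i)) + \tfrac{1}{2}\sum_{\sigma}(q_\infty(x_\sigma) - c_{n,\infty}(x_\sigma))$ by a quantity tending to $0$.

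The hard part will be to prove the two-sided convergence $c_{n,P}(v_i) \to q_P(v_i)$ and $c_{n,\infty}(x_\sigma) \to q_\infty(x_\sigma)$, since the bounds $m_P \leq l_{n,P} \leq q_P$ do not directly restrict the continuous extensions $c_{n,P}(v_i)$—the singularity orders of $l_{n,P}$ and $m_P$ at $v_i$ generally differ from that of $q_P$. I expect to combine the upper bound $l_{n,P} \leq q_P$ with a matching lower estimate extracted from the squeeze $\overline{M} \leq \overline{L}_n$ together with the nefness of $\overline{L}_n$ at $x$, sandwiching $c_{n,P}(v_i)$ between two sequences both converging to $q_P(v_i)$. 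Once this convergence is secured, passing to the limit in $\adeg(\srest{\overline{L}_n}{x}) \geq 0$ yields $\adeg(\srest{\overline{Q}}{x}) \geq 0$.
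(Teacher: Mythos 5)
The central claim of your plan --- that $\lim_{n\to\infty}\adeg(\srest{\overline{L}_n}{x}) = \adeg(\srest{\overline{Q}}{x})$ --- is false under the hypotheses of the lemma. Take any nef $\overline{Q}$ with $\adeg(\srest{\overline{Q}}{y})>0$ for all closed points $y$, and set $\overline{L}_n := \overline{Q} - (0,\delta[\infty])$ for a small fixed $\delta>0$. Each $\overline{L}_n$ is nef, $L_n = Q$ for every $n$ (so condition (3) holds trivially), and with $\overline{M} := \overline{L}_1$ all three hypotheses are met, yet $\adeg(\srest{\overline{L}_n}{x}) = \adeg(\srest{\overline{Q}}{x}) - (\delta/2)[K(x):\QQ]$ for every $n$, a constant strictly below the claimed limit. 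The point is that condition (3) constrains only the underlying $\RR$-divisors $L_n$, not the Green functions, so the sequence need not accumulate to $\overline{Q}$ at all. Your reduction steps (trivialising the multiplicity at $x$ via $\widehat{(\phi_0)}$ and re-centering via $a_n\widehat{(\phi)}$) are fine, but the ``hard part'' you flag is not merely hard: the asserted two-sided convergence $c_{n,P}(v_i)\to q_P(v_i)$ is false. Even the weaker one-sided bound $\limsup_n\adeg(\srest{\overline{L}_n}{x})\leq\adeg(\srest{\overline{Q}}{x})$ (which would suffice) is not extractable from the estimates at hand: the Green functions of $\overline{E}_n := \overline{Q}-\overline{L}_n$ are non-negative and bounded above by those of $\overline{Q}-\overline{M}$, but when $\mult_x(M)<0$ both envelopes blow up at $v_i$ in a way that gives no control on the continuous extension there, and nefness of $\overline{L}_n$ constrains only the weighted sum over all places, not each local value.

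The paper's proof follows a genuinely different route, using the Zariski-decomposition machinery developed in this section. In the special case where $\overline{Q}=(\QQQ,q_\infty)^{\ad}$ for some $\RR$-Cartier divisor $\QQQ$ on a regular model $\XXX$, it first replaces each $\overline{L}_n$ by the greatest nef arithmetic $\RR$-Cartier divisor $(\LLL_n,(g_n)_\infty)$ on $\XXX$ squeezed between $\overline{L}_n$ and $\overline{Q}$ (Theorem~\ref{thm:vertical:Zariski:decomp:global}); this gives $\LLL_n\cap X = L_n$ and $(\LLL_n,(g_n)_\infty)^{\ad}$ nef, while a companion $(\MMM,m_\infty)^{\ad}\leq\overline{M}$ on $\XXX$ bounds the multiplicities of $\LLL_n$ from below. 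Extracting a subsequence so that $\mult_C(\LLL_{n_i})$ converges for every prime divisor $C$, the compactness result \cite[Theorem~7.1]{MoArZariski} produces a limit pair $(\LLL,g_\infty)$ with $\LLL\cap X=Q$, $(\LLL,g_\infty)\leq(\QQQ,q_\infty)$, and $\adeg(\srest{(\LLL,g_\infty)}{\Delta_x})\geq\limsup_i\adeg(\srest{(\LLL_{n_i},(g_{n_i})_\infty)}{\Delta_x})\geq 0$; one concludes with Lemma~\ref{lem:global:degree:comp}. The general case then reduces to this one by squeezing $\overline{Q}$ between model divisors via Theorem~\ref{thm:approx:adelic:arith} and letting the approximation error go to zero. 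In short, the paper never asserts or needs convergence of $\adeg(\srest{\overline{L}_n}{x})$; it manufactures a single nef arithmetic divisor below $\overline{Q}$ with the same generic fiber divisor $Q$ and non-negative heights, and then invokes monotonicity.
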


\begin{proof}
We set $\overline{M} = (M, m)$,
$\overline{Q} = (Q, q)$ and $\overline{L}_n = (L_n, g_n)$.
First we assume that there are a regular  model $\XXX$ of $X$ over $\Spec(O_K)$ and
an $\RR$-Cartier divisor $\QQQ$ on $\XXX$ such that
$\overline{Q} = (\QQQ, q_{\infty})^{\ad}$.
Let us see the following claim:

\begin{Claim}
\begin{enumerate}
\renewcommand{\labelenumi}{(\roman{enumi})}
\item
For all $n \geq 1$, there is an $\RR$-Cartier divisor $\LLL_n$ on $\XXX$ such that
$\LLL_n \cap X = L_n$,
$(\LLL_n, (g_n)_{\infty})^{\ad}$ is nef and
$\overline{L}_n \leq (\LLL_n, (g_n)_{\infty})^{\ad} \leq (\QQQ, q_{\infty})^{\ad}$.

\item
There is an $\RR$-Cartier divisor $\MMM$ on $\XXX$ such that $\MMM \cap X = M$ and
$(\MMM, m_{\infty})^{\ad} \leq \overline{M}$.
\end{enumerate}
\end{Claim}

\begin{proof}
(i)
For each $n \geq 1$, we consider the following set:
\[
\left.\left\{ \overline{D} = \left( D, \{ g_P \}_{P \in M_K}\right) \ \right| \ 
\text{$\overline{D}$ is relatively nef, $D \leq L_n$ and
$\overline{D} \leq \QQQ^{\ad}$}
\right\}.
\]
Then, by using Theorem~\ref{thm:vertical:Zariski:decomp:global},
there is a relatively nef $\RR$-Cartier divisor $\LLL_n$ on $\XXX$ such that
$(\LLL_n)^{\ad}$ gives rise to the greatest
element of the above set. As $\overline{L}_n^{\tau}$ belongs to the above set,
we can see that
$\overline{L}_n \leq (\LLL_n, (g_n)_{\infty})^{\ad}$.
Moreover,  as $L_n = \LLL_n \cap X$,
we have $(\LLL_n, (g_n)_{\infty})^{\ad}$ is nef by Lemma~\ref{lem:global:degree:comp},
so that (i) follows.

(ii)
There are an $\RR$-Cartier divisor $\MMM'$ on $\XXX$ and
a non-empty open set $U$ of $\Spec(O_K)$ such that
$m_P$ is defined by $\MMM'$ for all $P \in U \cap M_K$.
For each $P \in M_K \setminus U$,
let $m'_P$ be the $M$-Green function arising from $\MMM'$.
As $m_P - m'_P$ is a continuous function on $X_P$, there is a constant $\theta_P$ such that
$m_P \geq m'_P + \theta_P$ for all $P \in M_K \setminus U$.
Let $F_P$ be the fiber of $\XXX \to \Spec(O_K)$ over $P$.
If we set 
\[
\overline{\MMM} = \left(\MMM' + \sum\nolimits_{P \in M_K \setminus U} \frac{\theta_P}{2\log\#(O_K/P)} F_P, m_{\infty} 
\right),
\]
then $\overline{M} \geq \overline{\MMM}^{\ad}$,
as required.
\end{proof}

By the above claim, $(\MMM, m_{\infty}) \leq (\LLL_n, (g_n)_{\infty}) \leq (\QQQ, q_{\infty})$ and
$\LLL_n \cap X = L_n$
for all $ n \geq 1$.
Thus
$\lim_{n\to\infty} \mult_C(\LLL_n)$ exists for all prime divisors $C$ on $\XXX$ except
finitely many fiber components, so that
if we choose a subsequence $\{ (\LLL_{n_i}, (g_{n_i})_{\infty}) \}_{i=1}^{\infty}$ of
$\{ (\LLL_{n}, (g_{n})_{\infty}) \}_{n=1}^{\infty}$,
then $\lim_{i\to\infty} \mult_C(\LLL_{n_i})$ exists for all prime divisors $C$ on $\XXX$.
Therefore, by using \cite[Theorem~7.1]{MoArZariski},
there are an $\RR$-Cartier divisor $\LLL$ on $\XXX$ and
a $Q$-Green function $g_{\infty}$ of $\Tpsh_{\RR}$-type on $X(\CC)$
with the following properties:
\begin{enumerate}
\renewcommand{\labelenumi}{(\alph{enumi})}
\item
$\mult_{C}(\LLL) = \lim_{i\to\infty} \mult_{C}(\LLL_{n_i})$ for all
prime divisors $C$ on $\XXX$. In particular, $\LLL \cap X = Q$.

\item
$\adeg(\rest{(\LLL, g_{\infty})}{C}) \geq \limsup_{i\to\infty} 
\adeg(\rest{(\LLL_{n_i}, (g_{n_i})_{\infty})}{C})$
for all
prime divisors $C$ on $\XXX$.

\item
$(\LLL, g_{\infty}) \leq (\QQQ, q_{\infty})$.
\end{enumerate}
Let $x$ be a closed point of $X$ and $\Delta_x$ the closure of $x$ in $\XXX$.
Then, as $\LLL \cap X = \QQQ \cap X$ and $(\LLL, g_{\infty}) \leq (\QQQ, q_{\infty})$,
\begin{multline*}
\qquad\adeg(\srest{\overline{Q}}{x}) = \adeg(\srest{(\QQQ, q_{\infty})}{\Delta_x}) \geq \adeg(\srest{(\LLL, g_{\infty})}{\Delta_x}) \\
\geq \limsup_{i\to\infty} 
\adeg(\srest{(\LLL_{n_i}, (g_{n_i})_{\infty})}{\Delta_x}) \geq 0.\qquad
\end{multline*}

In general,  let $U$ be a non-empty open set of $\Spec(O_K)$ such that
$\overline{Q}$ has a defining model over $U$.
For a positive number $\epsilon$, 
by Theorem~\ref{thm:approx:adelic:arith}, 
there are a regular  model $\XXX_{\epsilon}$ 
over $\Spec(O_K)$ and
an $\RR$-Cartier divisor $\QQQ_{\epsilon}$ on $\XXX_{\epsilon}$
such that
\[
\overline{Q} \leq (\QQQ_{\epsilon}, q_{\infty})^{\ad} \leq \overline{Q} + 
\left(0, \sum\nolimits_{P \in M_K \setminus U} \epsilon [P]\right).
\]
Then, by the previous observation, $\adeg(\rest{(\QQQ_{\epsilon}, q_{\infty})^{\ad}}{x}) \geq 0$, 
so that, by Lemma~\ref{lem:global:degree:comp},
\[
\adeg\left(\rest{\overline{Q} + 
\left(0, \sum\nolimits_{P \in M_K \setminus U} \epsilon [P]\right)}{x}\right) \geq 0,
\]
and hence
\[
\adeg(\srest{\overline{Q}}{x}) \geq -\epsilon [K(x) : K] \#(M_K \setminus U),
\]
where $K(x)$ is the residue field at $x$.
Thus the assertion follows.
\end{proof}

\begin{Lemma}
\label{lem:max:nef:adelic:arith}
\begin{enumerate}
\renewcommand{\labelenumi}{(\arabic{enumi})}
\item
Let 
\[
\overline{L}_1 = (L_1, \{ (l_1)_P \}_{P \in M_K})
\quad\text{and}\quad
\overline{L}_2 = (L_2, \{ (l_2)_P \}_{P \in M_K})
\]
be global adelic $\RR$-Cartier divisors of $C^0$-type on $X$.
If $\overline{L}_1$ and $\overline{L}_2$ are relatively nef,
then
\[
\max \{ \overline{L}_1, \overline{L}_2 \}
: = \left( \max \{ L_1, L_2 \}, \big\{ \max \{ (l_1)_P, (l_2)_P \} \big\}_{P \in M_K} \right)
\]
is also relatively nef.

\item
Let $\overline{Q}_1 = (Q_1, q_1)$ and $\overline{Q}_2 = (Q_2, q_2)$ 
be adelic arithmetic $\RR$-Cartier divisors of $C^0$-type
on $X$. If $\overline{Q}_1$ and $\overline{Q}_2$ 
are nef, then
\begin{multline*}
\qquad\qquad
\max \{ \overline{Q}_1, \overline{Q}_2 \}
:= \left( \max \{ Q_1, Q_2 \}, \left\{ \max \{ (q_1)_P, (q_2)_P \} \right\}_{P \in M_K} \right. \\
\left. \cup 
\left\{ \max \{ (q_1)_{\infty}, (q_2)_{\infty} \} \right\} \right)
\end{multline*}
is also nef.
\end{enumerate}
\end{Lemma}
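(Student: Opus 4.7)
The plan is to verify the defining conditions of relative nefness (resp.\ nefness) place by place, and then assemble them.

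\textbf{Part (1).} I will check the two ingredients for $\max\{\overline{L}_1,\overline{L}_2\}$ to be a relatively nef global adelic $\RR$-Cartier divisor of $C^0$-type. The underlying divisor $\max\{L_1,L_2\}$ makes sense via Conventions and terminology~\ref{CT:max:min:divisor} since $X$ is smooth. At every $P\in M_K$, Proposition~\ref{prop:max:Green:adelic} ensures that $\max\{(l_1)_P,(l_2)_P\}$ is a $\max\{L_1,L_2\}$-Green function of $(C^0\cap\Tpsh)$-type on $X_P^{\an}$. What is left is to produce a single defining model over some non-empty open $U\subseteq\Spec(O_K)$. Starting from defining models $(\mathcal{X}_{i,U_i},\mathcal{L}_{i,U_i})$ of $\overline{L}_i$, I set $U:=U_1\cap U_2$ and invoke Lipman's theorem (Conventions and terminology~\ref{CT:model}) to obtain a regular model $\mathcal{X}_U$ of $X$ over $U$ together with birational morphisms $\pi_i\colon \mathcal{X}_U\to\rest{\mathcal{X}_{i,U_i}}{U}$. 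On $\mathcal{X}_U$ I define $\mathcal{L}_U:=\max\{\pi_1^*(\rest{\mathcal{L}_{1,U_1}}{U}),\pi_2^*(\rest{\mathcal{L}_{2,U_2}}{U})\}$, which is an $\RR$-Cartier divisor thanks to the regularity of $\mathcal{X}_U$. Since pullbacks of relatively nef divisors remain relatively nef, Lemma~\ref{lem:for:vertical:Zariski:01}(2) gives that $\mathcal{L}_U$ is relatively nef over $U$, and a direct local computation shows that the Green function induced by $(\mathcal{X}_U,\mathcal{L}_U)$ at each $P\in U\cap M_K$ is exactly $\max\{(l_1)_P,(l_2)_P\}$.

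\textbf{Part (2).} Applying Part (1) to the truncations $\overline{Q}_i^{\tau}$ handles the non-archimedean part of $\max\{\overline{Q}_1,\overline{Q}_2\}$. For the archimedean Green function, the pointwise maximum $\max\{(q_1)_\infty,(q_2)_\infty\}$ is continuous and $F_\infty$-invariant, and its underlying current is positive because the maximum of two continuous plurisubharmonic functions is again continuous plurisubharmonic; this is the archimedean counterpart of Proposition~\ref{prop:max:Green:adelic} (cf.\ \cite{MoArZariski}). Hence $\max\{\overline{Q}_1,\overline{Q}_2\}$ is relatively nef. It remains to verify $\adeg(\srest{\max\{\overline{Q}_1,\overline{Q}_2\}}{x})\geq 0$ for every closed point $x\in X$. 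The key observation is that the $\max$ operation on $\RR$-Cartier divisors and on Green functions commutes with the addition of a common principal divisor, giving
\[
\max\{\overline{Q}_1+\widehat{(\phi)},\ \overline{Q}_2+\widehat{(\phi)}\}=\max\{\overline{Q}_1,\overline{Q}_2\}+\widehat{(\phi)},
\]
and by \eqref{eqn:arith:degree:principal:02} the global degree at $x$ is preserved by such a shift. Choosing $\phi\in\Rat(X)^{\times}_{\RR}$ so that $x$ avoids $\Supp_{\RR}(Q_i+(\phi))$ for $i=1,2$ (whence $x$ also avoids $\Supp_{\RR}(\max\{Q_1,Q_2\}+(\phi))$), I reduce to the case where every Green function is defined at the points of $X$ above $x$. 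There the pointwise inequality $\max\{(q_1)_\wp,(q_2)_\wp\}\geq (q_1)_\wp$ yields $\adeg_\wp(\srest{\max\{\overline{Q}_1,\overline{Q}_2\}}{x})\geq\adeg_\wp(\srest{\overline{Q}_1}{x})$ at every place $\wp\in M_K\cup\{\infty\}$; summing over $\wp$ and invoking the nefness of $\overline{Q}_1$ concludes the argument.

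The one genuinely delicate step is the construction of the common regular defining model in Part~(1): the regularity of $\mathcal{X}_U$ is essential for Conventions and terminology~\ref{CT:max:min:divisor} to produce a bona fide $\RR$-Cartier divisor, and one must check carefully that the coefficient-wise maximum at the divisor level matches the pointwise maximum of the Green functions at every finite place of $U$. The rest of the proof is routine pointwise estimation combined with the results of the preceding sections.
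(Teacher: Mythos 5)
Your proposal misses two crucial disjointness steps that the paper's argument depends on, and without them both parts break.

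In Part~(1), the claim that the coefficient-wise maximum $\mathcal{L}_U := \max\{\pi_1^*\mathcal{L}_1,\pi_2^*\mathcal{L}_2\}$ induces the pointwise maximum $\max\{(l_1)_P,(l_2)_P\}$ at \emph{every} $P\in U\cap M_K$ is false in general. If a reduction point $\xi$ of a fiber over some $P\in U$ lies on two distinct prime divisors $\Gamma_1,\Gamma_2$ appearing in the supports, then with local equations $h_1,h_2$ and $u_i := -\log\vert h_i(x)\vert^2\geq 0$ one computes the Green function of the coefficient-wise max to be $\max\{a_{11},a_{21}\}u_1+\max\{a_{12},a_{22}\}u_2$, whereas the max of the two Green functions is $\max\{a_{11}u_1+a_{12}u_2,\ a_{21}u_1+a_{22}u_2\}$; these disagree already for $a_{11}=a_{22}=1$, $a_{12}=a_{21}=0$, $u_1=u_2=1$. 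The paper avoids this by shrinking $U$ so that (after normalizing the model divisors to be the closures of the support points of $L_i$ in $\XXX_U$) those horizontal closures $C_1,\ldots,C_r$ are \emph{pairwise disjoint} over $U$; only then does the single-component-per-reduction-point analysis validate the claimed identity. You flag this as ``one must check carefully,'' but the identity you assert does not hold at the finitely many places of $U$ where the horizontal components collide, so a further shrinking of $U$ is not optional but mandatory.

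In Part~(2), you propose choosing $\phi\in\Rat(X)^{\times}_{\RR}$ so that $x$ avoids $\Supp_{\RR}(Q_1+(\phi))$ \emph{and} $\Supp_{\RR}(Q_2+(\phi))$ simultaneously, which forces $\mult_x Q_1 = \mult_x Q_2 = -\mult_x(\phi)$; this is impossible whenever $\mult_x Q_1\neq\mult_x Q_2$. The paper instead observes that $\Supp_{\RR}(\max\{Q_1,Q_2\}-Q_1)$ and $\Supp_{\RR}(\max\{Q_1,Q_2\}-Q_2)$ are disjoint (a point $x$ cannot satisfy both $\mult_x Q_2>\mult_x Q_1$ and $\mult_x Q_1>\mult_x Q_2$), so one may assume $x\notin\Supp_{\RR}(\max\{Q_1,Q_2\}-Q_1)$; then $\overline{Q}:=\max\{\overline{Q}_1,\overline{Q}_2\}-\overline{Q}_1$ is effective with $x$ outside its support, and the global degree splits as $\adeg(\srest{\overline{Q}_1}{x})+\adeg(\srest{\overline{Q}}{x})\geq 0$ by nefness of $\overline{Q}_1$ and effectivity of $\overline{Q}$. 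Your place-by-place comparison idea is fine \emph{once} the common shift exists; but guaranteeing its existence requires exactly this case distinction via disjoint supports, so your route ultimately has to absorb the paper's argument anyway.
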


\begin{proof}
(1)
We set $L_1 = a_{11} x_1 + \cdots + a_{1r} x_r$
and $L_2 = a_{21} x_1 + \cdots + a_{2r} x_r$,
where $x_1, \ldots, x_r$ are closed points on $X$ and
$a_{11}, \ldots, a_{1r}, a_{21}, \ldots, a_{2r} \in \RR$.
Let us choose an non-empty Zariski open set $U$ of $\Spec(O_K)$,
a regular  model $\XXX_U$ over $U$ and nef $\RR$-Cartier divisors $\LLL_1$ and $\LLL_2$ on $\XXX_U$
such that $l_1$ and $l_2$ are given by $\LLL_1$ and $\LLL_2$ over $U$, respectively.
For $i=1,\ldots, r$, let $C_i$ be the Zariski closure of $x_i$ in $\XXX_U$.
Shrinking $U$ if necessarily,
we may assume the following:
\begin{enumerate}
\renewcommand{\labelenumi}{(\alph{enumi})}
\item
$\LLL_1 = a_{11} C_1 + \cdots + a_{1r} C_r$ and
$\LLL_2 = a_{21} C_1 + \cdots + a_{2r} C_r$.

\item
$C_i \cap C_j = \emptyset$ for all $i \not= j$.
\end{enumerate}
Then, by the properties (a) and (b),
for $P \in U$, it is easy to see that
$\max \{ (l_1)_{P}, (l_2)_{P} \}$ is given by
$g_{((\XXX_U)_{(P)},\, \max \{ \LLL_1, \LLL_2\}_{(P)})}$, 
where $(\XXX_U)_{(P)}$ is the localization of $\XXX_U \to U$ at $P$.
Note that $\max \{ \LLL_1, \LLL_2\}$ is relatively nef by (2) in Lemma~\ref{lem:for:vertical:Zariski:01}.
Moreover, for $P \in M_K \setminus U$,
by (2) in Proposition~\ref{prop:max:Green:adelic},
$\max \{ (l_1)_{P}, (l_2)_{P} \}$ is of $(C^0 \cap \Tpsh)$-type.
Thus the assertion follows.

\medskip
(2)
By \cite[Lemma~9.1.1]{MoArZariski},
$\max \{ (q_1)_{\infty}, (q_2)_{\infty} \}$ is of $(C^0 \cap \Tpsh)$-type, so that,
by virtue of (1), it is sufficient to show that
$\adeg(\srest{\max \{ \overline{Q}_1, \overline{Q}_2 \}}{x}) \geq 0$
for all closed points $x$ of $X$.
As 
\[
\Supp_{\RR}(\max \{ Q_1, Q_2 \} - Q_1) \cap \Supp_{\RR}(\max \{ Q_1, Q_2 \} - Q_2)
= \emptyset,
\]
we may assume that $x \not\in \Supp_{\RR}(\max \{ Q_1, Q_2 \} - Q_1)$.
If we set 
\[
\overline{Q} := \max \{ \overline{Q}_1, \overline{Q}_2 \} - \overline{Q}_1,
\]
then $\overline{Q}$ is effective and
$x \not\in \Supp_{\RR}(Q)$, so that $\adeg(\srest{\overline{Q}}{x}) \geq 0$.
Therefore,
\[
\adeg(\srest{\max \{ \overline{Q}_1, \overline{Q}_2 \}}{x}) = \adeg(\srest{\overline{Q}_1}{x}) +
\adeg(\srest{\overline{Q}}{x}) \geq 0,
\]
as required.
\end{proof}

\begin{proof}[Proof of Theorem~\rom{\ref{thm:Zariski:decomp:adelic:arithmetic:divisor}}]
Let us start the proof of Theorem~\ref{thm:Zariski:decomp:adelic:arithmetic:divisor}.
We choose 
\[
\overline{M} = (M, m) \in \Upsilon(\overline{D};R).
\]
Let $\Upsilon([\overline{M}, \overline{D}];R)$ be the set of all
nef adelic arithmetic $\RR$-Cartier divisors $\overline{L} = (L, l)$ of
$C^0$-type on $X$ with $L \leq R$ and $\overline{M} \leq \overline{L} \leq \overline{D}$.
For each closed point $x \in X$,
we set 
\[
a_x = \sup \left\{ \mult_x(L) \mid (L, l) \in\Upsilon([\overline{M},\overline{D}];R) \right\}.
\]
Note that, if $x \not\in \Supp_{\RR}(D) \cup \Supp_{\RR}(M)$, then 
$\mult_{x}(L) = 0$ for all $\overline{L} = (L, l) \in \Upsilon([\overline{M},\overline{D}];R)$.
In particular, $a_x = 0$, so that
we set $Q = \sum_{x} a_x x$, which is an $\RR$-Cartier divisor on $X$ with $Q \leq R$.
By using Theorem~\ref{thm:vertical:Zariski:decomp:global},
let $(Q,  \{ q_P \}_{P \in M_K})$ be the greatest element of
\[
A := \left\{ \overline{L} = (L, \{ l_P \}_{P \in M_K}) \ \left| \ 
\begin{array}{l}
\text{$\overline{L}$ is a relatively nef global adelic $\RR$-Cartier} \\
\text{divisor on $X$ such that $L \leq Q$ and
$\overline{L} \leq \overline{D}^{\tau}$}
\end{array}
\right\}\right..
\]
Note that $(Q,  \{ q_P \}_{P \in M_K}) \geq \overline{M}^{\tau}$ because
$\overline{M}^{\tau} \in A$.
Moreover, by \cite[Theorem~4.6]{MoArZariski},
there is a $Q$-Green function $q_{\infty}$ of $(C^0 \cap \Tpsh)$-type on $X(\CC)$
such that $q_{\infty}$ yields 
the greatest element of
\[
B := \left\{ h_{\infty} \ \left|\ 
\begin{array}{l}
\text{$h_{\infty}$ is a $Q$-Green function of $(C^0 \cap \Tpsh)$-type} \\ 
\text{on $X(\CC)$ and $h_{\infty} \leq g_{\infty}$}
\end{array}
\right\}\right..
\]
As $\max \{ m_{\infty}, q_{\infty} \}$ is an element of $B$
(cf. \cite[Lemma~9.1.1]{MoArZariski}), 
we have $m_{\infty} \leq q_{\infty}$.
Here we set
\[
\overline{Q} := (Q, \{ q_P \}_{P \in M_K} \cup \{ q_{\infty} \}).
\]
Clearly $\overline{M} \leq \overline{Q} \leq \overline{D}$ and $Q \leq R$.

\frontmatterforspececialclaim
\begin{Claim}
\label{claim:thm:Zariski:decomp:adelic:arithmetic:divisor:01}
There is a sequence $\left\{ \overline{L}_n = (L_n, l_n)\right\}_{n=1}^{\infty}$ in $\Upsilon([\overline{M}, \overline{D}]; R)$
such that
\[
a_x = \lim_{n\to\infty} \mult_x(L_n)
\]
for all closed points $x \in X$.
\end{Claim}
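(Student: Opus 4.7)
The plan is to construct the sequence by taking maxima of finitely many sequences, one for each point where $a_x$ could be nonzero. Since $\overline{M} \le \overline{L} \le \overline{D}$ for every $\overline{L} = (L,l) \in \Upsilon([\overline{M},\overline{D}];R)$, we have $M \le L \le D$ as $\RR$-Cartier divisors, so $\mult_x(L)=0$ whenever $x$ lies outside the finite set $S := \Supp_{\RR}(D) \cup \Supp_{\RR}(M)$. In particular $a_x = 0$ for $x \notin S$, so only the finitely many points of $S$ require genuine approximation.

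For each $x \in S$, by the definition of $a_x$ as a supremum, choose a sequence $\{ \overline{L}_{x,n} = (L_{x,n}, l_{x,n}) \}_{n=1}^\infty$ in $\Upsilon([\overline{M},\overline{D}];R)$ with $\lim_{n\to\infty} \mult_x(L_{x,n}) = a_x$. Define
\[
\overline{L}_n := \max\{ \overline{L}_{x,n} \mid x \in S \},
\]
where the maximum of adelic arithmetic $\RR$-Cartier divisors is taken coordinate-wise (on $D$, on each $g_P$, and on $g_\infty$) as in Lemma~\ref{lem:max:nef:adelic:arith}. An induction on $\#(S)$ using (2) of Lemma~\ref{lem:max:nef:adelic:arith} shows that $\overline{L}_n$ is a nef adelic arithmetic $\RR$-Cartier divisor of $C^0$-type. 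Moreover, since each $\overline{L}_{x,n}$ satisfies $\overline{M} \le \overline{L}_{x,n} \le \overline{D}$ and $L_{x,n} \le R$, the same holds for $\overline{L}_n$: the upper bound $\overline{L}_n \le \overline{D}$ follows from each $\overline{L}_{x,n} \le \overline{D}$, the lower bound $\overline{M} \le \overline{L}_n$ from $\overline{M} \le \overline{L}_{x,n}$ for any single $x$, and $L_n = \max_x L_{x,n} \le R$ because each $L_{x,n} \le R$. Therefore $\overline{L}_n \in \Upsilon([\overline{M},\overline{D}];R)$.

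Finally, the convergence $\mult_x(L_n) \to a_x$ is verified point by point. For $x \in S$, the max construction on the divisor side gives
\[
\mult_x(L_n) \ge \mult_x(L_{x,n}),
\]
while $\overline{L}_n \in \Upsilon([\overline{M},\overline{D}];R)$ forces $\mult_x(L_n) \le a_x$. Combined with $\mult_x(L_{x,n}) \to a_x$, this yields $\mult_x(L_n) \to a_x$. For $x \notin S$, we already noted that $\mult_x(L_n) = 0 = a_x$, so the sequence works for every closed point of $X$.

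No single step is a serious obstacle here: the construction is almost formal once one observes that the exceptional set $S$ is finite and that the maxima operation preserves membership in $\Upsilon([\overline{M},\overline{D}];R)$. The only substantive input is Lemma~\ref{lem:max:nef:adelic:arith} guaranteeing that the max of finitely many nef adelic arithmetic $\RR$-Cartier divisors remains nef; the rest is bookkeeping with the inequalities defining $\Upsilon([\overline{M},\overline{D}];R)$.
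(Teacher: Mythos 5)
Your proof is correct and follows exactly the same route as the paper's: identify the finite exceptional set $\Supp_{\RR}(D)\cup\Supp_{\RR}(M)$, choose for each point a sequence realizing the supremum $a_x$, then take the componentwise maximum (via Lemma~\ref{lem:max:nef:adelic:arith}) to merge these into a single sequence that works for all points simultaneously. The extra bookkeeping you spell out — the upper bound $\mult_x(L_n)\le a_x$ and the verification that the max stays in $\Upsilon([\overline{M},\overline{D}];R)$ — is implicit but present in the paper's argument.
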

\backmatterforspececialclaim

\begin{proof}
Let $\{ x_1, \ldots, x_N \} := \Supp_{\RR}(D) \cup \Supp_{\RR}(M)$.
For each $i = 1, \ldots, N$, we can find
a sequence $\left\{ \overline{L}_{i,n} = (L_{i,n}, l_{i, n})\right\}_{n=1}^{\infty}$ in $\Upsilon([\overline{M}, \overline{D}]; R)$
such that
\[
a_{x_i} = \lim_{n\to\infty} \mult_{x_i}(L_{i, n}).
\]
We set $\overline{L}_n = \max \{ \overline{L}_{1,n}, \ldots, \overline{L}_{N,n} \}$ for $n \geq 1$.
Then, by (2) in Lemma~\ref{lem:max:nef:adelic:arith}, $\overline{L}_n \in \Upsilon([\overline{M}, \overline{D}]; R)$, and hence
\[
a_{x_i} = \lim_{n\to\infty} \mult_{x_i}(L_{n})
\]
for $i=1, \ldots, N$.
If $x \not\in \{ x_1, \ldots, x_N \}$, then $a_x = 0$ and $\mult_{x}(L_{n}) = 0$ for all $n$.
Thus we have the claim.
\end{proof}

As $\max \{ q_{\infty}, (l_n)_{\infty} \} \in B$ by \cite[Lemma~9.1.1]{MoArZariski}, 
we have $(l_n)_{\infty} \leq q_{\infty}$.
Moreover, $\overline{L}^{\tau}_n \leq \overline{Q}^{\tau}$ because $\overline{L}^{\tau}_n  \in A$.
Therefore, $\overline{L}_n \leq \overline{Q}$, so that,
by Lemma~\ref{lem:nef:seq:approx:nef}, $\overline{Q}$ is nef.
In particular, $\overline{Q} \in \Upsilon(\overline{D};R)$.
We need to check that $\overline{Q}$ is the greatest element of $\Upsilon(\overline{D};R)$.
Indeed, let $\overline{L} = (L, l)\in \Upsilon(\overline{D};R)$.
Then $\overline{L}^{\tau} \in A$ and $l_{\infty} \in B$, and
hence $\overline{L}^{\tau} \leq \overline{Q}^{\tau}$ and $l_{\infty} \leq q_{\infty}$,
as required.
\end{proof}

\begin{Corollary}[Zariski decomposition for adelic arithmetic divisors]
\label{cor:Zariski:decomp:adelic:arithmetic:divisor}
Let $\overline{D}$ be an adelic arithmetic $\RR$-Cartier divisor of $C^0$-type on $X$.
Let $\Upsilon(\overline{D})$ be the set of all
nef adelic arithmetic $\RR$-Cartier divisors $\overline{L}$ of
$C^0$-type on $X$ with $\overline{L} \leq \overline{D}$.
\index{\AdelDivSymbol}{0U:Upsilon(overline{D})@$\Upsilon(\overline{D})$}%
If $\Upsilon(\overline{D}) \not= \emptyset$,
then there is the greatest element $\overline{Q}$ of
$\Upsilon(\overline{D})$, that is,
$\overline{Q} \in \Upsilon(\overline{D})$ and
$\overline{L} \leq \overline{Q}$ for all $\overline{L} \in \Upsilon(\overline{D})$.
Moreover, 
the natural map 
$\aH(X, a \overline{Q}) \to \aH(X, a \overline{D})$
is bijective
for $a \in \RR_{>0}$.
In particular, $\avol(\overline{Q}) = \avol(\overline{D})$.
\end{Corollary}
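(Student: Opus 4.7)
The plan is to derive this corollary essentially as a direct specialization of Theorem~\ref{thm:Zariski:decomp:adelic:arithmetic:divisor}, supplemented by a short argument for the bijectivity of small sections. First, I would observe that $\Upsilon(\overline{D})$ coincides with $\Upsilon(\overline{D}; D)$: the extra requirement $L \leq D$ in the latter is automatic from $\overline{L} \leq \overline{D}$, so $\Upsilon(\overline{D}) \neq \emptyset$ entails $\Upsilon(\overline{D}; D) \neq \emptyset$. Applying Theorem~\ref{thm:Zariski:decomp:adelic:arithmetic:divisor} with $R = D$ therefore produces the greatest element $\overline{Q} \in \Upsilon(\overline{D}; D) = \Upsilon(\overline{D})$.

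For the bijection $\aH(X, a\overline{Q}) \to \aH(X, a\overline{D})$ induced by $\overline{Q} \leq \overline{D}$, injectivity is trivial since the map is just the natural inclusion of rational functions. For surjectivity, take $\phi \in \aH(X, a\overline{D})$; by (1) of Proposition~\ref{prop:basic:prop:H:0} this is equivalent to $a\overline{D} + \widehat{(\phi)} \geq 0$, i.e. to $-(1/a)\widehat{(\phi)} \leq \overline{D}$. The key observation is that $-(1/a)\widehat{(\phi)}$ is nef: indeed, by the example in Definition~\ref{def:positive:adelic:arithmetic:divisor} the adelic arithmetic principal divisor $\widehat{(\phi)}$ is nef, and its negative $-\widehat{(\phi)} = \widehat{(\phi^{-1})}$ is nef by the very same principle (applied to $\phi^{-1} \in \Rat(X)^{\times}$); scaling by $1/a > 0$ preserves nefness, since rescaling a defining sequence $\{(\XXX_n, \DDD_n)\}$ by a positive real keeps each $\DDD_n$ relatively nef and preserves the uniform convergence of Green functions, and likewise preserves positivity of the Chern current at $\infty$. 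Consequently $-(1/a)\widehat{(\phi)} \in \Upsilon(\overline{D})$, and the greatest element property yields $-(1/a)\widehat{(\phi)} \leq \overline{Q}$, which rearranges to $a\overline{Q} + \widehat{(\phi)} \geq 0$; applying Proposition~\ref{prop:basic:prop:H:0} again gives $\phi \in \aH(X, a\overline{Q})$.

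Finally, the volume equality follows by specializing to $a = n$ a positive integer: the sets $\aH(X, n\overline{Q})$ and $\aH(X, n\overline{D})$ coincide, hence their log cardinalities agree, and dividing by $n^{d+1}/(d+1)!$ and passing to the limit --- which exists by Theorem~\ref{thm:avol:lim} --- shows $\avol(\overline{Q}) = \avol(\overline{D})$. The argument presents no serious obstacle, as the hard work is already contained in Theorem~\ref{thm:Zariski:decomp:adelic:arithmetic:divisor}; the only subtlety, and the one step where care is needed, is the verification that $-\widehat{(\phi)}$ is nef, which crucially uses that $\phi$ is a genuine rational function (not merely an $\RR$-rational function) so that $\widehat{(\phi^{-1})}$ makes sense and is covered by the cited example.
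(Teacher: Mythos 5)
Your proposal is correct and follows essentially the same route as the paper: apply Theorem~\ref{thm:Zariski:decomp:adelic:arithmetic:divisor} with $R=D$ for existence of the greatest element, then note that for $\phi\in\aH(X,a\overline{D})$ the divisor $(1/a)\widehat{(\phi^{-1})}=-(1/a)\widehat{(\phi)}$ is nef and lies in $\Upsilon(\overline{D})$, hence is $\leq\overline{Q}$, giving $\phi\in\aH(X,a\overline{Q})$. The extra remarks you add on why scaling preserves nefness and on the $\ZZ$-versus-$\RR$ rational function point are sound but not needed beyond what the paper already records.
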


\begin{proof}
Applying Theorem~\ref{thm:Zariski:decomp:adelic:arithmetic:divisor} to the case where $R = D$,
we have the first assertion.
Let us see the second assertion. Let $\phi \in \aH(X, a\overline{D})$, that is,
$a \overline{D} + \widehat{(\phi)} \geq 0$ by Proposition~\ref{prop:basic:prop:H:0}, so that
$\overline{D} \geq (1/a)\widehat{(\phi^{-1})}$.
Note that $(1/a)\widehat{(\phi^{-1})}$ is nef, and hence $(1/a)\widehat{(\phi^{-1})} \in \Upsilon(\overline{D})$.
Therefore, $\overline{Q} \geq (1/a)\widehat{(\phi^{-1})}$, that is,
$a \overline{Q} + \widehat{(\phi)} \geq 0$, 
which means that $\phi \in \aH(X, a\overline{Q})$ by Proposition~\ref{prop:basic:prop:H:0}.
\end{proof}

\begin{Remark}
There are several conditions to guarantee $\Upsilon(\overline{D}) \not= \emptyset$.
For example, if $\aH(X, a\overline{D}) \not= \{ 0 \}$ for some $a \in \RR_{>0}$,
then $\Upsilon(\overline{D}) \not= \emptyset$.
Indeed, $\aH(X, a\overline{D}) \not= \{ 0 \}$ implies that
$a\overline{D} + \widehat{(\phi)} \geq 0$ for some $\phi \in \Rat(X)^{\times}$,
so that $(1/a)  \widehat{(\phi^{-1})} \leq \overline{D}$.
Note that $(1/a)  \widehat{(\phi^{-1})}$ is nef, and hence $(1/a)  \widehat{(\phi^{-1})} \in \Upsilon(\overline{D})$.
In particular, if $\overline{D}$ is big, then $\Upsilon(\overline{D}) \not= \emptyset$.
As a conjecture, we expect that if  $\overline{D}$ is pseudo-effective, then $\Upsilon(\overline{D}) \not= \emptyset$.
\end{Remark}

\ifmonog
\chapter[Characterization of nef adelic arithmetic divisors]{Characterization of nef adelic arithmetic divisors on arithmetic surfaces}\fi
\ifpaper\section{Characterization of nef adelic arithmetic divisors on arithmetic surfaces}\fi
In this 
\ifmonog chapter, \fi
\ifpaper section, \fi
we consider a generalization of the numerical 
characterization of nef arithmetic divisors proved in \cite{MoCharNef} to adelic arithmetic divisors.
Namely, we will prove that an integrable adelic arithmetic $\RR$-Cartier divisor $\overline{D}$ of $C^0$-type
on a projective smooth curve over a number field is nef if and only if
$\overline{D}$ is pseudo-effective and $\adeg(\overline{D}^2) = \avol(\overline{D})$.
Throughout this 
\ifmonog chapter, \fi
\ifpaper section, \fi
let $X$ be a projective, smooth and geometrically integral
variety over a number field $K$. 

\ifmonog\section{Hodge index theorem for adelic arithmetic divisors}\fi
\ifpaper\subsection{Hodge index theorem for adelic arithmetic divisors}\fi
\label{subsec:Hodge:index:adelic:arith:div}

We assume that $\dim X = 1$.
Let us start with a refinement of the generalized Hodge index theorem on an arithmetic surface.

\begin{Theorem}
\label{thm:GHI:adelic}
Let $\overline{D} = (D, g)$ be an integrable adelic arithmetic $\RR$-Cartier divisor of $C^0$-type on $X$.
If $\deg(D) \geq 0$, then $\deg(\overline{D}^2) \leq \avol_{\chi}(\overline{D})$.
Moreover, the equality holds if and only if $\overline{D}$ is relatively nef.
\end{Theorem}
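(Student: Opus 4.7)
The plan is to reduce to the relatively nef case via the relative Zariski decomposition of Corollary~\ref{cor:Zariski:rel:decomp:adelic:arithmetic:divisor}, and then to exploit negative semi-definiteness of the intersection pairing on the purely vertical remainder. Since $\deg(D)\ge 0$, that corollary produces a greatest relatively nef element $\overline{Q}=(Q,q)\in\Upsilon_{rel}(\overline{D})$ satisfying $Q=D$, $\adeg(\overline{Q}\cdot(\overline{D}-\overline{Q}))=0$, and $\avol_{\chi}(\overline{Q})=\avol_{\chi}(\overline{D})$. Setting $\overline{N}:=\overline{D}-\overline{Q}=(0,g-q)$---an integrable adelic arithmetic $\RR$-divisor with trivial underlying Cartier divisor---the standard expansion together with the orthogonality gives $\adeg(\overline{D}^2)=\adeg(\overline{Q}^2)+\adeg(\overline{N}^2)$. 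The generalized Hodge index theorem (Theorem~\ref{thm:G:H:I:T:adelic:arith}) then yields $\adeg(\overline{Q}^2)=\avol_{\chi}(\overline{Q})=\avol_{\chi}(\overline{D})$, so the inequality of the theorem reduces to showing $\adeg(\overline{N}^2)\le 0$, with equality forcing $\overline{N}=0$.

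Writing $g-q=\sum_{P\in M_K}(g_P-q_P)[P]+(g_\infty-q_\infty)[\infty]$ and repeatedly applying the formula \eqref{eqn:formula:intersection:adelic}, together with the fact that the trivial adelic divisor has vanishing local intersection against any divisor (Definition~\ref{def:intersection:integral:div}), the cross terms between distinct places vanish, and one obtains the diagonal decomposition
\[
\adeg(\overline{N}^2) \;=\; \sum_{P\in M_K}\log\#(O_K/P)\,\adeg_{P}\!\bigl((0,g_P-q_P)^2\bigr) \;+\; \adeg\!\bigl((0,(g_\infty-q_\infty)[\infty])^2\bigr).
\]
Each non-Archimedean term is $\le 0$ by Zariski's lemma for integrable functions (Lemma~\ref{lem:Zariski:adelic}), with equality iff $g_P-q_P$ is constant on $X_P^{\an}$. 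The archimedean contribution equals the Dirichlet-energy expression $-\int_{X(\CC)} d\varphi\wedge d^c\varphi$ for $\varphi=g_\infty-q_\infty$, obtained by integration by parts after approximating $\varphi$ uniformly by $F_\infty$-invariant smooth functions; it is likewise $\le 0$, with equality iff $\varphi$ is locally constant on $X(\CC)$. Summing gives $\adeg(\overline{N}^2)\le 0$, which establishes the desired inequality.

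For the equality clause, if $\overline{D}$ is relatively nef then $\overline{D}\in\Upsilon_{rel}(\overline{D})$, forcing $\overline{Q}=\overline{D}$ and hence equality. Conversely, if $\adeg(\overline{D}^2)=\avol_{\chi}(\overline{D})$, then $\adeg(\overline{N}^2)=0$; since every local term in the above sum is non-positive, each must vanish, and the sharp equality conditions give that each $g_P-q_P$ is a non-negative constant $c_P$ and that $g_\infty-q_\infty$ is a non-negative constant $c_\infty$ on each connected component of $X(\CC)$. But each of $\overline{Q}+(0,c_P[P])$ and $\overline{Q}+(0,c_\infty[\infty])$ is again relatively nef and bounded above by $\overline{D}$, so maximality of $\overline{Q}$ in $\Upsilon_{rel}(\overline{D})$ forces $c_P=0$ for all $P$ and $c_\infty=0$. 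Hence $\overline{D}=\overline{Q}$ is relatively nef.

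The main obstacle is the archimedean non-positivity and its equality condition, since Lemma~\ref{lem:Zariski:adelic} covers only the non-Archimedean places; one must regularize $g_\infty-q_\infty$ in a way compatible with the $C^0$-Green-function formalism and the integrability hypothesis, so that the classical integration-by-parts argument on the compact Riemann surface $X(\CC)$ applies and simultaneously yields the inequality and its sharp equality condition needed to conclude $\overline{D}=\overline{Q}$.
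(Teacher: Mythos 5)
Your proposal follows the paper's argument almost verbatim: both reduce to the relative Zariski decomposition of Corollary~\ref{cor:Zariski:rel:decomp:adelic:arithmetic:divisor}, use orthogonality and Theorem~\ref{thm:G:H:I:T:adelic:arith} to get $\adeg(\overline{D}^2)=\avol_{\chi}(\overline{D})+\adeg(\overline{N}^2)$, split $\adeg(\overline{N}^2)$ into local contributions handled by Lemma~\ref{lem:Zariski:adelic} (non-Archimedean places) and Dirichlet-energy negativity (the Archimedean place, for which the paper cites \cite[Proposition~1.2.3]{MoD}), and then characterize equality by the ``each local piece is a constant'' conditions. The only stylistic deviation is in the equality direction: the paper notes directly that ``all $\phi_{\wp}$ constant'' is equivalent to relative nefness of $\overline{D}$, whereas you invoke maximality of $\overline{Q}$ in $\Upsilon_{rel}(\overline{D})$ to force the constants to vanish and conclude $\overline{D}=\overline{Q}$; both reasonings are correct and equivalent.
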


\begin{proof}
Let $\Upsilon_{rel}(\overline{D})$ be the set of all
relatively nef adelic arithmetic $\RR$-Cartier divisors $\overline{L}$ of
$C^0$-type on $X$ with $\overline{L} \leq \overline{D}$ (cf. Corollary~\ref{cor:Zariski:rel:decomp:adelic:arithmetic:divisor}).
Then, by Corollary~\ref{cor:Zariski:rel:decomp:adelic:arithmetic:divisor},
$\Upsilon_{rel}(\overline{D})$ has the greatest element $\overline{Q} = (Q, q)$, that is,
$\overline{Q} \in \Upsilon_{rel}(\overline{D})$ and
$\overline{L} \leq \overline{Q}$ for all $\overline{L} \in \Upsilon_{rel}(\overline{D})$.
Further, we have the following:
\begin{enumerate}
\renewcommand{\labelenumi}{(\arabic{enumi})}
\item
$\avol_{\chi}(\overline{Q}) = \avol_{\chi}(\overline{D})$.

\item
If we set $\overline{N} := \overline{D} - \overline{Q}$, then
$\overline{N} = (0, \phi)$ and
$\adeg(\overline{Q} \cdot \overline{N}) = 0$, where
$\phi = \{ \phi_P \}_{P \in M_K} \cup \{ \phi_{\infty} \}$ 
is a collection of integrable continuous functions. 
\end{enumerate}
By Theorem~\ref{thm:G:H:I:T:adelic:arith},
\[
\adeg(\overline{Q}^2) = \avol_{\chi}(\overline{Q}) = \avol_{\chi}(\overline{D}).
\]
Note that
\[
\adeg(\overline{N}^2) = \sum_{P \in M_K} \log \#(O_K/P) \adeg_P((0, \phi_P) ;\phi_P) +
\frac{1}{2} \int_{X(\CC)} \phi_{\infty} dd^c(\phi_{\infty}),
\]
so that, by Lemma~\ref{lem:Zariski:adelic} and \cite[Proposition~1.2.3]{MoD}, we have
\[
\adeg(\overline{D}^2) = \adeg(\overline{Q}^2) + \adeg(\overline{N}^2) \leq \adeg(\overline{Q}^2).
\]
Thus the first assertion follows. In addition,
by using the equality conditions in Lemma~\ref{lem:Zariski:adelic} and \cite[Proposition~1.2.3]{MoD},
\begin{align*}
\adeg(\overline{D}^2) = \avol_{\chi}(\overline{D}) & \quad\Longleftrightarrow\quad  \deg(\overline{N}^2) = 0 \\
& \quad\Longleftrightarrow\quad  \text{$\phi_P$ ($\forall P \in M_K$) and $\phi_{\infty}$ are constant functions} \\
& \quad\Longleftrightarrow\quad  \text{$\overline{D}$ is relatively nef},
\end{align*}
as required.
\end{proof}

As a consequence of the above theorem, we have the Hodge index theorem for adelic arithmetic divisors.

\begin{Corollary}
\label{cor:HI:adelic}
Let $\overline{D} = (D, g)$ be an integrable adelic arithmetic $\RR$-Cartier divisor of $C^0$-type on $X$.
If $\deg(D) = 0$, then $\deg(\overline{D}^2) \leq 0$.
Moreover, the equality holds if and only if $\overline{D} = \widehat{(\psi)}_{\RR} + (0, \lambda [\infty])$
for some $\psi \in \Rat(X)^{\times}_{\RR}$ and $\lambda \in \RR$.
\end{Corollary}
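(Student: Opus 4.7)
The plan is to derive both assertions from Theorem~\ref{thm:GHI:adelic} by exploiting the hypothesis $\deg D = 0$. For the inequality, since $X$ is a curve and $\deg D = 0$, Riemann--Roch forces $\dim_K H^0(X, nD) \leq 1$ for every $n \geq 0$, so the $\ZZ$-module $\aH(X, n\overline{D}^{\tau})$ has rank at most $[K:\QQ]$ uniformly in $n$; a lattice of uniformly bounded rank produces at most linear growth in $\achi$, hence $\avol_{\chi}(\overline{D}) = \limsup_n \achi(X, n\overline{D})/(n^2/2) \leq 0$. Combined with the inequality $\adeg(\overline{D}^2) \leq \avol_{\chi}(\overline{D})$ from Theorem~\ref{thm:GHI:adelic}, this yields $\adeg(\overline{D}^2) \leq 0$. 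For the ``easy'' direction of the equality, if $\overline{D} = \widehat{(\psi)}_{\RR} + (0, \lambda[\infty])$ then by bilinearity $\adeg(\overline{D}^2)$ splits into three pieces each of which vanishes: Proposition~\ref{prop:intersection:nef:pseudo:effective}(1) kills every intersection involving $\widehat{(\psi)}_{\RR}$, while $\adeg((0,\lambda[\infty]) \cdot \overline{L}) = (\lambda/2)\deg L$ vanishes on $\overline{L} = \overline{D}$ (and on $\overline{L} = (0,\lambda[\infty])$) since the relevant underlying divisors have degree zero.

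For the forward direction, assume $\adeg(\overline{D}^2) = 0$. Applying the bound above also to $-\overline{D}$ (which still has underlying divisor of degree zero) gives $\avol_{\chi}(\pm\overline{D}) = 0$, and the equality case of Theorem~\ref{thm:GHI:adelic} forces both $\overline{D}$ and $-\overline{D}$ to be relatively nef. I then invoke Corollary~\ref{cor:Zariski:rel:decomp:adelic:arithmetic:divisor} to produce the greatest relatively nef $\overline{Q} \leq \overline{D}$: $D = Q$, $\overline{N}:=\overline{D}-\overline{Q} = (0,\phi)$, $\adeg(\overline{Q}\cdot\overline{N}) = 0$, and $\avol_{\chi}(\overline{Q}) = \avol_{\chi}(\overline{D}) = 0$. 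Since $\overline{Q}$ is relatively nef, Theorem~\ref{thm:GHI:adelic} applied to $\overline{Q}$ gives $\adeg(\overline{Q}^2) = \avol_{\chi}(\overline{Q}) = 0$, and expanding
\[
0 = \adeg(\overline{D}^2) = \adeg(\overline{Q}^2) + \adeg(\overline{N}^2)
\]
forces $\adeg(\overline{N}^2) = 0$. Writing
\[
\adeg(\overline{N}^2) = \sum_{P \in M_K} \log\#(O_K/P)\,\langle\phi_P,\phi_P\rangle_P + \tfrac{1}{2}\int_{X(\CC)} \phi_\infty\, dd^c\phi_\infty,
\]
each summand is non-positive by Lemma~\ref{lem:Zariski:adelic} and \cite[Proposition~1.2.3]{MoD} respectively, and the equality conditions in those results force each $\phi_P$ and $\phi_\infty$ to be a constant, say $\mu_P$ and $\mu_\infty$, with $\mu_P = 0$ at almost all $P$. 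Thus $\overline{N} = (0,\sum_P \mu_P[P] + \mu_\infty[\infty])$.

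It remains to identify $\overline{Q}$ in the required form. The natural route is to use Lemma~\ref{lem:approx:integrable} together with \cite{Lip} to write $\overline{Q} = \overline{\QQQ}^{\ad} + (0,\phi')$ for an integrable arithmetic $\RR$-Cartier divisor $\overline{\QQQ}$ of $C^0$-type on a regular model $\XXX$ and some $\phi'$ supported at finitely many finite places, then to appeal to the classical (Faltings--Hriljac) arithmetic Hodge index theorem for $\RR$-Cartier divisors on $\XXX$ (cf.~\cite{MoArZariski}). Its equality case identifies $\overline{\QQQ}$ as $\widehat{(\psi_0)}_{\XXX,\RR} + V + (0,\lambda_0)$, where $V$ is a vertical $\RR$-divisor in the kernel of the fiberwise intersection pairing---hence, by Zariski's lemma on $\XXX$, of the form $\sum_P a_P F_P$. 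In the adelic picture $V^{\ad}$ and any residual $\phi'$-contribution (itself shown to be constant at each place by a further application of Lemma~\ref{lem:Zariski:adelic}) amount to constants at finitely many finite places, so combining with $\overline{N}$ we obtain $\overline{D} = \widehat{(\psi_0)}_{\RR} + (0, \sum_P \nu_P[P] + \nu_\infty[\infty])$. Finally the finite-place constants are absorbed into a principal divisor by choosing $a \in K^{\times}_{\RR}$ with $2\,\ord_P(a)\log\#(O_K/P) = -\nu_P$ for each relevant $P$---possible since only finitely many $\nu_P$ are nonzero and the class group of $K$ vanishes after tensoring with $\RR$---yielding the desired decomposition $\overline{D} = \widehat{(a\psi_0)}_{\RR} + (0, \lambda[\infty])$ with $\lambda = \nu_\infty - \log|a|^2$. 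The main obstacle will be extracting the $\RR$-Cartier equality case of the classical Faltings--Hriljac theorem in the precise form needed and controlling the cross-term $\adeg(\overline{\QQQ} \cdot (0,\phi'))$ when expanding $\adeg(\overline{Q}^2) = 0$; an alternative route uses that both $\pm\overline{Q}$ are relatively nef to argue by N\'eron--Tate rigidity that the Green functions of $\overline{Q}$ are the canonical harmonic ones at every place up to constants, so that $\adeg(\overline{Q}^2) = 0$ forces $Q$ to be torsion in $\Pic^0(X)(K)$.
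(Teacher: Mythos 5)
Your outline stalls exactly where you flag the obstacle: the cross‑term. After you have concluded that $\pm\overline{D}$ are relatively nef (which is correct, and in fact means the Zariski decomposition $\overline{Q}=\overline{D}$, $\overline{N}=0$, so that whole detour is vacuous), you still need to split $\adeg(\overline{D}^2)=0$ into a ``model'' part and a ``finite places correction'' part in order to apply the classical equality case on a regular model and Zariski's lemma at the finite places. Writing $\overline{D}=\overline{\QQQ}^{\ad}+(0,\phi')$ via Lemma~\ref{lem:approx:integrable} does not give this, because $\adeg(\overline{\QQQ}^{\ad}\cdot(0,\phi'))$ has no reason to vanish. The paper's proof resolves precisely this by not taking an arbitrary $\overline{\QQQ}$: it first builds, on a regular model $\XXX$, an $\RR$-Cartier divisor $\DDD$ with $\DDD\cap X = D$ and $(\DDD\cdot C)=0$ for every vertical curve $C$ (take the Zariski closure of $D$, note its degree on a general fiber is $\deg D=0$, then use Zariski's lemma in each bad fiber to add a vertical correction making the intersection with every fiber component vanish). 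With that choice the cross‑term $\adeg((\DDD,g_\infty)^{\ad}\cdot(0,\phi))$ is automatically zero, so $0=\adeg((\DDD,g_\infty)^2)+\adeg((0,\phi)^2)$ with both summands $\leq 0$, and one can then invoke \cite[Lemma~4.1]{MoCharNef} for the first and Lemma~\ref{lem:Zariski:adelic} for the second. Without that construction your argument does not close.

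Two smaller points. For the inequality $\adeg(\overline{D}^2)\leq 0$, the paper shows $\avol(\overline{D})=0$ by the continuity of the volume function (if $\avol>0$ then $\overline{D}-\tfrac{1}{n}\overline{\AAA}^{\ad}$ is still big, forcing $\deg D>0$); your ``rank bounded $\Rightarrow$ $\achi$ at most linear'' heuristic is believable but not a proof as written, since $\achi$ depends on the covolume and successive minima of the lattices $\aH(X,n\overline{D}^{\tau})$, which vary with $n$, not only on their rank. Finally, the archimedean constant you denote $\nu_\infty$ coming from the equality case of the classical Hodge index theorem is a priori only an $F_\infty$-invariant \emph{locally} constant function on $X(\CC)$ (one value per archimedean place), so after absorbing the finite-place constants into $\widehat{(a)}$ with $a\in K^{\times}_{\RR}$ you still need Dirichlet's unit theorem to collapse the remaining locally constant function to a single $\lambda\in\RR$; writing $\lambda=\nu_\infty-\log|a|^2$ as a real number presupposes the conclusion.
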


\begin{proof}
By Theorem~\ref{thm:GHI:adelic}, $\deg(\overline{D}^2) \leq \avol_{\chi}(\overline{D}) \leq
\avol(\overline{D})$, so that it is sufficient to show that $\avol(\overline{D}) = 0$ for the first assertion.
Indeed, if $\avol(\overline{D}) > 0$, then, by the continuity of the volume function
(cf. Theorem~\ref{thm:cont:volume}), $\avol(\overline{D} - (1/n) \overline{\AAA}^{\ad}) > 0$
for a sufficiently large $n$,
where $\overline{\AAA}$ is an ample arithmetic Cartier divisor on some regular model of $X$.
In particular, $\deg(D) \geq (1/n)\deg(\AAA \cap X) > 0$, which is a contradiction.

Next we consider the equality condition.
Clearly if $\overline{D} = \widehat{(\psi)}_{\RR} + (0, \lambda [\infty])$
for some $\psi \in \Rat(X)^{\times}_{\RR}$ and $\lambda \in \RR$, then $\adeg(\overline{D}^2) = 0$, so that
we assume that $\deg(\overline{D}^2) = 0$.
Let $\XXX$ be a regular model of $X$ over $\Spec(O_K)$.

\begin{Claim}
There is an $\RR$-Cartier divisor $\DDD$ on $\XXX$ such that $\DDD \cap X = D$ and
$(\DDD \cdot C) = 0$ for any vertical curves $C$ on $\XXX$.
\end{Claim}

\begin{proof}
Let $U$ be a non-empty Zariski open set of $\Spec(O_K)$ such that $\XXX \to \Spec(O_K)$ is smooth over $U$.
Let $\DDD_0$ be the Zariski closure of $D$ in $\XXX$. Then the degree of $\DDD_0$ along every smooth fiber of $\XXX \to \Spec(O_K)$ is
zero. Moreover, using Zariski's lemma, for each $P \in M_K \setminus U$, there is a vertical $\RR$-Cartier 
divisor $Z_P$ in the fiber $F_P$
over $P$ such that the degree of $\DDD_0 + Z_P$ along every irreducible component of $F_P$ is zero.
Therefore, if we set $\DDD = \DDD_0 + \sum_{P \in M_K \setminus U} Z_P$, we have our desired $\RR$-Cartier divisor.
\end{proof}

We set $\overline{D}' := (\DDD, g_{\infty})^{\ad}$, $\phi_P := g_P - g_{(\XXX_{(P)},\ \DDD_{(P)})}$ and
$\phi := \sum_{P \in M_K} \phi_P [P]$,
where $\XXX_{(P)}$ is the localization of $\XXX \to \Spec(O_K)$ at $P$ and $\DDD_{(P)}$ is the restriction of $\DDD$ to $\XXX_{(P)}$.
Then $\overline{D} = \overline{D}' + (0, \phi)$.
Note that $\adeg(\overline{D}' \cdot (0, \phi)) = 0$ because $(\DDD \cdot C) = 0$ for any vertical curve $C$ in $\XXX$, so that
\[
0 = \adeg(\overline{D}^2) = \adeg({\overline{D}'}^2) + \adeg((0, \phi)^2).
\]
Therefore, 
\[
\adeg({\overline{D}'}^2) = \adeg((0, \phi)^2) = 0
\]
because
$\adeg({\overline{D}'}^2) \leq 0$ and $\adeg((0, \phi)^2)  \leq 0$ by the first assertion.
As 
\[
\adeg({\overline{D}'}^2) = \adeg((\DDD, g_{\infty})^2) = 0,
\]
by virtue of \cite[Lemma~4.1]{MoCharNef}, 
\[
\overline{D}' =  \widehat{(\psi')}_{\RR} + (0, \eta' [\infty])
\]
for some $\psi' \in \Rat(X)^{\times}_{\RR}$ and an $F_{\infty}$-invariant locally constant function $\eta'$ on $X(\CC)$.
Moreover, by using the fact $\adeg((0, \phi)^2) = 0$ together with
Lemma~\ref{lem:Zariski:adelic}, $\phi_P$ is a constant for all $P \in M_K$.
Note that $\phi_P = 0$ expect finitely many $P \in M_K$. In addition, for each $P \in M_K$, there is
$f_P \in K^{\times} \otimes_{\ZZ} \RR$ such that $(f_P) = P$ on $\Spec(O_K)$.
Therefore, 
\[
\overline{D} =  \widehat{(\psi'')}_{\RR} + (0, \eta'' [\infty])
\]
for some $\psi'' \in \Rat(X)^{\times}_{\RR}$ and an $F_{\infty}$-invariant locally constant function $\eta''$ on $X(\CC)$.
Thus, by using Dirichlet's unit theorem,
we have the second assertion of the corollary (cf. \cite[Proof of Lemma~4.1]{MoCharNef}).
\end{proof}

\ifmonog\section{Arithmetic asymptotic multiplicity}\fi
\ifpaper\subsection{Arithmetic asymptotic multiplicity}\fi
\label{subsec:arith:asymptotic:multiplicity}

Let $\KK$ be either $\QQ$ or $\RR$.
Let 
\[
\Rat(X)^{\times}_{\KK} := \Rat(X)^{\times} \otimes_{\ZZ} \KK,
\]
and let 
\[
(\ )_{\KK} : \Rat(X)^{\times}_{\KK} \to \Div(X)_{\KK}\quad\text{and}\quad
\widehat{(\ )}_{\KK} : \Rat(X)^{\times}_{\KK} \to \aDiv^{\ad}_{C^{0}}(X)_{\RR}
\]
be the natural extensions of the homomorphisms 
\[
\Rat(X)^{\times} \to \Div(X)\quad\text{and}\quad
\Rat(X)^{\times} \to \aDiv^{\ad}_{C^{0}}(X)_{\RR}
\]
given by
$\phi \mapsto (\phi)$ and 
$\phi \mapsto \widehat{(\phi)}$, respectively.
\index{\AdelDivSymbol}{0R:Rat(X)^{times}_{KK}@$\Rat(X)^{\times}_{\KK}$}%
\index{\AdelDivSymbol}{0p:(varphi)_{KK}@$(\varphi)_{\KK}$}%
\index{\AdelDivSymbol}{0p:widehat{(varphi)}_{KK}@$\widehat{(\varphi)}_{\KK}$}%
Let $\overline{D}$ be an adelic arithmetic $\RR$-Cartier divisor of $C^0$-type.
We define 
$\widehat{\Gamma}^{\times}_{\KK}(X, \overline{D})$
to be
\[
\widehat{\Gamma}^{\times}_{\KK}(X, \overline{D}) := \left\{ \phi \in \Rat(X)^{\times}_{\KK} \mid
\overline{D} + \widehat{(\phi)}_{\KK} \geq (0,0) \right\}.
\]%
\index{\AdelDivSymbol}{0G:widehat{Gamma}^{times}_{KK}(X,overline{D})@$\widehat{\Gamma}^{\times}_{\KK}(X, \overline{D})$}%
For $\xi \in X$,
the {\em $\KK$-asymptotic multiplicity of $\overline{D}$ at $\xi$} 
\index{\AdelDivSubject}{K-asymptotic multiplicity@$\KK$-asymptotic multiplicity}%
\index{\AdelDivSymbol}{0m:mu_{KK,xi}(overline{D})@$\mu_{\KK,\xi}(\overline{D})$}%
is defined to be
\[
\mu_{\KK,\xi}(\overline{D}) :=
\begin{cases}
\inf \left\{ \mult_{\xi}(D + (\phi)_{\KK}) \mid \phi \in \widehat{\Gamma}_{\KK}^{\times}(X, \overline{D}) \right\} &
\text{if $\widehat{\Gamma}_{\KK}^{\times}(X, \overline{D}) \not= \emptyset$}, \\
\infty & \text{otherwise}.
\end{cases}
\]

\begin{Proposition}
\label{prop:ar:mu:basic}
Let $\overline{D}$ and $\overline{E}$ be adelic arithmetic $\RR$-Cartier divisors of $C^0$-type on $X$.
Then we have the following:
\begin{enumerate}
\renewcommand{\labelenumi}{(\arabic{enumi})}
\item
$\mu_{\KK,\xi}(\overline{D} + \overline{E}) \leq 
\mu_{\KK,\xi}(\overline{D}) + \mu_{\KK,\xi}(\overline{E})$.

\item
If $\overline{D} \leq \overline{E}$, then 
$\mu_{\KK,\xi}(\overline{E}) \leq \mu_{\KK,\xi}(\overline{D}) + \mult_{\xi}(E - D)$.

\item
$\mu_{\KK,\xi}(\overline{D} + \widehat{(\phi)}_{\KK}) = \mu_{\KK,\xi}(\overline{D})$ for 
$\phi \in \Rat(X)^{\times}_{\KK}$.

\item
$\mu_{\KK,\xi}(a\overline{D}) = a \mu_{\KK,\xi}(\overline{D})$ for $a \in \KK_{\geq 0}$.

\item
$0 \leq \mu_{\RR,\xi}(\overline{D}) \leq \mu_{\QQ,\xi}(\overline{D})$.

\item
If $\overline{D}$ is big, then $\mu_{\RR,\xi}(\overline{D}) = \mu_{\QQ,\xi}(\overline{D})$.

\item
If $\overline{D}$ is nef and big, then $\mu_{\KK,\xi}(\overline{D}) = 0$.
\end{enumerate}
\end{Proposition}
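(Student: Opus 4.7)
My plan is to treat the first five items as essentially formal consequences of the definition, and then to concentrate effort on items (6) and (7), which are the substantive ones.

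For (1), given $\phi\in\widehat{\Gamma}^{\times}_{\KK}(X,\overline{D})$ and $\psi\in\widehat{\Gamma}^{\times}_{\KK}(X,\overline{E})$, the product $\phi\psi$ lies in $\widehat{\Gamma}^{\times}_{\KK}(X,\overline{D}+\overline{E})$ since $\widehat{(\phi\psi)}_{\KK}=\widehat{(\phi)}_{\KK}+\widehat{(\psi)}_{\KK}$, and $\mult_\xi(D+E+(\phi\psi)_{\KK})\le\mult_\xi(D+(\phi)_{\KK})+\mult_\xi(E+(\psi)_{\KK})$, so taking the infimum gives the inequality; if one of the sets is empty the assertion is trivial with the convention $\infty$. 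For (2), any $\phi\in\widehat{\Gamma}^{\times}_{\KK}(X,\overline{D})$ also lies in $\widehat{\Gamma}^{\times}_{\KK}(X,\overline{E})$ because $\overline{E}+\widehat{(\phi)}_{\KK}\ge\overline{D}+\widehat{(\phi)}_{\KK}\ge 0$, and $\mult_\xi(E+(\phi)_{\KK})=\mult_\xi(D+(\phi)_{\KK})+\mult_\xi(E-D)$. For (3), multiplication by $\phi^{-1}$ gives a bijection between $\widehat{\Gamma}^{\times}_{\KK}(X,\overline{D}+\widehat{(\phi)}_{\KK})$ and $\widehat{\Gamma}^{\times}_{\KK}(X,\overline{D})$ preserving the multiplicity $\mult_\xi(D+(\cdot)_{\KK})$. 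For (4), if $a>0$ replace $\phi$ by $\phi^{1/a}\in\Rat(X)^{\times}_{\KK}$ (which makes sense in $\Rat(X)^{\times}_{\KK}$ for $a\in\KK_{>0}$), and if $a=0$ both sides vanish. (5) follows from $\widehat{\Gamma}^{\times}_{\QQ}(X,\overline{D})\subseteq\widehat{\Gamma}^{\times}_{\RR}(X,\overline{D})$ and the non-negativity of $\mult_\xi(\cdot)$ applied to an effective $\RR$-divisor.

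The main content lies in (6). Assume $\overline{D}$ is big and fix $\phi\in\widehat{\Gamma}^{\times}_{\RR}(X,\overline{D})$. Choose any big adelic arithmetic $\RR$-Cartier divisor $\overline{A}$ of $C^0$-type with $\overline{A}\le\overline{D}+\widehat{(\phi)}_{\RR}$; since $\overline{D}$ is big, the continuity of the volume function (Theorem~\ref{thm:cont:volume}) guarantees that for every $\epsilon>0$ the perturbed divisor $\overline{D}-\epsilon\overline{A}$ is again big. Write $\phi=\phi_1^{a_1}\cdots\phi_r^{a_r}$ with $\phi_i\in\Rat(X)^{\times}$ and $a_i\in\RR$, and pick rational approximations $b_i\in\QQ$ with $|a_i-b_i|$ as small as we like; setting $\psi=\phi_1^{b_1}\cdots\phi_r^{b_r}\in\Rat(X)^{\times}_{\QQ}$ and $\delta_i=b_i-a_i$, one has $\overline{D}+\widehat{(\psi)}_{\QQ}=(\overline{D}+\widehat{(\phi)}_{\RR})+\sum_i\delta_i\widehat{(\phi_i)}$. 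Because $\overline{D}+\widehat{(\phi)}_{\RR}\ge 0$ and we control a sum of bounded perturbations, by applying (2) and (3) together with bigness of $\overline{D}-\epsilon\overline{A}$, we can arrange $\psi\in\widehat{\Gamma}^{\times}_{\QQ}(X,\overline{D})$ while keeping $\mult_\xi(D+(\psi)_{\QQ})$ arbitrarily close to $\mult_\xi(D+(\phi)_{\RR})$. This shows $\mu_{\QQ,\xi}(\overline{D})\le\mu_{\RR,\xi}(\overline{D})$, and with (5) gives equality. The hard point here is passing from an $\RR$-rational $\phi$ with $\overline{D}+\widehat{(\phi)}_{\RR}\ge 0$ to a nearby $\QQ$-rational $\psi$ with $\overline{D}+\widehat{(\psi)}_{\QQ}\ge 0$; this is where bigness and the continuity theorem must be used in an essential way, because absorbing the perturbation $\sum_i\delta_i\widehat{(\phi_i)}$ requires genuine room to move.

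Finally, (7) reduces to (6) together with Fujita's approximation theorem for adelic arithmetic divisors (Theorem~\ref{thm:Fujita:approx:adel}). If $\overline{D}$ is nef and big, then for each $\epsilon>0$ we may find a birational morphism $\mu:Y\to X$ and a nef adelic arithmetic $\RR$-Cartier divisor $\overline{Q}$ with $\overline{Q}\le\mu^*(\overline{D})$ and $\avol(\overline{Q})>\avol(\overline{D})-\epsilon$; after further perturbation by a small ample we may assume $\overline{Q}$ is ample, which by the arithmetic Bertini/basepoint considerations provides $\QQ$-rational sections $\psi$ of $\overline{Q}$, hence of $\mu^*(\overline{D})$, whose divisors avoid (or have arbitrarily small multiplicity at) the chosen point $\xi$. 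Pushing down through $\mu$ and invoking (6) to identify $\mu_{\RR,\xi}$ with $\mu_{\QQ,\xi}$, we obtain $\mu_{\KK,\xi}(\overline{D})=0$. The anticipated obstacle throughout is the $\RR$-coefficient bookkeeping in (6): keeping $\widehat{\Gamma}^{\times}_{\QQ}$-membership intact under small rational perturbation of real exponents, for which the continuity of $\avol$ and the finite-generation content of $H^0(X,nD)$ must be combined carefully.
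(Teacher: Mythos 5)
Items (1)--(5) in your write-up are fine and match the paper, which simply cites its earlier arithmetic analogue \cite[Proposition~2.1]{MoArLinB}. Items (6) and (7), however, each have a genuine gap.

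For (6), you write the key identity $\overline{D}+\widehat{(\psi)}_{\QQ}=(\overline{D}+\widehat{(\phi)}_{\RR})+\sum_i\delta_i\widehat{(\phi_i)}$ and then assert that bigness of $\overline{D}-\epsilon\overline{A}$ lets you ``arrange'' $\psi\in\widehat{\Gamma}^{\times}_{\QQ}(X,\overline{D})$. This does not follow: each $\widehat{(\phi_i)}$ is a principal adelic divisor with a genuinely negative part, so even for arbitrarily small $|\delta_i|$ the perturbation $\sum_i\delta_i\widehat{(\phi_i)}$ can destroy effectivity of $\overline{D}+\widehat{(\phi)}_{\RR}$ wherever that inequality is tight; bigness produces \emph{some} small nonzero section, not effectivity of the specific $\RR$-divisor you have written down. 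The paper instead splits each $\widehat{(\phi_i)}=\overline{A}_i-\overline{B}_i$ into a difference of \emph{effective} adelic arithmetic divisors (via \cite[Lemma~5.2.3, Lemma~5.2.4]{MoArZariski}), perturbs $\overline{D}$ by $\sum_i x'_{i,n}\overline{B}_i$ with small rationals $x'_{i,n}\geq x_{i,n}\geq 0$ chosen so that $a_i+x_{i,n}\in\QQ$, and observes that
\[
\overline{D}+\sum_i x'_{i,n}\overline{B}_i+\sum_i(a_i+x_{i,n})\widehat{(\phi_i)}
\geq \overline{D}+\widehat{(\phi)}_{\RR}+\sum_i x_{i,n}\overline{A}_i\geq 0,
\]
so $\mu_{\QQ,\xi}(\overline{D}+\sum_i x'_{i,n}\overline{B}_i)$ is bounded by a multiplicity tending to $\mult_\xi(D+\sum_i a_i(\phi_i))$. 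The limit step also needs more than ``continuity of $\avol$'': it uses the preliminary claim that $t\mapsto\mu_{\QQ,\xi}(\overline{D}+\sum_i t_i\overline{B}_i)$ is a \emph{convex}, finite-valued function on a small rational box (bigness being what keeps it finite there), hence extends continuously by \cite[Proposition~1.3.1]{MoArLin}. Your proposal supplies neither the effective decomposition nor the convexity argument, and these are exactly what close the proof.

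For (7), your route through Fujita's approximation and an ``arithmetic Bertini'' argument is both unavailable and unnecessary. Perturbing $\overline{Q}$ to be ``ample'' is undefined for adelic arithmetic divisors in this paper and would in any case spoil $\overline{Q}\leq\mu^*(\overline{D})$; moreover, comparing $\mu_{\QQ,\xi}(\overline{D})$ with the asymptotic multiplicity of $\mu^*(\overline{D})$ at a point above $\xi$ requires a birational compatibility lemma that the paper does not prove in the adelic arithmetic setting. The paper's proof is much lighter: by (5) reduce to $\KK=\QQ$; by Proposition~\ref{prop:rel:nef:pseudo:effective} approximate $\overline{D}$ from above by a nef and big arithmetic $\RR$-Cartier divisor $(\DDD,g_{\infty})$ on an honest model, for which $\mu_{\QQ,\xi}=0$ by \cite[Proposition~2.1, (6)]{MoArLinB}; deduce $\mu_{\QQ,\xi}\bigl(\overline{D}+(1/n)(0,\sum_{P\in M_K\setminus U}[P])\bigr)=0$ via (2); and let $n\to\infty$ using the same convexity claim from (6).
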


\begin{proof}
(1), (2), (3), (4) and (5) can be proved in the same way as \cite[Proposition~2.1]{MoArLinB}.
For the proofs for (6) and (7), let us begin with the following claim:

\begin{Claim}
\label{claim:rop:ar:mu:basic:01}
Let $\overline{D}_1, \ldots, \overline{D}_r$ be adelic arithmetic $\RR$-Cartier divisors of $C^0$-type on $X$.
If $\overline{D}$ is big, then
\[
\lim_{\substack{(x_1, \ldots, x_r) \to (0,\ldots, 0) \\ (x_1, \ldots, x_r) \in \QQ^r}} 
\mu_{\QQ,\xi}(\overline{D} + x_1 \overline{D}_1 + \cdots + x_r \overline{D}_r)
= \mu_{\QQ,\xi}(\overline{D}).
\]
\end{Claim}

\begin{proof}
Here we define $f : \QQ^r \to \RR \cup \{ \infty \}$ to be 
$f(x) := \mu_{\QQ,\xi}(\overline{D}_x )$
for $x = (x_1, \ldots, x_r) \in \QQ^r$, where 
\[
\overline{D}_x = \overline{D} + x_1 \overline{D}_1 + \cdots + x_r \overline{D}_r.
\]
Note that $f$ is a convex function over $\QQ$, that is,
\[
f(tx + (1-t)y) \leq t f(x) + (1-t)f(y)
\]
for all $x, y \in \QQ^r$ and $t \in [0, 1] \cap \QQ$.
Indeed, by using (1) and (4),
\begin{align*}
f(tx + (1-t)y) & = \mu_{\QQ,\xi}(t\overline{D}_x + (1-t)\overline{D}_y ) 
\leq \mu_{\QQ,\xi}(t\overline{D}_x)
+ \mu_{\QQ,\xi}( (1-t)\overline{D}_y) \\
& = t f(x) + (1-t)f(y).
\end{align*}
Moreover, by virtue of the continuity of the volume function,
there is a positive rational number $c$ such that $\overline{D}_x$ is big for all 
$x \in (-c, \infty)^r \cap \QQ^r$. 
Therefore, by \cite[Proposition~1.3.1]{MoArLin},
there is a continuous function $\tilde{f} : (-c, \infty)^r \to \RR$ such that $\tilde{f} = f$
on $(-c, \infty)^r \cap \QQ^r$.
Thus the assertion follows.
\end{proof}

(6) Let $\phi \in \widehat{\Gamma}^{\times}_{\RR}(X, \overline{D})$, that is,
$\phi = \phi_1^{a_1} \cdots \phi_r^{a_r}$
and $\overline{D} + a_1 \widehat{(\phi_1)} + \cdots + a_r \widehat{(\phi_r)} \geq 0$
for some $\phi_1, \ldots, \phi_r \in \Rat(X)^{\times}$ and $a_1, \ldots, a_r \in \RR$.
By using \cite[Lemma~5.2.3 and Lemma~5.2.4]{MoArZariski}, for each $i$,
we can find effective adelic arithmetic $\RR$-Cartier divisors $\overline{A}_i$ and $\overline{B}_i$ of $C^0$-type
on $X$ such that $\widehat{(\phi_i)} = \overline{A}_i - \overline{B}_i$.
For $n \in \ZZ_{>0}$, we choose $x_{i,n} \in \RR$ and $x'_{i,n} \in \QQ$ such that
$a_i + x_{i,n} \in \QQ$ and $0 \leq x_{i,n} \leq x'_{i,n} \leq 1/n$.
Then 
\begin{multline*}
\overline{D} + \sum\nolimits_{i} x'_{i,n} \overline{B}_i + \sum\nolimits_{i} (a_i + x_{i,n}) \widehat{(\phi_i)} \geq
\overline{D} + \sum\nolimits_{i} x_{i,n} \overline{B}_i + \sum\nolimits_{i} (a_i + x_{i,n}) \widehat{(\phi_i)} \\
= \overline{D} + \sum\nolimits_{i} a_i \widehat{(\phi_i)} + \sum\nolimits_{i} x_{i,n} \overline{A}_i \geq 0,
\end{multline*}
and hence
\[
\mu_{\QQ,\xi}\left(\overline{D} + \sum\nolimits_{i} x'_{i,n} \overline{B}_i\right) \leq
\mult_{\xi}\left(D + \sum\nolimits_{i} x'_{i,n} B_i + \sum\nolimits_{i} (a_i + x_{i,n}) (\phi_i)\right).
\]
On the other hand, by using Claim~\ref{claim:rop:ar:mu:basic:01},
\[
\lim_{n\to\infty} \mu_{\QQ,\xi}\left(\overline{D} + \sum_{i} x'_{i,n} \overline{B}_i\right) = \mu_{\QQ,\xi}(\overline{D})
\]
and
\[
\lim_{n\to\infty} \mult_{\xi}\left(D + \sum\nolimits_{i} x'_{i,n} B_i + \sum\nolimits_{i} (a_i + x_{i,n}) (\phi_i)\right)
= \mult_{\xi}\left(D + \sum\nolimits_{i} a_i(\phi_i)\right).
\]
Thus 
\[
\mu_{\QQ,\xi}(\overline{D}) \leq \mult_{\xi}\left(D + \sum\nolimits_{i} a_i(\phi_i)\right),
\]
which yields $\mu_{\QQ,\xi}(\overline{D}) \leq \mu_{\RR,\xi}(\overline{D})$, so that
(6) follows from (5).

\medskip
(7) By (5), it is sufficient to see that $\mu_{\QQ,\xi}(\overline{D}) = 0$.
We set 
\[
\overline{D} = (D, \{ g_P \}_{P \in M_K} \cup \{ g_{\infty} \}).
\]
Let $U$ be a non-empty open set $\Spec(O_K)$ such that
$\overline{D}$ has a defining model over $U$.
For each $n \in \ZZ_{>0}$, by Proposition~\ref{prop:rel:nef:pseudo:effective},
there is a normal model $\XXX$ of $X$ and a relatively nef $\RR$-Cartier divisor
$\DDD$ on $\XXX$ such that $\DDD \cap X = D$ and
\[
(\DDD, g_{\infty})^{\ad} - (1/n) \left(0,  \sum\nolimits_{P \in M_K \setminus U} [P]
\right)\leq \overline{D} \leq (\DDD, g_{\infty})^{\ad} .
\]
Note that $(\DDD, g_{\infty})$ is nef and big, so that, by (2) and \cite[Proposition~2.1, (6)]{MoArLinB},
\[
0 \leq \mu_{\QQ,\xi}\left(\overline{D} + (1/n) \left(0,  \sum\nolimits_{P \in M_K \setminus U}[P]
\right) \right)  \leq \mu_{\QQ,\xi}((\DDD, g_{\infty})) = 0,
\]
and hence $\mu_{\QQ,\xi}\left(\overline{D} + (1/n) \left(0,  \sum\nolimits_{P \in M_K \setminus U} [P]
\right) \right) = 0$.
Further, by Claim~\ref{claim:rop:ar:mu:basic:01},
\[
\mu_{\QQ,\xi}(\overline{D}) = 
\lim_{n\to\infty} \mu_{\QQ,\xi}\left(\overline{D} + (1/n) \left(0,  \sum\nolimits_{P \in M_K \setminus U} [P]
\right) \right),
\]
so that (7) follows.
\end{proof}

\ifmonog\section{Necessary condition for the equality $\avol = \acvol$}\fi
\ifpaper\subsection{Necessary condition for the equality $\avol = \acvol$}\fi
\label{subset:necessary:condition:equality:avol:cvol}

Let $\overline{K}$ be an algebraic closure of $K$ and
$X_{\overline{K}} := X \times_{\Spec(K)} \Spec(\overline{K})$.

We fix a monomial order $\precsim$ on $\ZZ^d_{\geq 0}$, that is,
$\precsim$ is a total ordering relation on $\ZZ^d_{\geq 0}$ with the following properties:
\begin{enumerate}
\renewcommand{\labelenumi}{(\alph{enumi})}
\item $(0,\ldots,0) \precsim A$ for all $A \in \ZZ^d_{\geq 0}$.

\item If $A \precsim B$ for $A, B \in \ZZ_{\geq 0}^d$, then $A + C \precsim B + C$ for all $C \in \ZZ_{\geq 0}^d$.
\end{enumerate}
The monomial order $\precsim$ on $\ZZ^d_{\geq 0}$ extends uniquely to a totally ordering relation $\precsim$ on $\ZZ^d$
such that $A + C \precsim B + C$ holds for all $A, B, C \in \ZZ^d$ with $A \precsim B$.

Let $z_P = (z_1, \ldots, z_d)$ be a local system of parameters of $\OOO_{X_{\overline{K}}, P}$ at $P \in X(\overline{K})$.
Note that the completion $\widehat{\OOO}_{X_{\overline{K}}, P}$ of 
$\OOO_{X_{\overline{K}}, P}$ with respect to
the maximal ideal of $\OOO_{X_{\overline{K}}, P}$ is naturally isomorphic to
$\overline{K}[\![z_1, \ldots, z_d]\!]$.
Thus, for $f \in \OOO_{X_{\overline{K}}, P}$, we can put
\[
f = \sum_{(a_1, \ldots, a_d) \in \ZZ_{\geq 0}^d} c_{(a_1, \ldots, a_d)} z_1^{a_1} \cdots z_d^{a_d},\qquad(c_{(a_1, \ldots, a_d)} \in \overline{K}).
\]
We define $\ord_{z_P}^{\precsim}(f)$ to be
\[
\ord_{z_P}^{\precsim}(f) := \begin{cases}
\min\limits_{\precsim} \left\{ (a_1, \ldots, a_d) \mid c_{(a_1, \ldots, a_d)} \not= 0 \right\} & \text{if $f \not= 0$},\\
\infty & \text{otherwise},
\end{cases}
\]
which gives rise to a rank $d$ valuation, that is,
\index{\AdelDivSymbol}{0o:ord_{z_P}^{precsim}(f)@$\ord_{z_P}^{\precsim}(f)$}%
the following properties are satisfied:
\begin{enumerate}
\renewcommand{\labelenumi}{(\roman{enumi})}
\item
$\ord_{z_P}^{\precsim}(fg) = \ord_{z_P}^{\precsim}(f) + \ord_{z_P}^{\precsim}(g)$ for $f, g \in \OOO_{X_{\overline{K}},P}$.

\item
$\min \left\{ \ord_{z_P}^{\precsim}(f), \ord_{z_P}^{\precsim}(g) \right\} \precsim \ord_{z_P}^{\precsim}(f + g)$ for $f, g \in \OOO_{X_{\overline{K}},P}$.
\end{enumerate}
By the property (i), $\ord_{z_P}^{\precsim} : \OOO_{X_{\overline{K}},P} \setminus \{ 0 \} \to \ZZ_{\geq 0}^d$ has the natural extension
\[
\ord_{z_P}^{\precsim} : \Rat(X_{\overline{K}})^{\times} \to \ZZ^d
\]
given by $\ord_{z_P}^{\precsim}(f/g) = \ord_{z_P}^{\precsim}(f) - \ord_{z_P}^{\precsim}(g)$.
Note that this extension also satisfies the same properties (i) and (ii) as before.
Since $\ord_{z_P}^{\precsim}(u) = (0,\ldots,0)$ for all $u \in \OOO^{\times}_{X_{\overline{K}},P}$, 
$\ord_{z_P}^{\precsim}$ induces
$\Rat(X_{\overline{K}})^{\times}/\OOO^{\times}_{X_{\overline{K}},P} \to \ZZ^d$. 
The composition of homomorphisms
\[
\Div(X_{\overline{K}}) \overset{\alpha_P}{\longrightarrow} \Rat^{\times}(X_{\overline{K}})/\OOO^{\times}_{X_{\overline{K}},P} \overset{\ord_{z_P}^{\precsim}}{\longrightarrow} \ZZ^d
\]
is
denoted by $\mult_{z_P}^{\precsim}$, 
\index{\AdelDivSymbol}{0m:mult_{z_P}^{precsim}@$\mult_{z_P}^{\precsim}$}%
where 
$\alpha_P : \Div(X_{\overline{K}}) \to \Rat(X_{\overline{K}})^{\times}/\OOO^{\times}_{X_{\overline{K}},P}$ is the natural
homomorphism. Moreover, the homomorphism $\mult_{z_P}^{\precsim} : \Div(X_{\overline{K}}) \to \ZZ^d$ gives rise to
the natural extension
$\Div(X_{\overline{K}}) \otimes_{\ZZ} \RR \to \RR^d$
over $\RR$.
By abuse of notation, the above extension is also denoted by $\mult_{z_P}^{\precsim}$.

Let $\overline{D} = (D, g)$ be an adelic arithmetic $\RR$-Cartier divisor of $C^0$-type on $X$.
Let $V_{\bullet} =  \bigoplus_{m \geq 0} V_m$ be a graded subalgebra of $R(D) := \bigoplus_{m \geq 0} H^0(X, mD)$ over $K$.
The Okounkov body $\Delta(V_{\bullet})$ of $V_{\bullet}$ is defined by
the closed convex hull of
\[
\bigcup_{m > 0} \left\{ \mult_{z_P}^{\precsim}(D_{\overline{K}} + (1/m) (\phi)) \in \RR_{\geq 0}^d \mid \phi \in (V_{m} \otimes_K \overline{K}) \setminus \{ 0 \} \right\}.
\]
For $t \in \RR$, let $V_{\bullet}^t$ be a graded subalgebra of $V_{\bullet}$ given by
\[
V_{\bullet}^t := \bigoplus_{m \geq 0}  \left\langle V_m \cap \aH(X, m (\overline{D} + (0, -2t[\infty]))) \right\rangle_K,
\]
where $\left\langle V_m \cap \aH(X, m (\overline{D} + (0, -2t[\infty]))) \right\rangle_K$ means the subspace of $V_m$ generated by
\[
V_m \cap \aH(X, m (\overline{D} + (0, -2t[\infty])))
\]
over $K$.
We define $G_{(\overline{D};V_{\bullet})} : \Delta(V_{\bullet}) \to \RR \cup \{ -\infty \}$ 
to be 
\[
G_{(\overline{D};V_{\bullet})}(x) := \begin{cases}
\sup \left\{ t \in \RR \mid x \in \Delta(V_{\bullet}^t) \right\} & \text{if $x \in \Delta(V_{\bullet}^t)$ for some $t$}, \\
-\infty & \text{otherwise}.
\end{cases}
\]%
\index{\AdelDivSymbol}{0G:G_{(overline{D};V_{bullet})}@$G_{(\overline{D};V_{\bullet})}$}%
Note that $G_{(\overline{D};V_{\bullet})}$ is an upper
semicontinuous concave function (cf. \cite[SubSection~1.3]{BC}). 
We also define $\avol(\overline{D};V_{\bullet})$ and $\acvol(\overline{D};V_{\bullet})$ 
to be
\[
\begin{cases}
{\displaystyle \avol(\overline{D};V_{\bullet}) := \limsup_{m\to\infty} 
\frac{\# \log \left( V_m \cap \aH(X, m\overline{D}) \right)}{m^{d+1}/(d+1)!}}, \\[3ex]
{\displaystyle  \acvol(\overline{D};V_{\bullet}) := \limsup_{m\to\infty} 
\frac{\hat{\chi} \left(V_m \cap H^0(X, mD), \Vert\cdot\Vert_{m\overline{D}} \right)}{m^{d+1}/(d+1)!}}. 
\end{cases}
\]%
\index{\AdelDivSymbol}{0v:avol(overline{D};V_{bullet})@$\avol(\overline{D};V_{\bullet})$}%
\index{\AdelDivSymbol}{0v:avol_{chi}(overline{D};V_{bullet})@$\avol_{\chi}(\overline{D};V_{\bullet})$}%
Let $\Theta(\overline{D}; V_{\bullet})$ be the closure of  
\[
\left\{ x \in \Delta(V_{\bullet}) \mid 
G_{(\overline{D};V_{\bullet})}(x) > 0 \right\}.
\]%
\index{\AdelDivSymbol}{0T:Theta(overline{D};V_{bullet})@$\Theta(\overline{D}; V_{\bullet})$}%
We assume that $V_{\bullet}$ contains an ample series, that is,
$V_m \not= \{ 0\}$ for $m \gg 1$ and 
there is an ample $\QQ$-Cartier divisor $A$ on $X$ with 
the following properties:
\[
\begin{cases}
\bullet\ \text{$A \leq D$.} \\
\bullet\ \text{There is a positive integer $m_0$ such that
$H^0(X, mm_0A) \subseteq V_{mm_0}$ for all $m \geq 1$.}
\end{cases}
\]
Then, in the similar way as  \cite[Theorem~2.8]{BC}, \cite[Theorem~3.1]{BC} and \cite[Section~3]{MoCharNef}, 
we have the following integral formulae:
\frontmatterforspececialeqn
\begin{equation}
\label{eqn:integral:formula:vol}
\avol(\overline{D};V_{\bullet}) = (d+1)! [K : \QQ] \int_{\Theta(\overline{D}; V_{\bullet})}
G_{(\overline{D};V_{\bullet})}(x) dx
\end{equation}
\backmatterforspececialeqn
and
\frontmatterforspececialeqn
\begin{equation}
\label{eqn:integral:formula:chivol}
\acvol(\overline{D};V_{\bullet}) = (d+1)! [K : \QQ] \int_{\Delta(V_{\bullet})}
G_{(\overline{D};V_{\bullet})}(x) dx.
\end{equation}
\backmatterforspececialeqn

Therefore, in the same way as \cite[Section~3]{MoCharNef},
we have the following theorem (cf. \cite[Theorem~3.4 and Corollary~3.5]{MoCharNef}).

\begin{Theorem}
\label{thm:vol:cvol:mu:zero}
We assume that $D$ is nef and big and
$\avol(\overline{D}) = \acvol(\overline{D}) > 0$. Then
$\mu_{\QQ, \xi}(\overline{D}) = 0$ for all $\xi \in X$.
\end{Theorem}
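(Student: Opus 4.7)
The plan is to mimic the approach of \cite{MoCharNef}, Section~3, in the adelic setting, via the integral formulae \eqref{eqn:integral:formula:vol} and \eqref{eqn:integral:formula:chivol}. I take $V_\bullet = R(D) = \bigoplus_m H^0(X, mD)$, which contains an ample series since $D$ is nef and big, and for which $\avol(\overline{D}; V_\bullet) = \avol(\overline{D})$ and $\acvol(\overline{D}; V_\bullet) = \acvol(\overline{D})$. Fix $\xi \in X$, choose a geometric point $P \in X(\overline{K})$ lying above $\xi$, and pick a local system of parameters $z_P = (z_1, \ldots, z_d)$ at $P$ together with the monomial order $\precsim$ arranged so that the first coordinate of $\mult_{z_P}^{\precsim}$ controls $\mult_\xi$ (when $\xi$ is codimension one, take $z_1$ to be a local equation of the closure of $\xi$ at $P$; for general $\xi$, take a flag of subvarieties through $P$ cutting out a neighborhood of $\xi$ in stages). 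Set $\Delta := \Delta(V_\bullet) \subset \RR^d_{\geq 0}$, $G := G_{(\overline{D};V_\bullet)}$, and $\Theta := \Theta(\overline{D};V_\bullet)$; by construction $G \leq 0$ on $\Delta \setminus \Theta$.

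Combining the two formulae with the hypothesis $\avol(\overline{D}) = \acvol(\overline{D}) > 0$ gives
\[
(d+1)!\,[K:\QQ]\int_{\Delta \setminus \Theta} G(x)\,dx \;=\; \acvol(\overline{D}) - \avol(\overline{D}) \;=\; 0,
\]
and together with $G \leq 0$ on $\Delta \setminus \Theta$ this forces $G = 0$ almost everywhere there. Since $G$ is concave on the convex body $\Delta$, it is continuous on the topological interior of $\Delta$; combined with upper semi-continuity (which yields $G \geq 0$ on $\Theta$, because $\{G \geq 0\}$ is closed and contains $\{G > 0\}$), this forces $G \geq 0$ on the entire interior of $\Delta$. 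Concavity together with $\avol(\overline{D}) > 0$, which forces $G > 0$ on a set of positive measure, then implies $G > 0$ on a dense open subset of the interior.

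Now for any $\epsilon > 0$, I choose an interior point $x \in \Delta$ with $G(x) > 0$ and with first coordinate less than $\epsilon$. Such a point exists because $\Delta$ contains points arbitrarily close to the face where the first coordinate vanishes, which follows from the classical geometric fact that the nef and big divisor $D$ has vanishing asymptotic multiplicity along any subvariety of $X$. By the definitions of $\Theta$ and $G$, there exist $t > 0$ and sections $\phi_n \in V_{m_n} \cap \aH(X, m_n(\overline{D} + (0, -2t[\infty])))$ with $\tfrac{1}{m_n}\mult_{z_P}^{\precsim}(D_{\overline{K}} + (\phi_n)) \to x$. Because $t > 0$, each $\phi_n$ satisfies $m_n \overline{D} + \widehat{(\phi_n)} \geq 0$ at every place (the constraint at $\infty$ even has positive slack $2m_n t$), so $\psi_n := \phi_n^{1/m_n} \in \widehat{\Gamma}_\QQ^\times(X, \overline{D})$, and the scalar multiplicity $\mult_\xi(D + (\psi_n)_\QQ)$ is dominated by the first coordinate of $\tfrac{1}{m_n}\mult_{z_P}^{\precsim}(D_{\overline{K}} + (\phi_n))$, which tends to a value at most $\epsilon$. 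Hence $\mu_{\QQ,\xi}(\overline{D}) \leq \epsilon$, and letting $\epsilon \to 0$ yields $\mu_{\QQ,\xi}(\overline{D}) = 0$.

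The principal technical point is in the last paragraph: the precise matching of the rank-$d$ valuation $\mult_{z_P}^{\precsim}$ on $X_{\overline{K}}$ with the scalar multiplicity $\mult_\xi$ on $X$, and the verification that the Okounkov body $\Delta(V_\bullet)$ accumulates at points with arbitrarily small first coordinate while remaining in the positivity locus of $G$. The codimension-one case follows \cite{MoCharNef}, Theorem~3.4 essentially verbatim; for higher-codimension $\xi$ the flag-based construction and the identification of $\mult_\xi$ with a linear functional of $\mult_{z_P}^{\precsim}$ require some additional bookkeeping, but become routine once the geometric vanishing of the asymptotic multiplicity of a nef and big divisor is invoked.
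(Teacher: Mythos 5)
Your argument takes the route the paper intends, via the integral formulae \eqref{eqn:integral:formula:vol} and \eqref{eqn:integral:formula:chivol} and the concavity of $G$; the derivation that $\avol(\overline{D})=\acvol(\overline{D})>0$ forces $G>0$ on the interior of $\Delta$ and $\Theta=\Delta$ is sound. Two points in the final paragraph need fixing. For $\xi$ of codimension $r$ the relevant functional is $\ell(a_1,\ldots,a_d)=a_1+\cdots+a_r$, with $\precsim$ chosen so that $a\precsim b$ implies $\ell(a)\leq\ell(b)$, and then $\mult_\xi(\cdot)=\ell(\mult_{z_P}^{\precsim}(\cdot))$; your statement that $\mult_\xi$ ``is dominated by the first coordinate'' has the inequality backwards (one always has $a_1\leq \ell=\mult_\xi$), so bounding $a_1$ alone would not bound $\mult_\xi$.

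More substantively, $G(x)>0$ does not ``by the definitions of $\Theta$ and $G$'' produce sections $\phi_n\in V_{m_n}\cap\aH(X,m_n(\overline{D}+(0,-2t[\infty])))$ with valuation vectors converging to $x$: it only puts $x$ in $\Delta(V^t_\bullet)$, the closed convex hull of valuation vectors of elements of $V^t_m\otimes_K\overline{K}$, which are $\overline{K}$-linear combinations of small sections and need be neither small nor $K$-rational. The conclusion you want nevertheless holds, but by a slightly different mechanism that should be spelled out: since $\ell$ is linear, $\min_{\Delta(V^t_\bullet)}\ell$ equals the asymptotic minimum of $\tfrac{1}{m}\,\ell(\mult_{z_P}^{\precsim}(mD_{\overline{K}}+(\phi)))$ over $\phi\in V^t_m\otimes\overline{K}$; since $\ell\circ\mult_{z_P}^{\precsim}$ computes the multiplicity along the component of the closure of $\xi$ through $P$, and the local ring map from $\OOO_{X,\xi}$ to its counterpart over $\overline{K}$ is flat and unramified, this agrees with the asymptotic minimum of $\tfrac{1}{m}\mult_\xi(mD+(\phi))$ over $\phi\in V^t_m$; and because $\mult_\xi(mD+(\cdot))$ is a valuation while $V^t_m$ is the $K$-span of $V_m\cap\aH(X,m(\overline{D}+(0,-2t[\infty])))$, that minimum is attained on an actual small section. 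As such sections lie in $\aH(X,m\overline{D})$, this yields $\mu_{\QQ,\xi}(\overline{D})\leq \min_{\Delta(V^t_\bullet)}\ell \leq \ell(x)$, and letting $\ell(x)$ tend to $0$ completes the proof without the density assertion you invoked.
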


Besides it, the following theorem is also obtained:

\begin{Theorem}
\label{thm:zariski:decomp:mu}
Let $\overline{Q} = (Q, q)$ be a nef adelic arithmetic $\RR$-Cartier divisor of $C^0$-type on $X$.
We assume that $\overline{D}$ is big, $\overline{Q} \leq \overline{D}$ and $\avol(\overline{Q}) = \avol(\overline{D})$.
If we set $N = D - Q$, then $\mu_{\QQ, \xi}(\overline{D}) = \mult_{\xi}(N)$
for all $\xi \in X$.
\end{Theorem}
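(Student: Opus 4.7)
The plan is to sandwich $\mu_{\QQ,\xi}(\overline{D})$ between two explicit quantities and reconcile them. Throughout I will freely use that $\overline{D}$ big implies $D$ is big on $X$ (since $\avol(\overline{D})>0$ forces $\dim H^0(X,nD)$ to have maximal growth), and consequently that $\overline{Q}$ nef with $\avol(\overline{Q}) = \avol(\overline{D}) > 0$ is in particular nef and big.

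First I would establish the upper bound $\mu_{\QQ,\xi}(\overline{D}) \leq \mult_\xi(N)$. Because $\overline{Q}$ is nef and big, Proposition~\ref{prop:ar:mu:basic}(7) gives $\mu_{\QQ,\xi}(\overline{Q}) = 0$. Applying Proposition~\ref{prop:ar:mu:basic}(2) to $\overline{Q} \leq \overline{D}$, with $D - Q = N$, immediately yields
$\mu_{\QQ,\xi}(\overline{D}) \leq \mu_{\QQ,\xi}(\overline{Q}) + \mult_\xi(N) = \mult_\xi(N)$.

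For the lower bound, I would invoke Corollary~\ref{cor:Zariski:decomp:adelic:arithmetic:divisor} to produce the greatest nef element $\overline{Q}^{\max}$ of $\Upsilon(\overline{D})$; the Corollary also furnishes bijections $\aH(X, n\overline{Q}^{\max}) = \aH(X, n\overline{D})$ for every $n \in \ZZ_{>0}$. For any $\phi \in \widehat{\Gamma}^{\times}_{\QQ}(X, \overline{D})$, choose $n$ with $\phi^n \in \aH(X, n\overline{D})$. Since $\aH(X, n\overline{D}) = \aH(X, n\overline{Q}^{\max})$, the divisor $nQ^{\max} + (\phi^n)$ is effective, so $\mult_\xi(D + (\phi)_{\QQ}) \geq \mult_\xi(D - Q^{\max}) = \mult_\xi(N^{\max})$, where $N^{\max} := D - Q^{\max}$. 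Taking the infimum over $\phi$ gives $\mu_{\QQ,\xi}(\overline{D}) \geq \mult_\xi(N^{\max})$. Combined with the upper bound this produces $\mult_\xi(N^{\max}) \leq \mu_{\QQ,\xi}(\overline{D}) \leq \mult_\xi(N)$, and since $\overline{Q} \leq \overline{Q}^{\max}$ implies $N \geq N^{\max}$, the proof reduces to showing $\mult_\xi(Q) = \mult_\xi(Q^{\max})$.

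Set $\overline{E} := \overline{Q}^{\max} - \overline{Q} \geq 0$. The log-concavity of the arithmetic volume (proved in Section~5.3) together with $\avol(\overline{Q}) = \avol(\overline{Q}^{\max})$ forces $\avol(\overline{E}) = 0$. Expanding $\adeg((\overline{Q}^{\max})^2) = \adeg(\overline{Q}^2) + 2\adeg(\overline{Q}\cdot\overline{E}) + \adeg(\overline{E}^2)$ and using Theorem~\ref{thm:G:H:I:T:adelic:arith} to identify $\avol$ with $\adeg(\,\cdot\,^2)$ on the nef divisors $\overline{Q}$ and $\overline{Q}^{\max}$ yields
$2\adeg(\overline{Q}\cdot\overline{E}) + \adeg(\overline{E}^2) = 0$.
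Both $\adeg(\overline{Q}\cdot\overline{E}) \geq 0$ and $\adeg(\overline{Q}^{\max}\cdot\overline{E}) = \adeg(\overline{Q}\cdot\overline{E}) + \adeg(\overline{E}^2) \geq 0$ by Proposition~\ref{prop:intersection:nef:pseudo:effective}, so $\adeg(\overline{Q}\cdot\overline{E}) = 0$ and $\adeg(\overline{E}^2) = 0$. Once the reduction $\deg(E) = 0$ is in place, Corollary~\ref{cor:HI:adelic} forces $\overline{E} = \widehat{(\psi)}_{\RR} + (0,\lambda[\infty])$ for some $\psi \in \Rat(X)^{\times}_{\RR}$ and $\lambda \in \RR$; effectivity of the principal divisor $(\psi)_{\RR}$ on the projective curve $X$ (together with its vanishing degree) then makes $E = Q^{\max} - Q = 0$, yielding $\mult_\xi(Q) = \mult_\xi(Q^{\max})$ as required.

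The hard step is the reduction to $\deg(E) = 0$: a priori $\overline{E}$ could be an effective adelic arithmetic $\RR$-Cartier divisor with positive-degree underlying divisor yet still satisfy $\adeg(\overline{E}^2) = \avol(\overline{E}) = 0$. I plan to exclude this using that such $\overline{E}$ would automatically be nef (effectivity giving $\adeg(\srest{\overline{E}}{x}) \geq 0$), so that $\overline{Q}^{\max} + t\overline{E}$ stays nef for $t \geq 0$ and, by the maximality of $\overline{Q}^{\max}$ inside $\Upsilon(\overline{D})$, must fail to be $\leq \overline{D}$ for any $t > 0$; combining this rigidity with the simultaneous vanishing $\adeg(\overline{Q}\cdot\overline{E}) = \adeg(\overline{Q}^{\max}\cdot\overline{E}) = \adeg(\overline{E}^2) = 0$ should compel $\deg(E) = 0$, closing the remaining gap.
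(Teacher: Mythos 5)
Your upper bound is exactly the paper's: Proposition~\ref{prop:ar:mu:basic}(2) combined with $\mu_{\QQ,\xi}(\overline{Q})=0$ from (7). Your lower bound, however, departs completely from the paper's argument, and it has two concrete problems.

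First, and most seriously, your argument is restricted to $\dim X = 1$, while Theorem~\ref{thm:zariski:decomp:mu} is stated (and used in the subsequent Remark on toric varieties) for a $d$-dimensional $X$. Every tool you invoke for the lower bound is an arithmetic-surface result: Corollary~\ref{cor:Zariski:decomp:adelic:arithmetic:divisor} (existence of the greatest element $\overline{Q}^{\max}$ of $\Upsilon(\overline{D})$ and the bijection of small sections) and Corollary~\ref{cor:HI:adelic} (the equality case of the Hodge index theorem) are proved only in the curve case, and the intersection-number expansion $\adeg((\overline{Q}^{\max})^2) = \adeg(\overline{Q}^2) + 2\adeg(\overline{Q}\cdot\overline{E}) + \adeg(\overline{E}^2)$ is the quadratic case of $\adeg(\,\cdot\,^{d+1})$. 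The paper's proof avoids this entirely by working with the Okounkov body data $\Delta(V_\bullet)$, $G_{\overline{D}}$, $\Theta_{\overline{D}}$ in $\RR^d$: choosing $z_P$ and the monomial order so that $\ell(\nu) = \mult_\xi(N)$, using the integral formula \eqref{eqn:integral:formula:vol} together with $\avol(\overline{Q})=\avol(\overline{D})$ to force $\Theta_{\overline{Q}} + \nu = \Theta_{\overline{D}}$, and reading off the lower bound from $\ell(x_0) = \ell(y_0) + \ell(\nu)$. That argument is dimension-free, which is the whole point of having set up the Okounkov machinery in this subsection.

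Second, even in the curve case your proof is not complete, and you say so yourself. What you actually establish cleanly is $\mu_{\QQ,\xi}(\overline{D}) \geq \mult_\xi(D - Q^{\max})$; to finish you need $Q = Q^{\max}$, i.e.\ $E := Q^{\max} - Q = 0$. Your intersection computations give $\adeg(\overline{Q}\cdot\overline{E}) = \adeg(\overline{E}^2) = 0$ and $\avol(\overline{E})=0$, and via the equality condition in Theorem~\ref{thm:GHI:adelic} you get $\overline{E}$ relatively nef, hence nef. But a nef, effective $\overline{E}$ with $\avol(\overline{E}) = 0$ does not in general have $\deg(E) = 0$: on $\PP^1$ a nef adelic divisor with a degree-one underlying divisor and total arithmetic self-intersection zero exists (a suitably normalized semipositive metric). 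So your last paragraph's ``should compel $\deg(E)=0$'' is a genuine gap, not merely a detail to write up — the statement does not follow from the data you have assembled there; some additional input (in effect equivalent to the conclusion of the theorem applied to both $\overline{Q}$ and $\overline{Q}^{\max}$) is needed, which is circular. The Okounkov body argument in the paper sidesteps this by never needing to compare $Q$ with $Q^{\max}$ at all.
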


\begin{proof}
By using (2) and (7) in Proposition~\ref{prop:ar:mu:basic}, we have
\[
\mu_{\QQ,\xi}(\overline{D}) \leq \mu_{\QQ,\xi}(\overline{Q}) + \mult_{\xi}(N) = \mult_{\xi}(N).
\]

Let us consider the converse inequality.
Let $B$ be the Zariski closure of $\{ \xi \}$ and $P$ a regular closed point of $B$.
Let $z_P = (z_1, \ldots, z_d)$ be a local system of parameters of $\OOO_{X, P}$ such that
$B$ is given by $z_1 = \cdots = z_r = 0$.
We choose a monomial order $\precsim$ of $\ZZ^{d}_{\geq 0}$ such that
$\ell(a) \leq \ell(b)$ for all $a, b \in \ZZ_{\geq 0}^d$ with $a \precsim b$,
where $\ell(x_1, \ldots, x_d) = x_1 + \cdots + x_r$.
We set $\nu := \mult_{z_P}^{\precsim}(N)$.
For simplicity, 
in the case $V_{\bullet} = R(D)$, we denote $\Delta(V_{\bullet})$, $\Delta(V_{\bullet}^t)$, $G_{(\overline{D}; V_{\bullet})}$ and
$\Theta(\overline{D}; V_{\bullet})$ by
$\Delta_D$, $\Delta_D^t$, $G_{\overline{D}}$ and
$\Theta_{\overline{D}}$, respectively.
Let us see the following claim:

\begin{Claim}
\begin{enumerate}
\renewcommand{\labelenumi}{(\arabic{enumi})}
\item
$\Delta_{Q}^t + \nu \subseteq \Delta_{D}^t$ for $t \in \RR$.

\item
$G_{\overline{Q}}(x) \leq G_{\overline{D}}(x + \nu)$ for $x \in \Delta_Q$.

\item
$\Theta_{\overline{Q}} + \nu \subseteq \Theta_{\overline{D}}$.

\item 
$\min \{ \ell(x) \mid x \in \Theta_{\overline{D}} \} \leq \mu_{\QQ,\xi}(\overline{D})$.
\end{enumerate}
\end{Claim}

\begin{proof}
(1) Let $\phi \in \left\langle \aH(X, m (\overline{Q} + (0, -2t[\infty]))) \right\rangle_{\overline{K}} \setminus \{ 0 \}$.
Then
\[
\mult_{z_P}^{\precsim}(Q + (1/m)(\phi)) + \nu = \mult_{z_P}^{\precsim}(D + (1/m)(\phi)),
\]
which shows (1).

(2) Let $t$ be a real number with $t < G_{\overline{Q}}(x)$.
Then $x \in \Delta^t_{Q} \subseteq \Delta_{D}^t - \nu$ by (1), and hence $x + \nu \in \Delta^t_D$.
Thus $t \leq G_{\overline{D}}(x + \nu)$, as required.

(3) follows because $G_{\overline{Q}}(x) > 0$ implies $G_{\overline{D}}(x + \nu) > 0$ by (2).

(4) Let $\phi \in \aH(X, m\overline{D}) \setminus \{ 0 \}$.
Note that $\mult_{z_P}^{\precsim}(D + (1/m) (\phi)) \in \Theta_{\overline{D}}$ and
\[
\ell(\mult_{z_P}^{\precsim}(D + (1/m) (\phi))) = \mult_{\xi}(D + (1/m) (\phi)).
\]
Thus $\min \{ \ell(x) \mid x \in \Theta_{\overline{D}} \} \leq \mult_{\xi}(D + (1/m) (\phi))$.
Therefore, we have (4).
\end{proof}

Since $\avol(\overline{Q}) = \avol(\overline{D})$,
by using the integral formula \eqref{eqn:integral:formula:vol} together with (2) and (3) in the above claim, we can see that
$\Theta_{\overline{Q}} + \nu = \Theta_{\overline{D}}$.
We choose $x_0 \in \Theta_{\overline{D}}$ such that $\ell(x_0) = \min \{ \ell(x) \mid x \in \Theta_{\overline{D}} \}$.
Then there is $y_0 \in \Theta_{\overline{Q}}$ such that $y_0 + \nu = x_0$. As $\ell(y_0) \geq 0$ and $\ell(\nu) = \mult_{\xi}(N)$,
by using (4) in the above claim,
\[
\mu_{\QQ,\xi}(\overline{D}) \geq \min \{ \ell(x) \mid x \in \Theta_{\overline{D}} \} = \ell(x_0) = \ell(y_0) + \ell(\nu) \geq \ell(\nu) = \mult_{\xi}(N),
\]
as required.
\end{proof}

\begin{Remark}
By virtue of Theorem~\ref{thm:zariski:decomp:mu},
we can generalize the necessary and sufficient condition for the existence of Zariski decompositions
on arithmetic toric varieties proved in \cite[Theorem~8.2]{BMPS} to the case of adelic arithmetic $\RR$-divisors.
\end{Remark}

\ifmonog\section{Numerical characterization}\fi
\ifpaper\subsection{Numerical characterization}\fi
\label{subset:numerical:characterization}

We assume that $\dim X = 1$.
The following theorem is the main result of this 
\ifmonog chapter. \fi
\ifpaper section. \fi

\begin{Theorem}
\label{thm:char:nef:general}
Let $\overline{D}$ be an integrable adelic arithmetic $\RR$-Cartier divisor on $X$.
Then $\overline{D}$ is nef if and only if $\overline{D}$ is pseudo-effective and
$\adeg(\overline{D}^2) = \avol(\overline{D})$.
\end{Theorem}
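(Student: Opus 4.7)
The \emph{only if} direction is immediate: Proposition~\ref{prop:rel:nef:pseudo:effective}(2) gives nef $\Rightarrow$ pseudo-effective, and Theorem~\ref{thm:G:H:I:T:adelic:arith} applied in dimension $d+1=2$ gives $\adeg(\overline{D}^2)=\avol(\overline{D})$ for nef $\overline{D}$. For the \emph{if} direction I begin by extracting two structural consequences of the hypothesis. Pseudo-effectivity of $\overline{D}$, tested against small ample perturbations via the continuity of the volume (Theorem~\ref{thm:cont:volume}), forces $\deg D\geq 0$; then Theorem~\ref{thm:GHI:adelic} yields
\[
\adeg(\overline{D}^2)\leq \avol_{\chi}(\overline{D})\leq \avol(\overline{D}),
\]
and the hypothesis collapses this chain to equalities. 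The equality condition in Theorem~\ref{thm:GHI:adelic} therefore delivers both $\avol_{\chi}(\overline{D})=\avol(\overline{D})$ and that $\overline{D}$ is already \emph{relatively} nef. What remains is to upgrade relative nefness to nefness, i.e., to verify $\adeg(\srest{\overline{D}}{x})\geq 0$ for every closed point $x\in X$.

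In the principal case $\avol(\overline{D})>0$, Corollary~\ref{cor:HI:adelic} rules out $\deg D=0$, so $D$ is ample on $X$, and Theorem~\ref{thm:vol:cvol:mu:zero} applies to give $\mu_{\QQ,\xi}(\overline{D})=0$ for every $\xi\in X$. Bigness ensures $\Upsilon(\overline{D})\neq\emptyset$, so Corollary~\ref{cor:Zariski:decomp:adelic:arithmetic:divisor} furnishes the greatest nef element $\overline{Q}\in\Upsilon(\overline{D})$ with $\avol(\overline{Q})=\avol(\overline{D})$, and Theorem~\ref{thm:zariski:decomp:mu} then forces $\mult_{\xi}(D-Q)=\mu_{\QQ,\xi}(\overline{D})=0$ for every $\xi$, so $D=Q$. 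The difference $\overline{N}:=\overline{D}-\overline{Q}=(0,n)$ thus has zero divisor part and a pointwise non-negative Green-function collection $n=g-q$; the local-degree formula at a closed point then expresses $\adeg(\srest{\overline{N}}{x})$ as a non-negative linear combination of pointwise values of the $n_P$ and $n_{\infty}$, giving $\adeg(\srest{\overline{N}}{x})\geq 0$. Combined with $\adeg(\srest{\overline{Q}}{x})\geq 0$ from the nefness of $\overline{Q}$, this yields $\adeg(\srest{\overline{D}}{x})\geq 0$ for every closed $x$, so $\overline{D}$ is nef.

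When $\avol(\overline{D})=0$ and $\deg D=0$, Corollary~\ref{cor:HI:adelic} combined with the hypothesis identifies $\overline{D}=\widehat{(\psi)}_{\RR}+(0,\lambda[\infty])$ for some $\psi\in\Rat(X)^{\times}_{\RR}$ and $\lambda\in\RR$; pseudo-effectivity (tested against scaled big divisors $\overline{A}_{0}/n$) forces $\lambda\geq 0$, and such an $\overline{D}$ is manifestly nef. The remaining subcase $\avol(\overline{D})=0$ with $\deg D>0$ is the chief technical obstacle, since the equality $\adeg(\overline{D}^2)=\avol(\overline{D})$ is not preserved under perturbation $\overline{D}_{\epsilon}:=\overline{D}+\epsilon\overline{A}$ by an ample $\overline{A}$, so neither Theorem~\ref{thm:vol:cvol:mu:zero} nor Theorem~\ref{thm:zariski:decomp:mu} applies directly to $\overline{D}_{\epsilon}$. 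I would dispatch this subcase by a limiting argument on the Zariski decompositions $\overline{D}_{\epsilon}=\overline{Q}_{\epsilon}+\overline{N}_{\epsilon}$ supplied by Corollary~\ref{cor:Zariski:decomp:adelic:arithmetic:divisor}: using the relative nefness of $\overline{D}$ together with asymptotic control of $\avol(\overline{D}_{\epsilon})-\adeg(\overline{D}_{\epsilon}^{2})$ as $\epsilon\to 0^{+}$, and positivity of the Chern measures of the $\overline{Q}_{\epsilon}$, the divisorial excess of $\overline{D}_{\epsilon}-\overline{Q}_{\epsilon}$ should be forced to vanish in the limit, producing a decomposition of the same shape as in the principal case and hence nefness of $\overline{D}$.
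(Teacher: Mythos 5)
Your handling of the big case and the $\deg D = 0$ case follows the paper's argument closely and is correct, but the subcase $\avol(\overline{D})=0$ with $\deg D>0$ is a genuine gap: you correctly single it out as the obstacle, yet leave it as a sketch that "should" work, and the route you outline (perturbing by a general ample $\overline{A}$ and trying to force the divisorial excess of $\overline{D}_{\epsilon}-\overline{Q}_{\epsilon}$ to vanish in the limit) does not carry through as stated. As you yourself observe, the equality $\adeg(\cdot^{2})=\avol(\cdot)$ is not preserved under an ample perturbation, so neither Theorem~\ref{thm:vol:cvol:mu:zero} nor Theorem~\ref{thm:zariski:decomp:mu} applies to $\overline{D}_{\epsilon}$, and there is no evident mechanism in your sketch forcing $\mult_{\xi}(D_{\epsilon}-Q_{\epsilon})\to 0$.

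The missing idea is to perturb by $(0,\epsilon[\infty])$ rather than by an ample $\overline{A}$: this particular perturbation \emph{does} preserve the equality. Since $\adeg(\overline{D}^{2})=0$ and $\adeg((0,[\infty])^{2})=0$, bilinearity of the intersection form gives $\adeg\bigl((\overline{D}+(0,\epsilon[\infty]))^{2}\bigr)=\epsilon[K:\QQ]\deg D$. On the other side, Proposition~\ref{prop:vol:comp:C:0} with $\avol(\overline{D})=0$ gives the upper bound $\avol(\overline{D}+(0,\epsilon[\infty]))\leq\epsilon[K:\QQ]\deg D$, while Theorem~\ref{thm:GHI:adelic} (applicable since $\deg D\geq 0$ and $\overline{D}+(0,\epsilon[\infty])$ is integrable) gives the matching lower bound $\avol(\overline{D}+(0,\epsilon[\infty]))\geq\adeg\bigl((\overline{D}+(0,\epsilon[\infty]))^{2}\bigr)$. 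Hence both quantities equal $\epsilon[K:\QQ]\deg D>0$: the perturbed divisor is big, pseudo-effective, and satisfies the equality, so your first case shows it is nef; letting $\epsilon\to 0^{+}$ then yields nefness of $\overline{D}$. This direct reduction should replace the limiting argument you sketch.
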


\begin{proof}
We need to show that if $\overline{D}$ is pseudo-effective and
$\adeg(\overline{D}^2) = \avol(\overline{D})$, then $\overline{D}$ is nef because
the converse follows from Proposition~\ref{prop:rel:nef:pseudo:effective} and
Theorem~\ref{thm:GHI:adelic}.

First we assume that $\overline{D}$ is big.
Since
\[
\deg(\overline{D}^2) \leq \avol_{\chi}(\overline{D}) \leq \avol(\overline{D})
\]
by Theorem~\ref{thm:GHI:adelic},
we have $\deg(\overline{D}^2) = \avol_{\chi}(\overline{D})$ and
$\avol_{\chi}(\overline{D}) = \avol(\overline{D})$.
Thus, by Theorem~\ref{thm:GHI:adelic} and Theorem~\ref{thm:vol:cvol:mu:zero},
$\overline{D}$ is relatively nef and $\mu_{\QQ,\xi}(\overline{D}) = 0$ for all $\xi \in X$.
On the other hand, by Corollary~\ref{cor:Zariski:decomp:adelic:arithmetic:divisor}, 
there is the  greatest element $\overline{Q}$ of
$\Upsilon(\overline{D})$. Thus, if we set $\overline{N} := \overline{D} - \overline{Q}$,
then $\mult_{\xi}(N) = \mu_{\QQ,\xi}(\overline{D}) = 0$ for all $\xi \in X$ by Theorem~\ref{thm:zariski:decomp:mu},
which means that $N = 0$. Therefore $\adeg(\rest{D}{x}) \geq 0$ for all closed point $x \in X$, and hence
$\overline{D}$ is nef.

Next we suppose that $\deg(D) > 0$ and $\overline{D}$ is not big. In this case,
$\adeg(\overline{D}^2) = \avol(\overline{D}) = 0$. Thus, for $\epsilon > 0$,
by using Proposition~\ref{prop:vol:comp:C:0},
\[
\epsilon[K : \QQ]\deg(D) = \adeg((\overline{D} + (0, \epsilon[\infty]))^2) \leq
\avol(\overline{D} + (0, \epsilon[\infty])) \leq \epsilon[K : \QQ]\deg(D).
\]
Therefore, $\adeg((\overline{D} + (0, \epsilon[\infty]))^2) = 
\avol(\overline{D} + (0, \epsilon[\infty])) > 0$, so that, by the previous observation,
$\overline{D} + (0, \epsilon[\infty])$ is nef, and hence $\overline{D}$ is also nef.

Finally we consider the case where $\deg(D) = 0$.
Then $\avol(\overline{D}) = 0$, so that $\adeg(\overline{D}^2) = 0$.
By Corollary~\ref{cor:HI:adelic}, $\overline{D} = \widehat{(\psi)}_{\RR} + (0, \lambda [\infty])$
for some $\psi \in \Rat(X)^{\times}_{\RR}$ and $\lambda \in \RR$.
As $\overline{D}$ is pseudo-effective, by (2) in Proposition~\ref{prop:intersection:nef:pseudo:effective},
$\adeg(\overline{A} \cdot \overline{D}) \geq 0$
for any nef adelic arithmetic $\RR$-Cartier divisor $\overline{A}$ of $C^0$-type. Thus
we can see that $\lambda \geq 0$, and hence $\overline{D}$ is nef.
\end{proof}

\begin{Corollary}
\label{cor:characterization:Zariski:decomp}
Let $\overline{D}$ and
$\overline{Q}$ be adelic arithmetic $\RR$-Cartier divisors of $C^0$-type on $X$.
Then the following are equivalent:
\begin{enumerate}
\renewcommand{\labelenumi}{(\arabic{enumi})}
\item
$\overline{Q}$ is the greatest element of $\Upsilon(\overline{D})$, that is,
$\overline{Q} \in \Upsilon(\overline{D})$ and $\overline{L} \leq \overline{Q}$ 
for all $\overline{L} \in \Upsilon(\overline{D})$.

\item
$\overline{Q}$ is an element of $\Upsilon(\overline{D})$ with the following property:
\[
\adeg(\overline{Q} \cdot \overline{B}) = 0\quad\text{and}\quad
\adeg(\overline{B}^2) < 0
\]
for all
integrable adelic arithmetic $\RR$-Cartier divisors $\overline{B}$ of $C^0$-type
with
$0 \lneqq \overline{B} \leq \overline{D} - \overline{Q}$.
\end{enumerate}
\end{Corollary}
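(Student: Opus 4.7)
The plan is to reduce the equivalence to three ingredients already established: the existence of a greatest element in $\Upsilon(\overline{D})$ (Corollary~\ref{cor:Zariski:decomp:adelic:arithmetic:divisor}), the equality $\adeg(\overline{L}^2)=\avol(\overline{L})$ for any nef adelic arithmetic $\RR$-Cartier divisor $\overline{L}$ on the curve (Theorem~\ref{thm:G:H:I:T:adelic:arith}), and the numerical characterization of nefness in Theorem~\ref{thm:char:nef:general}. A preliminary fact I will invoke is that any integrable pseudo-effective $\overline{L}$ on $X$ has $\deg L\ge 0$ (since $\avol(\overline{L}+\epsilon\overline{A})>0$ for any ample $\overline{A}$ forces $\deg L\ge -\epsilon\deg A$ for all $\epsilon>0$), so that Theorem~\ref{thm:GHI:adelic} supplies the Hodge-type bound $\adeg(\overline{L}^2)\le\avol(\overline{L})$.

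For $(1)\Rightarrow(2)$, let $\overline{B}$ be integrable with $0\lneqq\overline{B}\le\overline{D}-\overline{Q}$ and, for $t\in(0,1]$, introduce $\overline{L}_t:=\overline{Q}+t\overline{B}$. From $\overline{Q}\le\overline{L}_t\le\overline{D}$ I first deduce that $\overline{L}_t$ is integrable and pseudo-effective, and, using Corollary~\ref{cor:Zariski:decomp:adelic:arithmetic:divisor}, that $\avol(\overline{L}_t)=\avol(\overline{D})=\avol(\overline{Q})=\adeg(\overline{Q}^2)$. Since $\overline{L}_t$ strictly exceeds the maximal nef element $\overline{Q}$, it cannot be nef, so Theorem~\ref{thm:char:nef:general} combined with the Hodge inequality above yields the \emph{strict} bound $\adeg(\overline{L}_t^2)<\adeg(\overline{Q}^2)$. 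Expanding gives $2t\,\adeg(\overline{Q}\cdot\overline{B})+t^2\adeg(\overline{B}^2)<0$ for all $t\in(0,1]$. Dividing by $t$ and letting $t\to 0^+$ yields $\adeg(\overline{Q}\cdot\overline{B})\le 0$, while Proposition~\ref{prop:intersection:nef:pseudo:effective}(2) (applied to the nef $\overline{Q}$ and the effective, hence pseudo-effective, $\overline{B}$) gives the reverse inequality; hence $\adeg(\overline{Q}\cdot\overline{B})=0$, and substituting back forces $\adeg(\overline{B}^2)<0$.

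For $(2)\Rightarrow(1)$, Corollary~\ref{cor:Zariski:decomp:adelic:arithmetic:divisor} produces the greatest element $\overline{Q}'\in\Upsilon(\overline{D})$, and I set $\overline{B}:=\overline{Q}'-\overline{Q}$, which is integrable and satisfies $0\le\overline{B}\le\overline{D}-\overline{Q}$. Assuming $\overline{B}\gneqq 0$ for contradiction, hypothesis (2) applied to this specific $\overline{B}$ yields $\adeg(\overline{Q}\cdot\overline{B})=0$ and $\adeg(\overline{B}^2)<0$, so expanding gives $\adeg({\overline{Q}'}^2)=\adeg(\overline{Q}^2)+\adeg(\overline{B}^2)<\adeg(\overline{Q}^2)$. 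But Theorem~\ref{thm:G:H:I:T:adelic:arith} applied to the nef divisors $\overline{Q}$ and $\overline{Q}'$ rewrites this as $\avol(\overline{Q}')<\avol(\overline{Q})$, contradicting the monotonicity $\avol(\overline{Q})\le\avol(\overline{Q}')$ that follows from $\overline{Q}\le\overline{Q}'$. The main obstacle I foresee is essentially bookkeeping: careful verification that at every stage $\overline{L}_t$ is integrable and pseudo-effective with $\deg L_t\ge 0$, so that Theorems~\ref{thm:GHI:adelic} and~\ref{thm:char:nef:general} apply as stated and the two-sided squeeze $\adeg(\overline{L}_t^2)<\avol(\overline{L}_t)$ can actually be invoked.
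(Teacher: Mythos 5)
Your proof is correct, and the first direction $(1)\Rightarrow(2)$ is essentially the paper's argument: both perturb $\overline{Q}$ by a small multiple of $\overline{B}$, combine the Hodge-type bound from Theorem~\ref{thm:GHI:adelic} with the equality $\avol(\overline{Q}+\epsilon\overline{B})=\avol(\overline{D})=\avol(\overline{Q})=\adeg(\overline{Q}^2)$, extract $\adeg(\overline{Q}\cdot\overline{B})\le 0$ by letting $\epsilon\to 0$, use Proposition~\ref{prop:intersection:nef:pseudo:effective}(2) for the reverse inequality, and invoke Theorem~\ref{thm:char:nef:general} (non-nefness of $\overline{Q}+\overline{B}$) to get the strict $\adeg(\overline{B}^2)<0$. (A minor difference: you invoke the strict inequality from Theorem~\ref{thm:char:nef:general} at every $t\in(0,1]$, whereas the paper is more economical and only needs it at $t=1$; either works.)

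Your $(2)\Rightarrow(1)$ takes a genuinely different route. You appeal to Corollary~\ref{cor:Zariski:decomp:adelic:arithmetic:divisor} to produce the greatest element $\overline{Q}'$, set $\overline{B}=\overline{Q}'-\overline{Q}$, and if $\overline{B}\gneqq 0$ you derive $\adeg(\overline{Q}'^2)<\adeg(\overline{Q}^2)$ and then contradict volume monotonicity via two applications of Theorem~\ref{thm:G:H:I:T:adelic:arith}. The paper instead takes an \emph{arbitrary} $\overline{L}\in\Upsilon(\overline{D})$, forms $\overline{A}=\max\{\overline{Q},\overline{L}\}$ and $\overline{B}=\overline{A}-\overline{Q}$ using Lemma~\ref{lem:max:nef:adelic:arith}, and obtains a shorter contradiction purely at the level of intersection numbers: since $\overline{A}$ is nef and $\overline{B}$ effective, $0\le\adeg(\overline{A}\cdot\overline{B})=\adeg((\overline{Q}+\overline{B})\cdot\overline{B})=\adeg(\overline{B}^2)$, clashing with $\adeg(\overline{B}^2)<0$. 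Your version buys nothing new and leans on the existence theorem (Corollary~\ref{cor:Zariski:decomp:adelic:arithmetic:divisor}) plus the generalized Hodge index theorem where the paper needs only the max lemma and nef-times-effective nonnegativity; the paper's route is therefore a bit lighter and more self-contained, but both are valid, and your chain of equalities $\adeg(\overline{Q}'^2)=\adeg(\overline{Q}^2)+\adeg(\overline{B}^2)$ correctly uses $\adeg(\overline{Q}\cdot\overline{B})=0$ from hypothesis (2).
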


\begin{proof}
(1) $\Longrightarrow$ (2) :
By Corollary~\ref{cor:Zariski:decomp:adelic:arithmetic:divisor},
$\avol(\overline{D}) = \avol(\overline{Q})$.
Let $\overline{B}$ be an integrable adelic arithmetic $\RR$-Cartier divisor $\overline{B}$ of $C^0$-type
with $0 \lneqq \overline{B} \leq \overline{D} - \overline{Q}$.
For $0 < \epsilon \leq 1$, 
\[
\adeg((\overline{Q} + \epsilon \overline{B})^2) \leq \avol(\overline{Q} + \epsilon \overline{B})
\]
by Theorem~\ref{thm:GHI:adelic}.
On the other hand, by using Theorem~\ref{thm:G:H:I:T:adelic:arith},
\[
\adeg((\overline{Q} + \epsilon \overline{B})^2) \leq \avol(\overline{Q} + \epsilon \overline{B}) \leq
\avol(\overline{D}) = \avol(\overline{Q}) = \adeg(\overline{Q}^2),
\]
so that $\adeg((\overline{Q} + \epsilon \overline{B})^2) \leq \adeg(\overline{Q}^2)$.
Therefore, $2 \adeg(\overline{Q} \cdot \overline{B}) + \epsilon \adeg(\overline{B}^2) \leq 0$.
In particular, $\adeg(\overline{Q} \cdot \overline{B}) \leq 0$.
Moreover, as $\overline{Q}$ is nef and $\overline{B}$ is effective,
by (2) in Proposition~\ref{prop:intersection:nef:pseudo:effective},
we have $\adeg(\overline{Q} \cdot \overline{B}) \geq 0$, and hence
$\adeg(\overline{Q} \cdot \overline{B}) = 0$.

$\overline{Q} + \overline{B}$ is not nef because $\overline{B} \gneqq 0$, 
so that, by Theorem~\ref{thm:char:nef:general},
\[
\adeg((\overline{Q} + \overline{B})^2) < \avol(\overline{Q} + \overline{B}) = \avol(\overline{Q}) = \adeg(\overline{Q}^2).
\]
Therefore, $\adeg(\overline{B}^2) < 0$.

\medskip
(2) $\Longrightarrow$ (1) :
Let $\overline{L}$ be an element of $\Upsilon(\overline{D})$.
If we set $\overline{A} := \max \{ \overline{Q}, \overline{L} \}$
and $\overline{B} := \overline{A} - \overline{Q}$,
then $\overline{B}$ is effective, $\overline{A} \leq \overline{D}$ and $\overline{A}$ is nef by 
Lemma~\ref{lem:max:nef:adelic:arith}.
Moreover, 
\[
\overline{B} = \overline{A} - \overline{Q} \leq \overline{D} - \overline{Q}.
\]
If we assume $\overline{B} \gneqq 0$, then,
by the property, $\adeg(\overline{Q} \cdot \overline{B}) = 0$ and $\adeg(\overline{B}^2) < 0$.
On the other hand, as $\overline{A}$ is nef and $\overline{B}$ is effective,
\[
0 \leq \adeg(\overline{A} \cdot \overline{B}) = \adeg(\overline{Q} + \overline{B} \cdot \overline{B })
= \adeg(\overline{B}^2),
\]
which is a contradiction, so that $\overline{B} = 0$, that is,
$\overline{Q} = \overline{A}$, which means that $\overline{L} \leq \overline{Q}$, as required.
\end{proof}

\ifmonog
\appendix
\renewcommand{\thesection}{\Alph{chapter}.\arabic{section}}
\renewcommand{\theTheorem}{\Alph{chapter}.\arabic{section}.\arabic{Theorem}}
\renewcommand{\theClaim}{\Alph{chapter}.\arabic{section}.\arabic{Theorem}.\arabic{Claim}}
\renewcommand{\theequation}{\Alph{chapter}.\arabic{section}.\arabic{Theorem}.\arabic{Claim}}
\fi
\ifpaper
\renewcommand{\thesection}{Appendix~\Alph{section}}
\renewcommand{\thesubsection}{\Alph{section}.\arabic{subsection}}
\renewcommand{\theTheorem}{\Alph{section}.\arabic{subsection}.\arabic{Theorem}}
\renewcommand{\theClaim}{\Alph{section}.\arabic{subsection}.\arabic{Theorem}.\arabic{Claim}}
\renewcommand{\theequation}{\Alph{section}.\arabic{subsection}.\arabic{Theorem}.\arabic{Claim}}
\setcounter{section}{0}
\fi

\ifmonog\chapter{Characterization of relatively nef Cartier divisors}\fi
\ifpaper\section{Characterization of relatively nef Cartier divisors}\fi
In this appendix, we consider a characterization of relatively nef Cartier divisors
in terms of asymptotic multiplicities.
Let $k$ be a field and $v$ a complete discrete valuation of $k$. 
Let $\varpi$ be a uniformizing parameter of $k^{\circ}$. Note that
the valuation $v$ is not necessarily non-trivial.

\ifmonog\section{Asymptotic multiplicity}\fi
\ifpaper\subsection{Asymptotic multiplicity}\fi

Let $\XXX$ be a $(d+1)$-dimensional, proper and normal variety over $k^{\circ}$
(cf. Conventions and terminology~\ref{CT:S:variety}), that is,
the Krull dimension of $\XXX$ is $d+1$, $\XXX$ is proper over $\Spec(k^{\circ})$ and
$\XXX$ is integral and normal.
We denote  the rational function field of $\XXX$ by $\Rat(\XXX)$.
\index{\AdelDivSymbol}{0R:Rat(XXX)@$\Rat(\XXX)$}%
Let $\WDiv(\XXX)$ and $\Div(\XXX)$ denote the group of Weil divisors on $\XXX$
and the group of Cartier divisors on $\XXX$, respectively.
\index{\AdelDivSymbol}{0W:WDiv(XXX)@$\WDiv(\XXX)$}%
\index{\AdelDivSymbol}{0Div:Div(XXX)@$\Div(\XXX)$}%
In addition,
for a point $x \in \XXX$,
let $\Div(\XXX;x)$ be the subgroup of $\WDiv(\XXX)$ consisting
of Weil divisors $\DDD$ on $\XXX$ such that
$\DDD = (\phi)$ around $x$ for some $\phi \in \Rat(\XXX)^{\times}$, that is,
$\DDD$ is a Cartier divisor around $x$.
\index{\AdelDivSymbol}{0Div:Div(XXX;x)@$\Div(\XXX;x)$}%
Note that 
\[
\Div(\XXX) \subseteq \Div(\XXX;x) \subseteq \WDiv(\XXX).
\]
For example, if $x$ is a regular point of $\XXX$,
then $\Div(\XXX;x) = \WDiv(\XXX)$.
We set
\[
\begin{cases}
\WDiv(\XXX)_{\RR} := \WDiv(\XXX) \otimes_{\ZZ} \RR, \\
\Div(\XXX)_{\RR} := \Div(\XXX) \otimes_{\ZZ} \RR, \\
\Div(\XXX;x)_{\RR} := \Div(\XXX;x) \otimes_{\ZZ} \RR.
\end{cases}
\]%
\index{\AdelDivSymbol}{0W:WDiv(XXX)_{RR}@$\WDiv(\XXX)_{\RR}$}%
\index{\AdelDivSymbol}{0Div:Div(XXX)_{RR}@$\Div(\XXX)_{\RR}$}%
\index{\AdelDivSymbol}{0Div:Div(XXX;x)_{RR}@$\Div(\XXX;x)_{\RR}$}%
Let $\KK$ be either $\QQ$ or $\RR$ and
let $\Rat(\XXX)^{\times}_{\KK} := (\Rat(\XXX)^{\times}, \times) \otimes_{\ZZ} \KK$.
\index{\AdelDivSymbol}{0R:Rat(XXX)^{times}_{KK}@$\Rat(\XXX)^{\times}_{\KK}$}%
Note that the homomorphism
\[
(\ \cdot\ ) : \Rat(\XXX)^{\times} \to \Div(\XXX)
\]
given by $f \mapsto (f)$ has the natural
extension
\[
(\ \cdot\ ) : \Rat(\XXX)^{\times}_{\KK} \to \Div(\XXX)_{\RR},
\]
that is, for $\phi = f_1^{\otimes a_1} \cdots f_r^{\otimes a_r} \in \Rat(\XXX)^{\times}_{\KK}$ 
($f_1, \ldots, f_r \in \Rat(\XXX)^{\times}$,
$a_1, \ldots, a_r \in \KK$),
\[
(\phi) = a_1(f_1) + \cdots + a_r (f_r).
\]
Let $\DDD$ be an $\RR$-Weil divisor on $\XXX$, that is, $\DDD \in \WDiv(\XXX)_{\RR}$.
We define $\Gamma^{\times}(\XXX, \DDD)$ and $\Gamma^{\times}_{\KK}(\XXX, \DDD)$ to be
\[
\begin{cases}
\Gamma^{\times}(\XXX, \DDD) := \left\{ \phi \in \Rat(\XXX)^{\times} \mid \DDD + (\phi) \geq 0 \right\}, \\[1ex]
\Gamma^{\times}_{\KK}(\XXX, \DDD) := \left\{ \phi \in \Rat(\XXX)^{\times}_{\KK} \mid \DDD + (\phi) \geq 0 \right\}.
\end{cases}
\]%
\index{\AdelDivSymbol}{0G:Gamma^{times}(XXX,DDD)@$\Gamma^{\times}(\XXX, \DDD)$}%
\index{\AdelDivSymbol}{0G:Gamma^{times}_{KK}(XXX,DDD)@$\Gamma^{\times}_{\KK}(\XXX, \DDD)$}%
Let $w : \Rat(\XXX) \to \RR \cup \{ \infty \}$ be an additive discrete valuation over $k$. Namely,
$w$ satisfies the following  conditions:
\begin{enumerate}
\renewcommand{\labelenumi}{(\arabic{enumi})}
\item
$w(f\cdot g) = w(f) + w(g)$ for all $f, g \in \Rat(\XXX)$.

\item
$w(f + g) \geq \min \{ w(f), w(g) \}$ for all $f, g \in \Rat(\XXX)$.

\item
$f = 0$ if and only if $w(f) = \infty$.

\item
$w(a) = -\log v(a) $ for all $a \in k^{\times}$.
\end{enumerate}
Let $\OOO_w$ be the valuation ring of $w$ and $m_w$ its maximal ideal, that is,
\[
\OOO_w = \left\{ f \in \Rat(\XXX) \mid w(f) \geq 0 \right\}\quad\text{and}\quad
m_w = \left\{ f \in \Rat(\XXX) \mid w(f) > 0 \right\}.
\]
Note that $k^{\circ} \subseteq \OOO_w$ and
$k^{\circ\circ} \subseteq m_w$, so that $\OOO_w/m_w$ is a $k^{\circ}/k^{\circ\circ}$-algebra.
We say $w$ is a {\em divisorial valuation of $\Rat(\XXX)$ over $k$} 
\index{\AdelDivSubject}{divisorial valuation@divisorial valuation}%
if $\trdeg_{k^{\circ}/k^{\circ\circ}} \OOO_w/m_w = d$.
For a divisorial valuation $w$ of $\Rat(\XXX)$ over $k$,
there are  a normal variety $\VVV$ over $k^{\circ}$, a vertical prime divisor $\Gamma$ on $\VVV$ and a birational morphism
$\mu : \VVV \to \XXX$ over $\Spec(k^{\circ})$
such that  
$w = a \ord_{\Gamma}$ for some $a \in \RR_{>0}$.
Indeed, it can be shown as follows:
We may assume that $v$ is non-trivial. Otherwise the assertion follows from \cite[Chapter~VI, \S~14, Theorem~31]{ZS}.
We choose $x_1, \ldots, x_d \in \OOO_w$ such that $x_1, \ldots, x_d$ form a transcendental basis
of $\OOO_w/m_w$ over $k^{\circ}/k^{\circ\circ}$. Then $\Rat(\XXX)$ is a finite extension of $k(x_1, \ldots, x_d)$ and
the transcendental degree of $k^{\circ}[x_1, \ldots, x_d]/k^{\circ}[x_1, \ldots, x_d] \cap m_w$ over 
$k^{\circ}/k^{\circ\circ}$ is $d$.
Let $R$ be the normalization of $k^{\circ}[x_1, \ldots, x_d]$ in $\Rat(\XXX)$.
Note that $R$ is finite over $k^{\circ}[x_1, \ldots, x_d]$ because $k^{\circ}$ is excellent.
In addition, $R \subseteq \OOO_w$, 
$R \cap m_w$ is a prime ideal of $R$ and 
$\trdeg_{k^{\circ}/k^{\circ}}(R/R \cap m_w) = d$, 
which prove the assertion.

We denote the set of all divisorial valuations of $\Rat(\XXX)$ over $k$ by $\DVal_k(\XXX)$.
\index{\AdelDivSymbol}{0DVal:DVal_k(XXX)@$\DVal_k(\XXX)$}%
As $\XXX$ is proper and separated over $\Spec(k^{\circ})$, there is a unique morphism $t : \Spec(\OOO_w) \to X$ such that
the following diagram is commutative:
\[
\xymatrix{
\Spec(\Rat(\XXX)) \ar[r] \ar[d] &  \XXX \ar[d] \\
\Spec(\OOO_w) \ar[ru]^t \ar[r] & \Spec(k^{\circ}) \\
}
\]
Let $x$ be the image of the closed point $m_w$ by $t$.
The point $x$ is called the {\em center of $w$ on $\XXX$}.
\index{\AdelDivSubject}{center of valuation@center of valuation}%
Note that $x \in \XXX_{\circ}$ (the central fiber of $\XXX \to \Spec(k^{\circ})$).
For $\DDD \in \Div(\XXX;x)$, $\mult_w(\DDD)$ is defined by $w(f)$,
where $f$ is a local equation of $\DDD$ at $x$.
In this way, we have a map 
\[
\mult_w : \Div(\XXX;x) \to \ZZ.
\]%
\index{\AdelDivSymbol}{0m:mult_w@$\mult_w$}%
It is easy to see that $\mult_w$ is a homomorphism, so that we have the natural extension
\[
\mult_w : \Div(\XXX;x)_{\RR} \to \RR,
\]
that is,
\[
\mult_w(a_1 \DDD_1 + \cdots + a_r \DDD_r) = a_1 \mult_w(\DDD_1) + \cdots + a_r \mult_w(\DDD_r),
\]
where $\DDD_1, \ldots, \DDD_r \in \Div(\XXX;x)$ and $a_1, \ldots, a_r \in \RR$.

For $\DDD \in \Div(\XXX;x)_{\RR}$,
we define $\mu_{\KK, w}(\DDD)$ to be
\[
\mu_{\KK, w}(\DDD) :=
\begin{cases}
\inf \left\{ \mult_w(\DDD + (\phi)) \mid \phi \in \Gamma^{\times}_{\KK}(\XXX,\DDD) \right\} 
& \text{if $\Gamma^{\times}_{\KK}(\XXX,\DDD) \not= \emptyset$}, \\
\infty & \text{otherwise},
\end{cases}
\]
which is called the {\em $\KK$-asymptotic multiplicity of $\DDD$ at $w$}.
\index{\AdelDivSubject}{K-asymptotic multiplicity@$\KK$-asymptotic multiplicity}%
\index{\AdelDivSymbol}{0m:mu_{KK,w}(DDD)@$\mu_{\KK, w}(\DDD)$}%
Here we give one additional definition.
An $\RR$-Cartier divisor $\DDD$ (i.e. $\DDD \in \Div(\XXX)_{\RR}$)
is said to be {\em big}
\index{\AdelDivSubject}{big R-Cartier divisor@big $\RR$-Cartier divisor}%
if $\DDD$ is big on the generic fiber $\XXX \to \Spec(k^{\circ})$.
First let us observe elementary properties of the asymptotic multiplicity.
The arithmetic version can be found in \cite[Proposition~2.1 and Theorem~2.5]{MoArLinB}
and Proposition~\ref{prop:ar:mu:basic}.

\begin{Proposition}
\label{prop:mu:basic}
Let $w$ be a divisorial valuation of $\Rat(\XXX)$ over $k$ and $x$ the center of $w$ on $\XXX$.
For $\DDD, \EEE \in \Div(\XXX;x)_{\RR}$, we have the following:
\begin{enumerate}
\renewcommand{\labelenumi}{(\arabic{enumi})}
\item
$\mu_{\KK, w}(\DDD + \EEE) \leq 
\mu_{\KK, w}(\DDD) + \mu_{\KK, w}(\EEE)$.

\item
If $\DDD \leq \EEE$, then 
$\mu_{\KK, w}(\EEE) \leq \mu_{\KK, w}(\DDD) + \mult_{w}(\EEE - \DDD)$.

\item
$\mu_{\KK, w}(\DDD + (\phi)) = \mu_{\KK, w}(\DDD)$ for 
$\phi \in \Rat(\XXX)^{\times}_{\KK}$.

\item
$\mu_{\KK, w}(a\DDD) = a \mu_{\KK, w}(\DDD)$ for $a \in \KK_{>0}$.

\item
$0 \leq \mu_{\RR, w}(\DDD) \leq \mu_{\QQ, w}(\DDD)$.

\item
Let $\nu : \YYY \to \XXX$ be a birational morphism of proper and normal varieties over $k^{\circ}$.
\begin{enumerate}
\renewcommand{\labelenumii}{(\arabic{enumi}.\arabic{enumii})}
\item
If $\DDD$ is an $\RR$-Cartier divisor on $\XXX$, then
$\mu_{\KK,w}(\nu^*(\DDD)) = \mu_{\KK,w}(\DDD)$.

\item
Let $x$ and $y$ be the centers of $w$ on $\XXX$ and $\YYY$, respectively \rom{(}note that $\nu(y) = x$\rom{)}.
We assume that $\nu$ is an isomorphism over $x$.
Then, for $\DDD' \in \Div(\YYY; y)$,
\[
\mu_{\KK,w}(\nu_*(\DDD')) \leq \mu_{\KK,w}(\DDD').
\]
\end{enumerate}

\item
If $\DDD$ is an $\RR$-Cartier divisor on $\XXX$ and
$\DDD$ is relatively nef with respect to
$\XXX \to \Spec(k^{\circ})$
\rom{(}cf. Conventions and terminology~\rom{\ref{CT:rel:nef}}\rom{)}
and big, then $\mu_{\KK, w}(\DDD) = 0$.

\item
If $\DDD$ is an $\RR$-Cartier divisor on $\XXX$  and
$\DDD$ is big, then $\mu_{\QQ,w}(\DDD) = \mu_{\RR,w}(\DDD)$.
\end{enumerate}
\end{Proposition}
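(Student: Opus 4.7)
My plan is to dispatch (1)--(5) by routine formal manipulations with $\Gamma^{\times}_{\KK}$ and the additivity of $\mult_w$. For (1), $\phi_1\in\Gamma^{\times}_{\KK}(\XXX,\DDD)$ and $\phi_2\in\Gamma^{\times}_{\KK}(\XXX,\EEE)$ yield $\phi_1\phi_2\in\Gamma^{\times}_{\KK}(\XXX,\DDD+\EEE)$ with $\mult_w$ adding; take infima. Property (2) uses the inclusion $\Gamma^{\times}_{\KK}(\XXX,\DDD)\subseteq\Gamma^{\times}_{\KK}(\XXX,\EEE)$ when $\DDD\leq\EEE$ together with $\mult_w(\EEE+(\phi))=\mult_w(\DDD+(\phi))+\mult_w(\EEE-\DDD)$. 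Properties (3) and (4) follow from the bijections $\psi\mapsto\psi\phi^{-1}$ and $\psi\mapsto\psi^{a}$ on $\Gamma^{\times}_{\KK}$, respectively, together with the linearity of $\mult_w$. For (5), the upper bound is immediate from $\Gamma^{\times}_{\QQ}\subseteq\Gamma^{\times}_{\RR}$, while non-negativity of $\mu_{\RR,w}$ relies on Hartogs' lemma for $\RR$-rational functions (Lemma~\ref{lem:Hartogs:R:rat:fun}): a local equation of an effective $\RR$-Cartier divisor at $x$ has the form $\prod u_j^{c_j}$ with $u_j\in\OOO_{\XXX,x}\setminus\{0\}$ and $c_j>0$, and since $\OOO_{\XXX,x}\subseteq\OOO_w$ we get $\mult_w\geq 0$.

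For (6.1), I would observe that a birational $\nu:\YYY\to\XXX$ identifies $\Rat(\YYY)=\Rat(\XXX)$, so $\Gamma^{\times}_{\KK}(\XXX,\DDD)=\Gamma^{\times}_{\KK}(\YYY,\nu^{*}\DDD)$ inside $\Rat(\XXX)^{\times}_{\KK}$; the valuation $w$ is intrinsic to the function field, and the local equations match, so $\mult_w(\nu^{*}\DDD+(\phi))=\mult_w(\DDD+(\phi))$. For (6.2), if $\nu$ is an isomorphism near $x$, then a local equation of $\DDD'$ at $y$ is also a local equation of $\nu_{*}\DDD'$ at $x$; hence $\Gamma^{\times}_{\KK}(\YYY,\DDD')\subseteq\Gamma^{\times}_{\KK}(\XXX,\nu_{*}\DDD')$ with matching $\mult_w$, and the inequality follows by taking infima.

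The substantive content lies in (7) and (8). For (7), my plan is to first reduce, via the description of divisorial valuations given after Definition of $\DVal_k(\XXX)$ and via (6.1), to the case where $w=\ord_{\Gamma}$ for a prime divisor $\Gamma$ on some normal birational model $\mu:\VVV\to\XXX$; then $\mu^{*}\DDD$ is again relatively nef and big. Next, I would apply a relative Fujita-type approximation on $\VVV$ (obtained by combining a relative Kodaira lemma with a further blowing-up so that the approximating $\AAA$ is ample): for any $\epsilon>0$ one produces $n\in\ZZ_{>0}$ and $\phi\in\Rat(\VVV)^{\times}_{\KK}$ such that $n\mu^{*}\DDD+(\phi)=\AAA+\EEE$ with $\AAA$ relatively ample, $\EEE$ effective, and $\mult_{\Gamma}(\EEE)<n\epsilon$. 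Using base-point-freeness at the generic point of $\Gamma$ of a large multiple of $\AAA$, together with (4), one extracts an element of $\Gamma^{\times}_{\KK}(\XXX,\DDD)$ of $\mult_w$ less than $\epsilon$. Part (8) I would handle following the template of the arithmetic version in Proposition~\ref{prop:ar:mu:basic}(6): given $\phi=\prod\phi_j^{a_j}\in\Gamma^{\times}_{\RR}(\XXX,\DDD)$ with $a_j\in\RR$, perturb the exponents by small rationals and absorb the perturbation into a small positive multiple of a fixed auxiliary big $\RR$-divisor, invoking a continuity statement analogous to Claim~\ref{claim:rop:ar:mu:basic:01} (which holds because the volume function on the generic fiber is continuous on the big cone) to pass from $\RR$ to $\QQ$ without increasing $\mult_w$ essentially.

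The main obstacle will be establishing the relative Fujita-type approximation needed in (7). The base is $\Spec(k^{\circ})$ rather than a field, so one cannot merely invoke the classical Fujita approximation on the generic fiber: when the center of $w$ lies in the special fiber (the vertical case), one must argue with the relative ample $\AAA$ and carefully bound $\mult_w$ of the error $\EEE$, in a way that makes genuine use of the excellence of $k^{\circ}$ to guarantee the existence of the required normal birational models and finiteness of normalizations. Handling the vertical and horizontal cases for $\Gamma$ uniformly, while keeping the control on $\mult_w$, is the step where the argument is most delicate.
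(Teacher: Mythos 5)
Parts (1)--(6) and (8) of your proposal match the paper's proof essentially step by step: the infimum manipulations with $\Gamma^{\times}_{\KK}$, the bijections $\psi\mapsto\psi\phi^{-1}$ and $\psi\mapsto\psi^a$, the identification $\Gamma^{\times}_{\KK}(\XXX,\DDD)=\Gamma^{\times}_{\KK}(\YYY,\nu^*(\DDD))$ for (6.1), the pushforward inclusion for (6.2), and, for (8), writing each $(\phi_i)=\AAA_i-\BBB_i$, perturbing real exponents by small rationals, and using convexity plus the continuity statement on the big cone to pass to the limit.

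Part (7) is where you diverge, and the divergence is a genuine gap. You propose to reduce to a divisorial valuation $\ord_\Gamma$, then invoke a relative Fujita-type approximation over $\Spec(k^{\circ})$ to write $n\mu^*\DDD+(\phi)=\AAA+\EEE$ with $\AAA$ relatively ample and $\mult_\Gamma(\EEE)<n\epsilon$. You yourself identify this approximation as the main obstacle, and it is not established anywhere in the paper; it is a substantial asymptotic result in its own right, not a consequence of ``a relative Kodaira lemma with a further blowing-up.'' The paper's actual argument sidesteps Fujita approximation completely. After reducing to the projective case by Chow's lemma and (6.1), it first handles ample $\QQ$-Cartier divisors (pick $\phi$ with $\mult_w(\DDD+(\phi))=0$), then ample $\RR$-Cartier divisors by rational perturbation via (2), and then, in the general relatively nef and big case, it applies a Kodaira-type lemma (Claim~\ref{claim:prop:mu:basic:02}) to produce an ample $\QQ$-Cartier $\AAA$ and $\phi\in\Rat(\XXX)^{\times}_{\QQ}$ with $\EEE:=\DDD-\AAA+(\phi)$ effective. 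The key elementary observation, which your argument lacks and which makes Fujita unnecessary, is that
\[
\AAA + (1-\epsilon)\EEE \;=\; \epsilon\AAA + (1-\epsilon)\bigl(\DDD+(\phi)\bigr)
\]
is \emph{ample} for $0<\epsilon<1$, being a positive combination of an ample and a relatively nef divisor. Then (2), (3), (4) and the ample case give
\[
\mu_{\QQ,w}(\DDD)=\mu_{\QQ,w}(\AAA+\EEE)=\mu_{\QQ,w}\bigl(\AAA+(1-\epsilon)\EEE+\epsilon\EEE\bigr)\leq \mu_{\QQ,w}\bigl(\AAA+(1-\epsilon)\EEE\bigr)+\epsilon\,\mult_w(\EEE)\leq\epsilon\,\mult_w(\EEE),
\]
so $\mu_{\QQ,w}(\DDD)=0$. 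No control on $\mult_w(\EEE)$ beyond finiteness is needed, in contrast to your plan to make $\mult_\Gamma(\EEE)$ small. You should replace the Fujita-based argument with this direct absorption trick; as written, your proof of (7) is incomplete.
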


\begin{proof}
(1)  
If $\Gamma^{\times}_{\KK}(\XXX, \DDD+\EEE) = \emptyset$,
then either $\Gamma^{\times}_{\KK}(\XXX, \DDD) = \emptyset$ or
$\Gamma^{\times}_{\KK}(\XXX, \EEE) = \emptyset$, so that
we may assume that $\Gamma^{\times}_{\KK}(\XXX, \DDD+\EEE) \not= \emptyset$.
Thus we may also assume that $\Gamma^{\times}_{\KK}(\XXX, \DDD) \not= \emptyset$ and
$\Gamma^{\times}_{\KK}(\XXX, \EEE) \not= \emptyset$.
Therefore, the assertion follows because $\phi \psi \in \Gamma^{\times}_{\KK}(\XXX, \DDD+\EEE)$ for all 
$\phi \in \Gamma^{\times}_{\KK}(\XXX,\DDD)$ and
$\psi \in \Gamma^{\times}_{\KK}(\XXX,\EEE)$.

(2) is derived from (1).

(3) 
The assertion follows from the following:
\[
\psi \in \Gamma^{\times}_{\KK}(\XXX, \DDD)\quad\Longleftrightarrow \quad
\psi\phi^{-1} \in \Gamma^{\times}_{\KK}(\XXX, \DDD+ (\phi)).
\]

(4) Note that $\psi \in \Gamma^{\times}_{\KK}(\XXX,\DDD)$ if and only if
$\psi^a \in \Gamma^{\times}_{\KK}(\XXX, a\DDD)$, and that
\[
\mult_{w}(a \DDD + (\psi^a)) = a \mult_{v}(\DDD + (\psi)),
\]
which implies (4).

(5) is obvious.

\medskip
(6.1)
For $\phi \in \Rat(\XXX)^{\times}_{\KK}$,
$\DDD + (\phi)_{\XXX} \geq 0$ if and only if $\nu^*(\DDD) + (\phi)_{\YYY} \geq 0$.
Thus 
\[
\Gamma^{\times}_{\KK}(\XXX, \DDD) = \Gamma^{\times}_{\KK}(\YYY, \nu^*(\DDD)).
\]
Moreover,
\[
\mult_w(\DDD + (\phi)_{\XXX}) = \mult_w(\nu^*(\DDD) + (\phi)_{\YYY}).
\]
Therefore, we have (6.1).

(6.2) Let $\phi \in \Gamma^{\times}_{\KK}(\YYY, \DDD')$, that is,
$\DDD' + (\phi)_{\YYY} \geq 0$. Then
\[
0 \leq \nu_*(\DDD' + (\phi)_{\YYY}) = \nu_*(\DDD') + (\phi)_{\XXX}.
\]
The above observation means that 
$\Gamma^{\times}_{\KK}(\YYY, \DDD') \subseteq \Gamma^{\times}_{\KK}(\XXX, \nu_*(\DDD'))$.
Moreover, by our assumption,
\[
\mult_w(\DDD' + (\phi)_{\YYY}) = \mult_w(\nu_*(\DDD') + (\phi)_{\XXX})
\]
for $\phi \in \Gamma^{\times}_{\KK}(\YYY, \DDD')$.
Thus the assertion follows.

\medskip
(7) Let us begin with the following claim:

\begin{Claim}
\label{claim:prop:mu:basic:01}
If $\XXX$ is projective over $\Spec(k^{\circ})$, then, for any Cartier divisor $\EEE$ on $\XXX$,
there are effective Cartier divisors $\AAA$ and $\BBB$ on $\XXX$ such that
$\EEE = \AAA - \BBB$.
\end{Claim}

\begin{proof}
Let $\HHH$ be an ample Cartier divisor on $\XXX$.
Let $\EEE = e_1 \Gamma_1 + \cdots + e_r \Gamma_r$ be the decomposition as a Weil divisor.
As $\HHH$ is ample, for a sufficiently large $l$,
there is $\phi \in H^0(\XXX, l\HHH) \setminus \{ 0 \}$
such that $\AAA := l\HHH + (\phi)$ is effective
and $\mult_{\Gamma_i}(\AAA)  \geq  e_i$ for $i=1, \ldots, r$.
Thus $\BBB := \AAA - \EEE$ is effective and $\EEE = \AAA - \BBB$.
\end{proof}

Let us go back to the proof of (7).
By (5), it is sufficient to show that $\mu_{\QQ,w}(\DDD) = 0$.
By using Chow's lemma together with (6.1),
we may assume that $\XXX$ is projective over $\Spec(k^{\circ})$.
First we assume that $\DDD$ is an ample $\QQ$-Cartier divisor.
Then there is $\phi \in \Gamma^{\times}_{\QQ}(\XXX, \DDD)$ such that
$\mult_w(\DDD + (\phi)) = 0$, and hence $\mu_{\QQ, w}(\DDD) = 0$.

Next we assume that $\DDD$ is ample. By using Claim~\ref{claim:prop:mu:basic:01},
we can set 
\[
\DDD = a_1 \DDD_1 + \cdots + a_r \DDD_r,
\]
where $\DDD_1, \ldots, \DDD_r$ are effective Cartier divisors and $a_1, \ldots, a_r \in \RR$.
For any $n > 0$, there are $\delta_1, \ldots, \delta_r \in \RR$ such that
$0 < \delta_i < 1/n$ and $a_i - \delta_i \in \QQ$ for all $i$ and that
$(a_1 - \delta_1)\DDD_1 + \cdots + (a_r - \delta_r)\DDD_r$ is ample.
Then, by (2) and the previous case,
\begin{align*}
\mu_{\QQ, w}(\DDD) & 
\leq \mu_{\QQ, w}((a_1 - \delta_1)\DDD_1 + \cdots + (a_r - \delta_r)\DDD_r) + \mult_{\QQ, w}(\delta_1\DDD_1 + \cdots + \delta_r\DDD_r) \\
& \leq \delta_1\mult_w(\DDD_1) + \cdots + \delta_r\mult_w(\DDD_r) \\
& \leq (1/n)(\mult_w(\DDD_1) + \cdots + \mult_w(\DDD_r)),
\end{align*}
which proves the assertion in this case.

Let us consider a general case. 

\begin{Claim}
\label{claim:prop:mu:basic:02}
There are an ample $\QQ$-Cartier divisor $\AAA$ on $\XXX$ and
$\phi \in \Rat(\XXX)^{\times}_{\QQ}$ such that
$\EEE := \DDD - \AAA + (\phi)$ is effective.
\end{Claim}

\begin{proof}
If $v$ is trivial, then $k^{\circ} = k$, so that the assertion is obvious.
We assume that $v$ is non-trivial.
Let $\AAA'$ be an ample Cartier divisor on $\XXX$. Let $X$ be the generic fiber
of $\XXX \to \Spec(k^{\circ})$, $D := \DDD \cap X$ and $A' := \AAA' \cap X$.
Then, as $D$ is big, there are $n \in \ZZ_{>0}$ and $\phi_1 \in \Rat(\XXX)^{\times}$ such that
\[
nD - A' + (\phi_1) \geq 0
\]
on $X$.
Therefore, we can find $m \in \ZZ_{> 0}$ such that $n\DDD - \AAA' + (\phi_1) + m(\varpi) \geq 0$,
and hence,
$\DDD - (1/n)\AAA' + (\phi_1^{1/n} \varpi^{m/n}) \geq 0$, as required.
\end{proof}

As $\AAA + (1-\epsilon)\EEE = \epsilon \AAA + (1- \epsilon)(\DDD + (\phi))$ is ample for $0 < \epsilon < 1$,
by using (2), (3) and the previous assertion in the case where $\DDD$ is ample,
\begin{align*}
\mu_{\QQ, w}(\DDD) & = \mu_{\QQ, w}(\AAA + \EEE + (\phi^{-1})) 
= \mu_{\QQ, w}(\AAA + \EEE) = \mu_{\QQ, w}(\AAA + (1-\epsilon)\EEE + \epsilon \EEE) \\
& \leq \mu_{\QQ, w}(\AAA + (1-\epsilon)\EEE) + \epsilon\mult_w(\EEE) \leq \epsilon\mult_w(\EEE),
\end{align*}
and hence $\mu_{\QQ, w}(\DDD) = 0$.

\medskip
(8) In the same way as (7), we may assume that $\XXX$ is projective over $\Spec(k^{\circ})$.
Let $\phi = \phi_1^{a_1} \cdots \phi_r^{a_r} \in \Gamma^{\times}_{\RR}(\XXX, \DDD)$,
where $\phi_1, \ldots, \phi_r \in \Rat(\XXX)^{\times}$ and $a_1, \ldots, a_r \in \RR$.
By Claim~\ref{claim:prop:mu:basic:01}, for each $i$, there are effective Cartier divisors $\AAA_i$ and $\BBB_i$ on $\XXX$ such that
$(\phi_i) = \AAA_i - \BBB_i$.
Here we consider a map $f : \QQ^r \to \RR \cup \{ \infty \}$ given by
\[
f(t_1, \ldots, t_r) = \mu_{\QQ, w}(\DDD + t_1 \BBB_1 + \cdots + t_r \BBB_r).
\]

\begin{Claim}
\label{claim:prop:mu:basic:03}
$\lim\limits_{\substack{(t_1, \ldots, t_r) \to (0,\ldots, 0) \\ (t_1, \ldots, t_r) \in \QQ^r}} 
f(t_1, \ldots, t_r)
= f(0,\ldots, 0) = \mu_{\QQ,w}(\DDD)$.
\end{Claim}

\begin{proof}
First note that there is a positive rational number $c$ such that
$\DDD + t_1 \BBB_1 + \cdots + t_r \BBB_r$ is big for $(t_1, \ldots, t_r) \in (-c, \infty)^{r} \cap \QQ^r$.
Moreover, by using (1) and (4), we can see that $f$ is a convex function over $\QQ$, that is,
$f(\lambda t + (1-\lambda) t') \leq \lambda f(t) + (1-\lambda) f(t')$ for $t, t' \in \QQ^r$ and
$\lambda \in [0, 1] \cap \QQ$. Therefore, by virtue of \cite[Proposition~1.3.1]{MoArLin},
there is a continuous function $\tilde{f} : (-c, \infty)^r \to \RR$ such that $\tilde{f} = f$
on $(-c, \infty)^r \cap \QQ^r$, which shows the assertion of the claim.
\end{proof}

For each $i=1, \ldots, r$ and $n \in \ZZ_{>0}$, we choose $t_{i,n} \in \RR$ and $t'_{i,n} \in \QQ$ such that
$a_i + t_{i,n} \in \QQ$ and $0 \leq t_{i,n} \leq t'_{i,n} \leq 1/n$.
Then 
\begin{multline*}
\DDD + \sum\nolimits_{i} t'_{i,n} \BBB_i + \sum\nolimits_{i} (a_i + t_{i,n}) (\phi_i) \geq
\DDD + \sum\nolimits_{i} t_{i,n} \BBB_i + \sum\nolimits_{i} (a_i + t_{i,n}) (\phi_i) \\
= \DDD + \sum\nolimits_{i} a_i (\phi_i) + \sum\nolimits_{i} t_{i,n} \AAA_i \geq 0,
\end{multline*}
and hence
\[
\mu_{\QQ,w}\left(\DDD + \sum\nolimits_{i} t'_{i,n} \BBB_i\right) \leq
\mult_{w}\left(\DDD + \sum\nolimits_{i} t'_{i,n} \BBB_i + \sum\nolimits_{i} (a_i + t_{i,n}) (\phi_i)\right).
\]
Thus, taking the limits as $n \to \infty$ together with Claim~\ref{claim:prop:mu:basic:03}, we have
\[
\mu_{\QQ,w}(\DDD) \leq \mult_{w}\left(\DDD + \sum\nolimits_{i} a_i(\phi_i)\right),
\]
which gives rise to $\mu_{\QQ,w}(\DDD) \leq \mu_{\RR,w}(\DDD)$, so that
(8) follows from (5).
\end{proof}

\ifmonog\section{Sectional decomposition}\fi
\ifpaper\subsection{Sectional decomposition}\fi
Let $\XXX$ be a regular and proper variety over $k^{\circ}$.
Let $\DDD$ be an $\RR$-Cartier divisor on $\XXX$.
We assume that $\Gamma^{\times}(\XXX, \DDD) \not= \emptyset$.
We set 
\[
\begin{cases}
\Bs(\DDD) := \bigcap_{\phi \in \Gamma^{\times}(\XXX, \DDD)} \Supp_{\RR}(\DDD + (\phi)), \\
\FFF(\DDD) := \sum_{\Gamma} \inf \left\{ \mult_{\Gamma}(\DDD + (\phi)) \mid \phi \in \Gamma^{\times}(\XXX, \DDD) \right\} \cdot \Gamma, \\
\PPP(\DDD) := \DDD - \FFF(\DDD),
\end{cases}
\]
where $\Gamma$ runs over all prime divisors on $\XXX$.
\index{\AdelDivSymbol}{0B:Bs(DDD)@$\Bs(\DDD)$}%
\index{\AdelDivSymbol}{0F:FFF(DDD)@$\FFF(\DDD)$}%
\index{\AdelDivSymbol}{0P:PPP(DDD)@$\PPP(\DDD)$}%
Note that the above ``$\inf$'' can be replaced by ``$\min$'' because the set
$\left\{ \mult_{\Gamma}(\DDD + (\phi)) \mid \phi \in \Gamma^{\times}(\XXX, \DDD) \right\}$ is discrete in $\RR$.
The decomposition $\DDD = \PPP(\DDD) + \FFF(\DDD)$ is called the {\em sectional decomposition of $\DDD$}.
\index{\AdelDivSubject}{sectional decomposition@sectional decomposition}%

\begin{Lemma}
\label{lem:properties:sectional:decomp}
\begin{enumerate}
\renewcommand{\labelenumi}{(\arabic{enumi})}
\item
The natural inclusion map 
\[
H^0(\XXX, \PPP(\DDD)) \to H^0(\XXX, \DDD)
\]
is bijective.

\item
$\codim \Bs(\PPP(\DDD)) \geq 2$.
\end{enumerate}
\end{Lemma}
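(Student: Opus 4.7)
The plan is to deduce (1) directly from the construction of $\FFF(\DDD)$, and then to derive (2) as a quick consequence of (1) together with the observation that the infimum defining $\mult_{\Gamma}(\FFF(\DDD))$ is actually a minimum.

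For (1), because $\PPP(\DDD) \leq \DDD$, the inclusion $H^0(\XXX, \PPP(\DDD)) \subseteq H^0(\XXX, \DDD)$ is automatic (so injectivity is trivial). To show surjectivity, let $\phi \in H^0(\XXX, \DDD) \setminus \{0\}$. Since $\XXX$ is regular, an $\RR$-Cartier divisor is effective if and only if its multiplicity at every prime divisor is non-negative. For every prime divisor $\Gamma$, the definition of $\FFF(\DDD)$ gives
\[
\mult_{\Gamma}(\FFF(\DDD)) \leq \mult_{\Gamma}(\DDD + (\phi)),
\]
which rearranges to $\mult_{\Gamma}(\PPP(\DDD) + (\phi)) \geq 0$. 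Hence $\PPP(\DDD) + (\phi) \geq 0$, so $\phi \in H^0(\XXX, \PPP(\DDD))$.

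For (2), I will show that no prime divisor $\Gamma$ is contained in $\Bs(\PPP(\DDD))$. Writing $\DDD = a_1 \DDD_1 + \cdots + a_r \DDD_r$ with $\DDD_i \in \Div(\XXX)$ and $a_i \in \RR$, the set of values $\{\mult_{\Gamma}(\DDD + (\phi)) \mid \phi \in \Gamma^{\times}(\XXX,\DDD)\}$ lies in the discrete coset $\sum_i a_i \mult_{\Gamma}(\DDD_i) + \ZZ$ and is bounded below by $0$, so its infimum is attained. Pick $\psi \in \Gamma^{\times}(\XXX, \DDD)$ realising it, i.e.\ $\mult_{\Gamma}(\DDD + (\psi)) = \mult_{\Gamma}(\FFF(\DDD))$. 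By (1), $\psi \in \Gamma^{\times}(\XXX, \PPP(\DDD))$, and
\[
\mult_{\Gamma}(\PPP(\DDD) + (\psi)) = \mult_{\Gamma}(\DDD + (\psi)) - \mult_{\Gamma}(\FFF(\DDD)) = 0.
\]
Thus $\Gamma \not\subseteq \Supp_{\RR}(\PPP(\DDD) + (\psi))$, so $\Gamma \not\subseteq \Bs(\PPP(\DDD))$, giving $\codim \Bs(\PPP(\DDD)) \geq 2$.

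The only subtlety worth flagging is the well-definedness of $\FFF(\DDD)$ as a locally finite $\RR$-divisor: fixing one $\phi_0 \in \Gamma^{\times}(\XXX, \DDD)$, any prime $\Gamma$ not appearing in the finite support of $\DDD + (\phi_0)$ automatically contributes $\inf = 0$, so the sum defining $\FFF(\DDD)$ has finite support. Beyond this bookkeeping and the discreteness argument above, no serious obstacle arises—no cohomological input or vanishing theorem is required, and regularity of $\XXX$ enters only to identify $\RR$-Weil and $\RR$-Cartier divisors so that effectiveness can be tested prime-by-prime.
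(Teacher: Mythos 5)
Your argument is correct and takes essentially the same approach as the paper. The paper phrases (2) as a proof by contradiction while you argue directly, but both hinge on the observation (already recorded just before the lemma) that the infimum defining $\mult_\Gamma(\FFF(\DDD))$ is a minimum, combined with (1) to move between $\Gamma^{\times}(\XXX, \DDD)$ and $\Gamma^{\times}(\XXX, \PPP(\DDD))$.
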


\begin{proof}
By our construction, $\DDD + (\phi) \geq \FFF(\DDD)$ for all $\phi \in \Gamma^{\times}(\XXX, \DDD)$.
Thus (1) follows. Moreover, if $\codim \Bs(\PPP(\DDD)) = 1$,
then there is a prime divisor $\Gamma$ such that 
\[
\mult_{\Gamma}(\PPP(\DDD) + (\phi)) > 0
\]
for all $\phi \in \Gamma^{\times}(\XXX, \DDD)$, that is,
$\mult_{\Gamma}(\DDD + (\phi)) > \mult_{\Gamma}(\FFF(\DDD))$ for all $\phi \in \Gamma^{\times}(\XXX, \DDD)$,
which is a contradiction.
\end{proof}

From now on, we assume that $\Gamma^{\times}_{\QQ}(\XXX, \DDD) \not= \emptyset$.
We set 
\[
N(\DDD) := \{ m \in \ZZ_{\geq 1} \mid \Gamma^{\times}(\XXX, m\DDD) \not= \emptyset \}.
\]%
\index{\AdelDivSymbol}{0N:N(DDD)@$N(\DDD)$}%
Note that $N(\DDD) \not= \emptyset$.
For $m \in N(\DDD)$, we set $\FFF_m := \FFF(m\DDD)$ and $\PPP_m := \PPP(m\DDD)$.

\begin{Lemma}
\label{lem:properties:sectional:decomp:m}
\begin{enumerate}
\renewcommand{\labelenumi}{(\arabic{enumi})}
\item
$\FFF_{m} + \FFF_{m'} \geq \FFF_{m+m'}$ for $m, m' \in N(\DDD)$.
In particular,
\[
\inf_{m \in N(\DDD)} \left\{ \frac{\mult_{w}(\FFF_m)}{m} \right\} = 
\lim_{\substack{m \to \infty,\\  m \in N(\DDD)}} \frac{\mult_{w}(\FFF_m)}{m}
\]
for all $w \in \DVal_k(\XXX)$ \rom{(}cf. \cite[Chapter~3, {\bf 98}]{PS}\rom{)}.

\item ${\displaystyle \mu_{\QQ, \Gamma}(\DDD) = \inf_{m \in N(\DDD)} \left\{ \frac{\mult_{\Gamma}(\FFF_m)}{m} \right\}}$
for all prime divisors $\Gamma$ on $\XXX$.
\end{enumerate}
\end{Lemma}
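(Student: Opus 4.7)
The plan is to prove (1) and (2) in that order, extracting both from one elementary observation: if $\phi\in\Gamma^{\times}(\XXX,m\DDD)$ and $\phi'\in\Gamma^{\times}(\XXX,m'\DDD)$, then $\phi\phi'\in\Gamma^{\times}(\XXX,(m+m')\DDD)$, since $(\phi\phi')+(m+m')\DDD = (m\DDD+(\phi)) + (m'\DDD+(\phi'))\geq 0$.

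For (1), I would fix a prime divisor $\Gamma$ on $\XXX$ and use the above to write
\[
\mult_{\Gamma}(\FFF_{m+m'}) \;\leq\; \mult_{\Gamma}\big((m+m')\DDD+(\phi\phi')\big) \;=\; \mult_{\Gamma}(m\DDD+(\phi)) + \mult_{\Gamma}(m'\DDD+(\phi')).
\]
Taking the infimum over $\phi$ and $\phi'$ independently and summing over all $\Gamma$ gives $\FFF_{m+m'}\leq \FFF_m+\FFF_{m'}$ as Weil divisors. Since $\XXX$ is regular, every Weil divisor is Cartier, so for any $w\in\DVal_k(\XXX)$ the map $\mult_w$ is an additive $\ZZ$-valued functional on $\WDiv(\XXX)$, and $\mult_w(\FFF_m)\geq 0$ because $\FFF_m\geq 0$ and $w$ is a valuation centered on $\XXX$. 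The sequence $\{\mult_w(\FFF_m)\}_{m\in N(\DDD)}$ is therefore subadditive and nonnegative on the subsemigroup $N(\DDD)\subseteq\ZZ_{\geq 1}$, so Fekete's lemma (\cite[Chapter~3, {\bf 98}]{PS}) yields
\[
\inf_{m\in N(\DDD)}\frac{\mult_w(\FFF_m)}{m} \;=\; \lim_{\substack{m\to\infty \\ m\in N(\DDD)}}\frac{\mult_w(\FFF_m)}{m}.
\]

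For (2), I would prove the two inequalities separately. For the inequality $\mu_{\QQ,\Gamma}(\DDD)\geq \inf_m \mult_{\Gamma}(\FFF_m)/m$, take any $\phi\in\Gamma^{\times}_{\QQ}(\XXX,\DDD)$. Writing $\phi=\prod \phi_i^{a_i}$ with $a_i\in\QQ$, clear denominators to find $m\in\ZZ_{>0}$ with $ma_i\in\ZZ$; then $\phi^m\in\Rat(\XXX)^{\times}$ satisfies $m\DDD+(\phi^m)=m(\DDD+(\phi))\geq 0$, so $\phi^m\in\Gamma^{\times}(\XXX,m\DDD)$ and in particular $m\in N(\DDD)$. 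Hence
\[
\mult_{\Gamma}(\DDD+(\phi)) \;=\; \frac{1}{m}\mult_{\Gamma}(m\DDD+(\phi^m)) \;\geq\; \frac{\mult_{\Gamma}(\FFF_m)}{m},
\]
and taking infimum over $\phi$ gives the desired bound. Conversely, for each $m\in N(\DDD)$ the set $\{\mult_{\Gamma}(m\DDD+(\psi))\mid \psi\in\Gamma^{\times}(\XXX,m\DDD)\}$ is a discrete subset of $\ZZ_{\geq 0}$, so the infimum defining $\mult_{\Gamma}(\FFF_m)$ is attained by some $\psi_m\in\Gamma^{\times}(\XXX,m\DDD)$. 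Then $\psi_m^{1/m}\in\Rat(\XXX)^{\times}_{\QQ}$ lies in $\Gamma^{\times}_{\QQ}(\XXX,\DDD)$, and $\mult_{\Gamma}(\DDD+(\psi_m^{1/m})) = \mult_{\Gamma}(\FFF_m)/m$, giving $\mu_{\QQ,\Gamma}(\DDD)\leq \mult_{\Gamma}(\FFF_m)/m$; taking infimum over $m$ completes the proof.

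There is no genuine obstacle: the argument is essentially formal. The only point requiring minor care is the legitimacy of $\mult_w$ on $\FFF_m$ for a divisorial valuation $w$, which is ensured by the regularity of $\XXX$ (so $\WDiv(\XXX)=\Div(\XXX)$ and thus $\Div(\XXX;x)_{\RR}=\WDiv(\XXX)_{\RR}$), and the extraction of the $m$-th root in part (2), which works because $\Rat(\XXX)^{\times}_{\QQ}$ is by construction divisible.
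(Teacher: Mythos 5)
Your proof is correct and takes the same route as the paper: part (1) comes from $\phi\phi'\in\Gamma^{\times}(\XXX,(m+m')\DDD)$ together with Fekete's lemma, and part (2) is a reformulation of the identity $\Gamma^{\times}_{\QQ}(\XXX,\DDD)=\bigcup_{m\in N(\DDD)}\bigl(\Gamma^{\times}(\XXX,m\DDD)\bigr)^{1/m}$, which your two inequalities spell out. One small slip: since $\DDD$ is an $\RR$-Cartier divisor, the set $\{\mult_{\Gamma}(m\DDD+(\psi))\mid\psi\in\Gamma^{\times}(\XXX,m\DDD)\}$ lies in $\mult_{\Gamma}(m\DDD)+\ZZ$ rather than in $\ZZ_{\geq 0}$; it is nevertheless a discrete subset of $\RR_{\geq 0}$, so the minimum is attained as you use.
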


\begin{proof}
(1) is obvious because $\phi \phi' \in \Gamma^{\times}(\XXX, (m+m')\DDD)$ 
for all $\phi \in \Gamma^{\times}(\XXX, m\DDD)$ and
$\phi' \in \Gamma^{\times}(\XXX, m'\DDD)$.
For (2), note that
\[
\Gamma^{\times}_{\QQ}(\XXX, \DDD) = \bigcup_{m \in N(\DDD)} \left( \Gamma^{\times}(\XXX, m\DDD) \right)^{1/m}.
\]
\end{proof}

\ifmonog\section{Characterization in terms of $\mu_w$}\fi
\ifpaper\subsection{Characterization in terms of $\mu_w$}\fi

The following theorem is a characterization of relatively nef Cartier divisors in terms of the asymptotic multiplicity.

\begin{Theorem}
\label{thm:char:nef:big:alg:var}
Let $\XXX$ be a $(d+1)$-dimensional, proper and normal variety over $k^{\circ}$ and
let $\DDD$ be an $\RR$-Cartier divisor on $\XXX$.
If $\Gamma^{\times}_{\QQ}(\XXX, \DDD) \not= \emptyset$ and $\mu_{\QQ, w}(\DDD) = 0$ for all $w \in \DVal_k(\XXX)$,
then $\DDD$ is relatively nef.
In particular, if $\DDD$ is big, then the following are equivalent:
\begin{enumerate}
\renewcommand{\labelenumi}{(\arabic{enumi})}
\item
$\DDD$ is relatively nef with respect to $\XXX \to \Spec(k^{\circ})$.

\item
$\mu_{\QQ, w}(\DDD) = 0$ for all $w \in \DVal_k(\XXX)$.

\item
$\mu_{\RR, w}(\DDD) = 0$ for all $w \in \DVal_k(\XXX)$.
\end{enumerate}
\end{Theorem}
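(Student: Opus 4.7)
The strategy is to prove the main implication by contraposition: I assume $\DDD$ is not relatively nef and produce a divisorial valuation $w$ with $\mu_{\QQ,w}(\DDD) > 0$. Proposition~A.1.1~(6.1) shows that $\mu_{\QQ,w}$ is invariant under pullback by a birational morphism, and relative nefness is likewise invariant under birational modification of $\XXX$ (contracted vertical curves contribute zero, and non-contracted ones pair with the pullback in the same way as with $\DDD$). Using this together with Chow's lemma, I may assume from the start that $\XXX$ is projective over $\Spec(k^{\circ})$.

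Assume $\DDD$ is not relatively nef, so there is a vertical curve $C$ on $\XXX$ with $(\DDD \cdot C) = -\alpha < 0$. Let $\sigma\colon \YYY \to \XXX$ be the normalization of the blow-up of $\XXX$ along an ideal whose support is $C$, pick an irreducible component $E \subseteq \sigma^{-1}(C)$ whose image dominates $C$, and set $w := \ord_E \in \DVal_k(\XXX)$. The target is to find a constant $c > 0$, depending only on the geometry of $E \to C$ but independent of $m$ and $\phi$, with
\[
\mult_w\bigl(m\DDD + (\phi)\bigr) \geq c\,m
\]
for every $m \in N(\DDD)$ and every $\phi \in \Gamma^{\times}(\XXX, m\DDD)$. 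This would give $\mu_{\QQ,w}(\DDD) \geq c > 0$, contradicting the hypothesis.

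The main obstacle is the linear lower bound on $a := \mult_w(\sigma^{*}(m\DDD) + (\phi))$. I plan an iterated restriction-to-$E$ argument in the spirit of Nakayama's $\sigma$-decomposition: writing $D' := \sigma^{*}(m\DDD) + (\phi) = aE + F$ with $E \not\subseteq \Supp(F)$, if $a \leq k-1$ then $D' - (k-1)E$ is effective and does not contain $E$ in its support, so its restriction to $E$ is a nonzero effective Cartier divisor in the class $\pi_E^{*}(m\DDD|_C) \otimes \OOO_E((k-1)H)$, where $\pi_E\colon E \to C$ is the structure morphism (generically a projective bundle) and $H$ is the tautological class. Pushing forward along $\pi_E$ gives a nonzero section of $m\DDD|_C \otimes \Sym^{k-1}\VVV$ on the normalization of $C$, for an appropriate vector bundle $\VVV$ of rank $d$. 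Since $\deg(m\DDD|_C) = -m\alpha$ is very negative, a Harder--Narasimhan/slope comparison on the curve $C^{\nu}$ forces this cohomology to vanish as soon as $(k-1)\mu^{+}(\VVV)$ stays strictly below $m\alpha$, where $\mu^{+}$ is the maximum slope of $\VVV$. Consequently $a$ itself must exceed a linear-in-$m$ threshold, which furnishes the required constant $c$.

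Several technical points need care: $\XXX$ and $E$ may be singular, which is handled by passing to normalizations at each stage and reapplying Proposition~A.1.1~(6.1); and the degenerate case $\mu^{+}(\VVV) \leq 0$ is self-excluded, since it would force $a = \infty$ and hence $\Gamma^{\times}(\XXX, m\DDD) = \emptyset$ for all $m$, violating the standing assumption $\Gamma^{\times}_{\QQ}(\XXX,\DDD) \neq \emptyset$. For the ``in particular'' equivalences under bigness, $(1)\Rightarrow(2)$ is Proposition~A.1.1~(7), $(2)\Leftrightarrow(3)$ combines~(5) and~(8) of the same proposition, and $(2)\Rightarrow(1)$ is the main implication established above; bigness automatically supplies $\Gamma^{\times}_{\QQ}(\XXX,\DDD) \neq \emptyset$.
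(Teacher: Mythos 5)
Your approach is genuinely different from the paper's: where the paper intersects $\pi^*(m\DDD)$ with a curve $C'$ cut out on the exceptional $\PP^{d-1}$-bundle $\EEE$ by general hyperplanes and controls the $\EEE$-contribution of the sectional fixed part $\FFF_m$ via $\mu_{\QQ,\ord_{\EEE}}(\DDD)=0$ and Lemma~\ref{lem:properties:sectional:decomp:m}, you restrict $F=D'-aE$ to $E$, push forward to $C^{\nu}$, and run a Harder--Narasimhan slope estimate to force $a\gtrsim m$. The two arguments are thematically parallel (both localize at $w=\ord_E$) but mechanically distinct, intersection-theoretic versus cohomological.

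However, there is a genuine gap in your reduction step. You invoke Chow's lemma, which only yields a projective birational model; it does \emph{not} yield a regular one. Since $k^{\circ}$ may have mixed or positive residue characteristic, resolution of singularities is unavailable, and the paper instead uses de Jong's alteration theorem (Claim~\ref{claim:thm:char:nef:alg:var:02}). An alteration is dominant and generically finite but not birational, so Proposition~\ref{prop:mu:basic}~(6.1) does not apply; that is precisely why the paper first proves Claim~\ref{claim:thm:char:nef:alg:var:01}, which transfers the hypothesis $\mu_{\QQ,w}(\DDD)=0$ along dominant morphisms with algebraic function-field extensions. Your argument requires regularity in an essential way: for $d\geq 2$, a normal $\XXX$ can be singular along all of the codimension-$d$ curve $C$, and then $E\to C$ has no projective-bundle structure even generically, so $\pi_{E*}\OOO_E(aH)$ need not be $\Sym^{a}\VVV$ for a rank-$d$ bundle $\VVV$ and the slope bound collapses. (The paper also further blows up closed points so that $C$ itself becomes regular, a point you omit.) Two secondary issues: the sentence ``if $a\leq k-1$ then $D'-(k-1)E$ is effective'' is false unless $a=k-1$; the correct statement is that $F=D'-aE$ is effective with $E\not\subseteq\Supp F$, and one should restrict $F$ to $E$. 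And the slope inequality $\mu^{\max}(\Sym^{a}\VVV)\leq a\,\mu^{+}(\VVV)$ is clean only in characteristic zero; over a positive-characteristic residue field one needs a bound via $L^{\max}$ (in Langer's sense) or similar, which should be addressed before the argument is complete.
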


\begin{proof}
Let us begin with the following claim:

\begin{Claim}
\label{claim:thm:char:nef:alg:var:01}
Let $\YYY$ be a normal and proper variety over $k^{\circ}$ and
let $\nu : \YYY \to \XXX$ be a dominant morphism over $\Spec(k^{\circ})$ such that
$\Rat(\YYY)$ is algebraic over $\Rat(\XXX)$.
If
\[
\Gamma^{\times}_{\QQ}(\XXX, \DDD) \not= \emptyset
\quad\text{and}\quad
\mu_{\QQ, w}(\DDD) = 0
\]
for all $w \in \DVal_k(\XXX)$, then
$\mu_{\QQ, w'}(\nu^*(\DDD)) = 0$ for all $w' \in \DVal_k(\YYY)$.
\end{Claim}

\begin{proof}
Let $w'$ be a divisorial valuation of $\Rat(\YYY)$ over $k$ and let $w$ be the restriction of $w'$ to $\Rat(\XXX)$.
As $\Rat(\YYY)$ is algebraic over $\Rat(\XXX)$, we can see that $\OOO_{w'}/m_{w'}$ is algebraic over $\OOO_w/m_w$, so that
$w$ is a divisorial valuation of $\Rat(\XXX)$ over $k$.
Then,
for an $\RR$-Cartier divisor $\LLL$ on $\XXX$, $\mult_{w'}(\nu^*(\LLL)) = \mult_w(\LLL)$.
Thus, 
\begin{align*}
\mu_{\QQ, w'}(\nu^*(\DDD)) & 
= \inf \left\{ \mult_{w'}(\nu^*(\DDD) + (\psi)) \mid \psi \in \Gamma^{\times}_{\QQ}(\YYY, \nu^*(\DDD)) \right\} \\
& \leq \inf \left\{ \mult_{w'}(\nu^*(\DDD + (\phi))) \mid \phi \in \Gamma^{\times}_{\QQ}(\XXX, \DDD) \right\} \\
& = \inf \left\{ \mult_w(\DDD + (\phi)) \mid \phi \in \Gamma^{\times}_{\QQ}(\XXX, \DDD) \right\} = \mu_{\QQ, w}(\DDD),
\end{align*}
which prove the claim.
\end{proof}

\begin{Claim}
\label{claim:thm:char:nef:alg:var:02}
We may assume that
$\XXX$ is regular and projective over $\Spec(k^{\circ})$.
\end{Claim}

\begin{proof}
We assume that the theorem holds if $\XXX$ is regular and projective.
By de Jong's theorem \cite{deJong},
there is a regular and projective variety $\YYY$ over $k^{\circ}$ together with
a dominant morphism $\mu :  \YYY \to \XXX$ over $\Spec(k^{\circ})$ such that
$\Rat(\YYY)$ is algebraic over $\Rat(\XXX)$.
By the previous claim and our assumption, 
we can see that $\nu^*(\DDD)$ is relatively nef, so that
$\DDD$ is also relatively nef.
\end{proof}

Let $C$ be an irreducible and reduced curve on  $\XXX_{\circ}$.
Let us see $(\DDD \cdot C) \geq 0$.
Clearly we may assume that $\DDD$ is effective  and $C \subseteq \Supp_{\RR}(\DDD)$.
There is a  succession of blowing-ups $\rho : \tilde{\XXX} \to \XXX$ at closed points such that
the strict transform $\tilde{C}$ of $C$ is regular (cf. \cite[Theorem~1.101]{KoSing}).
If $(\rho^*(\DDD) \cdot \tilde{C}) \geq 0$, then 
$(\DDD \cdot C) = (\rho^*(\DDD) \cdot \tilde{C})  \geq 0$,
so that we may assume that $C$ is regular.

Let $\pi : \YYY \to \XXX$ be the blowing-up along $C$ and
let $\EEE$ be the exceptional set of $\pi$. 
Let $\DDD'$ be the strict transform of $\DDD$.
Then $\pi^*(\DDD) = \DDD' + e \EEE$ for some $e \in \ZZ_{>0}$.
Let $H$ be a very ample divisor on $\EEE$.
Choosing general members $H_1, \ldots, H_{d-1}$ of $| H |$,
we set $C' = H_1 \cap \cdots \cap H_{d-1}$ and $\pi_*(C') = aC$ for some $a \in \ZZ_{>0}$.
As $H_1, \ldots, H_{d-1}$ are general, $C' \not\subseteq \Supp_{\RR}(\DDD') \cap \EEE$.
If $(\EEE \cdot C') = (\rest{\OOO_{\XXX}(\EEE)}{\EEE} \cdot H^{d-1}) \geq 0$, then
\[
a(\DDD \cdot C) = (\pi^*(\DDD) \cdot C') = (\DDD' \cdot C') + e(\EEE \cdot C') \geq 0.
\]
Thus we may assume $(\rest{\OOO_{\XXX}(\EEE)}{\EEE} \cdot H^{d-1}) < 0$.

Let $m \pi^*(\DDD) = \PPP_m + \FFF_m$ be the sectional decomposition of $m \pi^*(\DDD)$.
By virtue of (1) in Lemma~\ref{lem:properties:sectional:decomp:m}, 
there are finitely many prime divisors $\Gamma_1, \ldots, \Gamma_r$ of $\YYY$ such that
\[
\FFF_m = a_{m,1} \Gamma_1 + \cdots + a_{m, r} \Gamma_r
\]
for some $a_{m,1},\ldots, a_{m,r} \in \RR_{\geq 0}$. 
First we assume that $\Gamma_i \not= \EEE$ for all $i=1, \ldots, r$.
Choosing general members $H_1, \ldots, H_{d-2} \in |H|$ if necessarily, we have
\[
C' \not\subseteq \left( \Bs(\PPP_m) \cup \Gamma_{1} \cup \cdots \cup \Gamma_r \right) \cap \EEE,
\]
and hence
\[
a m (\DDD \cdot C) =  (\pi^*(m \DDD) \cdot C') =  (\PPP_m \cdot C') + \sum_{i=1}^r a_{m,i} (\Gamma_i \cdot C') \geq 0.
\]
Therefore, we may assume that $\Gamma_1 = \EEE$.
By (2) in Lemma~\ref{lem:properties:sectional:decomp:m} and our assumption,
${\displaystyle \lim_{m \to \infty} a_{m,1}/m = 0}$.
For any $\epsilon > 0$, we choose $m$ such that
$0 \leq a_{m, 1}/m \leq \epsilon$. As before,
choosing general members $H_1, \ldots, H_{d-2} \in |H|$ if necessarily,
\[
C' \not\subseteq \left( \Bs(\PPP_m) \cup \Gamma_{2} \cup \cdots \cup \Gamma_r \right) \cap \EEE
\]
holds, so that
$(\PPP_m \cdot C') \geq 0$ and $(\Gamma_{i} \cdot C') \geq 0$ for $i=2, \ldots, r$.
Thus
\[
a(\DDD \cdot C) =  (\pi^*(\DDD) \cdot C') \geq (a_{m, 1}/m) (\EEE \cdot C') \geq \epsilon 
(\rest{\OOO_{\XXX}(\EEE)}{\EEE} \cdot H^{d-2}).
\]
Therefore $(\DDD \cdot C) \geq 0$ because $\epsilon$ is an arbitrary small number.

\medskip
Finally,
the last assertion of the theorem follows from the first assertion, (7) and (8) in Proposition~\ref{prop:mu:basic}.
\end{proof}

As a corollary, we have the following characterization of relatively nef $\RR$-Cartier divisors.
It is proved in \cite[Theorem~5.11 and Lemma~5.12]{BFJ} in the case where the characteristic of $k^{\circ}/k^{\circ\circ}$ is zero.
In general, it seems to be proved by Thuillier.
Note that our proof is based on de Jong's alteration.

\begin{Corollary}
\label{cor:nef:approx:nef}
Let $X$ be a proper and normal variety over $k$ and let $L$ be an $\RR$-Cartier divisor on $X$.
Let $\XXX$ be a normal model of $X$ over $\Spec(k^{\circ})$
and let $\LLL$ be an $\RR$-Cartier on $\XXX$ with $\LLL \cap X = L$. We assume there is a sequence 
$\left\{ (\XXX_n, \LLL_n) \right\}_{n=1}^{\infty}$
with the following properties:
\begin{enumerate}
\renewcommand{\labelenumi}{(\arabic{enumi})}
\item
$\XXX_n$ is a normal model of $X$ over $\Spec(k^{\circ})$.

\item
$\LLL_n$ is a relatively nef $\RR$-Cartier divisor on $\XXX_n$ such that $\LLL_n \cap X = L$.

\item
$\lim_{n\to\infty} \mult_{w}(\LLL_n) = \mult_{w}(\LLL)$ for
all $w \in \DVal_k(\XXX)$.
\end{enumerate}
Then $\LLL$ is relatively nef.
\end{Corollary}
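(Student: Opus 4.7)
\emph{Proof proposal.} The plan is to reduce to the case where $\LLL$ is big, and then verify the two hypotheses of Theorem~\ref{thm:char:nef:big:alg:var} via the approximating sequence together with a joint-multiplicity-control trick.

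\emph{Reduction to the big case.} Relative nefness is preserved and reflected by pullback under birational morphisms between proper normal varieties over $k^\circ$, so by Chow's lemma I may assume $\XXX$ is projective. Passing to normal common refinements, I may also assume that each $\XXX_n$ carries a birational morphism $\pi_n : \XXX_n \to \XXX$; both operations preserve hypotheses (1)--(3), since relative nefness and multiplicities along divisorial valuations are invariant under birational pullback. Fix an ample Cartier divisor $\AAA$ on $\XXX$. Then the sequence $(\XXX_n, \LLL_n + \epsilon \pi_n^*(\AAA))$ satisfies (1)--(3) for $(\XXX, \LLL + \epsilon\AAA)$, whose generic fiber is ample, hence big. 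If $\LLL + \epsilon\AAA$ can be shown to be relatively nef for every $\epsilon > 0$, then $(\LLL \cdot C) \geq -\epsilon(\AAA \cdot C)$ for every vertical curve $C$, and letting $\epsilon \to 0$ concludes. I may therefore assume $\LLL$ is big.

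\emph{Verifying the hypotheses of Theorem~\ref{thm:char:nef:big:alg:var}.} Since $L = \LLL \cap X$ is big, there is $\phi \in \Rat(X)^{\times}$ with $L + (\phi) \geq 0$; multiplying by a large power of $\varpi$ yields $\LLL + (\phi\varpi^m) \geq 0$ on $\XXX$, so $\Gamma^{\times}_{\QQ}(\XXX, \LLL) \neq \emptyset$. To check $\mu_{\QQ, w}(\LLL) = 0$ for a fixed $w \in \DVal_k(\XXX)$, let $W := \{w, \ord_{\Gamma_1}, \ldots, \ord_{\Gamma_r}\}$ where $\Gamma_1, \ldots, \Gamma_r$ are the irreducible components of $\XXX_\circ$. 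Each $\LLL_n$ is relatively nef and big, so $\mu_{\QQ, w'}(\LLL_n) = 0$ for every $w' \in W$ by Proposition~\ref{prop:mu:basic}(7). Given $\eta > 0$ and $n$ large, pick a common $m \geq 1$ and $s_{w'} \in \Gamma^{\times}(\XXX_n, m\LLL_n)$ with $\mult_{w'}(m\LLL_n + (s_{w'})) < m\eta$ for each $w' \in W$, and set $\phi_n := (\sum_{w'\in W} a_{w'} s_{w'})^{1/m}$ for generic units $a_{w'} \in k^{\times}$ (possible since $k$ is infinite when $v$ is non-trivial); the valuation of a generic $k$-linear combination equals the minimum of the summand valuations, so $\mult_{w'}(\LLL_n + (\phi_n)) < \eta$ simultaneously for all $w' \in W$. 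The vertical $\RR$-divisor $E_n := \pi_n^*(\LLL) - \LLL_n$ on $\XXX_n$ satisfies $\pi_n^*(\LLL) + (\phi_n) \geq E_n$, and pushing down yields
\[
\LLL + (\phi_n) \geq (\pi_n)_*(E_n) = \sum_i \bigl(\mult_{\ord_{\Gamma_i}}(\LLL) - \mult_{\ord_{\Gamma_i}}(\LLL_n)\bigr)\Gamma_i,
\]
whose coefficients tend to $0$ by hypothesis~(3). Setting $m_n := \min_i \mult_{\Gamma_i}(\LLL + (\phi_n))/\mult_{\Gamma_i}(\XXX_\circ) \in \QQ$ gives $\phi'_n := \phi_n \varpi^{-m_n} \in \Gamma^{\times}_{\QQ}(\XXX, \LLL)$, and the joint bound $\mult_{\ord_{\Gamma_i}}(\LLL_n + (\phi_n)) < \eta$ combined with~(3) forces $|m_n| = O(\eta)$ for large $n$. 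Expanding
\[
\mult_w(\LLL + (\phi'_n)) = \bigl(\mult_w(\LLL) - \mult_w(\LLL_n)\bigr) + \mult_w(\LLL_n + (\phi_n)) - m_n\, w(\varpi) = O(\eta),
\]
and letting $\eta \to 0$ gives $\mu_{\QQ, w}(\LLL) = 0$. Theorem~\ref{thm:char:nef:big:alg:var} then yields that $\LLL$ is relatively nef.

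\emph{Main obstacle.} The crucial technical step is the joint approximation: the individual vanishings $\mu_{\QQ, w'}(\LLL_n) = 0$ for $w' \in W$ must be upgraded to the existence of a single rational function $\phi_n$ realizing $\mult_{w'}(\LLL_n + (\phi_n)) < \eta$ simultaneously for every $w' \in W$. The generic-linear-combination trick achieves this precisely because $k$ is infinite when $v$ is non-trivial (the trivial-$v$ case is vacuous since then $\XXX_\circ = \emptyset$). With this uniform control in place, the vertical correction $\varpi^{-m_n}$ needed to push $\phi_n$ back into $\Gamma^{\times}_{\QQ}(\XXX, \LLL)$ introduces only an $O(\eta)$ error in $\mult_w$, and Theorem~\ref{thm:char:nef:big:alg:var} finishes the proof.
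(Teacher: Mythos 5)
Your overall strategy coincides with the paper's: reduce to the big case by adding $\epsilon\AAA$, and apply Theorem~\ref{thm:char:nef:big:alg:var} by showing $\mu_{\QQ,w}(\LLL)=0$ for all $w$. The method you use to verify the latter, however, is genuinely different — you construct explicit sections and correct them by a power of $\varpi$, whereas the paper stays in the abstract $\mu$-calculus, chaining Proposition~\ref{prop:mu:basic}~(2), (6.2) and~(7) together with the pushforward $(\nu_n)_*(\LLL_n)$ and a $\delta\XXX_\circ$ cushion. That route never needs a single rational function controlled at several valuations at once.

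There is a genuine gap in the central device of your construction. To form $\phi_n := (\sum_{w'} a_{w'} s_{w'})^{1/m}$ with $\sum a_{w'} s_{w'}$ still a section of $m\LLL_n$, the coefficients must lie in $\Gamma(\XXX_n,\OOO_{\XXX_n}) = k^\circ$, not arbitrary $k^\times$; otherwise negative powers of $\varpi$ introduce poles along $(\XXX_n)_\circ$ and the divisor $m\LLL_n + (\sum a_{w'}s_{w'})$ fails to be effective. With $a_{w'}\in k^{\circ\times}$, the ``generic linear combination'' argument depends only on the residues $\bar a_{w'}\in(k^\circ/k^{\circ\circ})^\times$, so the relevant field is the residue field $k^\circ/k^{\circ\circ}$, not $k$. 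In the arithmetic situation this residue field is typically finite (e.g.\ $\FF_p$ for $k^\circ = \widehat{O_{K,P}}$), and over a finite field a vector space can be covered by finitely many proper subspaces; the ``valuation of a generic combination equals the minimum'' claim therefore fails in general, and ``$k$ is infinite when $v$ is non-trivial'' is not the right safeguard. (A minor nit in the same paragraph: $m_n$ need not lie in $\QQ$ because $\LLL$ is an $\RR$-divisor; one should replace it by a nearby rational, which is harmless.)

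What is particularly unfortunate is that the joint approximation was never necessary. Once $\phi_n$ is any element of $\Gamma^\times_{\QQ}(\XXX_n,\LLL_n)$ with $\mult_w(\LLL_n + (\phi_n)) < \eta$ for the single valuation $w$ under consideration, the correction $\phi'_n = \phi_n\varpi^{-m_n}$ automatically yields $\LLL + (\phi'_n)\ge 0$, and then $\mult_w(\LLL + (\phi'_n))\ge 0$ by Hartogs' lemma (Lemma~\ref{lem:Hartogs:R:rat:fun}); this gives the upper bound $m_n\le\mult_w(\LLL + (\phi_n))/w(\varpi)\le(\eta + o(1))/w(\varpi)$ for free, with no control of $\mult_{\Gamma_i}(\LLL_n+(\phi_n))$ needed. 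Combined with the lower bound $m_n\ge -o(1)$ that you already derived from hypothesis~(3), this yields $\mult_w(\LLL + (\phi'_n))=O(\eta)+o(1)$ and the argument closes. So the proof can be repaired by simply deleting the joint approximation; but as written it relies on a step that does not hold.
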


\begin{proof}
If $v$ is trivial, then the assertion is obvious, so that we assume that $v$ is non-trivial.
Clearly we may assume that there is a birational morphism $\nu_n : \XXX_n \to \XXX$ over $k^{\circ}$.
By using Chow's lemma, we have a birational morphism $\mu : \XXX' \to \XXX$ over $k^{\circ}$ such that
$\XXX'$ is projective over $\Spec(k^{\circ})$.
Let $X'$ be the generic fiber of $\XXX' \to \Spec(k^{\circ})$.
Let $\XXX'_n$ be the normalization of the main component of $\XXX_n \times_{\XXX} \XXX'$, and
let $\mu_n : \XXX'_n \to \XXX_n$ 
be the induced morphism.
We set 
\[
\LLL' := \mu^*(\LLL),\quad
L':= \LLL' \cap X'
\quad\text{and}\quad
\LLL'_n := \mu_n^{*}(\LLL_n).
\]
Then $\XXX'_n$ is a model of $X'$ over $\Spec(k^{\circ})$ and
$\LLL'_n$ is a relatively nef $\RR$-Cartier divisor on $\XXX'_n$ such that $\LLL'_n \cap X' = L'$.
Moreover, 
\[
\mult_{w}(\LLL') = \mult_{w}(\LLL)
\quad\text{and}\quad
\mult_{w}(\LLL'_n) = \mult_{w}(\LLL_n)
\]
for all $w \in \DVal_k(\XXX)$.
Therefore, we may assume that $\XXX$ is projective.

Let $\AAA$ be a relatively nef and big Cartier divisor on $\XXX$.
As $L$ is nef on $X$, $\LLL + \epsilon \AAA$ is nef and big on $X$ for $\epsilon > 0$, so that,
by virtue of Theorem~\ref{thm:char:nef:big:alg:var},
it is sufficient to see that
\[
\mu_{\RR, w} (\LLL + \epsilon \AAA) = 0
\]
for all $w \in \DVal_k(\XXX)$ and $\epsilon > 0$.
Replacing $\XXX$ by a suitable birational model,
we may assume that there is a vertical prime divisor $\Gamma$ on $\XXX$ such that
$w = a \ord_{\Gamma}$ for some positive number $a$.
Let $\XXX_{\circ} = a_1 \Gamma_1 + \cdots + a_r \Gamma_r$ be be the
irreducible decomposition of 
the central fiber $\XXX_{\circ}$
of $\XXX \to \Spec(k^{\circ})$ as a Weil divisor. Renumbering
$\Gamma_1, \ldots, \Gamma_r$, we may set $\Gamma = \Gamma_1$.
Let $w_{\Gamma_i}$ be the additive valuation over $k$ arising from $\Gamma_i$.
Note that $w = w_{\Gamma_1}$.
For a positive number $\delta$,
there is $N$ such that
\[
\left\vert \mult_{w_{\Gamma_i}}(\LLL) - \mult_{w_{\Gamma_i}}(\LLL_n) \right\vert \leq a_i \delta
\]
for all $n \geq N$ and $i = 1, \ldots, r$.
Then $(\nu_n)_*(\LLL_n) - \delta \XXX_{\circ} \leq \LLL$ for $n \geq N$.
Therefore, as $\LLL_n - \delta (\XXX_n)_{\circ} + \epsilon \nu_n^*(\AAA)$ is relatively nef and big,
by (2), (6.2) and (7) in Proposition~\ref{prop:mu:basic},
{\allowdisplaybreaks
\begin{align*}
0 & \leq \mu_{\RR, w} (\LLL + \epsilon \AAA) \\
& \leq \mu_{\RR, w} ((\nu_n)_*(\LLL_n) - \delta \XXX_{\circ} + \epsilon \AAA) \\
& \qquad\qquad\qquad + \mult_{w}(\LLL - (\nu_n)_*(\LLL_n) + \delta \XXX_{\circ}) & (\text{$\because$ (2)}) \\
& = \mu_{\RR, w} ((\nu_n)_*(\LLL_n - \delta (\XXX_n)_{\circ} +\epsilon \nu_n^*(\AAA))) \\
& \qquad\qquad\qquad + \mult_{w}(\LLL - (\nu_n)_*(\LLL_n) + \delta \XXX_{\circ}) \\
& \leq  \mu_{\RR, w} (\LLL_n - \delta (\XXX_n)_{\circ} + \epsilon \nu_n^*(\AAA)) \\
& \qquad\qquad\qquad + \mult_{w}(\LLL - (\nu_n)_*(\LLL_n) + \delta \XXX_{\circ}) & (\text{$\because$ (6.2)})\\
& = \mult_{w}(\LLL - (\nu_n)_*(\LLL_n) + \delta \XXX_{\circ}) & (\text{$\because$ (7)})\\
& \leq \left| \mult_{w}(\LLL)  - \mult_{w} (\LLL_n)\right| + \delta \mult_{w}(\XXX_{\circ}) \leq 2 a_1 \delta
\end{align*}}
for $n \geq N$.
Thus $\mu_{\RR, w} (\LLL + \epsilon \AAA) = 0$ because $\delta$ is an arbitrary positive number.
\end{proof}

\ifmonog
\backmatter
\fi

\bigskip

\ifmonog
\Printindex{AdelDiv-Subject}{Subject Index}
\Printindex{AdelDiv-Symbol}{Symbol Index}
\fi

\ifpaper
\Printindex{AdelDiv}{Index}
\fi

\end{document}